\documentclass[preprint,3p]{amsart}

\usepackage{amsmath}
\usepackage{amsfonts}
\usepackage{amsthm}
\usepackage{amssymb, textcomp}
\usepackage{url}
\usepackage[usenames]{color}
\usepackage{xcolor}
\usepackage{stmaryrd}
\usepackage{hyperref}
\usepackage{verbatim}

\usepackage{pgf,tikz}
\usetikzlibrary{shapes.geometric, arrows}

\tikzstyle{rect} = [rectangle, minimum width=3cm, minimum height=1cm, text centered, draw=black]
\tikzstyle{arrow} = [thick,->,>=stealth]

\usepackage{float}  %per al text dels grafics

\vfuzz2pt % Don't report over-full v-boxes if over-edge is small
\hfuzz2pt % Don't report over-full h-boxes if over-edge is small
\oddsidemargin=3mm \evensidemargin=3mm \topmargin=-5mm
\textheight=225mm \textwidth=160mm
% \def\singlespace{ \renewcommand{\baselinestretch}{1.0}\normalsize }
% \def\middlespace{ \renewcommand{\baselinestretch}{1.3}\normalsize }
% \def\doublespace{ \renewcommand{\baselinestretch}{2.0}\normalsize }
% \def\figlabel#1{\label{#1}}
%\doublespace
\newtheorem{lemma}{Lemma}[section]

\newtheorem{remark}[lemma]{Remark}
\newtheorem{question}[lemma]{Question}

\newtheorem{notation}[lemma]{Notation}
\newtheorem{proposition}[lemma]{Proposition}
\newtheorem{theorem}[lemma]{Theorem}

\def\be{\begin{eqnarray}}
\def\ee{\end{eqnarray}}
\def\beal{\begin{aligned}}
\def\enal{\end{aligned}}
\newcommand{\eps}{\varepsilon}
\newcommand{\ga}{\gamma}

\newcommand{\Ga}{\Gamma}
\newcommand{\al}{\alpha}
\newcommand{\dps}{\displaystyle}
\newcommand{\RR}{\mathbb{R}}
\newcommand{\NN}{\mathbb{N}}
\newcommand{\CC}{\mathbb{C}}
\newcommand{\TT}{\mathbb{T}}

\newcommand{\ZZ}{\mathbb{Z}}
\newcommand{\MM}{\mathcal{M}}

\newcommand{\PP}{\mathcal{P}}
\newcommand{\GG}{\mathcal{G}}

\newcommand{\BB}{\mathcal{B}}
\newcommand{\LL}{\mathcal{L}}
\newcommand{\QQ}{\mathcal{Q}}
\newcommand{\YY}{\mathcal{Y}}
\newcommand{\KK}{\mathcal{K}}

\newcommand{\SSS}{\mathcal{S}}
\newcommand{\DD}{\mathbb{D}}
\newcommand{\XX}{\mathcal{X}}
\newcommand{\OO}{\mathcal{O}}
\newcommand{\FF}{\mathcal{F}}
\newcommand{\HH}{\mathcal{H}}

\newcommand{\RRR}{\mathcal{R}}
\newcommand{\TTT}{\mathcal{T}}
\newcommand{\UU}{\mathcal{U}}
\newcommand{\NNN}{\mathcal{N}}
\newcommand{\EE}{\mathcal{E}}
\newcommand{\JJ}{\mathcal{J}}
\newcommand{\AAA}{\mathcal{A}}

\newcommand{\tZ}{\mathtt{Z}}
\newcommand{\tL}{\mathtt{L}}
\newcommand{\CCC}{\mathcal{C}}
\newcommand{\QQQ}{\mathcal{Q}}
\newcommand{\Id}{\mathrm{Id}}

\newcommand{\lln}{\llfloor}
\newcommand{\rrn}{\rrfloor}
\newcommand{\ii}{^{-1}}
\newcommand{\de}{\delta}
\newcommand{\pa}{\partial}
\newcommand{\la}{\lambda}

\newcommand{\kk}{\kappa}
\newcommand{\rr}{\rho}

\newcommand{\tet}{\theta}
\newcommand{\ol}{\overline}
\newcommand{\Tet}{\Theta}
\newcommand{\tte}{\tilde{\Theta}}
\newcommand{\tth}{\tilde{\Theta}}%AQUI
\newcommand{\La}{\Lambda}
\newcommand{\bet}{\beta}

\newcommand{\lo}{\gamma}

\newcommand{\e}{e_c}

\newcommand{\vect}{\mathrm{vec}}

\newcommand{\s}{s}
\renewcommand{\Re}{\mathrm{Re\, }}
\renewcommand{\Im}{\mathrm{Im\,}}

\newcommand{\wt}{\widetilde}
\newcommand{\wh}{\widehat}
\newcommand{\Lip}{\mathrm{Lip}\,}
\newcommand{\El}{\mathrm{El}}
\newcommand{\Par}{\mathrm{Par}}

\newcommand{\loc}{\mathrm{loc}}

\newcommand{\h}{\mathrm{h}}
\newcommand{\het}{\mathrm{het}}
\newcommand{\ex}{\mathrm{e}}
\newcommand{\matrx}{\mathrm{mat}}

\newcommand{\alo}{\eta_0}
\newcommand{\beto}{\xi_0}
\newcommand{\vm}{{\sigma}}
\newcommand{\fl}{\mathrm{flow}}
\newcommand{\ovr}{\mathrm{ovr}}
\newcommand{\tY}{\mathtt{Y}}
\newcommand{\tU}{\mathtt{U}}
\newcommand{\tLa}{\mathtt{\Lambda}}
\newcommand{\tGa}{\mathtt{\Gamma}}
\newcommand{\tA}{\mathtt{A}}
\newcommand{\tB}{\mathtt{B}}
\newcommand{\tkk}{\tilde{\kappa}}
\newcommand{\tS}{\mathtt{S}}

\newcommand{\Psil}{\Psi_{\textrm{loc}}}
\newcommand{\Psiloc}[2]{\Psi_{\textrm{loc},#1,#2}}
\newcommand{\wtPsiloc}[2]{\wt \Psi_{\textrm{loc},#1,#2}}
\newcommand{\Psig}[1]{\Psi_{\textrm{glob},#1}}
\newcommand{\wtPsig}[1]{\wt \Psi_{\textrm{glob},#1}}
\newcommand{\M}{\mathcal{M}}

\newcommand{\fH}{{\rm \bf H1}}
\newcommand{\sH}{{\rm \bf H2}}
\newcommand{\etabad}{\eta_{\mathrm{bad}}}

%opening
\title{Hyperbolic dynamics and oscillatory motions in the 3 Body Problem}

\author[M. Guardia]{Marcel Guardia}
\address[MG]{ Departament de Matem\`atiques, Universitat Polit\`ecnica de Catalunya, Diagonal 647, 08028 Barcelona, Spain \& Centre de Recerca Matemàtica, Edifici C, Campus Bellaterra, 08193 Bellaterra, Spain}
\email{marcel.guardia@upc.edu}

\author[P. Mart\'in]{Pau Mart\'in}
\address[PM]{ Departament de Matem\`atiques, Universitat Polit\`ecnica de Catalunya, Diagonal 647, 08028 Barcelona, Spain \& Centre de Recerca Matemàtica, Edifici C, Campus Bellaterra, 08193 Bellaterra, Spain}
\email{p.martin@upc.edu}

\author[J. Paradela]{Jaime Paradela}
\address[JP]{ Departament de Matem\`atiques, Universitat Polit\`ecnica de Catalunya, Diagonal 647, 08028 Barcelona, Spain}
\email{jaime.paradela@upc.edu}

\author[T. M. Seara]{Tere M. Seara}
\address[TS]{Departament de Matem\`atiques, Universitat Polit\`ecnica de Catalunya, Diagonal 647, 08028 Barcelona, Spain \& Centre de Recerca Matemàtica, Edifici C, Campus Bellaterra, 08193 Bellaterra, Spain}
\email{tere.m-seara@upc.edu }

\begin{document}
\maketitle

\begin{abstract}
Consider the planar 3 Body Problem with masses $m_0,m_1,m_2>0$. In this paper we address two fundamental questions: the existence of oscillatory motions and of chaotic hyperbolic sets.

In 1922, Chazy classified the possible final motions of the three bodies, that is the behaviors the bodies may have when time tends to infinity. One of the possible behaviors are oscillatory motions, that is, solutions of the 3 Body Problem such that the positions of the bodies $q_0, q_1, q_2$ satisfy
\[
\liminf_{t\to\pm\infty}\sup_{i,j=0,1,2, i\neq j}\|q_i-q_j\|<+\infty \quad \text{ and }\quad \limsup_{t\to\pm\infty}\sup_{i,j=0,1,2, i\neq j}\|q_i-q_j\|=+\infty.
\]
Assume that all three masses $m_0,m_1,m_2>0$  are not equal. Then, we prove that such motions exists. We also prove  that one can construct solutions of the three body problem whose forward and backward final motions are of different type.

This result relies on constructing  invariant sets whose dynamics is conjugated to the (infinite symbols) Bernouilli shift. These sets are hyperbolic for the symplectically reduced planar 3 Body Problem. As  a consequence, we obtain the existence of chaotic motions, an infinite number of periodic orbits and  positive topological entropy for the 3 Body Problem.
\end{abstract}

\tableofcontents

\section{Introduction}
The 3 Body Problem models the motion of three punctual bodies $q_0, q_1, q_2$ of masses $m_0,m_1,m_2>0$ under the Newtonian gravitational force. In suitable units, it is given by the equations
\begin{equation}\label{eq:Newton}
\begin{split}
\ddot q_0&=m_1\frac{q_1-q_0}{\|q_1-q_0\|^3} +m_2\frac{q_2-q_0}{\|q_2-q_0\|^3}\\
\ddot q_1&=m_0\frac{q_0-q_1}{\|q_0-q_1\|^3} +m_2\frac{q_2-q_1}{\|q_2-q_1\|^3}\\
\ddot q_2&=m_0\frac{q_0-q_2}{\|q_0-q_2\|^3} +m_1\frac{q_1-q_2}{\|q_1-q_2\|^3}.
 \end{split}
\end{equation}
In this paper we want to address two fundamental questions for this classical model: The analysis of the possible \emph{Final Motions} and the existence of \emph{chaotic motions (symbolic dynamics)}. These questions go back to the first half of the XX century.
\vspace*{0.5cm}
\paragraph{\textbf{Final motions:}} We call final motions to the possible qualitative behaviors that the complete (i.e. defined for all time) trajectories of the 3 Body Problem may possess as time tends to infinity (forward or backward). The analysis of final motions was proposed by Chazy \cite{Chazy22}, who proved that the final motions of the 3 Body Problem should fall into one of the following categories. To describe them, we denote by $r_k$ the vector from the point mass $m_i$ to the point mass $m_j$ for $i\neq k$, $j\neq k$, $i<j$.

\begin{theorem}[Chazy, 1922, see also \cite{ArnoldKN88}]\label{thm:chazy}
 Every solution of the 3 Body Problem defined for all (future) time belongs to one of the following seven classes.
\begin{itemize}
\item Hyperbolic (\textit{H}): $|r_i|\to\infty$, $|\dot r_i|\to c_i>0$ as $t\to\infty$.
\item Hyperbolic--Parabolic (\textit{HP$_k$}): $|r_i|\to\infty$, $i=1,2,3$, $|\dot r_k|\to 0$, $|\dot r_i|\to c_i>0$, $i\neq k$, as $t\to\infty$.
\item Hyperbolic--Elliptic,  (\textit{HE$_k$}): $|r_i|\to\infty$, $|\dot r_i|\to c_i>0$, $i\neq k$, as $t\to\infty$, $\sup_{t\geq t_0}|r_k|<\infty$.
\item Parabolic-Elliptic (\textit{PE$_k$}): $|r_i|\to\infty$, $|\dot r_i|\to 0$, $i\neq k$, as $t\to\infty$, $\sup_{t\geq t_0}|r_k|<\infty$.
\item Parabolic (\textit{P}): $|r_i|\to\infty$, $|\dot r_i|\to 0$ as $t\to\infty$.
\item Bounded (\textit{B}):  $\sup_{t\geq t_0}|r_i|<\infty$.
\item Oscillatory (\textit{OS}):  $\limsup_{t\to\infty}\sup_{i}|r_i|=\infty$ and $\liminf_{t\to\infty}\sup_{i}|r_i|<\infty$.
\end{itemize}
\end{theorem}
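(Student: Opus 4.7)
The plan is to use conservation of energy and angular momentum together with the Lagrange-Jacobi identity to reduce the long-time dynamics to a few escape/boundedness scenarios for the mutual distances, and then to extract the asymptotic speeds in each scenario by an asymptotically Keplerian argument. Fixing the center of mass, introduce the moment of inertia $I(t)=\sum_i m_i|q_i(t)|^2$, the positive Newtonian potential $U=\sum_{i<j} m_im_j/|q_i-q_j|$ and the kinetic energy $T$. Energy conservation reads $T-U=h$ and a direct computation gives the Lagrange-Jacobi identity $\ddot I = 4T-2U = 4h+2U$; since $U>0$ away from collision, $\ddot I > 4h$. Note also that $I$ and $R(t)^2:=\max_{i<j}|q_i-q_j|^2$ are comparable up to constants depending only on the masses, so $I\to\infty$ iff $R\to\infty$.

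First I would split by the sign of $h$. If $h>0$, integrating $\ddot I\ge 4h$ twice yields $I(t)\ge 2h t^2+O(t)$, hence $R(t)\to\infty$ and in fact $R(t)\gtrsim t$. If $h=0$, strict convexity $\ddot I>0$ together with Sundman-type bounds relating $\dot I^2$ to $I$ still forces $R(t)\to\infty$, but now only polynomially. If $h<0$, three alternatives remain: $R$ may stay bounded (the bounded class \emph{B}), $R$ may tend to infinity, or $R$ may oscillate between bounded and unbounded values. The last alternative is by definition the oscillatory class \emph{OS}, so nothing further needs to be said about it. Whenever $R(t)\to\infty$, one next identifies \emph{which} of the three mutual distances actually escape. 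A compactness argument using that the smallest mutual distance cannot collapse to zero in infinite time without $U$ (and hence $T$) blowing up against energy conservation shows that either all three $|r_i|\to\infty$, or exactly one $|r_k|$ remains bounded while the other two escape—corresponding to the subscripted classes \emph{HP$_k$}, \emph{HE$_k$}, \emph{PE$_k$}.

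In each escape scenario I would then extract the asymptotic velocities. When a pair of bodies separates to infinity, the mutual force between them decays like $|r|^{-2}$, which is integrable along any asymptotically linear trajectory, and energy conservation restricted to that escaping subsystem implies $|\dot r_i|^2$ tends to a constant. A slightly finer argument—writing the motion of the escaping body relative to the center of mass of the remaining cluster as a Kepler problem perturbed by a term of size $O(|r|^{-2})$—shows the direction of $\dot r_i$ also stabilizes, and so $|\dot r_i|$ admits a genuine limit $c_i\ge0$. The trichotomy $c_i>0$, $c_i=0$, or no escape, combined with a standard Kepler analysis of any remaining bounded two-body cluster, produces exactly the six non-oscillatory classes \emph{H}, \emph{HP$_k$}, \emph{HE$_k$}, \emph{PE$_k$}, \emph{P}, \emph{B}. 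The hardest part is precisely this last step: proving that the velocity limits genuinely exist rather than merely $\liminf/\limsup$'s, which requires controlling the cumulative effect of the mutual perturbations and, in the parabolic cases, ruling out residual oscillations of $|\dot r_i|$ despite the slow $|r_i|\sim t^{2/3}$ escape. Both points rest on the integrability of $O(|r|^{-2})$ perturbative forces along asymptotically Keplerian trajectories.
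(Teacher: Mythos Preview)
The paper does not prove this theorem at all: it is stated in the introduction as a classical background result, attributed to Chazy (1922) with a reference to Arnold--Kozlov--Neishtadt, and is used only to frame the paper's main results on oscillatory motions. There is therefore no ``paper's own proof'' to compare against.

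Your outline is the standard route to Chazy's classification and is sound at the level of a plan: Lagrange--Jacobi to force $R\to\infty$ when $h\ge 0$, the logical trichotomy on $R=\max_i|r_i|$ when $h<0$, the triangle constraint among the three mutual distances to see that at most one $|r_k|$ can remain bounded once $R\to\infty$, and a perturbed-Kepler analysis to obtain the limits $|\dot r_i|\to c_i\ge 0$. Two places where the sketch is genuinely thin deserve flagging. First, in the $h=0$ case the bare convexity $\ddot I=2U>0$ does not by itself force $I\to\infty$ (a positive, convex function on $[0,\infty)$ can stay bounded); you need the additional input that if $I$ stays bounded then $\dot I$ has a limit, and then the integrability of $\ddot I=2U$ combined with the virial-type constraint $T=U$ yields a contradiction. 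Second, once $R\to\infty$ you must exclude the possibility that an individual $|r_i|$ oscillates (finite $\liminf$, infinite $\limsup$) rather than cleanly tending to infinity or staying bounded; this is part of what Chazy actually has to prove, and your ``compactness argument'' sentence hides real work. You rightly identify the existence of the velocity limits in the parabolic cases as the most delicate point.
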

Note that this classification applies both when $t\to+\infty$ or $t\to-\infty$. To distinguish both cases we add a superindex $+$ or $-$ to each of the cases, e.g $H^+$ and $H^-$.

At the time of Chazy all types of motions were known to exist except the oscillatory motions
\footnote{Indeed, note that in the  limit $m_1,m_2\to 0$, where the model becomes two uncoupled Kepler problems, all final motions are possible except \textit{OS}$^\pm$.}
. Their existence was proven later by Sitnikov \cite{Sitnikov60} for the Restricted 3 Body Problem and by Alekseev \cite{Alekseev68} for the (full) 3 Body Problem for some choices of the masses. After these seminal works, the study of oscillatory motions have drawn considerable attention  (see Section \ref{sec:literature} below) but all results apply under non-generic assumptions on the masses. 

Another question posed by Chazy was whether the future and past final motion of any trajectory must be of the same type. This was disproved by Sitnikov and Alekseev, who showed that there exist trajectories with all possible combinations of future and past final motions (among those permitted at an energy level). 

The first result of this paper is to construct oscillatory motions for the 3 Body Problem provided $m_0\neq m_1$ and to show that all possible past and future final motions at negative energy can be combined.

Besides the question of existence of such motions, there is the question about their abundance. As is pointed out in \cite{GorodetskiK12}, V. Arnol'd, in the conference in
honor of the 70th anniversary of Alexeev, posed the following question.

\begin{question}\label{ques:measure}
Is the Lebesgue measure of the set of oscillatory motions positive?
\end{question}

Arnol'd considered it the fundamental question in Celestial Mechanics. Alexeev conjectured in \cite{Alekseyev71} that the Lebesgue measure is zero (in the English version \cite{Alekseev81} he attributes this conjecture to Kolmogorov). This conjecture remains wide open.

\vspace*{0.5cm}
% Note that at the time of Chazy it was not known whether the past and future final motions of orbits must coincide (actually Chazy believed that was the case). Nowadays.
\paragraph{\textbf{Symbolic dynamics:}} The question on existence of chaotic motions in the 3 Body Problem can be traced back to Poincar\'e and Birkhoff. It has been a subject of deep research during the second half of the XX century. The second goal of this paper is to construct hyperbolic invariant sets for (a suitable Poincar\'e map and after symplectically reducing for the classical first integrals) of the 3 Body Problem whose dynamics is conjugated to that of the usual shift
\begin{equation}\label{shift}
\sigma:\NN^\ZZ\to \NN^\ZZ,\qquad  (\sigma\omega)_k = \omega_{k+1},
\end{equation}
in the space of sequences, one of the paradigmatic models of chaotic dynamics. Note that these dynamics implies positive topological entropy and an infinite number of periodic orbits.

The known results on symbolic dynamics in Celestial Mechanics require rather restrictive hypohteses on the parameters of the model (in particular, the values of the masses). Moreover, all the proofs of  existence of hyperbolic sets with symbolic dynamics in Celestial Mechanics deal with very symmetric configurations which allow to reduce the 3 Body Problem to a two dimensional Poincar\'e map (see the references in Section \ref{sec:literature} below).

\subsection{Main results}
The two main results of this paper are the following. The first theorem deals with the existence of different final motions and, in particular, of oscillatory motions.

\begin{theorem}\label{thm:Main1}
 Consider the 3 Body Problem with masses $m_0,m_1,m_2>0$ such that $m_0\neq m_1$. Then,
 \[
  X^-\cap Y^+\neq \emptyset\qquad \text{ with }\qquad X,Y=\textit{OS}, \textit{B}, \textit{PE$_3$}, \textit{HE$_3$}.
 \]
\end{theorem}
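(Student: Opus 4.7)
My plan is to work in the regime where $(m_0,m_1)$ form a tight Keplerian binary and $m_2$ orbits at very large distance. After reducing the system by translations and rotations via the classical first integrals (so that on each energy level one is left with a genuinely lower-dimensional Hamiltonian) and passing to Jacobi coordinates $(q,Q)$ with $|Q|$ large, the flow is a small-coupling perturbation of two uncoupled Kepler problems, the inner one in $q$ and the outer one in $Q$. In the outer Kepler problem, parabolic escape is realised along a homoclinic separatrix to infinity; following McGehee I would introduce coordinates compactifying infinity into an invariant (normally parabolic) manifold $\La_\infty$ of the reduced phase space, foliated by the inner Keplerian ellipses and carrying stable and unstable manifolds $W^{s}(\La_\infty)$, $W^{u}(\La_\infty)$ whose orbits are precisely the motions with parabolic escape of $m_2$.

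\textbf{Splitting of the parabolic invariant manifolds.} The second step is to show that $W^{s}(\La_\infty)$ and $W^{u}(\La_\infty)$, which coincide in the uncoupled limit, intersect transversally (inside the reduced energy level) in the coupled problem. I would set up a Melnikov-type integral along the outer parabolic separatrix with the inner binary running on its unperturbed Keplerian ellipse. The resulting splitting function depends on the binary's mean anomaly and on the angle its major axis makes with the outer apsidal line, and I expect its leading Fourier coefficient to carry a factor proportional to $m_0-m_1$: when $m_0=m_1$ the exchange symmetry $q\mapsto -q$ of the binary forces the leading harmonic to cancel, whereas for $m_0\neq m_1$ this Melnikov potential is nontrivial and yields non-degenerate transverse homoclinic (and heteroclinic) connections to $\La_\infty$.

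\textbf{From transversality to the four classes of final motion.} With transversality in hand, an inclination (lambda) lemma adapted to the parabolic setting produces an invariant set whose return map is conjugate to a Bernoulli shift on $\NN^{\ZZ}$; the $n$-th symbol records the state of the inner binary at the $n$-th pericentre passage of $m_2$. The four classes are then read off from the shift: sequences with uniformly bounded outer apocentre give $\mathrm{B}$; sequences whose apocentre has finite $\liminf$ but infinite $\limsup$ give $\mathrm{OS}$; sequences whose forward (resp. backward) tail shadows $W^{s}(\La_\infty)$ (resp. $W^{u}(\La_\infty)$) give $\mathrm{PE}_3^{+}$ (resp. $\mathrm{PE}_3^{-}$); and sequences along which the outer energy eventually becomes positive give $\mathrm{HE}_3$. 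Since past and future tails can be prescribed independently in a Bernoulli shift, every pair $(X^{-},Y^{+})$ with $X,Y\in\{\mathrm{OS},\mathrm{B},\mathrm{PE}_3,\mathrm{HE}_3\}$ is realised by some orbit.

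\textbf{Main difficulty.} The hardest step, and the one driving most of the technical work, is the splitting analysis. Because $\La_\infty$ is parabolic rather than hyperbolic, classical Melnikov theory and the hyperbolic lambda-lemma do not apply out of the box: the invariant manifolds are only polynomially contracting along the separatrix, the splitting is exponentially small in the angular momentum of $m_2$, and one has to propagate asymptotic expansions of $W^{s,u}(\La_\infty)$ along the parabolic separatrix on long (non-exponential) time scales while proving that the $(m_0-m_1)$-Melnikov term is not swallowed by higher-order corrections. A secondary, non-trivial burden is handling the symplectic reduction by rotations so that the transverse intersection genuinely survives on the reduced energy level.
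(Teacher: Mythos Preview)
Your overall strategy matches the paper's approach closely: hierarchical regime in Jacobi coordinates, McGehee compactification of the parabolic infinity, Melnikov-type splitting of the stable and unstable manifolds (which is indeed exponentially small in the outer angular momentum), a parabolic lambda lemma, a shift, and reading off the four final-motion types from symbol sequences. There is, however, a genuine gap in the passage ``transversality $+$ lambda lemma $\Rightarrow$ Bernoulli shift''.

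After symplectic reduction the Poincar\'e section is \emph{four}-dimensional, not two-dimensional as in Sitnikov or Alekseev. The parabolic lambda lemma gives strong expansion and contraction only in the two directions transverse to $W^{s,u}(\Lambda_\infty)$; in the remaining two ``center'' directions (essentially the eccentricity vector of the inner binary) the local passage near infinity is $\mathcal C^1$-close to the identity. A single transverse homoclinic channel therefore does \emph{not} produce a hyperbolic horseshoe: the associated scattering map on $\Lambda_\infty$ is a near-integrable twist around an elliptic fixed point, so the return map along one channel has two essentially neutral directions and the Moser construction breaks down. The paper closes this gap by exhibiting \emph{two} homoclinic channels with two scattering maps $S^1,S^2$ whose elliptic fixed points are distinct (though exponentially close), and then building an isolating block for a high iterate $(S^1)^M\circ S^2$ via a transversality--torsion argument; this composition is what manufactures hyperbolicity in the center directions and makes the four-dimensional horseshoe work.

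Relatedly, your localisation of the hypothesis $m_0\neq m_1$ is slightly off. The first Fourier harmonic of the Melnikov potential does not vanish identically when $m_0=m_1$; it vanishes only at one specific value of the inner eccentricity vector, so transverse homoclinic channels still exist in the equal-mass case. The place where $m_0\neq m_1$ is actually needed is the scattering-map step above: it guarantees (through the constant $N_3\propto m_1-m_0$) that the elliptic fixed points of $S^1,S^2$ sit inside the domain where the scattering maps are defined, so that the isolating-block construction for $(S^1)^M\circ S^2$ can be carried out.
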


Note that this theorem gives the existence of orbits which are oscillatory in the past and in the future. It also gives different combinations of past and future final motions.  Indeed,
\begin{itemize}
\item The bodies of masses $m_0$ and $m_1$ perform (approximately) circular motions. That is, $|q_0-q_1|$  is aproximately constant.
\item The third body may have radically different behaviors: oscillatory, bounded, hyperbolic or parabolic.
\end{itemize}
The motions given by Theorem \ref{thm:Main1} have negative energy. In such energy levels, only $\textit{OS}$, $\textit{B}$, $\textit{PE$_k$}$, $\textit{HE$_k$}$ are possible and therefore we can combine all types of past and future final motions.
% Such motions are shown to exist in every energy level. However,

The second main result of this paper deals with the existence of chaotic dynamics for the 3 Body Problem.
\begin{theorem}\label{thm:Main2}
 Consider the 3 Body Problem with masses $m_0,m_1,m_2>0$ such that $m_0\neq m_1$. Fix the center of mass at the origin and denote by $\Phi_t$ the corresponding flow. Then, there exists a section $\Pi$ transverse to  $\Phi_t$ such that the induced Poincar\'e map
 \[
  \PP: \UU=\mathring\UU\subset\Pi\to\Pi
 \]
satisfies the following. There exists $M\in\NN$ such that the map $\PP^M$ has an invariant set $\XX$ which is homeomorphic to $\NN^\ZZ\times\TT$. Moreover, the dynamics $\PP^M:\XX\to\XX$ is topologically conjugated to
\[
\begin{split}
 \NN^\ZZ\times\TT&\rightarrow\NN^\ZZ\times\TT\\
 (\omega,\theta)&\mapsto (\sigma\omega,\theta+f(\omega))
\end{split}
\]
where $\sigma$ is the  shift introduced in  \eqref{shift} and $f: \NN^\ZZ\to\RR$ is a continuous function.
\end{theorem}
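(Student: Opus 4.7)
The plan is to exploit a parabolic invariant cylinder at infinity for the outer body $m_2$ and to build a symbolic-dynamics horseshoe from a transverse homoclinic to it. In Jacobi coordinates, let $(q,p)$ describe the inner binary $(m_0,m_1)$ and $(Q,P)$ the position of $m_2$ relative to the binary's center of mass. The Hamiltonian decomposes as
\[
H \;=\; H^{\mathrm{in}}_{\mathrm{Kep}}(q,p) \;+\; H^{\mathrm{out}}_{\mathrm{Kep}}(Q,P) \;+\; V(q,Q),
\]
where $V$ is a multipole perturbation in powers of $|q|/|Q|$, whose first contribution depending nontrivially on the mass asymmetry $m_0-m_1$ appears at order $|Q|^{-4}$ (the octupole). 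After fixing the center of mass and symplectically reducing by $SO(2)$, I would introduce McGehee-type coordinates $x=|Q|^{-1/2}$, $y=\langle P,Q\rangle/|Q|^{1/2}$, turning $\{x=0\}$ into an invariant set. Restricting to a negative total energy and to a fixed inner Keplerian energy, the reduced dynamics on $\{x=y=0\}$ is the inner Kepler flow and foliates a circle $\La_\infty\simeq\TT$ parametrised by the inner mean anomaly.

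Standard results on parabolic infinity in Celestial Mechanics (McGehee, Moser, Llibre--Sim\'o, Baldom\'a--Fontich--Mart\'in) then furnish local invariant manifolds $W^{s,u}_{\loc}(\La_\infty)$ of dimension $\dim\La_\infty+1$, real analytic graphs over $\La_\infty$-coordinates, consisting of orbits on which $m_2$ escapes parabolically while the inner binary converges to a Keplerian ellipse.

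The heart of the argument, and the main obstacle, is to establish that $W^u(\La_\infty)$ and $W^s(\La_\infty)$ meet transversally inside the reduced energy level. I would compute the splitting by a Melnikov-type method along a zero-energy Keplerian parabola of the outer two-body system with masses $m_0+m_1$ and $m_2$: the first-order displacement between the manifolds is given by an improper integral of $V$ along this parabola. Expanding in Fourier series in the inner mean anomaly, when $m_0=m_1$ an additional $\ZZ_2$-symmetry (swap of the two inner bodies together with a reflection) forces the dominant harmonics to cancel, so $m_0\neq m_1$ is precisely what unlocks nonvanishing Melnikov harmonics. The technical difficulty is that the parabolic homoclinic approaches $\La_\infty$ only algebraically and the Melnikov integral is only conditionally convergent; one must use carefully chosen affine coordinates on $W^{s,u}_{\loc}(\La_\infty)$ and control remainders uniformly in the binary phase so that the first-order prediction is not destroyed by the higher-order corrections.

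Given one transverse homoclinic orbit to $\La_\infty$, I would conclude with a Smale--Shilnikov construction adapted to normally hyperbolic invariant manifolds with parabolic normal dynamics. Take a small product box $\UU\subset\Pi$ in a Poincar\'e section near the homoclinic point. An orbit returning to $\UU$ must shadow $W^u(\La_\infty)$ outward, remain close to $\La_\infty$ while the inner binary performs some number $n\in\NN$ of revolutions, and return along $W^s(\La_\infty)$; the integer $n$ serves as the symbol, producing the countable alphabet. An inclination/graph-transform lemma for NHIMs then yields, for each sequence $\omega\in\NN^\ZZ$, a unique orbit realising it, continuously dependent on $\omega$. Each excursion of type $n$ accumulates a well-defined phase shift on the inner binary angle as the orbit passes close to $\La_\infty$; summing these shifts along a bi-infinite orbit gives a cocycle $\theta\mapsto\theta+f(\omega)$ on the torus factor $\La_\infty\simeq\TT$, and the resulting invariant set is homeomorphic to $\NN^\ZZ\times\TT$ with precisely the stated conjugacy.
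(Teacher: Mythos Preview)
Your outline captures the Moser--Sitnikov paradigm, but it misses the essential difficulty that separates the planar 3 Body Problem from the low-dimensional models. After reducing by translations, rotations, and energy, the infinity manifold is \emph{not} a circle: it is a 3-dimensional cylinder $\EE_\infty\simeq\TT\times\RR^2$, foliated by periodic orbits $P_{\eta_0,\xi_0}$ indexed by the inner Poincar\'e variables $(\eta_0,\xi_0)$ (encoding the binary's eccentricity vector). You cannot simply ``fix the inner Keplerian energy'': on $\EE_\infty$ the semimajor axis $L$ is already pinned by the total energy, but $(\eta,\xi)$ remain free and evolve slowly under the return map. Consequently the return map on a transverse section is genuinely 4-dimensional, and the parabolic passage near $\EE_\infty$ supplies hyperbolicity in only two of those four directions. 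The remaining two ``center'' directions, governed by $(\eta,\xi)$, are close to the identity under both the local map and any single global map; a one-channel Smale--Shilnikov construction produces at best a partially hyperbolic set, not a horseshoe conjugate to a full shift.

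The paper's remedy is the main new mechanism and it is absent from your proposal: one finds \emph{two} homoclinic channels, hence two scattering maps $\SSS^1,\SSS^2$ on the $(\eta,\xi)$-plane, each a near-integrable twist around an elliptic fixed point. The fixed points differ by an exponentially small amount, and a high iterate $(\SSS^1)^M\circ\SSS^2$ admits an isolating block with genuine expansion/contraction in the $(\eta,\xi)$-directions (a transversality-torsion argument). This is where $m_0\neq m_1$ actually enters: it makes the octupole coefficient $N_3\neq0$, which is needed not to unlock nonzero Melnikov harmonics (the quadrupole already does that) but to place the scattering-map fixed points and the twist correctly so that the isolating block exists. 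A second gap is that in the hierarchical regime the splitting is exponentially small in $\Theta^{-3}$; a first-order Melnikov prediction is not sufficient to conclude transversality, and controlling the remainder requires the complex-analytic machinery developed in the paper rather than merely affine-coordinate bookkeeping.
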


% \begin{remark}
 The set $\XX$ is a hyperbolic set once the 3 Body Problem is reduced by its classical first integrals. This invariant set implies  positive topological entropy and an infinite number of periodic orbits for the 3 Body Problem for any values of the masses (except all equal). The oscillatory motions given by Theorem \ref{thm:Main1} also belong  to this invariant set $\XX$.  In fact, Theorem \ref{thm:Main1} will be a consequence of Theorem \ref{thm:Main2}.
% \end{remark}

\subsection{Literature}\label{sec:literature}
\paragraph{\textbf{Oscillatory motions:}}
The first proof of oscillatory motions was achieved by Sitnikov in~\cite{Sitnikov60} for what is called today the \emph{Sitnikov problem}. That is the Restricted 3 Body Problem when the two primaries have equal mass, equivalently the mass ratio is  $\mu=1/2$, and perform elliptic motion whereas the third body (of zero mass) is confined to the line perpendicular to the plane defined by the two primaries and passing through their center of mass. This configuration implies that this model can be reduced to a one and a half degrees of freedom Hamiltonian system, i.e. three dimensional phase space.

Later, Moser~\cite{Moser01} gave a new proof. His approach  to prove
the existence of such motions was to consider the invariant
manifolds of infinity and to prove that they intersect transversally.
Then, he established the existence of symbolic dynamics close to these invariant
manifolds which lead to the existence of oscillatory motions. His ideas have been very influential and are the base of the present work. In Section \ref{sec:Moser} we explain this approach and as well as the challenges to apply it to the 3 Body Problem.

Since the results by Moser, there has been quite a number of works dealing with the Restricted 3 Body Problem. In the planar setting, the first one was by Sim\'o and
Llibre~\cite{SimoL80}. Following the same approach as
in~\cite{Moser01}, they proved the existence of oscillatory motions
for the RPC3BP for small enough values of the mass ratio~$\mu$ between the two primaries.
One of the main ingredients of their proof, as in~\cite{Moser01}, was the study of
 the transversality of the intersection of the invariant manifolds of
 infinity. They were able to prove this transversality provided $\mu$ is exponentially small with respect to the Jacobi constant, which was taken large enough.
%  For~$\mu=0$, the system is integrable and the stable and
% unstable invariant manifolds of infinity coincide. Then, they
% applied the classical Poincar\'e--Melnikov Theory
% \cite{Poincare90,Melnikov63} to ensure that, for~$\mu$ small enough,
% the invariant manifolds do not coincide anymore and intersect
% transversally. Nevertheless, to be able to compute the Melnikov
% function, they considered the Jacobi constant~$\JJ$
% (see~\eqref{def:JacobiConstant:cartesian}) large enough. For these
% values of the Jacobi constant, the Melnikov function is
% exponentially small with respect to~$\JJ$ and therefore they were
% only able to prove the transversality of the invariant manifolds
% provided~$\mu$ was exponentially small with respect to the Jacobi
% constant. This allowed them to prove the existence of oscillatory
% motions for small enough~$\mu$. Note that the orbits that they
% obtained have large Jacobi constant, which implies that they are far
% from the primaries and, therefore, they are far from collisions.
Their result was extended by Xia~\cite{Xia92} using the
real-analyticity of the invariant manifolds of infinity.
% He claimed
% that these invariant manifolds intersect transversally for any mass
% ratio $\mu\in (0,1/2]$ except for a finite number of values, and
% thus obtaining the existence of oscillatory motions for any mass
% ratio except for these values
% (see also~\cite{Moser01} for a similar
% argument for the Sitnikov problem).
The problem of existence of oscillatory motions for the RPC3BP was closed by the authors of the present paper in \cite{GuardiaMS16}, which proved the existence of oscillatory motions for any value of the mass ratio $\mu\in (0,1/2]$. These oscillatory motions possess large Jacobi constant. The authors with M. Capinski and P. Zgliczy{\'n}ski showed the existence of oscillatory motions with ``low'' Jacobi constant relying on computer assisted proofs in \cite{CapinskiGSZ21}. 
A completely
different approach using Aubry-Mather theory and semi-infinite
regions of instability has been recently developed
in~\cite{GalanteK11, GalanteK10b, GalanteK10c}.
% This new approach
% has allowed the authors to prove the existence of orbits which
% initially are in the range of our Solar System and become
% oscillatory as time tends to infinity for the RPC3BP with a
% realistic mass ratio for the Sun-Jupiter pair. 
In \cite{GuardiaPSV20}, the Moser approach is applied to the Restricted Isosceles 3 Body Problem. 

The existence of oscillatory
motions has also been proven for the (full)
3 Body Problem by \cite{Alekseev68} and~\cite{LlibreS80}. The first paper deals with the Sitnikov problem with a third positive small mass and the second one with the collinear 3 Body Problem.

A fundamental feature of the mentioned models is that they can be reduced to two dimensional area preserving maps. In particular, one can implement the Moser approach~\cite{Moser01},
that is, they relate the oscillatory motions to transversal
homoclinic points to infinity and symbolic dynamics. The works by Galante and Kaloshin do not rely on the Moser approach but still rely on two dimensional area preserving maps tools. Moreover, most of these works require rather strong assumptions on the masses of the bodies.

Results on Celestial Mechanics models of larger dimension such as the  3 Body Problem or the Restricted Planar Elliptic 3 Body Problem are much more scarce.

The authors with L. Sabbagh (see \cite{GuardiaSMS17}) proved the existence of oscillatory motions for the Restricted Planar Elliptic 3 Body Problem for any mass ratio and small eccentricity of the primaries. This work relies on ``soft techniques'' which allow to prove that $\textit{OS}^\pm\neq\emptyset$ but unfortunately do not imply that $\textit{OS}^-\cap \textit{OS}^+\neq\emptyset$ (this stronger results could be proven with the tools developed in the present paper). The same result, that is $\textit{OS}^\pm\neq\emptyset$, is obtained for a Restricted Four Body Problem in \cite{Seara20}.

The present paper is the first one which ``implements'' the ideas developed by Moser to the planar 3 body problem (see Sections \ref{sec:Moser} and \ref{sec:outline} below). Conditional results had been previously obtained in \cite{Robinson84, Robinson15}, where C. Robinson,  proved the existence of oscillatory motions under the assumption that the so-called scattering map has a hyperbolic fixed point. As far as the authors know, such assumption has not been proven yet. We follow a different approach (see Section \ref{sec:outline}).

In \cite{Moeckel07}, R. Moeckel proves the existence of oscillatory motions for the 3 Body Problem relying on passage close to triple collision, and therefore for arbitrarily small total angular momentum. This result applies to a ``big'' set of mass choices but, unfortunately, the required hypotheses are not generic.

The mentioned works deal with the problem of  existence of
oscillatory motions in different models of Celestial Mechanics. As far as the authors know, there is only one result dealing with their abundance \cite{GorodetskiK12} (recall the fundamental  Question \ref{ques:measure}). Gorodetksi and Kaloshin analyze
the Hausdorff dimension of the set of oscillatory motions for the
Sitnikov example and the RPC3BP. They prove that for both problems and a Baire
generic subset of an open set of parameters (the eccentricity of the
primaries in the Sitnikov example and the mass ratio and the Jacobi
constant in the RPC3BP) the Hausdorff dimension is maximal.
% As pointed
% out to us by V. Kaloshin and A. Gorodetski, the present paper and
% the techniques developed by them in~\cite{GorodetskiK12} lead to the
% prove of the existence of a set of maximal Hausdorff dimension of
% oscillatory motions for any value of the mass ratio and a Baire
% generic subset of an open set of Jacobi constants.

A dynamics strongly related to oscillatory motions is the Arnold diffusion behavior attached to the parabolic invariant manifolds of infinity. Such unstable behavior leads to growth in angular momentum.
This is proven in \cite{DelshamsKRS19} for the Restricted Planar Elliptic 3 Body Problem for mass ratio and  eccentricity small enough (some formal computations on the associated Melnikov function had been done previously in ~\cite{MartinezP94}).

% Nevertheless, it
%still remains open to show that this transversality implies the
%existence of oscillatory motions for the restricted planar elliptic
%three body problem.
%\marginpar{Xia}

\vspace*{0.5cm}

\paragraph{\textbf{Symbolic dynamics and Smale horseshoes for the 3 Body Problem}}
Starting from the 90's, there is a wide literature proving the conjugacy or semi-conjugacy of the dynamics of  $N$-Body Problem models with the shift \eqref{shift}. These results give the existence of symbolic dynamics for such models. Results proving the existence of hyperbolic sets with symbolic dynamics  are much more scarce and, as far as the authors know, all of them are in models which can be reduced to 2 dimensional maps. Namely, until the present paper no  hyperbolic sets with symbolic dynamics had been proven to exist for the (symplectically reduced) planar 3 Body Problem.

Note also that all the previous results dealing with symbolic dynamics in Celestial Mechanics must impose non-generic conditions on the masses. These results usually rely  on dynamics either related to infinity manifolds (as the ones mentioned above), dynamics close to collision or close to central configurations.

Concerning the Restricted 3 Body Problem, there are several papers proving the existence of hyperbolic invariant sets with symbolic dynamics. On the one hand there are the results mentioned above which construct oscillatory motions relying  on the invariant manifolds of infinity. There is a wide literature constructing symbolic dynamics (providing semiconjugacy with the shift) by means of orbits passing very close to binary collision \cite{BolotinMcK00,BolotinMcK06, Bolotin06}.

For models which can be reduced to a two dimensional Poincar\'e map (such as the Restricted Circular Planar 3 Body Problem), there are also results which rely on Computer Assisted Proofs to show the existence of transverse homoclinic points and therefore symbolic dynamics (see for instance \cite{Arioli02,WilczakZ03,Capinski12, GierzkiewiczZ19}).

% Citar CAP? Piotr, Arioli

On the full 3 Body Problem, as far as the authors know, most of the results proving symbolic dynamics rely on dynamics close to triple collision \cite{Moeckel89, Moeckel07}. These great results give semiconjugacy between the 3 Body Problem and the shift \eqref{shift} and apply to a ``large'' open set (but unfortunately not generic) of masses (see also \cite{RobinsonS83}). However, they do not lead either to the existence of hyperbolic  sets with symbolic dynamics.

The results on symbolic dynamics for the $N$-Body Problem with $N\geq 4$ are very scarce (see \cite{ShaneMireles19} for chaotic motions in  a Restricted 4 Body Problem). They are more abundant for the  $N$ center problem \cite{BolotinN03,Terracini12}).

\subsection{The Moser approach}\label{sec:Moser}
The proof of Theorems \ref{thm:Main1} and \ref{thm:Main2} rely on the ideas developed by J. Moser \cite{Moser01} to prove the existence of symbolic dynamics and oscillatory motions for the Sitnikov problem. Let us explain here these ideas. Later, in Section \ref{sec:outline}, we explain the challenges we have to face to apply these ideas to the 3 Body Problem.

The Sitnikov problem models two particles of equal mass (take $m_0=m_1=1/2$) performing elliptic orbits with eccentricity $\eps$ and a third body of mass 0 which is confined along the line perpendicular to the ellipses plane and passing through the center of mass of the two first bodies. This  is a Hamiltonian system of one and a half degrees of freedom defined by
\begin{equation}\label{Sitnikov}
H(p,q,t)=\frac{p^2}{2}-\frac{1}{\sqrt{q^2+R(t)}}
\end{equation}
where $R(t)$ is the distance between each of the primaries to the center of mass and satisfies
\[
 R(t)=\frac{1}{2}+\frac{\eps}{2}\cos t+\OO(\eps^2).
\]
%
% If one considers the time stroboscopic map, one obtains an induce two dimensional map $\PP$ which is area preserving.
For this model, J. Moser proposed the following steps to construct oscillatory motions.

\begin{itemize}
 \item[1] One can consider $P=(q,p,t)=(+\infty, 0,t)$, $t\in\TT$,  as a periodic orbit at infinity. This periodic orbit is degenerate (the linearitzation of the vector field at it is the zero matrix). Nevertheless, one can prove that  it has stable and unstable invariant manifolds \cite{McGehee73}. Note that these manifolds correspond to the parabolic-elliptic motions (see Theorem \ref{thm:chazy}).

 \item[2] One can prove that these invariant manifolds intersect transversally, leading to transverse homoclinic orbits ``to infinity''. Indeed, when $\eps=0$ the Hamiltonian \eqref{Sitnikov} has one degree of freedom and is therefore integrable. Then, the invariant manifolds coincide. For $0<\eps\ll 1$ one can apply Melnikov Theory \cite{Melnikov63} to prove their splitting.
\end{itemize}
If $P$ would be a hyperbolic periodic orbit, one could apply the classical Smale Theorem \cite{Smale65}
 to construct symbolic dynamics and inside it oscillatory motions. However, since $P$ is  degenerate  one needs a more delicate analysis than rather just applying the Smale Theorem.
 In particular, one needs the further steps.

\begin{itemize}
 \item[3] Analyze the local behavior of \eqref{Sitnikov} close to the infinity periodic orbits $P$. In hyperbolic points/periodic orbits this is encoded in the classical Lambda lemma (see for instance \cite{PalisMelo82}). In this step one needs to prove a suitable version of the Lambda lemma for degenerate (parabolic) periodic orbits.

 \item[4] From Steps 2 and 3 one can construct a 2--dimensional  return map close to the invariant manifolds of the periodic orbit $P$. The final step is to construct a sequence of ``well aligned strips''  for this return map plus cone conditions. This leads to the existence to a hyperbolic set whose dynamics is conjugated to that of the shift \eqref{shift} (a Smale horseshoe with an ``infinite number of legs'').
\end{itemize}

\section*{Acknowledgements}
This project has received funding from the European Research Council (ERC) under the European Union’s
Horizon 2020 research and innovation programme (grant agreement No 757802). T. M. S. has also been partly supported by the Spanish MINECO-
FEDER Grant PGC2018-098676-B-100 (AEI/FEDER/UE) and the Catalan grant 2017SGR1049. M. G.
and T. M. S. are supported by the Catalan Institution for Research and Advanced Studies via an ICREA
Academia Prize 2019. P.M. has been partially supported by the Spanish MINECO-
FEDER Grant PGC2018-100928-B-I00 and the Catalan grant 2017SGR1049.
This work is also supported by the Spanish State Research Agency, through the Severo Ochoa and Mar\'ia de Maeztu Program for Centers and Units of Excellence in R\&D (CEX2020-001084-M).

\section{Outline of the proof}\label{sec:outline}
To apply the Moser ideas to the 3 Body Problem is quite challenging, even more if one wants to give results for a wide choice of masses. Note here the main difficulties:
\begin{itemize}
\item After reducing by the first integrals, Sitnikov model, Alekseev model, Restricted Planar Circular 3BP are 3 dimensional flows whereas the planar 3 Body Problem is a 5 dimensional flow. This is by no means a minor change. In particular infinity goes from a periodic orbit to a two dimensional family of periodic orbits. This adds ``degenerate'' dimensions which makes considerably more difficult to build hyperbolic sets.
\item We do not assume any smallness condition on the masses. This means that one cannot apply classical Melnikov Theory to prove the transversality between the invariant manifolds. We consider a radically different perturbative regime:  we take the third body far away from the other two (usually such regime is referred to as \emph{hierarchical}). This adds multiple time scales to the problem which leads to a highly anisotropic transversality between the invariant manifolds: in some directions the transversality is exponentially small whereas in the others is power-like.
\end{itemize}
These issues make each of the steps detailed in Section \ref{sec:Moser} considerably difficult to be implemented in the 3 Body Problem. In the forthcoming sections we detail the main challenges and the novelties of our approach.

We believe that the ideas developed for each of these steps have interest beyond the results of the present paper and could be used in other physical models (certainly in Celestial Mechanics) to construct all sorts of unstable motions such as chaotic dynamics or Arnold diffusion.
\subsection{Outline of Step 0: A good choice of coordinates}\label{sec:Step0}
Before implementing the Moser approach in the Steps 1, 2, 3, and 4 below, one has to consider first a preliminary step: to choose a good system of coordinates. This is quite usual in Celestial Mechanics where typically cartesian coordinates do not capture ``well'' the dynamics of the model and one has to look for suitable coordinates.

In this case, keeping in mind that we want to construct hyperbolic sets, it is crucial that
\begin{itemize}
\item We symplectically reduce the planar 3 Body Problem by the classical first integrals.
\item We consider coordinates which capture the near integrability of the model in such a way that the first two bodies perform close to circular motion whereas the third one performs close to parabolic motion (see Figure \ref{fig:parabolacircle}).
\end{itemize}

\begin{figure}[h]
\begin{center}
\includegraphics[height=5cm]{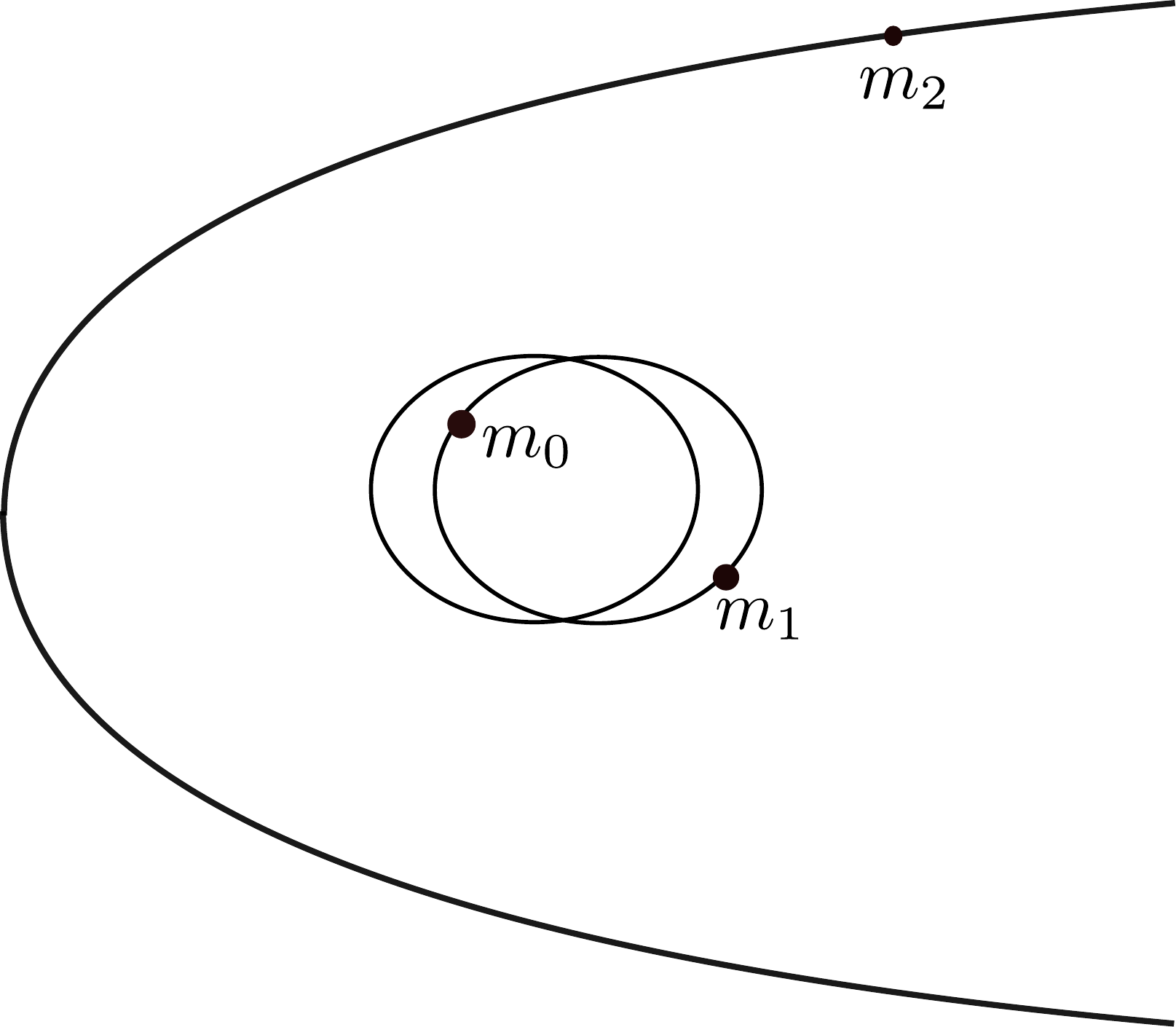}
\end{center}
\caption{We consider two bodies perform approximately circular motions while the third body is close to a  parabola, which is  arbitrarily large far from the other two bodies.}\label{fig:parabolacircle}
\end{figure}

To this end we first consider Jacobi coordinates $(Q_1,Q_2)$ as seen in Figure \ref{fig:Jacobi} and conjugate momenta $(P_1,P_2)$. This reduces the model to a 4 degrees of freedom Hamiltonian system.

Then, for the first pair $(Q_1,P_1)$, we consider the classical Poincar\'e variables $(\la,L,\eta,\xi)$ and for the second one $(Q_2,P_2)$ we consider polar coordinates $(r,y,\al,\Gamma)$ where $y$ is the radial momentum and $\Gamma$ is the angular momentum.  Finally, we ``eliminate'' the pair $(\al,\Gamma)$ by reducing the system by  rotations.

Therefore, we have finally a three degrees of freedom Hamiltonian system defined in the coordinates $(\la,L,\eta,\xi, r,y)$. In Section \ref{sec:Coordinates} we perform these changes of coordinates in full detail and give expressions for the resulting Hamiltonian.

We fix the total energy to a negative value. Following the Moser approach explained in Section \ref{sec:Moser}, we consider the ``parabolic infinity'' manifold, which is now defined by
\[
\EE_\infty = \{ r=\infty,\;  y=0\},
\]
and therefore can be parameterized by the coordinates $(\la,L,\eta,\xi)$ (we actually eliminate the variable $L$ by means of the energy conservation). More properly speaking, we consider McGehee coordinates
\[
 r=\frac{2}{x^2},
\]
so that ``infinity'' becomes $(x,y)=(0,0)$.

The dynamics at infinity is foliated by the periodic orbits $(\eta,\xi)=\text{constant}$ of the same period. The first step in our proof is to analyze the invariant manifolds of these periodic orbits and their intersections.

\subsection{Outline of Step 1: Transverse homoclinic orbits to infinity}
In suitably modified McGehee coordinates, the infinity manifold  becomes $\EE_\infty=\{(x,y)=(0,0),\, z\in U\subset\RR^2,\, t \in \TT\}$. The dynamics in a neighborhood of infinity  is given by
\[
\begin{aligned}
\dot x & = -  x^3 y(1+ \OO_2(x,y)), &  \dot { z} & =  \OO_6(x,y), \\
\dot y & = -  x^4(1+ \OO_2(x,y)), & \dot t&=1, \\
\end{aligned}
\]
for $x+y>0$, $z\in U\subset\RR^2$,  and $t \in \TT$. Note that infinity is foliated by the periodic orbits $z=\mathrm{constant}$. Thanks to \cite{BaldomaFM20a,BaldomaFM20b}, these periodic orbits have local stable and unstable invariant manifolds, which are analytic (away from infinity) and smooth with respect to parameters and to the base periodic orbit. The union of these invariant manifolds form the stable and unstable invariant manifolds of infinity, $W^s(\EE_\infty)$ and $W^u( \EE_\infty)$, which are four dimensional in a five dimensional energy level.

In order to control the globalization of these invariant manifolds, we consider a hierarchical regime in our system. We consider a configuration such that the first two bodies perform approximately circular motions whereas the third body performs approximately parabolic motion along a parabola which is taken arbitrarily large compared with the circle of the two first bodies (see Figure \ref{fig:parabolacircle}).

In other words, we choose the fixed value of the energy to be negative and of order 1, and take the total angular momentum $\Theta$ large. This choice has two consequences.  On the one hand, the motion of the third body takes place far from the first two. This  implies that the system becomes close to integrable, since, being far from the first bodies, the third one sees them almost as a single one and hence its motion is governed at a first order by a Kepler problem with zero energy --- since its motion is close to parabolic --- while the motion of the first bodies is given at first order by another Kepler problem with negative energy.
On the other hand, in this regime the system has two time scales, since the motion of the third body is $\OO(\Theta^{-3})$ slower than that of the first ones. This  implies that the coupling term between the two Kepler problems is a fast perturbation.

In the framework of averaging theory, the fact that the perturbation is fast implies that the difference between the stable and unstable invariant manifolds of infinity is typically exponentially small in $\Theta^{-3}$, which precludes the application of the standard Melnikov theory to compute the difference of these invariant manifolds. Indeed,  the perturbation can be averaged out up to any power of $\Theta^{-3}$, making the distance between the manifolds a beyond all orders quantity. We need to resort to more delicate techniques to obtain a formula of this distance which is exponentially small in $\Theta^{-3}$ (see Theorem~\ref{thm:MainSplitting}). From this formula we are able to deduce that the invariant manifolds of infinity do intersect transversally along two distinct intersections. These intersections are usually called homoclinic channels, which we denote by $\Gamma^1$ and $\Gamma^2$ (see Figure \ref{fig:canalhomoclinic}).

% \textcolor{red}{refs splitting}

The fact that the perturbed invariant manifolds are exponentially close is usually referred to as  \emph{exponentially small splitting of separatrices}. This phenomenon was discovered by Poincar\'e \cite{Poincare90, Poincare99}. It was not until the 80's, with the  pioneering work  by Lazutkin for the \emph{standard map} (see~\cite{Lazutkin84russian,Lazutkin84}) that analytic tools to analyze this phenomenon were developed. Nowadays, there is quite a number of works proving the existence of transverse homoclinic orbits following the seminal ideas by Lazutkin, see for instance \cite{DelshamsS92, Gelfreich94,DelshamsS97,DelshamsGJS97, Gelfreich97,DelshamsR98,Gelfreich99,Gelfreich00,Lombardi00,GelfreichS01,BaldomaF04, GuardiaOS10,GaivaoG11, BFGS12, MartinSS11,BCS13,BCS18a,BCS18b}. Note however that most of these results deal with rather low dimensional models (typically area preserving two dimensional maps or three dimensional flows), whereas the model considered in the present paper has higher dimension (see also \cite{breathers}, which deals with an exponentially small splitting problem in infinite dimensions). The high dimension makes the analysis in the present paper considerably more intrincate.
%
% There, the author studied the splitting of separatrices for the standard map, and gave the main tools to obtain an asymptotic formula for it.
%
Of special importance for the present paper are the works by Lochak, Marco and Sauzin (see \cite{LochakMS03,Sauzin01}) who analyze such phenomenon considering certain graph parameterizations of the invariant manifolds.  Other methods to deal with exponentially small splitting of separatrices are Treschev's \emph{continuous averaging}  (see \cite{Treshev97}) or ``direct'' series methods (see \cite{GGM99}).

As far as the authors know, the first paper to  prove an exponentially small splitting of separatrices in a Celestial Mechanics problem is \cite{GuardiaMS16} (see also \cite{GuardiaPSV20,BaldomaGG21a,BaldomaGG21a}).

The information in Theorem~\ref{thm:MainSplitting} will allow us to define and control two different return maps from a suitable section transverse to the unstable manifold of infinity. The combination of these return maps will give rise to chaotic dynamics. The section, four dimensional, will be close to $\EE_\infty$ and will include the time $t$ as a variable.
Each of these return maps will be, in turn, the composition of a \emph{local map}, that describes the passage close to infinity, and a \emph{global map}, following the dynamics along the global invariant manifolds. These are the subject of study of Steps~2 and 3 below.

\subsection{Outline of Step 2: The parabolic Lambda lemma and the Local map}\label{sec:Step2}
To analyze the local behavior close to infinity, we develop a \emph{parabolic Lambda lemma}. The classical Lambda lemma applies to (partially) hyperbolic invariant objects and is no longer true in the parabolic setting. The statement has to be adapted and the proof we provide has to face considerable subtleties.

The first step in proving a Lambda lemma is to perform a normal form procedure which straightens the invariant manifolds and the associated stable and unstable foliations. In the present paper, thus, we need to set up a \emph{parabolic normal form}. 
Indeed, for any fixed $N\geq 3$, we  construct local coordinates in a neighborhood of infinity in which the (symplectically reduced) 3BP  is written as
\begin{equation}\label{def:normalformintro}
\begin{aligned}
\dot q & = q ((q+p)^3+\OO_4(q,p)), & \dot{z} & = q^N p^N \OO_{4}(q,p), \\
\dot p & = -p ((q+p)^3+\OO_4(q,p)), & \dot t & = 1,
\end{aligned}
\end{equation}
where $p=q =0$ corresponds to the parabolic infinity, $\EE_\infty$. Note that in these coordinates the (local) unstable manifold of infinity is given by $p=0$ and the (local) stable manifold is $q=0$. The key point, however, is that the dynamics  on the ``center'' variables $z$ is extremely slow in a neighborhood of infinity.
This normal form is obtained in Theorem~\ref{prop:coordinatesatinfinity}.

The \emph{parabolic} Lambda Lemma is proven in these normal form variables. However, since the statement  fails at the infinity manifold, first we consider two 4-dimensional sections at a fixed but small distance of $\EE_\infty$: $\Sigma_1$ transverse to $W^{s}(\EE_\infty)$ and $\Sigma_2$ transverse to $W^{u}(\EE_\infty)$ (see Figure \ref{fig:LambdaLemma}).   We call \emph{local map} to the induced map by the flow between the sections $\Sigma_1$ and $\Sigma_2$.

\begin{figure}[h]
\begin{center}
\includegraphics[height=5cm]{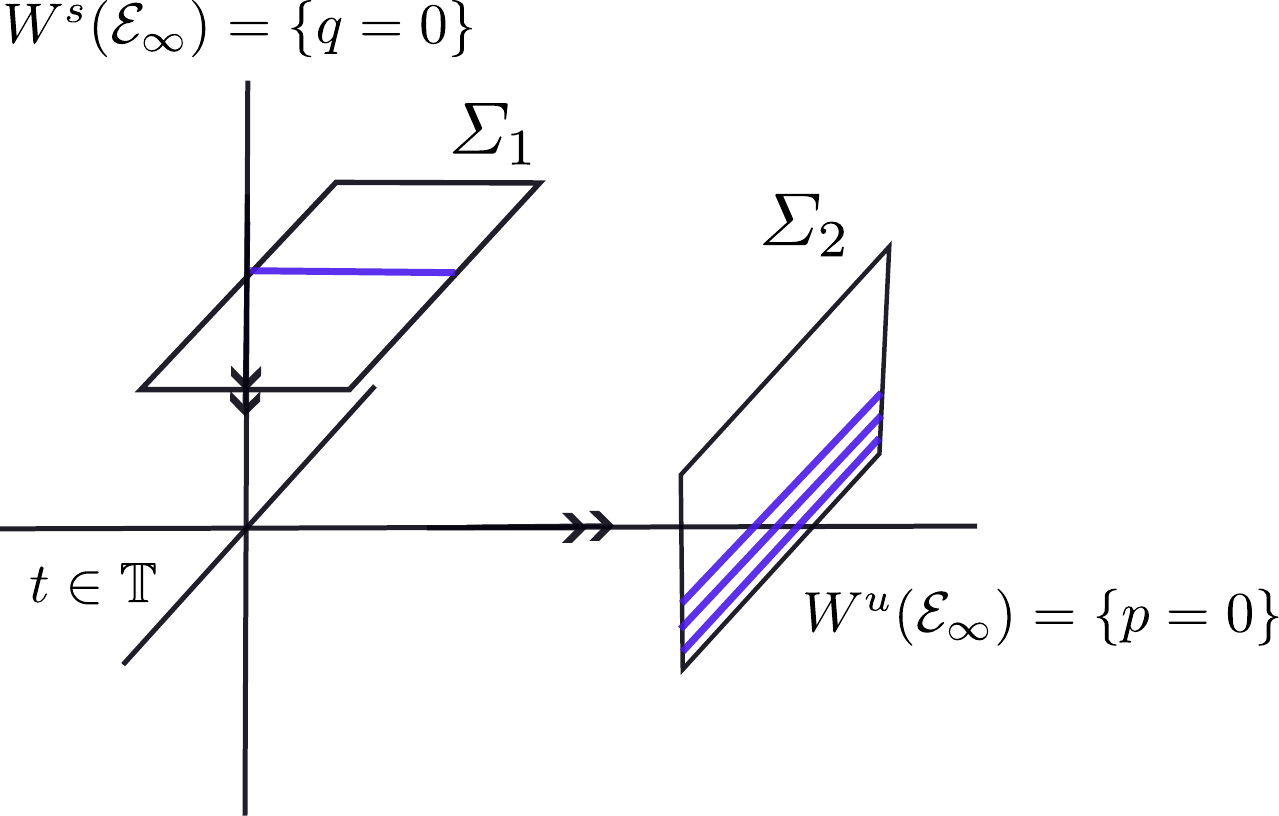}
\end{center}
\caption{Behavior of the local map from the section $\Sigma_1$ to the section $\Sigma_2$. We are omitting the dynamics of the $z$-components, which are very close to the identity.}\label{fig:LambdaLemma}
\end{figure}

The claim implies that the intersection of manifolds transverse to $W^s (\EE_\infty)$ within $\Sigma_1$ gets mapped by the local map to an immersed manifold which accumulates in a $\CCC^1$ way to  $W^{u}(\EE_\infty)\cap\Sigma_2$ (see Theorem~\ref{prop:lambdalemma}).  Furthermore, in the $z$-variables the local map is  close to the identity at the $\CCC^1$ level. As a consequence, the local map and its inverse have one and only one expanding direction.

When combining the local map with a global map
along a homoclinic chanel, this construction provides a map with a single expanding direction and a single contracting direction. This was enough for Moser since in the Sitnikov one deals with  2-dimensional sections. However,  in order to obtain a true hyperbolic object, we need   hyperbolicity in \emph{all four directions}. That is, we need to ``gain'' hyperbolicity in the $z$-directions,  whose dynamics  is given by the behavior of the global map. We will achieve hyperbolicity by combining two different global maps, related to the two different homoclinic channels obtained Step~1 (see Theorem~\ref{thm:MainSplitting}). The Lambda Lemma ensures that the dynamics in the $z$-variables induced by the travel along the homoclinic manifold is essentially preserved by the local passage.

%

%
%
%Moreover, as explained in Step 2, we want to use the hyperbolicity of the scattering maps to construct the (infinite symbols) Bernouilli shift. For this reason, we need to ensure that the local dynamics does not ``spoil'' the hyperbolicity created by the scattering maps. That is, we need to show that in certain directions the local dynamics is very close to the identity at the $\CCC^1$ level. All this information is contained in Theorem \ref{prop:lambdalemma}.

\subsection{Outline of Step 3: The Scattering map and the Global maps}

A crucial tool to understand the dynamics close to the invariant manifolds to infinity is the so-called \emph{Scattering map}. The Scattering map was  introduced by Delshams, de la Llave and Seara \cite{DelshamsLS00,DelshamsLS06a,DelshamsLS08} to analyze the heteroclinic connections to a normally hyperbolic invariant manifold.
%
% Let us define it in the classical setting of normally hyperbolic invariant manifolds. Note that in the present paper the infinity manifold is parabolic instead of hyperbolic.
However, as shown in~\cite{DelshamsKRS19} (see Section~\ref{sec:scatteringstatements}), the theory in~\cite{DelshamsLS08} can be  adapted to the parabolic setting of the present paper.

%Let $M$ be a manifold $\CCC^r$ and a $X$ a $\CCC^r$ vector field on it. Then, we say that  a compact submanifold  $N\subset M$ is normally hyperbolic if it is invariant under the flow of $X$ and moreover, there exist two fiber bundles $E^s, E^u$ such that for any point $p\in N$,
%\[
% T_xM=T_xN\oplus E_x^u\oplus E_x^s
%\]
%and, if we denote by $\phi_t$ the flow of $X$,
%\[
%\begin{aligned}
%|D\phi^t(x)v |&\leq Ce^{-\la t}|v|\qquad &\text{ for }t\geq 0\quad \text{and}\quad v\in E_x^s\\
%|D\phi^t(x)v |&\leq Ce^{-\la |t|}|v|\qquad &\text{ for }t\leq 0\quad \text{and}\quad v\in E_x^u\\
%|D\phi^t(x)v |&\leq Ce^{\mu |t|}|v|\qquad &\text{ for }t\in\RR\quad \text{and}\quad v\in T_xN.
% \end{aligned}
%\]
%for some $0<\mu<\la$ and $C>0$.
%
%Thanks to Fenichel Theory \cite{Fenichel74,Fenichel77} it is well known that $N$ must be $\CCC^\ell$ with
%\[
% \ell=\min\{r,\mu^{-1}\la\}
%\]
%and that it has  stable and unstable invariant manifolds $W_N^u$, $W_N^s$ which are $\CCC^{\ell-1}$.
% One can also define, for any $x\in N$, the stable and unstable leaves which we denote by $W_x^u$ and $W_x^s$ respectively. Then, we can define the wave maps

From Theorem~\ref{thm:MainSplitting} we will obtain that the transversal intersection of the invariant manifolds $W^s(\EE_\infty)$ and $W^u (\EE_\infty)$
contains at least two homoclinic channels, $\Gamma^j \subset W^s (\EE_\infty )\cap W^u( \EE_\infty)$ $j=1,2$ (see Figure \ref{fig:canalhomoclinic}).
%, that is,
%Since the invariant manifolds $W^s \EE_\infty$ and $W^u \EE_\infty$ intersect
%%Assume that
%%%$\ell\geq 2$ and that
%%the invariant manifolds $W_N^u$, $W_N^s$ intersect
%transversally along a homoclinic channel $W^s \EE_\infty \cap W^u \EE_\infty$. That is, for all $x\in\Gamma$,
%\[
% T_x M=T_xW_N^u+T_xW_N^s\qquad\text{and}\qquad  T_x T_x\Gamma=T_xW_N^u\cap T_xW_N^s.
%\]
Then, associated to each homoclinic channel, one can define the scattering map $S^j$ as follows. We say that $x_+=S^j (x_-)$ if there exists a heteroclinic point in $\Gamma^j$ whose trajectory  is asymptotic to the trajectory of $x_+$ in the future and asymptotic to the trajectory of $x_-$ in the past. Such points $x_\pm$ are well defined even if $\EE_\infty$ is not a normally hyperbolic manifold. Once $\Gamma^j$ is fixed, thanks to the transversality between the invariant manifolds, the associated scattering map is locally unique and inherits the regularity of the invariant manifolds. 
%
%
%As explained in Step 1, the infinity manifold is not hyperbolic but parabolic. Nevertheless, since it has invariant manifolds which are regular and intersect transversally along homoclinic channels, one can define the scattering maps in the same way.

\begin{figure}[h]
\begin{center}
\includegraphics[height=4cm]{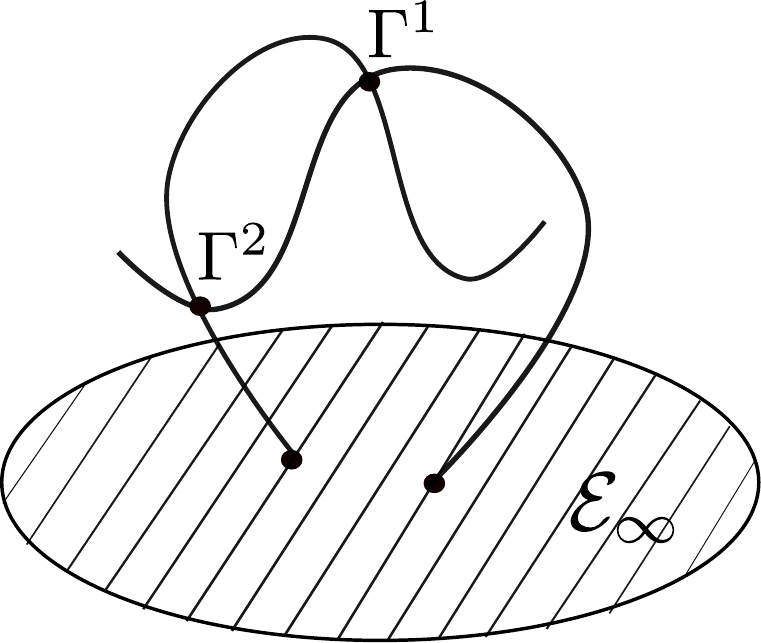}
\end{center}
\caption{Transverse intersection of the invariant manifolds of $\EE_\infty$ along two homoclinic channels $\Gamma_1$ and $\Gamma_2$.}\label{fig:canalhomoclinic}
\end{figure}

The construction of scattering maps in the parabolic setting was already done in \cite{DelshamsKRS19}. Note, however, that in the present paper the transversality between the invariant manifolds is  highly anisotropic (exponentially small in some directions and polynomially small in the others). This complicates considerably the construction of the scattering maps, which is done in Section \ref{sec:scatteringstatements} (see Section \ref{sec:scatteringglobal} for the proofs). Moreover, we show that the domains of the two scattering maps $S^1$ and $S^2$, associated to the two different channels, overlap. 

The scattering maps are crucial to analyze the global maps which have been introduced in Step 2 and are defined from the section $\Sigma_2$ to the section $\Sigma_1$. Indeed, we show that the dynamics of the $z$-variables in the two global maps are given by the corresponding variables of the associated scattering maps. The additional hyperbolicity in the $z$-directions we need will come from  a suitable high iterate of a combination of the two scattering maps $\wh S=(S^1)^M\circ S^2$ (for a suitable large $M$). To prove the existence of this hyperbolicity, we construct an isolating block for these combination.

\begin{figure}[h]
\begin{center}
\includegraphics[height=3cm]{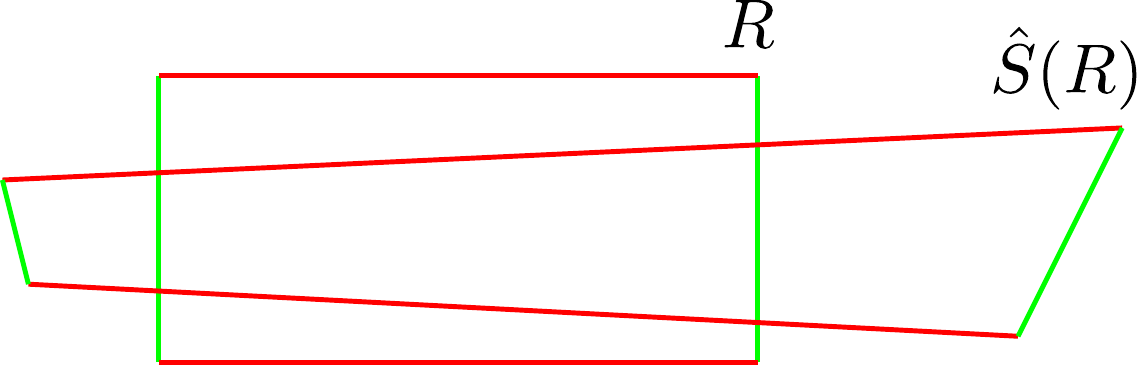}
\end{center}
\caption{The isolating block $R$ of the iterate of the scattering map $\wh S$}\label{fig:isolatingblockfigure}
\end{figure}

By isolating block we mean the following. There exists a small rectangle in the $z$-variables, in the common domain of the scattering maps, whose image under $\wh S$ is another rectangle ``correctly aligned'' with the original one, as seen in Figure~\ref{fig:isolatingblockfigure}, that is, the horizontal and vertical boundaries are mapped into horizontal and vertical boundaries,
respectively, it is stretched along the horizontal direction, shrunk in the vertical direction, and the left and right vertical boundaries are mapped to the left and right of the vertical boundaries, respectively, while the top and bottom horizontal boundaries are mapped below and above, respectively,
of the top and bottom horizontal boundaries.

To construct such isolating block we proceed as follows. Each of the scattering maps are nearly integrable twist maps around an elliptic fixed point (see Figure \ref{fig:2scattering}). These two fixed points are different but exponentially close to each other with respect $\Theta^{-3}$. Combining the two rotations around the distinct elliptic points, we use a transversality-torsion argument (in the spirit of \cite{Cresson03}) to build the isolating block.

\begin{figure}[h]
\begin{center}
\includegraphics[height=4cm]{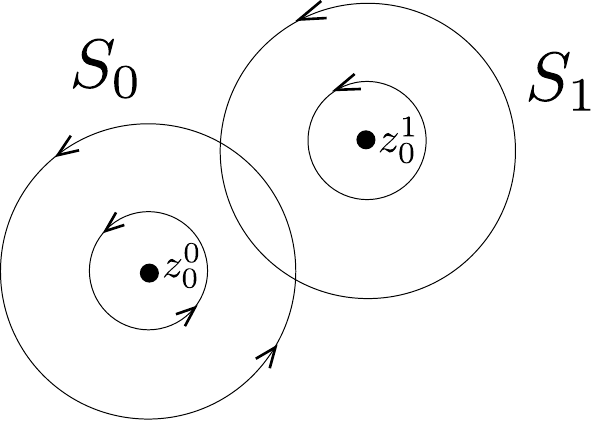}
\end{center}
\caption{The dynamics of the two scattering maps $S^1$ and $S^2$}\label{fig:2scattering}
\end{figure}

\subsection{Outline of Step 4: The isolating block for the return map} 
The last step of the proof combines Steps 2 and 3. We consider the return map $\Psi$ given by $M$ iterates of the return map along the first homoclinic chanel and $1$ iterate along the second homoclinic chanel. Each of the maps has two hyperbolic directions given by the passage close to the infinity manifolds as we have seen in Section \ref{sec:Step2}. The projection onto the $z$-variables  of each of the maps is close to the corresponding projection of the scattering maps. The same happens to the projection onto the $z$-variables of the whole composition $\Psi$. Hence, the map $\Psi$, possesses two ``stable'' and two ``unstable'' directions in some small domain. Even if the two stretching rates in the two expanding directions are drastically different, we are able to  check that the restriction of $\Psi$ to this small domain satisfies the standard hypotheses that ensure that $\Psi$ is conjugated to the Bernouilli shift with infinite symbols. In particular, we prove cone conditions for the return map $\Psi$.

In conclusion, we obtain a product-like structure as seen in Figure \ref{fig:isolatingtotal}. In left part of the figure, one obtains the usual structure of infinite horizontal and vertical strips as obtained by Moser in \cite{Moser01} whereas the right part of the figure corresponds to the isolating block construction in the $z$ directions. This structure leads to the existence of a hyperbolic set whose dynamics is conjugated to that of the usual shift \eqref{shift}. Since the strips accumulate to the invariant manifolds of infinity, one can check that there exists oscillatory orbits inside the hyperbolic invariant set.

\begin{figure}[h]
\begin{center}
\includegraphics[height=4cm]{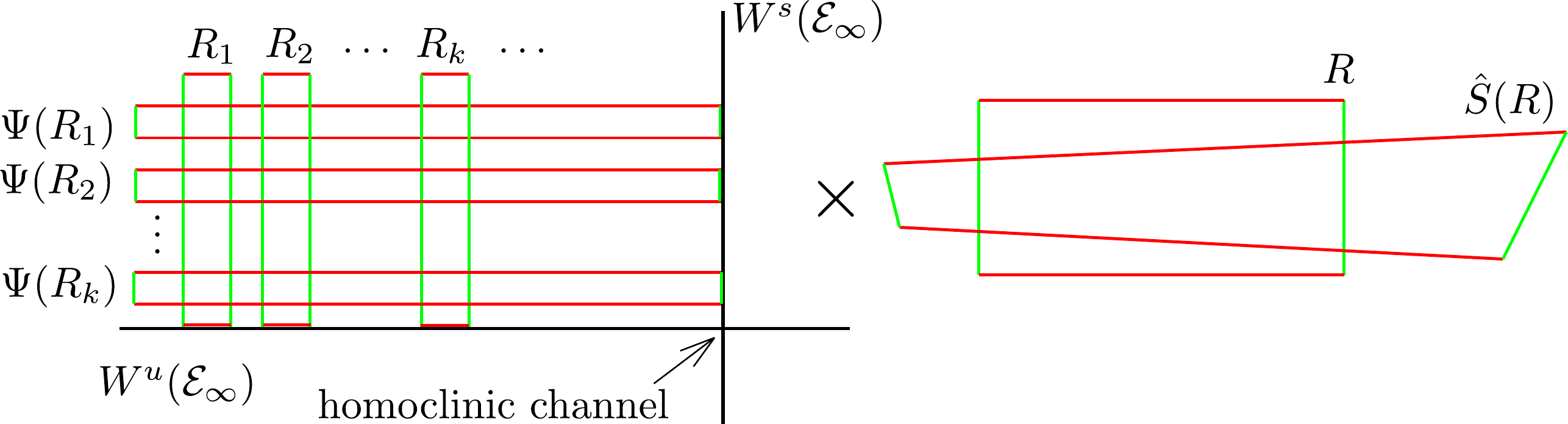}
\end{center}
\caption{The horizontal and vertical four dimensional strips which lead to the conjugation with the Bernouilli shift of infinite symbols.}\label{fig:isolatingtotal}
\end{figure}

\subsection{Summary of the outline and structure of the paper}

To summarize, we present here in a diagram the main steps in the proof of Theorems \ref{thm:Main1} and \ref{thm:Main2}.

\begin{center}
\begin{tikzpicture}[node distance=2.5cm]
\node(start)[rect]{\begin{tabular}{c}Transversality of the\\invariant manifolds\\ of infinity (Theorem \ref{thm:MainSplitting})\end{tabular}};

\node(normalform)[rect, right of=start, xshift=4cm]{\begin{tabular}{c} Parabolic\\normal form\\(Theorem \ref{prop:coordinatesatinfinity})\end{tabular}};

\node(scat)[rect, below of=start]{\begin{tabular}{c} Two Scattering maps\\(Theorem \ref{prop:scatteringmap})\end{tabular}};

\node(lambda)[rect, right of=scat, xshift=4cm]{\begin{tabular}{c} Parabolic \\Lambda lemma\\ (Theorem \ref{prop:lambdalemma})\end{tabular}};

\node(global)[rect, below of=scat]{\begin{tabular}{c} Two global maps
% \\ (Theorem ??)
\end{tabular}};

\node(local)[rect, below of=lambda]{\begin{tabular}{c}Local map
% \\ (Theorem ??)
\end{tabular}};

\node(bloc)[rect, below of=global, xshift=3cm]{\begin{tabular}{c}
                              Isolating block for \\
                              a suitable iterate of the return map\\ (Theorem \ref{thm:lanostraferradura})\end{tabular}};

\node(thm1)[rect, below of=bloc, xshift=-3cm]{\begin{tabular}{c} Oscillatory motions \\ (Theorem \ref{thm:Main1})\end{tabular}};
\node(thm2)[rect, below of=bloc, xshift=3cm]{\begin{tabular}{c} Symbolic dynamics\\ (Theorem \ref{thm:Main2})\end{tabular}};

% \node(nbppeople)[rect, below of=nbplist]{\begin{tabular}{c}Finite large
% dimensional\\dynamical systems\end{tabular}};
%
%
% \node(pdelist)[rect, below of=pde]{\begin{tabular}{l}
%                               $\bullet$ Transfer of energy\\
%                               $\bullet$ Arnold diffusion\\
%                               $\bullet$ Breathers\end{tabular}};
% \node(pdepeople)[rect, below of=pdelist]{\begin{tabular}{c}Infinite
% dimensional\\dynamical systems\end{tabular} };
\draw [arrow] (start) -- (scat);
\draw [arrow] (scat) -- (global);
\draw [arrow] (normalform) -- (lambda);
\draw [arrow] (lambda) -- (local);
\draw [arrow] (global) -- (bloc);
\draw [arrow] (local) -- (bloc);
\draw [arrow] (bloc) -- (thm1);
\draw [arrow] (bloc) -- (thm2);

\end{tikzpicture}
\end{center}

\section{A good system of coordinates for the 3 Body Problem}\label{sec:Coordinates}
To analyze the planar 3 Body Problem \eqref{eq:Newton}, the first step  is to choose a good system of coordinates which, on the one hand,  reduces symplectically the classical first integrals of the model and, on the other hand, makes apparent the nearly integrable setting explained in Section \ref{sec:outline}. That is, we consider a good system of coordinates so that we obtain at first order that the two first bodies, $q_0,q_1\in\RR^2$, move on ellipses, whereas the  third body, $q_2\in \RR^2$, moves on a coplanar parabola which is far away from the ellipses (that is, arbitrarily large).
% We call $m_i$ to the mass of the body $q_i$. \\
% We normalize and assume that $m_0+m_1+m_2=1$.

% The equations for these bodies are
% \begin{equation}\label{eq:Newton}
%  \begin{split}
% \ddot q_0&=m_1\frac{q_1-q_0}{\|q_1-q_0\|^3} +m_2\frac{q_2-q_0}{\|q_2-q_0\|^3}\\
% \ddot q_1&=m_0\frac{q_0-q_1}{\|q_0-q_1\|^3} +m_2\frac{q_2-q_1}{\|q_2-q_1\|^3}\\
% \ddot q_2&=m_0\frac{q_0-q_2}{\|q_0-q_2\|^3} +m_1\frac{q_1-q_2}{\|q_1-q_2\|^3}
%  \end{split}
% \end{equation}

\subsection{Symplectic reduction of the 3 Body Problem}\label{sec:symplecticreduction}
Introducing the momenta  $p_i=m_i \dot q_i$, $i=1,2,3$, equation \eqref{eq:Newton} defines a six degrees of freedom Hamiltonian system. We start  by reducing it by  translations with  the classical Jacobi coordinates to obtain  a four degrees of freedom Hamiltonian system.
% That is,  taking into account the conservation of the center of mass.
% We can assume that
% \[
%  m_0q_0+ m_1q_1+m_2q_2=0.
% \]
That is, we define the symplectic transformation
\[
 \begin{aligned}
 Q_0 &= q_0 &P_0 &= p_0+p_1+p_2\\
 Q_1&=q_1-q_0& P_1 &= p_1+\frac{m_1}{m_0+m_1}p_2\\
 Q_2&=q_2-\frac{m_0q_0+m_1q_1}{m_0+m_1} &P_2 &= p_2.
 \end{aligned}
\]

\begin{figure}[h]
\begin{center}
\includegraphics[height=5cm]{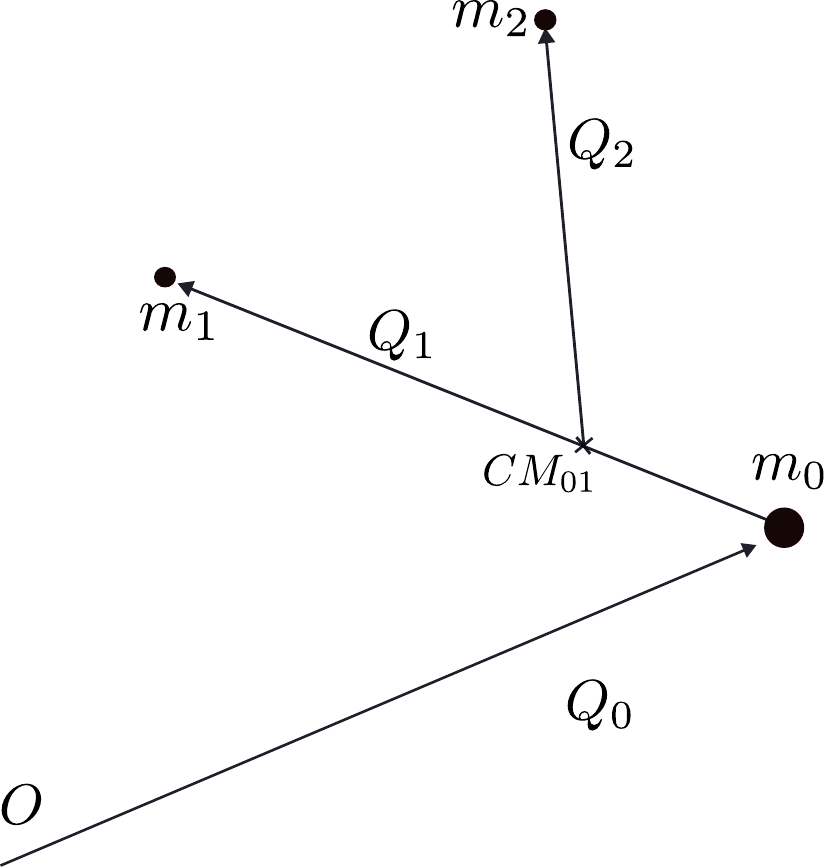}
\end{center}
\caption{The Jacobi coordinates. $CM_{01}$ stands for the center of mass of the bodies $q_0$ and $q_1$.}\label{fig:Jacobi}
\end{figure}
These coordinates allow to reduce by the total linear momentum since now $P_0$ is a first integral. Assuming $P_0=0$, the Hamiltonian of the 3 Body Problem becomes
\[
 \wt H(Q_1,P_1, Q_2,P_2)=\sum_{j=1}^2\frac{|P_j|^2}{2\mu_j}-\wt U(Q_1,Q_2)
\]
where
\[
\frac{1}{\mu_1}=\frac{1}{m_0}+\frac{1}{m_1},\qquad \frac{1}{\mu_2}=\frac{1}{m_0+m_1}+\frac{1}{m_2}
\]
and
\[
 \wt U(Q_1,Q_2)=\frac{m_0m_1}{\| Q_1\|}+\frac{m_0m_2}{\|Q_2+\sigma_0 Q_1\|}+\frac{m_1m_2}{\|Q_2-\sigma_1 Q_1\|}
\]
with
\begin{equation}\label{eq:sigma01}
 \sigma_0= \frac{m_1}{m_0+m_1},\qquad  \sigma_1= \frac{m_0}{m_0+m_1}=\frac{1}{1+\sigma_0}.
\end{equation}
Next step is to express the Hamiltonian $\wt H$ in polar coordinates.
Identifying $\RR^2$ with $\CC$, we consider the symplectic transformation
\[
Q_1=\rr e^{i\theta},\qquad Q_2=re^{i\al},\qquad  P_1=z e^{i\theta}+i\frac{\Gamma}{\rr}e^{i\theta},\qquad   P_2=y e^{i\al}+i\frac{G}{r}e^{i\al}
\]
% We call
% $(z,\Ga)$
%  and $(y,G)$ their corresponding symplectic conjugates, that is
% \[
% p_1=z e^{i\theta}+i\frac{\Gamma}{\rr}e^{i\theta},\qquad   p_2=y e^{i\al}+i\frac{G}{r}e^{i\al}
% \]
which leads to  the Hamiltonian
\[
H^*(\rr,z,\tet,\Ga,r,y,\al,G)=
\frac{1}{\mu_1}\left(\frac{z^2}{2}+\frac{\Ga^2}{2\rr^2}\right)+\frac{1}{\mu_2}\left(\frac{y^2}{2}+\frac{G^2}{2r^2}\right)-
\wt U\left(\rr e^{i\theta}, re^{i\al}\right).
\]
where
\[
\begin{split}
\wt U\left(\rr e^{i\theta}, re^{i\al}\right)&= \frac{m_0m_1}{ \rr}+\frac{m_0m_2}{|r e^{i\al}+\sigma_0 \rr e^{i\theta}|}+\frac{m_1m_2}{|r e^{i\al}-\sigma_1 \rr e^{i\theta}|}\\
&= \frac{m_0m_1}{\rr}+\frac{1}{r}\left(\frac{m_0m_2}{|1+\sigma_0 \frac{\rr}{r} e^{i(\theta-\al)}|}+\frac{m_1m_2}{|1-\sigma_1 \frac{\rr}{r} e^{i(\theta-\alpha)}|}\right).
\end{split}
\]
We study the regime where  the third body is  far away  from the other two and its angular momentum is very large. That is,
\[
 r\gg \rr\,\,\,\text{ and }\,\,\,G\gg\Ga.
\]
Then, we have
\[
H^*(\rr,z,\tet,\Ga,r,y,\al,G)=
\frac{1}{\mu_1}\left(\frac{z^2}{2}+\frac{\Ga^2}{2\rr^2}\right)+\frac{1}{\mu_2}\left(\frac{y^2}{2}+\frac{G^2}{2r^2}\right)-
\frac{m_0m_1}{\rr}-\frac{m_2(m_0+m_1)}{r}+\OO\left(\frac{\rr^2}{r^3}\right).
\]
Thus, at first order, we have two uncoupled Hamiltonians, one for $(\rr,z,\tet,\Ga)$ and the other for $(r, y,\al,G)$,
\begin{equation}\label{def:UncoupledHam}
\begin{split}
H_\El(\rr,z,\tet,\Ga)&=\frac{1}{\mu_1}\left(\frac{z^2}{2}+\frac{\Ga^2}{2\rr^2}\right)-m_0m_1\frac{1}{\rr}\\
H_\Par(r,y,\al,G)&=\frac{1}{\mu_2}\left(\frac{y^2}{2}+\frac{G^2}{2r^2}\right)-m_2(m_0+m_1)\frac{1}{r}.
\end{split}
\end{equation}
To have the first order Hamiltonians $H_\El$ and $H_\Par$ independent of the masses, we make the following scaling to the variables, which is symplectic,
% We choose the rescaling such that the change of coordinates is symplectic. We define the parameters
% \[
%  \mu=m_1m_2,\,\, M=m_1+m_2,\,\,\mu'=m_3\,\,\text{ and }\,\,M'=(m_1+m_2)^3
% \]
% and the rescaling
\[
 \rr=\frac{1}{\mu_1 m_0m_1}\wt\rr,\,\,\,\,z=\mu_1 m_0m_1 \wt z,\,\,\,\,r=\frac{1}{\mu_2 m_2(m_0+m_1)}\wt r\,\,\,\,\text{ and }\,\,\,\,y=\mu_2 m_2(m_0+m_1) \wt y.
\]
We also rescale time as
\[
 t=\frac{\tau}{\mu_2 m_2^2(m_0+m_1)^2}.
\]
Then, we obtain the Hamiltonian
\[
 \tilde{H}^*(\wt\rr,\wt z,\tet,\Ga,\wt r,\wt y,\al,G)=\nu\left(\frac{\wt
z^2}{2}+\frac{\Ga^2}{2\wt \rr^2}-\frac{1}{\wt
\rr}\right)+\left(\frac{\wt y^2}{2}+\frac{G^2}{2\wt r^2}-\frac{1}{\wt
r}\right)-W (\wt\rr,\wt r,\tet- \al).
\]
with
\begin{equation}\label{def:PotentialUtilde}
W (\wt\rr,\wt r,\tet- \al)=\frac{\wt\nu}{\wt r}\left(\frac{m_0}{|
1+\wt\sigma_0\frac{\wt \rr}{\wt r} e^{i(\tet-\al)}|}+\frac{m_1}{|1-\wt\sigma_1\frac{\wt
\rr}{\wt r} e^{i(\tet-\al)}|}-(m_0+m_1)\right),
\end{equation}
and
\begin{equation}\label{eq:sigmatilde01}
 \nu=\frac{\mu_1 m_0m_1}{\mu_2 m_2(m_0+m_1)},
\qquad \wt\nu=(m_0+m_1)m_2^2\qquad\text{and}\qquad\wt\sigma_i=\frac{\mu_2m_2(m_0+m_1)}{\mu_1 m_0m_1}\sigma_i.
\end{equation}
%
% \begin{equation}\label{eq:sigmatilde01}
% \wt\sigma_i=\frac{\mu_2m_2(m_0+m_1)}{\mu_1 m_0m_1}\sigma_i.
Note that the potential $W$ only depends on the angles through $\tet-\al$ due to the rotational symmetry of the system.
% Note also that
% \[
% W (\wt\rr,\wt r,\tet- \al) =\OO\left(\frac{\wt\rr^2}{\wt r^3}\right).
% \]

Now, we change the polar variables $(\wt\rr,\wt z,\tet,\Ga)$ to the classical Delaunay coordinates (see, for instance, \cite{Szebehely})
\begin{equation}\label{def:Delaunay}
(\wt\rr,\wt z,\tet,\Ga)\mapsto (\ell,L,g,\Ga).
\end{equation}
This change is symplectic. As usual, by the  Delaunay actions, which are the square of the semimajor axis $L$ and the angular momentum $\Ga$,
one can compute the eccentricity
\begin{equation}
\label{def:DeleLGamma}
 \e(L,\Ga)=\sqrt{1-\frac{\Ga^2}{L^2}}.
\end{equation}
The position variables $(\wt\rho,\theta)$ can be expressed in terms of Delaunay as
\begin{equation}
\label{def:Delvg} \wt\rr=\wt\rr(\ell,L,\Ga)=L^2(1-\e\cos E)
\qquad \text{and}\qquad \tet=\tet(\ell,L,g,\Ga)=v(\ell,L,\Ga)+g
\end{equation}
where the angles true anomaly $v$ and eccentric anomaly $E$ are defined in terms of the mean anomaly $\ell$ and eccentricity $e_c$  as
% Then, the mean anomaly $\ell$ is defined by
\begin{equation}
\label{def:Dellu}
 \ell=E-\e\sin E \qquad \text{and}\qquad  \tan \frac{v}{2}=\sqrt{\frac{1+\e}{1-\e}}\tan \frac{E}{2}.
\end{equation}
% Inverting this equation, one can obtain the eccentric anomaly $E=E(\ell,L,\Ga)$
% and the true anomaly $v=v(\ell,L,\Ga)$ through
% \begin{equation}
% \label{def:Delvu}
% \end{equation}
% Finally we obtain the angle $\tet$ in terms of $(\ell,L,g,\Ga)$, where $g$ is the argument of the perihelion
% %where $v$ is the true anomaly and ,
% %Then, from the true to the eccentric anomaly we use
% and  $\wt\rr$, which  is defined by
% \begin{equation}
% \label{def:DelrhoLeu}
% \end{equation}
One could also write an expression for $\tilde z$, but it is not necessary to obtain the new Hamiltonian
\[
\HH(\ell, L,g,\Ga,\wt r,\wt
y,\al,G)=-\frac{\nu}{2L^2}+\left(\frac{\wt y^2}{2}+\frac{G^2}{2\wt
r^2}-\frac{1}{\wt r}\right)+W (\wt\rr(\ell,L,\Ga),\wt
r,v(\ell,L,\Ga)+g- \al)
\]
where $W$ is the potential introduced in \eqref{def:PotentialUtilde}.
Now, by~\eqref{def:Delvg}, the distance condition corresponds to
$\wt r\gg L^2$
and the \emph{first order} uncoupled Hamiltonians are
\[
%  \begin{split}
\HH_\El(\ell, L,g,\Ga)=-\frac{\nu}{2L^2}\qquad\text{ and }\qquad
\HH_\Par(\wt r,\wt y,\al,G)=\frac{\wt
y^2}{2}+\frac{G^2}{2\wt r^2}-\frac{1}{\wt r}
%  \end{split}
\]
%For the second Hamiltonian we consider the separatrix formed by parabolic orbits.
%That's the one we used in the circular problem
%\[
% (r,y,\al,G)=(\wh r_\h(u;G_0),y_\h(u;G_0),\al_0+\al_\h(u;G_0),G_0)
%\]
%For the first one, we consider elliptic motion. We have a two parameter family of ellipses, parameterized for instance by the energy $H_\El$ and the angular momentum $\Ga$. We will need to restrict ourselves to some compact set of ellipses. This family of orbits are
%\[
%(\ell, L,g,\Ga)=\left(\ell_0+\frac{\nu}{L_0^3}u, L_0,g_0,\Ga_0\right).
%\]
whereas $W=\mathcal{O}(\frac{\tilde \rho ^2}{\tilde r ^3})= \mathcal{O}(\frac{L ^4}{\tilde r ^3})$.

Now, we make the last reduction which uses the rotational symmetry.
We define the new angle $\phi = g-\alpha$.
To have a symplectic change of coordinates, we consider the transformation
\begin{equation}\label{def:rotationreduction}
(\ell, L,\phi,\Ga,\wt r,\wt y,\al,\Tet)=(\ell, L,g-\al,\Ga,\wt r,\wt y,\al,G+\Ga).
\end{equation}
Then, we obtain the following Hamiltonian, which is independent of $\alpha$,
\begin{equation}\label{def:Ham3dof}
\begin{split}
\wt \HH(\ell, L,\phi,\Ga,\wt r,\wt y;\Tet)=&\,\HH(\ell, L,\phi+\alpha,\Ga,\wt r,\wt y,\alpha,\Tet-\Ga)\\
=&\,-\frac{\nu}{2L^2}+\left(\frac{\wt y^2}{2}+\frac{(\Theta-\Ga)^2}{2\wt r^2}-\frac{1}{\wt r}\right)+W (\wt\rr(\ell,L,\Ga),\wt r,v(\ell,L,\Ga)+\phi).
\end{split}
\end{equation}
Since this Hamiltonian is independent of $\al$,  the total angular momentum $\Theta$ is a conserved quantity which can be taken as a parameter of the system. We assume $\Theta\gg 1$.

\subsection{The Poincar\'{e} variables}

We  consider nearly circular motions for the first two bodies.
Since Delaunay variables are singular at the circular motions $\Ga\simeq L$ (equivalently by \eqref{def:DeleLGamma}, $\e\simeq 0$), we introduce Poincar\'e variables
\[
(\ell, L,\phi,\Ga, \wt r,\wt y) \mapsto (\la, L,\eta,\xi, \wt r,\wt y),
\]
defined by
\begin{equation}\label{def:PoincareVariablesnr}
\begin{aligned}
\la=&\,\ell+\phi,& L=&\,L,\\
\eta=&\,\sqrt{L-\Ga}e^{i\phi}, & \xi=&\,\sqrt{L-\Ga}e^{-i\phi},
\end{aligned}
\end{equation}
which are symplectic in the sense that the form $d\ell\wedge dL+d\phi\wedge d\Gamma$ is mapped into $d\la\wedge dL+id\eta\wedge d\xi$. These coordinates  make  the Hamiltonian $\wt \HH$ well defined at  circular motions (i.e.  at $\eta = \xi = 0$). The transformed Hamiltonian can be written as
\begin{equation}\label{def:Ham:Poincarenr}
\wt\KK(\la,L,\eta,\xi,\wt r,\wt y;\Tet)=-\frac{\nu}{2 L^2}+\frac{\wt
y^2}{2}+\frac{(\Theta-L+\eta\xi)^2}{2\wt r^2}-\frac{1}{\wt
r}+\wt  W (\la, L,\eta,\xi, \wt r)
\end{equation}
where, using that
\begin{equation}\label{epoincare}
e^{i\phi}=\frac{\eta}{\sqrt{\eta\xi}},
\end{equation}
the potential becomes
\begin{equation}\label{def:potentialwtilde}
\begin{split}
\wt W( \la, L,\eta,\xi,\wt r)&= W (\wt\rr(\ell,L,\Ga),\wt r,v(\ell,L,\Ga)+\phi)\\
&=\frac{\wt\nu}{\wt r}\left(\frac{m_0}{|
1+\wt\sigma_0\frac{\wt \rr}{\wt r} \frac{\eta}{\sqrt{\eta\xi}} e^{iv}|}
+\frac{m_1}{|1-\wt\sigma_1\frac{\wt \rr}{\wt r}\frac{\eta}{\sqrt{\eta\xi}} e^{iv}|}-(m_0+m_1)\right),
\end{split}
\end{equation}
where the functions $v$ and $\tilde \rr$ are evaluated at
\begin{equation}\label{def:rvInPoinc}
(\ell,L,\Gamma)=\left(\la+\frac{i}{2}\log\frac{\eta}{\xi},L,L-\eta\xi\right).
\end{equation}
In particular, by~\eqref{def:DeleLGamma} and~\eqref{def:PoincareVariablesnr}, the eccentricity is given by
\begin{equation*}
\label{def:ecc:xieta}
 \e=\frac{1}{L}\sqrt{\eta\xi}\sqrt{2L-\eta\xi}.
\end{equation*}
The associated equations are
\begin{equation}\label{def:Poincarenr}
  \begin{aligned}
  \la'&= \frac{\nu}{L^3}-\frac{\Theta-L+\eta\xi}{\wt r^2}  +\pa_L \wt W ,
  &  L' &= -\pa_\la \wt W\\
\eta' &=-i\frac{\Theta- L+\eta\xi}{ \wt r^2}\eta-i\pa_\xi \wt W,
&\xi'&=i\frac{\Theta- L+\eta\xi}{ \wt  r^2}\xi +i\pa_\eta \wt W\\
\wt  r'&=\wt  y, & \wt  y' &=  \frac{(\Theta- L+\eta\xi)^2}{\wt  r^3}-
 \frac{1}{\wt  r^2}-\pa_r \wt W.
\end{aligned}
\end{equation}

\begin{remark}\label{rmk:analytic}
Notice that the Hamiltonian \eqref{def:Ham3dof} was not analytic at a neighborhood of circular motions for the two first bodies, that is $L=\Ga$. Nevertheless, it is well known that once this  Hamiltonian is expressed in Poincar\'e variables, that is Hamiltonian \eqref{def:Ham:Poincarenr}, the system becomes \emph{analytic} for $(\eta,\xi)$ in a neighborhood of $(0,0)$. See, for instance, \cite{Fejoz13}.
\end{remark}

\section{The manifold at infinity and the associated invariant manifolds}
\label{sec:varietatainfinit}

The Hamiltonian $\wt\KK$ in  \eqref{def:Ham:Poincarenr} has an invariant manifold at infinity. Indeed, the potential $\wt W$ in \eqref{def:potentialwtilde} satisfies  $\wt W = \OO(L^4/\widetilde r^3)$. Therefore,  the manifold
\begin{equation*}
% \label{def:CylinderInfinity}
\PP_{\infty}=\{(\la, L,\eta,\xi,\wt r,\wt y): \ \wt r=+\infty,\ \wt y=0\}
\end{equation*}
is invariant\footnote{To analyze this manifold properly, one should consider McGehee coordinates $r=2/x^2$. This is done in Section \ref{sec:LambdaLemma}.}.

Note that, at $\PP_\infty$, the Hamiltonian $\wt\KK$ satisfies
\[
\wt\KK_{\mid\PP_\infty}=-\frac{\nu}{2L^2}
\]
and  $\dot L_{\mid\PP_\infty}=0$. Therefore, we can fix $L=L_0$ and restrict to an energy level $\wt\KK=-\frac{\nu}{2L_0^2}$. We consider the restricted infinity manifold
% Moreover,  it is foliated by the tori
% \[
% \TT_{L_{0},\Ga_{0}}=\{(\ell, L,\phi,\Ga,\wt r,\wt y), \ \wt r=+\infty,\ \wt y=0, \ L=L_0, \ \Ga=\Ga_0, (\ell,\phi)\in\TT^2 \}
% \]
% contained in the energy level $\wt \HH(\ell, L,\phi,\Ga,\wt r,\wt y;\Tet)=-\frac{\nu}{2L_0 ^2}$.
% For any $(L_0,\Ga_0)$ the motion in $\TT_{L_{0},\Ga_{0}}$ is periodic
% \[
% \ell= \ell_0+\frac{\nu}{L_0^3}t, \ \phi=\phi_0.
% \]
%
% We want to study the stable and unstable manifolds of (a portion of) $P_{\infty}$ their possible heteroclinic intersections.
% For this reason, we fix $L_0>0$ and consider all the tori $\TT_{L_{0},\Ga_{0}}$ contained in
% $\wt \HH= -\frac{\nu}{2L_0^2}$.
% As we will work in the  $5-$dimensional space $\wt \KK=-\frac{ \nu}{2L_0^2}$, we observe that, at infinity $L(\la,\eta,\xi,\infty,0)=L_0$, for this reason, we will then skip the $L_0$ from the notation.
% We will denote the infinity manifold restricted to the energy level as
\begin{equation}\label{def:inftyenergy}
\EE_{\infty}=\PP_{\infty}\cap \wt \KK^{-1}\left(-\frac{ \nu}{2L_0^2}\right)=\left\{
(\lambda, L,  \eta, \xi,\wt  r,\wt  y): L=L_0, \ \wt  r=+\infty , \ \wt  y=0, \ (\eta, \xi) \in \UU , \
\lambda \in \TT\right\},
\end{equation}
where $\UU\subset  \RR^2$ is an open set containing the origin\footnote{Observe that $(\eta,\xi)\in\CC^2$ but they satisfy $\xi=\ol\eta$.} which is specified below.
% In the Poincar\'{e} variables, the tori $\TT_{L_0,\Ga_0}=\TT_{\Ga_0}$ become
% \[
% \TT_{ \zeta_0}=\{(\lambda, L, \eta, \xi,\wt  r, \wt  y): L=L_0, \ \wt  r=+\infty , \ \wt  y=0, \ \eta^2+\xi^2= \zeta_0^2, \
% \lambda \in \TT\}, \ \mbox{where} \  \frac{\zeta_0^2}{2}= L_0-\Ga _0.
% \]
By the particular form of the  Hamiltonian  $\wt\KK$ in  \eqref{def:Ham:Poincarenr}, it is clear that
% $\dot \eta_{\mid \TT_{ \zeta_0}} = \dot \xi_{\mid \TT_{ \zeta_0}}=0$,that is,
the manifold $\EE_\infty$
% torus $\TT_{ \zeta_0}$
is foliated by  periodic orbits as
\[
\EE_{\infty}=\bigcup _{(\eta_0,\xi_0)\in\UU} P_{\eta_0,\xi_0}
\]
with
\[
P_{\eta_0,\xi_0} = \{(\lambda, L,  \eta, \xi,\wt  r,\wt  y): \  \ \eta = \eta_0,\ \xi= \xi_0, \ L=L_0, \ \wt  r=+\infty , \ \wt  y=0,\
\lambda \in \TT\},
\]
whose dynamics is given by
\[
\la(t)= \la_0+\frac{\nu}{L_0^3}t.
\]
These periodic orbits are parabolic, in the sense that its linearization (in McGehee coordinates) is degenerate. Nonetheless, they have stable and unstable invariant manifolds whose union  form the invariant manifolds of the infinity manifold $\EE_\infty$ (see Theorem \ref{thm:MainSplitting}).
% where $\eta_0^2+\xi_0^2= \zeta_0^2$.
% In conclusion, the manifold $\EE_{\infty}$ is a manifold of periodic orbits:

The goal of this section is to analyze the stable and unstable invariant manifolds of $\EE_\infty$ and show that,  restricting to suitable open domains in $\EE_\infty$, they  intersect transversally  along two homoclinic channels $\Gamma^1$ and $\Gamma^2$ (see Figure \ref{fig:canalhomoclinic}). This will allow us to define two different scattering maps in suitable domains in $\EE_{\infty}$.

\subsection{The unperturbed Hamiltonian system}
Since we are considering the regime $\wt r\gg L^2$
 and $\wt W$ satisfies $\wt W = \OO(L^4/\widetilde r^3)$, we  first analyze the Hamiltonian $\wt\KK$ in \eqref{def:Ham:Poincarenr} with  $\wt W=0$. We consider this as the unperturbed Hamiltonian. In fact, when $\wt W=0$,  $\wt\KK$ becomes  integrable and therefore the invariant manifolds of the periodic orbits $P_{\eta_0,\xi_0}$ coincide.

Indeed, it is easy to check that
$L$ and $\eta\xi$ (and the Hamiltonian) are functionally independent first integrals.
% is a first integral, $L=L_0$,  and t
Therefore, if we  restrict to the energy level $\wt\KK=-\frac{\nu}{2L_0}$
% we have
% \begin{equation}\label{eq:integrableJacobinr}
%   \begin{aligned}
%   \la'&=\frac{\nu }{L_0^3}-\frac{\Theta-L_0+\frac{\xi^2+\eta^2}{2}}{\wt  r^2}, &  &\\
% \eta' &=\frac{\Theta- L_0+\frac{\xi^2+\eta^2}{2}}{\wt r^2}\xi, &\xi'&=-\frac{\Theta- L_0+\frac{\xi^2+\eta^2}{2}}{\wt  r^2}\eta\\
% \wt  r'&=\wt  y, & \wt  y' &=  \frac{(\Theta- L_0+\frac{\xi^2+\eta^2}{2})^2}{\wt  r^3}-\frac{1}{\wt  r^2}.
% \end{aligned}
% \end{equation}
% Observe that $\frac{\xi^2+\eta^2}{2}$ is a first integral.
% Therefore, along orbits,
% $\frac{\xi^2+\eta^2}{2}=\frac{\xi_0^2+\eta_0^2}{2}$.
% We define
and we define
\begin{equation*}
% \label{eq:rescalingnew}
%\zeta_0=\sqrt{\xi_0\eta_0}\qquad \text{ and }\qquad
G_0=\Tet- L_0+\eta_0 \xi_0,
\end{equation*}
the invariant manifolds of any periodic orbit $P_{\eta_0,\xi_0}$ should satisfy
\[\Theta- L+\eta\xi=G_0\]
and
therefore they must be a solution of the equations
\begin{equation}\label{eq:integrableJacobinrmanifold}
  \begin{aligned}
  \la'&=\frac{\nu }{L_0^3}-\frac{G_0}{\wt  r^2}, &  &\\
\eta' &=-i\frac{G_0}{\wt r^2}\eta, &\xi'&=i\frac{G_0}{\wt  r^2}\xi\\
\wt  r'&=\wt  y, & \wt  y' &=  \frac{G_0^2}{\wt  r^3}-\frac{1}{\wt  r^2}.
\end{aligned}
\end{equation}
%First, observe that, when $\tilde W=0$, we  saw that
%$G=\Tet- L+\frac{\zeta^2}{2}=\Tet-L+\frac{\xi^2+\eta^2}{2}$ (see \eqref{eq:rescalingnew}), is a constant of motion.
The invariant manifolds of the periodic orbit $P_{\eta_0,\xi_0}$ associated to equation \eqref{eq:integrableJacobinrmanifold} are analyzed in the next lemma.

\begin{lemma}\label{lemma:homounperturbed}
 The invariant manifolds of the periodic orbit $P_{\eta_0,\xi_0}$ associated to equation \eqref{eq:integrableJacobinrmanifold} coincide along a homoclinic manifold which can be parameterized as
\begin{equation}\label{eq:homoclinicPoincare}
  \begin{aligned}
\la&=\ga+\phi_\h(u)&  L&=L_0\\
\eta&=
% \eta_\h(u)=
\eta_0 e^{i\phi_\h(u)}&
\xi&=
% \xi_\h(u)=
\xi_0e^{-i\phi_\h(u)}\\
\wt r&=G_0^2 \wh r_\h(u)&  \wt y&=G_0^{-1}\wh y_\h(u),
\end{aligned}
 \end{equation}
where $(\wh r_\h(u), \wh y_\h(u), \phi_\h(u))$ are defined as
 \begin{equation}\label{def:homoclinic}
\begin{aligned}
\wh r_\h(u) & =  r_0(\tau(u)), & \quad  r_0(\tau) & = \frac{1}{2}(\tau^2+1), \\
\wh y_\h(u) & =  y_0(\tau(u)), & \quad  y_0(\tau) & = \frac{2\tau}{(\tau^2+1)}, \\
\phi_\h(u) & = \phi_0(\tau(u)),& \quad  \phi_0(\tau) &  = i \log\left( \frac{\tau-i}{\tau+i}\right),
\end{aligned}
\end{equation}
where $\tau(u)$ is obtained through
\begin{equation*}
% \label{def:utotau}
u = \frac{1}{2}\left(\frac{1}{3} \tau^3+\tau\right).
\end{equation*}
In particular
\begin{equation*}
% \label{eq:expiphi0}
e^{i\phi_0(\tau)}=\frac{\tau+i}{\tau-i}
\end{equation*}
and $\phi_h$ satisfies
\begin{equation}\label{def:philimit}
\lim_{u\to \pm\infty}\phi_\h(u)=0 \,\,(\text{mod } 2\pi)\qquad \text{and}\qquad \ \phi_\h(0)=\pi.
\end{equation}
Moreover, the dynamics in the homoclinic manifold \eqref{eq:homoclinicPoincare} is given by
\[
 u'=G_0^{-3}, \quad \ga'= \frac{\nu}{L_0^3}.
\]
\end{lemma}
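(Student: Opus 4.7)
The proof boils down to integrating an explicit parabolic Kepler problem, because when $\wt W\equiv 0$ the Hamiltonian $\wt\KK$ is independent of $\la$ and rotation invariant in $(\eta,\xi)$, so $L$ and $\eta\xi$ are conserved by the flow. On the invariant manifolds of $P_{\eta_0,\xi_0}$ inside the energy level $\wt\KK=-\nu/(2L_0^2)$ this forces $L\equiv L_0$, $\eta\xi\equiv \eta_0\xi_0$, and hence $\Theta-L+\eta\xi\equiv G_0$; energy conservation then reduces the $(\wt r,\wt y)$ part of~\eqref{eq:integrableJacobinrmanifold} to the zero-energy level of a Kepler problem with angular momentum $G_0$, i.e.\ a parabolic radial motion. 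Since the reduced dynamics is two-dimensional and such a parabolic orbit is unique up to time translation, the stable and unstable manifolds of $P_{\eta_0,\xi_0}$ must coincide, which proves the first assertion of the lemma.

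To obtain the explicit parameterization I would first rescale $\wt r=G_0^2\wh r$, $\wt y=G_0^{-1}\wh y$ and introduce the slow time $u$ with $u'=G_0^{-3}$, so that the radial subsystem becomes $d\wh r/du=\wh y$, $d\wh y/du=\wh r^{-3}-\wh r^{-2}$. A direct check shows that the Barker-type ansatz $\wh r_0(\tau)=\tfrac12(\tau^2+1)$, $\wh y_0(\tau)=2\tau/(\tau^2+1)$, $u=\tfrac12(\tfrac13\tau^3+\tau)$, solves it: differentiating $u(\tau)$ gives $du/d\tau=\wh r_0$, so $d\wh r_0/du=\tau/\wh r_0=\wh y_0$, and the parabolic energy identity $\wh y_0^2=2/\wh r_0-\wh r_0^{-2}$ is immediate. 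Since $du/d\tau>0$, the map $\tau\mapsto u$ is a bijection of $\RR$ and the parameterization extends globally.

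With $\wt r$ known, the equation $\eta'=-iG_0\wt r^{-2}\eta$ integrates to $\eta(u)=\eta_0 e^{i\phi_\h(u)}$ (and symmetrically for $\xi$), with
\[
\frac{d\phi_\h}{du}=-\frac{1}{\wh r_0^2},\qquad \frac{d\phi_\h}{d\tau}=-\frac{2}{\tau^2+1}.
\]
Fixing the integration constant by $\phi_\h(0)=\pi$ yields $\phi_\h(\tau)=\pi-2\arctan\tau$, which for an appropriate branch of the complex logarithm coincides with the closed form $i\log\bigl((\tau-i)/(\tau+i)\bigr)$ stated in the lemma; the limits $\phi_\h(\pm\infty)=0\pmod{2\pi}$ follow. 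Finally, writing $\la=\ga+\phi_\h(u)$ and inserting into $\la'=\nu/L_0^3-G_0/\wt r^2$ gives, after the cancellation $\phi_\h'(u)\,u'=-1/(G_0^3\wh r_0^2)=-G_0/\wt r^2$, exactly $\ga'=\nu/L_0^3$. The only mildly delicate point in the whole argument is the branch choice for the logarithm that makes $\phi_\h$ a globally smooth real function with the stated boundary behaviour; everything else is a direct verification.
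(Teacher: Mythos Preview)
Your proof is correct and follows essentially the same route as the paper's: rescale $(\wt r,\wt y)$ and time by powers of $G_0$, reduce to the zero-energy radial Kepler problem, verify the explicit parabolic parameterization, integrate the $\phi$-equation, and read off $\ga'$. The paper is terser because it simply cites \cite{SimoL80} for the formulas you verify directly, so your version is in fact more self-contained; the conceptual preamble explaining why the stable and unstable manifolds must coincide is also a nice addition, though in the paper that observation is made in the text preceding the lemma rather than in the proof itself.
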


Note that the dynamics in this homoclinic manifold makes apparent the slow-fast dynamics.
Indeed the motion on the $(\wt r, \wt y)$ is much slower than the rotation dynamics in the $\lambda$ variable.

\begin{proof}[Proof of Lemma \ref{lemma:homounperturbed}]
To prove this lemma   it is convenient to  scale the variables and time as
\begin{equation}\label{def:rescaling}
\wt r = G_0^2 \wh r,\,\,\,\wt y=G_0^{-1}\wh y, \quad \text{and}  \quad t=G_0^3 s
% \mbox{where} \quad\ G_0=\Theta-\Ga_0,
\end{equation}
in equation \eqref{eq:integrableJacobinrmanifold} to obtain
\begin{equation}\label{eq:integrableJacobi}
  \begin{aligned}
 \frac{d \la}{ds}&=\frac{\nu G_0^3 }{L^3}-\frac{1}{\wh  r^2}, &  &\\
\frac{d\eta}{ds} &=-\frac{i}{\wh r^2}\eta, &\frac{d\xi}{ds}&=\frac{i}{\wh  r^2}\xi\\
\frac{d \wh  r}{ds}&=\wh  y, & \frac{d\wh  y}{ds} &=  \frac{1}{\wh  r^3}-\frac{1}{\wh  r^2}.
\end{aligned}
\end{equation}
The last two equations are Hamiltonian with respect to
\begin{equation}\label{def:HamPendulum}
 h(\wh r, \wh y)=\frac{\wh y^2}{2}+\frac{1}{2\wh r^2}-\frac{1}{\wh r}
\end{equation}
and, following \cite{SimoL80}, they have a solution $(\wh r_\h(s), \wh y_\h(s))$ as given in \eqref{def:homoclinic} which satisfies
%$\lim_{t\to \pm\infty} (\wh r_\h(u),\wh y_\h(u),\phi_\h(u)) = (\infty,0,\pm \pi)$
\begin{equation}\label{def:CondicioInicialHomo}
\lim_{s \to \pm\infty} (\wh r_\h(s),\wh y_\h(s)) = (\infty,0)\quad \text{ and }\quad \wh y_\h(0)=0.\,\,\,%\text{ and }\,\,\,%\phi_\h(0)=0.
\end{equation}
Moreover, for the $(\eta,\xi)$ components, it is enough to define the function $\phi_\h(s)$ which satisfies
\[
\frac{d \phi}{ds}=-\frac{1}{\wh  r_h^2} \qquad \text{and}\qquad \phi_\h(0)=\pi.
\]
Following again \cite{SimoL80}, it is given in \eqref{def:homoclinic} and  satisfies the asymptotic conditions in \eqref{def:philimit}.
% \[
% \textcolor{red}{\lim_{t\to \pm\infty}\phi_\h(s)=0 \,\,(\text{mod } 2\pi), \ \phi_\h(0)=\pi }
% \]

To complete the proof of the lemma it is enough to integrate the rest of equations in \eqref{eq:integrableJacobi}
% we can find a suitable  parameterizations of the homoclinic manifold to the periodic orbit at infinity
% $P_{\eta_0,\xi_0}$ satisfying $\eta_0^2+\xi_0^2=\zeta_0^2$, in these scaled variables,
and undo the scaling \eqref{def:rescaling}.
% we obtain:
% \begin{equation}\label{eq:homoclinicPoincare}
%   \begin{aligned}
% \la&=\ga+\phi_\h(u)&  L&=L_0\\
% \eta&=\eta_\h(u)=-\eta_0 \cos\phi_\h(u)+\xi_0\sin\phi_\h(u) &
% \xi&=\xi_\h(u)=-\xi_0\cos\phi_\h(u)-\eta_0 \sin\phi_\h(u)\\
% \wt r&=G_0^2 \wh r_\h(u)&  \wt y&=G_0^{-1}\wh y_\h(u).
% \end{aligned}
%  \end{equation}
% where the parameters satisfy:
% $$
% \frac{d u}{ds}=1, \quad \frac{d \ga}{ds}= \frac{\nu G_0^3 }{L_0^3}.
% $$
\end{proof}

Observe that the union of the homoclinic manifolds of the periodic orbits $P_{\eta_0,\xi_0}$ form the homoclinic manifold of the infinity manifold (restricted to the energy level) $\EE_{\infty}$, which is  four dimensional.

% Observe that, for any $u\in \ \RR$, the homolinic point $z_h=(\la,w)$ in \eqref{eq:homoclinicPoincare} with $w=(\eta,\xi,\wt r,\wt y)$ produces an homoclinic orbit between
% $(\ga,\eta_0,\xi_0,\pm\infty, 0)$.
% Consequently, for $\wt W=0$ the infinity manifold $\EE_\infty$ has a $3-$dimensional homoclinic manifold $\Gamma$ and one can define a global scattering map:\[\wt  \SSS_0:\, \EE_\infty \to \EE_\infty \]with\[\wt \SSS_0 (\la_0, \eta_0,\xi_0)= (\la_0, \eta_0,\xi_0)\]

\subsection{The invariant manifolds for the perturbed Hamiltonian}

In this section we analyze the invariant manifolds of the infinity manifold $\EE_\infty$ (see \eqref{def:inftyenergy}) and their transverse intersections  for the full Hamiltonian $\wt\KK$ in \eqref{def:Ham:Poincarenr} (that is, incorporating the potential $\wt W$ in \eqref{def:potentialwtilde}). Given a  periodic orbit $P_{\eta_0,\xi_0}\in \EE_\infty$, we want to study its unstable manifold and its possible intersections with the stable manifold of nearby periodic orbits
\[
P_{\eta_0+\delta\eta,\xi_0+\delta\xi}\in \EE_\infty\qquad \text{for some}\qquad |\delta\eta|,|\delta\xi|\ll 1.
\]
This will lead to heteroclinic connections and, therefore, to the definition of scattering maps. To this end, we consider parameterizations of a rather particular form. The reason, as it is explained in Section \ref{sec:adaptedcoordinates} below, is to keep track of the symplectic properties of these parameterizations. Using  the unperturbed parameterization introduced in Lemma \ref{lemma:homounperturbed} and the constant
\[
G_0=\Theta-L_0+\eta_0\xi_0,
\]
we define parameterizations of the following form, where $*$ stands for $ *=u,s$,
\begin{equation}\label{def:sigmaparamnousnr}
\begin{split}
\la =& \,  \gamma+\phi_\h(u)\\
L^*(u,\lo) =&\, L_0+\Lambda^*(u,\lo)\\
\eta^* (u,\lo)=&\,e^{i\phi_\h(u)}(\eta_0+\alpha^*(u,\lo))\\
\xi ^*(u,\lo)=&\,e^{-i\phi_\h(u)}(\xi_0+\beta^*(u,\lo))\\
\wt r =&\, G_0^2 \wh r_\h(u)\\
\wt y^*(u,\lo) =&\, \frac{ \wh y_\h(u)}{G_0}+\frac{ Y^*(u,\lo)}{G_0^2\wh y_\h(u)}+\frac{\Lambda^*(u,\lo)-(\eta_0+\alpha^*(u,\lo))(\xi_0+\beta^*(u,\lo))+\eta_0\xi_0}{G_0^2\wh y_\h(u)(\wh r_\h(u))^2}
\end{split}
\end{equation}
where
% $\alo=\eta_0$, $\beto=\xi_0$, and
the functions $\La^*,\al^*,\beta^*, Y^*$ satisfy
\[
\begin{split}
(\La ^u(u,\lo),\al^u(u,\lo),\beta^u(u,\lo),(\wh y_\h(u))\ii Y^u (u,\lo)) &\to (0,0,0,0),\quad \mbox{as}\quad  u\to -\infty\\
(\La ^s(u,\lo),\al^s(u,\lo),\beta^s(u,\lo),(\wh y_\h(u))\ii Y^s (u,\lo)) &\to (0,\delta \eta,\delta\xi,0),\quad \mbox{as}\quad  u\to +\infty
\end{split}
\]
% and an analogous condition for the unstable as $u\to -\infty$.

The rather peculiar form of these parameterizations relies on the fact that one can interpret them through the change of coordinates given by
\[
(\la,L,\eta,\xi,\wt r,\wt y) \to (\ga,\Lambda, \al, \bet, u, Y).
\]
Then, one can keep track of the symplectic properties of the invariant manifolds since this change is symplectic in the sense that it  sends the canonical form into  $ d\gamma \wedge d\Lambda+id\alpha\wedge d\beta+du\wedge d Y$. This is explained in full detail in Section \ref{sec:adaptedcoordinates} (see the symplectic transformation \eqref{def:ChangeThroughHomo}).

If the functions $(Y^*,\La ^*,\al^*,\beta^*)$ are small, as stated in Theorem \ref{thm:MainSplitting} below, these parameterizations are close to those of the unperturbed problem, given in Lemma \ref{lemma:homounperturbed}.
Furthermore, note that, to analyze the difference between the invariant manifolds, it is enough to measure the differences
\begin{equation}\label{def:diffdiff}
 (Y^s -Y^u,\La ^s-\La ^u,\al^s-\al^u,\beta^s-\beta^u)
\end{equation}
for $u$ in a suitable interval and $\lo\in\TT$.
The zeros of this difference  will lead to homoclinic connections to $P_{\eta_0,\xi_0}$, if one chooses $\de\eta=\de\xi=0$, and to heteroclinic connections between $P_{\eta_0,\xi_0}$ and $P_{\eta_0+\delta \eta,\xi_0+\delta \xi}$, otherwise.

The analysis of the difference \eqref{def:diffdiff} is done  in
Proposition \ref{prop:MelnikovPotential} and Theorem \ref{thm:MainSplitting} below.
First, in Proposition \ref{prop:MelnikovPotential}, we define  a  Melnikov potential, which  provides the first order of the difference between the invariant manifolds through the difference \eqref{def:diffdiff}.
% This proposition is proven in Appendix \ref{sec:Melnikov}.
Then, Theorem \ref{thm:MainSplitting} gives the existence of parameterizations of the form \eqref{def:sigmaparamnousnr} for the unstable manifold of $ {P}_{\eta_0, \xi_0}$ and the stable invariant manifold of $ {P}_{\eta_0+\de\eta, \xi_0+\de\xi}$  and shows that, indeed, the derivatives of the Melnikov potential given in Proposition \ref{prop:MelnikovPotential} plus an additional explicit term depending on $(\de\eta,\de\xi)$ gives the first order of their difference when the parameter $\Tet$ is large enough.

We then introduce a Melnikov potential
% whose gradient will provide the first order of the difference between the invariant manifolds.
% It is defined by
% \[
% L(u,\ga,\al_0,\bet_0)=\int_{-\infty}^{+\infty} \PP_1(u+s, \ga+ \omega s,0,\alo,\beto)\ ds
% =L(0,\ga-\omega u,\alo,\beto)=\LL(\ga-\omega u,\alo,\beto)
% \]
%
\begin{equation}\label{def:MelnikovPotential}
\LL(\vm,\eta_0,\xi_0)=G_0^3\int_{-\infty}^{+\infty}
\wt W\left( \vm+\omega s+\phi_\h(s), L_0,e^{i\phi_\h(s)}\eta_0,
e^{-i\phi_\h(s)}\xi_0,G_0^2\wh  r_\h(s)\right)ds,
% \PP_1(s, \vm+\omega s,\alo,\beto)\ ds
\end{equation}
%
% \begin{equation}\label{def:MelnikovPotential}
% \LL(\vm,\al_0,\bet_0)=\int_{-\infty}^{+\infty} \PP_1(s, \vm+\omega s,0,\alo,\beto)\ ds
% \end{equation}
where  $\wt W$ is given in \eqref{def:potentialwtilde}, $(r_\h(u),\phi_\h(u))$ are introduced in Lemma \ref{lemma:homounperturbed} and
\begin{equation}\label{eq:omega}
\omega=\frac{\nu G_0^3}{L_0^{3}}, \quad \mbox{with}\quad G_0=\Theta-L_0+\eta_0\xi_0.
\end{equation}
Note that, as usual, it is just the integral of  the perturbing potential $\wt W$ evaluated at the unperturbed homoclinic manifold \eqref{eq:homoclinicPoincare} (at a given energy level).
% That is
% \[
% \PP_1(u,\gamma,\alpha, \beta)=
% \wt W\left(\gamma+\phi_\h(u), L_0,e^{i\phi_\h(u)}\alpha,
% e^{-i\phi_\h(u)}\beta,G_0^2\wh  r_\h(u)\right),
% \]
% where

To provide asymptotic formulas for the Melnikov potential $\LL$ we use the parameter
\begin{equation}\label{def:ThetaTilde}
\tte=\Tet-L_0.
\end{equation}

\begin{proposition}\label{prop:MelnikovPotential}
Fix $L_0\in [1/2,2]$.
Then, there exists $\Theta ^*\gg 1$ and $0<\varrho^*\ll 1$ such that for
$\Theta\ge \Theta ^*$ and $(\eta_0, \xi_0)$
satisfying $\xi_0=\ol{\eta_0}$ and
% \begin{equation}\label{eq:condicioalG0}
$|\alo|\Theta^{3/2} \leq \varrho^*$,
the Melnikov potential introduced in \eqref{def:MelnikovPotential} is $2\pi$-periodic in $\sigma$ and can be written as
\[
 \LL(\vm,\eta_0,\xi_0)=\LL^{[0]}(\eta_0,\xi_0) +\LL^{[1]}(\eta_0,\xi_0)e^{i\vm}+\LL^{[-1]}(\eta_0,\xi_0)e^{-i\vm}+\LL^\geq(\vm,\eta_0,\xi_0),
\]
%
% \[
%  \LL(\vm,\eta_0,\xi_0)=\sum_{q\in \ZZ}\LL^{[q]}(\eta_0,\xi_0) e^{iq\vm},\qquad
% \]
% with
% \[
% \LL^{[q]}(\al_0,\bet_0)=\int_{-\infty}^{+\infty} \PP_1^{[q]}(s,0,0,\alo,\beto)e^{i\omega s}ds,
% \]
%
and the Fourier coefficients satisfy $\LL^{[q]}(\alo,\beto)=\ol{\LL^{[-q]}}(\beto,\alo)$ and
\[
\begin{split}
\LL^{[0]}(\alo,\beto) =&\,
\tilde \nu \pi L_0^4 (\tte+\alo\beto)^{-3} \Bigg[\frac{ N_2}{8}\left(1+3\frac{\alo\beto}{L_0}
-\frac32 \frac{\alo^2\beto^2}{L_0^2}
\right)
- N_3\frac{15}{64}\  \frac{L_0^2}{\sqrt{2L_0}} \tte^{-2}(\alo+\beto)+\RRR_0(\alo,\beto)\Bigg]\\
%\\
% \end{split}
% \]
% and:
% \[
% \begin{split}
\LL^{[1]}(\alo,\beto) =&\,
  \tilde\nu\ex^{-\frac{\tilde \nu (\tte+\alo\beto)^3}{3 L_0^3}} \Bigg[
  \frac{N_3}{32} \sqrt{\frac{\pi}{2}}L_0^6 \tte^{-\frac{1}{2}}-3\frac{N_2}{4}\sqrt{\pi}L_0^{\frac{7}{2}}\tte^{\frac32}\alo
 %\\ &
 +\RRR_1(\alo,\beto)\Bigg]
\end{split}
\]
where  $\tilde\nu$ is the constant introduced in  \eqref{eq:sigmatilde01} and
\begin{equation}\label{def:massterm}
% N_n= (m_0\tilde \sigma _0^{n}+m_1(-\tilde \sigma _1)^{n})
N_2=\frac{m_2^4(m_0+m_1)^5}{m_0^3m_1^3}, \ \quad
N_3=\frac{m_2^6(m_0+m_1)^7}{m_0^5m_1^5}(m_1-m_0),
\end{equation}
and
\[
% \begin{split}
\RRR_0(\alo,\beto)=\OO\left(\Tet^{-4}\right)+
\OO\left(\Tet^{-2}|\alo|^3\right),\qquad
\RRR_1(\alo,\beto)=  \OO\left(\Tet^{-1},|\alo|, |\alo|^2 \Tet^{5/2}\right)
% \end{split}
\]
and, for $i,j\geq 1$,
\[
% \begin{split}
\left|\pa_{\alo}^i\pa_{\beto}^j\RRR_0(\alo,\beto)\right|\leq C(i,j)\Theta^{-2},\qquad
% \quad \text{ for all }i,j\geq 1\\
% \pa_{\alo}^i\pa_{\beto}^j\RRR_0(\alo,\beto)&\leq C(i,j)\Theta^{-2}\quad \text{ for all }i=1,j=2 \text{ or }i=2,j=1\\
\left|\pa_{\alo}^i\pa_{\beto}^j\RRR_1(\alo,\beto)\right|\leq C(i,j)\Tet^{(-1+3i+3j)/2},
% \end{split}
\]
for some constants $C(i,j)$ independent of $\Theta$.

Moreover, for  $i,j\geq 0$, $k\geq 1$,
\[
|\pa^i_{\alo}\pa_{\beto}^j\pa_\sigma^k\LL^\geq|
 \leq C(i,j,k) \Tet ^{7/2+3(i+j)/2} \ex^{-\frac{2\tilde \nu \Tet^3}{3 L_0^3}}.
 % \lesssim \Theta^{7/2} \ex^{-\frac{2\tilde \nu \Theta^3}{3 L_0^3}}
\]
where $C(i,j,k)$ is a constant independent of $\Tet$.
\end{proposition}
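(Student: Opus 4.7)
The plan is to expand $\wt W$ in \eqref{def:potentialwtilde} as a Legendre series in the small ratio $\wt\rho/\wt r$, substitute the unperturbed homoclinic parameterization of Lemma~\ref{lemma:homounperturbed}, and split $\LL(\vm,\alo,\beto)$ into its Fourier modes in $\vm$. The zeroth mode $\LL^{[0]}$ is computed by integration along the real line and gives only polynomial smallness in $\Theta$, while the harmonics $\LL^{[q]}$ with $q\neq 0$ are obtained by complex-path deformation, which produces the exponentially small factor $\ex^{-|q|\wt\nu G_0^3/(3L_0^3)}$.

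\emph{Legendre expansion.} Using $\frac{1}{|1\pm z|}=\sum_{n\geq 0}P_n(\mp\cos v)\rho^n$ for $z=\rho\ex^{iv}$, the potential rewrites as $\wt W=\frac{\wt\nu}{\wt r}\sum_{n\geq 2}a_n(\wt\rho/\wt r)^n P_n(\cos(v+\phi))$ with explicit mass coefficients $a_n$. The $n=0,1$ contributions cancel thanks to the Jacobi-coordinate identity $m_0\wt\sigma_0=m_1\wt\sigma_1$ encoded in \eqref{eq:sigma01}--\eqref{eq:sigmatilde01}. A short computation gives $a_2\propto N_2$, while $a_3$ carries the factor $m_1-m_0$ because $P_3$ is odd; this is the algebraic reason for the hypothesis $m_0\neq m_1$ and the source of the constant $N_3$ in \eqref{def:massterm}. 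Along the homoclinic $\wt r=G_0^2\wh r_\h(s)=\OO(\Theta^2)$ and $\wt\rho=\OO(L_0^2)$, so the $n$-th term scales as $\Theta^{-2n-2}$; multiplied by the $G_0^3$ prefactor of \eqref{def:MelnikovPotential} the leading $\Theta^{-3}$ order comes from $n=2$.

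\emph{The average $\LL^{[0]}$ and the harmonics $\LL^{[q]}$.} For $q=0$ there is no oscillatory cancellation, so one integrates term by term along the real axis, expanding $\wt\rho(\ell,L_0,\Gamma)$ and $v(\ell,L_0,\Gamma)$ in powers of the small eccentricity $\e=\OO(\sqrt{\alo\beto/L_0})$ via \eqref{def:Delvg}--\eqref{def:Dellu}. The $n=2$ piece produces the leading $\wt\nu\pi L_0^4 N_2/(8G_0^3)$ together with its corrections in $\alo\beto/L_0$ and $(\alo\beto)^2/L_0^2$; the $n=3$ piece contributes only through its first-order eccentricity correction, because the angular average of $P_3(\cos v)$ vanishes, which gives the $(\alo+\beto)$ term at order $\tte^{-5}$; all $n\geq 4$ terms are absorbed in $\RRR_0$. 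The required real integrals reduce, via the substitution $u\mapsto\tau$ of \eqref{def:homoclinic}, to residue calculations for rational functions of $\tau$. For $q\neq 0$, Fourier-expanding $\wt W$ in its first variable $\ell$ and substituting $\ell=\vm+\omega s+\phi_\h(s)$ yields
\[
\LL^{[q]}(\alo,\beto)=G_0^3\int_{-\infty}^{+\infty}\WW^{[q]}\bigl(L_0,\alo,\beto,\wh r_\h(s),\phi_\h(s)\bigr)\,\ex^{iq\omega s+iq\phi_\h(s)}\,ds.
\]
By \eqref{def:homoclinic}, the complex singularities of $\wh r_\h$ and $\phi_\h$ closest to the real axis lie at $s=\pm i/3$ (preimages of $\tau=\pm i$ under $u=(\tau^3/3+\tau)/2$), where $\wh r_\h(s)\sim c_\pm(s\mp i/3)^{2/3}$ and $\phi_\h$ has a logarithmic singularity. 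Deforming the real axis to a contour hugging $s=i/3$ for $q>0$ (and $s=-i/3$ for $q<0$) and applying standard Hankel/Laplace-contour asymptotics yields the factor $\ex^{-|q|\wt\nu G_0^3/(3L_0^3)}$ together with a polynomial prefactor: the leading $N_3$ term comes from the $n=3$ Legendre piece evaluated at the singularity, producing the $\tte^{-1/2}$ factor through the branch behavior, while the subleading $N_2\alo$ contribution arises from the first-eccentricity correction of the $n=2$ piece. For $|q|\geq 2$ the same analysis gives $\ex^{-|q|\wt\nu G_0^3/(3L_0^3)}\leq\ex^{-2\wt\nu\Tet^3/(3L_0^3)}$, which is the bound stated for $\LL^\geq$.

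\emph{Main obstacle.} The delicate step is the exponentially small splitting: since $\omega\sim\Theta^3$, standard Melnikov estimates are useless, and one must carry out the contour-deformation analysis carefully, controlling both the saddle-point contribution at the branch singularity and the residual contour integral. A second technical point is the derivative bounds on $\RRR_1$: the natural rescaling suggested by the hypothesis $|\alo|\Theta^{3/2}\leq\varrho^*$ is $\wh\alpha_0=\alo\Theta^{3/2}$, in which the remainders become smooth and $\OO(1)$; reverting to the original $(\alo,\beto)$ coordinates produces the factor $\Theta^{3(i+j)/2}$ in the derivative estimates. The bounds on $\RRR_0$ follow from the $n\geq 4$ tail of the Legendre expansion together with an analogous eccentricity accounting.
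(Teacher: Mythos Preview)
Your proposal is correct and follows essentially the same route as the paper: Legendre expansion of $\wt W$ into the quadrupole ($N_2$) and octupole ($N_3$) pieces plus a tail, real integration for $\LL^{[0]}$, and complex-path deformation toward $u=\pm i/3$ for the oscillatory harmonics $\LL^{[q]}$, $q\neq 0$. The paper organizes the computation through the explicit Fourier coefficients $C_q^{n,m}$ of $\wt\rho^n e^{imv}$ and the model integrals $N(q,m,n)$ (whose asymptotics are quoted from \cite{DelshamsKRS19}), shifting the contour to the horizontal line $\Im s=1/3-G_0^{-3}$ rather than a Hankel loop; incidentally, the local behavior is $\wh r_\h(s)\sim (s\mp i/3)^{1/2}$, not $(s\mp i/3)^{2/3}$ as you wrote.
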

This proposition is proven in Appendix \ref{sec:Melnikov}.
%  \begin{remark}\label{rem:thetag0}
%  \textcolor{red}{Marcel: Aquesta remark l'eliminaria o l'enviaria a la part de la prova. Aqui la trobo massa dura}
%  An important observation is that, as $G_0\ge \tte=\Tet-L_0>\Tet$ one always has that
%  $$
%  \ex^{-\frac{2\tilde \nu G_0^3}{3 L_0^3}} \le \ex^{-\frac{2\tilde \nu \Tet^3}{3 L_0^3}}
%  $$
%  On the other hand, if $\alo$ satisfies \eqref{eq:condicioalG0}:
%  $$
%  G_0^3-\tte^3= 3 \tte ^2 \alo ^{2}+ 3 \tte  \alo ^{4}+ \alo ^{6} \le 3\Tet^{-1}+3\Tet^{-5}+\Tet^{-9}
%  $$
%  therefore, if $\Tet>\Tet^*$ big enough, one has:
%  $$
%  0\le G_0^3-\tte^3 \ll 1
%  $$
%  consequently:
%  $$
%  \ex^{-\frac{2\tilde \nu \tte^3}{3 L_0^3}} \le \ex^{-\frac{2\tilde \nu G_0^3}{3 L_0^3}}
%  \ex^{-\frac{2\tilde \nu (\tte^3-G_0^3)}{3 L_0^3}}
%  = \ex^{-\frac{2\tilde \nu G_0^3}{3 L_0^3}}
%  \ex^{\frac{2\tilde \nu (G_0^3-\tte^3)}{3 L_0^3}}
%  \le 2 \ex^{-\frac{2\tilde \nu G_0^3}{3 L_0^3}}
%  $$
%  In conclusion:
%  \begin{equation}\label{orderequivalent}
%  \ex^{-\frac{2\tilde \nu \tte^3}{3 L_0^3}}  \simeq
%  \ex^{-\frac{2\tilde \nu G_0^3}{3 L_0^3}}, \ \mbox{if}\  |\alo| \Tet ^{\frac32}\ll 1
%  \end{equation}
%  \end{remark}

The next theorem gives an asymptotic formula for the diference between the  unstable manifold of
the periodic orbit ${P}_{\eta_0, \xi_0}$ and the stable manifold of the periodic orbit ${P}_{\eta_0+\delta\eta, \xi_0+\de\xi}$.

\begin{theorem}\label{thm:MainSplitting}
Fix $L_0\in [1/2,2]$ and  $u_1, u_2$ such that $u_1> u_2 >0$.
% , $\alo=\frac{1}{\sqrt{2}} (\eta_0+i\xi_0)$  and  $\beta_0=\ol\al_0$,
Then, there exists $\Theta ^*\gg 1$ and $0<\varrho^*\ll 1$ such that for
$\Theta\ge \Theta ^*$, $(\eta_0, \xi_0)$
satisfying $\xi_0=\ol{\eta_0}$ and
% \begin{equation}\label{eq:condicioalG0}
$|\alo|\Theta^{3/2} \leq \varrho^*$ and $(\delta\eta,\delta\xi)$ satisfying $\de\xi=\ol{\de\eta}$ and $|\de\eta|\Theta^{3}\leq \varrho^*$,
% \end{equation}
% Then, there exists $\Theta^*$ such that if $\Theta \ge \Theta ^*$, and  $\al_0 \in \CC$,
% %\textcolor{magenta}{and  $\beta_0=\ol\al_0$,}
% satisfies
% $$
% |\al_0|G_0^{3/2}=|\alo|(\Theta-L_0+|\alo|^2)^{3/2}
% \le |\alo|(\Theta-L_0+\frac12|^2)^{3/2}
% \le |\alo|\Theta^{3/2}
% \ll 1 ,
% $$
%$|\al_0|G_0^{-3/2}=|\alo|(\Theta-L_0-|\alo|^2)^{-3/2} \ll 1$ and  $\beta_0=\ol\al_0$,
the unstable manifold of
 ${P}_{\eta_0, \xi_0}$ and the stable manifold of
 ${P}_{\eta_0+\de\eta, \xi_0+\de\xi}$ can be parameterized as  graphs with respect to $(u,\lo)\in (u_1,u_2)\times\TT$ as in
\eqref{def:sigmaparamnousnr}
%\eqref{def:sigmaparam} with \eqref{def:Parameterizations}
 for some functions $(Y^*, \Lambda^*, \al^*, \bet^*)$, $*=u,s$ which satisfy
%
% \marginpar{calen\\derivades?}
\begin{equation}\label{def:DerivadesManifolds}
% \begin{split}
 |Y^*|\leq \Theta^{-3},\quad  |\La^*|\leq C\Theta^{-6}, \quad
%  \\
 |\al^*|\leq \Theta^{-3}, \quad  |\bet^*|\leq C\Theta^{-3}\ln\Theta.
% \end{split}
\end{equation}
Moreover, its difference satisfies
\begin{equation}\label{def:SplittingFormula}
\begin{pmatrix}
Y^u(u,\ga, z_0) -Y^s(u,\ga,  z_0,\de z) \\
\La^u(u,\ga, z_0)-\La^s(u,\ga,  z_0,\de z)\\
\al^u(u,\ga, z_0) -\al^s(u,\ga,  z_0,\de z)\\
\bet^u(u,\ga,  z_0) -\bet^s(u,\ga,  z_0,\de z)
\end{pmatrix}=\NNN(u,\ga, z_0,\de z)\begin{pmatrix}
\MM_Y(u,\ga, z_0,\de z)\\
 \MM_\La(u,\ga,  z_0,\de z)\\
\de\eta+ \MM_\al(u,\ga,  z_0,\de z)\\
\de\xi+ \MM_\bet(u,\ga,  z_0,\de z)
\end{pmatrix}
% \begin{pmatrix}
% \MM_Y(u,\ga, \alo,\beto)+\OO\left(\ex^{-\frac{\tilde \nu G_0^3}{3 L_0^3}}G_0^{1/2}\ln^2G_0\right)\\
% \MM_\La(u,\ga, \alo,\beto)+\OO\left( \ex^{-\frac{\tilde \nu G_0^3}{3 L_0^3}}G_0^{-5/2}\ln^2G_0\right)\\
% \MM_\al(u,\ga, \alo,\beto)+\OO\left(G_0^{-6}\ln^2 G_0\right)\\
% \MM_\bet(u,\ga, \alo,\beto)+\OO\left( G_0^{-6}\ln^2 G_0\right).
% \end{pmatrix}
\end{equation}
where $z_0=(\eta_0,\xi_0)$, $\de z=(\de\eta,\de\xi)$,  $\NNN$ is a matrix which satisfies
\begin{equation}\label{def:matriuN}
\NNN=\Id+\OO(\Tet^{-3}\log \Tet)=\Id+\OO(\Tet^{-3})\quad \text{ and }\quad \NNN_{21}=\OO\left(\Tet^{-9/2}\log^2\Tet\right)
\end{equation}
%
% and
% \textcolor{red}{
% \[|D^N\NNN|\leq C(N)\Tet^{-3}\log \Tet\]}
%=\Id+\OO(\Theta^{-3}\log \Theta)
and the vector $\MM$ is of the form
\begin{equation}\label{def:MelnikvoWithErrors}
\begin{pmatrix}
\MM_Y(u,\ga,  z_0,\de z)\\
\MM_\La(u,\ga,  z_0,\de z)\\
\MM_\al(u,\ga,  z_0,\de z)\\
\MM_\bet(u,\ga,  z_0,\de z)
\end{pmatrix}
=
\begin{pmatrix}
\omega \pa_\vm \LL(\ga-\omega u,  z_0)
%+\OO\left(\ex^{-\frac{\tilde \nu G_0^3}{3 L_0^3}}G_0^{1/2}\ln^2G_0\right)\\
+\OO\left(\Tet^{-6}\ln^2\Tet\right)\\
-\pa_\vm \LL(\ga-\omega u,  z_0)
%+\OO\left( \ex^{-\frac{\tilde \nu G_0^3}{3 L_0^3}}G_0^{-5/2}\ln^2G_0\right)\\
+\OO\left( \ex^{-\frac{\tilde \nu \tte^3}{3 L_0^3}}\Tet^{-5/2}\ln^2\Tet\right)\\
-i\pa_{\beto}\LL(\ga-\omega u,  z_0)%+\OO\left(G_0^{-6}\ln^2 G_0\right)\\
+\OO\left(\Tet^{-6}\right)\\
i\pa_{\alo}\LL(\ga-\omega u,  z_0)
%+\OO\left( G_0^{-6}\ln^2 G_0\right),
+\OO\left(\Tet^{-6}\right),
\end{pmatrix}
\end{equation}
%
% which has as a first order the Melnikov function $\MM$  defined as
% \begin{equation}\label{eq:Melnikovvector}
% \begin{split}
% \MM_Y(\ga-\omega u, \alo,\beto)=&\,
% % -\pa_u L(u,\ga, \alo,\beto)=
% \omega \pa_\vm \LL(\ga-\omega u, \alo,\beto)\\
% \MM_\La(\ga-\omega u, \alo,\beto)=&\,
% % -\pa_\ga L(u,\ga, \alo,\beto)=
% -\pa_\vm \LL(\ga-\omega u, \alo,\beto)\\
% \MM_\al(\ga-\omega u, \alo,\beto)=&\,
% % -i\pa_{\beto}L(u,\ga, \alo,\beto)=
% -i\pa_{\beto}\LL(\ga-\omega u, \alo,\beto)\\
% \MM_\bet(\ga-\omega u, \alo,\beto)=&\,
% % i\pa_{\alo}L(u,\ga, \alo,\beto)=
% i\pa_{\alo}\LL(\ga-\omega u, \alo,\beto)
% \end{split}
% \end{equation}
%
where $\LL$ is the Melnikov potential introduced in \eqref{def:MelnikovPotential} and $\omega$ is given in \eqref{eq:omega}.
%
%\textcolor{magenta}{and the derivatives respect to $\alo, \beto$ have to be taken with $G_0$ constant.}

Moreover, the function $\MM$ satisfies the following estimates
\begin{equation}\label{def:DerivMelnikovAltres}
\begin{split}
  |\pa_{\alo}^i\pa_{\beto}^j\pa^k_\ga (\MM_Y+\omega\pa_{\sigma}\LL)|&\leq  C(i,j,k)\Tet^{-6}\\
|\pa_{\alo}^i\pa_{\beto}^j\pa^k_\ga (\MM_\al+i\pa_{\beto}\LL)|,  |\pa_{\alo}^i\pa_{\beto}^j\pa^k_\ga (\MM_\bet-i\pa_{\alo}\LL)|&\leq  C(i,j,k)\Tet^{-6}.
% \\
%   |\pa_{\alo}^i\pa_{\beto}^j\pa^k_\ga (\MM_\bet-i\pa_{\alo}\LL)|&\leq  C(i,j,k)\Tet^{-6}\log^2\Theta.
\end{split}
 \end{equation}
Furthermore, both for the derivatives of  the component $\MM_\Lambda$ and the $\gamma$-derivatives of the other components one has the following exponentially small estimates.
For any $i,j,k\geq 0$
\begin{equation}\label{def:DerivMelnikovLambda}
\begin{split}
|\pa_{\alo}^i\pa_{\beto}^j\pa^k_\ga (\MM_\La+\pa_\sigma\LL)|&\leq  C(i,j,k)\Tet^{-5/2+3(i+j)/2}\ex^{-\frac{\tilde \nu \tte^3}{3 L_0^3}}\log^2\Theta\\
% \end{equation}
% \textcolor{blue}{
% and
% \begin{equation}\label{def:DerivMelnikovAltres2}
% \begin{split}
|\pa_{\alo}^i\pa_{\beto}^j\pa^{k+1}_\ga (\MM_Y-\omega\pa_{\vm} \LL)|&\leq  C(i,j,k)\Tet^{2-3(i+j)/2}\ex^{-\frac{\tilde \nu \tte^3}{3 L_0^3}}\\
|\pa_{\alo}^i\pa_{\beto}^j\pa^{k+1}_\ga (\MM_\al+i\pa_{\alo} \LL)|,|\pa_{\alo}^i\pa_{\beto}^j\pa^{k+1}_\ga (\MM_\bet-i\pa_{\beto} \LL)|&\leq  C(i,j,k) \Tet^{1/2-3(i+j)/2}\ex^{-\frac{\tilde \nu \tte^3}{3 L_0^3}}.
% \\
% |\pa_{\alo}^i\pa_{\beto}^j\pa^{k+1}_\ga (\MM_\bet-i\pa_{\beto} \LL)|&\leq  C(i,j,k)\Tet^{1/2-3(i+j)/2}\ex^{-\frac{\tilde \nu \tte^3}{3 L_0^3}}.
\end{split}
\end{equation}
%
% \textcolor{blue}{
% \begin{equation}\label{eq:fitesderivadesMalfabeta}
% |\pa_\ga \wt\MM_Y-\omega\pa_{\ga\vm} \LL|\lesssim \Tet^{2}\ex^{-\frac{\tilde \nu \tte^3}{3 L_0^3}}
% % ,\qquad
% %  | \pa_x \wt\MM_Y|\lesssim \Tet^{-6}\ln^ 2\Tet.
%  \end{equation}
% }
%
%
% \begin{equation}\label{eq:fitesderivadesMalfabeta:2}
% % \begin{split}
% % \textcolor{red}{|\pa_\ga \wt\MM_y|\lesssim G_0^{3/2}\ex^{-\frac{\tilde \nu G_0^3}{3 L_0^3}},\qquad
% % | \pa_x \wt\MM_y|\lesssim G_0^{-3}.}\\
% |\pa_x\wt\MM_y|\lesssim \Tet^{-3} \quad\text{for}\quad x,y=\alo,\beto
% % \ex^{-\frac{\tilde \nu \tte^3}{3 L_0^3}},\qquad
% %  | \pa_x \wt\MM_Y|\lesssim \Tet^{-6}\ln^ 2\Tet.
% % \end{split}
%  \end{equation}
% and
% \begin{equation}\label{def:DerivMelnikovAltres2}
% \begin{split}
% |\pa_\ga \wt\MM_Y-\omega\pa_{\ga\vm} \LL|&\lesssim \Tet^{-1}\ex^{-\frac{\tilde \nu \tte^3}{3 L_0^3}}\\
% |\pa_\ga \wt\MM_\al+i\pa_{\alo\vm} \LL|&\lesssim \Tet^{-1}\ex^{-\frac{\tilde \nu \tte^3}{3 L_0^3}}\\
% |\pa_\ga \wt\MM_\bet-i\pa_{\beto\vm} \LL|&\lesssim \Tet^{-1}\ex^{-\frac{\tilde \nu \tte^3}{3 L_0^3}}.
% \end{split}
% \end{equation}
\end{theorem}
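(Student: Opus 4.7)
The plan is to cast the problem as an exponentially small splitting of separatrices with anisotropic splitting, adapting the complex-extension methodology of Lochak--Marco--Sauzin \cite{LochakMS03,Sauzin01} to the present high-dimensional, parabolic, multi-time-scale setting.

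\textbf{Step 1: Set-up in adapted symplectic coordinates.} First, I would use the symplectic change $(\la,L,\eta,\xi,\wt r,\wt y) \mapsto (\gamma,\Lambda,\alpha,\beta,u,Y)$ of Section \ref{sec:adaptedcoordinates}, tailored to the unperturbed homoclinic of Lemma \ref{lemma:homounperturbed}. In these coordinates the unperturbed invariant manifold of $P_{\eta_0,\xi_0}$ becomes $\{\Lambda=\alpha=\beta=Y=0\}$, and \eqref{def:sigmaparamnousnr} is precisely its graph representation. The symplectic form $d\gamma\wedge d\Lambda+id\alpha\wedge d\beta+du\wedge dY$ is what allows us, later, to recognize the first-order splitting as the gradient of a scalar Melnikov potential.

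\textbf{Step 2: Fixed-point formulation and size bounds.} The invariance condition for $(u,\gamma)\mapsto(Y^*,\Lambda^*,\alpha^*,\bet^*)$ becomes a transport-type integral equation driven by the partial derivatives of $\wt W$ pulled back to the unperturbed homoclinic, plus quadratic corrections. For the unstable manifold I would integrate backward from $u\to-\infty$ with zero asymptotic value; for the stable manifold, forward from $u\to+\infty$ with limit $(0,\delta\eta,\delta\xi,0)$. Since $\wt W=\OO(L_0^4/\wt r^3)=\OO(\Theta^{-6})$ along the homoclinic (where $\wt r=G_0^2\wh r_\h(u)$), a contraction argument in an appropriate Banach space of bounded analytic functions on $(u_1,u_2)\times\TT$ yields existence together with the size bounds \eqref{def:DerivadesManifolds}; the mild $\ln\Theta$ in the bound on $|\bet^s|$ arises from a non-integrable tail at $u\to+\infty$ carried by the shifted asymptotic condition.

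\textbf{Step 3: Complex extension and the exponentially small component.} This is the main obstacle. To prove the exponentially small bounds in \eqref{def:DerivMelnikovLambda}, I would extend both parameterizations analytically to complex strips whose width approaches the singularities of the unperturbed homoclinic, which lie at $u=\pm i/3$ (from $\tau=\pm i$ in \eqref{def:homoclinic}). Concretely, the fixed-point argument is rerun on two ``outer'' complex domains — one for $W^u$ reaching up to $\Im u=1/3-\kappa$, and one for $W^s$ reaching down to $-(1/3-\kappa)$ — with precise control of the blow-up rate near the singularity. On the overlap near the real axis one then subtracts the two parameterizations. Because the perturbation is fast with frequency $\omega=\nu G_0^3/L_0^3\to\infty$ as $\Theta\to\infty$, any non-zero Fourier harmonic in $\lambda$ of $\wt W$, once evaluated at $\Im u\approx 1/3$, picks up precisely the factor $\exp(-\tilde\nu\tte^3/(3L_0^3))$. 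This yields an exponentially small estimate for components (or derivatives) that see only non-zero harmonics — which is precisely the case for $\MM_\Lambda\approx-\pa_\sigma\LL$ and for any $\gamma$-derivative, explaining the anisotropy between \eqref{def:DerivMelnikovAltres} and \eqref{def:DerivMelnikovLambda}.

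\textbf{Step 4: Melnikov formula, matrix $\NNN$, and derivative estimates.} The first-order difference $(Y^s-Y^u,\Lambda^s-\Lambda^u,\alpha^s-\alpha^u,\bet^s-\bet^u)$ is, by direct integration along the unperturbed homoclinic together with the symplectic identification of Step 1, the gradient of the Melnikov potential $\LL$ defined in \eqref{def:MelnikovPotential}, augmented by the explicit $(\delta\eta,\delta\xi)$ contribution coming from the mismatched asymptotic limits of the two parameterizations. This matches the leading vector of \eqref{def:SplittingFormula}, while the asymptotic formulas of Proposition \ref{prop:MelnikovPotential} provide the sizes of its entries and, in particular, make visible the polynomial contributions carried by the zero Fourier mode $\LL^{[0]}$ and the exponentially small contributions carried by $\LL^{[\pm 1]}$. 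The matrix $\NNN$ is the Jacobian relating the graph parameterizations to the ambient Poincar\'e variables evaluated on the perturbed manifolds, so $\NNN=\mathrm{Id}+\OO(\wt W)=\mathrm{Id}+\OO(\Theta^{-3}\ln\Theta)$, and a single additional iteration of the fixed-point equation yields the sharper bound $\NNN_{21}=\OO(\Theta^{-9/2}\ln^2\Theta)$. Finally, the remaining derivative estimates \eqref{def:DerivMelnikovAltres}--\eqref{def:DerivMelnikovLambda} are obtained by differentiating the fixed-point equation in $(\alo,\beto,\gamma)$ and applying Cauchy estimates on the complex strips from Step 3; $\gamma$-differentiation projects away $\LL^{[0]}$ and therefore always inherits the exponentially small scale.
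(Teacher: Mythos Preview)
Your overall architecture (adapted symplectic coordinates, fixed-point construction on complex domains reaching $u=\pm i/3$, Melnikov potential as first order, Lazutkin-type exponential decay of nonzero Fourier harmonics) matches the paper's. But there is a genuine gap in Step~3--4, and it is precisely the step that makes the $\MM_\Lambda$ estimate work.

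The complex-extension argument controls only the \emph{nonzero} $\gamma$-harmonics of the error $\EE=\MM-\text{Melnikov}$: once you arrange $\EE\in\ker\LL$, each harmonic $\EE^{[q]}$, $q\neq0$, picks up the factor $e^{-|q|\omega/3}$. It says nothing about the average $\langle\EE_\Lambda\rangle_\gamma$, which from the fixed-point bounds alone is only $\OO(\Theta^{-6}\ln^3\Theta)$ --- far larger than the claimed exponential error in $\MM_\Lambda$. The paper closes this gap in Section~\ref{sec:Average} by a Poincar\'e-invariant argument: the action integral $\oint(Y\,du+\Lambda\,d\gamma+\beta\,d\alpha)$ vanishes along closed loops on both $W^u$ and $W^s$, and subtracting expresses $\langle\Delta\Lambda\rangle_\gamma$ as a quadratic form in the \emph{oscillating} parts of $\Delta\alpha,\Delta\beta,\Delta Y$, each already exponentially small. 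This is a use of the symplectic structure beyond the ``splitting is a gradient'' observation, and without it your $\MM_\Lambda$ estimate fails.

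A secondary point: your description of $\NNN$ as a coordinate Jacobian is not how it arises. The difference $\Delta=Z^u-Z^s$ satisfies a \emph{variable-coefficient} linear PDE $\wt\LL\Delta=(A+B+R)\Delta$. The paper first performs a symplectic straightening (Section~\ref{sec:change:straighteningoperator}) to turn $\wt\LL$ into $\LL$, then constructs a fundamental matrix $\Psi=\wt\Phi_A(\Id+\wt\Psi)$ of the resulting equation (Section~\ref{sec:FundamentalMatrix}); only then does one get $\Delta=\NNN\wh\Delta$ with $\wh\Delta\in\ker\LL$, which is what legitimizes the Lazutkin lemma. The refined bound $\NNN_{21}=\OO(\Theta^{-9/2}\ln^2\Theta)$ comes from the improved estimate $\|\Lambda^*\|\lesssim G_0^{-9/2}$ (which exploits that $\langle\partial_\gamma\PP_1\rangle_\gamma=0$) propagated through this construction, not from an extra fixed-point iteration.
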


Note that for $\MM_Y$ the error is \eqref{def:MelnikvoWithErrors} and \eqref{def:DerivMelnikovAltres} bigger than the first order. Modifying slightly the matrix $\NNN$ this error could be made smaller. However, this is not needed. The reason is that, by conservation of energy one does not need to take care of the distance between the invariant manifolds on the $Y$ component.

\begin{remark}\label{rmk:fitesMtilde}
The estimates in Proposition \ref{prop:MelnikovPotential} and the bounds \eqref{def:DerivMelnikovAltres} imply the following estimates that are needed for analyzing the scattering maps in Section \ref{sec:scatteringstatements},
\begin{equation*}
% \label{eq:fitesderivadesMalfabeta:3}
\begin{split}
% \textcolor{red}{|\pa_\ga \wt\MM_y|\lesssim G_0^{3/2}\ex^{-\frac{\tilde \nu G_0^3}{3 L_0^3}},\qquad
% | \pa_x \wt\MM_y|\lesssim G_0^{-3}.}\\
|D^N_{\alo,\beto}\MM_Y|&\leq C(N) \Tet^{-6}\\
|D^2_{\alo,\beto}\MM_x|&\lesssim \Tet^{-5}+\Tet^{-3}(|\eta_0|+|\xi_0|) \quad\text{for}\quad x=\al,\bet \\
|D^N_{\alo,\beto}\MM_x|&\leq C(N) \Tet^{-3} \quad\text{for }\quad x=\al,\bet\, \text{ and }\,N\geq  1, \, N\neq 2,
% \ex^{-\frac{\tilde \nu \tte^3}{3 L_0^3}},\qquad
%  | \pa_x \wt\MM_Y|\lesssim \Tet^{-6}\ln^ 2\Tet.
\end{split}
\end{equation*}
where $C(N)$ is a constant only depending on $N$.

Analogously Proposition \ref{prop:MelnikovPotential} and the bounds in \eqref{def:DerivMelnikovLambda} imply
\begin{equation*}
% \label{eq:fitesderivadesMLambda}
%  \begin{aligned}
% \textcolor{red}{ |\pa_\ga \wt\MM_\La|\lesssim G_0^{-1/2}\ex^{-\frac{\tilde \nu G_0^3}{3 L_0^3}},\qquad
%  |\pa_{\alo} \wt\MM_\La|\lesssim G_0^{3/2}\ex^{-\frac{\tilde \nu G_0^3}{3 L_0^3}},\qquad
%  |\pa_{\beto} \wt\MM_\La|\lesssim G_0^{3/2}\ex^{-\frac{\tilde \nu G_0^3}{3 L_0^3}}}\\
 |\pa^k_\ga\MM_\La|\leq  C(k)(\Tet^{-1/2}+\Tet^{-3/2}|\eta_0|)\ex^{-\frac{\tilde \nu \tte^3}{3 L_0^3}},\qquad
 |\pa_{\alo}\MM_\La|\lesssim \Tet^{3/2}\ex^{-\frac{\tilde \nu \tte^3}{3 L_0^3}},\qquad
 |\pa_{\beto}\MM_\La|\lesssim \Tet^{3/2}\ex^{-\frac{\tilde \nu \tte^3}{3 L_0^3}}
%  \end{aligned}
\end{equation*}
and, when $i+j\geq 1$,
\[
 |\pa_{\alo}^i\pa_{\beto}^j\pa^k_\ga\MM_\La|\leq C(i,j,k) \Tet^{3(i+j)/2}\ex^{-\frac{\tilde \nu \tte^3}{3 L_0^3}},\qquad
 \]
where $C(k)$ and $C(i,j,k)$ are independent of $\Theta$.

Finally, $i,j,k\geq 0$,
% for $x=\alo,\beto$,
% $y=\al,\bet$,
% \begin{equation}\label{eq:fitesderivadesMalfabeta}
% % \begin{split}
% % \textcolor{red}{|\pa_\ga \wt\MM_y|\lesssim G_0^{3/2}\ex^{-\frac{\tilde \nu G_0^3}{3 L_0^3}},\qquad
% % | \pa_x \wt\MM_y|\lesssim G_0^{-3}.}\\
% |\pa_\ga \wt\MM_Y-\omega\pa_{\ga\vm} \LL|\lesssim \Tet^{-1}\ex^{-\frac{\tilde \nu \tte^3}{3 L_0^3}},\qquad
%  | \pa_x \wt\MM_Y|\lesssim \Tet^{-6}\ln^ 2\Tet.
% % \end{split}
%  \end{equation}
%  and
%
\begin{equation*}
% \label{def:DerivMelnikovAltres2}
\begin{split}
|\pa_{\alo}^i\pa_{\beto}^j\pa^{k+1}_\ga\MM_Y|&\leq C(i,j,k) \Tet^{3-3(i+j)/2}\ex^{-\frac{\tilde \nu \tte^3}{3 L_0^3}}\\
|\pa_{\alo}^i\pa_{\beto}^j\pa^{k+1}_\ga\MM_\al|,|\pa_{\alo}^i\pa_{\beto}^j\pa^{k+1}_\ga \MM_\bet|&\leq C(i,j,k) \Tet^{3/2-3(i+j)/2}\ex^{-\frac{\tilde \nu \tte^3}{3 L_0^3}}.
% \\
% |\pa_{\alo}^i\pa_{\beto}^j\pa^{k+1}_\ga \MM_\bet|&\leq C(i,j,k) \Tet^{3/2-3(i+j)/2}\ex^{-\frac{\tilde \nu \tte^3}{3 L_0^3}}.
\end{split}
\end{equation*}
%
% \textr{Podem ser mes fins}
\end{remark}

Theorem \ref{thm:MainSplitting} is proved in several steps.
First, in Section \ref{sec:ParamInvManifolds}, we prove the existence of parameterizations of the form  \eqref{def:sigmaparamnousnr} for the invariant manifolds.
These parameterizations are analyzed in complex domains.
They fail  to exist at $u=0$ since at this point, when written in the original coordinates $(\lambda, L, \eta,\xi,\wt r,\wt y)$, the unperturbed invariant manifold is not a graph over the variables $\wt r, \la$.
% and therefore the parameterizations   \eqref{def:sigmaparamnousnr} are not well defined at $u=0$ (in particular the fact  that $\widehat y_\h(0)=0$ makes them singular).
Thus, in Section \ref{sec:InvManifold:ExtensionUnstable}, we extend the unstable
invariant manifolds using a different parameterization. This allows us to have
at the end a common domain (intersecting the real line) where both manifolds have graph parameterizations
as  \eqref{def:sigmaparamnousnr}.
Finally in Section \ref{sec:Difference} we analyze the
difference between the invariant manifolds and complete the proof of Theorem
\ref{thm:MainSplitting}.

\subsection{The scattering maps associated to the invariant manifolds of infinity}\label{sec:scatteringstatements}

Once we have asymptotic formulas \eqref{def:SplittingFormula} for the difference of the stable and unstable manifolds for nearby periodic orbits in $\EE_\infty$,  next step is to look for their intersections to find heteroclinic connections between different periodic orbits. This will allow us to define scattering maps in suitable domains of $\EE_\infty$.
Now we   provide two homoclinic channels and the two associated scattering maps whose domains inside $\EE_\infty$ overlap.
% As a consequence Proposition \ref{thm:SplittingGeometric}  gives the existence of two periodic orbits with homoclinic connections to them.
%
%  prove that in a neighborhood of these homoclinics to periodic orbits there exist two transverse homoclinic channels to $\EE_\infty$. This two channels allow to define two different scattering maps.
The construction of the homoclinic channels relies on certain non-degeneracies of the Melnikov potential analyzed in Proposition \ref{prop:MelnikovPotential}.
In particular, we need non-trivial dependence on the angle $\ga$.
If one analyzes the first $\gamma$-Fourier coefficient of the potential $\LL(\gamma-\omega u, \alo,\beto)$ given by \eqref{def:MelnikovPotential}, one can easily see that it vanishes at a  point of the form
\begin{equation*}
% \label{def:etabad}
\overline{\xi_{\mathrm{bad}}}=\etabad= \frac{N_3}{24\sqrt{2}N_2}L_0^{5/2}\tte^{-2}+\OO\left(\tte^{-5/2}\right).
\end{equation*}
Therefore, we will be able to define scattering maps for $|\eta_0|\ll \Theta^{-3/2}$ and $\xi_0=\ol{\eta_0}$ (that is the domain considered in Theorem \ref{thm:MainSplitting} minus a small ball around the point $(\etabad, \ol{\etabad})$.

The main idea behind Theorem \ref{prop:scatteringmap} is the following:
We  fix a section $u=u^*$   and, for $(\alo,\beto)$ in the good domain $\wt\DD$ introduced below (see \eqref{def:domainscattering}), we  analyze the zeros of equations \eqref{def:SplittingFormula}, which lead to  two solutions
\[
\ga^j=\ga^j(u^*,\alo,\beto), \
\delta \xi^j=\delta \xi^j(u^*,\alo,\beto), \
\delta \eta^j=\delta \eta^j(u^*, \alo,\beto), \ j=1,2.
\]
These solutions provide two heteroclinic points through the parameterization \eqref{def:sigmaparamnousnr} as
\[
\begin{split}
z_{\mathrm{het}}^j &= z_{\mathrm{het}}^j (u^*,\eta_0,\xi_0) = \left(\la_{\mathrm{het}}, L_{\mathrm{het}}, \eta_{\mathrm{het}},\xi_{\mathrm{het}}, \wt r_{\mathrm{het}},\tilde y_{\mathrm{het}}\right)\\
&=
\left(\la, L^u, \eta^u,\xi^u, \wt r,\tilde y^u\right)(u^*,\ga ^j(u^*,\alo,\beto)) \in W^u(P_{\eta_0,\xi_0})\cap
W^s(P_{\eta_0+\delta\eta^j,\xi_0+\delta\xi^j}).
\end{split}
\]
%
% where we recall that $\la_{\mathrm{het}}=\lambda (u^*,\ga ^j(u^*,\alo,\beto)) =\ga ^j(u^*,\alo,\beto)+\phi_h(u^*)$.
Varying $(\eta_0,\xi_0)$ and $u$, one has two 3--dimensional homoclinic channels which define homoclinic manifolds to infinity. These channels are defined by
\begin{equation}\label{def:homochannels}
\Gamma^j=\left\{ z_{\mathrm{het}}^j (u,\eta_0,\xi_0):\,u\in (u_1,u_2), (\eta_0,\xi_0)\in\wt\DD \right\}
\end{equation}
and associated to these homoclinic channels one can define scattering maps  which are analyzed in the next theorem.

To define the domains of the scattering maps we introduce the notation
\begin{equation}\label{def:disk}
\DD_\rr(\eta_0,\xi_0)=\left\{w\in\RR^2: |(\eta,\xi)-(\eta_0,\xi_0)|<\rr \right\}.
\end{equation}

%We denote these scattering maps as
%\begin{equation}\label{def:scatteringmap}
%\wt \SSS^i:
%\begin{pmatrix}\la_0^-\\ \eta_0^-\\ \xi_0^-\end{pmatrix}\to \begin{pmatrix}\la_0^+\\ \eta_0^+\\ \xi_0^+\end{pmatrix}
%\end{equation}
%Those are maps defined on open sets of $\TT \times \RR^2$.
% To define them, we introduce the following notation
% \[
%  \DD_\rr(\eta_0,\xi_0)=\left\{w\in\RR^2: |(\eta,\xi)-(\eta_0,\xi_0)|<\rr \right\}.
% \]
% for some $w_0=(\eta_0,\xi_0,\infty,0)$.
% Note that we are
% ``identifying'' $w_0\sim (\eta_0,\xi_0)$ .

\begin{theorem}\label{prop:scatteringmap}
% Consider the periodic orbits $P^j=P_{\eta_0^j,\xi_0^j}$ obtained in Theorem \ref{thm:SplittingGeometric} and fix $\eta>0$ small. For $j=1,2$, consider the disks
% \[
% \DD^j=\DD_\rr(\eta_0^j,\xi_0^j)\qquad \text{where}\qquad \rr=\tte^{11/2-\eta}  \ex^{-\frac{\tilde \nu \tte^3}{3 L_0^3}}\qquad \text{with}\qquad \tte=\Tet-L_0.
% % \quad\text{and}\quad j=1,2
%  \]
% % where  $$.
% % and ,
% % $z_0^j=(\ga, w_0^j) $ with $w_0^j=(\eta_0^j,\xi_0^j,\infty,0)$, are the  points in the periodic orbits $P^j \in \EE_\infty$ with homoclinic connections associated to the points $z_h^j$ given in Theorem \ref{thm:SplittingGeometric}.
Assume that $m_0\neq m_1$. Fix $L_0\in [1/2,2]$ and $0<\varrho\ll \varrho^*$ where $\varrho^*$ is the constant introduced in Theorem \ref{thm:MainSplitting}. Then, there exists $\Theta ^*\gg 1$ such that, if $\Theta \geq\Theta ^*$, one can define scattering maps
\[
\wt  \SSS^j:\,   \TT\times\left[\frac12,1\right]\times \wt \DD \to  \TT\times\left[\frac12,1\right]\times\CC, \quad j=1,2
\]
where (see Figure \ref{fig:Discoscattmap})
\begin{equation}\label{def:domainscattering}
  \wt \DD=\DD_{\varrho\Theta^{-3/2}}(0,0)\setminus \DD_{\varrho\Theta^{-2}}(\etabad,\ol{\etabad}),
\end{equation}
associated to the homoclinic channels $\Gamma^j$ introduced in \eqref{def:homochannels}. These scattering maps are of the form
\[
\wt \SSS^j (\la, L_0, \eta_0,\xi_0)= \begin{pmatrix} \la +\Delta_j(\eta_0,\xi_0)\\L_0\\ \SSS^j (\eta_0,\xi_0)\end{pmatrix}
\]
where $\SSS^j$ is independent of $\la$ but may depend\footnote{To simplify the notation we omit the dependence of $\SSS^j$ on $L_0$.
In fact, from now one we will restrict the scattering map to a level of energy.
Since the energy determines $L_0$ it can be treated as a fixed constant} on $L_0$ and is given by
%
% \begin{equation}%\label{def:ScatteringFormulasP}
%  \begin{split}
% \SSS^j \begin{pmatrix}\eta_0\\ \xi_0\end{pmatrix}=
% \begin{pmatrix}
% \eta_0+
% \MM_\al ((j-1)\pi,\eta_0,\xi_0)
%  +\OO\left(\Tet^{-6}\ln \Tet\right)\\
% \xi_0+\MM_\bet  ((i-1)\pi,\eta_0,\xi_0) +\OO\left(\Tet^{-6}\ln \Tet\right)
% \end{pmatrix}
% \end{split}
% \end{equation}
% %
% consequently:
%
%
\begin{equation}\label{def:ScatteringFormulasP}
 \begin{split}
\SSS^j(\eta_0,\xi_0)=
\begin{pmatrix}
\eta_0-i
\wt \nu \pi L_0^4 (\tte+\eta_0\xi_0)^{-3}
\left[ A_1 \eta_0+2 A_2 \eta_0^2\xi_0+A_3 \tte ^{-2}
\right]\\
\xi_0+i\wt \nu \pi L_0^4 (\tte+\eta_0\xi_0)^{-3}
\left[ A_1 \xi_0+2 A_2 \eta_0\xi_0^2+A_3 \tte ^{-2}
\right]\\
\end{pmatrix}
+\RRR^j(\eta_0,\xi_0)
\end{split}
\end{equation}
where $\tte=\Tet-L_0$,
\begin{equation}\label{def:A1A2A3}
A_1= \frac{3N_2}{8L_0},\quad A_2=-\frac{3 N_2}{16 L_0^2},\quad  A_3= - N_3\frac{15}{\sqrt{2} 64} L_0^\frac{3}{2}.
%  \text{ }.
\end{equation}
(see \eqref{def:massterm}) and $\RRR^j$ satisfies
\[
\RRR^j(\eta_0,\xi_0)=\OO\left(\Tet^{-5},\Tet^{-4}|\eta_0|\right)
% \OO\left(\Tet^{-6}\ln \Tet, \Tet^{-7},\Tet^{-4}|\eta_0|\right).
\]
Moreover,
\begin{itemize}
\item $\SSS^j$ is symplectic in the sense that it preserves the symplectic form $d\eta_0\wedge d\xi_0$.
\item Fix $N\geq 3$. Then,  the derivatives of $\RRR^j$ satisfy
\[
% |D\SSS^j(z) |\lesssim \Theta^{-3}, \quad |D^2\SSS^j(z) |\lesssim \Theta^{-5}\quad\text{and}\quad
|D^k\RRR^j(z) |\leq C(k) \Theta^{-5}, \,\,k=1\ldots N
\]
% \[
% |D\SSS^j(z) |\lesssim \Theta^{-3}, \quad |D^2\SSS^j(z) |\lesssim \Theta^{-5}\quad\text{and}\quad |D^k\SSS^j(z) |\leq C_k \Theta^{-3}, \,\,k=3\ldots N
% \]
% and, denoting $\SSS^j=(\SSS_1^j,\SSS_2^j
% \[
%  \pa_{\eta_0}^i\pa_{\xi_0}^j
% \]
%
for $z\in \widetilde \DD$, where $C(k)$ is a constant which may depend on $k$ but is independent of $\Theta$.
% \item Falta el twist
\item  There exists points
%in $ \KK^{-1}(-\frac{G_0 \nu}{2L_0^2})$,
$z_0^j=(\eta_0^j,\xi_0^j)$, $j=1,2$, satisfying
%$(\eta_0^i)^2+(\xi_0^i)^2\leq \zeta_0^2$ with
%$\zeta_0 G_0^{3/2}\ll 1$
\begin{equation}\label{def:formulafixedscattering}
% \sqrt{\frac{\eta_0^2+\xi_0^2}{2}}\left( \Theta - L_0+\frac{\eta_0^2+\xi_0^2}{2}\right)^\frac32 \le
% \sqrt{\frac{\eta_0^2+\xi_0^2}{2}} \left( \Theta - L_0+\frac12 \right)^\frac32
% \le
% \sqrt{\frac{\eta_0^2+\xi_0^2}{2}} \Theta^\frac32
%  \ll 1,
\xi_0^j=\overline{\eta_0^j}\qquad \text{and}\qquad\eta_0^j=\frac{5N_3}{8N_2}\frac{L_0^3}{\sqrt{2L_0}}\tte^{-2}+\OO\left(\Tet^{-3}\log^2\Tet\right)
\end{equation}
where $N_2$ and $N_3$ are the constants introduced in \eqref{def:massterm},
such that $\SSS^j(\eta_0^j,\xi_0^j)=(\eta_0^j,\xi_0^j)$. Furthermore, the distance between these two points is exponentially small as
%
% Morever, the distance between the periodic orbits $P^1$ and $P^2$ is exponentially small. More concretely the  values $(\eta_0^j,\xi_0^j)$ satisfy
\begin{equation}\label{def:DistanceHomosteor}
%  \begin{split}
  \eta_0^2-\eta_0^1= \xi_0^2-\xi_0^1=-\frac{4}{\sqrt{\pi}}L_0^{1/2} \tte^{9/2} \ex^{-\frac{\tilde \nu \tte^3}{3 L_0^3}} \left(1+\OO\left(\Tet^{-1}\ln^ 2\Tet\right)\right).
%   \xi_0^2-\xi_0^1&=-\frac{4}{\sqrt{\pi}}L_0^{1/2} \tte^{9/2} \ex^{-\frac{\tilde \nu \tte^3}{3 L_0^3}} \left(1+\OO\left(\Tet^{-1}\ln^ 2\Tet\right)\right).
%  \end{split}
\end{equation}
% its unstable manifold intersects transversally the stable manifold of $\EE_{\infty}$  in the energy level $\wt \KK=-\frac{\nu}{2L_0^2}$ along an homoclinic orbit to itself.
\end{itemize}
\end{theorem}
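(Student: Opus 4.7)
The plan is to extract the scattering maps by inverting the splitting relation \eqref{def:SplittingFormula} on a fixed section $\{u=u^*\}$, then to read off their explicit leading behavior from the Melnikov potential computed in Proposition \ref{prop:MelnikovPotential}, and finally to locate their fixed points by a balance argument. A heteroclinic point belongs to $W^u(P_{\eta_0,\xi_0})\cap W^s(P_{\eta_0+\delta\eta,\xi_0+\delta\xi})$ precisely when the left-hand side of \eqref{def:SplittingFormula} vanishes. Since $\NNN=\Id+\OO(\Tet^{-3}\log\Tet)$ by \eqref{def:matriuN}, this is equivalent to the vanishing of $(\MM_Y,\MM_\Lambda,\delta\eta+\MM_\al,\delta\xi+\MM_\bet)$. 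The last two equations determine $\delta\eta^j=-\MM_\al|_{\gamma=\gamma^j}$ and $\delta\xi^j=-\MM_\bet|_{\gamma=\gamma^j}$. For the first two, conservation of energy along the unperturbed homoclinic forces them to be proportional to $\partial_\sigma\LL$ at leading order (compare the first two lines of \eqref{def:MelnikvoWithErrors}), so the whole system reduces to the scalar equation $\partial_\sigma\LL(\gamma-\omega u^*,\eta_0,\xi_0)=0$ up to controlled remainders. By Proposition \ref{prop:MelnikovPotential} the dominant $\sigma$-dependence of $\LL$ is $\LL^{[1]}e^{i\sigma}+\LL^{[-1]}e^{-i\sigma}$, and the coefficient $\LL^{[1]}$ is nonzero on $\wt\DD$ precisely because the ball of radius $\varrho\,\Theta^{-2}$ around $(\etabad,\overline{\etabad})$ has been removed; consequently $\partial_\sigma\LL$ has exactly two simple zeros $\sigma^1,\sigma^2=\sigma^1+\pi$ in $\TT$, depending smoothly on $(\eta_0,\xi_0)$ by the implicit function theorem. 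These supply the two homoclinic channels $\Gamma^j$ in \eqref{def:homochannels} and the two scattering maps $\wt\SSS^j$.

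Substituting the expressions for $\MM_\al\approx -i\pa_{\beto}\LL$ and $\MM_\bet\approx i\pa_{\alo}\LL$ (see \eqref{def:MelnikvoWithErrors}) and then the $\sigma$-independent part $\LL^{[0]}$ of Proposition \ref{prop:MelnikovPotential} reproduces \eqref{def:ScatteringFormulasP} term by term: the coefficients $A_1,A_2,A_3$ in \eqref{def:A1A2A3} coincide with what one reads off from $\pa_{\beto}\LL^{[0]}$ and $\pa_{\alo}\LL^{[0]}$ (using $L_0^2/\sqrt{2L_0}=L_0^{3/2}/\sqrt{2}$). All higher-order contributions from $\RRR_0$, from $\LL^{[1]}e^{i\sigma^j}$, from the error terms in \eqref{def:DerivMelnikovAltres} and from $\NNN-\Id$ are collected in $\RRR^j$; the derivative bounds on $\RRR^j$ follow from the $\CCC^N$ estimates on $\LL$ and $\MM$ summarised in Remark \ref{rmk:fitesMtilde}. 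The shift $\Delta_j(\eta_0,\xi_0)$ in $\lambda$ is obtained by the same procedure applied to the $\lambda$-component of the parameterization \eqref{def:sigmaparamnousnr}. Symplecticity of $\SSS^j$ (and the fact that it preserves the energy level, hence fixes $L_0$) follows from the abstract scattering-map theory of \cite{DelshamsLS08} adapted to the parabolic infinity manifold as in \cite{DelshamsKRS19}; the construction via heteroclinic orbits of a Hamiltonian flow automatically preserves the reduced symplectic form $d\eta_0\wedge d\xi_0$.

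For the fixed points, the real-analytic reality condition $\xi_0=\overline{\eta_0}$ reduces $\SSS^j(z)=z$ to the single complex equation $A_1\eta_0+2A_2|\eta_0|^2\eta_0+A_3\tte^{-2}+(\text{remainder})=0$. The hypothesis $m_0\neq m_1$ ensures $N_3\neq 0$, hence $A_3\neq 0$, and the leading balance $A_1\eta_0\sim -A_3\tte^{-2}$ gives, after inserting \eqref{def:A1A2A3}, the value in \eqref{def:formulafixedscattering}; an implicit function argument around this approximate solution produces the genuine fixed point $(\eta_0^j,\xi_0^j)$ with the stated error $\OO(\Tet^{-3}\log^2\Tet)$. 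To obtain the distance $z_0^2-z_0^1$, I use that $\SSS^2-\SSS^1$ is driven only by $\sigma^2-\sigma^1=\pi$ applied to the $\LL^{[1]}e^{i\sigma}$-part of $\MM_\al,\MM_\bet$; by Proposition \ref{prop:MelnikovPotential} this is the exponentially small quantity whose leading coefficient is proportional to $L_0^{7/2}\tte^{3/2}e^{-\tilde\nu\tte^3/(3L_0^3)}$. Dividing this splitting by the linearization of the polynomial part near the common approximate fixed point (which is $A_1+\OO(\Tet^{-4})\sim\tte^{-3}$ after the factor $\tilde\nu\pi L_0^4(\tte+|\eta_0|^2)^{-3}$ in \eqref{def:ScatteringFormulasP}) yields the algebraic prefactor $-\tfrac{4}{\sqrt\pi}L_0^{1/2}\tte^{9/2}$ and the relative error $\OO(\Tet^{-1}\ln^2\Tet)$ of \eqref{def:DistanceHomosteor}. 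The main obstacle I foresee is the bookkeeping in this last step: one must check that the error terms coming from $\NNN-\Id$, from $\RRR_0,\RRR_1,\LL^{\geq}$ and from the implicit function theorem around each $(\eta_0^j,\xi_0^j)$ are all strictly smaller than the exponentially small leading contribution; this is possible thanks to the highly anisotropic bounds of Remark \ref{rmk:fitesMtilde}, which give polynomial errors in the $\gamma$-independent directions but exponentially small errors in the $\gamma$-derivatives, so that the $\LL^{[1]}$ signal is not contaminated.
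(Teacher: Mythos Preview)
Your proposal is correct and follows essentially the same route as the paper: fix a section $u=u^*$, use invertibility of $\NNN$ to reduce to the vanishing of $\MM$, drop the $Y$-equation by energy conservation, solve $\MM_\Lambda\approx-\partial_\sigma\LL=0$ for the two values $\gamma^j$ (which requires $\LL^{[1]}\neq 0$, hence the excision of the ball around $(\etabad,\overline{\etabad})$), and read off $\delta\eta,\delta\xi$ from $\MM_\al,\MM_\bet$ via the Melnikov asymptotics. For the exponentially small distance between the fixed points the paper carries out exactly your linearization argument, but works directly with the equations $\MM_\al(u^*,\gamma^j,z_0^j)=\MM_\bet(u^*,\gamma^j,z_0^j)=0$ rather than with $\SSS^j$, which slightly streamlines the bookkeeping you flagged as the main obstacle.
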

This theorem is proven in Section \ref{sec:existencescattering}.

\begin{figure}[h]
\begin{center}
\includegraphics[height=4cm]{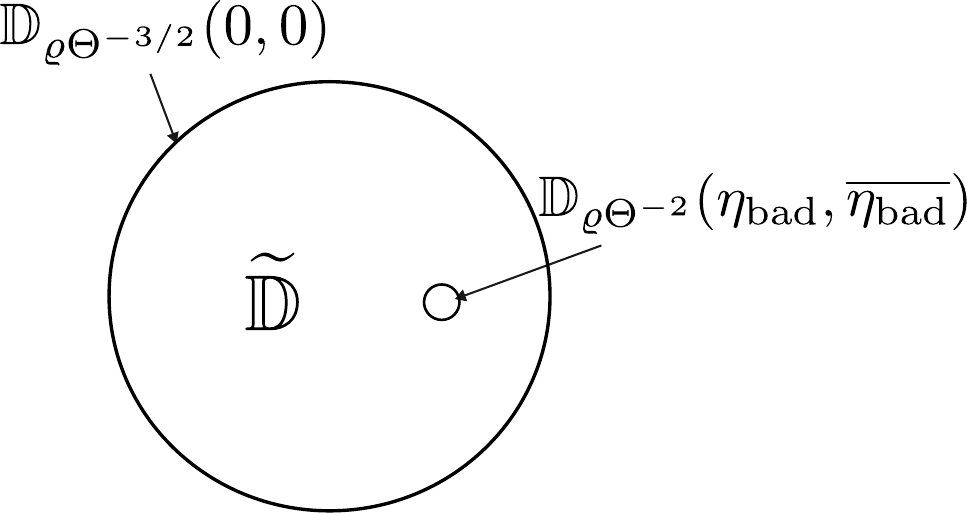}
\end{center}
\caption{The domain $\wt \DD$ in \eqref{def:domainscattering} of the scattering maps (see Theorem \ref{prop:scatteringmap}).}\label{fig:Discoscattmap}
\end{figure}

To analyze the return map from a neighborhood of infinity to itself along the homoclinic channels it is convenient to reduce the dimension of the model. To this end,  we apply the classical Poincar\'e-Cartan reduction. We fix the energy level $\wt \KK=-\frac{ \nu}{2L_0^2}$. Then, by the Implicit Function Theorem,  we know that there exists a function $\tL(\la,\eta,\xi,\wt r,\wt y;\Theta)$ satisfying
\[
 \wt \KK\left(\la,-\tL(\la,\eta,\xi,\wt r,\wt y),\eta,\xi,\wt r,\wt y\right) =-\frac{ \nu}{2L_0^2}.
\]
The function $\tL$ depends also on $\Theta$ and $L_0$ (which can be treated now as a parameter). We omit this dependence to simplify the notation.

The function $\tL$ can be seen as a Hamiltonian system of two and a half degrees of freedom with $\la$ as time. Then, it is well known that the trajectories of $\tL$  coincide with the trajectories of $\wt \KK$ at the energy level $\wt \KK=-\frac{ \nu}{2L_0^2}$ (up to time reparameterization).
% Then, the orbits of the Hamiltonian system associated \eqref{def:Ham:Poincarenr} are also orbits of the $\wt \KK=-\frac{ \nu}{2L_0^2}$.
%
%
% one can eliminate the variable $L$ using the conservation of the (autonomous) Hamiltonian \eqref{def:Ham:Poincarenr}. We fix the energy level $\wt \KK=-\frac{ \nu}{2L_0^2}$ to eliminate the  variable
% $L=L(\la,\eta,\xi,\wt r,\wt y;\Theta)$ and work with the variables $(\la,\eta,\xi,\wt r,\wt y)$.
% Then, one can write the equations of  \eqref{def:Poincarenr} associated to these components as
% \begin{align}\label{def:Poincarecanvidetemps}
% \la'&=f_1(\la,w)\\
% w' &=f_2(\la,w)
% \end{align}
% where $w=(\eta,\xi,\wt r,\wt y)$. Note that this equation can be seis Hamiltonian
% We also  reparameterize time so that $\la'=1$. Indeed, if one denotes
% Sometimes is more convenient to reparameterize  time so that one can treat  $\la$ as time. That is,  we perform a change of time
% \[\frac{d s}{d t}=f_1(\la (t), w(t))\]
% and we obtain
% \begin{equation}\label{eq:canvidetemps}
% \begin{split}
% \frac{d  \la}{ds}&=1\\
% \frac{dw}{ds} &=\frac{f_2}{f_1}
% \end{split}
% \end{equation}
% which are the equations in the extended phase space of the non-autonomous Hamiltonian $L(\la,\eta,\xi,\wt r,\wt y)$.
% We denote the general solution of this system as $\wh{\psi}(s;\la,w)$ (note that
% $\wh{\psi}(\la;\la,w)=w$).
From now on and, in particular, to analyze the return map from a neighborhood of infinity to itself we consider the flow given by the Hamiltonian $\tL$.

To this end, we need to compute the scattering maps associated  to the new Hamiltonian $\tL$ from the scattering maps  $\wt\SSS_j$ obtained in Theorem \ref{prop:scatteringmap} (restricted to the energy level $\wt \KK=-\frac{ \nu}{2L_0^2}$). Indeed, if we denote by
\[
\check{\SSS}^j:\, \TT\times \wt\DD \subset\TT\times\CC\to \TT\times\CC, \quad j=1,2
\]
the scattering maps associated to $\tL$, they are of the form
\[
\check{\SSS}^j (\la, \eta_0,\xi_0)= \begin{pmatrix}\la\\ \SSS^j (\eta_0,\xi_0)\end{pmatrix}.
\]
where $\SSS^j$ are the functions introduced in \eqref{def:ScatteringFormulasP}.
Note that the fact that $\la$  is now time, implies that the corresponding component in the scattering map is the identity. Nevertheless, as the $(\eta,\xi)$ coordinates of the periodic orbit in $\EE_\infty$ do not evolve in time, the associated components of  the scattering maps for the non-autonomous Hamiltonian $\tL$ and the original one $\wt\KK$ coincide.

%
% Recall that the meaning of  $S(z^-)=z^+$ is that there exists $z_{het}$ such that  the syncronization \eqref{eq:sincronizehetero}
% occurs for system \eqref{def:Poincarenr}. But the change of time \eqref{eq:canvidetemps} breaks, in general, this syncronization property, and therefore changes the Scattering map.
% Nevertheless, as $\eta,\xi$ coordinates of the periodic orbit in $\EE_\infty$ do not move, the formulas of the Scattering map for the non-autonomous system \eqref{def:Poincarecanvidetemps} and \eqref{def:PoincarenrsenseL} are the same.
%
To construct the invariant set for the return map, the first step is to construct an isolating block for a suitable (large) iterate of the scattering maps. To this end, the first step is to obtain properties of $\SSS^j$.
Theorem \ref{prop:scatteringmap}
%and Theorem \ref{thm:SplittingGeometric}
implies the following proposition.

% \begin{split}\SSS^i \begin{pmatrix}\eta_0\\ \xi_0\end{pmatrix}=\begin{pmatrix}\eta_0+\wt \nu \pi L_0^4 (\tte+\frac{\eta_0^2+\xi_0^2}{2})^{-3}
%\left[ A_0 \tte^{-2}+A_1 \xi_0  -A_3(\eta_0^2+\xi_0^2)\xi_0\right]\\\xi_0+\wt \nu \pi L_0^4 (\tte+\frac{\eta_0^2+\xi_0^2}{2})^{-3}  \left[ -A_1\eta_0 + A_3(\eta_0^2+\xi_0^2)\eta_0\right]\end{pmatrix} +\OO(\Tet^{-6}\ln \Tet, \Tet^{-7},\Tet^{-5}(\eta_0^2+\xi_0^2))\end{split}\end{equation}
%
%Where $A_1= \frac{3N_2}{8L_0}$, $A_0= -N_3\frac{15}{\sqrt2 64} L_0^\frac32$, $A_3=\frac{3}{2L_0^2}$, $A_4=\frac{3\wt \nu N_2L_0^\frac72 }{2\sqrt{2}}$

%\begin{equation}\label{def:ScatteringFormulas}\begin{split}
%\SSS^j\begin{pmatrix}\al_0\\ \bet_0\end{pmatrix}=\begin{pmatrix}\al_0+\MM_\al(u,\ga, \alo,\beto)+\OO\left(G_0^{-6}\ln G_0\right)\\
%\bet_0+\MM_\bet(u,\ga, \alo,\beto)+\OO\left(G_0^{-6}\ln G_0\right)\end{pmatrix}
%\end{split}\end{equation}
\begin{proposition}\label{prop:scatteringproperties}
Assume that $m_0\neq m_1$. The scattering maps $\SSS^j$ introduced in \eqref{def:ScatteringFormulasP} have the following properties.
\begin{itemize}
%  \item
%  They satisfy $\SSS^j(z_0^j)=z_0^j$ where $z_0^j=(\eta_0^j,\xi_0^j)$ are the periodic orbits introduced in Theorem \ref{prop:scatteringmap}.
 \item
 Fix $N\geq 3$.
 For $j=1,2$, the expansion of $\SSS^j$ around the points $z_0^j$ obtained in Theorem \ref{prop:scatteringmap}. is of the form
 \[
  \SSS^j(z)=z_0^j+\mu_j (z-z_0^j)+\sum_{k=2}^NP_k\left(z-z_0^j\right)+\OO\left(z-z_0^j\right)^{N+1}
 \]
where $z=(\eta,\xi)$ and
%
%\textcolor{magenta}{\begin{equation\label{def:VapScattering??}\la_j=e^{i\omega_j}\qquad \text{ with }\quad \omega_j=\omega_0 G_0^{-3}+\OO\left(G_0^{-4}\right)\end{equation}}
%
% \marginpar{Revisar}
%\textcolor{blue}{
\begin{equation}\label{def:VapScattering}
\mu_j=e^{i\omega_j}\qquad \text{ with }\quad \omega_j=\wt \nu \pi L_0^4 A_1 \tte^{-3}+\OO\left(\Tet^{-4}\right)
% \qquad \text{ and }\qquad \omega_0= \wt \nu \pi L_0^4 A_1
\end{equation}
%}
%
and $P_k$ are homogeneous polynomials in
%$\left(\al-\al_0^j\right)$ and $\left(\bet-\bet_0^j\right)$ of degree $k$.
% $$z=(\eta,\xi)$ with $\eta=\left(
$\eta-\eta_0^j$ and
% $)$ and $\xi=\left(
$\xi-\xi_0^j$
% )$
of degree $k$.
Moreover they satisfy
% \marginpar{Posar la $C$}
\[
 \begin{split}
 P_2(z)&=\sum_{i+j=2}b^2_{ij}\xi^i\eta^j\qquad \text{ with }\quad b^2_{ij}=\OO\left(\Tet^{-5}\right)\\
 P_3(z)&= \TTT\tte^{-3}|z|^2 z+\Tet^{-5}\OO\left(z^3\right)\qquad
  \end{split}
\]
where
\begin{equation}\label{def:TTT}
 \TTT=-4i\wt\nu\pi L_0^4A_2+\OO\left(\Theta^{-1}\right)
\end{equation}
(see \eqref{def:A1A2A3} and \eqref{def:massterm}) satisfies $\TTT\neq 0$ and the coefficients of $P_k$ for $k\geq 4$ are of order $\OO\left(\Tet^{-3}\right)$.
% (\textr{podriem ser mes fins})
\end{itemize}
%\textcolor{magenta}{crec que els $G_0$ explicits han de ser $\tte+\frac{\xi_0^2+\eta_0^2}{2}$ i als errors $\Tet$}
\end{proposition}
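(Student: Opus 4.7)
The statement is essentially a Taylor expansion of the closed-form expression \eqref{def:ScatteringFormulasP} around the fixed point $z_0^j$ produced in Theorem~\ref{prop:scatteringmap}. The strategy is first to expand the explicit rational part of $\SSS^j$ up to order $N$, identify the coefficients in the degrees $0,1,2,3$, and then absorb the contribution of the remainder $\RRR^j$ using the uniform derivative estimates $|D^k\RRR^j|\leq C(k)\Theta^{-5}$. The whole calculation is driven by a single asymmetry: at the fixed point $|z_0^j|\sim \Theta^{-2}$ while the prefactor $(\tte+\eta\xi)^{-3}\sim \tte^{-3}\sim \Theta^{-3}$, so every additional factor of $\eta_0^j$ or $\xi_0^j$ that appears in a Taylor coefficient buys a $\Theta^{-2}$ gain.

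\textbf{Linear part and identification of $\mu_j$.} Write
\[
F(\eta,\xi)=-i\wt\nu\pi L_0^4(\tte+\eta\xi)^{-3}\bigl[A_1\eta + 2A_2\eta^2\xi + A_3\tte^{-2}\bigr]
\]
so that $\SSS^j_1=\eta + F + \RRR^j_1$. The fixed-point equation forces $F(z_0^j)=\OO(\Theta^{-5})$, which to leading order reads $A_1\eta_0^j + A_3\tte^{-2}=0$ and reproduces \eqref{def:formulafixedscattering}. Because the bracket vanishes at $z_0^j$ to leading order, the linearization of $F$ is governed entirely by the linear part of the bracket: one computes $\partial_u F|_{z_0^j}=-i\wt\nu\pi L_0^4 A_1\tte^{-3}+\OO(\Theta^{-4})$ and $\partial_v F|_{z_0^j}=\OO(\Theta^{-7})$, with the conjugate expressions for the second component. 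The Jacobian of $\SSS^j$ at $z_0^j$ has negligible off-diagonal entries, its eigenvalues are complex conjugate unit numbers (as forced by symplecticity) given by $1\pm i\wt\nu\pi L_0^4 A_1\tte^{-3}+\OO(\Theta^{-4})$, and writing the leading one as $\mu_j=e^{i\omega_j}$ yields the formula for $\omega_j$ in \eqref{def:VapScattering}.

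\textbf{Quadratic and cubic coefficients.} The coefficients of $P_2$ collect three kinds of contributions: the quadratic part $2A_2(\xi_0^j u^2 + 2\eta_0^j uv)$ of the bracket, which yields $\OO(|\eta_0^j|\tte^{-3})=\OO(\Theta^{-5})$ after multiplication by the prefactor; the first Taylor term $-3\tte^{-4}(\eta_0^j v + \xi_0^j u)$ of $(\tte+\eta\xi)^{-3}$ multiplied by the linear part of the bracket, again of size $\OO(\Theta^{-5})$; and the quadratic coefficients of $\RRR^j$, directly $\OO(\Theta^{-5})$. Combining these gives the bound for $P_2$. For $P_3$ the dominant contribution is the purely cubic monomial $2A_2 u^2 v$ of the bracket multiplied by $-i\wt\nu\pi L_0^4 \tte^{-3}(1+\OO(\Theta^{-5}))$, producing a term of size $\Theta^{-3}$ in $u^2 v$. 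On the real slice $v=\bar u$ this takes the advertised form $\TTT\tte^{-3}|z|^2 z$ with $\TTT$ as in \eqref{def:TTT} (after accounting for the symmetric contribution in the conjugate component); the non-vanishing of $A_2$, a universal constant, guarantees $\TTT\neq 0$. Every remaining cubic contribution -- higher Taylor terms of $(\tte+\eta\xi)^{-3}$ paired with the quadratic and linear parts of the bracket, and cubic Taylor coefficients of $\RRR^j$ -- carries either an extra $\tte^{-1}$ or an extra $|z_0^j|$, and therefore lies in $\Theta^{-5}\OO(z^3)$. For $k\geq 4$, the Taylor coefficients of the explicit part are bounded by $\tte^{-3}\sim\Theta^{-3}$ and those of $\RRR^j$ by $\Theta^{-5}$, giving the claimed $\OO(\Theta^{-3})$ bound.

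\textbf{Main technical point.} The calculation is elementary but bookkeeping-intensive; the only delicate point is the cubic part, where one must cleanly separate monomials of size exactly $\tte^{-3}$ (the ones entering $\TTT$) from those of size $\tte^{-4}$ or smaller (which enter the remainder). This enumeration is finite and is organized by expanding $(\tte+\eta\xi)^{-3}$ by the multinomial theorem and assigning to each resulting monomial its order in $\Theta^{-1}$ using $|\eta_0^j|,|\xi_0^j|\sim\Theta^{-2}$, $A_1,A_2=\OO(1)$ and $A_3\tte^{-2}=\OO(\Theta^{-2})$. The assumption $m_0\neq m_1$ is used only through $A_3\neq 0$ (see \eqref{def:massterm}): it is what keeps $\eta_0^j\sim\Theta^{-2}$ strictly nonzero so that the leading-order structure (a small twist around a nonzero fixed point with nondegenerate cubic term) persists.
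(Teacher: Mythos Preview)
Your approach is correct and is exactly what the paper intends: the paper does not give a separate proof of this proposition, stating only that it follows from Theorem~\ref{prop:scatteringmap}, and your argument---a direct Taylor expansion of the explicit rational part of \eqref{def:ScatteringFormulasP} around $z_0^j$, with the contribution of $\RRR^j$ controlled by the derivative bounds $|D^k\RRR^j|\le C(k)\Theta^{-5}$---is precisely how one carries this out. Your bookkeeping of the orders in $\Theta^{-1}$ (using $|z_0^j|\sim\Theta^{-2}$ to track which monomials land in $\Theta^{-3}$ versus $\Theta^{-5}$) is the right mechanism, and your identification of the linear and cubic leading terms via $A_1$ and $A_2$ is on target.
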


To construct symbolic dynamics in a neighborhood of the  infinity manifold and the associated homoclinic channels we rely on the usual isolating block construction to prove the existence of hyperbolic dynamics (see Figure \ref{fig:isolatingblockfigure}). To capture the hyperbolicity in the $(\eta,\xi)$-directions we must rely on the scattering maps. Indeed, in these directions the dynamics close to infinity is close to the identity up to higher order (see Theorem  \ref{prop:lambdalemma} and the heuristics in Section \ref{sec:Step2}) and, therefore, hyperbolicity can only be created through the dynamics along the invariant manifolds, which is encoded in the scattering maps. Note, however, that we have the scattering maps only defined in domains where the dynamics is close to a rotation around an elliptic fixed point (with a very small frequency). For this reason it is rather hard to obtain hyperbolicity for any one of them. Instead, we obtain it for a suitable high iterate of a combination of the scattering maps. This is stated in the next theorem, whose prove is deferred to Section \ref{sec:IsolatingBlockScattering}.

\begin{theorem}\label{thm:BlockScattering}
Assume that $m_0\neq m_1$, fix $L_0\in [1/2,2]$ and  take $\Theta\gg 1$ large enough.  Then, there exists $0<\wt\kk_0 \ll 1$  and a change of coordinates
\[
 \Upsilon: (-\wt\kk_0 ,\wt\kk_0)^2\to \widetilde\DD, \quad (\eta_0,\xi_0)=\Upsilon( \varphi,J)
\]
where $\widetilde\DD$ is the domain introduced in \eqref{def:domainscattering}, such that  the scattering maps
\[
 \wh \tS_i= \Upsilon^{-1}\circ \SSS_i\circ \Upsilon,
\]
where $\SSS_i$ have been introduced in Theorem \ref{prop:scatteringmap}, satisfy the following statements.
\begin{enumerate}
\item They are of the form
\begin{equation*}
% \label{def:S1tilde:0}
 \wh  \tS^1(\varphi,J)=\begin{pmatrix}
                       \varphi+\wt B (J)+\OO\left(J^{2}\right)\\
                       J+\OO\left(J^{3}\right)
                      \end{pmatrix}
\qquad \text{and}\qquad
% \[
\wh\tS^2(\varphi,J)=\begin{pmatrix}\wh\tS^2_\varphi(\varphi,J)\\\wh\tS^2_J(\varphi,J)\end{pmatrix}
                      \end{equation*}
% =\begin{pmatrix}\varphi+\wh f_1(\varphi)+\OO\left(J\right)\\
% J+\wh g_1(\varphi)+\OO\left(J\right)\end{pmatrix}
% \]
% where for $|J|\ll 1$,
which satisfies
 \begin{equation*}
% \label{def:Scatt2Deriv:0}
 \nu=\pa_\varphi\wh\tS^2_J(0,0)\neq 0 \qquad \text{and}\qquad \wh\tS^2_J(0,J)=0\quad \text{for}\quad J\in (-\wt\kk_0 ,\wt\kk_0).
\end{equation*}
\item  For any $0< \wt\kk\ll\wt\kk_0$, there exists $M=M(\wt\kk)$ such that the rectangle
\begin{equation}
\label{def:RRR}
 \RRR=\left\{ (\varphi,J):\, 0\leq \varphi\leq 2\nu\ii\tkk, \, 0\leq J\leq \tkk\right\}
\end{equation}
is an isolating block for  $\wh\tS=(\wh\tS^1)^M\circ \wh\tS^2$.
% That is,
% the boundaries
% \[
% \begin{split}
% \pa_1 \RRR &=\left\{ (\varphi,J):\,  \varphi=0, \, 0\leq J\leq \tkk\right\}\\
% \pa_2 \RRR &=\left\{ (\varphi,J):\,  \varphi= 2\nu\ii\tkk, \, 0\leq J\leq \tkk\right\}\\
% \pa_3 \RRR &=\left\{ (\varphi,J):\, 0\leq \varphi\leq 2\nu\ii\tkk, \, J=0\right\}\\
% \pa_4 \RRR &=\left\{ (\varphi,J):\, 0\leq \varphi\leq 2\nu\ii\tkk, \, J=\tkk\right\}
% \end{split}
% \]
% satisfy
% \[
%  \pi_\varphi \wh\tS(\pa_1 \RRR)
% \]
Namely, if one considers a $\CCC^1$ curve $J=\gamma(\varphi)$ with $\gamma:[0,2\nu\ii\tkk]\to \RR$ with $\ga(\varphi)\in [0, \tkk]$, then, its image $(\varphi_1(\varphi),J_1(\varphi))=\wh \tS(\varphi,\ga(\varphi))$ is a graph over its horizontal component and satisfies that
\[
 J_1(\varphi)\in (0,\tkk), \quad \varphi_1(0)<0\quad\text{and} \quad \varphi_1(2\nu\ii\tkk)>2\nu\ii\tkk.
\]
\item For  $z=(\varphi,J)\in\RRR$, the matrix $D\wh \tS(z)$ is hyperbolic with eigenvalues $\la_{\wh \tS}(z), \la_{\wh \tS}(z)\ii\in\RR$
% for some $\la\in\RR$, $\la>1$
with
\[
\la_{\wh \tS}(z)\gtrsim \tkk^{-1}
% \quad \text{for}\quad z\in\RRR.
\]
% for some $\wh\nu>0$.
Furthermore, there exist two vectors fields $V_j:\RRR\to T\RRR$ of the form
\[
 V_1=\begin{pmatrix}1\\ 0 \end{pmatrix},\qquad  V_2=\begin{pmatrix}V_{21}(z)\\ 1 \end{pmatrix}\quad \text{with}\quad|V_{21}(z)| \lesssim\tkk,
\]
which satisfy, for $z\in\RRR$,
\[
 \begin{split}
  D\wh \tS(z)V_1&=\la_{\wh \tS}(z)\left(V_1+\wh V_1(z)\right)\qquad \text{with} \qquad |\wh V_1(z)|\lesssim \tkk\\
 D\wh \tS(z)V_2(z)&=\la_{\wh \tS} (z)\ii\left(V_2(\wh \tS(z))+\wh V_2(z)\right)\qquad \text{with}  \qquad |\wh V_2(z)|\lesssim \tkk.
 \end{split}
\]
\end{enumerate}
\end{theorem}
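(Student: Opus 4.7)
The proof has three parts: constructing the adapted coordinate chart $\Upsilon$, verifying that the rectangle $\RRR$ is an isolating block for $\wh\tS=(\wh\tS^1)^M\circ\wh\tS^2$, and extracting the hyperbolic splitting with the prescribed cone structure.

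First I would build $\Upsilon$ in two stages. Using Proposition~\ref{prop:scatteringproperties}, $\SSS^1$ is a symplectic analytic map with an elliptic fixed point $z_0^1$, linear multiplier $\mu_1=e^{i\omega_1}$, $\omega_1\sim\Theta^{-3}$, and non-vanishing Birkhoff coefficient coming from the cubic term $P_3$ (whose leading coefficient $\TTT$ is non-zero by \eqref{def:TTT}, precisely because we assume $m_0\neq m_1$ and so $N_3\neq 0$). A symplectic Birkhoff normal form up to any fixed order $N$ then produces coordinates $(\tilde\varphi,\tilde J)$ in which
\[
\wh\tS^1(\tilde\varphi,\tilde J)=\bigl(\tilde\varphi+\wt B(\tilde J)+O(\tilde J^N),\ \tilde J+O(\tilde J^N)\bigr),\qquad \wt B(\tilde J)=\omega_1+c\,\tilde J+O(\tilde J^2),
\]
with $c\neq 0$ of order $\Theta^{-3}$. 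In these coordinates, by \eqref{def:DistanceHomosteor}, the fixed point of $\SSS^2$ lies exponentially close to the origin. I would then perform a further local symplectic change of coordinates adapted to $\wh\tS^2$: pick an arc $\alpha$ through the $\SSS^2$-fixed point, transverse to its image $\beta:=\wh\tS^2(\alpha)$ (transversality follows from the non-degenerate linear part of $\SSS^2$ at $z_0^2$ and the proper choice of $\alpha$), and use $\alpha,\beta$ as the coordinate axes $\{\varphi=0\}$ and $\{J=0\}$. By construction this forces $\wh\tS^2_J(0,J)\equiv 0$, while $\nu=\pa_\varphi\wh\tS^2_J(0,0)\neq 0$ comes from the transversality. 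Since this second change is near-identity (of size exponentially small in $\Theta^3$ by \eqref{def:DistanceHomosteor}), it preserves the Birkhoff form of $\wh\tS^1$ up to errors absorbed in the stated $O(J^2)$, $O(J^3)$ tails.

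Next I would establish the isolating block property. Given $\wt\kk$, I choose $M=M(\wt\kk)$ so that $M\,\wt B'(0)\,\wt\kk$ equals approximately $2\nu^{-1}\wt\kk$, i.e.\ $M\sim(\nu\,c)^{-1}$. Iterating the Birkhoff form yields $(\wh\tS^1)^M(\varphi,J)=(\varphi+M\wt B(J)+O(MJ^2),J+O(MJ^N))$, which for $N$ large and $J\le\wt\kk$ keeps $J$-levels almost invariant and advances $\varphi$ by $M\wt B(J)$, a quantity that sweeps across $[0,2\nu^{-1}\wt\kk]$ as $J$ varies over $[0,\wt\kk]$. Composing with $\wh\tS^2$, whose second component satisfies $\wh\tS^2_J(\varphi,J)=\nu\varphi+O(\varphi^2,\varphi J)$ (a consequence of $\wh\tS^2_J(0,J)\equiv 0$), a $\CCC^1$ graph $J=\gamma(\varphi)$ in $\RRR$ is mapped by $\wh\tS^2$ to a curve with $J$-component of order $\nu\varphi$, hence spanning $[0,2\wt\kk]$; the subsequent application of $(\wh\tS^1)^M$ then stretches this arc horizontally through the whole width of $\RRR$. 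Tracking boundaries: $\{\varphi=0\}$ maps first into $\{J=0\}$ then translates left of $\varphi=0$; $\{\varphi=2\nu^{-1}\wt\kk\}$ maps beyond $\varphi=2\nu^{-1}\wt\kk$; and the horizontal boundaries $J=0,\wt\kk$ are pushed to $J<0$ and $J>\wt\kk$ respectively.

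Finally, for the hyperbolic splitting I would compute $D\wh\tS(z)=D(\wh\tS^1)^M(\wh\tS^2(z))\cdot D\wh\tS^2(z)$. Symplecticity together with $\wh\tS^2_J(0,J)\equiv 0$ gives
\[
D\wh\tS^2(z)=\begin{pmatrix} a(z) & -\nu^{-1}+O(\varphi,J) \\ \nu+O(\varphi,J) & O(\varphi) \end{pmatrix},
\]
and $D(\wh\tS^1)^M$ is close to the shear $\bigl(\begin{smallmatrix}1 & M\wt B'(J)\\ 0 & 1\end{smallmatrix}\bigr)$ up to errors controlled by $M$ times the normal-form tail. The product has off-diagonal entry of order $M\wt B'(0)\nu\sim\wt\kk^{-1}$, producing eigenvalues $\lambda_{\wh\tS}\sim\wt\kk^{-1}$ and $\lambda_{\wh\tS}^{-1}$. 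The unstable eigenvector is close to $V_1=(1,0)^\top$ and the stable one to $V_2=(V_{21},1)^\top$ with $|V_{21}|\lesssim\wt\kk$, as obtained from the standard cone-contraction/invariant-section argument applied to horizontal and vertical cones of aperture $\lesssim\wt\kk$ on $\RRR$.

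The main obstacle is the second stage of the coordinate construction: one must simultaneously straighten the axes for $\wh\tS^2$ (to obtain $\wh\tS^2_J(0,J)\equiv 0$) and preserve the Birkhoff form of $\wh\tS^1$ to the precision required for the isolating block estimates, keeping careful track of the exponentially small separation of the two elliptic fixed points $z_0^1,z_0^2$ given by \eqref{def:DistanceHomosteor}, and of the smallness of both $\nu$ and the twist $\wt B'(0)\sim\Theta^{-3}$, which forces $M\sim\Theta^3/\nu$ and dictates the size of the error terms throughout the transversality-torsion argument in the spirit of \cite{Cresson03}.
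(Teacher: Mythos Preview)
Your coordinate construction has a genuine gap: you skip the KAM step, and this cannot be finessed. The form $\wh\tS^1_J(\varphi,J)=J+O(J^3)$ in item~1 requires that $\{J=0\}$ be an \emph{exact} invariant curve of $\wh\tS^1$. High-order Birkhoff normal form around the elliptic fixed point $z_0^1$ only makes the level sets $\{\tilde J=\text{const}\}$ \emph{approximately} invariant (up to $O(\tilde J^N)$). You then propose to take $\{J=0\}=\beta:=\wh\tS^2(\alpha)$ for a generic arc $\alpha$; but $\beta$ is neither a BNF level set nor invariant under $\wh\tS^1$, so in the new coordinates $\wh\tS^1_J(\varphi,0)\neq 0$. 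Your justification that the second change is ``exponentially small by \eqref{def:DistanceHomosteor}'' conflates the exponentially small separation $|z_0^1-z_0^2|$ with the size of the coordinate change itself, which must be at least of order $\tkk$ to cover $\RRR$. The paper's route is different: after BNF around $z_0^1$ and a rescaling (Lemma~\ref{lemma:ActionAngleCoords}), it applies a KAM theorem (Theorem~\ref{thm:KAM}) to produce a genuine $\wh\tS^1$-invariant circle $\TT_*$, then does a second BNF around $\TT_*$ so that $\{J=0\}=\TT_*$ exactly (Lemma~\ref{lemma:BNFTorus}). Only afterwards is $\{\varphi=0\}$ defined as the local preimage $(\wh\tS^2)^{-1}(\TT_*)$ (Lemma~\ref{lemma:BlockScatteringStraight}); since this last change has the form $\psi=\varphi+h(J)$, it preserves the $\{J=\text{const}\}$ foliation and hence the Birkhoff form of $\wh\tS^1$.

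There is also a quantitative error in your choice of $M$. You take $M$ so that $M\wt B'(0)\approx 2\nu^{-1}$; this gives $\operatorname{tr} D\wh\tS\approx M\wt B'(0)\,\nu\approx 2$, which is the parabolic threshold, not hyperbolic with $\lambda_{\wh\tS}\gtrsim\tkk^{-1}$. The paper instead takes $M\sim(b_1\tkk)^{-1}$ (see \eqref{def:M}), which makes the twist-induced horizontal spread of order $1$ (not merely $\nu^{-1}\tkk$) and yields $\operatorname{tr} D\wh\tS\sim \nu\tkk^{-1}\gg 1$. The isolating block overflow condition is then automatic once $\tkk\ll\nu$.
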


\section{Local behavior close to infinity and a parabolic Lambda Lemma}\label{sec:LambdaLemma}
% Once we have understood the invariant manifolds of infinity, their transverse intersections and the scattering maps associ, now
% We devote this section to analyze the local behavior close $\EE_\infty$. First in Section \ref{sec:McGehee}

\subsection{McGehee coordinates}\label{sec:McGehee}

To study the behavior of Hamiltonian~\eqref{def:Ham:Poincarenr} close to the infinity manifold~$\EE_\infty$, we introduce the classical
McGehee coordinates $\wt r=2/x^2$. To simplify the notation, in this section we drop the tilde of $\wt y$. The  Hamiltonian $\wt\KK$ becomes
\[
\JJ(\la,L,\eta,\xi,x, y;\Tet)=-\frac{\nu}{2 L^2}+\frac{y^2}{2}+\frac{(\Theta-L+\eta\xi)^2}{2}\frac{x^4}{4}-\frac{x^2}{2}+ V (\la, L,\eta,\xi, x)
\]
%
% \begin{equation}
% \wt \KK (\lambda,L,\eta,\xi,x,y) =
% -\frac{G_0^2\nu}{2 L^2}+\frac{y^2}{2}+\frac{(\Theta-L+\frac{\xi^2+\eta^2}{2})^2}{2G_0^2}\frac{x^4}{4}-\frac{x^2}{2}+ \wt V(\la, L,\eta,\xi, x;G_0)
% \end{equation}
where
\[
V(\la, L,\eta,\xi, x) = \wt W\left(\la, L,\eta,\xi, \frac{2}{x^2}\right) = \OO(x^6),
\]
and $\wt W$ is the potential in \eqref{def:potentialwtilde}, while the canonical symplectic form
$
d\la\wedge d L+id\eta\wedge d\xi+dr \wedge d y
$
is transformed into
\[
d\la\wedge d L+id\eta\wedge d\xi-\frac{4}{x^3} dx\wedge d y.
\]
Hence, the equations of motion associated to $\JJ$ are
\begin{equation*}
% \label{eq:equationsofmotioninMcGehee}
\begin{aligned}
\lambda' & =  \partial_L \JJ = \frac{\nu}{L^3} + \OO(x^4), &L' & = -  \partial_\la \JJ = \OO(x^6), \\
\eta' & = -i\partial_\xi \JJ= -\frac{i}{4}\left(\Theta-L+\eta\xi\right)  x^4 \eta + \OO(x^6), &
\xi' & =i\partial_\eta \JJ= \frac{i}{4}\left(\Theta-L+\eta\xi\right) x^4 \xi + \OO(x^6), \\
x' & = - \frac{x^3}{4}\partial_y \JJ = - \frac{1}{4} x^3 y, &
y' & = - \frac{x^3}{4}\left(-\partial_x \JJ\right) = - \frac{1}{4} x^4 +
\frac{(\Theta-L+\eta\xi)^2}{8}x^6 +\OO(x^8).
\end{aligned}
\end{equation*}
In the new variables, the periodic orbits $P_{\eta_0,\xi_0}$ in the energy level $\wt\KK=-\frac{\nu}{2L_0^2}$ become
\[
P_{\eta_0,\xi_0} = \{\lambda\in\TT, \;L=L_0,\; \eta= \eta_0,\; \xi = \xi_0,\, x = y =0\}
\]
for any $\eta_0,\xi_0 \in \CC$ with $|\eta_0|,|\xi_0|\geq L_0^{1/2}$ (see \eqref{def:PoincareVariablesnr}). To study the local behavior around $\EE_\infty$, we consider the new variables
\begin{equation}
\label{def:abinfinity}
% \begin{aligned}
a  = \eta \,e^{-i\left(\Theta-L+\eta\xi\right) y}
% + \xi \cos \left(\left(\Theta-L+\frac{\xi^2+\eta^2}{2}\right) y\right), \\
\qquad \text{ and }\qquad
b  = \xi \, e^{i\left(\Theta-L+\eta\xi\right) y}.
%
% a & = -\eta \sin \left(\left(\Theta-L+\frac{\xi^2+\eta^2}{2}\right) y\right)
% + \xi \cos \left(\left(\Theta-L+\frac{\xi^2+\eta^2}{2}\right) y\right), \\
% b & = \eta \cos \left(\left(\Theta-L+\frac{\xi^2+\eta^2}{2}\right) y\right)
% + \xi \sin \left(\left(\Theta-L+\frac{\xi^2+\eta^2}{2}\right) y\right).
% \end{aligned}
\end{equation}
The equations of motion become
\begin{equation*}
\label{eq:equationsofmotioninMcGeheestraightened}
\begin{aligned}
\lambda' & =\partial_L\JJ = \frac{\nu}{L^3} + \OO(x^4), &
L' & = -  \partial_\la \JJ = \OO(x^6), \\
a' & =  \OO(x^6), &
b' & =  \OO(x^6), \\
x' & = - \frac{1}{4} x^3 y, &
y' & = - \frac{1}{4} x^4 +
\frac{(\Theta-L+ab)^2}{8}x^6+\OO(x^8).
\end{aligned}
\end{equation*}
% Equation~\eqref{eq:equationsofmotioninMcGeheestraightened}, although no longer Hamiltonian, has still a conserved quantity, the Hamiltonian $\wt \KK$ in the new variables.
\begin{remark}\label{rmk:VariablesE}
Note that the change of coordinates \eqref{def:abinfinity} is the identity on $\EE_\infty$. Therefore, Theorem~\ref{prop:scatteringmap} is still valid in these coordinates.
\end{remark}

As we have done in Section~\ref{sec:varietatainfinit}, we restrict to the energy level $\JJ=-\frac{\nu}{2L_0^2}$ and express $L$ in terms of the rest of the variables in a neighborhood of $x=y=0$.
An immediate computation shows that
\begin{equation*}
% \label{eq:aillantLdelHamiltonia}
L = L_0 +\frac{1}{2 \nu}(x^2-y^2) + \OO_2(x^2,y^2).
\end{equation*}
This function is even in $x$ and $y$.

% \textr{The energy can be fixed by taking the limit value of $L=L_0$. Indeed, the energy is given by
% \[
%  \wt\KK=-\frac{G_0^2\nu}{L_0^2}
% \]
Taking $\lambda$ as the new time and denoting the derivative with respect to this new time by a dot, we obtain the $2\pi$-periodic equation
\[
\begin{aligned}
\dot x & = - K_0 x^3 y \left( 1+B(x^2-y^2)+\OO_2(x^2,y^2)\right), \\
\dot y & =-K_0 x^4\left( 1-(4A(z)+B)x^2 -B y^2 + \OO_2(x^2,y^2)\right), \\
\dot z & = \OO(x^6),\\
\dot t&=1,
\end{aligned}
\]
where, abusing notation, we denote again by $t$ the new time $\lambda$,  $z = (a,b)$ belongs to a compact set and 
\begin{equation}
\label{def:AiBconstantsdeltermedegrau6}
\begin{aligned}
A(z) & = \frac{1}{8}\left(\Theta-L_0+ab\right)^2 =\frac{ \Theta^2}{8}\left(1-\frac{1}{\Theta}\left(L_0-ab\right)\right)^2, \\
B & = \frac{3}{2} \frac{1}{L_0  \nu},\qquad K_0 = \frac{L_0^3}{ 4\nu}.
\end{aligned}
\end{equation}
Observe that, since $(a,b)$ belong to a  compact set, taking $\Theta$ large enough, we can assume that
\begin{equation*}
% \label{def:fitainferiordA}
A  = \frac{ \Theta^2}{8} \left( 1-\frac{1}{\Theta}\left(L_0-ab\right)\right)^2
> \frac{1}{16} \Theta^2 >  0.
\end{equation*}
Scaling $x$ and $y$ by $K_0^{1/3}$, one obtains the following non-autonomous $2\pi$-periodic in time equation
\begin{equation}
\label{def:systematinfinityold}
\begin{aligned}
\dot x & = -  x^3 y(1+ B x^2-B y^2 + R_1(x,y,z,t)), \\
\dot y & = -  x^4(1+ (B-4A)x^2- B y^2 + R_2(x,y,z,t)), \\
\dot { z} & =  R_3(x,y,z,t),\\
\dot t&=1,
\end{aligned}
\end{equation}
where we have kept the notation $x$, $y$ for the scaled variables and  $A$,$B$ for the scaled constants.
Moreover,
\begin{enumerate}
\item
the functions $R_i$, $i=1,2,3$, are even in $x$,
\item $R_3(x,y,z,t) = \OO_3(x^2)$ and
$
% \begin{equation}
% \begin{aligned}
% R_1 (x,y,z,t) =  &  \OO_2(x^2,y^2), \\
R_i (x,y,z,t) =   \OO_2(x^2,y^2),\quad i=1,2$.
% \end{aligned}
% \end{equation}
\end{enumerate}

%\begin{remark}
%\label{comm:xdependenceoftheremainder}
%Notice that the vector field~\eqref{eq:equationsofmotioninMcGehee} only depends on $y$ on the equation for $\dot x$. The rest of its components are even functions of $x$. Hence, after the change of variables in~\eqref{def:abinfinity},
%the $\OO(x^6)$ terms in~\eqref{def:systematinfinityold} have
%an expansion of the form
%\[
%\OO(x^6) = \sum_{j\ge 6} p_j(x,y,a,b,t)
%\]
%where $p_j$ are homogeneous polynomials of degree $j$ in $(x,y)$,  even in $x$.
%\end{remark}
%\textr{No s\'e demostrar aquesta remark. La reducci\'o de l'energia em trenca l'estructura per a les components $x,y$. Jo diria: The equation is of the form
%\begin{equation}
%\label{def:systematinfinityold}
%\begin{aligned}
%\dot x & = -  x^3 y(1+ R_1(x,y,z,t)), \\
%\dot y & = -  x^4(1+ R_2(x,y,z,t)), \\
%\dot z & =  R_3(x,y,z,t).
%\end{aligned}
%\end{equation}
%where $R_3$ is of order $\OO(x^6)$ and satisfies \[
%R_3 = \sum_{j\ge 6} p_j(x,y,a,b,t)
%\]
%where $p_j$ are homogeneous polynomials of degree $j$ in $(x,y)$,  even in $x$. The functions $R_1$ and $R_2$ are of order $\OO(x^2,y^2)$ and satisfy
%\[
%R_i = \sum_{j\ge 2} p_{ij}(x,y,a,b,t)
%\]
%where $i=1,2$ and  $p_j$ are homogeneous polynomials of degree $j$ in $(x,y)$, even in $x$.
%}

The periodic orbit $P_{\eta_0,\xi_0}$ becomes $P_{\eta_0,\xi_0} = \{x = y =0,\; b= \eta_0,\; a = \xi_0,\; t\in\TT\}$.

We now apply the change  of variables $(x,y,z) = (x,y,\Upsilon (\varphi,J))$, where $\Upsilon$ is given in Theorem~\ref{thm:BlockScattering}.
The transformed equation has the same form as in~\eqref{def:systematinfinityold} with statements 1 and 2 above. From now on, we will assume $z = (\varphi,J)$. In particular, the scattering maps associated to the infinity manifold $\{x=y=0\}$ will satisfy the properties of Theorem~\ref{thm:BlockScattering}.

\subsection{$C^1$ behavior close to infinity}

To study the local behavior of system~\eqref{def:systematinfinityold} close to $\EE_\infty$ we start by finding a suitable set of coordinates,
 provided by the next theorem, whose proof is deferred to Appendix~\ref{app:proofofprop:coordinatesatinfinity}.

%A \emph{closed sector} of size $\rho$ in $\{(q,p,z,t) \in \RR \times \RR \times \RR^2 \times \TT\}$ is a set of the form
%\[
%\{(q,p,z,t) \in \RR \times \RR \times \RR^2 \times \TT\mid p \ge c_1 q,\,p \ge c_2 q\, \|(q,p)\| < \rho\},
%\]
%where $c_1,c_2 \in \RR$.

\begin{theorem}\label{prop:coordinatesatinfinity}
Let $K\subset \RR^2$ be a compact set. For any $N\ge 1$, there exists a neighborhood $U$ of the subset of $\EE_\infty$, $\EE_\infty^K=\cup_{z_0\in K}  P_{z_0}$, in $\RR^2 \times \RR^2 \times \TT$ and a $\CCC^N$ change of variables
\[
\Phi: (x,y,z,t)\in U  \mapsto (q,p,\tilde z, t) = (A (x,y)^\top, z,t) + \OO_2(x,y)
\]
with $A$ a constant matrix,
that transforms system~\eqref{def:systematinfinityold} into
\begin{equation}
\label{eq:infinitymanifoldsstraightened}
\begin{aligned}
\dot q & = q ((q+p)^3+\OO_4(q,p)), & \dot{\tilde  z} & = q^N p^N \OO_{4}(q,p), \\
\dot p & = -p ((q+p)^3+\OO_4(q,p)), & \dot t & = 1.
\end{aligned}
\end{equation}
\end{theorem}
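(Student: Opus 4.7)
The plan is to proceed in three successive steps, following the general philosophy of parabolic normal forms but relying crucially on the already-established existence and regularity of the invariant manifolds of infinity.

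\textbf{Step 1 (diagonalizing the leading order).} The parametrization of the homoclinic in Lemma \ref{lemma:homounperturbed} gives, in the McGehee variables of \eqref{def:systematinfinityold}, $y\sim -x$ on the unstable branch of the separatrix and $y\sim+x$ on the stable one. This suggests the linear change
\[
q_0=\tfrac{1}{2}(x-y),\qquad p_0=\tfrac{1}{2}(x+y),
\]
so that $q_0+p_0=x$, the two invariant manifolds are tangent at the origin to $\{p_0=0\}$ and $\{q_0=0\}$ respectively, and a direct substitution into \eqref{def:systematinfinityold} produces, at leading order,
\[
\dot q_0 = q_0(q_0+p_0)^3+\OO_5(q_0,p_0),\qquad \dot p_0 = -p_0(q_0+p_0)^3+\OO_5(q_0,p_0),\qquad \dot z=\OO_6(q_0,p_0).
\]
This identifies the constant matrix $A$ of the statement and already matches the target shape at leading order.

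\textbf{Step 2 (straightening the invariant manifolds).} Using the $\CCC^N$-smoothness of $W^u(\EE_\infty)$, $W^s(\EE_\infty)$ and their associated strong-stable/unstable invariant foliations from \cite{BaldomaFM20a,BaldomaFM20b}, I would construct a $\CCC^N$ change of the form $(q,p,\tilde z)=(q_0,p_0,z)+\OO_2(q_0,p_0)$ mapping $W^u$ to $\{p=0\}$, $W^s$ to $\{q=0\}$, and the strong invariant foliations to $\{q=\mathrm{const}\}$ and $\{p=\mathrm{const}\}$. Invariance of $\{p=0\}$ and $\{q=0\}$ then forces the factorizations $\dot q = q\,F(q,p,\tilde z,t)$ and $\dot p = -p\,G(q,p,\tilde z,t)$ with $\CCC^{N-1}$ coefficients. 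A final diagonal rescaling of $q$ and $p$ aligns $F$ and $G$ to the common expression $(q+p)^3+\OO_4(q,p)$; this is possible because at leading order both functions equal $x^3=(q_0+p_0)^3$ by Step~1.

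\textbf{Step 3 (killing the center coupling).} After Step~2 the equation for $\tilde z$ has the form $\dot{\tilde z}=H(q,p,\tilde z,t)$ with $H=\OO_2(q,p)$. One iteratively eliminates monomials $q^ip^j R_{ij}(\tilde z,t)$ with $\min(i,j)<N$ (and, at the end, the few remaining mixed terms with $i,j\ge N$ but $i+j<2N+4$) via $\CCC^N$ changes of the form $\tilde z\mapsto\tilde z-q^ip^j g_{ij}(\tilde z,t)$. The associated cohomological equation reads, at principal order,
\[
\partial_t g_{ij}+(i-j)(q+p)^3 g_{ij} = R_{ij}(\tilde z,t),
\]
and is solvable $2\pi$-periodically in $t$ by inverting $\partial_t$ on the zero-mean part, its non-zero-mean part generating a residue of one higher order in $(q+p)$ which is absorbed at the next iteration. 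After finitely many rounds all surviving terms in $\dot{\tilde z}$ carry both a factor $q^N$ and a factor $p^N$ and vanish to quartic order in $(q,p)$, yielding exactly \eqref{eq:infinitymanifoldsstraightened}.

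The main obstacle is Step~2: since the linearization at the origin is identically zero, Sternberg's theorem and any Poincar\'e--Dulac linearization scheme are unavailable, and the straightening must be built by hand out of the parabolic invariant manifolds and foliations provided in \cite{BaldomaFM20a,BaldomaFM20b}, carefully tracking $\CCC^N$ regularity uniformly over the compact base $K$. Step~3 is then a more or less routine iteration, the only mild novelty being the multiplicative factor $(q+p)^3$ in the cohomological equation, which is harmless because it is of higher order in $(q,p)$ and can be handled perturbatively at each level.
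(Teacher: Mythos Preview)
Your Steps 1 and 2 track the paper's approach fairly closely (the paper straightens only the manifolds, not the foliations, and in fact does a preliminary $z$-normal-form \emph{before} the straightening rather than after, but this is a matter of ordering). The real gap is in Step 3.

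Your cohomological equation
\[
\partial_t g_{ij}+(i-j)(q+p)^3 g_{ij}=R_{ij}(\tilde z,t),\qquad g_{ij}=g_{ij}(\tilde z,t),
\]
cannot eliminate the time-average $\langle R_{ij}\rangle_t(\tilde z)$. Inverting $\partial_t$ kills only the oscillatory part; the term $(i-j)(q+p)^3 g_{ij}$ is indeed pushed to higher order in $(q,p)$, but the autonomous piece $q^ip^j\langle R_{ij}\rangle_t(\tilde z)$ stays at exactly the same order $i+j$ and is untouched. There is no mechanism in your scheme to remove it: with $g_{ij}$ depending only on $(\tilde z,t)$ the operator $\partial_t$ has the constants in its kernel, and the coupling through $(i-j)(q+p)^3$ carries an unwanted $(q,p)$-factor. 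In particular when $i=j$ (powers of $qp$, which are precisely the terms you must eliminate to go from $qp\,\OO_\ast$ to $q^Np^N\,\OO_\ast$) even the coupling term vanishes and nothing moves.

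The paper resolves this in two complementary ways. First, \emph{before} the linear change and the straightening, it performs a polynomial normal form in the $(x,y)$ variables (Lemma~\ref{lem:normalformatinfinitystep1}): the averaged remainder $x^3\tilde p_N(x,y,\tilde z)$ is odd in $x$, and the operator $Z\mapsto y\partial_xZ+x\partial_yZ$ is surjective onto odd-in-$x$ homogeneous polynomials, so the autonomous part can be killed algebraically. Second, in the final step (Lemma~\ref{lem:thirdstepofnormalformqkpk}) the paper does \emph{not} use generators of the form $q^ip^jg(\tilde z,t)$; instead it takes $\tilde z\mapsto\tilde z+q^kp^k\bigl(A(q,\tilde z,t)+B(p,\tilde z,t)\bigr)$ with $A$ and $B$ depending on $q$ (resp.\ $p$), reducing the problem to transport equations $f(q,z,t)\partial_qA+\partial_tA+h_4A=-Q_M$ along each invariant manifold, which are then solved by a contraction argument in a weighted space after rectifying the characteristic $dq/du=f$. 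Allowing the generator to depend on $q$ (or $p$) is exactly what is needed to handle the autonomous terms.
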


\begin{remark}
It is worth to remark that the change of variables in Theorem~\ref{prop:coordinatesatinfinity} is analytic
in some complex sectorial domain of $\RR^2\times\RR^2\times\TT$ with $\EE^K_\infty$ in its vertex. This claim is made precise in the proof of the Theorem \ref{prop:coordinatesatinfinity}.
\end{remark}

To simplify the notation, we drop the tilde from the new $z$ variable. Let $N>10$ be fixed.
We are interested in the behavior of system~\eqref{eq:infinitymanifoldsstraightened} in the region $\Phi(U) \cap\{q+p\ge 0\}$.
The  stable and unstable invariant manifolds
of $P_{z_0}$ in this region are, respectively,
$W^s(P_{z_0}) = \{q = 0, \; p >0,\;  z = z_0, \; t\in \TT\}$  and $W^u(P_{z_0}) = \{p = 0,\; q >0,\;  z = z_0, \; t\in \TT\}$.
% 
% In particular, the sets $W^s(\EE_\infty^K) = \{q=0, \;z \in K, \; t\in \TT\}$ and $W^u(\EE_\infty^K) = \{p=0, \; z \in K, \; t\in \TT\}$ are invariant. Indeed, 
% (they are the invariant manifolds of $\EE_K^\infty$) 
Even if  the invariant manifold $\EE_\infty^K$ is not a normally hyperbolic invariant manifold it behaves as such and its  smooth stable and unstable invariant manifolds
\[
 W^\ast(\EE_\infty^K)=\bigcup_{z_0\in K}W^\ast(P_{z_0}),\quad \ast=u,s.
\]
Moreover, the invariant manifolds  $W^\ast(\EE_\infty^K)$ are foliated, as in the normally hyperbolic setting, by stable and unstable leaves $W_{w_0}^\ast$, $\ast=u,s$, which are defined as follows:
% \footnote{Equation \eqref{eq:infinitymanifoldsstraightened}  is non-autonmous. To consider an associated flow we proceed as usual. We consider as extended phase space with $t$ as variable to ``autonomize'' the equation and we denote by $s$ the time of the differential equation.} 
$w\in W_{w_0}^s$ if and only if
\[
 |\varphi_s(w)-\varphi_s(w_0)|\to 0\qquad \text{as}\qquad  s\to +\infty
\]
and analogously for the unstable leaves with backward time.

% Then, for point $w\in W^\ast(\EE_\infty^K)$

This allows us to  define the classical wave maps associated to the stable and unstable foliations, which we denote by $\Omega^s$ and $\Omega^u$, as
\begin{equation}\label{def:WaveMaps}
 \Omega^\ast(w)=w_0 \quad\text{if and only if}\quad w\in W_{w_0}^\ast\quad\text{ for }\ast=u,s.
\end{equation}
Observe that in the local coordinates given by Theorem \ref{prop:coordinatesatinfinity}, one has that locally
$W^s(\EE_\infty^K)=\{q=0\}$  and  $W^u(\EE_\infty^K)=\{p=0\}$. Moreover
\[
 \Omega^s(0,p,z,t)=(0,0,z,t)\qquad \text{and}\qquad  \Omega^s(q,0,z,t)=(0,0,z,t).
\]

The next step is to prove a Lambda Lemma that will describe the local dynamics close to $\EE_\infty^K$, in the coordinates given by Theorem \ref{prop:coordinatesatinfinity}.

Note that the particular form and invariance  of  $W^\ast(\EE_\infty^K)$,$\ast=u,s$, implies that the solution $\varphi_{s}(w_0)$ through any point $w_0 = (q_0,p_0,z_0,t_0) \in \Phi(U) \cap \{ q >0, \; p >0\}$ at $s=0$
satisfies  $\varphi_{s}(w_0) \in \Phi(U) \cap \{ q >0, \; p >0\}$ for all $s$ such that $\varphi_{s}(w_0) \in \Phi(U)$.
We define, then,
\begin{equation*}
% \label{def:entornlambdalemma}
V_\rho = \{(q,p)\mid |q|,|p| < \rho,\; q >0, \; p >0\}.
\end{equation*}
Let $W \subset \RR^2$ be an bounded open set.
Given $0<\delta < a \leq\rho$ and $\wt W \subset W$, we define the sections
\begin{equation}
\label{def:sectionsLambdaadeltapm}
\begin{aligned}
\Lambda_{a,\delta}^- (\wt W) & = \{(q,p,z,t) \in V_{\rho} \times \wt W \times \TT\mid p = a, \;0 < q < \delta\}, \\
\Lambda_{a,\delta}^+  (\wt W) & = \{(q,p,z,t) \in V_{\rho} \times \wt W \times \TT\mid q = a, \; 0 < p < \delta \}.
\end{aligned}
\end{equation}
and the associated Poincar\'e map
\begin{equation}\label{def:localmapLambda}
\Psi_\loc: \Lambda_{a,\delta}^- (\wt W)\longrightarrow\Lambda_{a,\delta}^+  (W)
\end{equation} induced by the flow of~\eqref{eq:infinitymanifoldsstraightened}, wherever it is well defined.

\begin{theorem}
\label{prop:lambdalemma}
Assume $N>10$ in system~\eqref{eq:infinitymanifoldsstraightened}.
Let $K \subset W$ be a compact set. There exists $0< \rho <1 $ and $C>0$, satisfying $C \rho <3/5$,   such that, for any $0<a\leq\rho$ and any $\delta\in (0,a/2)$, the Poincar\'e map $\Psi_\loc: \Lambda_{a,\delta}^- (K)\to \Lambda_{a,\delta^{1-C a}}^+  (W)$ associated to system~\eqref{eq:infinitymanifoldsstraightened} is well defined. Moreover, $\Psi_\loc$ satisfies the  following.
\begin{enumerate}
\item
There exist $\wt C_1, \wt C_2 >0$ such that, for any $(q,a,z_0,t_0) \in \Lambda_{a,\delta}^- (K)$,
$\Psi_\loc(q,a,z_0,t_0) = (a,p_1,z_1,t_1)$ satisfies
\begin{equation*}
% \label{bounds:C0boundsgamma}
\begin{aligned}
q^{1+Ca} \le  p_1 & \le q^{1-C a}, \\
 |z_1 - z_0 | & \le \frac{1}{2N} a^{N(1+C a)} q^{N(1-C a)}, \\
\wt C_1 q^{-3(1-C a)/2} \le t_1 -t_0 & \le \wt C_2 q^{-3(1+C a)/2}.
\end{aligned}
\end{equation*}
\item Fix any $M>0$. Then, there exists $\de_0$ and $\wt C_3>0$, such that for any $\de\in (0,\de_0)$ and $\gamma:[0,\delta) \to \overline{V_{\rho}} \times W \times \TT$, a $\CCC^1$ curve with $\gamma((0,\delta)) \subset \Lambda_{a,\delta}^- (K)$, of has the form $\gamma(q) = (q,a,z_0(q),t_0(q))$ and
satisfying $\|\ga\|_{\CCC^1}\leq M$ the folllowing is true. Its image $\Psi_\loc (\gamma(q)) = (a, p_1(q),z_1(q),t_1(q))$
 satisfies
 \begin{equation*}
%\label{bounds:C0boundsgamma}
\left|p_1'(q)\right|\leq \wt C_3,\qquad \left|\frac{p_1'(q)}{t_1'(q)}\right|\le \wt C_3 q^{1-Ca},\qquad  \left|z_1'(q)\right| \le \wt C_3, \qquad
|t_1'(q)|  \ge \wt C_3 \frac{1}{q^{3/5-Ca}}.
\end{equation*}
% \textcolor{red}{Nota: la constant $C$ en la fita de $z_1'(q)$ es pot prendre tan propera a $\sup_q |z_0'(q)|$ com es vulgui, agafant $\delta$ petit. Crec que
% no valdrà la pena ser tan fins.}
\item There exists $\wt C_4>0$ such that, if $\gamma:[0,1] \to \Lambda_{a,\delta}^- (K)$, a $\CCC^1$ curve,  has the form $\gamma(u) = (q_0(u),a,z_0(u),t_0(u))$,  then
$\Psi_\loc (\gamma(u)) = (a, p_1(u),z_1(u),t_1(u))$ satisfies, for all $u\in [0,1]$,
\[
|z_1'(u) - z_0'(u)| \le \wt C_4 \|\gamma'(u)\| q_0(u)^{N-10}.
\]
\item Fix any $M>0$. Then, there exists $\de_0$  such that for any $\de\in (0,\de_0)$,
 any $\CCC^1$ curve $(z_0(u),t_0(u)) \in K \times \TT$, $u \in [0,1]$, satisfying  $\|(z_0(u),t_0(u)) \|_{\CCC^1}\leq M$ and  any $\tilde q_0 \in (0,\delta)$,  there exists $q_0:[0,1] \to (0,\delta)$, $q_0(0) = \tilde q_0$, $|q_0'(0) |< \tilde q_0^{1/5}$,
such that $\Psi_\loc (q_0(u),a,z_0(u),t_0(u)) = (a,p_1(u),z_1(u),t_1(u))$ satisfies
\[
\begin{aligned}
|p_1'(0)| & \le  \wt C_5 \tilde q_0^{\frac{3}{5}-Ca}\|(q_0'(0),z_0'(0),t_0'(0))\|, \\
|z_1'(0)-z_0'(0)| & \le \wt C_5 \tilde q_0 \|(q_0'(0),z_0'(0),t_0'(0))\|, \\
t_1'(0) & = t_0'(0),
\end{aligned}
\]
for some $\wt C_5>0$ independent of the curve and $\delta$.
\end{enumerate}
\end{theorem}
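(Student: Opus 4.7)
The proof pivots on the observation that, in the coordinates supplied by Theorem~\ref{prop:coordinatesatinfinity}, the quantity $I=qp$ is an (almost) first integral of the $(q,p)$-subsystem. Writing $\dot q = qF_1$ and $\dot p = -pF_2$ with $F_i = (q+p)^3 + \OO_4(q,p)$, one has $\dot I = I(F_1-F_2) = \OO(I(q+p)^4)$, so along any orbit confined to $V_\rho$ the ratio $I(s)/I(0)$ stays within $1+\OO(a^3)$ by a Gronwall argument. Since $F_i > 0$ on $V_\rho$, $q$ is monotonically increasing and $p$ monotonically decreasing, which both guarantees that $\Psi_\loc$ is well defined on $\Lambda_{a,\delta}^-(K)$ and shows that the orbit through $(q_0,a,z_0,t_0)$ reaches $\{q=a\}$ at a first time $T$ with $p_1 = I(T)/a = q_0(1+\OO(a^3))$. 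For $C$ chosen large enough relative to $1/|\log\delta|$, this yields the bound $q^{1+Ca}\le p_1\le q^{1-Ca}$ of item~(1).

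The remaining estimates in~(1) come from parametrizing the orbit by $q$. Using $p(q) = I(q)/q$, the transit time reads $T = \int_{q_0}^a dq/(qF_1)$; the substitution $q = \sqrt{q_0 a}\,e^u$ brings this to an integral of the form $\int du/(8(q_0 a)^{3/2} \cosh^3 u + \text{corrections})$, which evaluates to $T\asymp q_0^{-3/2}$ with multiplicative $q_0^{\pm Ca}$ corrections from the $\OO_4$ terms, giving the third line of~(1). For the $z$-component, $\dot z = q^N p^N \OO_4(q,p) = I^N\,\OO((q+p)^4)$; changing variable to $q$ yields
\[
|z_1 - z_0|\le C\,I_0^N \int_{q_0}^a \frac{q+p}{q}\,dq = \OO\bigl((q_0 a)^N\, a\bigr),
\]
which is dominated by $\tfrac{1}{2N} a^{N(1+Ca)} q_0^{N(1-Ca)}$ for $a$ small enough.

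Items~(2)--(4) follow by differentiating these integral expressions and invoking the variational equations. For~(2), along $\gamma(q)=(q,a,z_0(q),t_0(q))$ we get $p_1'(q) = 1+\OO(a^3)$, $t_1'(q)\asymp q^{-5/2}$ (so $p_1'/t_1'\sim q^{5/2}\ll q^{1-Ca}$), and $z_1'(q) = \OO(q^{N-1})$. Item~(3) requires $|z_1'(u)-z_0'(u)|\lesssim \|\gamma'(u)\|\,q_0^{N-10}$; this comes from the variational equation, whose $(q,p)$-block can at worst grow by a bounded negative power of $q_0$ (controlled by the $q^{-3/2}$-type transit time factor) while the $z$-source is $I^N \sim q_0^N a^N$, and the hypothesis $N>10$ leaves exactly enough margin for the claimed exponent $N-10$. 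Item~(4) follows from the implicit function theorem: the map $q_0\mapsto (z_1,t_1)$ is to leading order the identity on $(z,t)$ plus a small $q_0$-perturbation, so one solves iteratively for $q_0(u)$, with the bound $|q_0'(0)|<\tilde q_0^{1/5}$ reflecting that the $q_0$-perturbation of $(z_1,t_1)$ has a small positive power of $\tilde q_0$ as prefactor.

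The main technical difficulty is the control of the variational derivatives of the long-time flow in~(2)--(4). The transit time $T\sim q_0^{-3/2}$ is very large, and a priori partial derivatives of the flow with respect to initial conditions could grow by substantial negative powers of $q_0$. The crucial cancellation is provided by the factor $q^N p^N = I^N$ in $\dot z$ with $N>10$, chosen precisely so that the $z$-drift and all its partial derivatives remain much smaller than the negative powers of $q_0$ arising from the $(q,p)$-variational equations; the exponent $N-10$ in~(3) reflects this trade-off directly.
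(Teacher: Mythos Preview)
Your first-integral idea $I=qp$ is sound and, for item~(1), actually sharper than the paper's Gronwall-in-$s$ argument (Lemma~\ref{lem:topological_lambda_lemma}): one gets $p_1=q_0(1+\OO(a))$ rather than merely $q_0^{1\pm Ca}$. One correction: a naive Gronwall in $t$-time does \emph{not} give $1+\OO(a^3)$. Writing $\dot I = I\,\OO_4(q,p)$, you need $\int_0^T (q+p)^4\,dt=\OO(a)$, and since $T\sim q_0^{-3/2}$ while $(q+p)^4\le (2a)^4$, the crude product blows up; the bound only follows after the substitution $q=\sqrt{I}\,e^u$ (equivalently, after passing to the time $s$ with $dt/ds=(q+p)^{-3}$ as the paper does). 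So your control of $I$ already implicitly requires the paper's time change.

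For items~(2)--(4) the paper takes a genuinely different route: rather than differentiating integral formulas, it rewrites the variational equations in $s$-time~\eqref{eq:variationalequationsatinfinitystau} and then decouples the contracting direction by the nonlinear change $\wt P=P+\alpha Q$, where $\alpha$ solves a Riccati equation along the orbit (Proposition~\ref{lem:straighteningvariationalequations}, Lemmas~\ref{lem:alpha0properties}--\ref{lem:attractingnullcline}). This yields the uniform bounds $\wt A>3/5$ and $A<-3/5$ that produce all the $q_0^{\pm 3/5}$ and $q_0^{1/5}$ exponents in the statement. Your direct estimates $t_1'\asymp q^{-5/2}$ and $p_1'\approx 1$ are in fact correct and stronger, but proving them rigorously for the full curve (not just $\partial/\partial q_0$) still needs careful control of the variational system. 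For item~(4) the paper does not use the implicit function theorem; it uses a shooting argument (item~4 of Lemma~\ref{lem:solvingstraightenedvariationalequations}) to pick the initial $Q_0$ making $Q(\tilde s_{w_0})=0$.

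There is a real gap in your account of item~(3). The $(q,p)$-variational block does \emph{not} grow like the transit time $q_0^{-3/2}$: in $s$-time the spectral radius of the matrix in~\eqref{eq:variationalequationsatinfinitystau} is bounded by $7+\OO(\rho)$, so the paper uses the crude bound $\|W(s_*)\|\le e^{8s_*}\|W_0\|\sim q_0^{-8}\|W_0\|$; combined with the coefficient $q^{N-1}p^{N-1}\sim(q_0a)^{N-1}$ this yields $q_0^{N-10}$, whence the hypothesis $N>10$. Your heuristic ($q_0^N$ source against $q_0^{-3/2}$ growth) would give $q_0^{N-3/2}$ and would require only $N>2$, which shows the mechanism you describe is not the right one.
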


The proof of Theorem~\ref{prop:lambdalemma} is deferred to Section \ref{sec:LambdaLemmaTechnical}.

\section{Construction of the hyperbolic set}\label{sec:IsolatingBlock}

The final step in constructing the Smale horseshoe for the 3 Body Problem given by the Hamiltonian \eqref{def:Ham:Poincarenr} is to analyze the dynamics in the vicinity of the disk  $\EE_\infty^K$ at infinity (see Theorem \ref{prop:coordinatesatinfinity}) and their invariant manifolds analyzed in Theorem \ref{thm:MainSplitting}.

\subsection{The return map}\label{sec:locandglobmaps}
We construct a return map in a suitable section transverse to the invariant manifolds of the form \eqref{def:sectionsLambdaadeltapm}. This map is built as a composition of the local map (close to infinity) studied in Theorem \ref{prop:lambdalemma} (see also Figure \ref{fig:LambdaLemma}), and a global map (along the invariant manifolds), which we analyze now. To build the horseshoe we will have to consider a suitable high iterate of the return map. To be more precise, we consider different return maps associated to two different homoclinic channels (and therefore, associated to two different scattering maps obtained in Theorem \ref{prop:scatteringmap}).

To define these return maps, we consider the sections $\Lambda _{a,\delta}^\pm$ given in \eqref{def:sectionsLambdaadeltapm}, which are transverse to the stable/unstable invariant manifolds, and we call
\begin{equation}\label{def:Sections:0}
\begin{split}
\Sigma_1&\equiv \Lambda _{a,\de}^-(K) =\{p=a, 0<q<\de , t \in\TT, z\in K\},\qquad \\
\Sigma_2&\equiv
\Lambda _{a,\de^{1-Ca}}^+(K) =\{q=a, 0<p< \de^{1-C a}, t \in\TT,z\in K\},
\end{split}
\end{equation}
where  $(q,p,\ga,z)$ are the coordinates defined by Theorem \ref{prop:coordinatesatinfinity}, $K \subset \RR^2$ is a compact set and we take
\[
 \de<\frac{a}{2}\qquad \text{and}\qquad a\leq \rr.
\]
%We define the local map induced by the vector field \eqref{eq:infinitymanifoldsstraightened}.
Theorem~\ref{prop:lambdalemma}) ensures that there exists $C>0$ such that the local map
\[
\Psil: {\Sigma_1}  \to {\Sigma_2}
\]
(see \eqref{def:localmapLambda}).
is well defined.
% if $\de$ is small enough.

The global maps will be defined from suitable open sets in $\Sigma_2$ to $\Sigma_1$.
 They are defined as the maps induced by Hamiltonian~\eqref{def:Ham:Poincarenr} expressed in the coordinates given by Proposition \ref{prop:coordinatesatinfinity}.
In fact, to construct them, we use slightly different coordinates which are defined in suitable coordinates in neighborhoods of the homoclinic channels at $\Sigma_2$ to $\Sigma_1$.
Let $W^{u,s}$ be the invariant unstable and stable manifolds of $q=p=0$. Theorem~\ref{prop:scatteringmap} ensures that $W^u$ and $W^s$ intersect transversely along two homoclinic channels, $\Gamma_i$, $i=1,2$. We use this transverse intersections to define the new system of coordinates.
\begin{enumerate}
\item
In the coordinates\footnote{Note that we have reordered the variables. The reason is that, in the section $\Sigma_2$, the variable $t$ will play a similar role as the variable $p$ whereas the variable $z$ is treated as a center variable.}  $(p,t,z)$ in $\Sigma_2$, $\Sigma_2 \cap W^u = \{p =0\}$.
Since we are in a perturbative setting (when $\Theta$ is large enough), we have that $W^s \cap \Sigma_2 = \{p = w^s(t,z)\}$.
Moreover, the intersection of the homoclinic channels  $\Gamma_i$ with $\Sigma_2$, $i=1,2$, given by Theorem~\ref{prop:scatteringmap} can be parametrized as  $\{(p,t,z) = (0, t_i^s(z),z)\}$.
In particular, the functions $t_i^s$ satisfy
\[
w^s(t_i^s(z),z) = 0, \qquad \partial_t w^s(t_i^s(z),z) \neq 0.
\]
% that is, the intersection of the homoclinic channel is given by
Hence, the equation $p = w^s(t,z)$ defines in the neighborhood of each homoclinic channel two functions $\tilde w_i^s(p,z)$, satisfying $\tilde w_i^s(0,z) = 0$, such that
\[
p = w^s(t,z) \qquad \Longleftrightarrow \qquad t = t_i^s(z) +\tilde   w_i^s(p,z),
\]
%\textcolor{red}{Potser $w_i^s$ es una notacio massa semblant? Posar $W_i^s$?}
in a neighborhood of $\Gamma_i \cap \Sigma_2$, $i=1,2$.
That is, $(p, t_i^s(z) +\tilde  w_i^s(p,z),z)$ parametrizes
$W^s \cap \Sigma_2$ in a neighborhood of $\Gamma_i\cap \Sigma_2$.
We define two new sets of coordinates in $\Sigma_2$,
defined in a neighborhood of $\Gamma_i \cap \Sigma_2$,
\begin{equation}
\label{def:coordenadeslocalsaSigma2}
(p,\tau,z) = A_i(p,t,z) = (p,t-t_i^s(z) - \tilde w_i^s(p,z),z), \qquad i =1,2.
\end{equation}
In these coordinates, $W^s \cap \Sigma_2$ in each of the neighborhoods of $\Gamma_i$ is given by $\tau = 0$.
\item
We proceed analogously in $\Sigma_1$. In the coordinates $(q,\gamma,z)$ in $\Sigma_1$, $\Sigma_1 \cap W^s = \{q =0\}$,  $W^u \cap \Sigma_1 = \{q = w^u(t,z)\}$ for some function $w^u$ and the intersection of the homoclinic channels $\Gamma_i$, $i=1,2$, with $\Sigma_1$ are given by $\{(q,t,z) = (0,t_i^u(z),z)\}$ for some functions $t_i^u$.
In particular,
\[
w^u(t_i^u(z),z) = 0, \qquad \partial_t w^u(t_i^u(z),z) \neq 0.
\]
Hence, the equation $q = w^u(t,z)$ can be inverted in the neighborhood of $\Gamma_i \cap \Sigma_1$, $i=1,2$, by defining  two functions $\tilde w_i^u(q,z)$ satisfying $\tilde w_i^u(0,z) = 0$, such that
\[
q = w^u(t,z) \qquad \Longleftrightarrow \qquad t = t_i^u(z) + \tilde w_i^u(q,z).
\]
That is, $(q, t_i^u(z) + \tilde w_i^u(q,z), z)$ parametrizes
$W^u \cap \Sigma_1$ in a neighborhood of $\Gamma_i\cap \Sigma_1$.
We define two new sets of coordinates in $\Sigma_1$,
defined in a neighborhood of $\Gamma_i \cap \Sigma_1$,
\begin{equation}
\label{def:coordenadeslocalsaSigma1}
(q,\sigma,z) = B_i(q,t,z) = (q,t-t_i^u(z) - \tilde w_i^u(q,z),z), \qquad i =1,2.
\end{equation}
In these coordinates, $W^u \cap \Sigma_1$ in each of the neighborhoods of $\Gamma_i$ is given by $\sigma = 0$.
\end{enumerate}

Let $\Psig{i}$, $i=1,2$, be the two \emph{global} maps from a neighborhood of $\Gamma_i \cap \Sigma_2$ in $\Sigma_2$, which we denote by $U_i^2$, to a neighborhood of $\Gamma_i \cap \Sigma_1$ in $\Sigma_1$, which we denote by $U_i^1$, defined by the flow.
Choosing the coordinates $(p,\tau,z)$ in $U_i^2$ and $(q,\sigma,z)$ in $U_i^1$, given by $A_i$ and $B_i$, respectively, we have that:

For points $(p,\tau,z)=(0,0,z) \in \Gamma_i\cap \Sigma_2$, the global map $\Psig{i}$ map is given by:
\begin{itemize}
\item
Compute $(0,0,\hat t, z)=\Omega^u(a,0,t_i^s(z),z)$, where $\Omega^u$ is the wave map introduced in \eqref{def:WaveMaps}.
\item
Compute $(0,0, \hat t, \wh \tS_i(z))$, where $\wh \tS_i$ is the scattering map introduced in Theorem \ref{prop:scatteringmap}
\item
Compute $(\Omega^s)^{-1}(0,0, \hat t, \wh \tS_i(z))=(0,a, \tilde t, \wh \tS_i (z))$ (see in \eqref{def:WaveMaps}).
\item
Finally
in coordinates $(q,\sigma,z)$ this last point becomes $(0,0,\wh \tS_i (z))$
\item
$\wtPsig{i}(0,0,z)=(0,0,\wh \tS_i (z))$
\end{itemize}
Using this fact and the changes of coordinates in \eqref{def:coordenadeslocalsaSigma2}, \eqref{def:coordenadeslocalsaSigma1} we have that
\begin{equation}
\label{def:globalmapiinlocalcoordinates}
\begin{pmatrix}
q_1 \\ \sigma_1 \\ z_1
\end{pmatrix} =
\wtPsig{i}(p,\tau,z) = B_i \circ \Psig{i} \circ A_i^{-1}(p,\tau,z) =
\begin{pmatrix}
\tau \nu_1^i(z) ( 1+ \OO_1(p,\tau)) \\
p \nu_2^i(z)(1+\OO_1(p,\tau)) \\
\wh \tS_i(z) + \OO_1(p,\tau)
\end{pmatrix},
\end{equation}
where $\nu_1^i(z) \nu_2^i(z) \neq 0$.
 Indeed,
the claim follows from the fact that $\Psig{i}(\{p=0\}\cap U_i^2) = \Psig{i} (W^u \cap \Sigma_2 \cap U_i^2) =
W^u \cap \Sigma_1 \cap U_i^1 = \{\sigma = 0\}\cap U_i^1$, that
$\Psig{i}(\{\tau=0\}\cap U_i^2) = \Psig{i} (W^s \cap \Sigma_2 \cap U_i^2) =
W^s \cap \Sigma_1 \cap U_i^1 = \{q = 0\}\cap U_i^1$ and expanding around $(0,0,z)$. The fact that $\nu_1^i(z) \nu_2^i(z) \neq 0$ follows immediately from the fact that $\Psig{i}$ are diffeomorphisms.
It is then immediate that
$\wtPsig{i}^{-1} : U_i^1 \to U_i^2$ has the form
\begin{equation}
\label{def:inverseglobalmapiinlocalcoordinates}
\wtPsig{i}^{-1}(q,\sigma,z) = A_i \circ \Psig{i}^{-1} \circ B_i^{-1}(q,\sigma,z) =
\begin{pmatrix}
\sigma \mu_1^i(z) ( 1+ \OO_1(q,\sigma)) \\
q \mu_2^i(z)(1+\OO_1(q,\sigma)) \\
\wh \tS_i ^{-1}(z) + \OO_1(q,\sigma)
\end{pmatrix},
\end{equation}
where $\mu_1^i(z) \mu_2^i(z) \neq 0$.

Notice that Theorem~\ref{prop:lambdalemma} implies that, for $1\le i,j\le 2$, $\Psil(U^1_i) \cap U^2_j\neq \emptyset$.
We will denote by $\Psiloc{i}{j} = {\Psil}_{\mid \Psil^{-1}(U^2_j)\cap U^1_i} : U^1_i \to U^2_j$ and its expression in coordinates
\[
\wtPsiloc{i}{j} = A_j \circ \Psiloc{i}{j} \circ B_i^{-1}.
\]
Observe that the map $\Psiloc{i}{j}$ does not depend on $i$ and $j$ and the dependence of $\wtPsiloc{i}{j}$ on $i$ and $j$ is only through the systems of coordinates $A_j$ and $B_i$.

The combination of the global maps along the homoclinic channels and the local map allows to define four different  maps
$\Psi_{ij} : U^2_i \to U^2_j$ by setting $\Psi_{i,j} = \Psiloc{i}{j} \circ \Psig{i}$. We will denote its expression in coordinates as
\begin{equation}
\label{def:wtPsiij}
\wt \Psi_{i,j} = \wtPsiloc{i}{j} \circ \wtPsig{i}.
\end{equation}

Let us specify the domains we will consider. Given $\de \in(0,a/2)$, let $\QQQ_\de^i \subset \Sigma_2$ be the set
\begin{equation}
\label{def:Qrho}
\QQQ_\de^i = \{0 < p < \de, \; 0 < \tau < \de, \; z \in \RRR\},
\end{equation}
where $(p,\tau,z)$ are the coordinates $A_i$ defined in the neighborhood of $\Gamma_i  \cap \Sigma_2$, called $U^2_i$,$i=1,2$,
and $\RRR$ was introduced in~\eqref{def:RRR}. We remark that the ``sides'' $\{p=0\}$ and $\{\tau = 0\}$ of $\QQQ_\de^i$ are $W^u \cap \Sigma_2$ and $W^s \cap \Sigma_2$, respectively, and
the ``vertex'' $\{p=\tau = 0\}$ is $\Gamma_i \cap \Sigma_2$.

Let $\Psi$ be the map formally defined as
\begin{equation}
\label{def:Psi}
\Psi = \Psi_{1,2} \circ \Psi_{1,1}^{M-1} \circ \Psi_{2,1},
\end{equation}
where $M$ is given by~\eqref{def:M}. We will denote by $\wt \Psi$ its expression in the $A_2$ coordinate system, that is,
\begin{equation}
\label{def:Psitilde}
\wt \Psi =\wt \Psi_{1,2} \circ \wt \Psi_{1,1}^{M-1} \circ \wt \Psi_{2,1}: \QQ_\de^2\longrightarrow \Sigma_2.
\end{equation}

\subsection{Symbolic dynamics: conjugation with the shift}

We consider in $\QQQ_\de^2$, defined in~\eqref{def:Qrho}, the set of coordinates $(p,\tau,\varphi,J)$ given by $A_2$ and
Theorem~\ref{thm:BlockScattering}. The coordinates have been chosen in such a way that $(\tau,\varphi)$ variables are ``expanding'' by $\wt \Psi$, while
the $(p,J)$ variables are ``contracting''. To formalize this idea, we introduce the classical concepts of \emph{vertical} and \emph{horizontal rectangles}  in our setting (see Figure \ref{fig:isolatingtotal}) as well as \emph{cone fields} (see~\cite{Smale65,Moser01,Wiggins90,KatokH95}).

We will say that $H \subset \QQQ_\de^2$ is a \emph{horizontal rectangle} if
\begin{equation}
\label{def:rectanglehoritzontal}
H = \{(p,\tau,\varphi,J) \in \QQQ_\de^2; \;h_1^-(\tau,\varphi) \le p \le h_1^+(\tau,\varphi),\;
h_2^-(\tau,\varphi) \le J \le h_2^+(\tau,\varphi)\},
\end{equation}
where $h_i^\pm : (0,\de) \times (0,\tkk) \to (0,\de) \times (0,\tkk)$, $i=1,2$, are $\ell_h$-Lipschitz. Analogously,
$V \subset \QQQ_\de^2$ is a \emph{vertical rectangle} if
\begin{equation}
\label{def:rectanglevertical}
V = \{(p,\tau,\varphi,J) \in \QQQ_\de^2; \;v_1^-(p,J) \le \tau \le v_1^+(p,J),\;
v_2^-(p,J) \le \varphi \le v_2^+(p,J)\},
\end{equation}
with $\ell_v$-Lipschitz functions $v_i^\pm : (0,\de) \times (0,\tkk) \to (0,\de) \times (0,\tkk)$, $i=1,2$.

If $H$ is the horizontal rectangle~\eqref{def:rectanglehoritzontal}, we split $\partial H = \partial_h H \cup \partial_v H$ as
\[
\begin{aligned}
\partial_h H & = \{\omega\in \QQQ_\de^2; \; (p,J) = (h_1^-,h_2^-)(\tau,\varphi) \text{ or } (p,J) = (h_1^+,h_2^+)(\tau,\varphi) \}, \\
\partial_v H & = \{\omega\in \QQQ_\de^2; \; (\tau,\varphi)=(0,0) \text{ or } (\tau,\varphi)=(\de,\tkk) \}
\end{aligned}
\]
and, analogously,
if $V$ is the vertical rectangle~\eqref{def:rectanglevertical}, we split $\partial V = \partial_h V \cup \partial_v V$ as
\[
\begin{aligned}
\partial_h V & = \{\omega\in \QQQ_\de^2; \; (p,J)=(0,0) \text{ or } (p,J)=(\de,\tkk) \}, \\
\partial_v V & = \{\omega\in \QQQ_\de^2; \; (\tau,\varphi) = (v_1^-,v_2^-)(p,J) \text{ or } (\tau,\varphi) = (v_1^+,v_2^+)(p,J) \}.
\end{aligned}
\]

Additionally, we define the \emph{stable and unstable cone fields} in the following way. For $\omega \in \QQQ_\de^2$, we consider in $T_\omega \QQQ_\de^2$ the basis given by the coordinates $(p,\tau,\varphi,J)$ and write $x \in T_\omega \QQQ_\de^2$ as $x = (x_u,x_s)$ meaning
$x = x_{s,p} \frac{\partial}{\partial p} + x_{u,\tau} \frac{\partial}{\partial \tau} + x_{u,\varphi} \frac{\partial}{\partial \varphi} +
x_{s,J} \frac{\partial}{\partial J}$. We define $\|x_u\| = \max\{|x_{u,\tau}|,|x_{u,\varphi}|\}$ and $\|x_s\| = \max\{|x_{s,p}|,|x_{s,J}|\}$. Then, a \emph{$\kappa_s$-stable cone} at $\omega \in \QQQ_\de^2$ is
\begin{equation*}
% \label{def:conestable}
S^s_{\omega,\kappa_s} = \{x \in T_\omega \QQQ_\de^2; \; \|x_u\| \le \kappa_s \|x_s\| \}
\end{equation*}
and a \emph{$\kappa_u$-unstable cone} at $\omega \in \QQQ_\de^2$,
\begin{equation}
\label{def:coninestable}
S^u_{\omega,\kappa_u} = \{x \in T_\omega \QQQ_\de^2; \;\|x_s\| \le \kappa_u \|x_u\| \}.
\end{equation}

Having in mind~\cite{Wiggins90} (see also~\cite{Moser01}), we introduce the following hypotheses. Let $F:\QQQ_\de^2 \to \RR^4$ be a $C^1$
diffeomorphism onto its image.
\begin{enumerate}
\item[\fH]
There exists two families $\{H_n\}_{n\in \NN}$, $\{V_n\}_{n\in \NN}$ of horizontal and vertical rectangles in $\QQQ_\de^2$, with $\ell_h \ell_v < 1$, such that
$H_n \cap H_{n'} = \emptyset$, $V_n \cap V_{n'} = \emptyset$, $n\neq n'$, $H_n \to \{p =0\}$, $V_n \to \{\tau = 0\}$, when $n\to \infty$, in the sense of the Hausdorff distance, $F(V_n) = H_n$, homeomorphically, $F^{-1}(\partial_v V_n) \subset \partial_v H_n$, $n\in \NN$.
% \textcolor{magenta}{no entenc}
\item[\sH]
There exist $\kappa_u,\kappa_s,\mu >0$ satisfying $0 < \mu < 1 - \kappa_u \kappa_s$ such that if $\omega \in \cup_{n} V_n$, $DF(\omega) S^u_{\omega,\kappa_u} \subset S^u_{F(\omega),\kappa_u}$, if $\omega \in \cup_{n} H_n$, $DF^{-1}(\omega) S^s_{\omega,\kappa_s} \subset S^s_{F^{-1}(\omega),\kappa_s}$ and,
denoting $x^+ = DF(\omega)x$ and $x^- = DF^{-1}(\omega)x$, if $x \in S^u_{\omega,\kappa_u}$, $|x_u^+| \ge \mu^{-1} |x_u|$ and if $x \in S^s_{\omega,\kappa_s}$, $|x_s^-| \ge \mu^{-1} |x_s|$.
\end{enumerate}

Finally, we introduce symbolic dynamics in our context (see~\cite{Moser01} for a complete discussion). Consider the \emph{space of sequences} $\Sigma = \NN^{\ZZ}$, with the topology\footnote{This toplogy can be also defined by the distance $d(s,r)=\sum_{k\in\ZZ}4^{-|k|}\delta(s_k,r_k)$ where $\delta(n,m)=1$ if $n=m$ and $\delta(n,m)=0$ if $n\neq m$.} induced by the neighborhood basis of $s^* = (\dots,s_{-1}^*,s_0^*,s_1^*,\dots)$
\[
J_j = \{ s \in \Sigma; \; s_k = s_k^*, \; |k| < j\}, \qquad J_{j+1}\subset J_j
\]
and the \emph{shift map} $\sigma: \Sigma \to \Sigma$ defined by $\sigma(s)_j = s_{j+1}$. The map $\sigma$ is a homeomorphism.

We have the following theorem, which is a direct consequence of Theorems~2.3.3 and~2.3.5 of~\cite{Wiggins90}.

\begin{theorem}
\label{thm:ferradura}
Assume that $F:\QQQ_\de^2 \to \RR^4$, a $C^1$ diffeomorphism onto its image, satisfies {\fH} and \sH. Then there exists a subset $\XX \subset \QQQ_\de^2$ and a homeomorphism $h: \XX \to \Sigma$ such that $h \circ F_{\mid \XX}  = \sigma \circ h$.
\end{theorem}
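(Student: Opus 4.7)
The plan is to run the classical Conley--Moser construction for Smale horseshoes in the setting of countably many symbols, using hypothesis \fH{} to produce the combinatorial/topological skeleton and hypothesis \sH{} to collapse each admissible itinerary to a single point. Concretely, I would define
\[
\XX = \bigcap_{k\in\ZZ} F^{-k}\!\left(\bigcup_{n\in\NN} V_n\right),
\]
i.e.\ the set of points whose full two-sided orbit under $F$ remains in $\bigcup_n V_n$. The conjugacy $h:\XX\to\Sigma$ would assign to $\omega\in\XX$ the itinerary $h(\omega)=(s_k)_{k\in\ZZ}$ with $s_k=n$ iff $F^k(\omega)\in V_n$. The identity $h\circ F = \sigma\circ h$ is immediate from the definition once $h$ is shown to be a well-defined bijection.

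The technical heart consists in showing that for every $s\in\Sigma$ the ``candidate fibre''
\[
\XX_s \;=\; \bigcap_{k\in\ZZ} F^{-k}(V_{s_k})
\]
is a single point. First I would establish a \emph{rectangle-intersection lemma}: if $H\subset \QQQ_\de^2$ is a horizontal rectangle of horizontal Lipschitz constant $\ell_h$ and $V_n$ is any of the vertical rectangles from \fH, then $H\cap V_n$ is again a ``horizontal slab over $V_n$'' whose $(p,J)$-thickness is controlled because $\ell_h\ell_v<1$, and symmetrically for $V\cap H_n$. Iterating and using \fH{}, the finite forward intersection $\bigcap_{k=0}^{N}F^{-k}(V_{s_k})$ is a vertical rectangle $V^{(N)}_s\subset V_{s_0}$ whose $(p,J)$-thickness goes to zero, because each further intersection forces the slice to lie inside $F^{-N-1}(V_{s_{N+1}})$, which is $F^{-1}$-image of a vertical rectangle inside $H_{s_N}$; here the unstable cone invariance from \sH{} guarantees that the images $F^{-1}(V_{s_{k+1}})\cap V_{s_k}$ remain vertical rectangles (graphs $\tau=v_1^\pm(p,J)$, $\varphi=v_2^\pm(p,J)$), and the expansion estimate $|x_u^+|\ge \mu^{-1}|x_u|$ with $\mu<1-\kappa_u\kappa_s$ forces their horizontal width to shrink geometrically. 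The symmetric argument on the past via $F$ and $H$-rectangles, using the stable cone invariance, shows that $\bigcap_{k=-N}^{0}F^{-k}(V_{s_k})$ is a horizontal rectangle $H^{(N)}_s$ of vanishing vertical thickness. The intersection $V^{(N)}_s\cap H^{(N)}_s$ is therefore a nested sequence of non-empty compact sets whose diameters tend to zero, so $\XX_s$ consists of exactly one point, which I call $h^{-1}(s)$.

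Next I would prove that $h:\XX\to\Sigma$ is a homeomorphism. Injectivity is built into the construction. Surjectivity requires exhibiting, for every admissible $s$, the point $\XX_s$ just produced; admissibility is automatic since \fH{} imposes no transition restrictions (the alphabet is full $\NN$). For continuity of $h$, I would argue that if $\omega_k\to\omega$ in $\XX$, then for each fixed $j$ the points $F^k(\omega_k)$ and $F^k(\omega)$ lie in the same $V_{s_k}$ eventually, because the $V_n$ are pairwise disjoint and $F^k$ is continuous; hence $h(\omega_k)$ and $h(\omega)$ agree on larger and larger index windows, which is precisely convergence in the product topology on $\Sigma$. For continuity of $h^{-1}$, I would use the geometric-shrinking estimate derived above: if two itineraries $s,s'$ agree on $|k|\le j$, then $h^{-1}(s)$ and $h^{-1}(s')$ both lie in $V^{(j)}_s\cap H^{(j)}_s$, whose diameter is bounded by $C\mu^j\to 0$. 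Compactness of $\XX$ (it is closed in the compact closure of $\QQQ_\de^2$) then upgrades this to a full homeomorphism.

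The step I expect to be most delicate is the geometric-shrinking estimate, because of the infinite alphabet and because, per \fH, the strips $H_n,V_n$ accumulate on the invariant manifolds $\{p=0\}$ and $\{\tau=0\}$, where $F$ fails to be uniformly hyperbolic in the naive sense. The cone and expansion bounds in \sH{} are stated only at points of $\cup_n V_n$ and $\cup_n H_n$, so I have to make sure every iterate stays inside these unions; this is exactly what the definition of $\XX$ guarantees, but one must track that $F^{-1}(\partial_v V_n)\subset \partial_v H_n$ (the last clause of \fH) so that the horizontal/vertical structure is preserved under the restricted map and that $\mu<1-\kappa_u\kappa_s$ yields a geometric contraction in the graph-transform sense uniformly in the symbol $n$. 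Once this uniformity is in hand, the remainder of the argument is the standard Moser/Wiggins horseshoe machinery, which is precisely why the statement can be cited as a direct consequence of Theorems~2.3.3 and~2.3.5 of \cite{Wiggins90}.
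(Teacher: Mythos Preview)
Your proposal is correct and matches the paper's treatment: the paper does not give its own proof of this theorem but simply cites Theorems~2.3.3 and~2.3.5 of \cite{Wiggins90}, and your sketch is exactly the Conley--Moser argument those theorems encode. One small correction: in the infinite-alphabet setting $\Sigma=\NN^\ZZ$ is not compact, hence neither is $\XX$, so the compactness shortcut you invoke at the end is unavailable; your direct diameter estimate $\mathrm{diam}(V^{(j)}_s\cap H^{(j)}_s)\lesssim \mu^j$ already gives continuity of $h^{-1}$, so nothing is lost.
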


\begin{remark}
Hypothesis {\sH} implies that the set $\XX$ given by Theorem~\ref{thm:ferradura} is hyperbolic.
\end{remark}

\begin{theorem}
\label{thm:lanostraferradura}
If $\tkk$ and $\de$ are small enough, $\wt \Psi$ satisfies {\fH} and \sH.
\end{theorem}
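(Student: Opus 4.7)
The plan is to verify hypotheses \fH\ and \sH\ of Theorem~\ref{thm:ferradura} for
\[
\wt\Psi = \wt\Psi_{1,2}\circ \wt\Psi_{1,1}^{M-1}\circ \wt\Psi_{2,1},
\]
exploiting the structural decomposition of each factor $\wt\Psi_{i,j}=\wtPsiloc{i}{j}\circ\wtPsig{i}$. By~\eqref{def:globalmapiinlocalcoordinates}, each global factor realizes, up to $\OO_1(p,\tau)$ corrections, the linear swap $(p,\tau)\mapsto(\sigma,q)$ of the planar coordinates together with the near-identity action $\wh\tS_i$ on $z$, and each local factor is controlled by the parabolic Lambda Lemma of Theorem~\ref{prop:lambdalemma}. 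I arrange the strips so that only the \emph{first} local passage (right after $\wtPsig{2}$) carries variable, unbounded travel time --- producing a single Moser-type strip index $n\in\NN$ per return --- while the remaining $M-1$ local passages are restricted to happen with their entry $q$-coordinate bounded below, so they behave as bounded diffeomorphisms; the combined $z$-action of the $M+1$ global excursions is then $\CCC^1$-close to $\wh\tS=(\wh\tS_1)^M\circ\wh\tS_2$, for which Theorem~\ref{thm:BlockScattering} supplies the isolating block and the hyperbolic splitting in the $z$-directions.

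For hypothesis~\fH, fix $n\in\NN$ large and define
\[
V_n=\{\omega\in\QQQ_\delta^2 : \text{the first local passage of }\omega\text{ has travel time in }[2\pi n,2\pi(n+1))\}.
\]
The entry $q$-coordinate in $\Sigma_1$ equals $\tau\,\nu_1^2(z)(1+\OO_1(p,\tau))$ and Theorem~\ref{prop:lambdalemma}(1) gives $t_1-t_0\asymp q^{-3/2}$, so the condition places $\tau$ in a thin slab of width $\OO(n^{-5/3})$ centered at some $\tau_n\sim n^{-2/3}$; in particular $V_n\to\{\tau=0\}$ in Hausdorff distance. The image $H_n=\wt\Psi(V_n)$ is obtained by tracing the slab through the composition: the first local passage maps it to a strip thin in the outgoing $p_1$-coordinate and covering a full period in $\tau_1\in\TT$; the remaining bounded factors, restricted to the compact image of $V_n$, act as $\CCC^1$-bounded diffeomorphisms whose $z$-component is $\CCC^1$-close to $(\wh\tS_1)^M$ and whose $(p,\tau)$-action is close to the linear swap. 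Theorem~\ref{thm:BlockScattering}(2) then places the $z$-image inside $\RRR$, producing a horizontal rectangle $H_n$ of the form~\eqref{def:rectanglehoritzontal} accumulating on $\{p=0\}$. Lipschitz graph bounds $\ell_v,\ell_h\to 0$ as $\delta,\tkk\to 0$ follow from the $\CCC^1$-estimates of Theorem~\ref{prop:lambdalemma}(2) and~\eqref{def:globalmapiinlocalcoordinates}, ensuring $\ell_v\ell_h<1$; the boundary correspondence $\wt\Psi(\partial_v V_n)\subset\partial_v H_n$ follows from the fact that the extremal travel times $t_1-t_0\in\{2\pi n,2\pi(n+1)\}$ are identified with the extremal $\tau_1$-values $\{0,2\pi\}$ modulo $2\pi$.

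For hypothesis~\sH, split $T_\omega\QQQ_\delta^2$ at each $\omega\in\QQQ_\delta^2$ into the planar pair $(\partial_p,\partial_\tau)$ and the scattering pair spanned by the vector fields $V_1,V_2$ of Theorem~\ref{thm:BlockScattering}(3); take $\partial_\tau$ and $V_1$ to span the unstable-cone axis and $\partial_p$ and $V_2$ the stable one. For the planar pair, a direct computation using~\eqref{def:globalmapiinlocalcoordinates} and Theorem~\ref{prop:lambdalemma}(2) shows that $D\wt\Psi$ sends $\partial_\tau$ to a vector whose dominant component lies along the outgoing $\partial_{\tau_1}$ with stretching rate $\to\infty$ as $\tau\to 0$ (equivalently as $n\to\infty$), while $\partial_p$ is mapped in a complementarily contracting direction, with the subsequent bounded factors contributing only bounded multiplicative corrections. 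For the scattering pair, Theorem~\ref{prop:lambdalemma}(3)--(4) yield that each local passage perturbs the $z$-part of the differential by $\OO(q^{N-10})+\OO(a)$ in $\CCC^1$, so the $(\varphi,J)$ block of $D\wt\Psi$ is $\CCC^1$-close to $D\wh\tS$; by Theorem~\ref{thm:BlockScattering}(3) its eigenvalues are $\lambda_{\wh\tS}^{\pm 1}$ with $|\lambda_{\wh\tS}|\gtrsim\tkk^{-1}$. Taking cone apertures $\kappa_u,\kappa_s=\OO(\tkk^{1/2})$ around the distinguished axes, one verifies strict invariance of $S^u_{\omega,\kappa_u}$ under $D\wt\Psi$ with expansion $\mu^{-1}\gtrsim\min(n,\tkk^{-1})\gg 1$, and symmetrically for $S^s_{\omega,\kappa_s}$ under $D\wt\Psi^{-1}$; the condition $\mu<1-\kappa_u\kappa_s$ holds for $\delta,\tkk$ sufficiently small.

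The main obstacle is the strong anisotropy of the two unstable mechanisms: the planar rate grows polynomially and unboundedly with $n$, while the scattering rate is a fixed $\sim\tkk^{-1}$. The off-diagonal blocks of $D\wt\Psi$ could, a priori, tilt an expanding scattering vector into the huge planar direction, making the cone inclusion hold formally while destroying the genuine product structure of the hyperbolicity. The resolution is the $\CCC^1$-decoupling estimate of Theorem~\ref{prop:lambdalemma}(3): the $z$-variation along a curve transverse to the stable foliation is bounded by $\OO(q^{N-10})$ for $N>10$, so the two hyperbolic scales remain honestly separated as $q\to 0$. Together with the fact that the $M-1$ intermediate local passages take place with $q$ bounded below, this gives uniform cone invariance across all strips $V_n$ and completes the verification of \sH.
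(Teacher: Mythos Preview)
The paper proves this theorem in one line by citing Propositions~\ref{prop:horizontalstrips} and~\ref{prop:condicionsdecons}, whose proofs (Sections~\ref{sec:proofofpropositionprop:horizontalstrips} and~\ref{sec:provaproposiciodelscons}) carry the real content. Your strategy --- Lambda Lemma for $(p,\tau)$, isolating block of $\wh\tS$ for $z$ --- is exactly the architecture behind those two propositions, so at that level you are aligned with the paper.

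Your execution, however, has a genuine gap that breaks both \fH\ and \sH. You assert that ``the remaining $M-1$ local passages are restricted to happen with their entry $q$-coordinate bounded below, so they behave as bounded diffeomorphisms.'' This cannot be arranged. After the first passage, whose travel time you let range over one full period on $V_n$, the exit $\tau$-coordinate covers all of $(0,\de)$; by~\eqref{def:globalmapiinlocalcoordinates} the next entry $q=\nu_1^i(z)\tau(1+\OO_1)$ therefore ranges over $(0,C\de)$ and is not bounded away from~$0$. The same happens at every subsequent step: each of the $M+1$ local passages (you undercount by one) contributes an expansion $\gtrsim\de^{-3/5}$, and $\wt\Psi(V_n)\cap\QQQ_\de^2$ is not a single horizontal rectangle but an infinite union indexed by all the later return times. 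The paper's Lemma~\ref{lem:bandeshoritzontalsabandeshoritzontals} handles this by tracking a horizontal surface inductively through all $M+1$ factors, reparametrizing by the unwrapped exit time at each step and taking the strip index from the \emph{last} passage rather than the first.

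For \sH\ you correctly flag the anisotropy $\lambda\gg\la_{\wh\tS}$ and point to Theorem~\ref{prop:lambdalemma}(3) for the decoupling of the $z$-block; this is indeed the input behind Lemma~\ref{lem:almostinvariantvectorfields}(3). But cone invariance does not then follow by inspection: in the straightened differential of Proposition~\ref{prop:straighteningDPsi} the off-diagonal entries $\lambda\eps_3,\lambda\eps_4$ in the first column are themselves large (since $\lambda$ is unbounded), and controlling $\|\M_{s,u}y_u\|$ relative to $\|(\M y)_u\|$ requires the case split at~\eqref{bound:Msuyu} on whether $|y_1|$ or $|y_3|$ dominates, and in the latter case whether $|y_1|\gtrless(\la_{\wh\tS}/\lambda)|y_3|$. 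Your sketch supplies neither the adapted-basis construction nor this case analysis, and your fallback --- ``$q$ bounded below for the intermediate passages'' --- is, as above, unavailable.
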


Theorem~\ref{thm:lanostraferradura} is an immediate consequence of the following two propositions.
Proposition \ref{prop:horizontalstrips} implies that $\wt \Psi$ indeed satisfies \fH and Proposition \ref{prop:condicionsdecons} implies that $\wt\Psi$ satisfies $\sH$.

\begin{proposition}
\label{prop:horizontalstrips}
If $\de$ is small enough, $\wt \Psi(\QQQ_\de^2) \cap \QQQ_\de^2$ has an infinite number of connected components.
More concretely, there exists  $0 < \tau_1 < \tau_2 < \de$ such that the set
\[
H_{\tau_1,\tau_2} = \{(p,\tau,\varphi,J) \in \QQQ_\de^2\mid \tau_1 \le \tau \le \tau_2 \}
\]
satisfies the following. There exists $\{H_n\}_{n\in \NN}$, a family of horizontal rectangles,
\[
H_n = \{(p,\tau,\varphi,J) \in \QQQ_\de^2\mid h_{1,n}^-(\tau,\varphi) \le p \le h_{1,n}^+(\tau,\varphi),\;
h_{2,n}^-(\tau,\varphi) \le J \le h_{2,n}^+(\tau,\varphi)\},
\]
with  $H_n \cap H_{n'} = \emptyset$, if $n\neq n'$, such that $h_{1,n}^-,h_{1,n}^+ \to 0$ uniformly
when $n\to \infty$, and
\[
\sup_n \Lip h_{2,n}^-, \sup_n \Lip h_{2,n}^+ \lesssim \OO(\tkk)+\OO(\de),
\]
% \textcolor{magenta}{no diem res de $h_2\to 0$???}
with $\wt \Psi(H_{\tau_1,\tau_2}) \cap \QQQ_\de^2 = \cup_{n\in \NN} H_n$.

The analogous claim holds for vertical rectangles and $\wt \Psi^{-1}$, that is, there exist $0 < p_1 < p_2 < \de$ such that the set
\[
V_{p_1,p_2} = \{(p,\tau,\varphi,J) \in \QQQ_\de^2\mid p_1 \le p \le p_2 \}
\]
satisfies the following. There exists $\{V_n\}_{n\in \NN}$, a family of vertical rectangles,
\[
V_n = \{(p,\tau,\varphi,J) \in \QQQ_\de^2\mid v_{1,n}^-(p,J) \le \tau \le v_{1,n}^+(p,J),\;
v_{2,n}^-(p,J) \le \varphi \le v_{2,n}^+(p,J)\},
\]
with  $V_n \cap V_{n'} = \emptyset$, if $n\neq n'$, such that $v_{1,n}^-,v_{1,n}^+ \to 0$ uniformly
when $n\to \infty$, and
\[
\sup_n \Lip v_{2,n}^-, \sup_n \Lip v_{2,n}^+ \lesssim \OO(1),
\]
with $\wt \Psi^{-1}(V_{p_1,p_2}) \cap \QQQ_\de^2 = \cup_{n\in \NN} V_n$.

In particular, $\wt \Psi$ satisfies {\fH}.
% \textcolor{magenta}{no es cert}
\end{proposition}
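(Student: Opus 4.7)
The plan is to focus on the horizontal-rectangle statement; the vertical one follows by the same strategy applied to $\wt\Psi^{-1}$ using \eqref{def:inverseglobalmapiinlocalcoordinates} in place of \eqref{def:globalmapiinlocalcoordinates}. I would track the strip $H_{\tau_1,\tau_2}\subset \QQQ_\de^2$ through each piece of $\wt\Psi=\wt\Psi_{1,2}\circ\wt\Psi_{1,1}^{M-1}\circ\wt\Psi_{2,1}$. The first global map $\wtPsig{2}$ in \eqref{def:globalmapiinlocalcoordinates} sends $(p,\tau,z)\in\Sigma_2$ to $(q,\sigma,z')\in\Sigma_1$ with $q=\tau\nu_1^2(z)(1+\OO_1(p,\tau))$, $\sigma=p\nu_2^2(z)(1+\OO_1(p,\tau))$, and $z'=\wh\tS_2(z)+\OO_1(p,\tau)$, so the strip becomes a set where $q$ lies in an $\OO_1$-perturbation of $[\tau_1\nu_1^2,\tau_2\nu_1^2]$ and $\sigma\in(0,\de\cdot\max|\nu_2^2|)$. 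The local map from Theorem \ref{prop:lambdalemma} then takes $q\in(0,\de)$ to $p_1\in[q^{1+Ca},q^{1-Ca}]$ with transverse drift $|z_1-z_0|\lesssim q^{N(1-Ca)}$ and a large time increment $t_1-t_0\sim q^{-3/2}$. The remaining factor $\wt\Psi_{1,1}^{M-1}\circ\wt\Psi_{1,2}$ is a composition of bounded diffeomorphisms whose $z$-projection, by items 3 and 4 of Theorem \ref{prop:lambdalemma}, essentially coincides with the iterate $\wh\tS=(\wh\tS^1)^M\circ\wh\tS^2$ of the scattering composition, which on the rectangle $\RRR$ is hyperbolic by Theorem \ref{thm:BlockScattering}.

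The infinite family of connected components comes from the fact that the time variable $t$ on $\Sigma_2$ lives in $\TT$, so the output $\tau$-coordinate of $\wt\Psi$ is a $2\pi$-periodic function of the total phase accumulated through the local passages. I would choose $\tau_1,\tau_2$ small enough that, after $\wtPsig{2}$, the resulting $q$-interval fits into $(0,\delta_0)$ for the $\delta_0$ of Theorem \ref{prop:lambdalemma} and that $q^{1-Ca}<\de$ throughout. As $\tau$ decreases from $\tau_2$ down towards $\tau_1$, the dominant contribution to the accumulated time, coming from the first local passage, sweeps an interval of length $\sim \tau_1^{-3/2}-\tau_2^{-3/2}$, which is unbounded as $\tau_1\to 0^+$; hence the image winds around $\TT$ infinitely many times. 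The preimage of $\{0<\tau<\de,\ 0<p<\de\}\subset\QQQ_\de^2$ under the composite map therefore decomposes into a countable family of disjoint sub-intervals $I_n\subset[\tau_1,\tau_2]$ accumulating at $\tau_1$, indexed by the winding number $n$. Fibring over $(\tau,\varphi)$ on each $I_n$ yields a horizontal rectangle $H_n\subset\QQQ_\de^2$; the bound $h_{1,n}^\pm\to 0$ is automatic since $p_1\sim q^{1-Ca}\to 0$ as $n\to\infty$.

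For the Lipschitz estimates, the functions $h_{2,n}^\pm$ encode the $J$-component of the image as a function of $(\tau,\varphi)$. The $z$-drift of the local map is $\OO(q^{N-10})$ at the $\CCC^1$ level by item 3 of Theorem \ref{prop:lambdalemma} (with $N>10$), so the $(\varphi,J)$-dependence of $\wt\Psi$ is inherited from $\wh\tS$ restricted to $\RRR$; the $J$-derivatives are therefore controlled by the isolating-block estimates $|\wh V_j|\lesssim\tkk$ of Theorem \ref{thm:BlockScattering}, while the $(\tau,\varphi)$-derivatives acquire an additional $\OO(\de)$ contribution from the $\OO_1(p,\tau)$ terms of the global maps. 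This yields $\sup_n\Lip h_{2,n}^\pm\lesssim \OO(\tkk)+\OO(\de)$, as claimed. The vertical case is symmetric, with the larger bound $\sup_n\Lip v_{2,n}^\pm\lesssim \OO(1)$ reflecting the loss of one factor of $\la_{\wh\tS}^{-1}\sim\tkk$ upon inversion of the hyperbolic block. The main obstacle I foresee is the topological alignment $\wt\Psi^{-1}(\partial_v H_n)\subset \partial_v H_{\tau_1,\tau_2}$, namely that the boundaries $\tau=\tau_1,\tau_2$ of each $I_n$-fibre sit correctly over the vertical boundary of $\QQQ_\de^2$ after the full composition; this must be verified by tracing through the $M-1$ applications of $\wt\Psi_{1,1}$ and invoking the explicit horizontal/vertical alignment in item 2 of Theorem \ref{thm:BlockScattering} for each step.
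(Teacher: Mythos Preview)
Your outline identifies the right ingredients—the decomposition $\wt\Psi=\wt\Psi_{1,2}\circ\wt\Psi_{1,1}^{M-1}\circ\wt\Psi_{2,1}$, the time blow-up $t_1-t_0\sim q^{-3/2}$ from Theorem~\ref{prop:lambdalemma} producing infinite winding in~$\TT$, and the hyperbolicity of $\wh\tS$ from Theorem~\ref{thm:BlockScattering} for the $(\varphi,J)$ control—and this is indeed the paper's strategy. There are, however, two genuine gaps in the execution.

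First, the claim that the remaining $M$ factors form ``a composition of bounded diffeomorphisms'' is false: each $\wt\Psi_{1,1}$ and $\wt\Psi_{1,2}$ contains its own local passage, whose $\tau$-output again behaves like an inverse power of the $\tau$-input. The paper handles this by an \emph{iterative reparametrization}: after each step $\wt\Lambda_j=\wt\Psi_{\cdot,\cdot}\circ\Lambda_{j-1}$ it inverts $\wt T_j(\tau,z_1)=T$ (using $|\partial_\tau\wt T_j|\gtrsim\tau^{-3/5+Ca}$) to define $\hat\tau_j(T,z_1)$, so that the surface is always parametrized by the \emph{output} time $T$ on a bounded interval. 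Without this, the derivatives of the composition are not controlled. Second, the Lipschitz bounds on $h_{2,n}^\pm$ do not follow from Theorem~\ref{prop:lambdalemma} and Theorem~\ref{thm:BlockScattering} alone. The paper relies on the adapted vector fields of Lemma~\ref{lem:almostinvariantvectorfields}: at each step the tangent vector $\partial_{z_1}[\Lambda_0(\hat\tau_j,z_1)]$ is decomposed along $v_1$, $v_2^{i,j}$ and the center vectors $v^{i,j}$ of that lemma, which is what allows one to isolate the $D\wh\tS_i$ contribution and kill the unbounded $\lambda$-term via the constraint $\pi_T\partial_{z_1}\Lambda_j=0$. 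Only after all $M+1$ such steps does one invert $\pi_{z_1}\Lambda_{M+1}(T,z_1)=Z_1$ using the full $D\wh\tS$, yielding $\partial_{Z_1}\hat z_{M+1}=\OO(\tkk)$ and hence the stated bounds. A minor point: in the statement the $H_n$ lie in the \emph{image} $\wt\Psi(H_{\tau_1,\tau_2})\cap\QQQ_\de^2$, so they are graphs over the output $(\tau,\varphi)$, not fibres over the domain intervals $I_n\subset[\tau_1,\tau_2]$ as you write.
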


The proof of this proposition is placed in Section~\ref{sec:proofofpropositionprop:horizontalstrips}.

\begin{proposition}
\label{prop:condicionsdecons} $\wt \Psi$ satisfies {\sH} with $\kappa_u = \OO(\de)+\OO(\tkk)$, $\kappa_s = \OO(1)$ and $\mu = \OO(\tkk)$.
\end{proposition}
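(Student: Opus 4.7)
The strategy is to linearize $\wt\Psi$ at a generic point of $\QQQ_\de^2$ and exhibit a dominated splitting of $D\wt\Psi$ into two $2\times 2$ blocks: a strongly hyperbolic $(p,\tau)$-block coming from the local passage near $\EE_\infty$, and a $(\varphi,J)=z$-block which is a small perturbation of $D\wh\tS$, hyperbolic by Theorem~\ref{thm:BlockScattering}. The $\OO(\de+\tkk)$ coupling between these blocks will be absorbed into the announced cone apertures.

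I would start by computing each factor $D\wt\Psi_{i,j}=D\wtPsiloc{i}{j}\cdot D\wtPsig{i}$. Using~\eqref{def:globalmapiinlocalcoordinates}, $D\wtPsig{i}$ swaps $p\leftrightarrow\tau$ (with multiplicative factors $\nu^i_1(z),\nu^i_2(z)$) and acts on $z$ by $D\wh\tS_i(z)$, with off-block entries of order $\OO(\de)$. Then parts~2--4 of Theorem~\ref{prop:lambdalemma} show that $D\wtPsiloc{i}{j}$, in the $\Sigma_1$-coordinates $(q,\sigma,z)$, contracts the $q$-direction by a factor $\OO(q^{2/5+Ca})$, expands the time-direction by $\gtrsim q^{-3/5+Ca}$, is the identity on $z$ modulo $\OO(q^{N-10})$, and has $\OO(q^{3/5})$ off-diagonal bounds. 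Composing the $M+1$ factors, the scattering actions on $z$ concatenate, up to perturbations of size $\OO(\de+\tkk)$ from the off-block couplings, into $\wh\tS=(\wh\tS^1)^M\circ\wh\tS^2$, producing
\[
D\wt\Psi=\begin{pmatrix} A & B \\ C & D\end{pmatrix},
\]
with $A$ hyperbolic on $(p,\tau)$ (expansion $\gg\tkk^{-1}$ in $\tau$, contraction in $p$), $D=D\wh\tS(z)+\OO(\de+\tkk)$, and $\|B\|,\|C\|\lesssim\de+\tkk$.

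Once this block form is in place, the two parts of \sH{} become straightforward. Writing $x=(x_{s,p},x_{u,\tau},x_{u,\varphi},x_{s,J})$ and setting $\kappa_u=C_1(\de+\tkk)$, $\kappa_s=C_2$ for suitable constants, I would verify forward invariance of $S^u_{\omega,\kappa_u}$ as follows: for $v\in S^u$, the $\tau$-expansion from $A$ dominates the $Bv_z$ coupling, yielding $|(D\wt\Psi v)_{s,p}|\le\kappa_u|(D\wt\Psi v)_{u,\tau}|$ in the $(p,\tau)$-block; in the $z$-block, $D$ being an $\OO(\de+\tkk)$-perturbation of $D\wh\tS$ whose expanding eigenvector $V_1=\pa_\varphi+\OO(\tkk)$ is nearly horizontal by Theorem~\ref{thm:BlockScattering}, preserves an $\OO(\tkk)$-cone around $\pa_\varphi$. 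Backward invariance of $S^s$ is proved symmetrically from~\eqref{def:inverseglobalmapiinlocalcoordinates}. The compatibility $\mu<1-\kappa_u\kappa_s$ is automatic because $\kappa_u\kappa_s\lesssim\de+\tkk\ll 1$, and the expansion estimates $\mu^{-1}\gtrsim\tkk^{-1}$ on both blocks (the $(p,\tau)$-block is much stronger; the $z$-block is the binding one) give $\mu=\OO(\tkk)$.

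The main technical obstacle is that individually none of the $D\wh\tS^i$ is hyperbolic---hyperbolicity is recovered only by the full composition $\wh\tS$---so one cannot simply propagate a fixed $z$-cone through each of the $M$ intermediate factors $\wt\Psi_{1,1}$. Instead one has to track intermediate non-cone-preserving images and show that the accumulated $\OO(\de+\tkk)$ coupling between the $(p,\tau)$- and $z$-blocks across all $M$ factors does not prevent the final $z$-image from landing in the narrow $\OO(\tkk)$-cone around $V_1$. This is possible because each $\wh\tS^i$ acts on $z$ as a near-rotation of very small angle (Theorem~\ref{thm:BlockScattering}, part 1), so the coupling errors accumulate only linearly in $M$ and remain subleading relative to the hyperbolicity of $\wh\tS$ that emerges at the last step.
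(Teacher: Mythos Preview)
Your block decomposition $(p,\tau)$ versus $z=(\varphi,J)$ does not align with the cone splitting, which separates the unstable pair $(\tau,\varphi)$ from the stable pair $(p,J)$. More importantly, the assertion $\|B\|,\|C\|\lesssim\de+\tkk$ fails in either decomposition: the strong unstable factor $\lambda\gtrsim\tau^{-3/5+Ca}\de^{-3M/5}$ is \emph{unbounded} on $\QQQ_\de^2$ as $\tau\to0$, and the coupling it induces from the $\tau$-direction into the $z$-directions is of size $\lambda\cdot\OO(\de^{1/5})$, hence large. This is precisely the obstruction the paper isolates at the start of Section~\ref{sec:provaproposiciodelscons}: the naive block-cone argument does not close because $\M_{s,u}$ (the unstable-to-stable block in any reasonable basis) contains entries $\lambda\eps_2,\lambda\eps_4$ that are not bounded.

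The paper's remedy is not to make the off-diagonal small but to make it \emph{proportional} to the diagonal. Proposition~\ref{prop:straighteningDPsi} constructs, by induction over the $M{+}1$ factors with near-identity triangular corrections $C_j$ at each step (see~\eqref{def:canviCj}--\eqref{eq:inducciocons}), a change of basis $C(\omega)$ in which the first column of $C^{-1}D\wt\Psi\,C$ consists of $\lambda$ and terms $\lambda\eps_i$ with $\eps_i=\OO(\de^{1/5})$, while the third column consists of $\la_{\wh\tS}$ and $\la_{\wh\tS}\eps_9$ with $\eps_9=\OO(\tkk)$. The cone verification in Section~\ref{sec:provaproposiciodelscons} then proceeds by a case analysis on which component of $y_u=(y_1,y_3)$ dominates (three cases, \eqref{bound:Msuyu}): in each case the large entries of $\M_{s,u}$ are compared not to an absolute scale but to the matching expansion in $\M_{u,u}$, giving the ratio $\OO(\de^{1/5}+\tkk)$. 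The obstacle you flag---that no single $D\wh\tS^i$ is hyperbolic---is real and is exactly what the inductive straightening in Proposition~\ref{prop:straighteningDPsi} addresses; but your proposed resolution via ``linear accumulation in $M$'' of $\OO(\de+\tkk)$ errors does not capture the actual mechanism, and misses the more serious difficulty coming from the unbounded $\lambda$.
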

The proof of this proposition is placed in Section~\ref{sec:provaproposiciodelscons}.

Propositions \ref{prop:horizontalstrips}  and \ref{prop:condicionsdecons} imply that the map $\wt\Psi$ satisfy hypotheses {\fH} and {\sH}. Therefore, one can apply Theorem \ref{thm:ferradura} to $\wt\Psi$. This completes the proof of Theorem \ref{thm:Main2}. Indeed, one can consider as Poincar\'e section $\Pi$ just the section $\Sigma_1$ introduced in \eqref{def:Sections:0} expressed in the original coordinates. Note that in Theorem \ref{thm:Main2} we are fixing the center of mass at the origin and therefore we are dealing with a four degree of freedom Hamiltonian system. The Poincar\'e map $\PP$ is just the return map $\wt\Psi$ written in the expressed in the original coordinates with fixed center of mass. On the contrary,  $\wt\Psi$ is the return map for the 3 body problem once has been reduced by rotations (see \eqref{def:rotationreduction}). Since Theorem \ref{thm:Main2} is stated for the 3 body problem without the rotation symplectic reduction, one has to add the dynamics of this ``extra angle''. Since this angle is cyclic, its dynamics only depends on the point in $\NN^\ZZ$.

Theorem~\ref{thm:Main1} is also a direct consequence of Theorem~\ref{thm:lanostraferradura}. Indeed, note that, the symbol in  $\NN$ keep track of the proximity of corresponding strip to each point in $\XX$ to the invariant manifolds of infinity. That is, the larger the symbol the closer the strip is to the invariant manifolds. This implies that these points get closer to infinity. For this reason, by construction, if one considers a bounded sequence in $\Sigma$, the corresponding orbit in $\XX$ is bounded. If one considers a sequence $\{\omega_k\}_{k\in\ZZ}$ which is unbounded both as $k\to\pm\infty$, the corresponding orbit belongs to \textit{OS$^-$}$\cap$\textit{OS$^+$}. Indeed, the orbit keeps visiting for all forward and backward times a fixed neighborhood of the homoclinic channel (which is ``uniformly far'' from infinity) but at the same time keeps getting closer and closer to infinity because the sequence is unbounded. By considering sequences which are bounded at one side and unbounded at the other, one can construct trajectories which belong to \textit{B}$^-\cap$\textit{OS}$^+$ and \textit{OS}$^-\cap $\textit{B}$^+$. The trajectories which are (in the future or in the past) parabolic-elliptic or hyperbolic-elliptic do not belong to $\XX$ but they can be built analogously. Indeed, as is done by Moser \cite{Moser01}, one can consider sequences of the form
\[
 \omega=(\ldots, \omega_{-1},\omega_0,\omega_1,\ldots,\omega_{M-1},\infty).
\]
That is points whose $M-1$ forward iterates come back to the section $\Sigma_1$ and then the trajectory goes to infinity. By the construction of the horizontal strips, one can built orbits which have these behavior since the strips get streched and therefore its image hit the invariant manifolds of infinity (which correspond to the motions \textit{PE$_3$}) and hit points ``at the other side'' of the invariant manifolds, which correspond to hyperbolic motions \textit{HE$_3$} (see Figure \ref{fig:isolatingtotal}). The same can be achieved for the inverse return map $\wt\Psi^{-1}$ and the vertical strips. For this reason one can combine future/past  \textit{PE$_3$} and  \textit{HE$_3$} with any other types of motion.

\section{Proof of Theorem \ref{thm:MainSplitting}: Parameterization of the invariant manifolds of infinity}
% $ \wt {P}_{\alpha_0, \beta_0}$}
\label{sec:ParamInvManifolds}

Theorem \ref{thm:MainSplitting} gives an asymptotic formula for the distance between the unstable manifold of the periodic orbit  $P_{\eta_0,\xi_0}$ and the stable manifold of the periodic orbit $P_{\eta_0+\delta \eta,\xi_0+\delta \xi}$. In this section we carry out the first step of its proof. We consider suitable graph parameterizations of the invariant manifolds and we analyze their analytic extensions to certain complex domains. Later, in Section \ref{sec:Difference}, we use these analytic extensions to obtain asymptotic formulas for the difference between the parameterizations for real values of the parameters.

This section is structured as follows. First, in Section \ref{sec:adaptedcoordinates} we consider symplectic coordinates which are adapted to have graph paramerizations of the invariant manifolds. These coordinates justify the particular choice of paramerizations given in \eqref{def:sigmaparamnousnr}.
Then, in Section \ref{sec:ParamInvManifoldsGraph} we  analyze the analytic extension of this graph paramerizations to certain complex domains. Such analysis is performed by means of a fixed point argument in suitable Banach spaces of formal Fourier series. These graphs paramerizations are singular at a certain point (where the invariant manifolds cease to be graphs). To overcome this problem, in Section \ref{sec:InvManifold:ExtensionUnstable}, we consider a different type of parameterizations.

\subsection{An adapted system of coordinates}\label{sec:adaptedcoordinates}

To study the unstable manifold of the periodic orbit $P_{\eta_0,\xi_0}$ and the stable manifold of the periodic orbit $P_{\eta_0+\delta \eta,\xi_0+\delta\xi}$ with $|\delta \eta|,|\delta\xi|\ll1$ we perform a change of variables to the coordinates introduced in \eqref{def:PoincareVariablesnr}.
This transformation
\[
(\wt r, \wt y,\la,L,  \eta,\xi) \to ( u, Y,\gamma,\Lambda,\alpha, \beta)
\]
relies on the parameterization of the unperturbed separatrix associated to the periodic orbit  $P_{\eta_0,\xi_0}$ given
by Lemma \ref{lemma:homounperturbed} and is defined as
\begin{equation}\label{def:ChangeThroughHomo}
\begin{aligned}
\wt r =&\,G_0^2 \wh r_\h(u),&\qquad
\wt y =&\, \frac{\wh y_\h(u)}{G_0}+\frac{ Y}{G_0^2\wh y_\h(u)}+\frac{\Lambda-(\eta_0+\alpha)(\xi_0+\beta)+\eta_0\xi_0}{G_0^2\wh y_\h(u)(\wh r_\h(u))^2}\\
\la =&\, \gamma+\phi_\h(u),&\qquad 
L =&\, L_0+\Lambda\\
\eta =&\,e^{i\phi_\h(u)}(\eta_0+\alpha),&\qquad
\xi =&\,e^{-i\phi_\h(u)}(\xi_0+\beta)
\end{aligned}
\end{equation}
where
% \begin{equation}\label{canvi0}
% \alpha_0=\eta_0\quad
% \beta_0 =\xi_0
% \end{equation}
% and therefore satisfy
\begin{equation}\label{ThetaG0}
% \Theta- L_0+\alpha_0 \beta_0= 
G_0=\Theta-L_0+\eta_0\xi_0.
\end{equation}
This change of coordinates is consistent with the particular form of the parameterization of the perturbed invariant manifolds given in \eqref{def:sigmaparamnousnr}. Indeed, we look for   parameterizations of the unstable manifold of the
periodic orbit $P_{\eta_0, \xi_0}$ and the stable manifold of the periodic orbit $P_{\eta_0+\delta \eta,\xi_0+\delta \xi}$ as graphs in $(u,\lo)$ as
\begin{equation*}
% \label{def:sigmaparam}
(u,\lo) \mapsto (Y,\Lambda,\alpha, \beta) =Z^*(u,\lo), \quad *=u,s.
\end{equation*}
It can be easily checked, using Lemma \ref{lemma:homounperturbed}, that the change of coordinates \eqref{def:ChangeThroughHomo} is symplectic in the sense that the pull back of the canonical symplectic form is just
% \[
$\omega= du\wedge d Y+ d\gamma \wedge d\Lambda+id\alpha\wedge d\beta$.
% \]
This fact will be strongly used later in Section \ref{sec:Average}.
% The system in these new variables $(u, Y,\gamma,\Lambda, \alpha,\beta)$ is Hamiltonian with respect to the canonical symplectic form

To analyze the dynamics it is enough to express the Hamiltonian  $\wt\KK$  \eqref{def:Ham:Poincarenr} in terms of the new variables.
We also scale time as
\begin{equation}
\label{def:timescaling}
 t=G_0^3s
\end{equation}
to have parabolic motion of speed one coupled with a fast rotation.
Then, the Hamiltonian becomes
\begin{equation}\label{def:HamFinal}
\begin{split}
\PP(u,Y,\gamma,\Lambda, \alpha, \beta) =&   -\frac{G_0^3\nu}{2 (L_0+\Lambda)^2}+\frac{G_0}{2}\left(\wh y_\h(u)+\frac{ Y}{G_0\wh y_\h(u)}
+\frac{\Lambda-(\eta_0+\alpha)(\xi_0+ \beta)+\eta_0 \xi_0}{G_0 \wh y_\h(u)( \wh r_\h(u))^2}\right)^2\\
&+\frac{(\Theta-L_0-\Lambda+(\eta_0+\alpha)(\xi_0+ \beta))^2}{2G_0\wh  r_\h(u)^2}-\frac{G_0}{\wh r_\h(u)}\\
&+ G_0^3\wt W\left(\gamma+\phi_\h(u), L_0+
\Lambda,e^{i\phi_\h(u)}(\eta_0+\alpha),
e^{-i\phi_\h(u)}(\xi_0+\beta), G_0^2\wh  r_\h(u)\right).
\end{split}
\end{equation}
Observe that we do not write the dependence of $\PP$ on the parameters $L_0,\eta_0, \xi_0$, nor on $G_0$.
In a natural way we can write $\PP=\PP_0(u,Y,\gamma,\Lambda, \alpha, \beta)+\PP_1(u,\gamma,\Lambda, \alpha, \beta)$ where, using  \eqref{ThetaG0},
\begin{equation}\label{eq:P0}
\begin{split}
\PP_0(u,Y,\gamma,\Lambda, \alpha, \beta)=& -\frac{G_0^3\nu}{2 (L_0+\Lambda)^2}+\frac{G_0}{2}\left(\wh y_\h(u)+\frac{ Y}{G_0\wh y_\h(u)}
+\frac{\Lambda-(\alo+\alpha)(\beto+ \beta)+\alo \beto}{G_0\wh  y_\h(u)(\wh r_\h(u))^2}\right)^2\\
&+\frac{(\Theta-L_0-\Lambda+(\alo+\alpha)(\beto+ \beta))^2}{2G_0\wh  r_\h(u)^2}-\frac{G_0}{\wh r_\h(u)}\\
=&   -\frac{G_0^3\nu}{2 (L_0+\Lambda)^2}+Q_0 (u,  Y,\Lambda-(\alo+\alpha)(\beto+ \beta)+\alo\beto)
\end{split}
\end{equation}
where, taking into account \eqref{def:HamPendulum}, $Q_0$ can be written as
\begin{equation}\label{def:Q0}
\begin{split}
Q_0(u, Y,q)=&\,\frac{G_0}{2}\left(\wh y_\h(u)+\frac{Y}{G_0\wh y_\h(u)}
+\frac{q}{G_0\wh  y_\h(u)(\wh r_\h(u))^2}\right)^2+\frac{(G_0-q)^2}{2G_0\wh  r_\h(u)^2}-\frac{G_0}{\wh r_\h(u)}\\
=&\,Y+\frac{Y^2}{2G_0\wh y_\h^2(u)}+f_1(u) Yq+ f_2(u)\frac{q^2}{2}
\end{split}
\end{equation}
with
\begin{equation}\label{def:f1f2}
f_1(u) =\frac{1}{G_0 y_h^2(u)r_h^2(u)},\qquad
f_2(u) =\frac{2}{G_0 r_h^3(u)y_h^2(u)}
\end{equation}
and
\begin{equation}\label{def:P1}
\PP_1(u,\gamma,\Lambda, \alpha, \beta)=
G_0^3\wt W\left(\gamma+\phi_\h(u), L_0+
\Lambda,e^{i\phi_\h(u)}(\alo+\alpha),
e^{-i\phi_\h(u)}(\beto+\beta), G_0^2\wh  r_\h(u)\right).
%G_0\wh V\left(\gamma+\phi_\h(u), L_0+ \Lambda,\frac{e^{i\phi_\h(u)}\alpha+e^{-i\phi_\h(u)}\beta}{\sqrt{2}},
%\frac{e^{i\phi_\h(u)}\alpha-e^{-i\phi_\h(u)}\beta}{\sqrt{2} i},\wh  r_\h(u);G_0\right).
\end{equation}
The periodic orbits at infinity $P_{\eta_0,\xi_0}$  and $P_{\eta_0+\delta \eta,\xi_0+\delta\xi}$ are now given by
\[
\begin{split}
{P}_{\alo, \beto}=&\{ (u,Y,\gamma,\Lambda, \alpha, \beta)=(\pm \infty,0,\gamma,0, 0,0), \ga \in \TT\}\\
{P}_{\alo+\delta \eta, \beto+\delta\xi}=&\{ (u,Y,\gamma,\Lambda, \alpha, \beta)=(\pm \infty,0,\gamma,0, \delta \eta,\delta\xi), \ga \in \TT\}
\end{split}
\]
% where $\eta_0=\alo$,$\xi_0=\beto$.
% are related through \eqref{canvi0}.
%\textcolor{magenta}{
%In these variables, the manifold $P_\infty$  restricted  to  the energy level $\PP=-\frac{G_0^3 \nu}%{2L_0^2}$, is filled by the periodic orbits given by:
%\[\wt{P}_{\alpha, \beta}=\{ (u,Y,\gamma,\Lambda, \alpha, \beta)=(\pm \infty,0,\gamma,0, \al,\beta), \ga \in \TT\}.\]}
The equations for the integrable system, which corresponds to $\PP_1=0$, are
\[
\begin{aligned}
\dot u &=\partial_Y \PP_0= \partial_Y Q_0=1+\frac{Y}{G_0\wh y_h^2}+f_1(u) q\\
\dot { Y} &=- \partial_u \PP_0=-\partial_u Q_0=\frac{\wh y_\h'(u)Y^2}{\wh y_\h^3(u)G_0}+f_1'(u)Yq+f_2'(u)\frac{q^2}{2}\\
\dot \gamma &= \partial_\La \PP_0=\frac{G_0^3 \nu}{(L_0+\Lambda)^3}+\partial_q Q_0=\frac{G_0^3 \nu}{(L_0+\Lambda)^3}+f_1(u)Y+f_2(u)q\\
\dot \Lambda &= - \partial_\gamma \PP_0= -\partial_\gamma Q_0=0 \\
\dot \alpha &= -i \partial_\beta \PP_0=i  \al\partial_q Q_0=i\al\left(f_1(u)Y+f_2(u)q\right)\\
\dot \beta &= i  \partial_\al \PP_0= -i \beta \partial_q Q_0=-i\bet\left(f_1(u)Y+f_2(u)q\right)
\end{aligned}
\]
where $q=\Lambda-(\alo+\al)(\beto+\bet)+\alo\beto$ and $f_1$ and $f_2$ are given in \eqref{def:f1f2}.

This system has a $2$-dimensional homoclinic manifold to the
periodic orbit  $P_{\alo, \beto}$ given by
\begin{equation}\label{def:homomanifoldugamma}
\{ u_\h=u, \, Y_\h=0, \, \gamma_\h= \gamma, \,   \Lambda_\h=0, \, \alpha_\h=0,\,  \beta_\h=0, \,   u \in \RR, \ga \in \TT\}
\end{equation}
whose  dynamics  is given by
\begin{equation*}
% \label{def:homoAdapted}
(\dot u,\dot {Y} , \dot \gamma, \dot \La,\dot \al,\dot \bet)=\left( 1,0,  \frac{G_0^3 \nu}{L_0^3}, 0,0,0\right).
\end{equation*}
(recall that we have scaled time as \eqref{def:timescaling}).
%\textcolor{magenta}{

% As we did for system \eqref{def:Ham:Poincarenr}, instead of working with the  three degrees of freedom Hamiltonian $\PP$,  sometimes it will more convenient to fix the energy level
% $\PP=-\frac{G_0^3 \nu}{2L_0^2}$ to eliminate the  variable $\La$
% %and then take $\ga$ as new time.
% %In this way we will work with a $2\pi-$periodic in time non-autonomous Hamiltonian system of two and a half degrees of freedom.
% and work with the flow
% %We denote the general solution of this ``reduced system'' as $\wh{\psi}(\ga;\ga_0,z)$, where
% %$z=(u, Y, \al,\beta)$.\\
% ${\phi}(t, \ga,v)$, where $v=(u, Y, \al,\beta)$.
% %We will also perform an extra reduction of the dimension by  considering consider the $4-$dimensional  Poincaré map
% %\begin{align}
% %\pi_{\ga_0}: \Sigma_{\ga_0} &\to \Sigma_{\ga_0+2\pi}\label{eq:poincarevarietat}\nonumber \\
% %z & \mapsto \wh{\psi}(\ga_0+2\pi;\ga_0,z)
% %\end{align}where $z=(u,Y,\al,\beta)$.
% Observe that if we call $v_{0}=(\pm\infty,0,\alo,\beto)$  we have that
%  $\phi(t, \ga_0 , v_{0})=(\ga +\frac{G_0^3 \nu}{L_0^3}t,v_0)$.
% %(\phi^\ga (t,\ga_0, v_0), v_0)$, where we denote by $\phi^\ga (t,\ga_0, v_0)=\ga +\frac{G_0^3 \nu}{L_0^3}t$ the $\ga-$component of the flow.
% %
% % Analogously,   $z_{\alo,\beto}$ is a fixed point of $\pi_{\ga_0}$.
% %}

\subsection{Graph parameterizations of the perturbed invariant manifolds}
We  look for parameterizations of the unstable  manifold of the
periodic orbit ${P}_{\alo, \beto}$ and the stable manifold of the periodic orbit $P_{\eta_0+\delta \eta,\xi_0+\delta\xi}$ as perturbations of the same homoclinic manifold \eqref{def:homomanifoldugamma} which exists when one takes $\PP_1=0$. We look for them as graphs with respect to the variables $(u,\lo)$, that is
%graphs in $(u,\lo)$:
% \begin{equation}\label{def:sigmaparam}
% %=Z_\h(u,\lo)+ Z_1 (u,\lo)
% \end{equation}
% where
\begin{equation}
\label{def:Parameterizations}
(u,\lo) \mapsto (Y,\Lambda,\alpha, \beta) =Z(u,\lo)
\qquad \text{where}\qquad% Z_\h(u,\lo) =
% \begin{pmatrix}
% 0\\0\\
% \alpha_0\\
% \beta_0
% \end{pmatrix}, \quad
Z (u,\lo) =
\begin{pmatrix}
Y(u,\lo)\\
\Lambda(u,\lo)\\
\alpha(u,\lo)\\
\beta(u,\lo)
\end{pmatrix}.
\end{equation}
% that is
% \begin{equation}\label{def:ParamGraphic}
%  \begin{aligned}
% Y&=Y_1(u,\lo),& \alpha &=\alpha_0 + \alpha_1(u,\lo)\\
%  \Lambda &=\Lambda_1(u,\lo), & \beta &= \beta_0 + \beta_1(u,\lo).
%  \end{aligned}
% \end{equation}
Note that in the unperturbed case, $Z=0$ is a manifold homoclinic to  $P_{\eta_0,\xi_0}$.
% and $Z=(0,0,\delta\eta,\delta \xi)$ is an homoclinic manifold to  $P_{\eta_0+\delta\eta,\xi_0+\delta\xi}$.

The graph parameterizations \eqref{def:Parameterizations} are not defined in a neighborhood of $u=0$ since the symplectic transformation
\eqref{def:ChangeThroughHomo} is not well defined at $u=0$. For this reason, we shall use different parameterizations depending on the domain.
% 
% To our purposes, we need to extend  these parameterizations in complex domains.
%
\begin{figure}[h]
\begin{center}
\includegraphics[height=4cm]{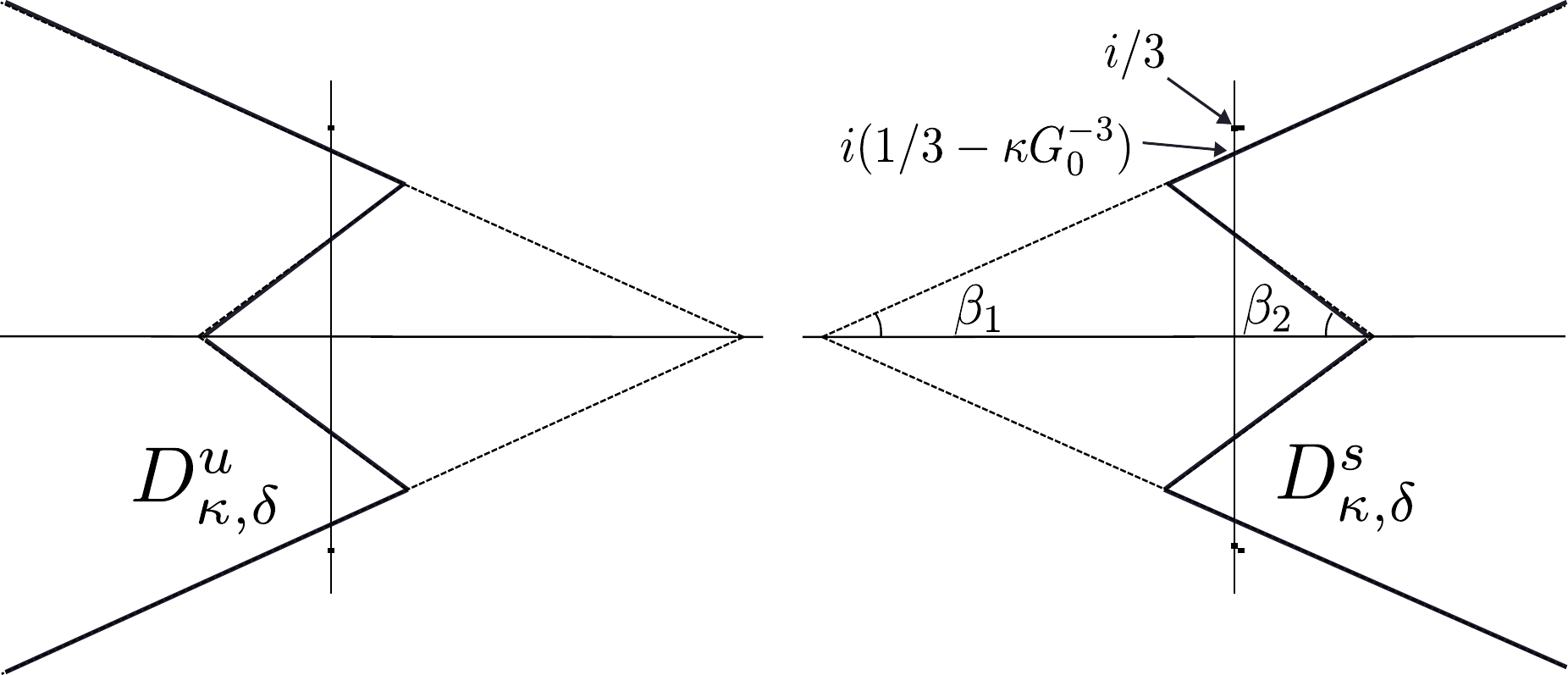}
\end{center}
\caption{The domains  $D^{u}_{\kk,\de}$ and $D^{s}_{\kk,\de}$ defined in
\eqref{def:DomainOuter}.}\label{fig:OuterDomains}
\end{figure}

First in Sections \ref{sec:ParamInvManifoldsGraph} and \ref{Sec:FixtPt}, we obtain graph parameterizations  \eqref{def:Parameterizations} in the  domains $D^{u,s}_{\kk,\de}\times \TT$,  where
\begin{equation}\label{def:DomainOuter}
\begin{split}
D^{s}_{\kk,\de}=\Big\{u\in\CC;& |\Im u|<\tan \beta_1\Re u+1/3-\kk
G_0^{-3},
|\Im u|>-\tan \beta_2\Re u+1/6-\de\Big\}\\
\dps D^{u}_{\kk,\de}=\{u\in\CC; &-u\in D^{s}_{\kk,\de}\},
\end{split}
\end{equation}
which do not contain $u=0$ (see Figure~\ref{fig:OuterDomains}). These are the same domains that were used in \cite{GuardiaMS16}.
However, the intersection  domain
 $D^{s}_{\kk,\de}\cap D^{u}_{\kk,\de}$ has empty intersection for real values of $u$ and therefore, to compare both manifolds one needs to extend the stable manifold to a domain which overlaps $D^{u}_{\kk,\de}\cap\RR$. This is done in Section \ref{sec:InvManifold:ExtensionUnstable}.
%
% Note that $D_{\kk,\de}\subset
% D^{s}_{\rr,\kk,\de}$. This is not true for the unstable manifold.

\subsection{The invariance equation for the graph parameterizations}\label{sec:ParamInvManifoldsGraph}
% The first step to prove Theorem \ref{thm:MainSplitting} is to construct parameterizations of the invariant manifolds of the periodic orbit $P_{L_0,\al_0,\bet_0}$ as  the solutions of a certain partial
% differential equation in the complex domains
% \begin{equation}\label{def:DomainOuter}
% \begin{split}
% D^{u}_{\kk,\de}=\Big\{u\in\CC;& |\Im u|<-\tan \beta_1\Re v+1/3-\kk
% G_0^{-3},
% |\Im u|>\tan \beta_2\Re u+1/6-\de\Big\}\\
% \dps D^{s}_{\kk,\de}=\{v\in\CC; &-u\in D^{\out,u}_{\rr,\kk}\},
% \end{split}
% \end{equation}
% which do not contain $u=0$ (see Figure~\ref{fig:OuterDomains}). These are the
% same domains that were used in \cite{GuardiaMS16}.
% %
% % Note that $D_{\kk,\de}\subset
% % D^{s}_{\rr,\kk,\de}$. This is not true for the unstable manifold.
%
%
% \begin{figure}[h]
% \begin{center}
% \includegraphics[height=6cm]{}
% \end{center}
% \caption{The domains  $D^{u}_{\kk,\de}$ and $D^{s}_{\kk,\de}$ defined in
% \eqref{def:DomainOuter}.}\label{fig:OuterDomains}
% \end{figure}

% \subsection{The invariance equation}
The graph parameterizations introduced in  \eqref{def:Parameterizations}
  satisfy  the invariance equation
\begin{equation}\label{eq:invarnova}
\left(\partial_u Z, \partial_\ga Z\right)(u,\gamma)\cdot\left(\begin{array}{c}
\partial_Y \PP\\
 \partial_\La \PP
\end{array}\right)(u,\gamma,Z(u,\gamma))=
\begin{pmatrix}
-\partial_u P\\
- \partial_\ga \PP\\
-i \partial_\bet \PP\\
i \partial_\al \PP
\end{pmatrix}(u,\gamma,Z(u,\gamma))
\end{equation}
Using  vector notation $Z=Z(x)$ is invariant if
\begin{equation}\label{eq:Invariance1}
DZ(x)X_x(x,Z(x))= X_Z(x,Z(x))
\end{equation}
where $X_x=\left(\partial_Y \PP, \partial_\La \PP\right)^\top$
and
$X_Z=\left(-\partial_u \PP,-
 \partial_\ga \PP,
-i \partial_\bet \PP,
i  \partial_\al \PP\right)^\top$.
% (x,Z)
% \]
% It is also important to stress that $Z=(Z_{YL},Z_{\alpha \beta})$ and that
% $(x,Z_{YL})$  are conjugated variables.
% The variables $Z_{\alpha \beta}=(\al,\bet)$ are conjugated to each other.

Observe that $X=X_0+X_1$ where $X_i$ are the Hamiltonian vector fields associated to $\PP_i$. Of course, when $\PP_1=0$, we have that  the unperturbed homoclinic manifold $Z=0$ to the periodic orbit $P_{\eta_0,\xi_0}$
satisfies the invariance equation \eqref{eq:invarnova}.
In fact,
% Note  that, when $Z=0$, one has
% \eqref{eq:invarnova},
% when  as a consequence of the particular form of $Q_0$ in \eqref{def:Q0},
\[
\begin{split}
\left(\partial_Y \PP_0,\partial_\La \PP_0\right)^\top
(u,\gamma,0)&=\left(1,\nu \frac{G_0^3}{L_0^3} \right)^\top \\
% &=\Omega\\
% \]
% where $\omega=$.
% and
% Observe that this also implies
% \[
% \begin{split}
X_Z^0(x, 0) = \left(-\partial_u \PP_0,
- \partial_\ga \PP_0,
-i \partial_\bet \PP_0,
i  \partial_\al \PP_0\right)^\top  (x,0)&=0.
% \\
% = &
% \,\left(\partial_u Q_0,
% 0,
% i  \pa_q Q_0\alpha,
% -i\pa_q Q_0\beta\right)^\top(u,  0,-\alpha_0\beta_0)= (0,0,0,0).
\end{split}
\]
% Therefore, $Z_\h$ satisfies
% $\mathcal{L}Z(u,\gamma)=X_Z^0(u,\gamma, Z)$,
% where

\begin{proposition}\label{prop:invariance}
The invariance equation \eqref{eq:invarnova} can be rewritten as 
% $Z$ as
% \[
% Z(u,\lo)=(Y,\Lambda,\alpha, \beta)= (0,0,\alpha_0, \beta_0)+(Y_1,\Lambda_1,\alpha_1,\beta_1)(u,\lo)=Z_\h+Z_1(u,\lo),
% \]
% the function $Z_1$ satisfies
\begin{equation}\label{eq:InvarianceVect}
 \LL Z=AZ+F(Z)\qquad \text{with}\qquad F(Z)=-\GG_1(Z)\pa_u Z-\GG_2(Z)\pa_\ga Z+\QQ(Z)
\end{equation}
where 
\begin{itemize}
\item $\LL$ is the operator 
% in \eqref{def:OperadorL},
\begin{equation}\label{def:OperadorL}
\mathcal{L}(Z)=
\pa_u Z+\nu \frac{G_0^3}{ L_0^{3}}\pa_\ga Z.
\end{equation}
% \begin{equation}\label{def:G1G2}
% \begin{split}
% \GG_1 (u,\lo,Y,\La,\al,\bet) =&\,
%  \frac{ Y}{\wh y_h^2(u)}+\frac{\Lambda-(\alpha_0+\alpha)(\beta_0+\beta) +\alpha_0 \beta_0}{G_0 \left(\wh y_\h(u)\wh r_\h(u)\right)^2}\\
%  \GG_2 (u,\lo,Y,\La,\al,\bet) =&\, \frac{G_0^3 \nu}{(L_0+\Lambda)^3}-\frac{G_0^3\nu}{L_0^3}
% \\&\,+ \left(Y+\frac{\Lambda-(\alpha_0+\alpha)(\beta_0+\beta)
% +\alpha_0 \beta_0}{G_0 (\wh r_\h(u))^2}\right)\frac{1}{\wh y^2_\h(u)(\wh r_\h(u))^2}\\
% &\,+
% \frac{(\Lambda-(\alpha_0+\alpha)(\beta_0+\beta)+\al_0\beta_0 )}{G_0\wh  r_\h(u)^2}
% +\frac{\partial \PP_1}{\partial \La}(u,\lo,\La,\al_0+\al,\bet_0+\bet),
% \end{split}
% \end{equation}
% Let us introduce some notation
\item The functions $\GG_1$ and $\GG_2$ are defined as
\begin{equation}\label{def:G1G2}
\begin{split}
\GG_1 (u,\lo,Y,\La,\al,\bet) =&\,
 \frac{ Y}{G_0\wh y_h^2(u)}+f_1(u)q\\
 \GG_2 (u,\lo,Y,\La,\al,\bet) =&\, \frac{G_0^3 \nu}{(L_0+\Lambda)^3}-\frac{G_0^3\nu}{L_0^3}+f_1(u)Y+f_2(u)q
% \\&\,+ \left(\wh y_\h(u)+\frac{ Y}{\wh y_\h(u)}+\frac{\Lambda-(\alpha_0+\alpha)(\beta_0+\beta)
% +\alpha_0 \beta_0}{G_0\wh  y_\h(u)(\wh r_\h(u))^2}\right)\frac{1}{\wh y_\h(u)(\wh r_\h(u))^2}\\
% &\,-
% \frac{(\Theta-L_0-\Lambda+(\alpha_0+\alpha)(\beta_0+\beta) )}{G_0 \wh r_\h(u)^2}
% +\frac{\partial \PP_1}{\partial \La}(u,\lo,\La,\al_0+\al,\bet_0+\bet)\\
 +\partial_\La \PP_1(u,\lo,\La,\al,\bet)
\end{split}
\end{equation}
where $q=\Lambda-\alo\beta-\beto\al-\al\bet$
\item The matrix $A$ is 
\begin{equation}\label{def:A}
A(u)=
\left(\begin{array}{cc}0&0\\ \mathcal{A}(u)& \mathcal{B}(u) \end{array}\right)
\end{equation}
with
\begin{equation}\label{eq:AB}
\mathcal{A}(u)= i
\left(\begin{array}{cc}f_1(u)\alo & f_2(u)\alo \\
-f_1(u) \beto & - f_2(u)\beto
\end{array}\right),
% \label{eq:A}
\qquad
% \begin{array}{rcl}
\mathcal{B}(u)= i f_2(u)
\left(\begin{array}{cc}-\alo \beto & -\alo^2 \\
\beto^2 & \alo\beto
\end{array}\right)
% \label{eq:B}
% \end{array}
\end{equation}
where $f_1$ and $f_2$ are defined in \eqref{def:f1f2}.
\item The function $\QQ$ is 
\begin{equation}\label{def:Q}
\begin{split}
 \QQ_1 (u,\lo,Y,\La,\al,\bet) =&\, \frac{\wh y_\h'(u) }{G_0\wh y_\h^3(u)}Y^2-f_1'(u)Yq-f_2'(u)\frac{q^2}{2}-\frac{\partial \PP_1}{\partial  u}(u,\lo,\La,\al,\bet)\\
 \QQ_2 (u,\lo,Y,\La,\al,\bet) =&\, -\frac{\partial \PP_1}{\partial  \ga}(u,\lo,\La,\al,\bet)\\
  \QQ_3 (u,\lo,Y,\La,\al,\bet)=&\,i  \al\left[f_1(u)Y+f_2(u)\left(\La-2\alo\beta-\al\beto-\al\beta\right)\right]- i \partial_\beta \PP_1(u,\lo,\La,\al,\bet)\\
 \QQ_4 (u,\lo,Y,\La,\al,\bet) =&\,-i  \bet\left[f_1(u)Y+f_2(u)\left(\La-\alo\beta-2\al\beto-\al\beta\right)\right]
%  \\
%  &\,
 + i \partial_\al \PP_1(u,\lo,\La,\al,\bet).
\end{split}
\end{equation}
% \begin{equation}\label{def:Q}
% \begin{split}
%  \QQ_1 (u,\lo,Y,\La,\al,\bet) =&\, -\frac{\partial Q_0}{\partial  u}(u,  Y,\Lambda-(\al_0+\alpha)(\bet_0+\beta))
%  -\frac{\partial \PP_1}{\partial  u}(u,\lo,\La,\al_0+\al,\bet_0+\bet)\\
%  \QQ_2 (u,\lo,Y,\La,\al,\bet) =&\, -G_0\frac{\partial \PP_1}{\partial  \ga}(u,\lo,\La,\al_0+\al,\bet_0+\bet)\\
%   \QQ_3 (u,\lo,Y,\La,\al,\bet)=&\, -\frac{i\al _0 \al\bet}{G_0^2 y_h^2r_h^4}-\frac{i\al _0 \al\bet}{G_0^2 r_h^2}\\
% &\,+ i\al \left[\left( \frac{Y}{y_h}+ \frac{\La- \al_0 \bet- \bet _0 \al-\al \bet}{G_0 y_h r_h^2}
%  \right) \frac{1}{y_hr_h^2}-
%  \frac{\La +\al_0 \bet+ \bet _0 \al+\al \bet}{G_0 r_h^2}
%  \right] \\
%  &\,- iG_0 \frac{\partial \PP_1}{\partial  \beta}(u,\lo,\La,\al_0+\al,\bet_0+\bet)\\
%  \QQ_4 (u,\lo,Y,\La,\al,\bet) =&\, \frac{i\bet _0 \al\bet}{G_0 y_h^2r_h^4}+\frac{i\bet _0 \al\bet}{G_0 r_h^2}\\
% &\,- i\bet \left[\left( \frac{Y}{y_h}+ \frac{\La- \al_0 \bet- \bet _0 \al-\al \bet}{G_0 y_h r_h^2}
%  \right) \frac{1}{ y_hr_h^2}-
%  \frac{\La +\al_0 \bet+ \bet _0 \al+\al \bet}{G_0 r_h^2}
%  \right] \\
%  &\,+ iG_0 \frac{\partial \PP_1}{\partial  \al}(u,\lo,\La,\al_0+\al,\bet_0+\bet).
% \end{split}
% \end{equation}
\end{itemize}
\end{proposition}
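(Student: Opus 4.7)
The proof is a direct calculation: expand $\PP = \PP_0 + \PP_1$ using \eqref{eq:P0}--\eqref{def:P1}, compute all the partial derivatives of $\PP$ appearing in the invariance equation \eqref{eq:invarnova} (both the ones multiplying $\partial_u Z, \partial_\gamma Z$ and those on the right-hand side), then rearrange by separating the ``constant'' part (which defines $\LL$), the part linear in $Z=(Y,\Lambda,\alpha,\beta)$ (which defines $A$), and the remainders (which define $\GG_1,\GG_2,\QQ$).

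First I will treat the left-hand side of \eqref{eq:invarnova}. Since $\PP_1$ does not depend on $Y$, we have $\partial_Y \PP = \partial_Y Q_0 = 1 + Y/(G_0 \wh y_\h^2) + f_1(u)\,q$ with $q=\Lambda-(\alo+\alpha)(\beto+\beta)+\alo\beto$, so $\partial_Y\PP = 1+\GG_1$ with $\GG_1$ as in \eqref{def:G1G2}. Similarly $\partial_\Lambda \PP_0 = G_0^3\nu/(L_0+\Lambda)^3 + f_1 Y + f_2 q$, and adding $\partial_\Lambda \PP_1$ gives $\partial_\Lambda \PP = G_0^3\nu/L_0^3 + \GG_2$. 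Substituting into the left-hand side of \eqref{eq:invarnova} immediately produces
\[
\partial_u Z + \tfrac{G_0^3\nu}{L_0^3}\partial_\gamma Z \;+\; \GG_1(Z)\,\partial_u Z + \GG_2(Z)\,\partial_\gamma Z \;=\; \LL Z + \GG_1(Z)\,\partial_u Z + \GG_2(Z)\,\partial_\gamma Z,
\]
and moving the last two terms to the right-hand side produces the $-\GG_1\partial_u Z - \GG_2 \partial_\gamma Z$ terms of $F(Z)$ in \eqref{eq:InvarianceVect}.

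Next I handle the right-hand side. The components $-\partial_u \PP$ and $-\partial_\gamma \PP$ are read off directly from \eqref{def:Q0} and \eqref{def:P1}, and, since $\PP_0$ does not depend on $\gamma$, they coincide with $\QQ_1$ and $\QQ_2$ in \eqref{def:Q}. The non-trivial step is the $\alpha,\beta$ derivatives: using $\partial_\alpha q = -(\beto+\beta)$ and $\partial_\beta q = -(\alo+\alpha)$ in $Q_0 = Y + Y^2/(2G_0 \wh y_\h^2) + f_1 Y q + f_2 q^2/2$, one gets
\[
-i\partial_\beta \PP = i(\alo+\alpha)\bigl(f_1 Y + f_2 q\bigr) - i\partial_\beta \PP_1,\qquad i\partial_\alpha \PP = -i(\beto+\beta)\bigl(f_1 Y + f_2 q\bigr) + i\partial_\alpha \PP_1.
\]

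The main bookkeeping step is then to split each of these two expressions into its part that is linear in $Z=(Y,\Lambda,\alpha,\beta)$ and its higher-order remainder. Expanding $q=\Lambda-\alo\beta-\beto\alpha-\alpha\beta$ and keeping only terms of degree $1$ in $Z$ (treating $\alo,\beto$ as parameters) gives for the $\alpha$-component of the right-hand side
\[
i\alo f_1 Y + i\alo f_2 \Lambda - i f_2\alo\beto\,\alpha - i f_2\alo^2\,\beta,
\]
which is exactly the first row of $\mathcal{A}\,(Y,\Lambda)^\top + \mathcal{B}\,(\alpha,\beta)^\top$ in \eqref{eq:AB}; the remaining degree-$\geq 2$ terms, together with $-i\partial_\beta \PP_1$, reconstitute $\QQ_3$ in \eqref{def:Q}. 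The analogous computation for the $\beta$-component produces the second row of $\mathcal{A}(Y,\Lambda)^\top + \mathcal{B}(\alpha,\beta)^\top$ plus $\QQ_4$. Collecting, the right-hand side of \eqref{eq:invarnova} equals $A(u)Z + \QQ(Z)$, and combining with the left-hand side gives \eqref{eq:InvarianceVect}. The only real obstacle is the careful bookkeeping of which terms in $(\alo+\alpha)(f_1 Y + f_2 q)$ and $(\beto+\beta)(f_1 Y + f_2 q)$ are linear in $Z$; the choice of matrix $A$ in \eqref{def:A}--\eqref{eq:AB} is designed precisely so that this splitting is clean.
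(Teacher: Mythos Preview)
Your proof is correct and follows essentially the same approach as the paper: a direct computation of the partial derivatives of $\PP=\PP_0+\PP_1$, isolating the constant-coefficient transport operator $\LL$, the linearization $A(u)Z$ at $Z=0$, and the remainders $\GG_1,\GG_2,\QQ$. The paper (Appendix~A) organizes the same calculation slightly more abstractly, writing the invariance equation as $\LL Z = D_Z X_Z^0(x,0)Z + F(x,Z)$ with $F = X_Z^0(\cdot,Z)-X_Z^0(\cdot,0)-D_ZX_Z^0(\cdot,0)Z + X_Z^1(\cdot,Z) - DZ(X_x(\cdot,Z)-\Omega)$ and then identifying $A=D_ZX_Z^0(\cdot,0)$ as the Hessian matrix of $\PP_0$ at $Z=0$; your component-by-component bookkeeping is an equivalent, slightly more explicit way of reaching the same formulas.
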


% \subsubsection{The integral equation}

To solve the invariance equations we first  integrate the linear system
$\LL Z=A(u) Z$, which, writing $Z=(Z_{Y\La},Z_{\al\bet})$, reads
\begin{equation}\label{variacionals}
 \begin{split}
 \mathcal{L} Z_{Y\La} &=\, 0 \\
 \mathcal{L} Z_{\al\bet} &=\, \mathcal{A}(u) Z_{Y\La}+\mathcal{B}(u) Z_{\al\bet}
\end{split}
\end{equation}
where $\mathcal{A}(u)$ and  $\mathcal{B}(u)$ are given in \eqref{eq:AB}. The proof of the following lemma is straighforward.

\begin{lemma}
A fundamental matrix of the linear system \eqref{variacionals} is
\begin{equation}\label{def:FundamentalMatrix}
\Phi_A^\ast(u)=\left(\begin{array}{cccc}
1& 0& 0 &0 \\
0& 1 & 0& 0 \\
\alo g_1(u)&
\alo g^\ast_2(u)& 1-\alo \beto g^\ast_2(u) &-\alo^2 g^\ast_2(u)\\
 - \beto g_1(u)& - \beto g^\ast_2(u) &\beto^2g^\ast_2(u)&1+\alo \beto
g^\ast_2(u)
 \end{array}\right),\qquad \ast=u,s
\end{equation}
where $g'_1(u)=if_1(u)$ and $(g_2^*)'(u)=if_2(u)$, $\ast=u,s$. In particular we can choose the functions $g_2^*$ satisfying that $\lim_{\Re u\to+\infty} g_2^s(u)=0$ and $\lim_{\Re u\to-\infty} g_2^u(u)=0$.
\end{lemma}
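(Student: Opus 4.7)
The proof is a direct verification, and the plan is to reduce \eqref{variacionals} to a linear ODE in $u$ and then exhibit the columns of $\Phi_A^\ast$ as explicit solutions.

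First I would observe that the matrices $\mathcal{A}(u)$ and $\mathcal{B}(u)$ depend only on $u$ (the parameters $\alo,\beto$ are fixed), so it suffices to look for $\gamma$-independent solutions $Z(u)$, on which the operator $\mathcal{L}$ in \eqref{def:OperadorL} acts as $d/du$. The system then decouples: the first block gives $Z_{Y\Lambda}'(u)=0$, so $Z_{Y\Lambda}$ is constant and can be taken as $(1,0)^\top$ or $(0,1)^\top$, which furnishes the upper $2\times 2$ identity block of $\Phi_A^\ast$. The remaining task is to solve the two-dimensional inhomogeneous equation $Z_{\al\bet}'=\mathcal{A}(u)Z_{Y\Lambda}+\mathcal{B}(u)Z_{\al\bet}$ in each case.

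The crucial algebraic observation is that the constant matrix
\[
B_0=\begin{pmatrix}-\alo\beto & -\alo^2\\ \beto^2 & \alo\beto\end{pmatrix}
\]
is nilpotent, since $\operatorname{tr} B_0=0$ and $\det B_0=0$, so $B_0^2=0$. Because $\mathcal{B}(u)=if_2(u)B_0$ commutes with itself at different times and all higher powers vanish, the fundamental matrix of the homogeneous system $Z_{\al\bet}'=\mathcal{B}(u)Z_{\al\bet}$ is simply $\Phi_B^\ast(u)=I+g_2^\ast(u)B_0$, whose two columns are exactly columns~3 and~4 of $\Phi_A^\ast$. The same nilpotency trivialises the variation-of-constants formula for the inhomogeneous part: since the vectors $\mathcal{A}(u)(1,0)^\top=if_1(u)(\alo,-\beto)^\top$ and $\mathcal{A}(u)(0,1)^\top=if_2(u)(\alo,-\beto)^\top$ both lie in $\ker B_0$ (a direct $2\times 1$ check using $-\alo\beto\cdot\alo+(-\alo^2)(-\beto)=0$ and $\beto^2\alo+\alo\beto(-\beto)=0$), the particular solutions reduce to pure antiderivatives: $\al g_1(u)(\alo,-\beto)^\top$ and $\al g_2^\ast(u)(\alo,-\beto)^\top$, exactly matching the $(\al,\bet)$ entries of columns~1 and~2 of $\Phi_A^\ast$. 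Plugging each column into \eqref{variacionals} and using $g_1'=if_1$, $(g_2^\ast)'=if_2$ confirms the claim.

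Finally, for the asymptotic normalisation I would show that $f_2$ is absolutely integrable at $\pm\infty$: from the explicit formulas \eqref{def:homoclinic}, for large $|\tau|$ one has $\wh r_\h\sim\tau^2/2$, $\wh y_\h\sim 2/\tau$ and $u\sim\tau^3/6$, whence $f_2(u)=2/(G_0\wh r_\h^3 \wh y_\h^2)=\OO(\tau^{-4})=\OO(|u|^{-4/3})$, which is integrable. Consequently the improper integrals
\[
g_2^s(u)=-\int_u^{+\infty}if_2(s)\,ds,\qquad g_2^u(u)=\int_{-\infty}^u if_2(s)\,ds
\]
are well defined and provide primitives of $if_2$ that vanish as $\Re u\to+\infty$ and $\Re u\to-\infty$ respectively, completing the construction. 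The determinant of $\Phi_A^\ast$ equals $\det(I+g_2^\ast B_0)=1$ because $B_0^2=0$, so $\Phi_A^\ast$ is indeed a fundamental matrix. No step here should pose a real obstacle; the only mildly delicate point is the integrability estimate for $f_2$, which follows from the McGehee-type asymptotics of the homoclinic manifold already recorded in Lemma~\ref{lemma:homounperturbed}.
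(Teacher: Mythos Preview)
Your proposal is correct and is essentially the direct verification the paper has in mind (the paper merely says ``the proof of the following lemma is straightforward'' and gives no details). Your structural observations---that $B_0$ is nilpotent so $\exp(g_2^\ast B_0)=I+g_2^\ast B_0$, and that the forcing $\mathcal A(u)e_j$ lies in $\ker B_0$ so variation of constants collapses to a bare antiderivative---are a clean way to organize the check, and the integrability of $f_2$ via the $|u|^{-4/3}$ decay is exactly what is needed (this is also recorded in Lemma~\ref{lemma:f1f2} as $f_2\in\XX_{4/3,1/2}$). One small typo: in the particular solutions you wrote ``$\al g_1(u)(\alo,-\beto)^\top$'' with a stray $\al$; the intended expressions are $g_1(u)(\alo,-\beto)^\top$ and $g_2^\ast(u)(\alo,-\beto)^\top$.
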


We define two different  inverse operators of $\LL$,
\begin{equation}\label{def:Outer:IntegralOperator}
\begin{split}
\GG^u(h)(u,\ga)=\int_{-\infty}^0 h(u+s, \ga+\nu G_0^3L_0^{-3} s)ds\\
\GG^s(h)(u,\ga)=\int_{+\infty}^0 h(u+s, \ga+\nu G_0^3L_0^{-3} s)ds,
\end{split}
\end{equation}
We use them to prove the existence of the stable and unstable invariant manifolds.
Here we only deal with the stable manifold of $P_{\eta_0+\delta\eta,\xi_0+\delta_\xi}$ and we take $\GG=\GG^s$ and  $\Phi_A=\Phi_A^s$ (the unstable manifold of $P_{\eta_0,\xi_0}$ is obtained analogously from the particular case $\delta\eta=\delta\xi=0$).

We use this operator and the fundamental matrix $\Phi_A$ to derive an integral equation equivalent to the invariance equation \eqref{eq:InvarianceVect}.

\begin{lemma}\label{lemma:Invariance}
The parameterization of the stable manifold $Z^s$ of the periodic orbit $P_{\eta_0+\delta\eta,\xi_0+\delta_\xi}$  is  a fixed point of the operator
\begin{equation}\label{def:OperatorInfinity}
\FF(Z)= \Phi_A\delta z+ \Phi_A\GG\left(\Phi_A\ii F(Z)\right)=\Phi_A \delta z+\GG_A\circ F(Z)
\end{equation}
where $\delta z=(0,0,\delta\eta,\delta\xi)^\top$ and
\begin{equation}\label{def:OperadorGA}
\GG_A(h)=\begin{pmatrix}
\GG(h_1)\\
\GG(h_2)\\
\GG(h_3) +\alo \GG\left(f_1 \GG(h_1)\right)+ \alo\GG\left(f_2\GG(h_2)\right)
% &+\bet_0 g_2F_3(Z_1)+\al_0g_2F_4(Z_1)\right)\\
-\alo \GG\left(f_2\GG\left(\beto h_3+\alo h_4\right)\right)\\
  \GG(h_4)-\beto \GG\left(f_1 \GG(h_1)\right)- \beto\GG\left(f_2\GG(h_2)\right)
% & -\bet_0 \GG\left(-g_1 F_1(Z_1)-g_2 F_2(Z_1)+\bet_0 g_2F_3(Z_1)+\al_0g_2F_4(Z_1)\right)\\
+\beto\GG\left(f_2\GG\left(\beto h_3+\alo h_4\right)\right).
\end{pmatrix}
\end{equation}
\end{lemma}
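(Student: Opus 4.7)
The strategy is variation of parameters applied to the linearized operator $\mathcal{L} - A$, followed by selection of the integration constant via the asymptotic condition at $u \to +\infty$ that distinguishes the stable manifold of $P_{\eta_0+\delta\eta,\xi_0+\delta\xi}$. First I rewrite the invariance equation as $\mathcal{L}Z - AZ = F(Z)$ by Proposition~\ref{prop:invariance}. Since the columns of $\Phi_A = \Phi_A^s$ (defined in \eqref{def:FundamentalMatrix}) form a fundamental system for the homogeneous problem $\mathcal{L}Y = AY$, I substitute $Z = \Phi_A V$; using $\mathcal{L}(\Phi_A V) = (\mathcal{L}\Phi_A)V + \Phi_A \mathcal{L}V = A\Phi_A V + \Phi_A \mathcal{L}V$ this reduces to $\mathcal{L}V = \Phi_A^{-1} F(Z)$. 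Applying $\mathcal{G} = \mathcal{G}^s$ from \eqref{def:Outer:IntegralOperator}, which is a right inverse of $\mathcal{L}$ killing terms that decay as $\Re u \to +\infty$, yields $V = V_0 + \mathcal{G}(\Phi_A^{-1} F(Z))$ with $V_0$ constant along characteristics.

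To pin down $V_0$, I use the asymptotic prescription in \eqref{def:sigmaparamnousnr} for the stable parameterization: $(Y^s,\Lambda^s,\alpha^s,\beta^s) \to (0,0,\delta\eta,\delta\xi)$ as $\Re u \to +\infty$. By construction $g_2^s \to 0$ at $+\infty$, so the off-diagonal block of $\Phi_A^s$ vanishes in the limit and $\Phi_A^s \to I$ in the rows that see $V_0$; together with $\mathcal{G}(\Phi_A^{-1} F(Z)) \to 0$ (which will be guaranteed inside the Banach spaces set up in Section~\ref{Sec:FixtPt}), this forces $V_0 = \delta z = (0,0,\delta\eta,\delta\xi)^\top$. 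Thus $Z = \Phi_A \delta z + \Phi_A \mathcal{G}(\Phi_A^{-1} F(Z))$, which is the first expression in \eqref{def:OperatorInfinity}.

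It remains to identify $\Phi_A \mathcal{G} \Phi_A^{-1}$ with the explicit operator $\mathcal{G}_A$ of \eqref{def:OperadorGA}. The block structure $\Phi_A = \bigl(\begin{smallmatrix} I & 0 \\ C & D \end{smallmatrix}\bigr)$, together with $\det D = 1$ (a direct check from \eqref{def:FundamentalMatrix}), gives $\Phi_A^{-1} = \bigl(\begin{smallmatrix} I & 0 \\ -D^{-1}C & D^{-1} \end{smallmatrix}\bigr)$ with entries that are at most linear in $g_1,g_2$. Expanding $\Phi_A \mathcal{G}(\Phi_A^{-1} h)$ component by component, the purely algebraic cancellations that come from $\det D = 1$ eliminate all products of $g_j$'s, leaving expressions of the type $g_j \mathcal{G}(h_k) - \mathcal{G}(g_j h_k)$. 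These are handled by the integration-by-parts identity
\[
g_j\, \mathcal{G}(h) - \mathcal{G}(g_j h) \;=\; \mathcal{G}\bigl(\mathcal{L}(g_j)\,\mathcal{G}(h)\bigr),
\]
which holds along the characteristic flow whenever $g_j \mathcal{G}(h)$ decays at $+\infty$; combined with $\mathcal{L}g_1 = if_1$ and $\mathcal{L}g_2^s = if_2$, this replaces every $g_j$ by a nested integral against $f_j$ and reproduces the formula \eqref{def:OperadorGA} (the factors of $i$ being absorbed into the conventions already present in $\mathcal{A},\mathcal{B}$ through \eqref{eq:AB}).

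The main—essentially only—obstacle is the legitimacy of the integration by parts and of the asymptotic normalization of $V_0$: both rely on the boundary terms at $+\infty$ vanishing, which is a statement about the functional framework. At this stage this is a formal derivation, and the full justification will be built into the Banach spaces of functions decaying on the stable domain $D^s_{\kappa,\delta}$ used for the fixed point argument in Section~\ref{Sec:FixtPt}; once that framework is in place, every manipulation above is a well-defined identity and the equivalence $Z = \mathcal{F}(Z)$ follows.
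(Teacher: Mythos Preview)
Your proof is correct and follows essentially the same approach as the paper: variation of constants with the fundamental matrix $\Phi_A$, fixing the constant via the asymptotic condition $Z^s\to\delta z$ at $+\infty$, and then rewriting $\Phi_A\GG(\Phi_A^{-1}h)$ componentwise using the identity $g_j\GG(h)-\GG(g_jh)=\GG(\partial_u g_j\,\GG(h))$. Your observation that $\det D=1$ (and in fact $D^{-1}C=C$) explains conceptually why the quadratic $g_j\GG(g_k\cdot)$ terms cancel is a nice addition that the paper leaves implicit in its direct component computation.
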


\begin{proof}
Using the fundamental matrix $\Phi_A(u)$ and the variation of constants formula give the first equality in \eqref{def:OperatorInfinity}. We point out that $\lim_{u\to +\infty}\Phi_A(u)\delta z=\delta z$. For the second one we write $\Phi_A\GG\left(\Phi_A\ii F(Z)\right)$ in components as
\[
\Phi_A\GG\left(\Phi_A\ii F(Z_1)\right)=\left(
\begin{array}{l}
 \GG(F_1(Z_1))\\
 \GG(F_2(Z_1))\\
 \GG(F_3(Z_1)) +\alo g_1 \GG(F_1(Z_1))+ \alo g_2\GG(F_2(Z_1))\\
\,\,+ \alo \GG\left(-g_1 F_1(Z_1)-g_2 F_2(Z_1)+\beto g_2F_3(Z_1)+\alo g_2F_4(Z_1)\right)\\
\,\,-\alo g_2\GG\left(\beto F_3(Z_1)+\alo F_4(Z_1)\right)\\
 \GG(F_4(Z_1))-\beto  g_1 \GG(F_1(Z_1))- \beto g_2\GG(F_2(Z_1))\\
\,\, -\beto \GG\left(-g_1 F_1(Z_1)-g_2 F_2(Z_1)-\beto
g_2F_3(Z_1)-\alo g_2F_4(Z_1)\right)\\
\,\,+\beto g_2\GG\left(\beto F_3(Z_1)+\alo F_4(Z_1)\right)\end{array}\right).
\]
Then, it only suffices to note that the  terms of the form $g_i\GG(F_j)-\GG(g_i F_j)$ can be  rewritten as
\begin{equation*}
\label{def:ByParts}
g_i\GG(F_j)-\GG(g_i F_j)=\GG(\pa_u g_i\GG(F_j))=\GG( f_i\GG(F_j)).
\end{equation*}
Indeed, it is enough to apply the operator $\LL$ to both sides.
% (note that we work in Banach spaces where $\LL$ is both a left and right inverse of $\GG$).
% Thus, the equation we need to solve is
% \begin{equation}\label{eq:FinalIntegralEquation}
% \begin{split}
%  Y_1 =& \GG(F_1(Z_1))\\
% \La_1 =& \GG(F_2(Z_1))\\
% \al_1 =& \GG(F_3(Z_1)) +\al_0 \GG\left(f_1 \GG(F_1(Z_1))\right)+ \al_0\GG\left(f_2\GG(F_2(Z_1))\right)\\
% % &+\bet_0 g_2F_3(Z_1)+\al_0g_2F_4(Z_1)\right)\\
% &-\al_0 \GG\left(f_2\GG\left(\bet_0F_3(Z_1)+\al_0F_4(Z_1)\right)\right)\\
% \bet_1 =&  \GG(F_4(Z_1))-\bet_0 \GG\left(f_1 \GG(F_1(Z_1))\right)- \bet_0\GG\left(f_2\GG(F_2(Z_1))\right)\\
% % & -\bet_0 \GG\left(-g_1 F_1(Z_1)-g_2 F_2(Z_1)+\bet_0 g_2F_3(Z_1)+\al_0g_2F_4(Z_1)\right)\\
% &+\bet_0\GG\left(f_2\GG\left(\bet_0F_3(Z_1)+\al_0F_4(Z_1)\right)\right).
% \end{split}
% \end{equation}
% Therefore, we look for a fixed point of the operator
%
\end{proof}

\subsection{A fixed point of the operator $\FF$}\label{Sec:FixtPt}
To obtain a fixed point of the operator $\FF$ in \eqref{def:OperatorInfinity}, for $(u,\gamma)\in D^{s}_{\kk,\de}\times\TT_\sigma$, we introduce the functional setting we work with.
We consider functions of the form $Z=( Y,L,\al,\bet)^\top$.
Take $h$ any of these components and define its  Fourier  series
\[
 h(u,\lo)=\sum_{q\in \ZZ}h^{[q]}(u)e^{iq\ga}
\]
Denote by $f$ any of the Fourier coefficients, which are only functions of $u$, and take $\rr>0$. We
consider the following norm, which captures the behavior as $\Re u\to \infty$
and also the behavior ``close'' to the singularities of the unperturbed
homoclinic (see Lemma \ref{lem:HomoInfinity}),
% . To this end, we take  and
% define
\[
% \begin{split}
 \|f\|_{n,m,q}=\sup_{D^{s}_{\kappa,\de} \cap \{\Re u\geq \rr\}}\left|
u^ n f(u)\right|
% \\
% &
+\sup_{D^s_{\kappa,\de} \cap \{\Re u\leq
\rr\}}\left|
\left(u-\frac{i}{3}\right)^{m}\left(u+\frac{i}{3}\right)^{m} e^{-iq\phi_\h(u)}
f(u)\right|.
% \end{split}
\]
Now, for a fixed $\sigma>0$, we define the norm for $h$  as
\[
 \|h\|_{n,m}=\sum_{k\in\ZZ}\|h^{[q]}\|_{n,m,q}e^{|q|\sigma}.
\]
We denote the corresponding Banach space by $\YY_{n,m}$. Note that such norms do not define functions in $D^s_{\kappa,\de} \times\TT_\sigma$. Indeed, the
Fourier series may be divergent for complex $u$ due to the term
$e^{-iq\phi_\h(u)}$ which grows exponentially as $|q|\to\infty$. Still, since
$|e^{-iq\phi_\h(u)}|=1$ for real values of $u$, the Fourier series define
actual functions for real values of $u$.

To prove the existence of the invariant manifolds we need to keep control of
the first derivatives for sequences of Fourier
coefficients $h\in\YY_{n,m}$. The derivatives of
sequences  are defined in the natural way
\begin{equation}\label{def:DerivativeFourierSeries}
%  \begin{split}
   \pa_u h(u,\ga)=\sum_{q\in \ZZ}\pa_u  h^{[q]}(u) e^{iq\ga},\qquad
 \pa_\ga h(u,\ga)=\sum_{q\in \ZZ}(i\ell) h^{[q]}(u) e^{iq\ga}.
%  \end{split}
\end{equation}
Then, we also consider the norm
\[
 \lln h\rrn_{n,m}=\|h\|_{n,m}+\|\pa_u h\|_{n+1,m+1}+G_0^3\|\pa_\gamma
h\|_{n+1,m+1}
\]
and denote by  $\XX_{n,m}$ the associated Banach space.
Since each component has different behavior, we need to consider weighted norms for $Z=(Y,L,\al,\bet)$. We define
\begin{equation}\label{def:Norm}
\begin{split}
\|Z\|_{n,m, \vect}=&\,\|Y\|_{n+1,m+1}+ \|\La\|_{n,m}+\|\al e^{i\phi_\h(u)}\|_{n,m}+ \|\bet e^{-i\phi_\h(u)}\|_{n,m}\\
 \lln Z\rrn_{n,m, \vect}=&\,\|Z\|_{n,m, \vect}+\|\pa_u Z\|_{n+1,m+1,
\vect}+G_0^3\|\pa_\ga Z\|_{n+1,m+1,
\vect}.
% +G_0^4\|\pa_\gamma Y_1\|_{n+2,m+2}+G_0^3\|\pa_\gamma \La_1\|_{n+1,m+1}\\
% &+ G_0^{3}|\ln
% G_0|^{-1}\left(\|\pa_\ga\al_1 e^{i\phi_\h(u)}\|_{n+1,m+1}+ \|\pa_\ga\bet_1
% e^{-i\phi_\h(u)}\|_{n+1,m+1}\right).
 \end{split}
\end{equation}
% \textr{repassar normes???}
% Note that the $\pa_\ga$ terms are treated differently. The reason is that the
% $\pa_\ga\La$ has a size not consistent with the others.
We denote by $\YY_{n,m,\vect}$ and $\XX_{n,m,\vect}$ the associated Banach
spaces.

Since the Banach space $\XX_{n,m,\vect}$ is a space of formal Fourier series, the terms $\pa_z\PP_1(u,\ga,Z)$, $z=u,\ga,\Lambda,\al,\bet$, which appear in Proposition \ref{prop:invariance} for $Z =(Y,\al,\La,\bet) \in \XX_{n,n,\vect}$ are understood 
 formally by the formal Taylor expansion\footnote{Note that the function $\PP_1$ in \eqref{eq:InvarianceVect} does not depend on $Y$}
\begin{equation}\label{def:composition}
\pa_z\PP_1(u,\gamma,Z) = \sum_{\ell\ge 0} \frac{1}{\ell !} \sum_{i_1=0}^{\ell} \sum_{i_2 = 0}^{\ell-i_1}
\frac{\ell !}{i_1 !i_2! (\ell-i_1-i_1)!} \frac{\partial^{\ell} (\pa_z\PP_1)}{\partial^{i_1}\al \partial^{i_2}\bet \partial^{\ell-i_1-i_2} \La} (u,\gamma,0,0,0) \al^{i_1} \bet^{i_2} \La^{\ell-i_1-i_2}.
\end{equation}
where $z=u,\ga,\Lambda,\al,\bet$. In Lemma \ref{lem:CompositionManifolds} below we give conditions on $Z$ which make this formal composition meaningful.
% such that this formal comp

Finally, we define 
\begin{equation*}
\label{eq:Ztilde}
\widetilde{Z}=Z-\delta z\qquad \text{where}\qquad \delta z=(0,0,\delta\eta,\delta\xi)^\top
\end{equation*}
and introduce the operator
\begin{equation}\label{eq:Ftilde}
\mathcal{\widetilde{F}}(\widetilde{Z})=\mathcal{F}(\widetilde{Z}+\delta z)-\delta z
\end{equation}
where $\FF$  is defined in~\eqref{def:OperatorInfinity}. It is clear that $Z$ is a fixed point of $\mathcal{F}$ if and only if $\widetilde{Z}$ is a fixed point of  $\widetilde{\mathcal{F}}$.
% The Banach space $\YY_{n,m}$  behave in the right way with
% respect the product.

\begin{theorem}\label{thm:ExistenceManiFixedPt}
Let $\delta z=(0,0,\delta \eta,\delta\xi)$ and denote by $B_{\rho}$
the ball of radius $\rho$ in $\XX_{1/3,1/2,\vect}$.
There exists $b_0 >0$ such that if  $G_0 \gg 1$, $|\delta\eta|,|\delta\xi|\lesssim G_0^{-3}$ and
\begin{equation}\label{cond:eccentricity}
|\alo| G_0^{3/2} \ll 1,
\end{equation}
then, the operator  $\mathcal{\widetilde{F}}$  defined in~\eqref{eq:Ftilde}
has the following properties.
\begin{enumerate}
\item $\widetilde{\mathcal{F}}:B_{b_0 G_0^{-3}\ln G_0} \to  B_{b_0 G_0^{-3}\ln G_0}$,
\item It is Lipschitz  in $ B_{b_0 G_0^{-3}\ln G_0}\subset\XX_{1/3,1/2,\vect}$ with
Lipschitz constant
% \[
$\Lip(\mathcal{\widetilde{F}}) \simeq   G_0^{-3/2}\ln^2 G_0$.
% \]
\end{enumerate}
Therefore, $\widetilde{\FF}$ has a fixed point $\widetilde{Z}^s$ in $B_{b_0 G_0^{-3}\ln G_0}$. Denoting by $Z^s=\delta z+\widetilde{Z}^s$ and by $\mathcal{F}$ the operator ~\eqref{def:OperatorInfinity} we have that $Z^s-\mathcal{F}(0)\in B_{b_0 G_0^{-3}\ln G_0}$ and 
\[
\lln  Z^s-\FF(0)  \rrn_{1/3,1/2,\vect} \lesssim G_0^{-9/2}\ln^3 G_0.
\]
Moreover, $\widetilde{\La}^s$ satisfies
\begin{equation}\label{def:LambdaImprovedEstimate}
\lln\widetilde{\La}^s\rrn_{1/3,1}\lesssim G_0^{-9/2}.
\end{equation}
\end{theorem}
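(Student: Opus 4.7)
The plan is to apply the Banach fixed point theorem to $\widetilde\FF$ on the closed ball $B = B_{b_0 G_0^{-3}\ln G_0}\subset \XX_{1/3,1/2,\vect}$ for $b_0$ chosen sufficiently large. Two estimates are the main work: $\widetilde\FF(0)\in B/2$ and the bound $\Lip(\widetilde\FF)\lesssim G_0^{-3/2}\ln^2 G_0$ on $B$. Existence of $Z^s$, the refined bound on $Z^s-\FF(0)$, and the improved estimate \eqref{def:LambdaImprovedEstimate} are then routine consequences of the contraction mapping theorem together with a short bootstrap using \eqref{eq:InvarianceVect}.

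First I would develop the mapping properties of the building blocks. Using the explicit asymptotics of $\wh r_\h,\wh y_\h$ at $\Re u\to\infty$ and near the complex singularities $u=\pm i/3$, together with the conventions of the weighted norm $\|\cdot\|_{n,m,q}$, one shows that the integral operator $\GG=\GG^s$ of \eqref{def:Outer:IntegralOperator} maps $\YY_{n+1,m+1}\to\YY_{n,m}$ with operator norm $\OO(1)$, with at worst a $\ln G_0$ loss at the critical indices $(n,m)=(1/3,1/2)$ coming from borderline divergent integrals of $f_1,f_2$. Acting on a single Fourier harmonic $e^{iq\ga}$, the fast phase $e^{iq\omega s}$ with $\omega=\nu G_0^3/L_0^3$ allows an integration by parts producing a gain of $G_0^{-3}$ per $q\neq 0$, which exactly matches the weight $G_0^3\|\pa_\ga\cdot\|$ appearing in $\lln\cdot\rrn$. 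Together with the uniform bound on $\Phi_A^s$ of \eqref{def:FundamentalMatrix} guaranteed by \eqref{cond:eccentricity} (which controls the off-diagonal entries of size $|\alo|,|\beto|$), this yields that $\GG_A$ in \eqref{def:OperadorGA} is a bounded operator between the appropriate $\vect$-weighted spaces.

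Next I would estimate the source and the Lipschitz constant. At $Z=0$ only $\QQ$ from \eqref{def:Q} contributes to $F(0)$, and from \eqref{def:P1} combined with $\wt W=\OO(L^4/\wt r^3)$ one gets $\QQ(0)=\OO(G_0^{-3})$ measured in $\YY_{4/3,3/2,\vect}$; applying $\GG_A$ and adding the trivial bound $\|\Phi_A\delta z\|\lesssim G_0^{-3}$ then gives $\lln\widetilde\FF(0)\rrn_{1/3,1/2,\vect}\lesssim G_0^{-3}\ln G_0$. For the Lipschitz constant, the difference $F(Z_1)-F(Z_2)$ splits into (i) bilinear terms of the form $\GG_i(Z)\pa_u Z$ and $\GG_i(Z)\pa_\ga Z$, where one factor lies in $B$ and contributes $\OO(G_0^{-3}\ln G_0)$; (ii) the linear term $AZ$, controlled by $\|A\|\sim|\alo|\lesssim G_0^{-3/2}$ via \eqref{cond:eccentricity}; (iii) differences of $\pa_z\PP_1$ controlled by the $\OO(G_0^{-3})$ Hessian of $\PP_1$ via the formal composition \eqref{def:composition}. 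The dominant contribution is (ii), giving $\Lip(\widetilde\FF)\lesssim G_0^{-3/2}\ln^2 G_0$, where the second logarithm is absorbed in the $\GG_A$ step.

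The hard part of the argument lies in the bookkeeping surrounding \eqref{def:composition}: one must verify that evaluating $\pa_z\PP_1$ on a formal Fourier series $Z\in\XX_{1/3,1/2,\vect}$ actually produces an element of a suitable $\YY_{n,m,\vect}$ despite the exponential weights $e^{i\phi_\h}$ and $e^{-i\phi_\h}$ attached to the components $\al,\bet$ in the norm \eqref{def:Norm}. The hypothesis $|\alo|G_0^{3/2}\ll 1$ is crucial here: it ensures that the background $(\eta,\xi)=(\alo,\beto)$ at which $\pa_z\PP_1$ is expanded lies well inside the domain where the series converges as an element of $\XX$. Once this is established, Banach's theorem delivers $\widetilde Z^s\in B$ and hence $Z^s=\widetilde Z^s+\delta z$, and the refined bound follows from $Z^s-\FF(0)=\FF(Z^s)-\FF(0)$ and a single application of the Lipschitz estimate, giving $\lln Z^s-\FF(0)\rrn\lesssim\Lip(\widetilde\FF)\cdot\lln Z^s\rrn\lesssim G_0^{-9/2}\ln^3 G_0$. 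Finally, \eqref{def:LambdaImprovedEstimate} is obtained by substituting $Z^s$ into the second component of \eqref{eq:InvarianceVect}: the driving term $-\pa_\ga\PP_1$ has zero $\ga$-average, so one integration by parts on the fast phase in $\GG$ extracts an additional factor $\omega^{-1}\sim G_0^{-3}$ and simultaneously upgrades the decay exponent from $m=1/2$ to $m=1$, producing the stated $G_0^{-9/2}$ bound in $\YY_{1/3,1}$.
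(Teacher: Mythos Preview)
Your overall strategy matches the paper's: Banach fixed point for $\widetilde\FF$, estimate $\widetilde\FF(0)$, estimate the Lipschitz constant, then bootstrap for $Z^s-\FF(0)$ and for $\widetilde\La^s$ using the zero $\ga$-average of $\pa_\ga\PP_1$. The paper carries this out via Lemmas~\ref{lemma:LipschitzVarietats:F1}--\ref{lemma:LipschitzVarietats:F3} on the pieces $F^1,F^2,F^3$ of $F$ and the mapping Lemma~\ref{lemma:Operator:2} for $\GG_A$.

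There is one genuine misidentification in your Lipschitz analysis. The linear term $AZ$ is \emph{not} part of $F$: by construction (Lemma~\ref{lemma:Invariance}) the matrix $A$ has already been absorbed into the integral operator $\GG_A$ through the fundamental matrix $\Phi_A$ of $\LL Z=AZ$, so the fixed-point operator is $\FF(Z)=\Phi_A\delta z+\GG_A\circ F(Z)$ with $F=-\GG_1(Z)\pa_uZ-\GG_2(Z)\pa_\ga Z+\QQ(Z)$ only. Hence your item~(ii) does not occur in the difference $F(Z_1)-F(Z_2)$, and it cannot be the source of the $G_0^{-3/2}$.

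The actual mechanism is an index mismatch in the weighted norms. The paper shows
\[
\|F(Z)-F(Z')\|_{4/3,2,\vect}\lesssim G_0^{-3}\ln G_0\,\lln\widetilde Z-\widetilde Z'\rrn_{1/3,1/2,\vect},
\]
and then $\GG_A:\YY_{4/3,2,\vect}\to\XX_{1/3,1,\vect}$ yields the \emph{improved} Lipschitz bound
$\lln\widetilde\FF(\widetilde Z)-\widetilde\FF(\widetilde Z')\rrn_{1/3,1,\vect}\lesssim G_0^{-3}\ln^2 G_0\,\lln\widetilde Z-\widetilde Z'\rrn_{1/3,1/2,\vect}$ (this is~\eqref{def:LipschitzImproved}). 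The fixed-point space, however, uses the second index $m=1/2$, and the embedding $\XX_{1/3,1}\hookrightarrow\XX_{1/3,1/2}$ costs exactly $G_0^{3(1-1/2)}=G_0^{3/2}$ (last item of Lemma~\ref{lemma:banach:AlgebraProps}), producing $\Lip(\widetilde\FF)\lesssim G_0^{-3/2}\ln^2 G_0$. In short, the $G_0^{-3/2}$ is a loss from going between singular weights near $u=\pm i/3$, not a smallness of $A$. This is also what makes the $\La$-bootstrap work cleanly: the improved bound~\eqref{def:LipschitzImproved} already controls $\widetilde\FF_2(\widetilde Z^s)-\widetilde\FF_2(0)$ in the $(1/3,1)$-norm, and the zero-average of $\pa_\ga\PP_1$ gives the extra $G_0^{-3}$ for $\widetilde\FF_2(0)$ via item~4 of Lemma~\ref{lemma:Operator:1}, as you correctly anticipate.
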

To have estimates of the derivatives of the scattering map, we need also estimates for the derivatives of the invariant manifolds parameterizations for real values of $(u,\ga)$. To this end we state the following proposition.

\begin{proposition}\label{prop:InvManReal}
The parameterization  $Z^s=\delta z+\widetilde{Z}^s$
 obtained in Theorem \ref{thm:ExistenceManiFixedPt} is also defined in the domain
\begin{equation}\label{def:domainsInvReal}
 u\in D^s_{\kk,\de}\cap\RR, \ga\in\TT, |\alo|\leq \frac{1}{2}, |\beto|\leq \frac{1}{2}.
\end{equation}
Moreover, in this domain the functions $Z^s=(Y,\varLambda,\alpha,\beta)$ satisfy that for $N\geq 0$,
% \[
% |D^N Y^s|\leq CG_0^{-3}\log G_0, \quad |D^N \Lambda^s|\leq CG_0^{-6},\quad  |D^N \al^s|\leq CG_0^{-3}\log G_0,\quad  |D^N \beta^s|\leq CG_0^{-3}\log G_0
% \]
% \textcolor{red}{
\begin{equation}\label{def:NderivManifolds}
|D^N( Z^s-\FF(0))|\lesssim G_0^{-6}.
\end{equation}
% }
where $D^N$ denotes the differential of order $N$ with respect to the variables $(u,\ga,\alo,\beto)$ and $C$ is a constant which may depend on $N$ but independent of $G_0$.
\end{proposition}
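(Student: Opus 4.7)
The plan is to leverage the fixed point $Z^s$ produced by Theorem \ref{thm:ExistenceManiFixedPt} on the complex domain $D^s_{\kk,\de}\times\TT_\sigma$, which contains the real section \eqref{def:domainsInvReal}, and to upgrade the complex-norm bound $\lln Z^s-\FF(0)\rrn_{1/3,1/2,\vect}\lesssim G_0^{-9/2}\ln^3 G_0$ to the pointwise estimate $|D^N(Z^s-\FF(0))|\lesssim G_0^{-6}$ by exploiting the fast rotation $\omega=\nu G_0^3/L_0^3$ along the unperturbed homoclinic.

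\textbf{Existence on the real domain.} The weights $(u-i/3)^m(u+i/3)^m$ appearing in $\|\cdot\|_{n,m,q}$ are uniformly bounded above and below on $D^s_{\kk,\de}\cap\RR$, and $|e^{-iq\phi_\h(u)}|=1$ for real $u$, so the Fourier series defining $Z^s$ converges absolutely at every point of \eqref{def:domainsInvReal}. This already yields $Z^s$ as an analytic function on the real section together with the preliminary pointwise bound $|Z^s-\FF(0)|\lesssim G_0^{-9/2}\ln^3 G_0$, which however falls short of the claimed $G_0^{-6}$ by a factor of $G_0^{-3/2}$.

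\textbf{Improved real-line bound via averaging.} To recover the missing $G_0^{-3/2}$, I write
\[
Z^s-\FF(0)=\FF(Z^s)-\FF(0)=\int_0^1 D\FF(\tau Z^s)[Z^s]\,d\tau,
\]
and use the explicit formula \eqref{def:OperadorGA} for $\GG_A$: each component is a composition of at most two applications of the integral operator $\GG^s$ of \eqref{def:Outer:IntegralOperator}. Along the real trajectory $s\mapsto(u+s,\ga+\omega s)$, each integrand admits a Fourier expansion in $\ga$. The zero mode of the driving perturbation $\PP_1$ is exactly (a derivative of) the Melnikov potential $\LL^{[0]}$ of Proposition \ref{prop:MelnikovPotential}, already of size $O(G_0^{-6})$. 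Every non-zero Fourier mode can be integrated by parts in $s$ against $e^{iq\omega s}$, producing a factor $1/(iq\omega)=O(G_0^{-3})$; combined with the pre-existing $O(G_0^{-3}\ln G_0)$ bound for $Z^s$ inside the nonlinear terms $\QQ(Z^s)$, $\GG_1(Z^s)\pa_u Z^s$ and $\GG_2(Z^s)\pa_\ga Z^s$, both contributions produce the required pointwise bound $|Z^s-\FF(0)|\lesssim G_0^{-6}$ on $D^s_{\kk,\de}\cap\RR$.

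\textbf{Higher derivatives.} The Lipschitz estimate of Theorem \ref{thm:ExistenceManiFixedPt} gives $\|D_Z\FF(Z^s)\|<1/2$, so $\Id-D_Z\FF(Z^s)$ is invertible with inverse of norm $O(1)$. Differentiating the fixed-point identity $Z^s=\FF(Z^s;\alo,\beto)$ in the parameters yields
\[
\pa_{\alo}Z^s=\bigl(\Id-D_Z\FF(Z^s)\bigr)^{-1}\pa_{\alo}\FF|_{Z=Z^s},
\]
and the right-hand side is controlled by the same averaging argument used above. Iterating this differentiation up to order $N$ in $(\alo,\beto)$ produces only commutator terms involving previously controlled quantities. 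The derivatives in $(u,\ga)$ are obtained via Cauchy estimates, since the real section sits at fixed positive distance from $\pa D^s_{\kk,\de}$, so each differentiation costs only an $O(1)$ constant.

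\textbf{Main obstacle.} The principal technical difficulty is the dependence on $G_0=\Theta-L_0+\alo\beto$: every derivative in $(\alo,\beto)$ that falls on the fast frequency $\omega=\nu G_0^3/L_0^3$ introduces a factor of order $G_0^2$, which a priori threatens the $G_0^{-6}$ bound. The smallness condition $|\alo|G_0^{3/2}\ll 1$ from Theorem \ref{thm:ExistenceManiFixedPt}, combined with the fact that each integration by parts in the averaging step simultaneously gains a $G_0^{-3}$ factor, keeps the estimates closed; making this book-keeping uniform in $N$, and verifying that the compositional structure of $\GG_A$ and the formal Taylor expansion \eqref{def:composition} do not degrade the gain, is the technical core of the argument.
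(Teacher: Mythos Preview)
Your proposal misses the central point of the proposition and therefore has a genuine gap. The domain \eqref{def:domainsInvReal} requires $|\alo|,|\beto|\le 1/2$, \emph{without} the smallness condition $|\alo|G_0^{3/2}\ll 1$ of Theorem~\ref{thm:ExistenceManiFixedPt}; indeed the paper remarks right after the proposition that the whole purpose of allowing $|\alo|\le 1/2$ is so that Cauchy estimates in $(\alo,\beto)$ can be taken on disks of radius independent of $G_0$. Your argument, by contrast, starts from the fixed point produced under $|\alo|G_0^{3/2}\ll 1$ and explicitly invokes that condition in the ``Main obstacle'' paragraph to control the $(\alo,\beto)$-derivatives. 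On that narrow domain your Cauchy/implicit-differentiation steps would cost $G_0^{3/2}$ per $(\alo,\beto)$-derivative, which destroys the $G_0^{-6}$ bound; so the proposal does not actually establish the statement on the domain claimed.

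The paper's proof avoids this entirely by \emph{re-running} the fixed point argument of Theorem~\ref{thm:ExistenceManiFixedPt} on a complex neighborhood of the real $u$-axis that stays a uniform, $G_0$-independent distance from the singularities $u=\pm i/3$. On such a domain all the weights $(u\mp i/3)^m$ are harmless, so (i) the bounds on $\PP_1$ from Lemma~\ref{lem:P1k} hold for any $|\alo|,|\beto|\le 1/2$ (the condition $|\alo|G_0^{3/2}\ll 1$ was only needed near the singularities), and (ii) there is no $G_0^{3/2}$ loss when passing between $m$-indices in Lemma~\ref{lemma:banach:AlgebraProps}. Consequently the Lipschitz constant is $O(G_0^{-3})$ rather than $O(G_0^{-3/2})$, and one gets $\|Z^s-\FF(0)\|\lesssim G_0^{-6}$ directly, without any averaging or integration-by-parts. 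The derivative bounds then follow from straight Cauchy estimates on $O(1)$-disks in all four variables. Your averaging scheme is not needed, and your claim that the zero Fourier mode is already $O(G_0^{-6})$ would require its own justification.
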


Note that the condition \eqref{cond:eccentricity} is not required in Proposition \ref{prop:InvManReal}. Indeed this condition is needed to extend the Fourier coefficients of the invariant manifolds parameterizations into points of $D^s_{\kk,\de}$ wich are $G_0^{-3}$-close to the singularities $u=\pm i/3$. The extension to the disk $|\alo|\leq \frac{1}{2}, |\beto|\leq \frac{1}{2}$ is needed to apply Cauchy estimates to obtain \eqref{def:NderivManifolds}, which is needed (jointly with the analogous estimate for the parameterization of the unstable manifold) to obtain the estimates for the difference between the invariant manifolds given in \eqref{def:DerivMelnikovAltres}.
% This proposition implies the estimates \eqref{def:DerivadesManifolds} of Theorem \ref{thm:MainSplitting}.

We devote the rest of this section to proof Theorem \ref{thm:ExistenceManiFixedPt}.
First in Section \ref{sec:ExistManifoldsLemmas} we state several lemmas which give properties of the norms and the functional setting. Then, in Section \ref{sec:proofexistencemani} we give the fix point argument which  proves Theorem \ref{thm:ExistenceManiFixedPt}.
Finally, in Section \ref{sec:Proof:InvReal}, we explain how to adapt the proof of Theorem \ref{thm:ExistenceManiFixedPt} to prove Proposition \ref{prop:InvManReal}.

\subsubsection{Technical lemmas}\label{sec:ExistManifoldsLemmas}
We devote this section to state several lemmas which are needed to prove Theorem \ref{thm:ExistenceManiFixedPt}. The first one, whose prove is straighforward, gives properties of the Banach spaces $\YY_{n,m,\kk,\de,\sigma}$.
\begin{lemma}\label{lemma:banach:AlgebraProps}
The spaces $\YY_{n,m,\kk,\de,\sigma}$ satisfy the following properties:
\begin{itemize}
 \item If $h\in \YY_{n,m,\kk,\de,\sigma}$ and $g\in
\YY_{n',m',\kk,\de,\sigma}$, then the formal product of Fourier series
$hg$ defined as usual by
\[
 (hg)^{[\ell]}(v)=\sum_{k\in\ZZ}h^{[k]} g^{[\ell-k]}
\]
satisfies that $hg\in \YY_{n+n',m+m',\kk,\de,\sigma}$ and
% \[
$ \|hg\|_{n+n',m+m'\sigma}\leq \|h\|_{n,m,\sigma}\|g\|_{n',m',\sigma}$.
% \]
\item If $h\in \YY_{n,m,\kk,\de,\sigma}$, then $h\in
\YY_{n-\eta,m,\kk,\de,\sigma}$ with $\eta>0$ and
% \[
 $\|h\|_{n-\eta,m,\sigma}\leq K\|h\|_{n,\sigma}$.
% \]
\item If $h\in \YY_{n,m,\kk,\de,\sigma}$, then $h\in
\YY_{n,m+\eta,\kk,\de,\sigma}$ with $\eta>0$ and
% \[
 $\|h\|_{n,m+\eta,\sigma}\leq K\|h\|_{n,\sigma}$.
% \]
\item If $h\in \YY_{n,m,\kk,\de,\sigma}$, then $h\in
\YY_{n,m-\eta,\kk,\de,\sigma}$ with $\eta>0$ and
% \[
 $\|h\|_{n,m-\eta,\sigma}\leq K G_0^{3\eta}\|h\|_{n,m,\sigma}$.
% \]
\end{itemize}
\end{lemma}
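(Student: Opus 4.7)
The lemma is a standard collection of Banach-algebra-type estimates for the weighted sup norms combined with a convolution structure on Fourier coefficients. My plan is to handle each item separately and reduce everything to elementary pointwise inequalities on the two pieces of $D^s_{\kappa,\delta}$ appearing in the definition of $\|\cdot\|_{n,m,q}$: the ``tail'' piece $\{\Re u \geq \rho\}$, where the weight is $|u|^n$, and the ``bounded'' piece $\{\Re u \leq \rho\}\cap D^s_{\kappa,\delta}$, where the weight is $|(u-i/3)(u+i/3)|^m$ multiplied by the phase $e^{-iq\phi_h(u)}$.

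For the product estimate, the key observation is that the phase factor is multiplicative: if $h^{[k]}$ and $g^{[\ell-k]}$ contribute to the formal convolution $(hg)^{[\ell]}$, then $e^{-i\ell\phi_h(u)} = e^{-ik\phi_h(u)}\,e^{-i(\ell-k)\phi_h(u)}$, so the weights $|u|^n |u|^{n'} = |u|^{n+n'}$ and $|(u-i/3)(u+i/3)|^{m+m'}$ factor correctly over each of the two subdomains. Taking suprema pointwise and bounding each term of the convolution by $\|h^{[k]}\|_{n,m,k}\|g^{[\ell-k]}\|_{n',m',\ell-k}$, the inequality $e^{|\ell|\sigma}\leq e^{|k|\sigma}e^{|\ell-k|\sigma}$ turns the double sum over $\ell$ and $k$ into the product $\|h\|_{n,m,\sigma}\|g\|_{n',m',\sigma}$ after a standard rearrangement.

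For the three monotonicity statements, the two subdomains behave differently with respect to $|u|$ and $|u\pm i/3|$. For item (ii), on $\{\Re u\ge\rho\}$ one has $|u|^{n-\eta}\le\rho^{-\eta}|u|^n$, while on the complementary piece the second seminorm is independent of $n$, giving a constant $K$ depending only on $\rho$. For item (iii), on $\{\Re u\le\rho\}\cap D^s_{\kappa,\delta}$ the set is bounded and hence $|(u-i/3)(u+i/3)|^\eta\le C$ uniformly, while on the tail part the first seminorm is independent of $m$; again $K$ is independent of $G_0$. The main (and only) $G_0$-dependent step is item (iv): the defining inequality $|\Im u| < \tan\beta_1\,\Re u + 1/3 - \kappa G_0^{-3}$ of $D^s_{\kappa,\delta}$ in \eqref{def:DomainOuter} forces $|u-i/3|\gtrsim \kappa G_0^{-3}$ and $|u+i/3|\gtrsim \kappa G_0^{-3}$ near the respective singularities, while away from them both factors are bounded below by a constant. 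This yields $|(u-i/3)(u+i/3)|^{-\eta}\le C\,G_0^{3\eta}$ throughout the bounded subdomain and produces the announced factor.

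I do not expect any real obstacle: the whole argument is bookkeeping, once one tracks the multiplicativity of the phase $e^{-iq\phi_h(u)}$ and locates the singularities of the weight. The only point requiring mild care is extracting exactly the power $G_0^{3\eta}$ rather than $G_0^{6\eta}$ in item (iv); this uses that the domain stays at distance $\gtrsim \kappa G_0^{-3}$ from each of the two singularities $u=\pm i/3$ \emph{separately}, with only one of them being the ``nearby'' obstacle at any given $u$, so that the losses do not compound.
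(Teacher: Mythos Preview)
Your proposal is correct and supplies exactly the routine verifications the paper has in mind; the paper itself gives no proof, merely declaring the lemma ``straightforward.'' Your handling of item~(iv)---noting that the boundary condition in~\eqref{def:DomainOuter} keeps $u$ at distance $\gtrsim \kappa G_0^{-3}$ from each of $\pm i/3$ separately, so that only one factor in $|(u-i/3)(u+i/3)|$ is small at a time---is precisely the reason the loss is $G_0^{3\eta}$ rather than $G_0^{6\eta}$.
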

% The proof of this lemma is straighforward.

We are going to find a fixed point of the operator \eqref{def:OperatorInfinity} in a suitable space $\XX_{n,m,\vect}$, which is rather a space of \emph{formal} Fourier series than a space of functions. More concretely, although the elements of $\XX_{n,m,\vect}$ define functions for real values of $u$ and $\gamma$, these series are just formal in some  open sets of $\CC^2$. The previous lemma ensures that $\XX_{n,m,\vect}$ is an algebra with respect to the usual product, but we need to give a meaning to the composition.

\begin{lemma}\label{lem:CompositionManifolds}
Consider $Z\in\XX_{1,1/2,\vect}$ satisfying $\|Z\|_{1,1/2,\vect} \ll G_0^{-3/2}$. Then, 
the formal compositions 
$\pa_z\PP_1(u,\gamma,Z(u,\ga))$, $z=u,\ga,\Lambda,\al,\bet$, defined in \eqref{def:composition} satisfy
% % 
% % Consider a funtion $R(u,\ga,\al,\La,\bet)$ defined for 
% % \[
% %   u\in D^s_{\kk,\de}\cap\RR, \ga\in\TT
% % \]
% % 
% % 
% % 
% % be defined formally as
% % \[
% % R(u,\ga,\al,\La,\bet) = \sum_{q\in\ZZ} R^{[q]} (u,\al,\La,\bet) e^{i q (\ga+\phi_\h(u))}.
% % \]
% % Assume that
% % \[
% % R^{[q]} (u,\al,\La,\bet) = Q^{[q]}(\wh r_\h(u),  e^{i\phi_\h(u)} \al, \La,  e^{-i \phi_\h(u)} \bet),
% % \]
% % where $Q^{[q]} (r,A,\La,B)$ is analytic in $\{|r| > r_0,\; |A| < A_0, \; |\La| < \La_0, \; |B|<B_0\}$ and satisfies,
% % for all $(r,A,\La,B) \in \{|r| > r_0,\; |A| < A_0, \; |\La| < \La_0, \; |B|<B_0\}$
% % \[
% % |Q^{[q]} (r,A,\La,B)| \le \frac{K}{G_0^{2k} |r|^{k+1}} e^{-|q| \sigma_0},
% % \]
% % for some $r_0, A_0, \La_0, B_0, \sigma_0 >0$, $k\ge 0$. Then, if $Z =(Y,\al,\La,\bet) \in \XX_{1,1/2,\vect}$,
% % the composition $R(u,\gamma,Z)$ formally defined by the formal Taylor expansion
% \[
% R(u,\gamma,Z) = \sum_{\ell\ge 0} \frac{1}{\ell !} \sum_{i_1=0}^{\ell} \sum_{i_2 = 0}^{\ell-i_1}
% \frac{\ell !}{i_1 !i_2! (\ell-i_1-i_1)!} \frac{\partial^{\ell} R}{\partial^{i_1}\al \partial^{i_2}\bet \partial^{\ell-i_1-i_2} \La} (u,\gamma,0,0,0) \al^{i_1} \bet^{i_2} \La^{\ell-i_1-i_2}
% \]
% is indeed well defined, belongs to $\YY_{2(k+1)/3,(k+1)/2}$ and
% \[
%  \pa_u\PP_1(\cdot,\cdot,Z)\in\XX_{3,5/2,\vect} \qquad \text{and}\qquad  \pa_\ga\PP_1(\cdot,\cdot,Z), e^{i\phi_\h} \pa_\beta\PP_1(\cdot,\cdot,Z), e^{-i\phi_\h} \pa_\al\PP_1(\cdot,\cdot,Z)\in\XX_{2,3/2,\vect} 
% \]
\[
 (\pa_u\PP_1(\cdot,\cdot,Z),\pa_\ga\PP_1(\cdot,\cdot,Z), \pa_\beta\PP_1(\cdot,\cdot,Z), \pa_\al\PP_1(\cdot,\cdot,Z))\in\XX_{2,3/2,\vect} 
\]
and
\[
\left\|(\pa_u\PP_1(\cdot,\cdot,Z),\pa_\ga\PP_1(\cdot,\cdot,Z), \pa_\beta\PP_1(\cdot,\cdot,Z), \pa_\al\PP_1(\cdot,\cdot,Z))\right\|_{2,3/2,\vect} \lesssim G_0^{-3}
\]
% \[
% \begin{split}
% \|\pa_u\PP_1(u,\gamma,Z)\|_{3,5/2,\vect},\|\pa_\ga\PP_1(u,\gamma,Z)\|_{2,3/2,\vect}&\lesssim G_0^{-3}\\
% \| e^{i\phi_\h} \pa_\beta\PP_1(u,\gamma,Z)\|_{1,1/2,\vect},\|e^{-i\phi_\h} \pa_\al\PP_1(u,\gamma,Z)\|_{2,3/2,\vect}&\lesssim G_0^{-3}
% \end{split}
% \]
Moreover, if one defines
\[
\begin{split}
 \Delta \PP_1(Z,\wt Z)= &(\pa_u\PP_1(\cdot,\cdot,Z),\pa_\ga\PP_1(\cdot,\cdot,Z), \pa_\beta\PP_1(\cdot,\cdot,Z), \pa_\al\PP_1(\cdot,\cdot,Z))\\&-  (\pa_u\PP_1(\cdot,\cdot,\wt Z),\pa_\ga\PP_1(\cdot,\cdot,\wt Z), \pa_\beta\PP_1(\cdot,\cdot,\wt Z), \pa_\al\PP_1(\cdot,\cdot,\wt Z)),
\end{split}
 \]
then, for $Z,Z'\in\XX_{1,1/2,\vect}$,
\[
 \left\|
 \Delta \PP_1(Z,\wt Z) \right\|_{3,2,\vect}\lesssim G_0^{-3}\|Z-\wt Z\|_{1,1/2,\vect}.
\]
% 
% \[
% \| R(u,\gamma,Z)\|_{2(k+1)/3,(k+1)/2} \lesssim G_0^{-2k}
% \]
% Furthermore, if $Z, \wt Z  \in \XX_{1,1/2,\vect}$,
% $Z =(Y,\al,\La,\bet)$, $\wt Z = (\wt Y,\wt \al,\wt \La,\wt \bet)$, then
% \[
% \| R(u,\gamma,Z)-R(u,\gamma, \wt Z)\|_{2(k+1)/3,(k+1)/2} \lesssim G_0^{-2k+3/2} |\log G_0 |\|Z-\wt Z\|_{1,1/2,\vect}.
% \]
% \textcolor{red}{
% Pau, amb aixo en tindriem prou. S'ha de revisar. Sembla que no usem mes que aixo.
% \[
% \| R(u,\gamma,Z)\|_{2,3/2} \lesssim G_0^{-3}
% \]
% and
% \[
% \| R(u,\gamma,Z)-R(u,\gamma, \wt Z)\|_{2,2} \lesssim G_0^{-3} \|Z-\wt Z\|_{1,1/2,\vect}.
% \]
% }
\end{lemma}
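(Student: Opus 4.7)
The plan is to treat the formal composition $\pa_z\PP_1(u,\ga,Z(u,\ga))$ in~\eqref{def:composition} term by term, bounding each Taylor monomial in the Banach space $\XX_{2,3/2,\vect}$ with the algebra estimates of Lemma~\ref{lemma:banach:AlgebraProps}. The driving observation is that, by~\eqref{def:P1} and the asymptotic $\wt W=\OO(L_0^4/\wt r^3)$, evaluating on the unperturbed homoclinic $\wt r=G_0^2\wh r_\h(u)$ gives $\PP_1=\OO(L_0^4 G_0^{-3}\wh r_\h(u)^{-3})$; the weight structure of $\|\cdot\|_{n,m,q}$ is tailored precisely to the behavior $\wh r_\h(u)\sim u^{2/3}$ at infinity and $|\wh r_\h(u)|\sim|u\mp i/3|^{1/2}$ near the complex singularities $u=\pm i/3$ coming from Lemma~\ref{lemma:homounperturbed}.

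First I would estimate the ``zero-order'' terms, that is $\pa_z\PP_1(u,\ga,0)$ for $z=u,\ga,\La,\al,\bet$, and check component by component that they fit in the announced vector norm. For $\pa_\ga\PP_1$ and the weighted quantities $e^{\pm i\phi_\h}\pa_{\beta,\al}\PP_1$ this amounts to estimating factors of $\wh r_\h^{-3}$ (giving $\|\cdot\|_{2,3/2}\lesssim G_0^{-3}$), while $\pa_u\PP_1$ produces an extra $\wh y_\h/\wh r_\h$ from differentiating $\phi_\h$ and $\wh r_\h$ in $u$ (giving $\|\cdot\|_{3,5/2}\lesssim G_0^{-3}$), which matches exactly the $Y$-slot of $\XX_{2,3/2,\vect}$ via~\eqref{def:Norm}. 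Second, the higher-order terms in~\eqref{def:composition} carry monomials $\al^{i_1}\bet^{i_2}\La^{\ell-i_1-i_2}$ with coefficients bounded by Cauchy estimates on a polydisk in $(\La,\al,\bet)$ whose radius is controlled by the size of $(L_0,\alo,\beto)$; this is where the hypothesis $|\alo|G_0^{3/2}\ll 1$ from the ambient Theorem~\ref{thm:ExistenceManiFixedPt} enters. Substituting $Z=(Y,\La,\al,\bet)$ into each monomial and using $\|fg\|_{n+n',m+m'}\le\|f\|_{n,m}\|g\|_{n',m'}$ together with the assumption $\|Z\|_{1,1/2,\vect}\ll G_0^{-3/2}$, the $\ell$-th order contribution is bounded by $G_0^{-3}(CG_0^{3/2}\|Z\|_{1,1/2,\vect})^{\ell}$, so the Taylor series sums geometrically and the leading $\OO(G_0^{-3})$ bound persists. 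Third, the Lipschitz estimate follows by writing
\[
\Delta\PP_1(Z,\wt Z)=\int_0^1 D\bigl[\pa_z\PP_1\bigr]\!\bigl(u,\ga,(1-s)\wt Z+sZ\bigr)\cdot(Z-\wt Z)\,ds
\]
read formally, term by term in the Taylor series, and repeating the same algebra bounds with one less $Z$-factor per monomial; the extra integer degree saved allows the bound to be stated in the slightly stronger norm $\XX_{3,2,\vect}$ (obtained from $\XX_{2,3/2,\vect}$ via Lemma~\ref{lemma:banach:AlgebraProps} at the cost of one power of $G_0^{3/2}$, which is absorbed into the smallness of $Z-\wt Z$).

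The delicate point, and the main obstacle, is to make the formal Taylor composition~\eqref{def:composition} converge in the Banach norm $\XX_{2,3/2,\vect}$ rather than only pointwise. The Fourier coefficients carry the weight $e^{-iq\phi_\h(u)}$ in the definition of $\|\cdot\|_{n,m,q}$, which blows up exponentially as $|q|\to\infty$ off the real axis; together with the exponential weight $e^{|q|\sigma}$ in the Fourier norm, this forces simultaneous control of the $\gamma$-harmonics of each Taylor monomial and of its spatial decay near $u=\pm i/3$. This is precisely where the choice of the vector norm~\eqref{def:Norm} with its companion phases $e^{\pm i\phi_\h}$ on the $\al$- and $\bet$-slots pays off: the factors $\al^{i_1}\bet^{i_2}$ of a Taylor monomial come multiplied by $e^{-i(i_1-i_2)\phi_\h}$ through the parametrization~\eqref{def:ChangeThroughHomo}, and that phase is exactly cancelled by the weighted norm of $Z$, so no loss of $\sigma$ occurs in the geometric series. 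Once this bookkeeping is written out, the entire lemma reduces to the algebra of Lemma~\ref{lemma:banach:AlgebraProps} in $\YY_{n,m}$ combined with the smallness assumptions on $\alo$, $\delta z$ and $Z$.
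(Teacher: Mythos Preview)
Your approach is correct and is precisely what the paper has in mind: the paper disposes of this lemma in one sentence (``The proof of this lemma is a straightforward computation''), and your proposal spells out exactly that computation --- bounding the Taylor coefficients of $\pa_z\PP_1$ in the weighted Fourier norms via Lemma~\ref{lem:P1k} and Lemma~\ref{lemma:homounperturbed}, invoking the algebra property of Lemma~\ref{lemma:banach:AlgebraProps} on each monomial $\al^{i_1}\bet^{i_2}\La^{\ell-i_1-i_2}$, and summing the resulting geometric series thanks to $\|Z\|_{1,1/2,\vect}\ll G_0^{-3/2}$. Your observation about the phase bookkeeping with $e^{\pm i\phi_\h}$ in the $\al,\bet$ slots is the key structural point that makes the algebra close, and is implicit in the paper's choice of vector norm~\eqref{def:Norm}.
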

The proof of this lemma is a straighforward computation.

We also need a precise knowledge of the behavior of the paramerization of the unperturbed homoclinic introduced in Lemma \ref{lemma:homounperturbed} as $u\to\pm \infty$ and close to its complex singularities. They are given in the next two lemmas.

\begin{lemma}\label{lem:HomoInfinity}
The homoclinic~\eqref{def:homoclinic} with
initial conditions~\eqref{def:CondicioInicialHomo}
 behaves as follows:
\begin{itemize} 
\item As $|u|\rightarrow +\infty$,
\[\wh r_\h (u)\sim  u^{2/3}, \quad
\wh y_\h (u)\sim   u^{-1/3}\quad \text{and}\quad 
 \phi_\h (u)-\pi\sim   u^{-1/3}\,\,(\text{mod } 2\pi).
%  \wh r_\h (u)\sim  2^{-1/3} 3^{2/3} u^{2/3}, \quad
% \wh y_\h (u)\sim  2^{2/3} 3^{-1/3} u^{-1/3}\quad \text{and}\quad 
%  \phi_\h (u)-\pi\sim  -2^{2/3} 3^{-1/3} u^{-1/3}\,\,(\text{mod } 2\pi)
\]
% \end{lemma}
% 
% \begin{lemma}\label{lem:HomoSing}
% The  homoclinic~\eqref{def:homoclinic} with
% initial conditions~\eqref{def:CondicioInicialHomo}
%  behaves as follow 
\item As $u\longrightarrow \pm i/3$,
\[
\wh r_\h(u)\sim \left( u\mp\frac{i}{3}\right)^{1/2},\quad \wh
y_\h(u)\sim \left( u\mp \frac{i}{3}\right)^{-1/2}, \quad
% 
% \phi_\h(u)\sim \pm \frac{i}{2}\log \left( u\mp\frac{i}{3}\right),
% \]
% and
% \[
e^{i \phi_\h (u)}\sim \left(\frac{u+\frac{i}{3}}{u-\frac{i}{3}}\right)^{1/2}.
\]
\end{itemize}
\end{lemma}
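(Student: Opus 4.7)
The plan is to reduce everything to the explicit formulas for $r_0(\tau)$, $y_0(\tau)$, $\phi_0(\tau)$, and to extract the required asymptotics from the implicit relation
\[
u=\frac{1}{2}\Bigl(\frac{\tau^3}{3}+\tau\Bigr)=\frac{\tau(\tau^2+3)}{6},
\]
which determines $\tau(u)$. Since $du/d\tau=(\tau^{2}+1)/2$, the map $\tau\mapsto u$ is a local diffeomorphism away from $\tau=\pm i$, is singular precisely at $\tau=\pm i$, and $\tau(\pm i)=\pm i/3$. Hence the only points where $\tau(u)$ fails to be analytic on its natural branch are $u=\pm i/3$, while at infinity $\tau(u)$ admits a convergent Puiseux expansion.

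For the asymptotics as $|u|\to\infty$, I would solve $u=\tau^{3}/6+\tau/2$ by inversion of a near-monomial: write $\tau=(6u)^{1/3}(1+\zeta)$ with $\zeta$ small, substitute and iterate once to get $\tau=(6u)^{1/3}\bigl(1+O(u^{-2/3})\bigr)$. Substituting into $r_{0}=\frac12(\tau^2+1)$ gives $\wh r_{\h}(u)=\frac12(6u)^{2/3}\bigl(1+O(u^{-2/3})\bigr)\sim u^{2/3}$; substituting into $y_{0}=2\tau/(\tau^{2}+1)$ gives $\wh y_{\h}(u)\sim 2/\tau\sim u^{-1/3}$; and using $e^{i\phi_{0}(\tau)}=(\tau+i)/(\tau-i)=1+\frac{2i}{\tau-i}$ one obtains $\phi_{0}(\tau)=2/\tau+O(\tau^{-3})$ on the appropriate branch, giving $\phi_{\h}(u)\sim u^{-1/3}$ mod $2\pi$, matching the asserted rate.

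For the asymptotics as $u\to\pm i/3$, I would expand near $\tau=\pm i$. Writing $\tau=\pm i+\epsilon$ and substituting in $u(\tau)$,
\[
u\mp\frac{i}{3}=\frac{1}{6}\bigl((\pm i+\epsilon)^{3}+3(\pm i+\epsilon)\mp 2i\bigr)=\pm\frac{i}{2}\epsilon^{2}+\frac{\epsilon^{3}}{6},
\]
so by the Morse/Weierstrass preparation lemma $\epsilon=\tau\mp(\pm i)$ is an analytic function of a choice of $(u\mp i/3)^{1/2}$ with $\epsilon\sim\sqrt{\mp 2i\,(u\mp i/3)}$. Plugging this into $r_{0}(\tau)=\frac12(\tau^{2}+1)=\pm i\epsilon+\epsilon^{2}/2$ yields $\wh r_{\h}(u)\sim(u\mp i/3)^{1/2}$ (up to a nonzero constant). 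Similarly $y_{0}=2\tau/(\tau^{2}+1)\sim \pm 1/\epsilon$ gives $\wh y_{\h}(u)\sim(u\mp i/3)^{-1/2}$. For the exponential of the angle, the identity $e^{i\phi_{0}(\tau)}=(\tau+i)/(\tau-i)$ shows that at $\tau=+i$ there is a simple pole and at $\tau=-i$ a simple zero, and the combination $(\tau+i)(\tau-i)=\tau^{2}+1=2r_{0}(\tau)$ has a simple zero at each of $\tau=\pm i$; taking the square root of $e^{2i\phi_{0}}=(\tau+i)^{2}/(\tau^{2}+1)\cdot(\tau^{2}+1)/(\tau-i)^{2}$ and using $(\tau+i)^2/(\tau^2+1)$ analytic and nonvanishing near $\tau=i$, together with the analogous factorization at $\tau=-i$, one obtains $e^{i\phi_{\h}(u)}\sim\bigl((u+i/3)/(u-i/3)\bigr)^{1/2}$ up to a nonzero multiplicative constant, as claimed.

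The entire argument is an inverse-function calculation, and the only mild subtlety is fixing the correct branch of $(u\mp i/3)^{1/2}$ so that $\tau(u)$ agrees with the principal branch coming from the real homoclinic through $\tau=0$ at $u=0$; this is handled by continuous extension along any path in $D^{s,u}_{\kappa,\delta}$, which avoids the two singular points by construction, and by comparing signs with the already-known behaviour $r_{0}(0)=1/2$, $y_{0}(0)=0$, $\phi_{0}(0)=\pi$. No part of the proof is delicate; I expect the verification is purely algebraic.
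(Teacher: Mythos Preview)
Your approach is correct. The paper does not actually prove this lemma in the text: it simply states that the proof is given in \cite{GuardiaMS16} and moves on. Your direct computation---inverting $u=\tau^{3}/6+\tau/2$ via a Puiseux expansion at infinity and via the quadratic (Morse) normal form near the critical points $\tau=\pm i$, then reading off the asymptotics of $r_{0}$, $y_{0}$, $e^{i\phi_{0}}$ from their explicit rational expressions---is precisely how the result is obtained in that reference, and your writeup has the advantage of being self-contained. One minor remark: your computation correctly yields $\phi_{\h}(u)\sim u^{-1/3}$ (mod $2\pi$) as $|u|\to\infty$, which is consistent with $\lim_{u\to\pm\infty}\phi_{\h}(u)=0$ (mod $2\pi$) stated earlier in the paper; the ``$-\pi$'' in the printed statement appears to be a typo, and you were right not to try to force your calculation to match it.
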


The proof of this lemma is given in \cite{GuardiaMS16}.
% a direct consequence of the particular form of  \eqref{def:homoclinic} and the relation betwen $u$ and $\tau$ given in  \eqref{def:utotau}. In particular, note that $\tau = \pm i$ if and only if $u = \pm i/3$ and that
% \begin{equation*}
% % \label{eq:singularitiesofthechangeofvariables}
% \tau \pm i \sim (1\pm i) (u\pm i/3)^{1/2}.
% \end{equation*}
%
From Lemma \ref{lem:HomoInfinity}, one can derive also properties for the functions  $f_1$ and $f_2$ introduced in \eqref{def:f1f2}.
%
%
% The following lemma is a direct consequence of the properties of $r_h$ and $y_h$ given in Lemmas \ref{lem:HomoInfinity} and \ref{lem:HomoSing}.
\begin{lemma}\label{lemma:f1f2}
The functions $f_1$ and $f_2$ introduced in \eqref{def:f1f2} satisfy $f_1\in\XX_{2/3,0}$ and $f_2\in\XX_{4/3,1/2}$. Moreover,
\[
 \lln f_1\rrn_{2/3,0}\lesssim G_0^{-1}\quad\text{ and }\quad \lln f_2\rrn_{4/3,1/2}\lesssim G_0^{-1}.
\]
\end{lemma}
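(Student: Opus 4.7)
Since $f_1$ and $f_2$ depend only on $u$, the corresponding Fourier series consists only of the zero-th harmonic, so $\partial_\gamma f_i \equiv 0$ and the third term in the definition of $\lln\cdot\rrn_{n,m}$ drops out. Consequently it suffices to establish the bounds $\|f_1\|_{2/3,0}+\|\partial_u f_1\|_{5/3,1}\lesssim G_0^{-1}$ and $\|f_2\|_{4/3,1/2}+\|\partial_u f_2\|_{7/3,3/2}\lesssim G_0^{-1}$, and this we will do by pulling back the asymptotic behavior of $\wh r_\h,\wh y_\h$ from Lemma \ref{lem:HomoInfinity} through the explicit expressions \eqref{def:f1f2}.

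For the large $|u|$ part of the norm we use $\wh r_\h(u)\sim u^{2/3}$ and $\wh y_\h(u)\sim u^{-1/3}$ from Lemma \ref{lem:HomoInfinity}. Substituting into \eqref{def:f1f2} gives, for $\Re u\geq r$,
\[
f_1(u)\sim \frac{1}{G_0}\,u^{-2/3},\qquad f_2(u)\sim \frac{2}{G_0}\,u^{-4/3},
\]
so that $|u^{2/3}f_1(u)|$ and $|u^{4/3}f_2(u)|$ are both $\OO(G_0^{-1})$ on $D^s_{\kk,\delta}\cap\{\Re u\geq r\}$. For the part of the norm adjacent to the singularities we use $\wh r_\h(u)\sim (u\mp i/3)^{1/2}$ and $\wh y_\h(u)\sim (u\mp i/3)^{-1/2}$. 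Hence
\[
f_1(u)\sim \frac{1}{G_0},\qquad f_2(u)\sim \frac{2}{G_0}\,(u\mp i/3)^{-1/2},
\]
so $|f_1(u)|\lesssim G_0^{-1}$ (giving the $m=0$ weight) and $|(u-i/3)^{1/2}(u+i/3)^{1/2}f_2(u)|\lesssim G_0^{-1}$ (giving the $m=1/2$ weight). This yields $\|f_1\|_{2/3,0}\lesssim G_0^{-1}$ and $\|f_2\|_{4/3,1/2}\lesssim G_0^{-1}$.

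For the derivative bounds it is enough to observe that formally differentiating the asymptotic formulas for $f_1$ and $f_2$ lowers the exponent at infinity by one and the exponent near each singularity by one as well. Explicitly, at infinity $\partial_u f_1\sim G_0^{-1}u^{-5/3}$ and $\partial_u f_2\sim G_0^{-1}u^{-7/3}$, matching the weights $n+1=5/3$ and $n+1=7/3$. Near $u=\pm i/3$, the leading behavior is $\partial_u f_1=\OO(G_0^{-1})$ (the $1/G_0$ in $f_1$ comes from a constant leading term, and the next correction introduces one power of $(u\mp i/3)$, whose derivative is bounded), which is absorbed by $(u\mp i/3)^{m+1}=(u\mp i/3)^{1}$; and $\partial_u f_2\sim G_0^{-1}(u\mp i/3)^{-3/2}$, absorbed by $(u\mp i/3)^{m+1}=(u\mp i/3)^{3/2}$. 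This gives $\|\partial_u f_1\|_{5/3,1}\lesssim G_0^{-1}$ and $\|\partial_u f_2\|_{7/3,3/2}\lesssim G_0^{-1}$, completing the argument.

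There is no genuine obstacle here: everything reduces to bookkeeping on the known asymptotics from Lemma \ref{lem:HomoInfinity}. The only point to be careful with is that the asymptotic expansions from that lemma must be differentiated term by term in a sectorial neighborhood of the singularities $u=\pm i/3$ (and of infinity), which is legitimate because $\wh r_\h$ and $\wh y_\h$ are analytic away from the singularities, so Cauchy estimates on small disks of radius comparable to $|u\mp i/3|$ (respectively, to $|u|$) upgrade the pointwise asymptotics to asymptotics of derivatives with the expected exponents.
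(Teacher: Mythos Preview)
Your approach is correct and matches the paper's, which simply states that the lemma follows from Lemma~\ref{lem:HomoInfinity} without giving any details. One small slip in the reasoning for $\partial_u f_1$ near the singularities: since $u-i/3\sim \tfrac{i}{2}(\tau-i)^2$ (the map $\tau\mapsto u$ has a critical point at $\tau=i$), the inverse has a square-root branch and $f_1=1/(G_0\tau^2)=-G_0^{-1}+cG_0^{-1}(u-i/3)^{1/2}+\cdots$, so in fact $\partial_u f_1\sim G_0^{-1}(u-i/3)^{-1/2}$, not $\OO(G_0^{-1})$. This does not affect the conclusion, since the weight $(u-i/3)^{m+1}=(u-i/3)$ still absorbs it, and your Cauchy-estimate remark at the end gives the correct bound $\partial_u f_1=\OO(G_0^{-1}|u-i/3|^{-1})$ in any case.
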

Finally we give properties of the operators introduced in \eqref{def:Outer:IntegralOperator} and \eqref{def:OperadorGA}.
\begin{lemma}\label{lemma:Operator:1}
The operator $\GG=\GG^ s$ in \eqref{def:Outer:IntegralOperator}, when
considered acting on the spaces $\XX_{n,m}$ and $\YY_{n,m}$ has the following
properties.
\begin{enumerate}
\item For any $n> 1$ and $m\geq 1$, $\GG:\YY_{n,m}\longrightarrow
\YY_{n-1,m-1}$ is well defined and linear continuous.
Moreover $\LL\circ\GG=\mathrm{Id}$.
\item If $h\in\YY_{n,m}$ for some $n> 1$ and $m>1$, $\GG(h)\in \YY_{n-1,m-1}$
and
\[
 \| \GG\left(h\right)\|_{n-1,m-1}\lesssim  \| h\|_{n,m}.
\]
 \item If $h\in\YY_{n,1}$ for some $n>1$, $\GG\left(h\right)\in
\YY_{n-1, 0}$ and
\[
 \|\GG\left(h\right)\|_{n-1, 0}\lesssim  \ln G_0\| h\|_{n, 1}
\]
\item If $h\in\YY_{n,m}$ for some $n\geq 1$ and $m\geq 1$ satisfies $\langle
h\rangle_\ga=0$, $\GG(h)\in
\YY_{n,m}$
and
\[
 \| \GG\left(h\right)\|_{n,m}\lesssim G_0^{-3}  \| h\|_{n,m}.
\]

\item If $h\in\YY_{n,m}$ for some $n\geq 1$ and $m\geq 1$, $\pa_u\GG(h),\pa_\ga\GG(h)\in
\YY_{n,m}$
and
\[
\begin{split}
 \| \pa_u\GG\left(h\right)\|_{n,m}&\lesssim  \| h\|_{n,m}\\
 % \item If $h\in\YY_{n,m}$ for some $n\geq 1$ and $m\geq 1$, $\pa_\ga\GG(h)\in
% \YY_{n,m}$
% and
% \[
 \|\pa_\ga \GG\left(h\right)\|_{n,m}&\lesssim G_0^{-3} \| h\|_{n,m}.
\end{split}
 \]
\item From the previous statements, one can conclude that if $h\in\YY_{n,m}$
for some $n>1$ and $m\geq 1$, then $\GG(h)\in \XX_{n-1,m-1}$ and
\[
 \begin{aligned}
  \lln \GG(h)\rrn_{n-1,m-1}&\lesssim \|h\|_{n,m} \qquad\qquad\,\,\,\, \  &&\text{
if }m>1\\
 \lln \GG(h)\rrn_{n-1,m-1}&\lesssim \ln G_0\|h\|_{n,m} \qquad &&\text{ if }m=
1.
 \end{aligned}
 \]
 \end{enumerate}
 Additionally,
\begin{enumerate}
\item[(7)]
if $h\in\YY_{n,m}$ for $n>1$, $m>3/2$ then
\[
\| e^{\pm i \phi_h} \GG( e^{\mp i \phi_h} h)\|_{n-1,m-1} \lesssim  \|h \|_{n,m},
\]
\item[(8)]
if $h\in\YY_{n,3/2}$ for $n>1$, then
\[
\| e^{\pm i \phi_h} \GG( e^{\mp i \phi_h} h)\|_{n-1,1/2} \lesssim \ln G_0 \|h \|_{n,3/2}.
\]
\end{enumerate}
\end{lemma}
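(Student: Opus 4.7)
The plan is to work Fourier coefficient by Fourier coefficient. Since $\LL = \pa_u + \omega\, \pa_\ga$ with $\omega = \nu G_0^3 / L_0^3$ has explicit straight-line characteristics, the operator decouples as
\[
\GG(h)^{[q]}(u) = \int_{+\infty}^{0} h^{[q]}(u+s)\, e^{iq\omega s}\,ds,
\]
and the weight $e^{|q|\sigma}$ in the $\YY_{n,m}$-norm simply tags along under the sum. Everything therefore reduces to scalar oscillatory integrals along the characteristic path $\{u+s:s\ge 0\}\subset D^{s}_{\kk,\de}$, and the identity $\LL\circ\GG=\mathrm{Id}$ is immediate.

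Items (1)–(3) are obtained by direct estimation. The path of integration moves away from the two singularities $\pm i/3$ and towards $+\infty$, so I split at some threshold $\Re(u+s)=\rr$: on the near-singularity portion I bound $|h^{[q]}(u+s)|$ by $\|h^{[q]}\|_{n,m,q}\cdot |(u+s-i/3)(u+s+i/3)|^{-m}|e^{iq\phi_\h(u+s)}|$, and on the tail by $\|h^{[q]}\|_{n,m,q}\cdot |u+s|^{-n}$. Both integrals lose one unit of decay, giving the index shift $(n,m)\mapsto(n-1,m-1)$ in (1)–(2). The logarithmic loss of (3) appears precisely when $m=1$: the critical integral $\int |u+s-i/3|^{-1}\,ds$ across the near-singularity zone grows like the logarithm of the distance from the path to $i/3$, which by the definition~\eqref{def:DomainOuter} of $D^{s}_{\kk,\de}$ is of order $\kk G_0^{-3}$, hence the $\ln G_0$.

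The core gain in (4) and (5) is the fast oscillation $e^{iq\omega s}$, valid only for $q\neq 0$. One integration by parts gives
\[
\int_{+\infty}^{0} h^{[q]}(u+s)\, e^{iq\omega s}\,ds = -\frac{h^{[q]}(u)}{iq\omega} - \frac{1}{iq\omega}\int_{+\infty}^{0} \partial_u h^{[q]}(u+s)\, e^{iq\omega s}\,ds,
\]
so $|\GG(h)^{[q]}|$ picks up a factor $\omega^{-1}\simeq G_0^{-3}$; the boundary term at $s=+\infty$ vanishes by the decay hypothesis $n\geq 1$, and the $u$-derivative in the remainder is controlled by a Cauchy estimate on a slightly shrunken subdomain $D^{s}_{\kk',\de'}$. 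Under the zero-mean hypothesis of (4) the $q=0$ mode is absent, so the gain applies uniformly, yielding (4). For (5) the bound on $\pa_\ga\GG(h)$ is immediate from $\pa_\ga\GG(h)^{[q]}=iq\,\GG(h)^{[q]}$ combined with the above identity (the $q=0$ mode contributes zero), while the bound on $\pa_u\GG(h)$ follows from $\pa_u\GG(h)=h-\omega\,\pa_\ga\GG(h)$ (rewriting $\LL\GG=\mathrm{Id}$) together with $\omega G_0^{-3}=\nu/L_0^3=\OO(1)$. Item (6) is then an algebraic combination of (1)–(3) and (5).

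The conjugated estimates (7)–(8) require more delicate bookkeeping. The extra factor $e^{\mp i\phi_\h(u+s)}$ inside the integral modifies the local weights near $\pm i/3$: by Lemma~\ref{lem:HomoInfinity}, $e^{\mp i\phi_\h(u)}\sim ((u\mp i/3)/(u\pm i/3))^{\mp 1/2}$, so its insertion enhances the singularity at one of the two points by a half-power and weakens it at the other. Reconjugating by $e^{\pm i\phi_\h(u)}$ outside the integral compensates the half-power at the ``wrong'' singularity, and only the remaining one controls integrability. Propagating these half-integer corrections through the direct bounding argument of (1)–(3) yields the gain $m\mapsto m-1$ when $m>3/2$ and the logarithmic loss at the critical threshold $m=3/2$. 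The main technical obstacle of the lemma is precisely this half-integer bookkeeping combined with the need to keep the $s$-integration path bounded away from $\pm i/3$, which is exactly what the geometry of $D^{s}_{\kk,\de}$ is engineered to guarantee.
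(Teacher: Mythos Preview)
The paper does not give a proof of this lemma: it cites \cite{GuardiaMS16} for items (1)--(6) and remarks that (7)--(8) follow by the same reasoning once one uses the asymptotics of $e^{\mp i\phi_\h}$ from Lemma~\ref{lem:HomoInfinity}. So there is no detailed argument to compare your sketch against, and your outline for items (1)--(3), (6), (7), (8) is correct and standard.

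There is, however, a genuine gap in your treatment of item (4) (and hence of the $\pa_\ga$-estimate in (5), which you reduce to the same identity). After one integration by parts you obtain
\[
\GG(h)^{[q]}(u)=\frac{h^{[q]}(u)}{iq\omega}-\frac{1}{iq\omega}\,\GG(\pa_u h)^{[q]}(u),
\]
and you propose to control $\pa_u h^{[q]}$ by a Cauchy estimate on a shrunken domain $D^{s}_{\kk',\de'}$. But shrinking in $\kk$ by a fixed amount only buys you room of order $(\kk'-\kk)G_0^{-3}$ near the singularities, so Cauchy costs a factor $G_0^{3}$ there, and this exactly cancels the $\omega^{-1}\sim G_0^{-3}$ gain. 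Feeding $\pa_u h\in\YY_{n,m}$ (with the $G_0^{3}$ loss) back through item (2) lands you only in $\YY_{n-1,m-1}$, not in $\YY_{n,m}$; iterating the integration by parts is circular, since $\GG(\pa_u h)^{[q]}=h^{[q]}-iq\omega\,\GG(h)^{[q]}$ is just a rewriting of $\LL\GG=\Id$.

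What actually produces the clean $G_0^{-3}$ in the reference is contour deformation rather than integration by parts: for $q\neq 0$ one tilts the integration path $s\mapsto se^{i\theta}$ with $\mathrm{sign}\,\theta=\mathrm{sign}\,q$ and $|\theta|$ small enough (less than $\beta_1$, resp.\ $\beta_2$) that the tilted ray $u+se^{i\theta}$ stays inside $D^{s}_{\kk,\de}$. Along this ray $|e^{iq\omega se^{i\theta}}|=e^{-|q|\omega s\,|\sin\theta|}$ decays exponentially with rate $\sim G_0^{3}$, and since the cone-like geometry of $D^{s}_{\kk,\de}$ forces the ray to move away from the singularities, the weight ratio $W(u)/W(u+se^{i\theta})$ stays bounded. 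Integrating the exponential then yields the factor $(|q|\omega)^{-1}\sim G_0^{-3}$ directly, with no derivative of $h$ and no domain shrink needed. Your argument can alternatively be repaired by taking a Cauchy radius proportional to the distance from $u+s$ to $\pm i/3$ (equivalently, shrinking the opening angle $\beta_1$ rather than $\kk$), which yields $\pa_u h\in\YY_{n+1,m+1}$ without a $G_0$-loss; but as written, ``slightly shrunken $D^{s}_{\kk',\de'}$'' does not deliver this.
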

Claims 1 to 6 in this lemma are proved for $m>1$ in
\cite{GuardiaMS16}. The case $m=1$ can be proven analogously.  Claims 7 and 8 can be deduced analogously
taking into account the expression of $e^{\mp i \phi_h}$ given in Lemma~\ref{lem:HomoInfinity}

From this
result we can deduce the following lemma, which is a direct consequence of Lemmas \ref{lemma:Operator:1} and \ref{lemma:f1f2}.

%\marginpar{\scriptsize Although the lemma is right, it does not seem to follow from the previous one}

\begin{lemma}\label{lemma:Operator:2}
Consider $h\in\YY_{n,m,\vect}$ for $n>1$ and  $m\geq 3/2$. Then, the operator $\GG_A$ introduced in \eqref{def:OperadorGA} satisfies the following.
%\marginpar{\scriptsize Hem canviat el terme $\log G_0$ per $\log^2 G_0$!}
\begin{itemize}
 \item If $m>3/2$, $\GG_A\left(h\right)\in \XX_{n-1,m-1,\vect}$
and
$ \lln \GG_A\left(h\right)\rrn_{n-1,m-1,\vect}\lesssim  \|
h\|_{n,m,\vect}$,
\item If $m=3/2$, $\GG_A\left(h\right)\in \XX_{n-1,1/2,\vect}$
and $\lln \GG_A\left(h\right)\rrn_{n-1,1/2,\vect}\lesssim  \ln G_0\|
h\|_{n,3/2,\vect}$.
\end{itemize}
\end{lemma}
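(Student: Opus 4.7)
The plan is to estimate each of the four components of $\GG_A(h)$ separately in the appropriate slot of the weighted norm $\lln \cdot \rrn_{n-1,m-1,\vect}$ defined in~\eqref{def:Norm}, by combining the algebra properties of Lemma~\ref{lemma:banach:AlgebraProps}, the bounds on $f_1, f_2$ from Lemma~\ref{lemma:f1f2}, and the $\GG$-estimates of Lemma~\ref{lemma:Operator:1}. Recall that by the definition~\eqref{def:Norm} of the vector norm, the hypothesis $h=(h_1,h_2,h_3,h_4)\in \YY_{n,m,\vect}$ means precisely that $h_1\in \YY_{n+1,m+1}$, $h_2\in\YY_{n,m}$ and $h_3 e^{i\phi_\h},\, h_4 e^{-i\phi_\h}\in\YY_{n,m}$, with the sum of these four norms bounded by $\|h\|_{n,m,\vect}$. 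Analogously, the target estimate reduces to bounding the $Y$-slot in $\XX_{n,m}$, the $\La$-slot in $\XX_{n-1,m-1}$, and the weighted $\al$- and $\bet$-slots $(\cdot)\,e^{\pm i\phi_\h}$ in $\XX_{n-1,m-1}$.

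For the first two components, $\GG(h_1)$ and $\GG(h_2)$, the estimate is immediate from claim (6) of Lemma~\ref{lemma:Operator:1} applied respectively at indices $(n+1,m+1)$ (which lies in the regime $m+1>1$) and $(n,m)$ (which lies in the regime $m>1$ since $m\geq 3/2$). This produces, in both cases, the unweighted bound without any logarithmic loss. For the third and fourth components, I would treat each summand separately. The term $\GG(h_3)$ in the $\al$-slot should be rewritten as $\GG(h_3)\,e^{i\phi_\h}=e^{i\phi_\h}\GG(e^{-i\phi_\h}\cdot h_3 e^{i\phi_\h})$, so that $h_3 e^{i\phi_\h}\in\YY_{n,m}$ and claims~(7)--(8) of Lemma~\ref{lemma:Operator:1} apply directly: for $m>3/2$ one gets a clean bound by $\|h_3 e^{i\phi_\h}\|_{n,m}$, while for $m=3/2$ one picks up the factor $\ln G_0$ advertised in the statement. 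This is the only place where the logarithm arises, and it propagates to the final estimate exactly as in the statement. The analogous computation, with $e^{-i\phi_\h}$ in place of $e^{i\phi_\h}$, handles the corresponding term in the $\bet$-slot.

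The remaining terms in the $\al$- and $\bet$-slots have the nested form $\alo\GG(f_1\GG(h_1))$, $\alo\GG(f_2\GG(h_2))$ and $\alo\GG(f_2\GG(\beto h_3+\alo h_4))$ (and symmetric versions with $\beto$). For each of them, the plan is: (i) apply the inner $\GG$ using claim~(6), gaining the appropriate indices; (ii) multiply by $f_1\in\XX_{2/3,0}$ or $f_2\in\XX_{4/3,1/2}$ with the $G_0^{-1}$ gain coming from Lemma~\ref{lemma:f1f2}, using the algebra estimates of Lemma~\ref{lemma:banach:AlgebraProps} to combine weights; (iii) apply the outer $\GG$ together with the weight $e^{\pm i\phi_\h}$, exactly as in the treatment of $\GG(h_3)$ above. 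Because the inner $\GG$ has already improved the index by one, the outer $\GG$ can be applied in the non-critical regime and no additional logarithm appears from these nested terms. The uniform bound $|\alo|,|\beto|\leq 1/2$ (which holds in the parameter region of Theorem~\ref{thm:MainSplitting}) turns the prefactors into harmless constants absorbed in the $\lesssim$.

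Finally, the derivative part of the $\XX$-norm is handled by claim~(5) of Lemma~\ref{lemma:Operator:1}: both $\pa_u\GG$ and $G_0^3\pa_\ga\GG$ act as bounded operators on each $\YY_{n,m}$ without losing indices, and commute past the multiplications by $f_1,f_2,e^{\pm i\phi_\h}$ at the cost of additional terms of the same or better type (using $f_1',f_2'$, which enjoy the same structural estimates as $f_1,f_2$ by Lemma~\ref{lem:HomoInfinity}). Summing all contributions yields the two announced bounds. The main bookkeeping obstacle in this proof is keeping track of the $e^{\pm i\phi_\h}$ weights through the compositions and correctly distinguishing the critical case $m=3/2$ (where only claim~(8) and hence the factor $\ln G_0$ is available for the outermost weighted $\GG$) from the subcritical case $m>3/2$ (where claim~(7) yields the clean estimate).
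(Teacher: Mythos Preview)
Your proposal is correct and follows the same approach the paper implicitly uses: the paper's proof is a single sentence stating the lemma is a direct consequence of Lemmas~\ref{lemma:Operator:1} and~\ref{lemma:f1f2}, and your component-by-component breakdown via claims (2)--(8) of Lemma~\ref{lemma:Operator:1} together with the $f_1,f_2$ bounds is exactly the natural expansion of that sentence. The identification of the $\GG(h_3)$, $\GG(h_4)$ terms as the sole source of the $\ln G_0$ (through claim~(8) at the critical value $m=3/2$) is the key structural point, and you have it right.
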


\subsubsection{The fixed point argument: Proof of Theorem \ref{thm:ExistenceManiFixedPt}}\label{sec:proofexistencemani}
To prove the existence of a fixed point of the operator $\widetilde{\FF}$ defined in \eqref{eq:Ftilde}, we start by analyzing $\widetilde{\FF}(0)= (\Phi_A -\mathrm{Id})\delta z+\GG_A\circ F(\delta z)$ (see \eqref{eq:InvarianceVect} and \eqref{def:OperadorGA}) where $\delta z =(0,0,\delta_\eta,\delta_\xi)$. Given $\widetilde{Z}\in\mathcal{X}_{n,m,\vect}$ we denote by $Z=\delta z+ \widetilde{Z}$. Since $F$ has several terms we split $F(Z)=-\GG_1(Z)\pa_uZ-\GG_2(Z)\pa_\ga Z+\QQ(Z)$ (see \eqref{eq:InvarianceVect})  as $F=F^1+F^2+F^3$ where
\begin{align}
F^1(Z)&=\dps-\GG_1(Z)\pa_uZ-\GG_2(Z)\pa_\ga Z\label{def:Exist:RHS:1}\\
% \end{equation}
% the function $F^2$ is given by
% \begin{equation}\label{def:Exist:RHS:2}
% \begin{split}
F^2(Z)&=\dps\begin{pmatrix}\frac{\wh y_\h'(u) }{G_0\wh y_\h^3(u)}Y^2-f_1'(u)Yq-f_2'(u)\frac{q^2}{2}\\
 0 \\
 i \al\left(f_1(u)Y+f_2(u)\La-2f_2(u)\alo\beta-f_2(u)\al\beto-f_2(u)\al\beta\right)\\
 -i \bet\left(f_1(u)Y+f_2(u)\La-f_2(u)\alo\beta-2f_2(u)\al\beto-f_2(u)\al\beta\right)\end{pmatrix}\label{def:Exist:RHS:2}\\
% \end{split}
% \end{equation}
% \begin{equation}\label{def:Exist:RHS:3}
F^3(Z)&=\begin{pmatrix}-\partial_u \PP_1(u,\lo,\La,\al,\bet)\\-\partial_\ga \PP_1(u,\lo,\La,\al,\bet)\\
- i\partial_\beta \PP_1(u,\lo,\La,\al,\bet)\\ i\partial_\al \PP_1(u,\lo,\La,\al,\bet)\end{pmatrix}.\label{def:Exist:RHS:3}
\end{align}
and $q=\La-\alo\beta-\beto\al-\al\beta$. First we notice that $F^1(\delta z)=0$. Denoting by $|\delta z|=|\delta \eta|+|\delta\xi|$ it is straightforward to check, using Lemma \ref{lemma:f1f2}, that
\[
\left\lVert F^2(\delta z)\right\rVert_{4/3,1/2}\lesssim G_0^{-1}|\delta z|^2.
\]
On the other hand, by the bounds of $\PP_1$ in Lemma \ref{lem:P1k} (see the estimates \eqref{eq:boundsP1q}) and the estimates for $\wh r_\h$ and $\wh y_\h$ in Lemma \ref{lem:HomoInfinity}, one has  that $F^3(\delta z )\in \YY_{2,3/2,\vect}$ and
\[
 \left\|F^3(\delta z)\right\|_{2,3/2,\vect}\lesssim G_0^{-3}.
\]
Then, applying Lemma \ref{lemma:Operator:2} one obtains $\widetilde{\FF}(0)\in\XX_{1/3,1/2,\vect}$ and that there exists $b_0>0$ such that
\begin{equation}\label{def:Ftitlla0}
\lln \widetilde{\FF}(0)\rrn_{1/3,1/2,\vect}\leq \frac{b_0}{4}(G_0^{-2}+|\delta z|+|\delta z|^2)G_0^{-1}\ln G_0\leq \frac{b_0}{2} G_0^{-3}\ln G_0 .
\end{equation}
where we have used that 
\[
\left\lVert (\Phi_A-\mathrm{Id})\delta z \right\rVert_{1/3,1/2,\mathrm{vec}}\lesssim G_0^{-1} |\delta z|
\]
and the hypothesis $|\delta z|\lesssim G_0^{-3}$.
Next step is to prove that $\widetilde{\FF}$ is contractive in the ball
$B(b_0G_0^{-3}\ln G_0)\subset\XX_{1/3,1/2,\vect}$.
%Here it will be crucial that $\zeta_0G_0^{3/2}\ln^2 G_0\ll 1$.
For that we compute separately the Lipschitz constant of each of the terms $F^{i}(Z)$ for  $i=1,2,3$.

\begin{notation}
In the statements of the forthcoming lemmas, given an element $\widetilde{Z}\in \mathcal{X}_{1/3,1/2}$ we write $Z=\widetilde{Z}+\delta z$. 
\end{notation}
We also assume without mentioning that $|\delta\eta|,|\delta\xi|\lesssim G_0^{-3}$ and $\eta_0 G_0^{3/2}\ll1$.

\begin{lemma}\label{lemma:PropsP}
Consider $\widetilde{Z},\widetilde{Z}'\in \XX_{1/3,1/2,\vect}$ with $\lln \widetilde{Z}\rrn_{1/3,1/2,\vect}, \lln
\widetilde{Z}'\rrn_{1/3,1/2,\vect}\lesssim G_0^{-3}\ln G_0$. Then, the functions $\GG_1$ and $\GG_2$ introduced in \eqref{def:G1G2} satisfy that
\[
\begin{split}
 \|\GG_1(Z)\|_{2/3,1/2}, \|\GG_1(Z')\|_{2/3,1/2}&\lesssim G_0^{-4}\ln G_0\\
\|\GG_2(Z)\|_{1/3,1/2}, \|\GG_2(Z')\|_{1/3,1/2}&\lesssim \ln G_0
\end{split}
\]
and
\[
\begin{split}
 \| \GG_1(Z)-\GG_1(Z')\|_{2/3,1/2}&\lesssim G_0^{-1}\lln \widetilde{Z}-\widetilde{Z}'\rrn_{1/3,1/2,\vect}\\
\| \GG_2(Z)-\GG_2(Z')\|_{1/3,1/2}&\lesssim G_0^{3}\lln \widetilde{Z}-\widetilde{Z}'\rrn_{1/3,1/2,\vect}.
\end{split}
 \]
% Moreover, $\GG_2(Z)$ can be written as
% $\GG_2(Z)=\frac{1}{r_\h^2(u)}+\wt\GG_2(Z)$ and
% \[
%  \|\GG_2(Z)\|_{1,3/2}, \|\GG_2(Z')\|_{1,3/2}\lesssim G_0^{-3}.
% \]
%
 \end{lemma}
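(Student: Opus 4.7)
The plan is to establish the four estimates by decomposing $\GG_1$ and $\GG_2$ into their elementary summands and then bounding each piece via the algebra and inclusion properties of the Banach spaces $\YY_{n,m}$ (Lemma~\ref{lemma:banach:AlgebraProps}), combined with the decay/singularity profiles of $f_1,f_2$ (Lemma~\ref{lemma:f1f2}), of $\wh y_h$ (Lemma~\ref{lem:HomoInfinity}), and of the composition $\pa_\La\PP_1(\cdot,\cdot,Z)$ obtained as in Lemma~\ref{lem:CompositionManifolds}. The assumption $\lln\widetilde Z\rrn_{1/3,1/2,\vect}\lesssim G_0^{-3}\ln G_0$ together with $|\delta z|\lesssim G_0^{-3}$ and $|\alo|G_0^{3/2}\ll 1$ provide the small quantities needed to close every estimate.

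For the size bound on $\GG_1$, write $\GG_1(Z)=G_0^{-1}\wh y_h^{-2}\,Y+f_1(u)\,q$ with $q=\La-\alo\bet-\beto\al-\al\bet$. Since $\wh y_h^{-2}\sim u^{2/3}$ at infinity and $\sim(u\mp i/3)$ at the complex singularities (Lemma~\ref{lem:HomoInfinity}), the function $\wh y_h^{-2}$ has norm of order $1$ in the appropriate shifted space, so that $G_0^{-1}\wh y_h^{-2}Y$ lies in $\YY_{2/3,1/2}$ with norm $\lesssim G_0^{-1}\|Y\|_{4/3,3/2}\lesssim G_0^{-4}\ln G_0$. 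For $f_1\cdot q$, Lemma~\ref{lemma:f1f2} gives $\|f_1\|_{2/3,0}\lesssim G_0^{-1}$; multiplying by $\La\in\YY_{1/3,1/2}$ of norm $\lesssim G_0^{-3}\ln G_0$ and using the algebra property gives the required bound, while the quadratic pieces $\alo\bet$, $\beto\al$, $\al\bet$ are smaller by a factor $|\alo|\lesssim G_0^{-3/2}$ or by an extra copy of $\lln\widetilde Z\rrn$, after one reinterprets $\al=(\al e^{i\phi_h})e^{-i\phi_h}$ and $\bet=(\bet e^{-i\phi_h})e^{i\phi_h}$ using the controlled behaviour of $e^{\pm i\phi_h}$ from Lemma~\ref{lem:HomoInfinity}. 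For $\GG_2$, the key contribution is the potential piece
\[
\frac{G_0^3\nu}{(L_0+\La)^3}-\frac{G_0^3\nu}{L_0^3}=-\frac{3G_0^3\nu}{L_0^4}\La+\OO(G_0^3\La^2),
\]
which, since $\La$ has norm $\lesssim G_0^{-3}\ln G_0$ in $\YY_{1/3,1/2}$, contributes $\ln G_0$ in $\YY_{1/3,1/2}$ and dominates the remaining pieces $f_1Y$, $f_2 q$ (which are already $\OO(G_0^{-1}\ln G_0)$ after paying the inclusion cost $G_0^{3\eta}$) and $\pa_\La\PP_1(\cdot,\cdot,Z)$, whose norm in $\YY_{2,3/2}$ is $\lesssim G_0^{-3}$ by the analogue of Lemma~\ref{lem:CompositionManifolds} and hence is $\OO(1)$ in $\YY_{1/3,1/2}$.

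The Lipschitz estimates follow the same decomposition, now applied to the differences. For $\GG_1$, both summands are linear or bilinear in $Z$ and the algebra property yields that $\|\GG_1(Z)-\GG_1(Z')\|_{2/3,1/2}$ is controlled by $G_0^{-1}\lln\widetilde Z-\widetilde Z'\rrn_{1/3,1/2,\vect}$, the gain $G_0^{-1}$ coming uniformly from $\|G_0^{-1}\wh y_h^{-2}\|$ and $\|f_1\|_{2/3,0}$ together with the small factors $|\alo|,|\beto|\lesssim G_0^{-3/2}$ and $\|\widetilde Z\|\lesssim G_0^{-3}\ln G_0$ controlling the quadratic terms. For $\GG_2$ the situation is qualitatively different: the derivative of $G_0^3\nu(L_0+\La)^{-3}$ with respect to $\La$ is $-3G_0^3\nu(L_0+\La)^{-4}$, which is of size $G_0^3$; together with the inclusion cost $G_0^{3\cdot 1/2}=G_0^{3/2}$ needed to land in $\YY_{1/3,1/2}$ the resulting Lipschitz constant is bounded by $G_0^3$, and all other terms (those involving $f_1Y$, $f_2q$ and $\pa_\La\PP_1$) contribute strictly smaller factors. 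The main technical obstacle is this $G_0^3$ blow-up, which is unavoidable and in fact determines the admissible size of the ball in the fixed-point argument; handling it rigorously requires writing the difference $(L_0+\La)^{-3}-(L_0+\La')^{-3}$ as an integrated derivative in $\La$ and using $\|\La-\La'\|_{1/3,1/2}\le\lln\widetilde Z-\widetilde Z'\rrn_{1/3,1/2,\vect}$, plus the smallness $|\La|,|\La'|\lesssim G_0^{-3}\ln G_0$ so that $(L_0+\La)^{-4}$ remains bounded of order~$1$.
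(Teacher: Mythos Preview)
Your proof is correct and follows exactly the approach the paper indicates: the paper's own proof is a single sentence stating that the lemma is a direct consequence of the definition of $\GG_1,\GG_2$ together with Lemmas~\ref{lem:HomoInfinity} and~\ref{lemma:banach:AlgebraProps}, and you have simply written out those computations term by term.

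One small remark on presentation: in the Lipschitz bound for $\GG_2$ you write that the $G_0^3$ comes from the derivative $-3G_0^3\nu(L_0+\La)^{-4}$ ``together with the inclusion cost $G_0^{3/2}$''. There is in fact no inclusion cost here: since $|\La|\ll 1$, the factor $(L_0+\La)^{-4}$ is a bounded scalar multiplier (an element of $\YY_{0,0}$ with $\OO(1)$ norm), so multiplying $\La-\La'\in\YY_{1/3,1/2}$ by it stays in $\YY_{1/3,1/2}$ directly via the algebra property. The entire $G_0^3$ factor comes from the explicit prefactor $G_0^3\nu$. This does not affect the conclusion, which is right.
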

This lemma is a direct consequence of the definition of $\GG_1$ and $\GG_2$ in
\eqref{def:G1G2} and Lemmas \ref{lem:HomoInfinity}
and \ref{lemma:banach:AlgebraProps}.
We use this lemma to compute the Lipschitz constant of $F^1$.

% \begin{lemma}\label{lemma:LipschitzVarietats:F1}
% Assume $\zeta_0 G_0^{3/2} \ll1$.
% Consider $Z,Z'\in \XX_{1,1/2,\vect}$ with $\lln Z\rrn_{1,1/2,\vect}, \lln
% Z'\rrn_{1,1/2,\vect}\leq b_0 G_0^{-3}$. Then, the function $F^1$ introduced in
% \eqref{def:Exist:RHS:3} satisfies
% \[
% \lln \Phi_A\GG\left(\Phi_A\ii F^1(Z)\right)-\Phi_A\GG\left(\Phi_A\ii
% F^1(Z')\right)\rrn_{1,1/2,\vect}\lesssim G_0^{-3/2}|\ln G_0| \lln
% Z-Z'\rrn_{1,1/2,\vect}.
% \]
% \end{lemma}
\begin{lemma}\label{lemma:LipschitzVarietats:F1}
Consider $\widetilde{Z},\widetilde{Z}'\in \XX_{1/3,1/2,\vect}$ with $\lln \widetilde{Z}\rrn_{1/3,1/2,\vect}, \lln
\widetilde{Z}'\rrn_{1/3,1/2,\vect}\leq b_0 G_0^{-3}\ln G_0$. Then, the function $F^1$ introduced in
\eqref{def:Exist:RHS:1} satisfies
\[
\begin{split}
\| F^1(Z)\|_{5/3,2,\vect}&\lesssim G_0^{-6}\ln^2 G_0\\ 
\| F^1(Z)- F^1(Z')\|_{5/3,2,\vect}&\lesssim G_0^{-3}\ln G_0\lln
\widetilde{Z}-\widetilde{Z}'\rrn_{1/3,1/2,\vect}.
\end{split}
\]
\end{lemma}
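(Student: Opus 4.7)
The plan is to reduce this to a routine product estimate using Lemma \ref{lemma:PropsP} for the scalar factors $\GG_1,\GG_2$, combined with the algebra property of the norms (Lemma \ref{lemma:banach:AlgebraProps}) and the fact that the $\lln\cdot\rrn_{1/3,1/2,\vect}$ norm controls $\pa_u Z$ in $\YY_{4/3,3/2,\vect}$ and $\pa_\ga Z$ in $\YY_{4/3,3/2,\vect}$ with an extra gain of $G_0^{-3}$. Since $\delta z$ is constant in $(u,\ga)$, we have $\pa_u Z=\pa_u\widetilde Z$ and $\pa_\ga Z=\pa_\ga\widetilde Z$, so from $\lln\widetilde Z\rrn_{1/3,1/2,\vect}\leq b_0 G_0^{-3}\ln G_0$ we get
\[
 \|\pa_u Z\|_{4/3,3/2,\vect}\lesssim G_0^{-3}\ln G_0,\qquad \|\pa_\ga Z\|_{4/3,3/2,\vect}\lesssim G_0^{-6}\ln G_0.
\]

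For the first estimate, apply the algebra bound componentwise (a scalar in $\YY_{n',m'}$ times a vector in $\YY_{n,m,\vect}$ lies in $\YY_{n+n',m+m',\vect}$). Using the Lemma \ref{lemma:PropsP} bounds, the two pieces of $F^1$ give
\[
\|\GG_1(Z)\,\pa_u Z\|_{2,2,\vect}\lesssim G_0^{-4}\ln G_0\cdot G_0^{-3}\ln G_0=G_0^{-7}\ln^2G_0,
\]
\[
\|\GG_2(Z)\,\pa_\ga Z\|_{5/3,2,\vect}\lesssim \ln G_0\cdot G_0^{-6}\ln G_0=G_0^{-6}\ln^2G_0.
\]
The dominant contribution is the second, and the first is reabsorbed via the second bullet of Lemma \ref{lemma:banach:AlgebraProps} (passing from indices $(2,2)$ to $(5/3,2)$ only loses a constant), yielding $\|F^1(Z)\|_{5/3,2,\vect}\lesssim G_0^{-6}\ln^2G_0$.

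For the Lipschitz estimate, I will add and subtract cross terms in the usual way, writing
\[
\GG_j(Z)\pa_\ast Z-\GG_j(Z')\pa_\ast Z'=\bigl(\GG_j(Z)-\GG_j(Z')\bigr)\pa_\ast Z+\GG_j(Z')\bigl(\pa_\ast Z-\pa_\ast Z'\bigr),
\]
for $j=1,2$ and $\ast=u,\ga$, and control each factor separately by Lemma \ref{lemma:PropsP} and the bound $\|\pa_\ast(\widetilde Z-\widetilde Z')\|_{4/3,3/2,\vect}\le G_0^{-3\delta_{\ast,\ga}}\lln\widetilde Z-\widetilde Z'\rrn_{1/3,1/2,\vect}$ (with the $G_0^{-3}$ gain when $\ast=\ga$). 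The four resulting contributions are
\[
G_0^{-1}\cdot G_0^{-3}\ln G_0,\quad G_0^{-4}\ln G_0\cdot 1,\quad G_0^{3}\cdot G_0^{-6}\ln G_0,\quad \ln G_0\cdot G_0^{-3},
\]
times $\lln\widetilde Z-\widetilde Z'\rrn_{1/3,1/2,\vect}$, all of which are $\OO(G_0^{-3}\ln G_0)\,\lln\widetilde Z-\widetilde Z'\rrn_{1/3,1/2,\vect}$. The dominant contribution again comes from the $\GG_2$ term, which drives the final Lipschitz bound.

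No genuine obstacle is expected: the only mild subtlety is matching the weighted indices of the $\vect$-norm (different weights for the $Y$ component versus $\La,\al e^{i\phi_\h},\bet e^{-i\phi_\h}$), but since the scalar factors $\GG_1,\GG_2$ are multiplied against a vector $\pa_\ast Z$ with matching internal weights, the algebra property of Lemma \ref{lemma:banach:AlgebraProps} applies componentwise and the claimed indices $(5/3,2,\vect)$ are obtained directly.
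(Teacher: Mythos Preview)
Your proposal is correct and follows essentially the same approach as the paper: both use Lemma \ref{lemma:PropsP} for the sizes and Lipschitz constants of $\GG_1,\GG_2$, the algebra property of Lemma \ref{lemma:banach:AlgebraProps}, and the standard add-and-subtract decomposition of the difference. The paper's proof is terser (it writes out only the $\La$-component as a representative case), while you track the four contributions explicitly, but the argument is the same.
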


\begin{proof}
The first part plainly follows from the definition of $F^1$ and the estimates in Lemma \ref{lemma:PropsP}. We now obtain the result for the difference. For the second component can be written as
\[
\begin{split}
F^1_2(Z)-F^1_2(Z')=&
\,\left(\pa_u\La-\pa_u\La'\right)\GG_1(Z)+
\pa_u\La'\left(\GG_1(Z)-\GG_1(Z')\right)\\
  &\,
\left(\pa_\ga\La-\pa_\ga\La'\right)\GG_2(Z)+
\pa_\ga\La'\left(\GG_2(Z)-\GG_2(Z')\right).
 \end{split}
%  -\wt\LL(Z_1)(L_1-L_1')-\left(\wt\LL(Z_1)-\wt\LL(Z_1')\right)L_1'
\]
Then, the estimate for the second component is a consequence of  Lemmas \ref{lemma:banach:AlgebraProps} and  \ref{lemma:PropsP} and the fact that $\lln \widetilde{Z}\rrn_{1/3,1/2,\vect}, \lln
\widetilde{Z}'\rrn_{1/3,1/2,\vect}\leq b_0 G_0^{-3}\ln G_0$. The other components can be estimated analogously.
% \[
%  \begin{split}
%   \left\|\wt\LL(Z_1)(L_1-L_1')\right\|_{2,3/2}&\lesssim  \left\|\frac{\wt
% Y_1}{\wh y_h^2(u)}\pa_u (L_1-L_1')\right\|_{2,3/2}+ \left\|P\pa_\ga
% (L_1-L_1')\right\|_{2,3/2}\\
%   &\lesssim G_0^{-3/2}\lln L_1-L_1'\rrn_{1,1/2}.
%  \end{split}
% \]
% For the second one, using again Lemmas
% \ref{lem:HomoInfinity}, \ref{lem:HomoSing} and  \ref{lemma:PropsP},
% \[
%  \begin{split}
%   \left\|\left(\wt\LL(Z_1)-\wt\LL(Z_1')\right)L_1'\right\|_{2,3/2}&\lesssim
% \left\|\frac{\wt Y_1-\wt Y_1'}{\wh y_h^2(u)}\pa_u L_1'\right\|_{2,3/2}+
% \left\|(P(Z)-P(Z'))\pa_\ga L_1'\right\|_{2,3/2}\\
% &\lesssim  \left\|\wt Y_1-\wt Y_1'\right\|_{2,3/2}\left\|\frac{\pa_u
% L_1'}{\wh y_h^2(u)}\right\|_{0}+ \left\|P(Z)-P(Z')\right\|_{1,1/2}\left\|\pa_\ga
% L_1'\right\|_{1,1}\\
%   &\lesssim G_0^{-3/2}\lln Z_1-Z_1'\rrn_{1,1/2}
%  \end{split}
% \]
% Therefore,
% \[
% \left\| F^1_2(Z)-F^1_2(Z')\right\|_{2,5/2}\lesssim G_0^{-3/2}\lln
% Z_1-Z_1'\rrn_{1,1/2,\vect}
% \]
% Proceeding analogously, one can estimate the other components to obtain
% \[
% \left\| F^1(Z)-F^1(Z')\right\|_{2,3/2,\vect}\lesssim G_0^{-3/2}\lln
% Z_1-Z_1'\rrn_{1,1/2,\vect}
% \]
% Then, it only remains to apply Lemma \ref{lemma:Operator:2} to complete the proof of the lemma.
\end{proof}

\begin{lemma}\label{lemma:LipschitzVarietats:F2}
Consider $\widetilde{Z},\widetilde{Z}'\in \XX_{1/3,1/2,\vect}$ with $\lln \widetilde{Z}\rrn_{1/3,1/2,\vect}$, $\lln
\widetilde{Z}'\rrn_{1/3,1/2,\vect}\leq b_0 G_0^{-3}\ln G_0$. Then,
the function $F^2$ introduced in \eqref{def:Exist:RHS:2} satisfies
% %
% \marginpar{\scriptsize \bf ATENCI\'O: abans deia $G_0^{-5/2}|\ln G_0|$!!!}
%
\[
\begin{split}
\| F^2(Z) \|_{4/3,2, \vect}&\lesssim G_0^{-7}\ln^2 G_0\\
% \]
% and
% \[
\| F^2(Z)-F^2(Z')\|_{4/3,2, \vect}&\lesssim G_0^{-4}\ln G_0 \lln
\widetilde{Z}-\widetilde{Z}'\rrn_{1/3,1/2,\vect}.
\end{split}
\]
\end{lemma}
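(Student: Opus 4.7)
The plan is to bound each of the four components of $F^2$ separately in the corresponding weighted space and then combine them into the vector norm $\|\cdot\|_{4/3,2,\vect}$, which controls the $Y$-component in $\YY_{7/3,3}$ and the remaining three components (after multiplying by the appropriate $e^{\pm i\phi_\h}$) in $\YY_{4/3,2}$.

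For the \emph{size} estimate the key ingredients are as follows. From Lemma~\ref{lem:HomoInfinity} the ratio $\wh y_\h'/(G_0\wh y_\h^3)$ decays like $u^{-1/3}/G_0$ at infinity and is bounded by $G_0^{-1}$ near the singularities $u=\pm i/3$, so it lies in $\YY_{1/3,0}$ with norm $\lesssim G_0^{-1}$. From Lemma~\ref{lemma:f1f2} we have $\|f_1'\|_{5/3,1},\|f_2'\|_{7/3,3/2}\lesssim G_0^{-1}$. The hypothesis on $\lln\widetilde Z\rrn_{1/3,1/2,\vect}$, combined with the algebra property of Lemma~\ref{lemma:banach:AlgebraProps} and the smallness $|\alo|,|\beto|\lesssim G_0^{-3/2}$, gives $\|q\|_{1/3,1/2}\lesssim G_0^{-3}\ln G_0$. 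Applying the algebra property to each monomial of the first component of $F^2$, namely $\frac{\wh y_\h'}{G_0\wh y_\h^3}Y^2$, $f_1' Yq$ and $f_2'q^2$, places all three in $\YY_{7/3,3}$ with norm $\lesssim G_0^{-7}\ln^2 G_0$, after the admissible shifts of $(n,m)$ provided by the monotonicity clauses of Lemma~\ref{lemma:banach:AlgebraProps}. The third and fourth components are treated similarly: each summand is of the form $\al\cdot(\text{linear or bilinear in }Y,\La,\al,\bet)$, so after multiplying by $e^{i\phi_\h}$ and pairing $\|\al e^{i\phi_\h}\|_{1/3,1/2}\lesssim G_0^{-3}\ln G_0$ with a factor of the form $f_1 Y$ or $f_2 q$ (each of norm $\lesssim G_0^{-4}\ln G_0$) we land in $\YY_{4/3,2}$ with the same $\lesssim G_0^{-7}\ln^2 G_0$ bound.

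For the \emph{Lipschitz} bound I would use a standard telescoping identity at every quadratic or trilinear place: $Y^2-(Y')^2=(Y+Y')(Y-Y')$, $q^2-(q')^2=(q+q')(q-q')$, and for the $\al$-component
\[
\al(f_1 Y+f_2 q)-\al'(f_1 Y'+f_2 q')=(\al-\al')(f_1 Y+f_2 q)+\al'\bigl(f_1(Y-Y')+f_2(q-q')\bigr),
\]
and analogously for the $\bet$-component. Every such swap replaces one ``solution'' factor by the corresponding difference: the difference is controlled by $\lln\widetilde Z-\widetilde Z'\rrn_{1/3,1/2,\vect}$ in the relevant component-space, while the surviving factors are still bounded by $b_0 G_0^{-3}\ln G_0$. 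Quantitatively this trades one factor of the size bound $G_0^{-3}\ln G_0$ for the norm of the difference, yielding the advertised $G_0^{-4}\ln G_0$ Lipschitz constant.

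The \emph{main obstacle} is the careful bookkeeping of the exponents $(n,m)$ at each multiplication step, together with the conjugation factors $e^{\pm i\phi_\h}$ appearing in the weighted norm on $\al,\bet$. These factors are of unit modulus on the real axis but behave like $(u\mp i/3)^{\pm 1/2}$ near the complex singularities, so they must be tracked whenever $\al$ or $\bet$ appears multiplied by another object that is \emph{not} already conjugated (e.g.\ $f_2\bet$ rather than $f_2\bet\, e^{-i\phi_\h}$). This is precisely the same bookkeeping already carried out in the proof of Lemma~\ref{lemma:LipschitzVarietats:F1}, so the argument here is a direct adaptation of that one.
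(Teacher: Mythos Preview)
Your approach matches the paper's: estimate each component of $F^2$ separately via the algebra property of the weighted norms together with the bounds on $\wh y_\h'/(G_0\wh y_\h^3)$, $f_1'$, $f_2'$ (resp.\ $f_1$, $f_2$ for the third and fourth components), and use the telescoping identity for the Lipschitz bound. One small slip to watch: since $Z=\widetilde Z+\delta z$ with $\delta z=(0,0,\delta\eta,\delta\xi)$ a nonzero constant, the functions $q$ and $\al e^{i\phi_\h}$ carry nondecaying pieces at infinity and therefore lie only in $\YY_{0,1/2}$, not $\YY_{1/3,1/2}$ as you wrote; the paper indeed uses $\|q\|_{0,1/2}\lesssim |\delta z|+\|\widetilde Z\|_{0,1/2}\lesssim G_0^{-3}\ln G_0$, and the product estimates still land in $\YY_{7/3,3}$ and $\YY_{4/3,2}$ because $f_1,f_2,f_1',f_2'$ supply all the needed decay at infinity.
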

\begin{proof}
% Applying Lemma \ref{lemma:Operator:2}, we have that
% \[
% \begin{split}
% \lln F^2(Z)-F^2(Z')\rrn_{2,2, \vect}&=\lln \Phi_A\GG\Phi_A\ii\left(
% F^2(Z)-F^2(Z')\right)\rrn_{1,1/2,\vect}\\
% &\leq |\ln G_0| \| F^2( Z)-F^2(Z')\|_{2,3/2,\vect}.
% \end{split}
% \]
% So, it only remains to estimate $F^2( Z)-F^2(Z')$.

We recall that $F^2$ was defined in~\eqref{def:Exist:RHS:2}.
For the first component $F_1^2$ we obtain
\[
\| F_1^2(Z) \|_{7/3,3}\lesssim G_0^{-1} \left( \| Y\|^2_{4/3,3/2}+\| Y\|_{2/3,5/2} \|q\|_{0,1/2}+\|q\|^2_{0,1/2}\right) \lesssim G_0^{-7}\ln^2G_0,
\]
where we have used that $\|q \|_{0,1/2}\lesssim (|\delta z|+\| \tilde{Z}\|_{0,1/2})\lesssim G_0^{-3}\ln G_0$. On the other hand, for the difference
\[
\left\| F^2_1(Z)-F^2_1(Z')\right\|_{7/3,3}\lesssim G_0^{-4}  \ln G_0 \|
\widetilde{Z}-\widetilde{Z}'\|_{1/3,1/2,\vect}.
\]
%
%Note that $F_2^2$ can be written as
%\[
% F_3^2(Z)=\frac{iLa}{G_0 r_\h^2(u)}-\frac{ia^2b}{2G_0r_\h^2(u)}-\frac{ib_\h a^2}{2G_0 r_\h^2(u)}-\frac{ia_\h ab}{G_0r_\h^2(u)}
%\]
Similar computations lead to the following estimate for the third component
\[
 \left\| F^2_3(Z)\right\|_{4/3,2}\lesssim G_0^{-7}\ln G_0
\]
and to the bound for the difference
\[
 \left\| F^2_3(Z)-F^2_3(Z')\right\|_{4/3,2}\lesssim G_0^{-4}\ln G_0\|
\widetilde{Z}-\widetilde{Z}'\|_{1/3,1/2,\vect}
\]
Proceeding analogously one obtains the same estimate for $F^2_4$. Since $F_2^2=0$,
% we can conclude that
% \[
%  \left\| F^2(Z)-F^2(Z')\right\|_{2,3/2,\vect}\lesssim G_0^{-3/2} | \log G_0|^2\|
% Z_1-Z_1'\|_{1,1/2,\vect},
% \]
% from which
the claim follows.
\end{proof}

\begin{lemma}\label{lemma:LipschitzVarietats:F3}
Consider $\widetilde{Z},\widetilde{Z}'\in \XX_{1/3,1/2,\vect}$ with $\lln \widetilde{Z}\rrn_{1/3,1/2,\vect}, \lln
\widetilde{Z}'\rrn_{1/3,1/2,\vect}\leq b_0 G_0^{-3}\ln G_0$. Then, the function $F^3$ introduced in
\eqref{def:Exist:RHS:3} satisfies
\[
\begin{split}
\|F^3(Z)-F^3(0)\|_{2,2,\vect}&\lesssim G_0^{-6}\ln^2 G_0\\
% \]
% and
% \[
\|F^3(Z)-F^3(Z')\|_{2,2,\vect}&\lesssim G_0^{-3}\lln
\widetilde{Z}-\widetilde{Z}'\rrn_{1/3,1/2,\vect}.
\end{split}
\]
\end{lemma}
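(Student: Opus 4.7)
My plan is to derive both estimates as direct corollaries of Lemma~\ref{lem:CompositionManifolds}, in the same spirit as the proofs of Lemmas~\ref{lemma:LipschitzVarietats:F1} and~\ref{lemma:LipschitzVarietats:F2}. By its definition~\eqref{def:Exist:RHS:3}, $F^3(Z)$ coincides, up to reordering and factors of $\pm i$, with the composition vector $-\Delta\PP_1(Z,\cdot)$ appearing in Lemma~\ref{lem:CompositionManifolds}. Writing $Z = \delta z + \widetilde Z$ and $Z' = \delta z + \widetilde Z'$, the Lipschitz part of that lemma supplies
\[
\|F^3(Z) - F^3(Z')\|_{3, 2, \vect} \;\lesssim\; G_0^{-3}\, \|\widetilde Z - \widetilde Z'\|_{1, 1/2, \vect}.
\]
The elementary embedding $\|\cdot\|_{2, 2, \vect} \lesssim \|\cdot\|_{3, 2, \vect}$, which follows componentwise from Lemma~\ref{lemma:banach:AlgebraProps} (decreasing the first index by one is cost-free), transfers this bound to the required norm. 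Specializing to $\widetilde Z' = 0$, so that $Z' = \delta z$ and $F^3(0)$ is understood as $F^3(\delta z)$---consistent with the convention of~\eqref{def:Ftitlla0}---reduces the first inequality to controlling $\|\widetilde Z\|_{1, 1/2, \vect}$ by $G_0^{-3}\ln^2 G_0$.

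The non-trivial step is then to translate the hypothesis $\lln\widetilde Z\rrn_{1/3, 1/2, \vect} \leq b_0 G_0^{-3}\ln G_0$ (and analogously for differences) into a bound on $\|\widetilde Z\|_{1, 1/2, \vect}$. I would argue componentwise: the $Y$-slot of $\|\widetilde Z\|_{1/3, 1/2, \vect}$ already requires the index $(4/3, 3/2)$, and combining this with the derivative controls $\|\pa_u\widetilde Y\|_{7/3, 5/2}$ and $G_0^3\|\pa_\ga\widetilde Y\|_{7/3, 5/2}$ upgrades $\widetilde Y$ to the $(2, 3/2)$-slot via interpolation at infinity. The $\La$-slot benefits directly from the improved bound~\eqref{def:LambdaImprovedEstimate}. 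For the $\al,\bet$-slots, one performs a $\ga$-Fourier decomposition: the oscillating Fourier modes gain an extra factor $G_0^{-3}$ from the derivative control $G_0^3\|\pa_\ga\widetilde Z\|_{4/3, 3/2, \vect}$, while the $\ga$-average mode is covered by a borderline logarithmic estimate analogous to item~(3) of Lemma~\ref{lemma:Operator:1}. This last step is responsible for an extra $\ln G_0$ factor, which compounds with the $\ln G_0$ already in the hypothesis to yield the final $\ln^2 G_0$ in the bound.

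The main obstacle is exactly this passage from norm index $n = 1/3$ in the hypothesis to $n = 1$ on the right-hand side of the Lipschitz estimate of Lemma~\ref{lem:CompositionManifolds}. The elementary embeddings of Lemma~\ref{lemma:banach:AlgebraProps} do not bridge this gap because only decreasing the first index is cost-free. The fix requires exploiting the derivative controls built into $\lln\cdot\rrn_{1/3, 1/2, \vect}$, the improved $\widetilde\La$-estimate, and a Fourier-mode decomposition of the $\al, \bet$-components, at the cost of a single logarithmic factor which is the source of the $\ln^2 G_0$ term in the first inequality. The Lipschitz estimate for the difference $F^3(Z) - F^3(Z')$ is obtained by the same argument, now applied to the difference $\widetilde Z - \widetilde Z'$, which satisfies the same qualitative bounds.
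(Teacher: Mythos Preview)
Your argument has a genuine gap at the step you yourself flag as ``non-trivial'': upgrading $\|\widetilde Z\|_{1/3,1/2,\vect}$ to $\|\widetilde Z\|_{1,1/2,\vect}$. The index $n$ measures decay as $\Re u\to\infty$, and the derivative controls in $\lln\cdot\rrn_{1/3,1/2,\vect}$ are exactly consistent with decay $|u|^{-1/3}$: if $|f|\lesssim u^{-1/3}$ and $|\partial_u f|\lesssim u^{-4/3}$, integrating $\partial_u f$ from infinity only recovers $|f|\lesssim u^{-4/3+1}=u^{-1/3}$. The Fourier-mode trick does not help either: the $\gamma$-derivative control lives at index $(4/3,3/2)$, and passing back to $(1,1/2)$ costs $G_0^{3}$ when you lower $m$, so you land back at the original size. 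The improved $\widetilde\Lambda$ estimate~\eqref{def:LambdaImprovedEstimate} is still at first index $1/3$ and, besides, holds only for the fixed point $\widetilde Z^s$, not for an arbitrary element of the ball, so it cannot be used inside a Lipschitz argument.

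The fix is the one the paper uses implicitly: do not upgrade $\widetilde Z$ at all. The second derivatives $\partial_Z\partial_z\PP_1$ carry all the needed decay themselves --- by Lemma~\ref{lem:P1k} and Cauchy estimates they belong to $\YY_{2,3/2}$ (up to the $e^{\pm i\phi_\h}$ weights). Hence by the algebra property (Lemma~\ref{lemma:banach:AlgebraProps}) the mean-value expression $\int_0^1 D_Z\partial_z\PP_1(\cdot,\cdot,sZ+(1-s)Z')\,ds\cdot(Z-Z')$ lies in $\YY_{2+1/3,\,3/2+1/2}=\YY_{7/3,2}\hookrightarrow\YY_{2,2}$, and the Lipschitz bound of Lemma~\ref{lem:CompositionManifolds} holds with $\|Z-Z'\|_{1/3,1/2,\vect}$ on the right. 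This is what the paper means by ``direct consequence.'' There is also a small misreading in your proposal: $F^3(0)$ means $F^3$ at $Z=0$, not at $\delta z$. The paper handles this by writing $F^3(Z)-F^3(0)=\bigl(F^3(\delta z+\widetilde Z)-F^3(\delta z)\bigr)+\bigl(F^3(\delta z)-F^3(0)\bigr)$ and bounding the second piece directly by the mean value theorem together with Lemma~\ref{lem:P1k}, using $|\delta z|\lesssim G_0^{-3}$.
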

% \begin{proof}
% Applying Lemma \ref{lemma:Operator:2}, we have that
% \[
% \begin{split}
% \lln \Phi_A\GG\left(\Phi_A\ii F^3(Z)\right)-\Phi_A\GG\left(\Phi_A\ii
% F^3(Z')\right)\rrn_{1,1/2, \vect}&=\lln \Phi_A\GG\Phi_A\ii\left(
% F^3(Z)-F^3(Z')\right)\rrn_{1,1/2,\vect}\\
% &\leq |\ln G_0| \, \| F^3( Z)-F^3(Z')\|_{2,3/2,\vect}.
% \end{split}
% \]
% So, it only remains to estimate $F^3( Z)-F^3(Z')$.
% % Abusing notation, we consider the potential $\wh V$
% % in ?? as a function of $Z$.
% We start with the second component.
% Defining
% \[
%  A(\La,\al,\bet)=\left(\wh r_\h(u),L_0+\La,\frac{e^{i\phi_\h(u)}(\al_0+\al)+e^{-i\phi_\h(u)}(\bet_0+\bet)}{\sqrt{2}},\frac{e^{i\phi_\h(u)}(\al_0+\al)-e^{-i\phi_\h(u)}(\bet_0+\bet)}{\sqrt{2}}\right)
% \]
% one has
% \[
% F_2^3( Z)-F_2^3(Z')=G_0 \pa_\la \wh V\circ A(Z)-G_0 \pa_\la \wh V\circ A(Z')
% \]
% Now, it is enough to apply the mean value theorem and the estimates for the second derivatives  of
% the potential $\wh V$ given in ?? to obtain
% \[
% \|F_2^3( Z)-F_2^3(Z')\|_{2,5/2}\lesssim G_0^{-3/2} |\log G_0| \, \lln Z-Z'\rrn_{1,1/2,\vect}.
% \]
\begin{proof}
To prove the first statement, one can write 
\[
 F^3(Z)-F^3(0)=(F^3(\de z+\wt Z)-F^3(\de z))+(F^3(\de z)-F^3(0))
\]
For the first term in the right hand side, it  is enough to apply Lemma \ref{lem:CompositionManifolds}. To estimate the second term, one can use the mean value theorem and the estimates for the derivatives of $\PP_1$ given in Lemma \ref{lem:P1k} (and Cauchy estimates). The second statement in Lemma \ref{lemma:LipschitzVarietats:F3} is a direct consequence of Lemma \ref{lem:CompositionManifolds}.
% with $k=2$ and $R$ the functions $\pa_u\PP_1$, $\pa_\ga \PP_1$, $\pa_\al\PP_1$ and $\pa_\bet\PP_1$
%  (see \eqref{def:Exist:RHS:3}) and  the estimates of the Fourier coefficients of these funtions  in Lemma \ref{lem:P1k}.
\end{proof}

Now we are ready to prove Theorem \ref{thm:ExistenceManiFixedPt}.
\begin{proof}[Proof of Theorem \ref{thm:ExistenceManiFixedPt}]
Lemmas  \ref{lemma:LipschitzVarietats:F1}, \ref{lemma:LipschitzVarietats:F2} and \ref{lemma:LipschitzVarietats:F3} imply that
\[
\|F(Z)-F(Z')\|_{4/3,2,\vect}\lesssim G_0^{-3}\ln G_0 \lln
\widetilde{Z}-\widetilde{Z}'\rrn_{1/3,1/2,\vect}.
\]
for $\widetilde{Z},\widetilde{Z}'\in B(b_0 G_0^{-3})\subset \YY_{1/3,1/2,\vect}$. Therefore, applying Lemma \ref{lemma:Operator:2}, one has
\begin{equation}\label{def:LipschitzImproved}
\lln \widetilde{\FF}(\widetilde{Z})-\widetilde{\FF}(\widetilde{Z}')\rrn_{1/3,1,\vect}\lesssim G_0^{-3}\ln^2 G_0\lln
\widetilde{Z}-\widetilde{Z}'\rrn_{1/3,1/2,\vect},
\end{equation}
which implies
\[
\lln \widetilde{\FF}(\widetilde{Z})-\widetilde{\FF}(\widetilde{Z}')\rrn_{1/3,1/2,\vect}\lesssim G_0^{-3/2}\ln^2 G_0\lln
\widetilde{Z}-\widetilde{Z}'\rrn_{1/3,1/2,\vect}.
\]
Thus, for $G_0$ large enough, $\widetilde{\FF}$ is contractive from $B(b_0 G_0^{-3}\ln G_0)\subset \YY_{1/3,1/2,\vect}$ to itself with Lipschitz constant of size $\Lip\lesssim G_0^{-3/2}\ln^2 G_0$  and it has a unique fixed point $\widetilde{Z}^s$. Denote now by $Z^s=\delta z+\widetilde{Z}^s$. By definition of the operator $\mathcal{F}$ we have that 
\[
Z^s-\mathcal{F}(0)=\mathcal{G}_A(F(Z^s)-F(0))=\mathcal{G}_A F_1(Z^s)+\mathcal{G}_A F_2(Z^s)+ \mathcal{G}_A( F_3(Z^s)- F_3(0))
\]
so it follows from Lemmas  \ref{lemma:LipschitzVarietats:F1}, \ref{lemma:LipschitzVarietats:F2} and \ref{lemma:LipschitzVarietats:F3} that $Z^s-\mathcal{F}(0)\in \mathcal{X}_{1/3,1/2,\vect}$ and 
\[
\begin{split}
\| Z^s-\mathcal{F}(0)\|_{1/3,1/2,\vect} \lesssim& G_0^{3/2}\ln G_0 \|   F(Z^s)-F(0)\|_{1/3,1,\vect}\\
\lesssim & G_0^{3/2} \lln \mathcal{G}_A F_1(Z^s)+\mathcal{G}_A F_2(Z^s)+ \mathcal{G}_A( F_3(Z^s)- F_3(0))\lln_{1/3,1,\vect}\\
\lesssim&  G_0^{3/2} ( G_0^{-6}\ln^2 G_0+ G_0^{-7}\ln^2 G_0 + G_0^{-6}\ln^3 G_0)  \lesssim  G_0^{-9/2}\ln^3 G_0
\end{split}
\]
as was to be shown. Now it only remains to obtain the improved estimates for $\widetilde{\Lambda}^s$. Since it is a fixed point of $\widetilde{\FF}$, it can be written as
\[
 \widetilde{\La}^s= \widetilde{\FF}_2(0)+\left(\widetilde{\FF}_2(\widetilde{Z}^s)-\widetilde{\FF}_2(0)\right).
\]
For the second term, we use \eqref{def:LipschitzImproved} to obtain
\[
\lln \widetilde{\FF}_2(\widetilde{Z}^s)-\widetilde{\FF}_2(0)\rrn_{1/3,1}\lesssim  G_0^{-3}\ln^2 G_0 \lln \widetilde{Z}^s\rrn_{1/3,1/2}\lesssim G_0^{-6}\ln^3 G_0.
\]
Using \eqref{def:OperatorInfinity}, we write the first term  as $\widetilde{\FF}_2(0)=\GG(F_2(\delta z))$ where $F_2(\delta z)=-\partial_\ga \PP_1(u,\lo,0,\delta \eta,\delta\xi)$. Since $\langle F_2(\delta z)\rangle_\ga=0$ and satisfies $\|F_2(\delta z)\|_{2,3/2}\lesssim G_0^{-3}$ (see Lemma \ref{lem:P1k}), one can apply item 4 of Lemma \ref{lemma:Operator:1} one obtains
\[
 \lln\widetilde{\FF}_2(0)\rrn_{1,1}\lesssim G_0^{3/2} \lln\widetilde{\FF}_2(0)\rrn_{2,3/2}\lesssim  G_0^{-3/2}\left\|F_2(\delta z)\right\|_{2,3/2}\lesssim G_0^{-9/2}.
\]

% Lemmas \ref{lemma:PrimeraIteracioExitVar}, \ref{lemma:LipschitzVarietats:F1}, \ref{lemma:LipschitzVarietats:F2} and \ref{lemma:LipschitzVarietats:F3} imply that the
% operator $\FF$ defined in \eqref{def:OperatorInfinity}  is contractive from the ball $B(b_0G^{-3})\subset\YY_{1/2,\vect}$. Therefore, it has a unique fixed point. This completes the proof of the Theorem.
\end{proof}

\subsubsection{Proof of Proposition \ref{prop:InvManReal}}\label{sec:Proof:InvReal}
The proof of Theorem \ref{thm:ExistenceManiFixedPt} can be carried out in the same way in the smaller domain
\[
\wt{D}^{u}=\Big\{u\in\CC; |\Im u|<-\tan \beta_1\Re v+1/4,
|\Im u|>\tan \beta_2\Re u+1/6-\de\Big\}
\]
where all the points at a uniform distance from the singularities $u=\pm i/3$. In this case the perturbing potential $\PP_1$ can be easily bounded of order $G_0^{-3}$ with the norm \eqref{def:Norm} with $m=0$ for any $(\alo,\beto)$ satisfying $|\alo|,|\beto|\leq 1/2$. That is, without imposing condition \eqref{cond:eccentricity}. Note that now the weight at the singularities $u=\pm i/3$, measured by $m$, is harmless since the points  $\wt{D}^{u}$ are $\OO(1)$-far from them.
% Note that in $\wt{D}^{u}$ the norm \eqref{def:Norm} with $m=0$ is equivalent to the classical Fourier norm
% \[
%  \|h\|kkkk
% 

This  gives the estimates for the invariant manifolds. One can obtain the improved estimate for  $\La$  as has been done in the proof of Theorem \ref{thm:ExistenceManiFixedPt}. To obtain the estimates for the derivatives it is enough to apply Cauchy estimates. Note that in all the variables one can apply these estimates in disks of radius independent of $G_0$.

\subsection{Extension of the parametrization of the unstable manifold by the flow}\label{sec:InvManifold:ExtensionUnstable}
Theorem~\ref{thm:ExistenceManiFixedPt} gives a graph
parameterization $Z^s$ of the form  \eqref{def:Parameterizations} of the  stable invariant manifold of the periodic orbit $P_{\eta_0+\delta\eta,\xi_0+\delta\xi}$ as  a formal Fourier
series with analytic Fourier coefficients defined  in the domain
$D^{s}_{\rr_1,\kk_0,\de_0}\times \TT$.
% This parametrization is of the form
% \eqref{def:Parameterizations} and therefore are graphs over $r$ and $\ell$.

To compute the difference betweent the stable and unstable manifold, it is necessary to have the parameterizations
of both manifolds defined in a common (real) domain. However, since
$\wh y_\h(0) = 0$, it is not possible to extend these
parameterizations to a common domain containing a real interval (see \eqref{def:ChangeThroughHomo}).  Therefore, to compare them,  we extend the unstable manifold using a
different parametrization. We proceed analogously as in \cite{GuardiaMS16}.

% \subsection{From graphic parameterizations to
% flow--parameterizations}\label{sec:InvManifolds:Flow}
Theorem \ref{thm:ExistenceManiFixedPt} gives a paramerization $Z^s$ of the unstable invariant manifold of the periodic orbit $P_{\alo+\delta\eta,\beto+\delta\xi}$ associated to Hamiltonian \eqref{def:HamFinal} as a graph of the form
\begin{equation}\label{def:sigmaparam}
Z^s(u,\ga)=
%  \begin{pmatrix}
 \left(u, Y(u,\ga), \ga, \La(u,\ga), \al(u,\ga), \bet(u,\ga)\right).
%  \end{pmatrix}.
\end{equation}
The first step is to look for  a
change of variables of the form
\begin{equation}\label{def:ChangeToFlow}
\Id +g:(v,\xi)\mapsto
(u,\ga)=(v+g_1(v,\xi),\xi+g_2(v,\xi)),\,\,
% g=(g_1,g_2),
\end{equation}
 in
such a way that, applied to~\eqref{def:sigmaparam},
$ \tZ^s = Z^s \circ (\Id +g)$
satisfies the invariance equation
\begin{equation}\label{invariancebytheflow21}
 \Phi_t\left( \tZ^s (v,\xi)\right) = 
\tZ^s \left(v+t,\xi+\frac{\nu G_0^3}{L_0^3} t\right),
\end{equation}
where $\Phi_t$ is the flow associated to the
Hamiltonian system~\eqref{def:HamFinal}. Note that the composition is understood as  formal composition of
formal Fourier series
\[
% \begin{split}
 h\circ (\Id +g) (v,\xi) = h(v+g_1(v,\xi),\xi+g_2(v,\xi))
=\sum_{m=0}^\infty \frac{1}{m!}\sum_{n=0}^m \binom{m}{n}
 \pa_v^{m-n}  \pa_\xi^{n} h(v,\xi) g_1^{m-n}(v,\xi) g_2^{n}(v,\xi).
% \end{split}
\]
% Note that there is no need
% of applying a change of coordinates to the $\psi$ component since in Section
% \ref{sec:ParamInvariantMani} it has been chosen such that $\dot \psi=0$.
Denoting by $X$ the
associated vector field to Hamiltonian~\eqref{def:HamFinal} %(see \eqref{def:FinalVF}), this
equation \eqref{invariancebytheflow21} is
equivalent to
\begin{equation}\label{invariancebytheflow2}
\LL( \tZ^s) = X \circ  \tZ^s,
\end{equation}
where the operator $\LL$ is defined in~\eqref{def:OperadorL}.
% is understood to be acting on each component of~$\wh \Gamma^u$.

We want to obtain a parameterization of the unstable manifold of the form  $\wh \tZ^s$ in the domain $D
^\fl_{\kk,\de}\times \TT$,  where
\begin{equation}\label{def:DominiParamFlux}
% \begin{split}
D^\fl_{\kk,\de}=
\left\{v\in\CC; |\Im v|<\tan\beta_1\Re
v+1/3-\kk G_0^{-3},\,|\Im v|<-\tan\beta_2\Re v+1/6+\de\right\},
% \end{split}
\end{equation}
which can be seen in Figure~\ref{fig:DomFlux}.

We will relate the two types of the parameterization in the overlapping domain
\begin{equation}\label{def:DomainOverlap}
D^\ovr_{\kk,\de}=D^\fl_{\kk,\de}\cap D^s_{\kk,\de}.
\end{equation}
Proceeding as in \cite{GuardiaMS16}, one can obtain in this domain the change of coordinates \eqref{def:ChangeToFlow}. Abusing notation, we use the Banach space $\YY_{n,m}$ introduced in Section \ref{Sec:FixtPt}. Recall that the index $n$ referred to the decay at infinity and therefore it does not give any information in the compact domain $D^\ovr_{\kk,\de}$ and therefore we can just take $n=0$.

\begin{lemma}\label{lemma:GraphToFlow}
Let $\de_0$,  $\kk_0$ and  $\sigma_0$  be the constants
fixed in the statement of
Theorem~\ref{thm:ExistenceManiFixedPt}. Let $\sigma_1<\sigma_0$, $\de_1<\de_0$ and $\kk_1>\kk_0$  such that $(\log
\kk_1-\log \kk_0)/2 < \sigma_0 -\sigma_1$ be fixed. Then, for $G_0$
big enough, there exists a (not necessarily convergent) Fourier
series $g =(g_1,g_2) \in \YY_{0,\rr_2,\kk_1,\de_1,\sigma_1}\times
\YY_{0,\rr_2,\kk_1,\de_1,\sigma_1}$ satisfying
\[
\|g_1\|_{0,0}\leq b_2 G_0^{-4}, \qquad \|g_2\|_{0,0}\leq b_2 G_0^{-3/2},
\]
where $b_2>0$ is a constant independent of  $G_0$, such
that
$\tZ^s = Z^s \circ (\Id + g)$,
satisfies~\eqref{invariancebytheflow2}.
\end{lemma}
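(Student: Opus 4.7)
The plan is to recast \eqref{invariancebytheflow2} as a fixed point problem for $g$, using a right inverse $\wh\GG$ of $\LL$ adapted to $D^\fl_{\kk_1,\de_1}$ with matching boundary conditions coming from the overlap $D^\ovr$, and close a contraction argument in a ball of $\YY_{0,\rr_2,\kk_1,\de_1,\sigma_1}\times\YY_{0,\rr_2,\kk_1,\de_1,\sigma_1}$ calibrated to the different sizes expected for $g_1$ and $g_2$.

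\textbf{Invariance equation for $g$.} Applying the chain rule to $\tZ^s=Z^s\circ(\Id+g)$ together with \eqref{def:OperadorL} gives
\[
\LL\tZ^s = \bigl(\pa_u Z^s\circ(\Id+g)\bigr)(1+\LL g_1) + \bigl(\pa_\ga Z^s\circ(\Id+g)\bigr)(\omega+\LL g_2),\qquad \omega=\tfrac{\nu G_0^3}{L_0^3}.
\]
On the other hand, because $Z^s$ is an invariant graph, the field $X$ restricted to the manifold reads $X\circ Z^s = (\pa_u Z^s)V_1+(\pa_\ga Z^s)V_2$ with $V_1=\pa_Y\PP\circ Z^s$ and $V_2=\pa_\La\PP\circ Z^s$. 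Since $Z^s$ is small by Theorem \ref{thm:ExistenceManiFixedPt}, the pair $(\pa_u Z^s,\pa_\ga Z^s)$ stays a linearly independent basis and projecting \eqref{invariancebytheflow2} onto it yields the system
\[
\LL g_1 = (V_1-1)\circ(\Id+g),\qquad \LL g_2 = (V_2-\omega)\circ(\Id+g).
\]
From \eqref{eq:P0}--\eqref{def:Q0}, both right-hand sides are linear combinations of the small quantities $Y^s,\La^s,\al^s,\bet^s$ and of a derivative of $\PP_1$, and their sizes are controlled by the bounds of Theorem \ref{thm:ExistenceManiFixedPt} supplemented by the improved estimate \eqref{def:LambdaImprovedEstimate} for $\La^s$.

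\textbf{Fixed point.} I would build a right inverse $\wh\GG$ of $\LL$ on $D^\fl_{\kk_1,\de_1}$ by integrating along the characteristics of $\LL$ from the portion of $\pa D^\fl_{\kk_1,\de_1}$ lying inside $D^s_{\kk_0,\de_0}$, with the initial condition $g\equiv 0$ there, so that on the overlap $D^\ovr$ the two parameterizations agree up to this trivial reparameterization. Passing to $\ga$-Fourier coefficients as in \eqref{def:DerivativeFourierSeries}, $\wh\GG$ diagonalises as the inverse of $\pa_u+iq\omega$: the zero mode is integrated in $u$ and loses at worst a $\log G_0$ factor, while non-zero modes gain a factor $1/(q\omega)\sim G_0^{-3}$ from the fast frequency. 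The equation becomes $g=\wh\GG F(g)$, with $F(g)=\bigl((V_1-1)\circ(\Id+g),(V_2-\omega)\circ(\Id+g)\bigr)$, and is closed by contraction on
\[
B = \{g:\; \|g_1\|_{0,0}\le b_2 G_0^{-4},\ \|g_2\|_{0,0}\le b_2 G_0^{-3/2}\},
\]
the composition $h\circ(\Id+g)$ being handled by Taylor expansion together with the algebra structure of the norm (Lemma \ref{lemma:banach:AlgebraProps}), and the mapping properties of $\wh\GG$ being analogous to those of $\GG$ in Lemma \ref{lemma:Operator:1}.

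\textbf{Main obstacle.} The technical heart is precisely the composition $h\circ(\Id+g)$ in the weighted spaces with polynomial singularities at $u=\pm i/3$, where the norm uses the weights $(u-i/3)^m(u+i/3)^m$ and the factor $e^{-iq\phi_h(u)}$, the latter blowing up for complex $u$. Since the flow domain $D^\fl_{\kk_1,\de_1}$ approaches the singularities within distance $\kk_1 G_0^{-3}$, the shift by $g_1$ of size $G_0^{-4}$ must not eject points out of $D^s_{\kk_0,\de_0}$, forcing $\kk_1>\kk_0$, and the shift by $g_2$ in $\ga$ must be balanced against the loss of analyticity strip from $\sigma_0$ to $\sigma_1$ absorbed through the factor $e^{|q|\sigma}$ in the norm. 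The compatibility requirement $(\log\kk_1-\log\kk_0)/2<\sigma_0-\sigma_1$ in the statement encodes exactly this balance, and verifying it throughout the fixed point iteration (so that $F$ remains well defined and Lipschitz on $B$) is the main point to be checked.
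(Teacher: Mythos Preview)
Your reduction of~\eqref{invariancebytheflow2} to the pair of scalar equations
\[
\LL g_1 = (\partial_Y\PP\circ Z^s-1)\circ(\Id+g),\qquad
\LL g_2 = (\partial_\La\PP\circ Z^s-\omega)\circ(\Id+g),
\]
and the contraction scheme $g=\wh\GG F(g)$, is the right approach and is precisely what the paper has in mind (the paper does not spell out a proof but defers to the identical construction in~\cite{GuardiaMS16}). The sizes you predict, $G_0^{-4}$ for $g_1$ and $G_0^{-3/2}$ for $g_2$, come out exactly from $\GG_1(Z^s)$ and from the term $G_0^3\nu\bigl((L_0+\La^s)^{-3}-L_0^{-3}\bigr)$ together with~\eqref{def:LambdaImprovedEstimate}.

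There is, however, one genuine gap in your set-up: you place the fixed point on $D^\fl_{\kk_1,\de_1}$ and integrate characteristics from $\partial D^\fl\cap D^s$. This cannot work, because the right-hand side involves $Z^s\circ(\Id+g)$, and $Z^s$ is only defined on $D^s_{\kk_0,\de_0}$; as soon as a characteristic leaves $D^s$ the equation for $g$ is undefined. The lemma is stated (and must be proved) on the overlap $D^\ovr_{\kk_1,\de_1}=D^\fl\cap D^s$, where $(\Id+g)$ stays inside the domain of $Z^s$ after the $\kk_0\to\kk_1$, $\de_0\to\de_1$ shrinking. The extension of the flow-parameterization to the full $D^\fl$ is a separate step (Lemma~\ref{lemma:FlowExtension}) done directly on $\tZ^s$, not through $g$.

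Relatedly, your ``main obstacle'' is misdiagnosed. A short look at the inequalities defining $D^\ovr$ (and $D^\fl$) shows that these domains stay at a distance \emph{independent of $G_0$} from $u=\pm i/3$; this is why the paper can take $n=m=0$ in the norms here. So the composition $Z^s\circ(\Id+g)$ is benign: $g_1=\OO(G_0^{-4})$ shifts $v$ well inside $D^s_{\kk_0,\de_0}$, and the series in $g_2=\OO(G_0^{-3/2})$ is absorbed by the strip loss $\sigma_0-\sigma_1$. The hypothesis $(\log\kk_1-\log\kk_0)/2<\sigma_0-\sigma_1$ is a compatibility condition between the Fourier weight $e^{-iq\phi_\h}$ and the $e^{|q|\sigma}$ factor under domain restriction (it compensates the change in $|e^{-iq\phi_\h}|$ between $D^s_{\kk_0}$ and $D^s_{\kk_1}$), not a statement that the working domain is $\kk G_0^{-3}$-close to the singularities. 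Once you restrict the argument to $D^\ovr$, the contraction closes routinely.
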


Once we have obtained a parameterization $\tZ^s$ which satisfies \eqref{invariancebytheflow2} in the overlapping domain $D^\ovr_{\kk,\de}$ (see \eqref{def:DomainOverlap}), next step is to extend this parameterization to the domain $D^\fl_{\kk,\de}$ in \eqref{def:DominiParamFlux}. This extension is done through a fixed point argument and follows the same lines as the flow extension of \cite{GuardiaMS16} (Section 5.5.2). We  write the parameterization $ \tZ^s$ as $\tZ^s(v,\xi)=  \tZ^s_0(v,\xi)+ \tZ^s_1(v,\xi)$ with
\begin{equation}\label{def:FlowParam}
  \tZ^s_0= \begin{pmatrix} v\\0\\\xi\\0\\ 0 \\ 0\end{pmatrix},\qquad\tZ^s_1= \begin{pmatrix} G_0^{-1}\tU(v,\xi)\\ \tY (v,\xi)\\ G_0^3 \tGa(v,\xi)\\\tLa(v,\xi)\\ \tA(v,\xi) \\ \tB(v,\xi)\end{pmatrix}.
\end{equation}
The $G_0$--factors in the $u$ and  $\ga$ component is just to normalize the sizes. In the statement of the following lemma, abusing notation, we also use the Banach space $\YY_{n,m}$ introduced in Section \ref{Sec:FixtPt} referred to the domain $D^\fl_{\kk,\de}$. Since the domain $D^\fl_{\kk,\de}$ is compact and all points are at a distance independent of $G_0$ from the singularities $v=\pm i/3$, we can just take $n=m=0$ (all norms $\|\cdot\|_{n,m}$ are equivalent).

\begin{lemma}\label{lemma:FlowExtension}
Let $\kk_1$ , $\de_1$, $\sigma_1$ be the constants considered in Lemma \ref{lemma:GraphToFlow}. Then, there exists a solution of equation \eqref{invariancebytheflow2} of the form \eqref{def:FlowParam} (as a formal Fourier series) for $(v,\xi)\in D^\fl_{\kk_1,\de_1}\times\TT_{\sigma_1}$, whose Fourier coefficients are analytic continuation of those obtained in Lemma \ref{lemma:GraphToFlow}. Moreover, they satisfy $ \tZ^s_1\in \YY_{0,0}^6$ and
\[
\| \tU\|_{0,0}, \|\tY\|_{0,0}, \|  \tGa\|_{0,0}, \| \tLa\|_{0,0}, \| \tA\|_{0,0}, \|  \tB\|_{0,0}\lesssim G_0^{-3}.
\]
\end{lemma}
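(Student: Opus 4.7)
The plan is to derive an integral equation equivalent to the invariance equation \eqref{invariancebytheflow2}, using the characteristics of the operator $\LL$ in \eqref{def:OperadorL}, and to set up a fixed point scheme in the Banach space $\YY_{0,0}^6$ whose natural size is dictated by $\PP_1 = \OO(G_0^{-3})$. First, I would insert the ansatz \eqref{def:FlowParam} into \eqref{invariancebytheflow2}. Since the unperturbed flow at the unperturbed homoclinic $\{Y=\La=\al=\bet=0\}$ gives $(\dot u, \dot Y, \dot\gamma, \dot \La, \dot\al, \dot\bet) = (1, 0, \nu G_0^3/L_0^3, 0,0,0)$, one has $\LL(\tZ_0^s) = X(\tZ_0^s)$ identically in the McGehee variables, and the equation becomes
\begin{equation*}
\LL(\tZ_1^s) = X(\tZ_0^s + \tZ_1^s) - X(\tZ_0^s) =: \NNN(v,\xi, \tZ_1^s),
\end{equation*}
where $\NNN$ encodes the ``perturbation plus quadratic part'' of the vector field computed in Proposition \ref{prop:invariance}. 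Using Lemma \ref{lem:P1k} together with the fact that on $D^\fl_{\kk_1,\de_1}$ the parabolic denominators $\wh r_\h(v)$ and $\wh y_\h(v)$ stay bounded away from $0$ and $\infty$, one gets $\|\NNN(\cdot,\cdot,0)\|_{0,0} \lesssim G_0^{-3}$ and a Lipschitz bound $\|\NNN(\cdot,\cdot,W) - \NNN(\cdot,\cdot,W')\|_{0,0} \lesssim G_0^{-1}\|W-W'\|_{0,0}$ in a ball of radius $\OO(G_0^{-3})$ of the target space.

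Next I would invert $\LL$. Its characteristics through $(v,\xi)$ are $s \mapsto (v + s, \xi + \omega s)$ with $\omega = \nu G_0^3/L_0^3$, so given a suitable boundary point $s_*(v,\xi) \le 0$ in the overlap $D^\ovr_{\kk_1,\de_1}$ one defines
\begin{equation*}
\wt\GG(h)(v,\xi) = \int_{s_*(v,\xi)}^0 h(v+s, \xi + \omega s)\, ds.
\end{equation*}
The geometry of the domains \eqref{def:DominiParamFlux} and \eqref{def:DomainOverlap} shows that every $(v,\xi)\in D^\fl_{\kk_1,\de_1}\times\TT_{\sigma_1}$ can be reached from a point in $D^\ovr_{\kk_1,\de_1}\times\TT_{\sigma_1}$ by a backward characteristic of length $\OO(1)$, uniformly in $G_0$. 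Consequently $\wt\GG$ gains no power of $G_0$ but remains a bounded operator $\YY_{0,0}\to \YY_{0,0}$. Using this operator and the Fourier series $\tZ_1^s\!\mid_{D^\ovr}$ produced by Lemma \ref{lemma:GraphToFlow} (which we abbreviate by $\tZ_{\mathrm{init}}^s$), the extension must satisfy the fixed point equation
\begin{equation*}
\tZ_1^s = \TTT(\tZ_1^s) := \tZ_{\mathrm{init}}^s(v + s_*, \xi + \omega s_*) + \wt\GG\bigl(\NNN(\cdot,\cdot,\tZ_1^s)\bigr).
\end{equation*}

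The bounds $\|\tZ_{\mathrm{init}}^s\|_{0,0}\lesssim G_0^{-3}$ from Lemma \ref{lemma:GraphToFlow} combined with the estimates on $\NNN$ above imply that, for $G_0$ large enough, $\TTT$ maps the ball of radius $C G_0^{-3}$ in $\YY_{0,0}^6$ to itself and is a contraction with Lipschitz constant $\OO(G_0^{-1})$. The Banach fixed point theorem then produces $\tZ_1^s$ satisfying the required size estimates for each scalar component, and coefficient-wise analyticity is preserved by $\wt\GG$, so the resulting formal Fourier series coincides with the analytic continuation of $\tZ_1^s\!\mid_{D^\ovr}$ on the overlap (by uniqueness of the fixed point restricted to $D^\ovr$). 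The main technical nuisance, which I would address by a geometric inspection of \eqref{def:DomainOuter}, \eqref{def:DominiParamFlux} and \eqref{def:DomainOverlap}, is choosing the function $s_*(v,\xi)$ so that the trajectory $s\mapsto v+s$ stays inside $D^\fl_{\kk_1,\de_1}$ for $s\in [s_*(v,\xi),0]$ and ends on a piece of $\pa D^\fl_{\kk_1,\de_1}$ contained in $D^\ovr_{\kk_1,\de_1}$; granting this, all remaining steps are routine applications of the functional estimates developed in Section \ref{sec:ExistManifoldsLemmas}.
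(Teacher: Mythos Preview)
Your overall strategy---rewrite the invariance equation as an integral equation along the characteristics of $\LL$, impose the data produced by Lemma~\ref{lemma:GraphToFlow} on $D^\ovr_{\kk_1,\de_1}$, and close by a fixed point---is exactly the scheme the paper attributes to \cite{GuardiaMS16}. Two of your quantitative claims, however, are incorrect and leave real gaps.

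First, the assertion that $\wh y_\h(v)$ is bounded away from $0$ on $D^\fl_{\kk_1,\de_1}$ is false: one has $\wh y_\h(0)=0$ (see \eqref{def:CondicioInicialHomo}) and $0\in D^\fl_{\kk_1,\de_1}$. Hence the functions $f_1,f_2$ in \eqref{def:f1f2} and the term $Y/(G_0\wh y_\h^2(u))$ coming from $\partial_Y Q_0$ in \eqref{def:Q0} blow up at $v=0$, so the right-hand side $\NNN$ is \emph{not} bounded on $D^\fl_{\kk_1,\de_1}$ when expressed in the $(u,Y,\gamma,\La,\al,\bet)$ coordinates. Since the real characteristics must cross $v=0$ to connect $D^\ovr_{\kk_1,\de_1}\cap\RR\subset\{\Re v>0\}$ with the part of $D^\fl_{\kk_1,\de_1}$ in $\{\Re v<0\}$, no choice of $s_*$ avoids this. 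The argument in \cite{GuardiaMS16} (Proposition~5.20) integrates the flow in coordinates that are regular at the turning point and only afterwards recasts the result in the form \eqref{def:FlowParam}; you would need to do the same.

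Second, even away from $v=0$, the Lipschitz bound $\OO(G_0^{-1})$ is too optimistic. The $\tGa$-equation contains $\nu(L_0+\tLa)^{-3}-\nu L_0^{-3}$, whose $\tLa$-derivative is $\OO(1)$; conversely the $\tLa$-equation is $-\pa_\gamma\PP_1$ evaluated at $\gamma=\xi+G_0^3\tGa$, whose $\tGa$-derivative equals $-G_0^3\pa_\gamma^2\PP_1=\OO(1)$ by Lemma~\ref{lem:P1k}. So $\NNN$ has Lipschitz constant $\OO(1)$, and since $|s_*|=\OO(1)$, $\TTT$ is not a contraction in the plain $\YY_{0,0}^6$ norm. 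This part is easily repaired---use a norm weighted by $e^{-\mu s}$ along the characteristics, or equivalently the standard Picard--Gronwall argument on a bounded time interval---but it is not the contraction you claim.
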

The proof of this lemma is through a fixed point argument analogous to the one considered in the proof of Proposition 5.20 in \cite{GuardiaMS16}. This is a standard fixed point argument in the sense that the domain $D^\fl_{\kk,\de}$ is ``far'' from the singularities $v=\pm i/3$ (the distance to these points is independent of $G_0$). The only issue that one has to keep in mind that we are dealing with formal Fourier series and therefore  all equations (and the compositions of functions involved in them) have to be understood in terms of formal Fourier series.

Once we have obtained this flow parameterizations in $D^\fl_{\kk_1,\de_1}$, the last step is to switch back to the graph parameterization \eqref{def:Parameterizations}. We want the graph parameterization to be defined in the  following domain where we can compare the graph parameterizations of the stable and unstable invariant manifolds.
\begin{equation}\label{def:DominisRaros}
\begin{split}
D_{\kk,\de}=
&\left\{v\in\CC;\right.\left. |\Im v|<\tan\beta_1\Re
v+1/3-\kk G_0^{-3},
|\Im v|<-\tan\beta_1\Re v+1/3-\kk G_0^{-3},\right.\\
& \left.|\Im v|>-\tan\beta_2\Re v+1/6-\delta\right\},
\end{split}
\end{equation}
where  $\kappa\in (0,1/3)$, $\de\in (0,1/12)$  and
$\beta_1,\beta_2\in (0,\pi/2)$  are fixed  independently of~$G_0$ (see Figure \ref{fig:DomRaro}).
Therefore, this domain is not empty provided $G_0>1$.

\begin{figure}[H]
\begin{center}
\includegraphics[height=5cm]{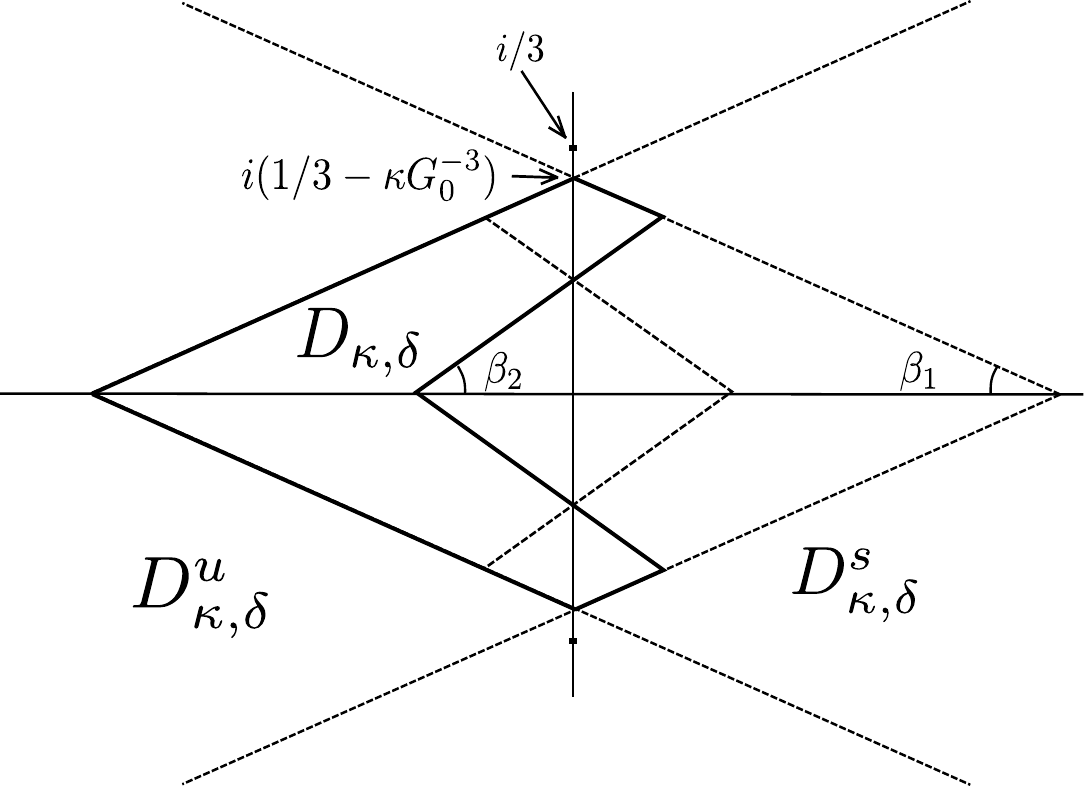}
\end{center}
\caption{The domains  $D_{\kk,\de}$ defined in
\eqref{def:DominisRaros}.}\label{fig:DomRaro}
\end{figure}

Note that,  Theorem \ref{thm:ExistenceManiFixedPt} gives already the graph parameterization $Z^s$ in the domain $D_{\kk,\de}\cap D^s_{\kk,\de}$. Now it only remains to show that they are also defined in the domain 
\begin{equation}\label{def:Domain:UnstableLastPiece}
\begin{split}
\wt D_{\kk,\de}=
\Big\{v\in\CC;\, &|\Im v|<\tan\beta_1\Re
v+1/3-\kk G_0^{-3}, |\Im v|>\tan\beta_2\Re v+1/6-\delta\\
& |\Im v|<\tan\beta_2\Re v+1/6+\delta\Big\}.
\end{split}
\end{equation}
(see Figure~\ref{fig:DomExtFinal}).
\begin{figure}[H]
\begin{center}
\includegraphics[height=5cm]{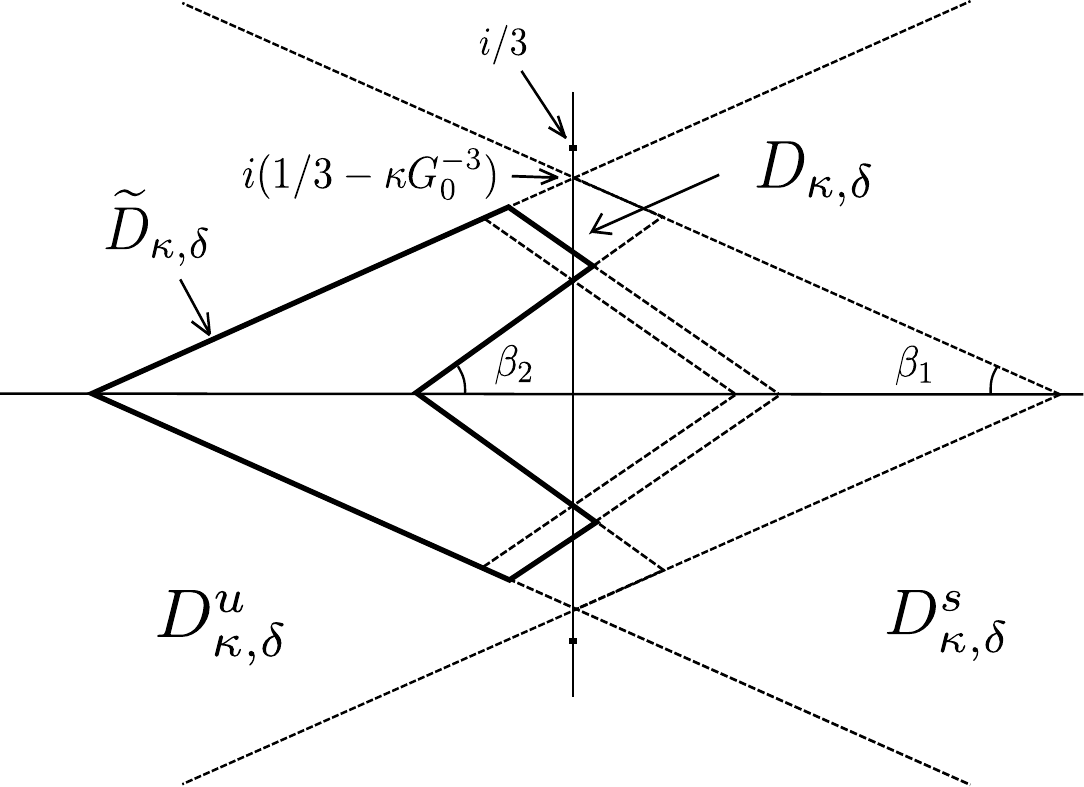}
\end{center}
\caption{The domain  $\wt D_{\kk,\de}$ defined in
\eqref{def:Domain:UnstableLastPiece}.}\label{fig:DomExtFinal}
\end{figure}
Indeed, it is easy to see that
\begin{equation*}
% \label{def:DomainsInclusions}
 D_{\kk,\de}\subset D^s_{\kk,\de}\cup\wt D_{\kk,\de}.
\end{equation*}
% \[
%  \wt D_{\kk,\de}=D_{\kk,\de}\cap D^\fl_{\kk,\de}
% \]
To this end, we look for a change of coordinates which transforms the flow--parameterization obtained in Lemma \ref{lemma:FlowExtension} to the graph parameterization \eqref{def:Parameterizations}. Note that this change is just the inverse of the change obtained in Lemma \ref{lemma:GraphToFlow} (in the domain where both are defined). The change we look for is just the inverse of
\begin{equation*}
% \label{def:InverseFlowToParam}
 \begin{split}
 u&=v+G_0^{-1}\tU(v,\xi)\\
 \ga&=\xi+G_0^3 \tGa(v,\xi)
 \end{split}
\end{equation*}

\begin{lemma}\label{lemma:FromParamToGraph}
Consider  the constants $\kk_1$, $\de_1$ and $\sigma_1$ considered in
Lemma~\ref{lemma:FlowExtension} and any $\kk_2>\kk_1$,
$\de_2>\de_1$ and $\sigma_2<\sigma_1$. Then,
\begin{itemize}
 \item  
 There exists a function $h=(h_1,h_2) \in \YY_{0,0}$ with
\[
\|h_1\|_{0,0} \le b_4 \mu G_0^{-4},\qquad \|h_2\|_{0,0} \le b_4 \mu G_0^{-1}.
\]
such that the change of coordinates $\Id+h$ is the inverse of the restriction of
the change given by Lemma \ref{lemma:GraphToFlow} to
the domain~$D^{s}_{\kk_1,\de_1} \cap \wt D_{\kk_2,\de_2}$.
\item 
Moreover,
\[
Z^s= \tZ^s \circ (\mathrm{Id}+h)
\]
defines a formal Fourier series which gives a parameterization of the stable invariant manifold as a graph, that is of the form \eqref{def:Parameterizations}. 
Then, in the domain $D_{\kk_2,\de_2}\times \TT_{\sigma_2}$ this parameterization satisfies
\[
\begin{aligned}
 \|Y\|_{0,3/2}&\lesssim G_0^{-3}\ln G_0, &\,\|\La\|_{0,3/2}&\lesssim& G_0^{-9/2}, \\
 \|\al e^{i\phi(u)}\|_{0,1/2}&\lesssim G_0^{-3}\ln G_0,&\, \|\bet e^{-i\phi(u)}\|_{0,1/2}&\lesssim& G_0^{-3}\ln G_0.
 \end{aligned}
\]
% of a generating function
% which can be written as $T^u=T_0+T_1^u$ where $T_0$ has been defined
% in~\eqref{def:T0} and $T_1$ satisfies
% \[
%  \left\|\pa_v^m\pa_\xi^nT_1^u\right\|_{0,\sigma_2}\leq K\mu G_0^{-4},
% \]
% for any $0\leq m+n\leq 1$.
\end{itemize}
\end{lemma}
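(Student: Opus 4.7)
The plan splits along the two bullets of the statement.

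\textbf{Stage 1 (inversion).} I would build $\mathrm{Id}+h$ by a standard contraction argument. Write $g=(g_1,g_2)$ for the change from Lemma \ref{lemma:GraphToFlow}, with $\|g_1\|_{0,0}\lesssim G_0^{-4}$ and $\|g_2\|_{0,0}\lesssim G_0^{-3/2}$ on the overlap domain $D^{\mathrm{ovr}}_{\kk_1,\de_1}\times\TT_{\sigma_1}$. Looking for the inverse in the form $(v,\xi)=(u+h_1(u,\gamma),\gamma+h_2(u,\gamma))$ reduces the problem to the fixed-point equation
\begin{equation*}
h_1=-g_1\circ(\mathrm{Id}+h),\qquad h_2=-g_2\circ(\mathrm{Id}+h),
\end{equation*}
which I would solve by Banach fixed point in a small ball of $\YY_{0,0}\times\YY_{0,0}$. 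After slightly shrinking the domain parameters from $(\kk_1,\de_1,\sigma_1)$ to $(\kk_2,\de_2,\sigma_2)$, Cauchy estimates for $\partial g_i$ yield the required contraction, and the quoted bounds on $h_1,h_2$ are inherited directly from those on $g_1,g_2$.

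\textbf{Stage 2 (graph form and norm bounds).} With $h$ in hand, set $Z^s:=\tilde Z^s\circ(\mathrm{Id}+h)$. The first and third components of $\tilde Z^s(v,\xi)$ are $v+g_1(v,\xi)$ and $\xi+g_2(v,\xi)$; substituting $(v,\xi)=(u+h_1,\gamma+h_2)$ and using the defining relation $h_i=-g_i\circ(\mathrm{Id}+h)$ collapses them identically to $u$ and $\gamma$. Hence $Z^s$ has the graph form \eqref{def:Parameterizations}. For the norm estimates on the target domain $D_{\kk_2,\de_2}\subset D^s_{\kk_1,\de_1}\cup\tilde D_{\kk_2,\de_2}$, I would split it in two pieces. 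On the portion intersecting $D^s_{\kk_1,\de_1}$, the parametrization $Z^s$ coincides with the one constructed in Theorem \ref{thm:ExistenceManiFixedPt}, because $\tilde Z^s=Z^s\circ(\mathrm{Id}+g)$ on the overlap and $\mathrm{Id}+h$ is the inverse of $\mathrm{Id}+g$; hence the stated bounds follow, including the $\log G_0$ factor in $Y$, the improved bound $\|\Lambda\|_{0,3/2}\lesssim G_0^{-9/2}$ coming from \eqref{def:LambdaImprovedEstimate}, and the $e^{\pm i\phi_\h}$-weighted bounds for $\alpha,\beta$. On the complementary piece $\tilde D_{\kk_2,\de_2}$, which is compact and bounded away from $v=\pm i/3$ by a $G_0$-independent distance, Lemma \ref{lemma:FlowExtension} gives $\|\tilde W\|_{0,0}\lesssim G_0^{-3}$ for each scalar component $\tilde W$; composition with the near-identity $\mathrm{Id}+h$ only perturbs these by a factor $1+O(G_0^{-3/2})$, the weight $(u\mp i/3)^{-m}$ is $O(1)$ on this compact region, and the $e^{\pm i\phi_\h}$ factors are harmless there. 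Gluing the two pieces produces the quoted bounds on $D_{\kk_2,\de_2}$.

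\textbf{Stage 3 (main obstacle).} The essential subtlety, and what I expect to require the most care, is that everything must be phrased for formal Fourier series, not for honestly convergent ones: for complex $u\in D_{\kk_2,\de_2}$ the weight $|e^{-iq\phi_\h(u)}|$ that enters the coefficient norms grows exponentially in $|q|$, so the full series representing $\tilde W\circ(\mathrm{Id}+h)$ does not converge pointwise. I would handle this via the termwise formal Taylor expansion
\begin{equation*}
\tilde W(u+h_1,\gamma+h_2)=\sum_{j,k\ge 0}\frac{1}{j!k!}\,\partial_v^j\partial_\xi^k\tilde W(u,\gamma)\,h_1^j h_2^k,
\end{equation*}
interpreted coefficient by coefficient in the $\gamma$-Fourier variable. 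Convergence in $\YY_{0,m}$ follows because $\|h_1\|_{0,0}\lesssim G_0^{-4}$ is far smaller than the $G_0^{-3}$ scale controlling the distance from $D_{\kk_2,\de_2}$ to the singularities $u=\pm i/3$, so Cauchy estimates applied to each Fourier coefficient $\tilde W^{[q]}$ propagate losslessly in the $m$-weight, while the shrinkage $\sigma_1\to\sigma_2$ in the Fourier width yields a factor $e^{-|q|(\sigma_1-\sigma_2)}$ that absorbs the losses arising from the $\xi$-derivatives. The same mechanism controls the $e^{\pm i\phi_\h(u+h_1)}$ factors that appear in converting bounds on the flow-component $\tilde A,\tilde B$ to the weighted bounds on $\alpha,\beta$: one writes $e^{\pm i\phi_\h(u+h_1)}=e^{\pm i\phi_\h(u)}\bigl(1+O(h_1\phi_\h'(u))\bigr)$, where the second factor is negligible because $h_1$ is much smaller than the distance to the singularities on $D_{\kk_2,\de_2}$.
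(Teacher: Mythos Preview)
The paper gives no explicit proof of this lemma; the surrounding text only says that one should invert the map $(v,\xi)\mapsto(v+G_0^{-1}\tU,\,\xi+G_0^3\tGa)$ produced by Lemma~\ref{lemma:FlowExtension}, and the details are implicitly deferred to the analogous construction in \cite{GuardiaMS16}. Your scheme---fixed-point inversion, composition, and gluing the estimates on $D^s_{\kk_1,\de_1}$ (from Theorem~\ref{thm:ExistenceManiFixedPt}) with those on $\wt D_{\kk_2,\de_2}$ (from Lemma~\ref{lemma:FlowExtension})---is exactly this implied approach, and your treatment of the formal Fourier composition is the right one (it is the content of Lemma~\ref{lemma:banach:Composition}). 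One notational point: the map you should invert is $(G_0^{-1}\tU,\,G_0^3\tGa)$, defined on all of $D^{\fl}_{\kk_1,\de_1}$, rather than $g$ from Lemma~\ref{lemma:GraphToFlow}, which lives only on the overlap $D^{\mathrm{ovr}}$; otherwise $h$ is not defined on $\wt D_{\kk_2,\de_2}\setminus D^{\mathrm{ovr}}$, where you later need it.

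There is one genuine gap in Stage~2. On $\wt D_{\kk_2,\de_2}$ you only invoke $\|\tLa\|_{0,0}\lesssim G_0^{-3}$ from Lemma~\ref{lemma:FlowExtension}; since that region is uniformly far from $\pm i/3$, the singular weight is $O(1)$ and you obtain at best $\|\Lambda\|_{0,3/2}\lesssim G_0^{-3}$ there, not the claimed $G_0^{-9/2}$. Closing this requires propagating the improved estimate~\eqref{def:LambdaImprovedEstimate} through the flow extension: since $\partial_\gamma\PP_0=0$, the invariance equation for $\tLa$ reads $\LL\tLa=-\partial_\gamma\PP_1\circ\tilde Z^s$, whose right-hand side has zero $\gamma$-average and size $G_0^{-3}$; the zero-average gain (Item~4 of Lemma~\ref{lemma:Operator:1}, or rather its compact-domain analogue) then yields $\|\tLa\|_{0,0}\lesssim G_0^{-6}$ from the source plus the boundary value $\lesssim G_0^{-9/2}$ inherited from the overlap, giving the required bound. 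This refinement is not contained in Lemma~\ref{lemma:FlowExtension} as stated, and your proposal skips it.
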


% Finally, we switch
% back to the original parametrization~\eqref{def:ParamViaHJ}, whose
% Fourier coefficients are defined in the domain $\wt
% D_{\kk,\de}=D^\fl_{\kk,\de}\cap D_{\kk,\de}$
% in~\eqref{def:Domain:UnstableLastPiece}. This domain does not
% contain~$v=0$ and equation~\eqref{eq:Q} is well defined. Since the
% inclusion~\eqref{def:DomainsInclusions} is satisfied, after this
% procedure we will have the Fourier coefficients of the
% parametrization of the unstable manifold $Q^u$ defined in the whole
% domain  $D_{\kk,\de}$.
%

\begin{figure}[H]
\begin{center}
\includegraphics[height=5cm]{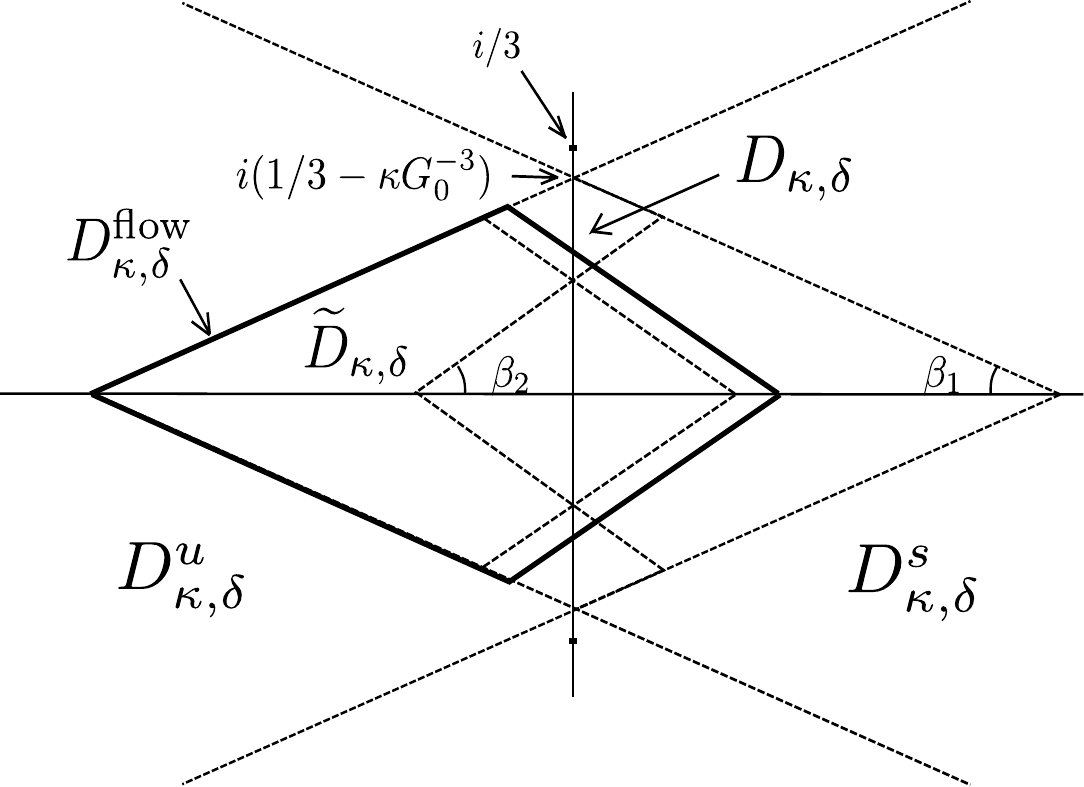}
\end{center}
\caption{The domain  $ D^\fl_{\kk,\de}$ defined in
\eqref{def:Domain:UnstableLastPiece}.}\label{fig:DomFlux}
\end{figure}

Taking $\delta z=0$ and proceeding analgously one obtains  analogous result to Theorems \ref{thm:ExistenceManiFixedPt} and Proposition \ref{prop:InvManReal} for the unstable manifold of the periodic orbit $P_{\eta_0,\xi_0}$. For the sake of clarity, we sum up the properties of the graph parameterizations of the unstable manifold of $P_{\eta_0,\xi_0}$ and the stable manifold of $P_{\eta_0+\delta\eta,\xi_0+\delta\xi}$ in the following Theorem.

\begin{theorem}\label{thm:parametrcommdomain}
Let $\delta z=(0,0,\delta\eta,\delta_\xi)$. Then, if $G_0\gg1$, $|\delta\eta|,|\delta\xi|\lesssim G_0^{-3}$ and 
\[
|\eta_0|G_0^{3/2}\ll 1, 
\]
the unstable manifold of the periodic orbit $P_{\eta_0,\xi_0}$ and the stable manifold of the periodic orbit $P_{\eta_0+\delta\eta,\xi_0+\delta\xi}$ admit graph parameterizations $Z^{u,s}:D_{\kappa,\delta}\to \mathbb{C}^4$ of the form \eqref{def:Parameterizations} which satisfy 
\[
\| Z^{s}-\mathcal{F}^{s}(0)\|_{0,1/2},\| Z^{u}-\mathcal{F}^{u}(0)\|_{0,1/2}\lesssim G_0^{-9/2}\ln^3 G_0
\]
where $\mathcal{F}^{s}$ is the operator defined in \eqref{def:OperatorInfinity} and $\mathcal{F}^{u}$ is defined analogously but taking $\delta z=0$. In particular, the estimates
\[
\begin{aligned}
 \|Y^*\|_{0,3/2}&\lesssim G_0^{-3}\ln G_0, &\,\|\La^*\|_{0,3/2}&\lesssim& G_0^{-9/2}, \\
 \|\al^* e^{i\phi(u)}\|_{0,1/2}&\lesssim G_0^{-3}\ln G_0,&\, \|\bet^* e^{-i\phi(u)}\|_{0,1/2}&\lesssim& G_0^{-3}\ln G_0.
 \end{aligned}
\]
and 
\[
 \| \La^*\|_{0,1}\lesssim G_0^{-9/2}
\]
hold for $*=u,s$.

Moreover these parameterizations admit an analytic continuation to the 
and for $N\geq 0$,
\[
|D^N( Z^s-\FF^s(0))|,|D^N( Z^u-\FF^u(0))|\lesssim G_0^{-6},
% |D^N Y^*|\leq CG_0^{-3}, \quad |D^N \Lambda^*|\leq CG_0^{-6},\quad  |D^N \al^*|\leq CG_0^{-3},\quad  |D^N \beta^*|\leq CG_0^{-3}
\]
where  $D^N$ denotes the differential of order $N$ with respect to the variables $(u,\ga,\alo,\beto)$ and $C$ is a constant which may depend on $N$ but independent of $G_0$.
\end{theorem}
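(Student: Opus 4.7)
The plan is to assemble the theorem directly from the ingredients already established in Sections \ref{Sec:FixtPt} and \ref{sec:InvManifold:ExtensionUnstable}, treating separately the stable and unstable cases and then matching estimates on the common domain $D_{\kappa,\delta}$.

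First, for the stable manifold of $P_{\eta_0+\delta\eta,\xi_0+\delta\xi}$, Theorem~\ref{thm:ExistenceManiFixedPt} supplies, under the hypotheses $G_0\gg 1$, $|\delta\eta|,|\delta\xi|\lesssim G_0^{-3}$ and $|\eta_0|G_0^{3/2}\ll 1$, a fixed point $Z^s=\delta z+\widetilde Z^s$ of $\FF^s$ in the Banach space $\XX_{1/3,1/2,\vect}$ on the domain $D^{s}_{\kappa,\delta}\times\TT$, with $\lln Z^s-\FF^s(0)\rrn_{1/3,1/2,\vect}\lesssim G_0^{-9/2}\log^3 G_0$ and the sharper bound \eqref{def:LambdaImprovedEstimate} for $\widetilde\La^s$. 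Since the $\|\cdot\|_{0,1/2}$-norm is controlled by $\lln\cdot\rrn_{1/3,1/2,\vect}$ in the restricted subdomain $D_{\kappa,\delta}\cap D^s_{\kappa,\delta}$ (all points being at distance $\gtrsim 1$ from infinity), the component-wise estimates follow for this piece of the domain. For the unstable manifold of $P_{\eta_0,\xi_0}$, we invoke the same theorem with $\delta z=0$, producing $Z^u$ on $D^{u}_{\kappa,\delta}\times\TT$ with identical bounds.

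Next, we extend each parametrization to the remaining piece of $D_{\kappa,\delta}$. Using the inclusion $D_{\kappa,\delta}\subset D^{s}_{\kappa,\delta}\cup\widetilde D_{\kappa,\delta}$ (and its mirror image for the unstable case), what is missing is the extension across $\widetilde D_{\kappa,\delta}$. We carry out the three-step procedure of Section~\ref{sec:InvManifold:ExtensionUnstable}: (i) Lemma~\ref{lemma:GraphToFlow} produces a change of coordinates $\Id+g$ converting the graph parametrization into a flow parametrization $\widetilde Z^s$ on the overlap $D^{\mathrm{ovr}}_{\kappa,\delta}$, satisfying the invariance equation~\eqref{invariancebytheflow2}; (ii) Lemma~\ref{lemma:FlowExtension} extends this flow parametrization, by a standard contraction argument far from the singularities $v=\pm i/3$, to all of $D^{\fl}_{\kappa,\delta}\times\TT_{\sigma_1}$ with the uniform $G_0^{-3}$ bounds stated there; (iii) Lemma~\ref{lemma:FromParamToGraph} produces the inverse change $\Id+h$ and returns a graph parametrization $Z^s=\widetilde Z^s\circ(\Id+h)$ on the enlarged domain $D_{\kappa_2,\delta_2}$. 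Composition of formal Fourier series with the small shifts $g$ and $h$ preserves the $\|\cdot\|_{0,m}$ estimates up to constants, so the bounds displayed in the statement are inherited on the whole of $D_{\kappa,\delta}$. The improved estimate $\|\La^*\|_{0,1}\lesssim G_0^{-9/2}$ follows because $\widetilde\La^s$ already satisfies the better bound \eqref{def:LambdaImprovedEstimate} coming from the cancellation $\langle F_2(\delta z)\rangle_\gamma=0$, and this cancellation is preserved by the flow extension since the $\La$-component is essentially driven by the same averaged forcing. The unstable case is identical after reversing time.

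Finally, the analytic continuation and the $C^N$-estimates \eqref{def:NderivManifolds}  follow from Proposition~\ref{prop:InvManReal} (and its unstable analogue). Indeed, the fixed-point argument of Theorem~\ref{thm:ExistenceManiFixedPt} can be run in the smaller subdomain of $D^s_{\kappa,\delta}$ which stays at an $\OO(1)$-distance from the singularities $u=\pm i/3$; there the condition $|\eta_0|G_0^{3/2}\ll 1$ is no longer required, and the potential $\PP_1$ can be bounded of order $G_0^{-3}$ in norms with $m=0$ on complex polydisks in $(\alo,\beto)$ of radius $1/2$. Applying Cauchy estimates in the $(u,\gamma,\alo,\beto)$-variables on these polydisks yields the derivative bound $|D^N(Z^s-\FF^s(0))|\lesssim G_0^{-6}$, with a constant depending on $N$. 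The main subtlety, and the only non-routine point, is to verify that the flow extension and the coordinate changes in Lemmas~\ref{lemma:GraphToFlow}--\ref{lemma:FromParamToGraph} preserve analyticity and the Cauchy-type derivative estimates in the parameter $(\alo,\beto)$; this follows because the shifts $g$ and $h$ are themselves analytic in the same polydisks and of size at most $G_0^{-3/2}$, hence they stay in the domain of definition and their Taylor expansions converge absolutely. This completes the proof.
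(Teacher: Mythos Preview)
Your proposal is correct and follows essentially the same approach as the paper: the theorem is stated there explicitly as a summary of the results already obtained in Theorem~\ref{thm:ExistenceManiFixedPt}, Proposition~\ref{prop:InvManReal}, and the flow-extension Lemmas~\ref{lemma:GraphToFlow}--\ref{lemma:FromParamToGraph}, with the unstable case obtained by setting $\delta z=0$ and arguing symmetrically. Your assembly of these ingredients, including the passage from the $\lln\cdot\rrn_{1/3,1/2,\vect}$ norm on $D^s_{\kappa,\delta}$ to the $\|\cdot\|_{0,m}$ norms on the compact domain $D_{\kappa,\delta}$ and the Cauchy estimates in $(\eta_0,\xi_0)$ for the derivative bounds, matches the paper's intended argument.
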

% 
% To prove the second part is enough to apply the extension procedure 
% just explained starting from the domains \eqref{def:domainsInvReal} (and apply Cauchy estimates to have bounds for the derivatives).

\section{Proof of Theorem \ref{thm:MainSplitting}: The difference between the invariant manifolds of infinity}
% $ \wt {P}_{\alpha_0, \beta_0}$}
\label{sec:Difference}
This section is devoted to prove Theorem \ref{thm:MainSplitting}.
Once we have obtained the parametrization of the  invariant
manifolds (as formal Fourier series) up to  points $\OO(G_0^{-3})$
close to the singularities $u=\pm i/3$ in Theorem \ref{thm:parametrcommdomain},
the next step is to study
their difference. We fix $(L_0, \alo, \beto)$ and
consider the parameterization $Z^u$ of the unstable manifold of the periodic orbit $P_{\alo,\beto}$ and the parameterization $Z^s$ of the stable manifold of the periodic orbit $P_{\alo+\delta\eta,\beto+\delta\xi}$.

We define then the difference vector $\Delta=\Delta(u,\ga)$ as
\begin{equation}\label{def:difference}
 \Delta=\left(Y^u- Y^s, \La^u-
\La^s, \al^u- \al^s,\bet^u-
\bet^s\right)^T.
\end{equation}
The
Fourier coefficients of  $\Delta$ are defined in the domain $D_{\kk,\de}$ introduced in \eqref{def:DominisRaros}.
% \begin{equation}\label{def:DominisRaros}
% \begin{split}
% D_{\kk,\de}=
% &\left\{v\in\CC;\right.\left. |\Im v|<\tan\beta_1\Re
% v+1/3-\kk G_0^{-3},
% |\Im v|<-\tan\beta_1\Re v+1/3-\kk G_0^{-3},\right.\\
% & \left.|\Im v|>-\tan\beta_2\Re v+1/6-\delta\right\},
% \end{split}
% \end{equation}
% where  $\kappa\in (0,1/3)$, $\de\in (0,1/12)$  and
% $\beta_1,\beta_2\in (0,\pi/2)$  are fixed  independently of~$G_0$.
% Therefore, this domain is not empty provided $G_0>1$.

Using the equations for $(Y^*, \La^*, \al^*, \bet^*)$ for $*=u,s$
in
\eqref{eq:InvarianceVect}, we have that $\Delta$ satisfies an equation of the
form
\begin{equation}\label{eq:Diffeq}
\wt\LL\Delta=A \Delta+B\Delta +R\Delta,
\end{equation}
where $\wt\LL$ is the linear operator
\begin{equation}\label{def:operatorLtilde}
\wt\LL=\LL+\GG_1(Z^u)\pa_u+\GG_2(Z^u)\pa_\ga,
\end{equation}
$\GG_1, \GG_2$ are the operators defined in \eqref{def:G1G2}, $A$ is the matrix introduced in
\eqref{def:A} and $B$ and $R$ are matrices which depend on $Z^u$ and
$Z^s$ and its derivatives and are expected to be small compared  to $A$. 
The matrix $B$ has only one non-zero term,
\begin{equation}\label{def:MatrixB}
\begin{split}
B_{21}&=-\frac{\pa_u \La^s}{G_0\wh y_\h^2}+f_1(u)\pa_\ga\Lambda^s\\
B_{ij}&=0\qquad \text{ otherwise}.
\end{split}
\end{equation}
where $f_1$ is the function introduced in \eqref{def:f1f2}. The matrix $R$ is defined as follows
% we use the functions $F^2$ and $F^3$ introduced in \eqref{def:Exist:RHS:2} and
% \eqref{def:Exist:RHS:3}. Then,
\begin{equation}\label{def:matrixR}
\begin{aligned}
 R(u,\ga)=&\,\int_0^1 D_Z \QQ\left(u,\ga,
sZ^u(u,\ga)+(1-s)Z^s(u,\ga)\right)ds\\
% &\,+\int_0^1 D_Z F^3\left(u,\ga,
% sZ^u(u,\ga)+(1-s)Z^s(u,\ga)\right)ds\\
 &\,-\pa_u Z^s(u,\ga)\int_0^1 D_Z \GG_1\left(u,\ga,
sZ^u(u,\ga)+(1-s)Z^s(u,\ga)\right)ds\\
&\,-\pa_\ga Z^s(u,\ga)\int_0^1 D_Z \GG_2\left(u,\ga,
sZ^u(u,\ga)+(1-s)Z^s(u,\ga)\right)ds-B,
\end{aligned}
 \end{equation}
where $\QQ$ is the function introduced in \eqref{def:Q}.  
The reason for defining the matrix $B$ and not putting all terms together in $R$ will be clear later. 
Note that $R$ satisfies
\[
 R_{21}=0.
\]
Roughly speaking, the first order of such equation is
$\LL\Delta=A\Delta$. To give an heuristic idea of the proof let us assume that $\Delta$ is a solution of this equation
instead of \eqref{eq:Diffeq}. 
Then, one can easily check that $\Delta$ must be
of the form
\[
 \Delta=\Phi_A C
\]
where $\Phi_A $ is the fundamental matrix introduced in
\eqref{def:FundamentalMatrix} (actually a suitable modification of it) and $C(u,\ga)$ is a vector whose $\ga$-- Fourier
coefficients are defined (and bounded) in $D_{\kk,\de}$ and satisfying $\LL
C=0$. Then, one can  show that for real values of the parameters the function
$C$ (minus its average with respect to $\ga$) is exponentially small (see Lemma
\ref{lemma:Lazutkin}).
%
% one can use the following lemma, which asserts that functions belong to the
% kernel of $\LL$ bounded in a complex domain are exponentially small for real
% values (once substracted the average with respect to $\ga$).

Now, $\Delta$ is a solution of~\eqref{eq:Diffeq} instead of
$\LL\Delta=A\Delta$. 
Thus, to apply Lemma~\ref{lemma:Lazutkin} we adapt these
ideas. 
We do this in several steps. 
First, in Section~\ref{sec:diff:banach} we
describe the functional setting. 
Then, in Section~\ref{sec:change:straighteningoperator} we perform a symplectic change of
coordinates to straighten the operator in the left hand side of~\eqref{eq:Diffeq}. 
Then, in Section~\ref{sec:FundamentalMatrix}, we look for a fundamental solution of the transformed linear partial differential equation. 
Finally, in Section~\ref{sec:Lazutkin},
we deduce the asymptotic formula of the distance between the invariant manifolds and in Section~\ref{sec:Average} we obtain more refined estimates for the average of the difference of the $\Lambda$ component.

\subsection{Weighted Fourier norms and Banach spaces}\label{sec:diff:banach}
We define the Banach spaces for Fourier series
with coefficients defined in $D_{\kk,\de}$.
First, we define the
Banach spaces for the Fourier coefficients as
\[
\PP_{m,q,\kk,\de}=\left\{h:D_{\kk,\de}\rightarrow \CC: \text{analytic},
\|h\|_{m,q}<\infty  \right\},
\]
where
\[
 \|h\|_{m,q}=\sup_{u\in
D_{\kk,\de}}\left||u-i/3|^m|u+i/3|^{m}
e^{iq\phi_\h(u)}h(u)\right|.
\]
for any $m,q\in\RR$. 
Note that these definitions are the same
as in Section \ref{Sec:FixtPt} but for
functions defined in $D_{\kk,\de}$ instead of $D^s_{\kk,\de}$.
Now we define the Banach space for Fourier series
%\begin{equation}\label{def:diff:BanachFourier}
\[
\QQQ_{m,\kk,\de,\sigma}=\left\{h(u,\ga)=\sum_{\ell\in
\ZZ}h^{[\ell]}(u) e^{i\ell\ga}: h^{[\ell]}\in
\PP_{m,\ell,\kk,\de},
\|h\|_{m,\sigma}<\infty\right\},
\]
%\end{equation}
where
\[
%\begin{equation}\label{def:diff:Norma:Weighted-Fourier}
\|h\|_{m,\sigma}=\sum_{\ell\in\ZZ}\left\|h^{[\ell]}\right\|_{m,
\ell} e^{|\ell|\sigma}.
\]
%\end{equation}
The Banach space $\QQQ_{m,\kk,\de,\sigma}$ satisfies the algebra properties stated in Lemma~\ref{lemma:banach:AlgebraProps}. 
From now on in this section, we will refer to this lemma understanding the properties stated in it as properties referred to elements of $\QQQ_{m,\kk,\de,\sigma}$ instead of elements of $\YY_{m,\kk,\de,\sigma}$.

Now, we need to define  vector and matrix norms associated to the just introduced norms. 
Those norms inherit the structure of the norms
considered in Section \ref{sec:ParamInvManifolds}.
We consider
\[
\QQQ_{m,\kk,\de,\sigma,\vect}= \QQQ_{m+1,\kk,\de,\sigma}\times
\QQQ_{m,\kk,\de,\sigma}^3
\]
with the norm
\begin{equation}\label{def:VectNorm:Diff}
\|Z\|_{m,\sigma, \vect}= \|Y\|_{m+1,\sigma}+ \|\La\|_{m,\sigma}+\|e^{i\phi_h}\al\|_{m,\sigma}+ \||e^{-i\phi_h}\bet\|_{m,\sigma}.
\end{equation}
Analogously, we consider the Banach space $\QQQ_{\nu,\kk,\de,\sigma,\matrx}$ of $4\times 4$ matrices  with the associated  norm
% whose columns belong
% $\QQQ_{\nu,\kk,\de,\sigma,\vect}$
% which we denote by
\begin{equation}\label{def:MatrixNorm}
\begin{split}
\|\Psi\|_{m, \sigma,\matrx}=\\
\max\Bigg\{&
\|\Psi_{11}\|_{m,\sigma}+ \|\Psi_{21}\|_{m-1,\sigma}+ \|e^{i\phi_\h(u)}\Psi_{31}\|_{m-1,\sigma}+
\|e^{-i\phi_\h(u)}\Psi_{41}\|_{m-1,\sigma},\\
&
\|\Psi_{12}\|_{m+1,\sigma}+ \|\Psi_{22}\|_{m,\sigma}+ \|e^{i\phi_\h(u)}\Psi_{32}\|_{m,\sigma}+
\|e^{-i\phi_\h(u)}\Psi_{42}\|_{m,\sigma},\\
&
\|e^{-i\phi_\h(u)}\Psi_{13}\|_{m+1,\sigma}+ \|e^{-i\phi_\h(u)}\Psi_{23}\|_{m,\sigma}+ \|\Psi_{33}\|_{m,\sigma}+
\|e^{-2i\phi_\h(u)}\Psi_{43}\|_{m,\sigma},\\
&
\|e^{i\phi_\h(u)}\Psi_{14}\|_{m+1,\sigma}+ \|e^{i\phi_\h(u)}\Psi_{23}\|_{m,\sigma}+ \|e^{2i\phi_\h(u)}\Psi_{34}\|_{m,\sigma}+
\|\Psi_{44}\|_{m,\sigma}\Bigg\}.
%
%
%
% &\max_{2\leq j\leq
% 4}\left\{G_0
% \|\Psi_{1j}\|_{m+1,\sigma}+ \|\Psi_{2j}\|_{m,\sigma}+ |\ln
% G_0|^{-1}\left(\|e^{i\phi_\h(u)}\Psi_{3j}\|_{m,\sigma}+
% \|e^{-i\phi_\h(u)}\Psi_{4j}\|_{m,\sigma}\right)\right\}\Bigg\}.
\end{split}
\end{equation}

\begin{lemma}\label{lemma:NormMatrix}
The norms $\|\cdot\|_{m, \sigma,\vect}$ and $\|\cdot\|_{m, \sigma,\matrx}$ introduced in \eqref{def:VectNorm:Diff} and \eqref{def:MatrixNorm} respectively have the following properties
\begin{itemize}
\item Consider $Z\in
\QQQ_{\nu,\kk,\de,\sigma,\vect}$ and a matrix $\Psi\in
\QQQ_{\eta,\kk,\de,\sigma,\matrx}$. Then, $\Psi Z\in
\QQQ_{\nu+\eta,\kk,\de,\sigma,\vect}$ and
\[
 \|\Psi Z\|_{\nu+\eta,\sigma, \vect}\lesssim \|\Psi\|_{\eta,\sigma,
\matrx}\|Z\|_{\nu,\sigma, \vect}.
\]
\item Consider matrices $\Psi\in
\QQQ_{\eta,\kk,\de,\sigma,\matrx}$ and $\Psi'\in
\QQQ_{\nu,\kk,\de,\sigma,\matrx}$. Then, $\Psi \Psi'\in
\QQQ_{\nu+\eta,\kk,\de,\sigma,\matrx}$ and
\begin{equation*}
 \|\Psi \Psi'\|_{\nu+\eta,\sigma, \matrx}\lesssim \|\Psi\|_{\eta,\sigma,
\matrx}\|\Psi'\|_{\nu,\sigma, \matrx}.
\end{equation*}
\end{itemize}
% Note that the norm $\|\cdot\|_{m, \sigma,\wt\matrx}$ does not satisfy this properties.
\end{lemma}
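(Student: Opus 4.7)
The claim is a Banach algebra-type statement for the weighted Fourier norms, so the plan is to reduce both parts to the scalar algebra property $\|fg\|_{m+m',\sigma}\leq \|f\|_{m,\sigma}\|g\|_{m',\sigma}$ stated for $\QQQ_{m,\kk,\de,\sigma}$ in (the matrix/vector analog of) Lemma~\ref{lemma:banach:AlgebraProps}, and then check entry by entry that the weights built into~\eqref{def:VectNorm:Diff} and~\eqref{def:MatrixNorm} are internally consistent. The weights on the individual entries $\Psi_{ij}$ were specifically designed so that multiplying $\Psi_{ij}$ (with its prescribed conjugation factor $e^{\pm i\phi_\h}$ or $e^{\pm 2i\phi_\h}$) by the $j$-th component of $Z$ (again with the prescribed factor) produces a scalar function whose natural weight matches that of the $i$-th component of $\Psi Z$; so the proof is essentially mechanical bookkeeping.

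For the first bullet, the plan is to write $(\Psi Z)_i=\sum_{j=1}^{4}\Psi_{ij}Z_j$ for $i=1,\dots,4$, insert the appropriate conjugation factor required by~\eqref{def:VectNorm:Diff} in front, and rewrite each summand as a product of the two quantities that are actually controlled by $\|\Psi\|_{\eta,\sigma,\matrx}$ and $\|Z\|_{\nu,\sigma,\vect}$. For example, for $i=3$ one computes
\[
e^{i\phi_\h}(\Psi Z)_3 = (e^{i\phi_\h}\Psi_{31})\,Y + (e^{i\phi_\h}\Psi_{32})\,\La + \Psi_{33}\,(e^{i\phi_\h}\al) + (e^{2i\phi_\h}\Psi_{34})(e^{-i\phi_\h}\bet),
\]
so that each factor on the right is precisely one of the four entries bounded by $\|\Psi\|_{\eta,\sigma,\matrx}$ and the remaining factor is one of the four components bounded by $\|Z\|_{\nu,\sigma,\vect}$. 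Applying the scalar algebra property to each summand yields a bound in $\QQQ_{\nu+\eta,\kk,\de,\sigma}$ with exponent $\nu+\eta$ in the third coordinate (and analogously $\nu+\eta+1$ in the first coordinate where the $Y$-component lives). Summing over $j$ and then over $i$ and taking the maximum gives the desired inequality.

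For the second bullet, the plan is identical: $(\Psi\Psi')_{ij}=\sum_{k=1}^{4}\Psi_{ik}\Psi'_{kj}$, and the trick is to insert a resolution of the form $1=e^{\sigma_k i\phi_\h}e^{-\sigma_k i\phi_\h}$ between $\Psi_{ik}$ and $\Psi'_{kj}$, where $\sigma_k\in\{-1,0,+1\}$ is chosen to be the conjugation exponent the matrix norm assigns to the $k$-th row index of $Z$ (i.e.\ $0,0,+1,-1$ for $k=1,2,3,4$, matching~\eqref{def:VectNorm:Diff}). With this insertion each factor falls into one of the four weighted slots of the matrix norm, and the scalar algebra property gives the correct exponent for each entry of $\Psi\Psi'$. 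A uniform constant then absorbs the finite number of summands $k=1,\dots,4$ and the maximum over $i$.

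The only place that requires any care is the bookkeeping of the $e^{\pm i\phi_\h}$ factors in the $(3,4)$ and $(4,3)$ positions of the matrix norm, which carry the exponent $\pm 2$ rather than $\pm 1$; this double exponent is precisely what is needed so that the product of two such entries (arising, for instance, in $(\Psi\Psi')_{34}=\sum_k \Psi_{3k}\Psi'_{k4}$) distributes the correct $e^{2i\phi_\h}$ factor on the left and $e^{-2i\phi_\h}$ factors on the right for the two ``center'' terms $k=3,4$. This is the only non-routine check; everything else is a direct application of Lemma~\ref{lemma:banach:AlgebraProps} to each of the at-most-sixteen scalar products appearing in the computation.
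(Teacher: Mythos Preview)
Your proposal is correct, and the paper itself does not supply a proof of this lemma (it is stated and then used without further comment). Your reduction to the scalar algebra property of Lemma~\ref{lemma:banach:AlgebraProps} together with the entry-by-entry bookkeeping of the $e^{\pm i\phi_\h}$ factors and the $m\pm 1$ shifts is exactly the intended verification; in particular, your observation that the phase weight on $\Psi_{ij}$ factors as $e^{(\rho_i-\rho_j)i\phi_\h}$ with $\rho=(0,0,1,-1)$, and the polynomial exponent as $m+\tau_i-\tau_j$ with $\tau=(1,0,0,0)$, makes both compatibilities (matrix--vector and matrix--matrix) immediate.
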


In the present section we will need to take derivatives of and
compose Fourier series. 
% To this end we state the following two
% technical  lemmas, which are equivalent to
% Lemmas~\ref{lemma:banach:CauchyEstimates}
% and~\ref{lemma:banach:Compositionparameterization}.
% \\
% \textcolor{magenta}{aquests lemmas han desaparegut}
% \\
%
\begin{lemma}\label{lemma:banach:CauchyEstimates:diff}
Fix constants $\sigma'<\sigma$, $\kk'>\kk$ and $\de'>\de$ and take  $h\in \QQQ_{m,\kk,\de,\sigma}$. 
Its derivatives, as defined 
in~\eqref{def:DerivativeFourierSeries}, satisfy
\begin{itemize}
 \item $\pa_v^n h\in \QQQ_{m,\kk',\de',\sigma'}$ and
\[
 \|\pa_v^n h\|_{\nu,\sigma'}\leq \left(\frac{\kappa'}{\kappa}\right)^m
\frac{G_0^{3n}n!}{(\kappa'-\kappa)^n}
 \|h\|_{\nu,\sigma}.
\]
 \item $\pa_\xi h\in \QQQ_{m,\kk',\de',\sigma'}$ and
\[
 \|\pa_\xi h\|_{\nu,\sigma'}\leq \frac{1}{\sigma-\sigma'}\|h\|_{\nu,\sigma}.
\]
%No tinc clar que per xi ens calgui mes que la primera derivada.
\end{itemize}
\end{lemma}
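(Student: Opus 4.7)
The plan is to prove the two bounds independently, both by classical Cauchy-type arguments adapted to the weighted Fourier norm $\|\cdot\|_{m,\sigma}$, treating each Fourier coefficient $h^{[\ell]}$ separately. Throughout I read the index $\nu$ in the statement as the original weight $m$.

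First I would handle the $\xi$-derivative, which follows by a one-line computation. By the formal differentiation rule \eqref{def:DerivativeFourierSeries}, the Fourier coefficients of $\pa_\xi h$ agree with those of $h$ multiplied by $i\ell$, hence by definition of the norm
\[
\|\pa_\xi h\|_{m,\sigma'} = \sum_{\ell\in\ZZ} |\ell|\,\|h^{[\ell]}\|_{m,\ell}\,e^{|\ell|\sigma'}
= \sum_{\ell\in\ZZ} \left(|\ell|\,e^{-|\ell|(\sigma-\sigma')}\right) \|h^{[\ell]}\|_{m,\ell}\,e^{|\ell|\sigma}.
\]
The elementary bound $\sup_{x\ge 0} x\,e^{-xa} = 1/(ea)$ applied with $a = \sigma - \sigma'$ gives the claimed estimate.

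For the $v$-derivative I would work coefficient by coefficient. For each fixed $\ell$ and each $v\in D_{\kk',\de'}$, apply Cauchy's integral formula on the closed disk $\overline{B(v,r)}$ of radius $r = (\kk'-\kk)G_0^{-3}$. The key geometric fact is that $\overline{B(v,r)}\subset D_{\kk,\de}$: the $\kk$-dependent boundaries of \eqref{def:DominisRaros} are parallel lines whose distance from the singularities $\pm i/3$ is precisely $\kk G_0^{-3}$, and points of $D_{\kk',\de'}$ already sit at distance $\ge \kk' G_0^{-3}$ from those singularities, leaving exactly the margin $r$ needed for the disk. Cauchy's formula then yields the pointwise bound $|\pa_v^n h^{[\ell]}(v)| \le (n!/r^n)\,\sup_{w\in \partial B(v,r)}|h^{[\ell]}(w)|$. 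To transfer this to the weighted norm one compares the factor $|v-i/3|^m |v+i/3|^m |e^{i\ell\phi_\h(v)}|$ with the corresponding quantity at $w\in\partial B(v,r)$. For $v$ in the worst-case position, $|v-i/3| = \kk' G_0^{-3}$, so $|w-i/3|\ge |v-i/3|-r = \kk G_0^{-3}$, yielding $|v-i/3|/|w-i/3|\le \kk'/\kk$; the factor $|v+i/3|/|w+i/3|$ is $1+\OO(r)$ since $v$ is far from $-i/3$, and symmetrically for $v$ near $-i/3$. Using Lemma~\ref{lem:HomoInfinity}, the singular behaviour of $e^{i\phi_\h(u)}$ near $\pm i/3$ is exactly $[(u+i/3)/(u-i/3)]^{1/2}$ up to a bounded holomorphic, non-vanishing factor, so the combined weight $|u-i/3|^m|u+i/3|^m|e^{i\ell\phi_\h(u)}|$ is the modulus of a holomorphic function on $\overline{B(v,r)}$, and the preceding geometric comparison controls its ratio across the disk by $(\kk'/\kk)^m$ uniformly in $\ell$. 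Multiplying through, taking the sup over $v$, and summing over $\ell$ (with $e^{|\ell|\sigma'}\le e^{|\ell|\sigma}$) gives the claimed bound $(\kk'/\kk)^m (G_0^{3n} n!/(\kk'-\kk)^n)\|h\|_{m,\sigma}$.

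The main obstacle is precisely the uniformity in $\ell$ of the weight comparison: a naive estimate that treats the $|u\pm i/3|^m$ factors and $|e^{i\ell\phi_\h(u)}|$ separately would produce an $\ell$-dependent constant growing like $(\kk'/\kk)^{|\ell|/2}$, which would ruin the summation over $\ell$. The resolution is to recognise that $|e^{i\ell\phi_\h(u)}|$ combines with $|u\pm i/3|^m$ into the modulus of a function holomorphic across the closed disk $\overline{B(v,r)}$ (this is where the specific definition of the weighted norm is essential), after which the Cauchy ratio is bounded by $(\kk'/\kk)^m$ and is manifestly independent of $\ell$.
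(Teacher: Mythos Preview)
The paper states this lemma without proof, treating it as a routine Cauchy estimate, so there is no argument to compare against directly. Your treatment of the $\pa_\xi$--bound is correct and is exactly the intended one-line computation.

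For the $\pa_v^n$--bound, your set-up (Cauchy on a disk of radius $r=(\kk'-\kk)G_0^{-3}$, coefficient by coefficient) is the right one, and you are also right that the whole difficulty is the uniformity of the weight ratio in the Fourier index~$\ell$. However, your resolution does not work as written. Saying that the combined weight $|u-i/3|^m|u+i/3|^m|e^{i\ell\phi_\h(u)}|$ is the modulus of a holomorphic, non-vanishing function on $\overline{B(v,r)}$ does \emph{not} by itself bound the ratio of its values at $v$ and at $w$; one still has to compute $\sup/\inf$ of this modulus over the disk, and that depends on~$\ell$. Concretely, since $|e^{i\ell\phi_\h(u)}|\sim |u+i/3|^{\ell/2}/|u-i/3|^{\ell/2}$ near $u=i/3$, the effective weight there is $|u-i/3|^{m-\ell/2}$, so for $v$ at distance $\kk'G_0^{-3}$ from $i/3$ the ratio across the Cauchy disk is of order $(\kk'/\kk)^{|m-\ell/2|}$ (or $((2\kk'-\kk)/\kk')^{\ell/2-m}$ when $\ell>2m$), which grows with~$|\ell|$ --- precisely the loss you warned against. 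The ``preceding geometric comparison'' only delivers $(\kk'/\kk)^m$ when $\ell=0$.

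The way this loss is actually controlled is by the drop $\sigma\to\sigma'$ in the analyticity strip: the extra factor $e^{-|\ell|(\sigma-\sigma')}$ absorbs the $\ell$--dependent constant provided one assumes a compatibility condition of the type $\tfrac12\log(\kk'/\kk)<\sigma-\sigma'$. Such a condition is imposed elsewhere in the paper (see Lemma~\ref{lemma:GraphToFlow}) and should be regarded as implicit here; with it, your Cauchy argument goes through, but the uniform constant $(\kk'/\kk)^m$ comes from summing $(\kk'/\kk)^{|m-\ell/2|}e^{-|\ell|(\sigma-\sigma')}$ rather than from any holomorphicity trick.
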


\begin{lemma}\label{lemma:banach:Composition}
We define the formal composition of formal Fourier series
\[
 h(v+g(v,\xi),\xi)=\sum_{n=0}^\infty \frac{1}{n!}\pa_v^nh(v,\xi) g^n(v,\xi).
\]
Fix constants $\sigma'<\sigma$, $\kk'>\kk$ and $\de'>\de$. Let
$\kk'-\kk>\eta>0$. Then,
\begin{itemize}
\item 
If $h\in \QQQ_{m,\kk,\de,\sigma}$, $g\in \QQQ_{0,\kk',\de',\sigma'}$ and
$\|g\|_{0,\sigma'}\leq \eta G_0^{-3}$
we have that
 $X(v,\xi)=h(v+g(v,\xi),\xi)$ satisfies  $X\in \YY_{m,\kk',\de',\sigma'}$ and
\[
 \|X\|_{m,\sigma'}\leq \left(\frac{\kk'}{\kk}\right)^m
\left(1-\frac{\eta}{\kk'-\kk}\right)^{-1}
 \|h\|_{m,\sigma}.
\]
Moreover, if $\|g_1\|_{0,\sigma},\|g_2\|_{0,\sigma}\leq \eta
G_0^{-3}$, then $Y(v,\xi)=h(v+g_2(v,\xi),\xi)-h(v+g_1(v,\xi),\xi)$
satisfies
\[
 \|Y\|_{m,\sigma'}\leq \frac{G_0^{3}}{\kk'-\kk}
 \left(\frac{\kk'}{\kk}\right)^m \left(1-\frac{\eta}{\kk'-\kk}\right)^{-2}
 \|h\|_{m,\sigma}\|g_2-g_1\|_{0,\sigma}.
\]
\item 
If $\pa_v h\in \QQQ_{m,\kk,\de,\sigma}$, $g_1, g_2 \in
\QQQ_{0,\kk',\de',\sigma'}$ and
$\|g_1\|_{0,\sigma'},\|g_2\|_{0,\sigma'}\leq \eta G_0^{-3}$ we
have that
  $Y\in \YY_{m,\kk',\de',\sigma'}$ and
\[
 \|Y\|_{m,\sigma'}\leq \left(\frac{\kk'}{\kk}\right)^{m}
 \frac{1}{1-\frac{\eta}{\kk'-\kk}}
 \|\pa_v h\|_{m,\sigma}\|g_2-g_1\|_{0,\sigma'}.
\]
\end{itemize}
\end{lemma}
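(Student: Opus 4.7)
The plan is to derive all three estimates from the same template: reduce the formal composition to a Taylor series in $g$, bound each term by the algebra property and the Cauchy estimate of Lemma \ref{lemma:banach:CauchyEstimates:diff}, and sum the resulting geometric series. The constraint $\|g\|_{0,\sigma'}\leq \eta G_0^{-3}$ with $\eta < \kk'-\kk$ is precisely what makes the ratio of successive terms $\eta/(\kk'-\kk)<1$, so convergence of the series is automatic.

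For the first bound, I would start from the definition
\[
X(v,\xi)=\sum_{n=0}^{\infty}\frac{1}{n!}\pa_v^n h(v,\xi)\,g^n(v,\xi),
\]
apply the algebra property of Lemma \ref{lemma:banach:AlgebraProps} to get $\|\pa_v^n h\cdot g^n\|_{m,\sigma'}\leq \|\pa_v^n h\|_{m,\sigma'}\|g\|_{0,\sigma'}^n$ (since $g\in\QQQ_{0,\cdot}$ contributes nothing to the weight index), and then use the $n$-th order Cauchy estimate in Lemma \ref{lemma:banach:CauchyEstimates:diff} for the loss $(\kk,\sigma)\mapsto(\kk',\sigma')$, namely $\|\pa_v^n h\|_{m,\sigma'}\leq (\kk'/\kk)^m n!\,G_0^{3n}(\kk'-\kk)^{-n}\|h\|_{m,\sigma}$. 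The factorials cancel, the powers of $G_0^3$ cancel against $\|g\|^n\leq \eta^n G_0^{-3n}$, and one is left with the geometric series $\sum(\eta/(\kk'-\kk))^n=(1-\eta/(\kk'-\kk))^{-1}$.

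For the Lipschitz estimate with $g_1,g_2$, the difference is
\[
Y(v,\xi)=\sum_{n=1}^{\infty}\frac{1}{n!}\pa_v^n h(v,\xi)\bigl(g_2^n-g_1^n\bigr),
\]
and the factorization $g_2^n-g_1^n=(g_2-g_1)\sum_{k=0}^{n-1}g_2^k g_1^{n-1-k}$ in the algebra yields $\|g_2^n-g_1^n\|_{0,\sigma'}\leq n(\eta G_0^{-3})^{n-1}\|g_2-g_1\|_{0,\sigma'}$. Plugging in and again using Cauchy, the $n!$'s cancel and an extra factor $n$ survives, producing $\sum_{n\geq 1} n\,r^{n-1}=(1-r)^{-2}$ with $r=\eta/(\kk'-\kk)$, together with one leftover factor $G_0^3/(\kk'-\kk)$ that comes from losing one Cauchy derivative more than is compensated by $\|g_2-g_1\|$.

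For the second bullet, I would do the same bookkeeping but shift the index. Writing $Y=(g_2-g_1)\sum_{n=1}^{\infty}\tfrac{1}{n!}\pa_v^{n-1}(\pa_v h)\sum_{k=0}^{n-1}g_2^k g_1^{n-1-k}$ and setting $\ell=n-1$, the coefficient becomes $(\ell+1)/(\ell+1)!=1/\ell!$, so the extra factor $n$ that appeared in the first Lipschitz estimate is now absorbed. Only $\ell$ Cauchy derivatives of $\pa_v h$ are needed, giving $(\ell!\,G_0^{3\ell}(\kk'-\kk)^{-\ell})$ against $(\eta G_0^{-3})^\ell$, and the series telescopes to $(1-\eta/(\kk'-\kk))^{-1}\|\pa_v h\|_{m,\sigma}\|g_2-g_1\|_{0,\sigma'}$, matching the claimed bound. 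The main (minor) obstacle is purely bookkeeping: verifying that the weight indices track correctly under the algebra property since $g$ lives in $\QQQ_{0,\cdot}$ and hence does not shift the $m$-index, and confirming that the Cauchy loss parameter $(\kk'-\kk)$ rather than $(\kk-\kk')$ is the correct one given that $\kk'>\kk$ makes $D_{\kk',\de'}$ the smaller of the two domains. Everything else is a direct application of Lemmas \ref{lemma:banach:AlgebraProps} and \ref{lemma:banach:CauchyEstimates:diff}.
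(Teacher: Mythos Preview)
Your proposal is correct and follows exactly the natural route: expand the formal composition as a Taylor series, bound each term via the algebra property (Lemma~\ref{lemma:banach:AlgebraProps}, which the paper explicitly notes carries over to the $\QQQ$ spaces) together with the Cauchy estimate of Lemma~\ref{lemma:banach:CauchyEstimates:diff}, and sum the resulting geometric series. The paper does not supply a proof for this lemma---it is stated as a standard technical fact---so there is nothing to compare against; your argument is precisely the intended one.
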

Finally we give estimates for the matrices appearing in the right hand side of \eqref{eq:Diffeq}.
\begin{lemma}\label{lemma:boundsR}
The matrices $B$ and $R$ in \eqref{def:MatrixB} and  \eqref{def:matrixR}
satisfy the following
\begin{itemize}
% \item $B_{21}$ satisfies $\|B_{21}\|_{3/2,\sigma}\lesssim G_0^{-6}$.
\item 
$B_{21}$ satisfies $\|B_{21}\|_{1,\sigma}\lesssim G_0^{-11/2}$. Therefore  $\|B\|_{2,\sigma,\matrx}\lesssim G_0^{-11/2}$.
\item 
$R\in
\QQQ_{3/2,\kk,\de,\sigma,\matrx}$ and $\|R\|_{3/2,\sigma,\matrx}\lesssim G_0^{-3}.$
\end{itemize}
\end{lemma}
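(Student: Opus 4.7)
My plan is to estimate the two matrices separately, treating each non-zero entry as a product of elementary pieces whose norms are already controlled by the previously established lemmas. The ingredients I would repeatedly invoke are: the parameterization estimates of Theorem \ref{thm:parametrcommdomain} for $Z^u,Z^s$ (and in particular the improved bound $\|\La^\ast\|_{0,1}\lesssim G_0^{-9/2}$ for the $\La$-component); the singularity profile of the unperturbed separatrix from Lemma \ref{lem:HomoInfinity}; the Cauchy estimates in $u$ and $\ga$ of Lemma \ref{lemma:banach:CauchyEstimates:diff}; the algebra property of Lemma \ref{lemma:banach:AlgebraProps}, entrywise combined with the product rule for matrix norms of Lemma \ref{lemma:NormMatrix}; the bounds on $f_1,f_2$ from Lemma \ref{lemma:f1f2}; and, for the $\QQ$ terms, the bounds on $\PP_1$ and its derivatives from Lemma \ref{lem:P1k} together with the composition estimate of Lemma \ref{lem:CompositionManifolds}.

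For $B_{21}$ I would split $B_{21}=-\pa_u\La^s/(G_0\wh y_\h^2)+f_1(u)\pa_\ga\La^s$ and bound both pieces in $\|\cdot\|_{1,\sigma}$. For the first piece, Cauchy in $u$ on a slightly shrunken subdomain $D_{\kk',\de}$ with $\kk'-\kk\simeq 1$ turns $\|\La^s\|_{0,1}\lesssim G_0^{-9/2}$ into a bound of size $G_0^{-3/2}$ for $\pa_u\La^s$, losing one unit of singularity weight (so that $|\pa_u\La^s|\lesssim G_0^{-9/2}/|u\mp i/3|^2$); since $1/(G_0\wh y_\h^2)\sim (u\mp i/3)/G_0$ by Lemma \ref{lem:HomoInfinity}, the product is of order $G_0^{-11/2}/|u\mp i/3|$, which is precisely a bound in $\|\cdot\|_{1,\sigma}$ of size $G_0^{-11/2}$. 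For the second piece, Cauchy in $\ga$ is free of $G_0$, giving $\|\pa_\ga\La^s\|_{0,1}\lesssim G_0^{-9/2}$, while $\|f_1\|_{2/3,\sigma}\lesssim G_0^{-1}$ (the $\wh y_\h^{-2}$ and $\wh r_\h^{-2}$ in $f_1$ cancel singularity-wise), so the algebra property again delivers $G_0^{-11/2}$. Since $B$ has no other non-zero entries, the matrix-norm convention in \eqref{def:MatrixNorm} (which demands weight $m-1=1$ for the $(2,1)$ entry when $m=2$) immediately gives $\|B\|_{2,\sigma,\matrx}\lesssim G_0^{-11/2}$.

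For $R$ I would expand each of the sixteen entries using the explicit formulas \eqref{def:G1G2}, \eqref{def:Q} for $\GG_1,\GG_2,\QQ$. Each entry is then a finite sum of terms of the form (coefficient involving $\wh y_\h,\wh r_\h,f_1,f_2$ or $\pa^2\PP_1$) $\times$ (component of $Z^s$ or its derivative), with the $\PP_1$-terms evaluated at the convex combination $sZ^u+(1-s)Z^s$; Lemma \ref{lem:CompositionManifolds} legitimizes the composition since $\|sZ^u+(1-s)Z^s\|$ lies within the required small ball. The coefficients are of size $\OO(G_0^{-1})$ or $\OO(G_0^{-3})$, and the $Z^\ast$-factors are of size $\OO(G_0^{-3}\log G_0)$ by Theorem \ref{thm:parametrcommdomain}, so a generic entry naturally comes out of order $G_0^{-3}$ in its prescribed weight. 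The crucial point is the $(2,1)$ entry: differentiating $\GG_1$ in $Y$ produces the factor $1/(G_0\wh y_\h^2)$ which when paired with $\pa_u\La^s$ gives the $G_0^{-11/2}$-contribution, and analogously $\pa_Y\GG_2=f_1$ paired with $\pa_\ga\La^s$ produces the other $G_0^{-11/2}$-contribution; these are precisely the two summands subtracted off in the definition of $B_{21}$, so $R_{21}$ retains only nonlinear and cross terms which fit within the required $G_0^{-3}$ bound at weight $m-1=1/2$.

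The main obstacle is purely bookkeeping: the matrix norm \eqref{def:MatrixNorm} assigns distinct singularity weights and phase conjugations $e^{\pm i\phi_\h},e^{\pm 2i\phi_\h}$ to each entry, so for every one of the sixteen entries one must verify that after tracking the singularity exponents through every product, Cauchy step, and composition, the resulting size matches the prescribed weight. I expect the most delicate entries to be those in the second row and column, where only the improved estimate $\|\La^\ast\|_{0,1}\lesssim G_0^{-9/2}$ (not the generic $\|\La^\ast\|_{0,3/2}\lesssim G_0^{-9/2}$) is strong enough to close the argument; away from the $\La$-row, everything reduces to a direct application of the algebra lemmas.
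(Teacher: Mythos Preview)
Your approach is essentially the same as the paper's, which sketches the proof by invoking the improved bound \eqref{def:LambdaImprovedEstimate} for $\La^s$ together with Lemma~\ref{lem:HomoInfinity} for $B_{21}$, and the explicit formulas \eqref{def:Q}, \eqref{def:G1G2} together with Lemmas~\ref{lem:HomoInfinity}, \ref{lem:P1k}, \ref{lem:CompositionManifolds} and Theorem~\ref{thm:parametrcommdomain} for $R$. One minor point: the paper asserts $R_{21}=0$ identically (since $\pa_Y\QQ_2=0$ while $\pa_Y\GG_1$ and $\pa_Y\GG_2$ are independent of $Z$, the mean-value integral in the $(2,1)$ entry is linear and entirely absorbed by the definition of $B_{21}$), so your residual analysis of that entry is unnecessary though harmless.
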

\begin{proof}
For $B_{21}$ one needs the improved bounds for $\La^s$ given in \eqref{def:LambdaImprovedEstimate} and the estimates in Lemma \ref{lem:HomoInfinity}.
The estimates for $R$ are obtained through an easy but tedious computation using the definitions of $\QQ$, $\GG_1$ and $\GG_2$ given in \eqref{def:Q} and \eqref{def:G1G2}, Lemma \ref{lem:HomoInfinity}, Lemma \ref{lem:P1k} and the estimates for the $Z^u$, $Z^s$ given in Theorem \ref{thm:parametrcommdomain}. 
Note that since we are dealing with formal Fourier series the compositions are understood as in Lemma \ref{lem:CompositionManifolds}.
\end{proof}

%
% Roughly speaking, the first order of such equation is
% $\LL\Delta=0$. Assume that $\Delta$ is a solution of this equation
% instead of \eqref{eq:Diffeq}. Then, one can easily check that for real
% values of the parameters the function
% $\Delta$ (minus its average with respect to $\ga$) is exponentially small (see
% Lemma
% \ref{lemma:Lazutkin} below).
%
% one can use the following lemma, which asserts that functions belong to the
% kernel of $\LL$ bounded in a complex domain are exponentially small for real
% values (once substracted the average with respect to $\ga$).
%
%
% Now, $\Delta$ is a solution of \eqref{eq:Diffeq} instead of
% $\LL\Delta=0$. Thus, to apply Lemma \ref{lemma:Lazutkin} we adapt these
% ideas.

\subsection{Straightening the differential
operator}\label{sec:change:straighteningoperator}
First step is to perform a symplectic change of coordinates \emph{in phase
space} so that one transforms the operator $\wt\LL$ in \eqref{def:operatorLtilde} into $\LL$. Namely, to  remove the term
$\GG_1(Z^u)\pa_u\Delta+\GG_2(Z^u)\pa_\ga\Delta$ from the left hand side
of  equation \eqref{eq:Diffeq}.

\begin{theorem}\label{thm:DifferenceSymplecticStraightening}
 Let $\sigma_2$, $\kk_2$ and $\de_2$ be the constants considered in Lemma \ref{lemma:FromParamToGraph}. Let
$\sigma_3<\sigma_2$, $\kk_3>\kk_2$ and $\de_3>\de_2$ be fixed. Then, for $G_0$
big enough and $|\alo| G_0^{3/2}$ small enough, there exists a symplectic transformation given by
a (not necessarily convergent) Fourier series
\[
 (u,Y,\lo, \La, \al,\beta)=\Phi  (v,\wt Y,\lo, \wt \La, \al,\beta)
\]
of the form
\begin{equation}\label{def:SymplecticStraight}
 \Phi  (v,\wt Y,\lo, \wt \La, \al,\beta)=\left(v+\CCC(v,\lo),
\frac{1}{1+\pa_v\CCC(v,\lo)}\wt Y,\lo,
\wt \La-\frac{\pa_\ga\CCC(v,\lo)}{1+\pa_v\CCC(v,\lo)}\wt Y, \al,\beta\right)
\end{equation}
where $\CCC\in\QQQ_{0,\kk_3,\de_3,\sigma_3}$ satisfying
\[
\|\CCC\|_{0,\sigma_3}\leq b_6G_0^{-4}\ln G_0,
\]
with $b_6>0$ a constant independent of $G_0$, such
that
\begin{equation}\label{def:difference:reparameterized}
 \wt
\Delta(v,\gamma)=\begin{pmatrix}\frac{1}{1+\pa_v\CCC(v,\lo)}&0&0&0\\-\frac{\pa_\ga\CCC(v,\lo)}{1+\pa_v\CCC(v,\lo)}&1&0&0\\0&0&1&0\\0&0&0&1\end{pmatrix}\Delta\left(v+\CCC(v,\gamma),\gamma\right),
\end{equation}
where $\Delta$ is the function defined
in~\eqref{def:difference}, is well defined and satisfies the equation
\begin{equation}\label{eq:Diffeq:transf}
\LL\wt \Delta=A\wt \Delta+\BB\wt\Delta+\RRR\wt \Delta
\end{equation}
where $A$ is the matrix introduced in \eqref{def:A} and the matrices $\BB$ and $\RRR$ satisfy
% \[
%  \RRR=\frac{A\circ\Phi-A+R\circ\Phi}{1+\GG_2(Z^u_1)\circ \Phi}
% \]
% with
% \[
% \Phi(v,\ga)=\left(v+\CCC(v,\gamma),\gamma\right),
% \]
% which
\begin{itemize}
% \item $\|\BB_{21}\|_{3/2}\lesssim G_0^{-6}$ and $\BB_{ij}=0$ otherwise.
\item $\|\BB_{21}\|_{1}\lesssim G_0^{-11/2}$ and $\BB_{ij}=0$ otherwise. Therefore $\|\BB\|_{2,\sigma_3,\matrx}\lesssim G_0^{-11/2}$
\item $\RRR\in
\QQQ_{3/2,\kk_3,\de_3,\sigma_3,\matrx}$, $\RRR_{21}=0$, $\|\RRR\|_{3/2,\sigma_3,\matrx}\lesssim G_0^{-3}$.
\end{itemize}
\end{theorem}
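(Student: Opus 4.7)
\medskip

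\noindent\textbf{Proof proposal.} The strategy is to first find the scalar function $\CCC(v,\gamma)$ as a fixed point of a functional equation, then verify that the map $\Phi$ in \eqref{def:SymplecticStraight} is symplectic (which justifies the specific transformation rule for $Y$ and $\La$), and finally transform the equation \eqref{eq:Diffeq} under $\Phi$ and track carefully how the matrices $A$, $B$, $R$ evolve.

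First I would derive the equation that $\CCC$ must satisfy. Under the scalar substitution $h(u,\gamma)=\wt h(v,\gamma)$ with $u=v+\CCC(v,\gamma)$, one computes
\[
\wt\LL h\bigr|_{(v+\CCC,\gamma)}=\bigl(1+\GG_1(Z^u)\bigr)\partial_u h+\bigl(\tfrac{\nu G_0^3}{L_0^3}+\GG_2(Z^u)\bigr)\partial_\gamma h,
\]
while on the other hand
\[
\LL\wt h=\bigl(1+\partial_v\CCC+\tfrac{\nu G_0^3}{L_0^3}\partial_\gamma\CCC\bigr)\partial_u h+\tfrac{\nu G_0^3}{L_0^3}\partial_\gamma h.
\]
Matching the coefficient of $\partial_u h$ at the manifold (and treating the mismatch in $\partial_\gamma h$ as a small perturbation absorbed into $\RRR$) gives the invariance equation
\[
\LL\CCC=\GG_1\bigl(Z^u(v+\CCC,\gamma)\bigr)-\partial_\gamma\CCC\cdot\GG_2\bigl(Z^u(v+\CCC,\gamma)\bigr),
\]
which I would rewrite using the inverse operator $\GG$ from Lemma~\ref{lemma:Operator:1} as a fixed point problem $\CCC=\mathfrak{N}(\CCC)$ in the space $\QQQ_{0,\kk_3,\de_3,\sigma_3}$. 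The bounds $\|\GG_1(Z^u)\|_{2/3,\sigma_2}\lesssim G_0^{-4}\ln G_0$ and $\|\GG_2(Z^u)\|_{1/3,\sigma_2}\lesssim \ln G_0$ from Lemma~\ref{lemma:PropsP}, combined with the gain of one power of $G_0^{-3}$ for $\pa_\gamma\CCC$ (by Lemma~\ref{lemma:banach:CauchyEstimates:diff} and the operator $\GG$), yield $\|\mathfrak{N}(0)\|_{0,\sigma_3}\lesssim G_0^{-4}\ln G_0$ and contractivity with rate $\lesssim G_0^{-3/2}\log G_0$ on a ball of radius $b_6G_0^{-4}\ln G_0$; the smallness condition $|\alo|G_0^{3/2}\ll 1$ enters precisely through the composition estimates of Lemma~\ref{lemma:banach:Composition} applied to $Z^u$, which is a formal Fourier series controlled in the region $D_{\kk,\de}$.

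Second, I would check by a direct computation of pullbacks of $dY\wedge du + d\La\wedge d\gamma$ that the map $\Phi$ in \eqref{def:SymplecticStraight} is indeed symplectic; this is precisely why $Y$ and $\La$ must transform with the Jacobian factors appearing in the statement, and it is equivalent to saying that $\Phi$ is generated by the function $S(u,\gamma,\wt Y,\wt\La)=(u-\CCC(u,\gamma))\wt Y+\gamma\wt\La$ (up to solving implicitly for $v$). The multiplicative factor $1/(1+\pa_v\CCC)$ in the first component of the conjugating matrix in \eqref{def:difference:reparameterized} is exactly what is required so that $\LL$ applied to $\wt\Delta_1$ reproduces $\wt\LL$ applied to $\Delta_1$ evaluated along the reparametrization.

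Finally, the transformed equation for $\wt\Delta$ would be obtained by applying the conjugating matrix $M(v,\gamma)$ of \eqref{def:difference:reparameterized} to both sides of \eqref{eq:Diffeq}, then evaluating at $u=v+\CCC(v,\gamma)$. The matrix $A$ has a block-triangular structure and does not involve $Y$ or $\La$, so conjugation by $M$ leaves $A$ unchanged at leading order, contributing only to $\RRR$ an error of order $G_0^{-3}$ (coming from $\CCC$ itself and its derivatives). The component $B_{21}$ of $B$ is preserved up to the same type of error, giving $\BB_{21}$ with the same bound $G_0^{-11/2}$, while all other entries are placed in $\RRR$. The main bookkeeping obstacle, and what I expect to be the trickiest step, is ensuring that the apparent mismatch in the $\partial_\gamma$ coefficient between $\LL\wt h$ and $\wt\LL h$ (which cannot be killed exactly by the choice of $\CCC$) produces only admissible contributions to $\RRR$ of order $G_0^{-3}$ rather than degrading the estimate on $\BB_{21}$; this is handled by absorbing the term $\partial_\gamma\CCC\cdot\GG_2(Z^u)$ into $\RRR$ and using the better $\ga$-derivative estimates for $\CCC$ coming from the decay of the Fourier tail, together with the improved bound $\lln\La^s\rrn_{1/3,1}\lesssim G_0^{-9/2}$ from \eqref{def:LambdaImprovedEstimate} which is crucial to maintain the $G_0^{-11/2}$ size of $\BB_{21}$.
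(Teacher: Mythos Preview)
Your overall strategy---find $\CCC$ via a fixed point, check symplecticity, then transform the matrix equation---matches the paper's. However, two technical points differ from the paper and one of them is a genuine gap.

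First, the equation you write for $\CCC$ is not the one the paper uses, and your derivation does not actually straighten $\wt\LL$. Writing $\wt\LL=(1+Q_1)\partial_u+\tfrac{\nu G_0^3}{L_0^3}(1+Q_2)\partial_\gamma$ with $Q_1=\GG_1(Z^u)$ and $Q_2=\tfrac{L_0^3}{\nu G_0^3}\GG_2(Z^u)$, the paper imposes
\[
\LL\CCC=\frac{Q_1-Q_2}{1+Q_2}\circ\Phi_0,
\]
which yields the \emph{exact} identity $\LL\wt h=\tfrac{D\circ\Phi_0}{1+Q_2\circ\Phi_0}$ whenever $\wt\LL h=D$. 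There is no residual ``mismatch in $\partial_\gamma h$'' to absorb into $\RRR$: the factor $1/(1+Q_2)$ appears instead on the right-hand side, and this is what produces the matrices $\wt B$ and $\wt R$ of Proposition~\ref{prop:straighteninLhat}. Your equation, obtained by matching only the $\partial_u h$ coefficient, would leave an uncontrolled $\GG_2(Z^u)\partial_\gamma\Delta$ term on the left; while $Q_2$ is small, this route forces you to treat it as a perturbation of $\LL$ rather than of the right-hand side, which complicates the subsequent matrix bookkeeping.

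Second, and more seriously, the inverse operator you invoke (the $\GG$ of Lemma~\ref{lemma:Operator:1}) integrates from $\pm\infty$ and is defined on $D^s_{\kk,\de}$. Here the relevant domain is the bounded ``boomerang'' $D_{\kk,\de}$, and one cannot reach $\pm\infty$ along characteristics. The paper introduces a different operator $\wt\GG$ (see \eqref{def:canvi:operadorG}) that integrates from the top and bottom vertices $u_2,\bar u_2$ of $D_{\kk,\de}$ for nonzero Fourier modes, and from a real endpoint for the zero mode; its mapping properties on the spaces $\QQQ_{\nu,\kk,\de,\sigma}$ are recorded in Lemma~\ref{lemma:diff:OperadorG}. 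Without this adjustment the fixed-point argument for $\CCC$ does not close on the correct domain. Your final paragraph on tracking $\BB_{21}$ and the role of the improved estimate \eqref{def:LambdaImprovedEstimate} is correct and matches the paper.
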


We devote the rest of this section to prove this theorem.

\subsubsection{Proof of Theorem \ref{thm:DifferenceSymplecticStraightening}}
The first step is to perform a change of coordinates
\begin{equation}\label{def:change:straighteningoperator}
\Phi_0:(v,\ga)\mapsto
(u,\ga)=(v+\CCC(v,\gamma),\gamma),
\end{equation}
to straighten the operator $\wt\LL$. Then, we will apply a change of coordinates to the conjugate variables to make the change symplectic.

To straighten the operator we proceed as in \cite{GuardiaMS16}.
Consider an operator of the form
\[
 \wt \LL =(1+Q_1(u,\ga))\pa_u+\frac{\nu
G_0^3}{L_0^3}(1+Q_2(u,\ga))\pa_\ga.
\]
and  consider a change of coordinates of the form
\eqref{def:change:straighteningoperator} where satisfies
\begin{equation}\label{eq:Straightening}
 \LL g=\frac{Q_1\circ\Phi_0-Q_2\circ\Phi_0}{1+Q_2\circ\Phi_0}.
\end{equation}
Then, if $h$ solves the equation $\wt \LL h=D$ for some
$D$, the transformed $\wt h=h\circ\Phi_0$ satisfies
the equation
\[
\LL \wt h=\wt D\qquad \text{ where }\quad \wt D=\frac{D\circ
\Phi_0}{1+Q_2\circ\Phi_0}.
\]
Note that all these equations and transformations have to make sense for formal
Fourier series. In particular, the compositions are understood as in  Lemma
\ref{lemma:banach:Composition} and the fraction as
\[
 \frac{1}{1+Q_2(u,\ga)}=\sum_{q\geq 0}(-Q_2(u,\ga))^q.
\]
\begin{proposition}\label{prop:straighteninLhat}
Let $\sigma_3$, $\kk_3$ and $\de_3$ be the constants considered in Theorem \ref{thm:DifferenceSymplecticStraightening}.
Then, for $G_0$ big enough and $|\alo| G_0^{3/2}\ll 1$, there exists
a (not necessarily convergent) Fourier series
$\CCC\in\QQQ_{0,\kk_3,\de_3,\sigma_3}$ satisfying
\[
\|\CCC\|_{0,\sigma_3}\leq b_6G_0^{-4}\ln G_0,\qquad \|\pa_v\CCC\|_{1/2,\sigma_3}\leq b_6G_0^{-3},\qquad \|\pa_\ga\CCC\|_{1/2,\sigma_3}\leq b_6G_0^{-6}
\]
with $b_6>0$ a constant independent of $G_0$, such
that
\begin{equation}\label{def:difference:reparameterized}
\Delta^\ast(v,\gamma)=\Delta\left(v+\CCC(v,\gamma),\gamma\right),
\end{equation}
where $\Delta$ is the function defined
in~\eqref{def:difference}, is well defined and satisfies the equation
\begin{equation}\label{eq:Diffeq:transf}
\LL\wt \Delta=A \Delta^\ast+\wt B\Delta^\ast+\wt R \Delta^\ast
\end{equation}
where
\[
\wt B=\frac{B\circ\Phi_0}{1+\GG_2(Z^u)\circ \Phi_0},\qquad
 \wt R=\frac{A\circ\Phi_0-A+R\circ\Phi_0}{1+\GG_2(Z^u)\circ \Phi_0}
\]
with $\Phi_0(v,\ga)=\left(v+\CCC(v,\gamma),\gamma\right)$. Morever the matrix $\wt B$
satisfies $\|\wt B_{21}\|_{1,\sigma_3}\lesssim G_0^{-11/2}$, $B_{ij}=0$ otherwise, which imply $\|\wt B\|_{2,\sigma_3,\matrx}\lesssim G_0^{-11/2}$. 
The matrix $\wt R\in
\QQQ_{3/2,\kk_3,\de_3,\sigma_3,\matrx}$, $\wt R_{21}=0$ and
\[
 \|\wt R\|_{3/2,\sigma_3,\matrx}\lesssim G_0^{-3}.
\]
%
% One can choose $\CCC$ to satisfy
% \[
%  \CCC(-w,-\xi)=-\CCC(w,\xi),
% \]
% and, by~\eqref{def:Symmetry:GeneratingFunction}, one has that
% \[
%  \Delta(-w,-\xi)=\Delta(w,\xi).
% \]
\end{proposition}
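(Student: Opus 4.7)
The plan is to reformulate the straightening condition \eqref{eq:Straightening} as a fixed-point equation for $\CCC$ in the Banach space $\QQQ_{0,\kk_3,\de_3,\sigma_3}$ of formal Fourier series, solve it by contraction, and then transfer the bounds to the transformed matrices $\wt B$ and $\wt R$. Writing $Q_1=\GG_1(Z^u)$ and $Q_2=(L_0^3/\nu G_0^3)\GG_2(Z^u)$ so that $\wt\LL=(1+Q_1)\pa_u+(\nu G_0^3/L_0^3)(1+Q_2)\pa_\ga$, the identity \eqref{eq:Straightening} reads $\LL\CCC = N\circ\Phi_0^\CCC$ with $N=(Q_1-Q_2)/(1+Q_2)$ and $\Phi_0^\CCC(v,\ga)=(v+\CCC(v,\ga),\ga)$. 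Using a right inverse $\GG$ of $\LL$ adapted to the boomerang domain $D_{\kk_3,\de_3}$ (in the spirit of \eqref{def:Outer:IntegralOperator}), I would look for $\CCC$ as the fixed point of $\TT(\CCC)=\GG\bigl(N\circ\Phi_0^\CCC\bigr)$ in a small ball of radius $b_6 G_0^{-4}\ln G_0$.

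To run the contraction argument, I would first bound $Q_1$ and $Q_2$ term by term using Theorem \ref{thm:parametrcommdomain} (in particular the refined estimate $\|\La^u\|_{0,1}\lesssim G_0^{-9/2}$), the asymptotics of Lemma \ref{lem:HomoInfinity} for $\wh r_\h,\wh y_\h,f_1,f_2$, and Lemma \ref{lem:P1k} for the derivatives of $\PP_1$. Combined with Lemmas \ref{lemma:banach:AlgebraProps} and \ref{lemma:banach:Composition}, the latter used with $\kk_3>\kk_2$, $\de_3>\de_2$, $\sigma_3<\sigma_2$ to absorb the $G_0^3$ Cauchy loss produced by composing with $\Phi_0^\CCC$, these estimates control the map $\CCC\mapsto N\circ\Phi_0^\CCC$. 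The decisive point is to split $N\circ\Phi_0^\CCC$ into its $\ga$-average and its oscillatory part, because item 4 of Lemma \ref{lemma:Operator:1} gives an additional $G_0^{-3}$ gain on the oscillatory piece. Balancing these contributions yields self-mapping and contraction for $\TT$, producing the desired fixed point $\CCC$ with the stated $\|\CCC\|_{0,\sigma_3}$ bound; differentiating the fixed-point equation and re-applying items 4 and 5 of Lemma \ref{lemma:Operator:1} delivers the bounds on $\pa_v\CCC$ and $\pa_\ga\CCC$, the sharper $G_0^{-6}$ bound for $\pa_\ga\CCC$ coming from using the oscillatory gain twice.

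For the transformed equation, a direct chain-rule computation yields
\[
\LL(\Delta\circ\Phi_0^\CCC) \;=\; \frac{1}{1+Q_2\circ\Phi_0^\CCC}\,(A+B+R)\circ\Phi_0^\CCC\cdot(\Delta\circ\Phi_0^\CCC),
\]
which matches \eqref{eq:Diffeq:transf} after writing $A\circ\Phi_0^\CCC=A+(A\circ\Phi_0^\CCC-A)$ and isolating $\wt B$ from $B$. The identity $\wt R_{21}=0$ follows because $R_{21}=0$ by construction and the full upper-left $2\times 2$ block of $A$ vanishes in \eqref{def:A}--\eqref{eq:AB}, so $(A\circ\Phi_0^\CCC-A)_{21}=0$. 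The bound on $\wt B_{21}$ is immediate from Lemma \ref{lemma:boundsR} together with $\|Q_2\circ\Phi_0^\CCC\|\ll 1$. The bound on $\wt R$ combines the $R$-estimate of Lemma \ref{lemma:boundsR} with a mean-value estimate $\|A\circ\Phi_0^\CCC-A\|_{3/2,\sigma_3,\matrx}\lesssim\|\pa_u A\|\cdot\|\CCC\|_{0,\sigma_3}$, whose $u$-derivative is controlled by Lemma \ref{lemma:f1f2}.

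The main technical obstacle is that the series involved are only formal Fourier series: the compositions $h\circ\Phi_0^\CCC$ must be defined termwise as in Lemma \ref{lemma:banach:Composition}, and each such application costs a factor $G_0^3$ via Cauchy estimates in $v$, to be offset by a slight shrinkage of the complex domain $D_{\kk,\de}$. The delicate bookkeeping consists in arranging that the chain of shrinkages $\kk_2\to\kk_3$, $\de_2\to\de_3$, $\sigma_2\to\sigma_3$ is sufficient to absorb these Cauchy losses while still preserving the required sharp sizes of $\CCC$, $\wt B$ and $\wt R$, and in particular the non-trivial $G_0^{-3}$ gain coming from the zero-average projection that underlies the $G_0^{-4}\ln G_0$ bound on $\CCC$ and the $G_0^{-6}$ bound on $\pa_\ga\CCC$.
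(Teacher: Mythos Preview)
Your fixed-point strategy is exactly the paper's, and your treatment of $\wt B$, $\wt R$ (in particular the reason $\wt R_{21}=0$) is correct. However, you misidentify the mechanisms behind the two sharp estimates, and this would make your implementation harder than necessary.

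For the bound $\|\CCC\|_{0,\sigma_3}\lesssim G_0^{-4}\ln G_0$, no zero-average splitting is needed: the size is inherited directly from $Q_1=\GG_1(Z^u)$. From \eqref{def:G1G2} together with $\|Y^u\|,\|\al^u\|,\|\bet^u\|\lesssim G_0^{-3}\ln G_0$ and the extra $G_0^{-1}$ in $1/(G_0\wh y_\h^2)$ and in $f_1$, one already has $\|Q_1\|_{1/2,\sigma_2}\lesssim G_0^{-4}\ln G_0$ (this is recorded in \eqref{def:Q1Q2Estimates}). The paper then writes $N=Q_1-\frac{Q_2(1+Q_1)}{1+Q_2}$, applies the bounded-domain inverse $\wt\GG$ of Lemma~\ref{lemma:diff:OperadorG} (not the $\GG^{u,s}$ of \eqref{def:Outer:IntegralOperator}, which integrate from $\pm\infty$ and do not act on the boomerang domain), and reads off the bound with no oscillatory gain. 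Your proposed average/oscillatory split does not by itself produce $G_0^{-4}\ln G_0$, since the $\ga$-average of $N$ is not zero and receives no gain.

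For $\|\pa_\ga\CCC\|_{1/2,\sigma_3}\lesssim G_0^{-6}$, the paper avoids any ``oscillatory gain twice'' argument. Since $\CCC$ solves $\LL\CCC=\KK(\CCC)$, one has the algebraic identity
\[
\pa_\ga\CCC=\frac{L_0^3}{\nu G_0^3}\bigl(\KK(\CCC)-\pa_v\CCC\bigr),
\]
so the factor $G_0^{-3}$ is explicit; combining with $\|\KK(\CCC)\|_{1/2,\sigma_3},\|\pa_v\CCC\|_{1/2,\sigma_3}\lesssim G_0^{-3}$ (the latter from item~4 of Lemma~\ref{lemma:diff:OperadorG}) gives the result in one line. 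Your route via differentiating the fixed-point equation and invoking item~4 of Lemma~\ref{lemma:Operator:1} would require a bootstrap (the chain rule reintroduces $\pa_\ga\CCC$ on the right-hand side) and uses the wrong operator lemma for this domain.
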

Using, the definition of $\wt\LL$ in \eqref{def:operatorLtilde}, to prove this
proposition,
we look for a function $\CCC$ satisfying  equation \eqref{eq:Straightening}
with
\begin{equation}\label{def:Q1Q2}
 Q_1(u,\ga)=\GG_1(Z^u)(u,\ga),\qquad Q_2(u,\ga)=\frac{L_0^3}{\nu
G_0^3}\GG_2(Z^u)(u,\ga).
\end{equation}
%
% $Q_1\in
% \QQ_{1/2,\kk,\de,\sigma}$,  $Q_2\in
% \QQ_{3/2,\kk,\de,\sigma}$ and
The next lemma gives estimates for these functions.
\begin{lemma}
The functions $Q_1$ and $Q_2$ satisfy
\begin{equation}\label{def:Q1Q2Estimates}
\begin{aligned}
\|Q_1\|_{1/2,\sigma_2}&\lesssim G_0^{-4}\ln G_0,&  \|Q_2\|_{1,\sigma_2}&\lesssim
G_0^{-9/2}\\
\|\pa_u Q_1\|_{3/2,\sigma_2}&\lesssim G_0^{-4}\ln G_0,&
\|\pa_u Q_2\|_{2,\sigma_2}&\lesssim
G_0^{-9/2}\\
\|\pa_\ga Q_1\|_{3/2,\sigma_2}&\lesssim G_0^{-7}\ln G_0,&
\|\pa_\ga Q_2\|_{1,\sigma_2}&\lesssim
G_0^{-9/2}.
\end{aligned}
\end{equation}
\end{lemma}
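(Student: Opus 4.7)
The plan is to estimate each term in the expressions \eqref{def:Q1Q2} separately, then combine using the algebra properties of the weighted Fourier norms (Lemma~\ref{lemma:banach:AlgebraProps}). All inputs needed are already at hand: the size of the invariant manifold parameterization $Z^u=(Y^u,\Lambda^u,\alpha^u,\beta^u)$ from Theorem~\ref{thm:parametrcommdomain} (including the improved bound $\|\Lambda^u\|_{0,1}\lesssim G_0^{-9/2}$), the weighted estimates on the coefficients $f_1\in\XX_{2/3,0}$ and $f_2\in\XX_{4/3,1/2}$ from Lemma~\ref{lemma:f1f2}, the asymptotic behaviour of $\wh r_\h$ and $\wh y_\h$ near the singularities $u=\pm i/3$ from Lemma~\ref{lem:HomoInfinity}, and the control on the partial derivatives of $\PP_1$ from Lemma~\ref{lem:P1k}.

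For $Q_1=\GG_1(Z^u)=Y^u/(G_0\wh y_\h^2)+f_1\,q^u$ with $q^u=\Lambda^u-\alo\beta^u-\beto\alpha^u-\alpha^u\beta^u$, I first observe that $1/\wh y_\h^2$ behaves like $(u\mp i/3)^{1}$ near the singularities (Lemma~\ref{lem:HomoInfinity}), so multiplying by $1/\wh y_\h^2$ lowers the weight index by one. Consequently the first term satisfies $\|Y^u/(G_0\wh y_\h^2)\|_{1/2,\sigma_2}\lesssim G_0^{-1}\|Y^u\|_{3/2,\sigma_2}\lesssim G_0^{-4}\ln G_0$. For $f_1 q^u$, the contribution of $\Lambda^u$ is bounded by $\|f_1\|_{2/3,0}\|\Lambda^u\|_{0,3/2}\lesssim G_0^{-1}\cdot G_0^{-9/2}$, and the quadratic terms $\alo\beta^u$, $\beto\alpha^u$, $\alpha^u\beta^u$ are controlled using $|\alo|\ll G_0^{-3/2}$ together with the bounds on $e^{\pm i\phi_\h}\alpha^u$, $e^{\mp i\phi_\h}\beta^u$ from Theorem~\ref{thm:parametrcommdomain}; in each case the output lies in $\QQQ_{1/2,\sigma_2}$ with norm $\lesssim G_0^{-4}\ln G_0$, giving the first estimate in \eqref{def:Q1Q2Estimates}. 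The dominant term is $Y^u/(G_0\wh y_\h^2)$, which is why the factor $\ln G_0$ survives.

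For $Q_2=(L_0^3/(\nu G_0^3))\GG_2(Z^u)$, I rewrite
\[
Q_2=\frac{L_0^3}{(L_0+\Lambda^u)^3}-1+\frac{L_0^3}{\nu G_0^3}\bigl(f_1 Y^u+f_2 q^u+\partial_\Lambda\PP_1(u,\gamma,\Lambda^u,\alpha^u,\beta^u)\bigr).
\]
The first piece is the dominant one: expanding the geometric series and invoking the improved estimate \eqref{def:LambdaImprovedEstimate} yields $\|L_0^3/(L_0+\Lambda^u)^3-1\|_{0,1}\lesssim \|\Lambda^u\|_{0,1}\lesssim G_0^{-9/2}$. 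The remaining terms are all smaller thanks to the factor $G_0^{-3}$ in front: $f_1 Y^u$ gives a bound of order $G_0^{-7}\ln G_0$ in $\|\cdot\|_{2/3,3/2}$, $f_2 q^u$ is of order $G_0^{-15/2}$ in $\|\cdot\|_{4/3,2}$ after using $\|f_2\|_{4/3,1/2}\lesssim G_0^{-1}$ and the bounds on $q^u$, and $\partial_\Lambda\PP_1$ evaluated on $Z^u$ is handled by Lemma~\ref{lem:P1k} combined with the formal composition bound of Lemma~\ref{lem:CompositionManifolds}. Collecting these in the norm $\|\cdot\|_{1,\sigma_2}$ and using Lemma~\ref{lemma:banach:AlgebraProps} to raise the weight index when needed yields the second estimate.

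For the derivative estimates $\|\partial_u Q_i\|$ and $\|\partial_\gamma Q_i\|$, I differentiate the explicit expressions; the $\partial_u$-derivative either falls on a factor of $Y^u$, $\Lambda^u$, $\alpha^u$, $\beta^u$ or on one of the analytic weights $\wh y_\h^{-2}$, $f_1$, $f_2$. In the first case, the derivative bounds are built into the norm $\lln\cdot\rrn$ on $\XX_{n,m,\vect}$ (recall \eqref{def:Norm}) and hence are already provided by Theorem~\ref{thm:parametrcommdomain}; in the second, the derivatives of the explicit profile functions carry an extra weight unit (one power of $(u\mp i/3)^{-1}$), accounting for the unit shift in the weight index. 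The $\partial_\gamma$-derivatives act only on $Z^u$, and the $\gamma$-norm provided by $\lln\cdot\rrn_{\vect}$ gains the factor $G_0^{-3}$ in \eqref{def:Norm}, which is exactly what makes $\|\partial_\gamma Q_2\|_{1,\sigma_2}\lesssim G_0^{-9/2}$ (no improvement) and $\|\partial_\gamma Q_1\|_{3/2,\sigma_2}\lesssim G_0^{-7}\ln G_0$ (gain of $G_0^{-3}$ over $Q_1$). The main technical point, and the only mildly delicate one, will be the bookkeeping of weight indices near the singularities $u=\pm i/3$ when a factor like $\wh y_\h^{-2}$ is differentiated; but this is already handled by Lemma~\ref{lem:HomoInfinity}, and no new Cauchy estimate in the $\kappa$-direction (which would bring the dangerous factor $G_0^3$ of Lemma~\ref{lemma:banach:CauchyEstimates:diff}) is needed because every bound involves only explicit differentiation of known asymptotic profiles together with the derivative norms already encoded in $\lln\cdot\rrn_{\vect}$.
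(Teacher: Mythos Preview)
Your proposal is correct and takes essentially the same approach as the paper. The paper's proof is very terse—it simply cites Lemma~\ref{lemma:PropsP} for $Q_1$, then says the derivative estimates follow by differentiating \eqref{def:G1G2} and invoking Theorem~\ref{thm:parametrcommdomain} and Lemma~\ref{lem:HomoInfinity}, and that $Q_2$ is handled analogously using the improved $\Lambda^u$ estimate—while you have unfolded those references into explicit term-by-term bookkeeping.
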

\begin{proof}
Lemma \ref{lemma:PropsP} gives the estimate for $Q_1$. Analogous estimates can be obtained for its derivatives, differentiating \eqref{def:G1G2}  and using the estimates for $Z^u$ and its derivatives in Theorem \ref{thm:parametrcommdomain} and Lemma \ref{lem:HomoInfinity}. To estimate $Q_2$ and its derivatives one can proceed analogously taking into account the improved estimates for $\La^u$ in Theorem \ref{thm:parametrcommdomain}.
\end{proof}

% However, the bounds for $Q_2$ are not enough. To improve them, we use the
% analysis of the fixed point argument performed in Section
% \ref{sec:ParamInvariantMani}. Note that, using the definition of the operator
% $\FF$ in \eqref{def:OperatorInfinity},  $L_1$-component of the fixed point
% obtained in Proposition \ref{thm:ExistenceManiFixedPt} can be written as
% \[
% L_1=L_1^0+L_1' \quad\text{ with }\quad L_1^0=\FF_{L}(0)\quad
% \text{ and }\quad L_1'=\FF_L(Z_1)- \FF_{L}(0).
% \]
% By  \eqref{def:OperatorInfinity}, $L_1^0=G_0\GG(\pa_\la \wt V\circ A_0)$. Then,
% by Lemma \ref{lemma:PrimeraIteracioExitVar} and taking into account that
% $\langle \pa_\la \wt V\circ A_0\rangle=0$, one has
% \[
%  \|L_1^0\|_{3/2,\sigma}\lesssim G_0^{-6}
% \]
% Moreover, by \eqref{def:Exist:RHS} and the analysis of the Lipschitz constant
% in Lemmas \ref{lemma:LipschitzVarietats:F1}, \ref{lemma:LipschitzVarietats:F3}
% and \ref{lemma:Operator:1},
% \[
%  \|L_1'\|_{1/2\sigma}\lesssim \|\GG(F_2(Z_1)-
% \FF_{L}(0)\|_{1/2\sigma}\lesssim G_0^{-9/2}.
% \]
% Then, using the definition of $P$ in \eqref{def:POperator}, $Q_2$ can be
% written as
% \begin{equation}\label{def:Q20prime}
%  Q_2=Q_2^0+Q_2'\qquad \text{ where }\quad Q_2^0=-\frac{3L_1}{L_0}
% \end{equation}
% and $Q_2'$ contains the rest of the terms. Then,
% \begin{equation}\label{def:Q2:ImprovedEstimates}
%  \|Q_2^0\|_{3/2,\sigma}\lesssim G_0^{-6},\qquad  \|Q_2'\|_{1/2,\sigma}\lesssim
% G_0^{-4}
% \end{equation}
%
We obtain a solution of equation \eqref{eq:Straightening}  by considering a
left inverse of the operator $\LL$ in the space $\QQ_{1/2,\kk,\de,\sigma}$ and
setting up a fixed point argument.

We define the following operator acting on the Fourier
coefficients as
\begin{equation}\label{def:canvi:operadorG}
\wt\GG(h)(u,\ga)=\sum_{q\in\ZZ}\wt\GG(h)^{[q]}(u)e^{iq\gamma},
\end{equation}
where its Fourier coefficients are given by
\begin{align*}
\dps\wt\GG(h)^{[q]}(u)&= \int_{\ol{u}_2}^u e^{iq \nu G_0^3L_0^{-3}
(t-u)}h^{[q]}(t)\,dt& \text{ for }q< 0\\
\dps\wt\GG(h)^{[0]}(u)&=\int_{u^\ast}^u
h^{[0]}(t)\,dt
% -\frac{1}{2}\int_{u^\ast}^{u_2}
% h^{[0]}(t)\,dt-\frac{1}{2}\int_{u^\ast}^{\ol u_2} h^{[0]}(t)\,dt
&
\\
\dps\wt\GG (h)^{[q]}(u)&=\int_{u_2}^u e^{iq \nu G_0^3L_0^{-3}
(t-u)}h^{[q]}(t)\,dt& \text{ for }q>0.
\end{align*}
Here  $u_2=i(1/3-\kk G_0^{-3})$ is the  top vertex of the
domain $D_{\kk,\de}$, $\ol u_2$ is its conjugate, which corresponds to the
bottom vertex of the domain  $D_{\kk,\de}$ and  $u^\ast$ is the left endpoint
of $D_{\kk,\de}\cap\RR$.

\begin{lemma}\label{lemma:diff:OperadorG}
The operator $\wt\GG$  in~\eqref{def:canvi:operadorG} satisfies that
\begin{itemize}
\item If $h\in \QQQ_{\nu,\kk,\de,\sigma}$ for some $\nu\in (0,1)$, then
$\wt\GG(h)\in \QQQ_{0,\kk,\de,\sigma}$  and $\left\|\wt \GG(h)\right\|_{0,\sigma}\leq K\|h\|_{\nu,\sigma}$.
\item If $h\in \QQQ_{1,\kk,\de,\sigma}$, then
$\wt\GG(h)\in \QQQ_{0,\kk,\de,\sigma}$  and
$
\left\|\wt \GG(h)\right\|_{0,\sigma}\leq K\ln G_0\|h\|_{1,\sigma}.
$
\item If $h\in \QQQ_{\nu,\kk,\de,\sigma}$ for some $\nu>1$, then
$\wt\GG(h)\in \QQQ_{\nu-1,\kk,\de,\sigma}$  and
$
\left\|\wt \GG(h)\right\|_{\nu-1,\sigma}\leq K\|h\|_{\nu,\sigma}.
$
\item If $h\in \QQQ_{\nu,\kk,\de,\sigma}$ for some $\nu>0$, then
$\pa_v\wt\GG(h)\in \QQQ_{\nu,\kk,\de,\sigma}$  and
$
\left\|\pa_v\wt \GG(h)\right\|_{\nu,\sigma}\leq K\|h\|_{\nu,\sigma}.
$
\item If $h\in \QQQ_{\nu,\kk,\de,\sigma}$ for some $\nu>0$ and $\langle
h\rangle=0$, then
$\wt\GG(h)\in \QQQ_{\nu,\kk,\de,\sigma}$  and
\[
\left\|\wt \GG(h)\right\|_{\nu,\sigma}\leq KG_0^{-3}\|h\|_{\nu,\sigma}.
\]
\end{itemize}
Moreover,
% \begin{itemize}
% \item
if $h$ is a real-analytic Fourier series, that is $h^{[q]}(\ol u)=\ol{h^{[-q]}(u)}$, then so is $\wt \GG(h)$.
% \item If $h$ satisfies $h(-v,-\xi)=h(v,\xi)$, one has that
% \[
%  \wt\GG(h)(-v,-\xi)=-\wt\GG(h)(v,\xi).
% \]
% \end{itemize}
\end{lemma}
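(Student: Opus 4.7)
Since the operator $\wt\GG$ is diagonal in the Fourier expansion, the plan is to estimate each coefficient $\wt\GG(h)^{[q]}(u)$ on $D_{\kappa,\delta}$ in the norm $\|\cdot\|_{m,q}$ and then sum with the weight $e^{|q|\sigma}$ to recover the Banach space estimates. The argument is structurally very close to the proof of Lemma~\ref{lemma:Operator:1} for the operator $\GG$ carried out in \cite{GuardiaMS16}. The two main differences are that here the contours of integration are bounded --- the endpoints $u_2,\overline{u}_2$ lie at distance $\kappa G_0^{-3}$ from the singularities $u=\pm i/3$, and $u^\ast$ at distance $O(1)$ --- and that both endpoints $\pm i/3$ of $D_{\kappa,\delta}$ are simultaneously relevant for the weight, whereas in the unbounded setting only one of them was.

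First I would fix $q>0$ and deform the contour from $u_2$ to $u$ inside $D_{\kappa,\delta}$ to a polygonal path chosen so that two properties hold at once: (i) $|e^{iq\omega(t-u)}|$ is non-increasing as $t$ moves from $u_2$ toward $u$, which is possible because $\omega=\nu G_0^3/L_0^3>0$ and the geometry of $D_{\kappa,\delta}$ forces $\Im(u_2-u)\ge 0$; and (ii) $|t-i/3|$ is essentially monotone increasing away from the starting vertex. Writing $h^{[q]}(t)=|t-i/3|^{-m}|t+i/3|^{-m}e^{-iq\phi_\h(t)}\varphi(t)$ with $|\varphi(t)|\le \|h^{[q]}\|_{m,q}$ and using Lemma~\ref{lem:HomoInfinity} to express $e^{iq(\phi_\h(u)-\phi_\h(t))}$ in closed form, the integrand is dominated by $\|h^{[q]}\|_{m,q}\,|t-i/3|^{-m}|t+i/3|^{-m}$ up to oscillatory factors bounded uniformly in $q$. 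The case $q<0$ is handled identically with the conjugate contour from $\overline u_2$, and the case $q=0$ is a direct integration from $u^\ast\in\RR$.

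From this master bound, each quantitative claim follows by a standard estimate of $\int_{u_2}^u |t\mp i/3|^{-m}\,|dt|$ along the chosen path: the integral is uniformly bounded for $m\in(0,1)$, giving claim 1; it is of size $\ln G_0$ at $m=1$, since the lower endpoint lies at distance $\kappa G_0^{-3}$ from $i/3$, giving claim 2; and it yields a factor $|u\mp i/3|^{-(m-1)}$ for $m>1$, corresponding to the loss of one weight index, giving claim 3. Claim 4 reduces to the identity $\pa_u \wt\GG(h)^{[q]}=h^{[q]}-iq\omega\,\wt\GG(h)^{[q]}$ which expresses that $\wt\GG$ is a left inverse of $\LL$ on each Fourier mode: the first term has the right norm by hypothesis, and the second is of the same size after one integration by parts, which cancels the factor $|q|\omega\gtrsim |q|G_0^3$. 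For claim 5 the zero-average hypothesis excludes $q=0$, so a single integration by parts using the antiderivative $e^{iq\omega(t-u)}/(iq\omega)$ produces a uniform gain of $1/|q\omega|\lesssim G_0^{-3}$, and the boundary terms at $t=u_2$ are absorbed by the weight. Finally, claim 6 is immediate from the symmetry of the endpoint prescription, $\overline{u_2}=\overline u_2$ and $u^\ast\in\RR$, which forces complex conjugation of the integral defining $\wt\GG(h)^{[q]}(\overline u)$ to return the integral defining $\wt\GG(h)^{[-q]}(u)$.

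The main obstacle will be the contour construction in the second paragraph, namely the simultaneous control along one path of three competing exponential factors: the algebraic weight $|t-i/3|^{-m}$, singular at the starting vertex; the oscillatory kernel $e^{iq\omega(t-u)}$ of large frequency $|q|G_0^3$; and the factor $e^{iq[\phi_\h(u)-\phi_\h(t)]}$ of frequency $|q|$ inherited from the weight on $h$. A careless bound would combine these in a way that produces a spurious positive power of $G_0^3$ near the vertex. The resolution is to use the explicit expression of $\phi_\h$ given by Lemma~\ref{lem:HomoInfinity} to show that $\omega(t-u)+\phi_\h(u)-\phi_\h(t)$ has monotone imaginary part along a concrete polygonal contour consisting of a short vertical descent from $u_2$ followed by a path to $u$ staying away from the singularities. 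This is exactly the Lazutkin-type phase analysis already performed in \cite{GuardiaMS16} for the unbounded analogue of $\wt\GG$, and it adapts to the present bounded-contour setting with only minor bookkeeping modifications at the upper endpoint.
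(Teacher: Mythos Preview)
The paper gives no proof here, deferring instead to Lemma~8.3 of~\cite{GuardiaOS10}. Your contour-deformation scheme is exactly the method of that reference (and of the analogous Lemma~\ref{lemma:Operator:1}, proved in~\cite{GuardiaMS16}), and your handling of items~1--3 and of the real-analyticity claim is correct.

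There is, however, a gap in your argument for item~5 (and through it item~4). Integrating by parts against the antiderivative $e^{iq\omega(t-u)}/(iq\omega)$ produces, in addition to the boundary terms you discuss, the bulk term
\[
-\frac{1}{iq\omega}\int_{u_2}^{u} e^{iq\omega(t-u)}\,(h^{[q]})'(t)\,dt,
\]
and the hypotheses give no direct control on $(h^{[q]})'$. A Cauchy estimate on $D_{\kappa,\delta}$ costs a factor $\sim G_0^{3}$ near the vertices $u_2,\overline{u}_2$ (which sit at distance $\kappa G_0^{-3}$ from $\pm i/3$), so the gain $1/(|q|\omega)$ is exactly cancelled and one is forced back into the full contour analysis for the derivative; the ``single integration by parts'' is therefore not a shortcut.

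The fix is already contained in your framework: in the master bound, keep the kernel factor $|e^{iq\omega(t-u)}|=e^{-|q|\omega\,\Im(t-u)}$ explicit rather than bounding it by~$1$. Along the contour you describe---$\Im t$ monotone toward $\Im u$ with slope bounded away from horizontal---one has $\int_\Gamma e^{-|q|\omega\,\Im(t-u)}\,|dt|\le C/(|q|\omega)$, and the phase combination $e^{iq(\phi_\h(u)-\phi_\h(t))}$ that you analyze in the last paragraph keeps the weight ratio $W(u)/W(t)$ under control along $\Gamma$. This yields the per-mode estimate $\|\wt\GG(h)^{[q]}\|_{\nu,q}\le K(|q|\omega)^{-1}\|h^{[q]}\|_{\nu,q}$ for $q\neq 0$; summing gives item~5, and inserting into the identity $\partial_v\wt\GG(h)^{[q]}=h^{[q]}-iq\omega\,\wt\GG(h)^{[q]}$ gives item~4 with no loss.
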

% Th first part of
This lemma can be  proven as Lemma~8.3
of~\cite{GuardiaOS10}.
% The second part can be found in \cite{GuardiaMS16}.

\begin{proof}[Proof of Proposition \ref{prop:straighteninLhat}]
We prove Proposition \ref{prop:straighteninLhat} by looking for a
fixed point of the operator
\begin{equation}\label{def:Straightening:operator}
\wt\KK=\wt\GG\circ\KK,\qquad \KK(\CCC)(v,\ga)= \left.
\frac{Q_1(u,\ga)-Q_2(u,\ga)}{1+Q_2(u,\ga)}\right|_{u=v+\CCC(v,\ga)}
\end{equation}
where $\wt\GG$ is the operator introduced in \eqref{def:canvi:operadorG} and
$Q_1$, $Q_2$ are the formal Fourier series in \eqref{def:Q1Q2}.

We write $\KK(0)$ as
\[
 \KK(0)=\frac{Q_1-Q_2}{1+Q_2}=Q_1-\frac{Q_2(1+Q_1)}{1+Q_2}.
\]
Note that, by \eqref{def:Q1Q2Estimates}, the second term satisfies
\[
 \left\|\frac{Q_2(1+Q_1)}{1+Q_2}\right\|_{1,\sigma_2}\lesssim G_0^{-9/2}.
\]
% where $Q_2^0$ satisfes  \eqref{def:Q2:ImprovedEstimates} and, by
% \eqref{def:Q1Q2Estimates}, the second term satisfies
% \[
%  \left\|\frac{Q_1-Q_2'+Q_2^0Q_2}{1+Q_2}\right\|_{1/2,\sigma}\lesssim G_0^{-4}
% \]
Now, by Lemmas
\ref{lemma:diff:OperadorG} and  \ref{lemma:banach:AlgebraProps},
there exists a constant $b_6>0$ independent of $G_0$, such that
% $\wt\KK(0)$ satisfies
\[
\begin{split}
\left\|\wt \KK(0)\right\|_{0,\sigma_3}=\left\|\wt\GG\circ \KK(0)\right\|_{0,\sigma_3}&\,\leq\left\|\wt \GG(Q_1)\right\|_{0,\sigma_3}+\left\|\wt\GG\left(\frac{Q_2(1+Q_1)}{1+Q_2}\right)\right\|_{0,\sigma_3}\\
 &\,\leq\|Q_1\|_{1/2,\sigma_3}+\ln G_0\left\|\frac{Q_2(1+Q_1)}{1+Q_2}\right\|_{1,\sigma_3}\\
&\,\leq \frac{b_6}{2}G_0^{-4}\ln G_0.
\end{split}
\]
% , there
% exists $b_6>0$ independent of $G_0$ such that
%  $\|\KK(0)\|_{1/2,\sigma}\leq b_6G_0^{-4}/2$.
 Now we prove that $\wt\KK$ is
a Lipschitz operator in the ball $B(b_6 G_0^{-4}\ln G_0)\subset
\QQQ_{0,\kk_3,\de_3,\sigma_3}$.
Take $g_1,g_2\in B(b_6 G_0^{-4}\ln G_0)\subset
\QQQ_{0,\kk_3,\de_3,\sigma_3}$.  By Lemma \ref{lemma:banach:Composition} and
estimates \eqref{def:Q1Q2Estimates},
\[
% \begin{split}
 \left\|\KK(g_2)-\KK(g_1)\right\|_{3/2, \sigma_3}\lesssim
\left\|\pa_u\left[\frac{Q_1(u,\ga)-Q_2(u,\ga)}{1+Q_2(u,\ga)}
\right]\right\|_{3/2,\sigma_2}\|g_2-g_1\|_{0,\sigma_3}\lesssim G_0^{-3}\|g_2-g_1\|_{0,\sigma_3}.
%  \end{split}
\]
Then, by Lemma \ref{lemma:banach:AlgebraProps} and \ref{lemma:diff:OperadorG},
\[
% \begin{split}
\left\|\wt\KK(g_2)-\wt \KK(g_1)\right\|_{0, \sigma_3}\lesssim
G_0^{3/2}\left\|\wt\KK(g_2)-\wt \KK(g_1)\right\|_{1/2, \sigma_3}
\lesssim G_0^{3/2}\left\|\KK(g_2)-\KK(g_1)\right\|_{3/2, \sigma_3}\lesssim G_0^{-3/2}\|g_2-g_1\|_{0,\sigma_3}.
% \end{split}
\]
Thus, taking $G_0$ large enough, the operator $\wt\KK$ is a contractive
operator $B(b_6 G_0^{-4}\ln G_0)\subset
\QQQ_{0,\kk_3,\de_3,\sigma_3}$. 
The fix point of the operator gives the change
of coordinates provided in Proposition \ref{prop:straighteninLhat}.

To obtain the estimates for $\pa_v\CCC$ it is enough to use that we have seen that for $\CCC\in B(b_6 G_0^{-4}\ln G_0)\subset
\QQQ_{0,\kk_3,\de_3,\sigma_3}$, $\KK(\CCC)$ satisfies $\KK(\CCC)\in
\QQQ_{1/2,\kk_3,\de_3,\sigma_3}$ and $\|\KK(\CCC)\|_{1/2,\sigma_3}\lesssim G_0^{-3}$. Then, by Lemma \ref{lemma:diff:OperadorG}, $\pa_v\CCC=\pa_v\wt\GG\circ\KK(\CCC)\in \QQQ_{1/2,\kk_3,\de_3,\sigma_3}$ and satisfies $\|\pa_v\wt\GG\circ\KK(\CCC)\|_{1/2,\sigma_3}\lesssim G_0^{-3}$. The estimates for  $\pa_\ga\CCC$ are obtained through the identity
\[
 \pa_\ga \CCC=\frac{L_0^3}{\nu G_0^3}\left(\KK(\CCC)-\pa_v\CCC\right).
\]
Finally, the estimates for $\wt B$ and  $\wt R$ are a direct consequence of the estimates for $\CCC$
just obtained, the estimate of $R$ in Lemma \ref{lemma:boundsR}, the identity \eqref{def:Q1Q2}, estimates \eqref{def:Q1Q2Estimates}, the
definition of $A$  in \eqref{def:A}, the estimates of the functions $f_1$ and $f_2$ given in Lemma \ref{lemma:f1f2} and the condition $|\alo| G_0^{3/2}\ll 1$.
\end{proof}

Now, we are ready to prove Theorem \ref{thm:DifferenceSymplecticStraightening}.

\begin{proof}[Proof of Theorem \ref{thm:DifferenceSymplecticStraightening}]
It is straightforward to check that the transformation \eqref{def:SymplecticStraight} is symplectic. It only remains to obtain the estimates for $\BB$ and $\RRR$. To this end, it is enough to apply the transformation
\[
Y= \frac{1}{1+\pa_v\CCC(v,\ga)}\wt Y,\qquad
\La=\wt \La-\frac{\pa_\ga\CCC(v,\ga)}{1+\pa_v\CCC(v,\ga)}\wt Y
\]
to equation \eqref{eq:Diffeq:transf} to obtain the  formulas for the
coefficients  $(\BB+\RRR)_{ij}$. To this end, to a $4\times 4$ matrix $M$
whose entries $M_{ij}$ are functions of $(v,\ga)$ we define the following
$4\times 4$ matrix $\JJ(M)$ whose coefficients $\JJ(M)_{ij}$ are defined a
\[
 \begin{split}
  \JJ(M)_{11}&=
M_{11}+\frac{\pa_v\KK(\CCC)}{1+\pa_v\CCC}-M_{12}\pa_\ga\CCC,\quad
\JJ(M)_{1j}=\left(1+\pa_v\CCC\right) M_{1j},\quad j=2,3,4,\\
  \JJ(M)_{21}&=\frac{M_{21}+\pa_\ga\CCC M_{11}+\pa_\ga\KK(\CCC)-
M_{22}\pa_\ga\CCC-\left(\pa_\ga\CCC\right)^2 M_{12}}{1+\pa_v\CCC} \\
  \JJ(M)_{2j}&=M_{2j}+\pa_\ga\CCC M_{1j},\quad j=2,3,4\\
  \JJ(M)_{i1}&=\frac{M_{i1}+M_{i2}\pa_\ga\CCC}{1+\pa_v\CCC},\quad
\JJ(M)_{ij}=M_{ij},\quad i=3,4, j=2,3,4.
 \end{split}
\]

We split $\BB$ and $\RRR$ as before. That is, $\BB_{ij}=0$ for $ij\neq 21$ and
$\RRR_{21}=0$. Then, the coefficients of the matrix $\BB$ and  $\RRR$ in
Theorem \ref{thm:DifferenceSymplecticStraightening} are defined as
\[
\BB_{21}=\frac{\wt B_{21}+\pa_\ga\CCC \wt R_{11}+\pa_\ga\KK(\CCC)-
\wt R_{22}\pa_\ga\CCC-\left(\pa_\ga\CCC\right)^2 \wt R_{12}}{1+\pa_v\CCC}
\]
and
\[
 \RRR=\JJ(A)-A+\JJ(\wt R)-(\BB-\wt B)
\]
where $A$ and $\wt R$ are the matrices defined in
\eqref{def:A} and Proposition \ref{prop:straighteninLhat} respectively. This
implies that $\RRR_{ij}=\JJ(\wt R)_{ij}$ for all coefficients except
$\RRR_{21}=0$ and
\[
 \RRR_{i1}=\frac{-A_{i1}\pa_v\CCC+\wt R_{i1}+\pa_\ga\CCC\left(A_{i2}+\wt R_{i2}\right)}{1+\pa_v\CCC}
\]

Then, one can obtain the estimates for the coefficients of $\RRR$ using these definitions, the estimates for $\wt R$ and $\CCC$ in Proposition \ref{prop:straighteninLhat}, the estimates for the matrix $A$ given in Lemma
\ref{lemma:f1f2} (see the definition of $A$ in \eqref{def:A}) and the condition $|\alo| G_0^{3/2}\ll 1$. For the bounds of $\pa_v\KK(\CCC)$ and $\pa_\ga\KK(\CCC)$ one has to use the definition of $\KK(\CCC)$ in \eqref{def:Straightening:operator} to obtain
\[
\begin{split}
\pa_v\KK(\CCC)(v,\ga)=&\, \pa_u\left[\left.
\frac{Q_1(u,\ga)-Q_2(u,\ga)}{1+Q_2(u,\ga)}\right]\right|_{u=v+\CCC(v,\ga)} \left(1+\pa_v\CCC(v,\ga)\right)\\
\pa_\ga\KK(\CCC)(v,\ga)=&\, \pa_u\left[\left.
\frac{Q_1(u,\ga)-Q_2(u,\ga)}{1+Q_2(u,\ga)}\right]\right|_{u=v+\CCC(v,\ga)} \pa_\ga\CCC(v,\ga)\\
&\,+ \pa_\ga\left[\left.
\frac{Q_1(u,\ga)-Q_2(u,\ga)}{1+Q_2(u,\ga)}\right]\right|_{u=v+\CCC(v,\ga)}
\end{split}
\]
Then, using the estimates in \eqref{def:Q1Q2Estimates} and Lemma \ref{lemma:banach:Composition}, one has
\[
 \|\pa_v\KK(\CCC)\|_{3/2,\sigma_3}\lesssim G_0^{-3},\qquad  \|\pa_\ga\KK(\CCC)\|_{1,\sigma_3}\lesssim G_0^{-11/2}.
\]
\end{proof}

\subsection{The general solution for the straightened linear
system}\label{sec:FundamentalMatrix}

Now, we solve the linear equation \eqref{eq:Diffeq:transf} by looking for a fundamental matrix $\Psi$ satisfying
\begin{equation}\label{def:LinearPDE}
 \LL\Psi=(A+\BB+\RRR)\Psi.
\end{equation}
Note that  in \eqref{def:FundamentalMatrix} we have obtained a fundamental matrix $\Phi_A$ of the linear equation $\LL\Psi=A\Psi$. 
However, it can be easily seen that this matrix does not have good estimates with respect to the norm introduced in \eqref{def:MatrixNorm}. 
Thus, we modify it slightly. 
Let us introduce the notation $\Phi_A= (V_1, \ldots, V_4)$ where $V_i$ are the columns of the matrix. Then, we define the new fundamental matrix 
\begin{equation}\label{def:FundamentalMatrixModified}
% \wt\Phi_A=(\wt V_1, \ldots \wt V_4), \ \mbox{where} 
\wt\Phi_A=(\wt V_1, \ldots \wt V_4)\qquad \text{  defined as }\qquad
\begin{aligned}
\wt V_1&=V_1-\alo g_1\left(\frac{i}{3}\right)V_3+\beto g_1\left(-\frac{i}{3}\right)V_4\\ \wt V_2&=V_2-\alo g_2\left(\frac{i}{3}\right)V_3+\beto g_2\left(-\frac{i}{3}\right)V_4\\ \wt V_j&=V_j, j=3,4.
\end{aligned}
\end{equation}

\begin{lemma}\label{lemma:FundMatEstimates}
Assume $|\alo| G_0^{3/2}\ll1$. The fundamental matrix $\wt\Phi_A$ and its inverse  $\wt\Phi_A\ii$ satisfy $\wt\Phi_A,\wt\Phi_A\ii\in \QQ_{0,\kk_3,\de_3,\sigma_3}$ and
\[
 \left\|\wt\Phi_A\right\|_{0,\sigma_3,\matrx}\lesssim 1,\qquad  \left\|\wt\Phi_A\ii\right\|_{0,\sigma_3,\matrx}\lesssim 1.
\]
Moreover, the matrices $\Phi_A$ in \eqref{def:FundamentalMatrix} and $\wt \Phi_A$ in \eqref{def:FundamentalMatrixModified} are related as $ \Phi_A=\wt \Phi_A \JJ$ where $\JJ$ is a constant matrix which satisfies
\[
\JJ=\Id+\OO(|\alo|).
\]
Moreover,  the  $\OO(|\alo|)$ terms are only present in the third and fourth row of the matrix.
\end{lemma}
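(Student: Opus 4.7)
The plan is to compute $\wt\Phi_A$ and $\wt\Phi_A^{-1}$ explicitly using the block structure of $\Phi_A$, and then verify the weighted matrix-norm bounds column by column using the properties of the antiderivatives $g_1, g_2^\ast$ deduced from Lemma \ref{lemma:f1f2}.

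First, I would encode the column operations defining $\wt V_1, \wt V_2$ in \eqref{def:FundamentalMatrixModified} as right multiplication $\wt\Phi_A = \Phi_A\,J_0$, where $J_0 = \Id + K$ is lower triangular with unit diagonal and the only nonzero entries of $K$ sit in rows $3,4$ and columns $1,2$, namely $K_{31} = -\alo g_1(i/3)$, $K_{41} = \beto g_1(-i/3)$, $K_{32} = -\alo g_2^\ast(i/3)$, $K_{42} = \beto g_2^\ast(-i/3)$. Since $f_1 \in \XX_{2/3,0}$ is bounded at $\pm i/3$ by Lemma \ref{lemma:f1f2}, $g_1$ is analytic there, while $g_2^\ast$, obtained by integrating an $f_2$-type $(u\mp i/3)^{-1/2}$ singularity, has a finite limit at $\pm i/3$; in either case $|g_1(\pm i/3)|, |g_2^\ast(\pm i/3)| \lesssim G_0^{-1}$. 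Hence $\JJ := J_0^{-1}$ is lower triangular with unit diagonal and entries of size $\lesssim |\alo|G_0^{-1}$ confined to rows 3 and 4, which gives the last assertion of the lemma.

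For the bound on $\wt\Phi_A$, I would estimate the four columns separately against \eqref{def:MatrixNorm}. Columns 3 and 4 coincide with those of $\Phi_A$; their entries $1 \pm \alo\beto g_2^\ast$ and $\pm\alo^2 g_2^\ast, \pm\beto^2 g_2^\ast$ are manifestly bounded in $\|\cdot\|_{0,\sigma_3}$ (using $|\alo\beto|\lesssim G_0^{-3}$), and the weights $e^{\mp 2i\phi_h}$ in the off-diagonal row entries absorb the mild $(u\mp i/3)^{1/2}$ behavior of $g_2^\ast$. For the modified columns, a direct expansion gives
\[
\wt V_{1,3}(u) = \alo\bigl[g_1(u)-g_1(i/3)\bigr] + \alo^2\beto\bigl[g_1(i/3)-g_1(-i/3)\bigr]\,g_2^\ast(u),
\]
and an analogous expression for $\wt V_{1,4}, \wt V_{2,3}, \wt V_{2,4}$ obtained by swapping roles of $(i/3,-i/3)$ and $(\alo, \beto)$. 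The first term vanishes linearly at $u = i/3$ with constant $\lesssim |\alo|G_0^{-1}$, and the second has size $|\alo|^2|\beto|G_0^{-2}$. Multiplying by the weight $e^{i\phi_h}(u)(u-i/3)^{-1}(u+i/3)^{-1}$ from $\|\cdot\|_{-1,\sigma_3}$, using $e^{i\phi_h} \sim (u-i/3)^{-1/2}$ near $i/3$ and the domain constraint $|u-i/3| \geq \kk_3 G_0^{-3}$, both contributions remain $\lesssim 1$ thanks to the assumption $|\alo|G_0^{3/2} \ll 1$. The estimate at $u = -i/3$ is symmetric and the analysis for $\wt V_2$ is identical.

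For the inverse, I would use the block-triangular form $\Phi_A = \bigl(\begin{smallmatrix}I_2 & 0\\ M & N\end{smallmatrix}\bigr)$, verify $\det N = (1-\alo\beto g_2^\ast)(1+\alo\beto g_2^\ast) + \alo^2\beto^2(g_2^\ast)^2 = 1$ by direct computation, and observe the remarkable identity $N^{-1}M = M$, which follows by explicit $2\times 2$ matrix multiplication using that the off-diagonal entries of $N^{-1}$ carry a factor $\alo^{\pm 2}g_2^\ast$ or $\beto^{\pm 2}g_2^\ast$ that exactly cancels the $\alo, \beto$-factors in $M$. Consequently
\[
\wt\Phi_A^{-1} = J_0^{-1}\Phi_A^{-1} = \begin{pmatrix} I_2 & 0 \\ -M - K & N^{-1} \end{pmatrix},
\]
and the bottom-left block has entries exactly of the form $\pm\alo[g_k(u)-g_k(\pm i/3)]$ or $\pm\beto[g_k(u)-g_k(\mp i/3)]$, which are estimated by the same cancellation argument as in the preceding paragraph. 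The main obstacle is precisely this weighted-norm analysis near $u = \pm i/3$: the weight $|u\mp i/3|^{-1}$, the $(u\mp i/3)^{-1/2}$ blow-up of $e^{\pm i\phi_h}$, and the minimal distance $\kk_3 G_0^{-3}$ to the singularity together produce a potential $G_0^{9/2}$ blow-up, which the cancellations built into \eqref{def:FundamentalMatrixModified} (linear vanishing of $g_1, g_2^\ast$ at the subtracted base points) together with the smallness $|\alo|G_0^{3/2} \ll 1$ are precisely calibrated to absorb.
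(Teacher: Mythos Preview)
Your proposal is correct and follows the same approach as the paper, which simply states that the lemma is ``a direct consequence of the definition of $\wt\Phi_A$ in \eqref{def:FundamentalMatrixModified} and Lemma \ref{lemma:f1f2}''; you have supplied the explicit column-by-column verification, the block-triangular inversion (including the identity $N^{-1}M=M$), and the calibration of the weighted norm against the singularity distance $\kk_3 G_0^{-3}$ under the hypothesis $|\alo|G_0^{3/2}\ll 1$. One small imprecision: for $g_2^\ast$ the subtracted difference $g_2^\ast(u)-g_2^\ast(\pm i/3)$ vanishes like $(u\mp i/3)^{1/2}$ rather than linearly (since $f_2\in\XX_{4/3,1/2}$ blows up like $(u\mp i/3)^{-1/2}$), but this only improves the resulting bound, so the conclusion stands.
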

The proof of this lemma is a direct consequence of the definition of $\wt\Phi_A$ in \eqref{def:FundamentalMatrixModified} and Lemma \ref{lemma:f1f2}.

Now we look for a fundamental matrix \eqref{def:LinearPDE} as $\Psi=\wt\Phi_A(\Id+\wt\Psi)$. Then, $\wt\Psi$
must satisfy
\begin{equation}\label{eq:PDEFundMatrix}
 \LL\wt \Psi=\wt\Phi_A\ii(\BB+\RRR)\wt\Phi_A(\Id+\wt\Psi).
\end{equation}
Next theorem gives a matrix solution of this equation.
\begin{theorem}\label{thm:fundamentalmatrix}
Let $\sigma_3$, $\kk_3$ and $\de_3$ be the constants considered in Theorem \ref{thm:DifferenceSymplecticStraightening}. Then, for $G_0$
big enough and $|\alo| G_0^{3/2}$ small enough, there exists a matrix $\wt\Psi\in\QQQ_{1/2,\kk_3,\de_3,\sigma_3,\matrx}$ wich is a solution of  equation \eqref{eq:PDEFundMatrix} and satisfies
\[\left\|\wt\Psi\right\|_{1/2,\sigma_3, \matrx}\lesssim G_0^{-3}\ln G_0.\]
Moreover, for
\[\left\|\wt\Psi_{21}\right\|_{0,\sigma_3, \matrx}\lesssim G_0^{-9/2}\ln G_0.\]
\end{theorem}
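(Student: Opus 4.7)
The plan is a standard Banach fixed-point argument for the matrix PDE \eqref{eq:PDEFundMatrix}, applying the operator $\wt\GG$ entrywise to convert it into an integral equation. More precisely, set $M = \wt\Phi_A^{-1}(\BB+\RRR)\wt\Phi_A$ and introduce the operator
\[
\NNN(\wt\Psi) = \wt\GG\!\left[M(\Id+\wt\Psi)\right],
\]
where $\wt\GG$ acts componentwise on matrices. Any fixed point of $\NNN$ in $\QQQ_{1/2,\kk_3,\de_3,\sigma_3,\matrx}$ will satisfy \eqref{eq:PDEFundMatrix} since $\LL\circ\wt\GG = \Id$.

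The source term is estimated using multiplicativity of the matrix norm (Lemma \ref{lemma:NormMatrix}) and the bounds of Theorem \ref{thm:DifferenceSymplecticStraightening} and Lemma \ref{lemma:FundMatEstimates}: we obtain $\|\wt\Phi_A^{-1}\RRR\wt\Phi_A\|_{3/2,\sigma_3,\matrx} \lesssim G_0^{-3}$ and $\|\wt\Phi_A^{-1}\BB\wt\Phi_A\|_{2,\sigma_3,\matrx} \lesssim G_0^{-11/2}$, with the $\BB$-contribution concentrated on one column because $\BB$ has a single nonzero entry. Applying $\wt\GG$ entrywise and using the different statements of Lemma \ref{lemma:diff:OperadorG} (item 3 for entries of weight strictly greater than $1$, item 2 for the weight-$1$ entry coming from $\BB_{21}$, item 1 for entries of weight in $(0,1)$), together with the continuous inclusion $\QQQ_\nu \hookrightarrow \QQQ_{\nu-\eta}$ from Lemma \ref{lemma:banach:AlgebraProps} at cost $G_0^{3\eta}$, yields
\[
\|\NNN(0)\|_{1/2,\sigma_3,\matrx}\ \lesssim\ G_0^{-3}\ln G_0.
\]
The logarithm appears only through the weight-$1$ entry produced by $\BB$; the $\RRR$ contribution alone gives the clean bound $G_0^{-3}$.

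For the Lipschitz estimate, $\NNN(\wt\Psi_1) - \NNN(\wt\Psi_2) = \wt\GG[M(\wt\Psi_1 - \wt\Psi_2)]$, and multiplicativity together with the same application of $\wt\GG$ gives
\[
\|\NNN(\wt\Psi_1) - \NNN(\wt\Psi_2)\|_{1/2,\sigma_3,\matrx}\ \lesssim\ G_0^{-3/2}\ln G_0\,\|\wt\Psi_1 - \wt\Psi_2\|_{1/2,\sigma_3,\matrx},
\]
so $\NNN$ is a contraction on the closed ball of radius $b\,G_0^{-3}\ln G_0$ in $\QQQ_{1/2,\kk_3,\de_3,\sigma_3,\matrx}$ for an appropriate constant $b>0$ and $G_0$ large enough. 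The Banach fixed-point theorem then produces the desired $\wt\Psi$ satisfying the first estimate.

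The refined bound on $\wt\Psi_{21}$ is obtained by tracking that specific entry in the fixed-point equation. Since $\RRR_{21}=0$ and the only nonzero entry of $\BB$ is $\BB_{21}$ with $\|\BB_{21}\|_{1,\sigma_3}\lesssim G_0^{-11/2}$, the $(2,1)$-component of $\wt\GG[M]$ inherits size $G_0^{-11/2}\ln G_0$ (Lemma \ref{lemma:diff:OperadorG}, item 2, applied to a weight-$1$ function), and the feedback term $\wt\GG[M\wt\Psi]_{21}$ is a sum of bilinear contributions controlled by $\|M\|\cdot\|\wt\Psi\|\lesssim G_0^{-3}\cdot G_0^{-3}\ln G_0 = G_0^{-6}\ln G_0$; this is absorbed into the stated $G_0^{-9/2}\ln G_0$ bound. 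The main obstacle in this whole argument is not any single step but the bookkeeping of entrywise weights in the matrix norm \eqref{def:MatrixNorm}: each of the sixteen entries sits in a different scalar space $\QQQ_{\nu,\sigma}$, and one must verify at which entries the critical weight $\nu=1$ is hit (producing the logarithmic loss) and at which ones the inclusion between spaces at different weights costs a power of $G_0$. Once this bookkeeping is organized carefully, the fixed-point argument is essentially routine.
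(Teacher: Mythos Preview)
Your overall strategy---rewrite \eqref{eq:PDEFundMatrix} as a fixed-point equation by inverting $\LL$, estimate the source term and the Lipschitz constant in $\QQQ_{1/2,\matrx}$, then refine the $(2,1)$ entry---matches the paper's, and the numerical sizes you quote for $\wt\Phi_A^{-1}(\BB+\RRR)\wt\Phi_A$, for the Lipschitz constant $G_0^{-3/2}\ln G_0$, and for the refined $(2,1)$ entry are correct.

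There is, however, a genuine gap in the choice of inverse. Applying $\wt\GG$ from \eqref{def:canvi:operadorG} entrywise does \emph{not} map $\QQQ_{3/2,\matrx}$ into $\QQQ_{1/2,\matrx}$. The issue is not merely bookkeeping of weights: the matrix norm \eqref{def:MatrixNorm} measures the third and fourth rows through $e^{i\phi_\h}\Psi_{3j}$ and $e^{-i\phi_\h}\Psi_{4j}$, and by Lemma~\ref{lem:HomoInfinity} the weight $e^{i\phi_\h}$ has a pole of order $\tfrac12$ at $u=i/3$ and a zero of order $\tfrac12$ at $u=-i/3$. So these entries are \emph{asymmetric} with respect to the two singularities, and none of the items of Lemma~\ref{lemma:diff:OperadorG} applies to them directly. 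Concretely, the zero Fourier mode of $\wt\GG$ integrates from a real basepoint $u^*$, so $\wt\GG(M_{3j})^{[0]}$ has no reason to vanish near $u_2\approx i/3$; then $e^{i\phi_\h}\,\wt\GG(M_{3j})^{[0]}$ carries an uncompensated factor $|u-i/3|^{-1/2}\sim G_0^{3/2}$, which destroys the contraction.

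The paper resolves this by introducing the operators $\GG_\pm$ in \eqref{def:OperatorGPlusMinus}, which agree with $\wt\GG$ on every nonzero Fourier mode but integrate the zero mode from the top (resp.\ bottom) vertex $u_2$ (resp.\ $\bar u_2$); Lemma~\ref{lemma:diff:OperadorGpm} records precisely the estimate $\|e^{\pm i\phi_\h}\GG_\pm(e^{\mp i\phi_\h}h)\|_{\nu-1}\lesssim\|h\|_\nu$ that the asymmetric rows require. With the row-dependent inverse $\GG_\matrx$ of \eqref{def:OperatorGMatrix} (using $\wt\GG$ on rows $1,2$ and $\GG_+,\GG_-$ on rows $3,4$), Lemma~\ref{lemma:diff:MatrixOperadorG} gives the mapping $\QQQ_{3/2,\matrx}\to\QQQ_{1/2,\matrx}$ (with the logarithm) that you are implicitly assuming. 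After replacing entrywise $\wt\GG$ by $\GG_\matrx$, the rest of your argument goes through essentially verbatim.
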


We devote the rest of this section to prove this theorem. We solve equation \eqref{eq:PDEFundMatrix} through a fixed point argument by setting up an integral
equation.

The first step is to invert the operator $\LL$. To this end, we need to use \emph{different} integral operators depending on
the components. The reason is the significantly different behavior of the components close to the singularities of the unperturbed separatrix. That is, besides the operator $\wt \GG$ in
\eqref{def:canvi:operadorG}, we define the  operators
\begin{equation}\label{def:OperatorGPlusMinus}
\GG_\pm(h)(u,\ga)=\GG_\pm(h)^{[0]}(u)+\sum_{q\in\ZZ\setminus\{0\}}\wt\GG(h)^{[q]}(u)e^{iq\gamma},\qquad \GG_\pm(h)^{[0]}(u)=\int_{u_\pm}^{u}
h^{[0]}(t)\,dt
\end{equation}
% with
% \begin{align*}
% % \dps\GG_\pm(h)^{[q]}(u)&= \wt\GG(h)^{[q]}(u)& \text{ for }q\neq 0.\\
% \dps\GG_\pm(h)^{[0]}(u)&=\int_{u_\pm}^{u}
% h^{[0]}(t)\,dt
% % -\frac{1}{2}\int_{u^\ast}^{u_2}
% % h^{[0]}(t)\,dt-\frac{1}{2}\int_{u^\ast}^{\ol u_2} h^{[0]}(t)\,dt
% &
% \end{align*}
where $u_+=u_2$ and $u_-=\bar u_2$, where $u_2$ has been introduced in
\eqref{def:canvi:operadorG}.
Note that equation \eqref{eq:PDEFundMatrix} has many solutions which arise from the fact that the operator $\LL$ has many left inverse operators. We choose just one solution which is convenient for us.

\begin{lemma}\label{lemma:diff:OperadorGpm}
The operators $\GG_\pm$  introduced in~\eqref{def:OperatorGPlusMinus} satisfy the following.
% \begin{itemize}
% \item
Assume $h\in \QQQ_{\nu,\kk,\de,\sigma,\matrx}$ with $\nu\geq 1/2$. Then $e^{\pm i\phi_\h(u)}\GG_\pm(e^{\mp i\phi_\h(u)}h)\in \QQQ_{\nu-1,\kk,\de,\sigma,\matrx}$ and
\[
\begin{split}
\left\|e^{\pm i\phi_\h(u)}\GG_\pm\left(e^{\mp i\phi_\h(u)}h\right)\right\|_{\nu-1,\sigma}&\lesssim \left\|h\right\|_{\nu,\sigma}\qquad \text{ for }\nu>1/2\\
\left\|e^{\pm i\phi_\h(u)}\GG_\pm\left(e^{\mp i\phi_\h(u)}h\right)\right\|_{-1/2,\sigma}&\lesssim \ln G_0\left\|h\right\|_{1/2,\sigma}.
\end{split}
\]
% \end{itemize}
% Moreover, provided $\zeta G_0^{3/2}\ll1$, the operator $\Phi_A\circ \GG_\matrx\circ\Phi_A^{-1}$ also has these properties.
\end{lemma}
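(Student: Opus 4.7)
The plan is to estimate the operator coefficient by coefficient in the Fourier expansion and handle the zero mode (where $\GG_\pm$ differs from $\wt\GG$) and the nonzero modes (where they coincide) separately. Writing $h=\sum_{q\in\ZZ}h^{[q]}(u)e^{iq\ga}$, the $q$-th Fourier coefficient of $e^{\pm i\phi_\h}\GG_\pm(e^{\mp i\phi_\h}h)$ equals $e^{\pm i\phi_\h(u)}\wt\GG(e^{\mp i\phi_\h}h^{[q]})(u)$ for $q\ne 0$ and $e^{\pm i\phi_\h(u)}\int_{u_\pm}^{u}e^{\mp i\phi_\h(t)}h^{[0]}(t)\,dt$ for $q=0$, so bounding each coefficient-wise contribution in the $\|\cdot\|_{\nu-1,q}$ norm and summing with the weights $e^{|q|\sigma}$ yields the claim.

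For the nonzero modes I would leverage the existing bounds on $\wt\GG$ provided by Lemma~\ref{lemma:diff:OperadorG}. The key algebraic input is the asymptotic identity of Lemma~\ref{lem:HomoInfinity}, which implies that $|e^{\pm i\phi_\h(u)}|\sim |u\mp i/3|^{-1/2}|u\pm i/3|^{1/2}$ near the singularities, so that multiplication by $e^{\mp i\phi_\h(u)}$ shifts the Fourier weight $e^{iq\phi_\h(u)}$ appearing in the definition of $\|\cdot\|_{m,q}$ by one unit, i.e.\ $\|e^{\mp i\phi_\h}f\|_{m,q\mp 1}\lesssim \|f\|_{m,q}$ at the coefficient level. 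Applying $\wt\GG$ in the shifted weight (where the estimates of Lemma~\ref{lemma:diff:OperadorG} are uniform in $q$) and then multiplying again by $e^{\pm i\phi_\h(u)}$ returns the weight to index $q$, recovering the announced bound with the same $\OO(1)$ (respectively $\OO(\ln G_0)$) constants.

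For $q=0$ the integration starts at $u_\pm=\pm i(1/3-\kk G_0^{-3})$, which sits at distance $\OO(\kk G_0^{-3})$ of $\pm i/3$. The crucial cancellation is that $e^{\mp i\phi_\h(t)}$ behaves like $|t\mp i/3|^{1/2}$ as $t\to \pm i/3$, compensating half a power of the singular weight $|t\mp i/3|^{-\nu}$ controlling $h^{[0]}$. I would deform the integration path inside $D_{\kk,\de}$ so that it first runs along the boundary, at distance $\OO(G_0^{-3})$ of the singularity, up to a point at $\OO(1)$-distance from it, and then to $u$. The contribution near the endpoint is dominated by $\int_{\kk G_0^{-3}}^{1} s^{-1/2-\nu+1/2}\,ds$, which is $\OO(1)$ when $\nu>1/2$ and $\OO(\ln G_0)$ when $\nu=1/2$, since the cutoff $\kk G_0^{-3}$ is exactly the origin of the logarithm. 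The remaining multiplication by $e^{\pm i\phi_\h(u)}$ contributes an extra $|u\mp i/3|^{-1/2}$, which combines with the polynomial weights to produce precisely $|u\mp i/3|^{-(\nu-1)}|u\pm i/3|^{-(\nu-1)}$, matching the norm $\|\cdot\|_{\nu-1,0}$.

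The main technical obstacle is to keep the branch book-keeping of $e^{i\phi_\h}$ consistent across the branched domain $D_{\kk,\de}$, so that at the level of formal Fourier series the cancellation $e^{\pm i\phi_\h}\cdot e^{\mp i\phi_\h}=1$ actually holds and the shift-of-index argument for $q\neq 0$ is rigorous, and to design the path of integration for $q=0$ so that it stays inside $D_{\kk,\de}$ while realizing both the non-logarithmic estimate for $\nu>1/2$ and the sharp single power of $\ln G_0$ (rather than $G_0^{\eta}$) for $\nu=1/2$. Once these two points are in place, the estimate follows by the same mechanism as Lemma~\ref{lemma:diff:OperadorG}, which itself is adapted from the analogous statement for one-sided integrals in Lemma~\ref{lemma:Operator:1}(7)--(8).
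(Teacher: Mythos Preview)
The paper does not give a proof of this lemma; it is stated without argument, relying implicitly on the same mechanism as items (7)--(8) of Lemma~\ref{lemma:Operator:1} (for which the paper only remarks that they ``can be deduced analogously taking into account the expression of $e^{\mp i\phi_\h}$ given in Lemma~\ref{lem:HomoInfinity}''). So there is no paper proof to compare against, and your proposal is effectively filling a gap.

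Your zero-mode analysis is correct and is the heart of the matter: the choice of base point $u_\pm$ near $\pm i/3$ is precisely matched to the sign so that $e^{\mp i\phi_\h(t)}\sim |t\mp i/3|^{1/2}$ supplies the extra half-power that makes $\int s^{-\nu-1/2+1/2}\,ds$ convergent for $\nu>1/2$ and logarithmic at $\nu=1/2$. This is exactly the mechanism the paper has in mind.

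For the nonzero modes, however, your ``index shift'' is a heuristic, not a reduction to Lemma~\ref{lemma:diff:OperadorG}. The identity $\|e^{\mp i\phi_\h}f\|_{m,q\mp 1}=\|f\|_{m,q}$ is fine at the level of a single coefficient, but after the shift you are asking for a bound of the form $\|\wt\GG^{[q]}(g)\|_{\nu-1,q\pm 1}\lesssim\|g\|_{\nu,q\pm 1}$, i.e.\ the \emph{operator} still uses the kernel $e^{iq\omega(t-u)}$ and the base point determined by the sign of $q$, while the \emph{norm weight} carries $e^{i(q\pm 1)\phi_\h}$. Lemma~\ref{lemma:diff:OperadorG} does not provide this mismatched estimate; its proof (cf.\ \cite{GuardiaOS10}, Lemma~8.3) balances the kernel index and the weight index. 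In particular, for $q<0$ the base point is $\bar u_2$ near $-i/3$, where $e^{-i\phi_\h(t)}$ \emph{loses} rather than gains half a power, so the asymmetry really matters and cannot be absorbed by a uniform appeal to the existing lemma.

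The fix is what you gesture at in your last paragraph: redo the mode-wise integral estimates directly, inserting the asymptotics $|e^{\pm i\phi_\h(u)}|\sim |u\pm i/3|^{-1/2}|u\mp i/3|^{1/2}$ from Lemma~\ref{lem:HomoInfinity} into the path-deformation argument for each $q$. This is entirely parallel to how items (7)--(8) of Lemma~\ref{lemma:Operator:1} are obtained from items (1)--(6), and once done the proof is complete. So your plan is sound, but the nonzero-mode step should be presented as a direct estimate in the spirit of Lemma~\ref{lemma:diff:OperadorG}, not as a formal reduction to it.
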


We define an integral operator $\GG_\matrx$ acting on matrices in
$\QQQ_{\nu,\kk,\de,\sigma,\matrx}$ linearly on the coefficents as follows. For
$M\in \QQQ_{\nu,\kk,\de,\sigma,\matrx}$, we define $\GG_\matrx(M)$ as
\begin{equation}\label{def:OperatorGMatrix}
 \begin{aligned}
\GG_\matrx(M)_{ij}&=\wt  \GG(M_{ij})\qquad&& \text{ for }i=1,2, j=1,2,3,4\\
\GG_\matrx(M)_{3j}&=  \GG_+(M_{ij})\qquad&& \text{ for }j=1,2,3,4\\
\GG_\matrx(M)_{4j}&=  \GG_-(M_{ij})\qquad &&\text{ for }j=1,2,3,4.
\end{aligned}
\end{equation}

\begin{lemma}\label{lemma:diff:MatrixOperadorG}
The operator $\GG_\matrx$  in~\eqref{def:OperatorGMatrix} has the following properties.
\begin{itemize}
\item Assume $M\in \QQQ_{\nu,\kk,\de,\sigma,\matrx}$ with $\nu\geq 2$. Then $\GG_\matrx(M)\in \QQQ_{\nu-1,\kk,\de,\sigma,\matrx}$ and
\[
\begin{split}
\left\|\GG_\matrx(M)\right\|_{\nu-1,\sigma}&\lesssim \left\|M\right\|_{\nu,\sigma}\qquad \text{ for }\nu>2\\
\left\|\GG_\matrx(M)\right\|_{1,\sigma}&\lesssim \ln G_0\left\|M\right\|_{2,\sigma}.
\end{split}
\]
\item Assume $M\in \QQQ_{\nu,\kk,\de,\sigma,\matrx}$ with $\nu\geq 3/2$ and $M_{21}=0$. Then $\GG_\matrx(M)\in \QQQ_{\nu-1,\kk,\de,\sigma,\matrx}$ and
\[
\begin{split}
\left\|\GG_\matrx(M)\right\|_{\nu-1,\sigma}&\lesssim \left\|M\right\|_{\nu,\sigma}\qquad \text{ for }\nu>3/2\\
\left\|\GG_\matrx(M)\right\|_{1/2,\sigma}&\lesssim \ln G_0\left\|M\right\|_{3/2,\sigma}.
\end{split}
\]
\end{itemize}
% Moreover, provided $\zeta G_0^{3/2}\ll1$, the operator $\Phi_A\circ \GG_\matrx\circ\Phi_A^{-1}$ also has these properties.
\end{lemma}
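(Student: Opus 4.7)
The plan is to prove both estimates by reducing to entry-wise scalar bounds. Since $\GG_\matrx$ acts on each entry independently ($\wt\GG$ on rows $1,2$ and $\GG_\pm$ on rows $3,4$) and the matrix norm \eqref{def:MatrixNorm} is the maximum over four columnar sums of weighted $\|\cdot\|_{m,\sigma}$-norms of entries, it suffices to control every entry of $\GG_\matrx(M)$ in the weight dictated by the output columnar sum, using the corresponding input weight from $\|M\|_{\nu,\sigma,\matrx}$.

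For rows $1$ and $2$, Lemma \ref{lemma:diff:OperadorG} supplies the one-unit loss of weight of $\wt\GG$: item (3) gives the clean estimate when the input weight exceeds $1$, item (2) supplies an additional $\ln G_0$ at the threshold weight $1$, and item (1) places the output in $\QQQ_0$ for input weights in $(0,1)$. For rows $3$ and $4$ one writes
\[
e^{\pm i\phi_\h}\,\GG_\matrx(M)_{3j,4j}
= e^{\pm i\phi_\h}\,\GG_\pm\!\bigl(e^{\mp i\phi_\h}\cdot (e^{\pm i\phi_\h}M_{3j,4j})\bigr),
\]
which is exactly the combination controlled by Lemma \ref{lemma:diff:OperadorGpm}, with the log-free threshold now at input weight $1/2$ instead of $1$. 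Thus each entry of $\GG_\matrx(M)$ matches its required output weight with at most one $\ln G_0$ factor, and this factor appears exactly when the relevant input weight sits on the threshold.

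Reading off \eqref{def:MatrixNorm}, column $1$ is the binding one: the entries $M_{21}$, $e^{i\phi_\h}M_{31}$, $e^{-i\phi_\h}M_{41}$ all carry weight $\nu-1$, one unit below $M_{11}$, whereas columns $2$--$4$ contain only weights $\nu$ or $\nu+1$, strictly above both thresholds in the regimes we consider. For the first claim ($\nu\geq 2$), the critical entry is $M_{21}$: at $\nu=2$ its weight equals the $\wt\GG$-threshold $1$, producing the $\ln G_0$ via item (2) of Lemma \ref{lemma:diff:OperadorG}, while the row-$3,4$ entries at weight $1$ lie strictly above the $\GG_\pm$-threshold $1/2$ and remain log-free; for $\nu>2$ nothing is on a threshold. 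For the second claim, the hypothesis $M_{21}\equiv 0$ trivially gives $\wt\GG(M_{21})=0\in\QQQ_m$ for every $m$, eliminating the binding row-$2$ constraint and allowing $\nu$ to drop to $3/2$; at $\nu=3/2$ the row-$3,4$ entries in column $1$ now land precisely on the $\GG_\pm$-threshold $1/2$ and produce the $\ln G_0$ via the second inequality of Lemma \ref{lemma:diff:OperadorGpm}.

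Taking the maximum of the columnar bounds after these entry-wise estimates then yields the stated inequalities. The argument is essentially a careful bookkeeping of weights, pairing each entry with the correct integral operator and the correct clause of Lemmas \ref{lemma:diff:OperadorG}--\ref{lemma:diff:OperadorGpm}; I do not anticipate any substantive obstacle beyond this weight tracking, and in particular the logarithmic prefactors in the claimed bounds are forced by exactly one entry in column $1$ in each regime.
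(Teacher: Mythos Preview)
Your proposal is correct and is exactly the approach the paper intends: Lemma~\ref{lemma:diff:MatrixOperadorG} is stated without proof in the paper, immediately after Lemmas~\ref{lemma:diff:OperadorG} and~\ref{lemma:diff:OperadorGpm}, as their direct consequence via entry-by-entry application against the matrix norm~\eqref{def:MatrixNorm}. Your identification of column~$1$ as the binding column, with $M_{21}$ forcing the $\ln G_0$ at $\nu=2$ in the first item and $e^{\pm i\phi_\h}M_{31},\,e^{\mp i\phi_\h}M_{41}$ forcing it at $\nu=3/2$ in the second, is precisely the bookkeeping the paper leaves implicit.
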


We use the operator $\GG_\matrx$ to look for solutions of \eqref{eq:PDEFundMatrix} through an integral equation.  We define the
operator
\begin{equation*}
% \label{def:operator:Straightening}
 \wt\SSS(\Psi)=\GG_\matrx\circ\SSS(\Psi)\qquad\text{
with
}\qquad\SSS(\Psi)=\wt\Phi_A\ii(\BB+\RRR)\wt\Phi_A(\Id+\Psi).
\end{equation*}
%
% To show that this operator is contractive we need the following lemmas.
%
% \begin{lemma}\label{lemma:Operator:2}
% Consider $\Psi\in\QQ_{m,\matrx}$ with  $m\geq 3/2$. Then,
% \begin{itemize}
%  \item if $m>3/2$, $\Phi_A\GG\left(\Phi_A\ii \Psi\right)\in
% \QQ_{m-1,\matrx}$
% and
% \[
%  \|\Phi_A\wt\GG\left(\Phi_A\ii \Psi\right)\|_{m-1,\sigma,\matrx}\lesssim
% \|
% \Psi\|_{m,\sigma,\matrx},
% \]
% \item if $m=3/2$, $\Phi_A\GG\left(\Phi_A\ii \Psi\right)\in
% \QQ_{1/2,\sigma,\matrx}$
% and
% \[
%  \|\Phi_A\wt\GG\left(\Phi_A\ii \Psi\right)\|_{1/2,\sigma,\matrx}\lesssim  \ln
% G_0\| \Psi\|_{3/2,\sigma,\matrx}
% \]
% \end{itemize}
% \end{lemma}

\begin{lemma}\label{lemma:Fund:Lip}
The linear operator $\wt\SSS: \QQQ_{1/2,\kk_3,\de_3,\sigma_3,\matrx}\to \QQQ_{1/2,\kk_3,\de_3,\sigma_3,\matrx}$ is Lipschitz and satisfies
\[
 \left\|\wt\SSS(\Psi)-\wt\SSS(\Psi')\right\|_{1/2,\sigma_3,\matrx}\lesssim G_0^{-3/2} \ln G_0\left\|\Psi-\Psi'\right\|_{1/2,\sigma_3,\matrx}
\]
for any $\Psi,\Psi'\in \QQQ_{1/2,\kk_3,\de_3,\sigma_3,\matrx}$.
\end{lemma}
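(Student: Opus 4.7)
The plan exploits that $\wt\SSS$ is affine in $\Psi$: writing $\Xi = \Psi - \Psi' \in \QQQ_{1/2,\kk_3,\de_3,\sigma_3,\matrx}$, the difference reduces to the linear object
\[
\wt\SSS(\Psi) - \wt\SSS(\Psi') = \GG_\matrx\bigl(\wt\Phi_A\ii(\BB+\RRR)\wt\Phi_A\, \Xi\bigr),
\]
so everything comes down to estimating the norm of $M := \wt\Phi_A\ii(\BB+\RRR)\wt\Phi_A\, \Xi$ in the matrix spaces of Section \ref{sec:diff:banach} and then applying $\GG_\matrx$.

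The next step is to split $M = M^{\BB} + M^{\RRR}$ where $M^{\BB}=\wt\Phi_A\ii\BB\wt\Phi_A\,\Xi$ and $M^{\RRR}=\wt\Phi_A\ii\RRR\wt\Phi_A\,\Xi$, and estimate each piece with the matrix multiplicativity of Lemma \ref{lemma:NormMatrix} together with the bounds of Lemma \ref{lemma:FundMatEstimates} ($\|\wt\Phi_A\|_{0,\sigma_3,\matrx}, \|\wt\Phi_A\ii\|_{0,\sigma_3,\matrx}\lesssim 1$), the estimate $\|\BB\|_{2,\sigma_3,\matrx}\lesssim G_0^{-11/2}$ from Theorem \ref{thm:DifferenceSymplecticStraightening}, and $\|\RRR\|_{3/2,\sigma_3,\matrx}\lesssim G_0^{-3}$ from the same theorem. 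Adding the weight indices ($2+1/2$ and $3/2+1/2$, respectively), this yields
\[
\|M^{\BB}\|_{5/2,\sigma_3,\matrx}\lesssim G_0^{-11/2}\|\Xi\|_{1/2,\sigma_3,\matrx},\qquad \|M^{\RRR}\|_{2,\sigma_3,\matrx}\lesssim G_0^{-3}\|\Xi\|_{1/2,\sigma_3,\matrx}.
\]

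Then apply $\GG_\matrx$ via Lemma \ref{lemma:diff:MatrixOperadorG}. For $M^{\BB}$ the regularity index $5/2 > 2$, so the first item gives $\|\GG_\matrx M^{\BB}\|_{3/2,\sigma_3,\matrx}\lesssim G_0^{-11/2}\|\Xi\|_{1/2}$. For $M^{\RRR}$ the borderline index $\nu = 2$ yields the logarithmic loss $\|\GG_\matrx M^{\RRR}\|_{1,\sigma_3,\matrx}\lesssim \ln G_0\cdot G_0^{-3}\|\Xi\|_{1/2}$. Finally, using the scale-exchange property of Lemma \ref{lemma:banach:AlgebraProps} (namely $\|h\|_{n,m-\eta}\lesssim G_0^{3\eta}\|h\|_{n,m}$), one drops the singularity weight down to $1/2$ at the cost of $G_0^{3}$ for $M^{\BB}$ and $G_0^{3/2}$ for $M^{\RRR}$:
\[
\|\GG_\matrx M^{\BB}\|_{1/2,\sigma_3,\matrx}\lesssim G_0^{-5/2}\|\Xi\|_{1/2},\qquad \|\GG_\matrx M^{\RRR}\|_{1/2,\sigma_3,\matrx}\lesssim G_0^{-3/2}\ln G_0\,\|\Xi\|_{1/2}.
\]
Adding the two contributions, the $\RRR$-term dominates and delivers exactly the claimed Lipschitz constant $G_0^{-3/2}\ln G_0$.

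The only point that deserves care is purely bookkeeping: one must verify that every intermediate product lies in the correct space so that Lemma \ref{lemma:NormMatrix} applies with the stated weight additivity, and that the borderline hypothesis $\nu=2$ in Lemma \ref{lemma:diff:MatrixOperadorG} is what controls the final $\ln G_0$. There is no subtlety in the $M^{\RRR}_{21}$ entry (which need not vanish after left-multiplication by $\wt\Phi_A\ii$), because we are using the first branch of Lemma \ref{lemma:diff:MatrixOperadorG} and do not require this entry to be zero; the same bound would also follow, with the identical logarithm, from the second branch if one preferred to work one weight lower.
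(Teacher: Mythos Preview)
Your argument is correct and follows essentially the same route as the paper: exploit the affine structure, bound $\wt\Phi_A^{-1}(\BB+\RRR)\wt\Phi_A\,\Xi$ via Lemmas~\ref{lemma:NormMatrix} and~\ref{lemma:FundMatEstimates}, apply Lemma~\ref{lemma:diff:MatrixOperadorG}, and use the weight-exchange property. The only cosmetic difference is the order of operations: the paper first drops $\BB+\RRR$ to weight~$1$ (so that the product lands at weight~$3/2$) and then invokes $\GG_\matrx$ at the borderline of the second item, whereas you keep $\BB$ and $\RRR$ at their natural weights, apply $\GG_\matrx$ via the first item at $\nu=5/2$ and $\nu=2$, and perform the scale-exchange afterward; your ordering has the mild advantage of never needing to discuss the $21$-entry.
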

\begin{proof}
To compute the Lipschitz constant, we write
\[
 \SSS(\Psi)- \SSS(\Psi')=\wt\Phi_A\ii(\BB+\RRR)\wt\Phi_A(\Psi-\Psi').
\]
The properties of $\BB$ and $\RRR$ in Theorem \ref{thm:DifferenceSymplecticStraightening} imply that
$  \left\|\BB+\RRR\right\|_{1,\sigma_3,\matrx}\lesssim G_0^{-3/2}$.
Then, using also Lemmas \ref{lemma:FundMatEstimates} and Lemma \ref{lemma:NormMatrix},
\[
\begin{split}
\left\| \SSS(\Psi)- \SSS(\Psi')\right\|_{3/2,\sigma_3,\matrx}\lesssim &\, \left\| \wt\Phi_A\ii\right\|_{0,\sigma_3,\matrx}\left\| \BB+\RRR\right\|_{1,\sigma_3,\matrx}\left\| \wt\Phi_A\right\|_{0,\sigma_3,\matrx}\left\| \Psi-\Psi'\right\|_{1/2,\sigma_3,\matrx}\\
\lesssim & \,G_0^{-3/2}\left\| \Psi-\Psi'\right\|_{1/2,\sigma_3,\matrx}.
\end{split}
\]
%
% \[
%   \left\|\wt\Phi_A\ii(\BB+\RRR)(\Psi-\Psi')\right\|_{2,\sigma}\lesssim G_0^{-3}\left\|\Psi-\Psi'\right\|_{1/2,\sigma,\matrx}.
% \]
% \textcolor{red}{Canviar $\sigma_3$}
Thus, applying Lemma \ref{lemma:diff:MatrixOperadorG},
\[
% \begin{split}
\left\| \wt\SSS(\Psi)- \wt \SSS(\Psi')\right\|_{1/2,\sigma_3,\matrx}\lesssim  \ln G_0 \left\| \SSS(\Psi)- \SSS(\Psi')\right\|_{3/2,\sigma_3,\matrx}\\
\lesssim G_0^{-3/2}\ln G_0\left\| \Psi-\Psi'\right\|_{1/2,\sigma_3,\matrx}.
% \end{split}
\]

% \[
% \begin{split}
%   \left\|\wt\SSS(\Psi)-\wt\SSS(\Psi')\right\|_{1/2,\sigma,\matrx}\lesssim&\, G_0^{3/2}  \left\|\wt\SSS(\Psi)-\wt\SSS(\Psi')\right\|_{1,\sigma,\matrx}\\
% \lesssim&\, G_0^{3/2} \ln G_0 \left\|\wt\Phi_A\ii(\BB+\RRR)(\Psi-\Psi')\right\|_{2,\sigma,\matrx}\\
% \lesssim&\, G_0^{-3/2} \ln G_0 \left\|\Psi-\Psi'\right\|_{1/2,\sigma,\matrx}
%   \end{split}
%   \]
\end{proof}

Then, to finish the proof of Theorem \ref{thm:fundamentalmatrix}, it is enough to use Lemmas \ref{lemma:diff:OperadorG} and \ref{lemma:diff:MatrixOperadorG} and to use the estimates for $\BB$ and $\RRR$ in Theorem \ref{thm:DifferenceSymplecticStraightening} to see that
\[
\wt\SSS(0)=\GG_\matrx\left[\wt\Phi_A\ii(\BB+\RRR)\wt\Phi_A\right]
\]
satisfies
\[
\begin{split}
  \left\|\wt\SSS(0)\right\|_{1/2,\sigma_3,\matrx}\lesssim &\, G_0^{3/2} \left\|\GG_\matrx\left[\wt\Phi_A\ii\BB\wt\Phi_A\right]\right\|_{1,\sigma_3,\matrx}+\left\|\GG_\matrx\left[\wt\Phi_A\ii\RRR\wt\Phi_A\right]\right\|_{1/2,\sigma_3,\matrx}\\
  \lesssim &\, G_0^{3/2}\ln G_0 \left\|\wt\Phi_A\ii\BB\wt\Phi_A\right\|_{2,\sigma_3,\matrx}+\ln G_0\left\|\wt\Phi_A\ii\RRR\wt\Phi_A\right\|_{3/2,\sigma_3,\matrx}\\
  \lesssim &\, G_0^{3/2}\ln G_0 \left\|\BB\right\|_{2,\sigma_3,\matrx}+\ln G_0\left\|\RRR\right\|_{3/2,\sigma_3,\matrx}\\
    \lesssim &\,  G_0^{-3}\ln G_0.
\end{split}
  \]
Therefore, together with Lemma \ref{lemma:Fund:Lip}, one has that the operator $\wt\SSS$ has a unique fixed point $\wt\Psi$ which satisfies $\left\|\wt\Psi\right\|_{1/2,\sigma_3,\matrx}\lesssim G_0^{-3}\ln G_0$.

For the estimates for $\wt\Psi_{21}$ it is enough to write $\wt\Psi_{21}$ as
\[
 \wt\Psi_{21}=\wt\SSS(0)_{21}+\left[\wt\SSS(\Psi)-\wt\SSS(\Psi')\right]_{21}.
\]
For the first term, by Theorem \ref{thm:DifferenceSymplecticStraightening} and Lemma \ref{lemma:diff:OperadorG}, one has that
\[
 \left\|\wt\SSS(0)_{21}\right\|_{0,\sigma_3}\lesssim G_0^{-11/2}\ln G_0.
\]
For the second term it is enough to use Lemma \ref{lemma:Fund:Lip} to obtain
\[
% \begin{split}
\left\| \left[\wt\SSS(\Psi)-\wt\SSS(\Psi')\right]_{21}\right\|_{0,\sigma_3}\leq \left\| \left[\wt\SSS(\Psi)-\wt\SSS(\Psi')\right]_{21}\right\|_{-1/2,\sigma_3}\lesssim  G_0^{-3/2}\ln G_0\left\|\wt\Psi\right\|_{1/2,\sigma_3,\matrx}\lesssim G_0^{-9/2}\ln G_0.
% \end{split}
\]

\subsection{Exponentially small estimates of the difference between the invariant manifolds}\label{sec:Lazutkin}

Last step is to obtain exponentially small bounds of the difference between invariant manifolds and its first order. Using that $\Psi=\wt\Phi_A(\Id+\wt\Psi)$ with $\wt\Psi$ obtained in Theorem \ref{thm:fundamentalmatrix} is a fundamental matrix of the  equation \eqref{def:LinearPDE},  we know that $\wt\Delta$ (which also satisfies \eqref{def:LinearPDE})  is of the form
\begin{equation}\label{def:DiffKernel}
\wt\Delta=\wt\Phi_A(\Id+\wt\Psi)\wh\Delta\quad\text{ where }\wh\Delta\quad\text{ satisfies }\LL\wh\Delta=0,
\end{equation}
and $\wt \Phi_A$ is given in \eqref{def:FundamentalMatrixModified} and $\wt\Psi$ is the matrix obtained in Theorem \ref{thm:fundamentalmatrix}.

To bound the function $\wh \Delta$,  we use the following lemma, proven in \cite{GuardiaMS16}.

\begin{lemma}\label{lemma:Lazutkin}
Fix $\kk>0$, $\de>0$ and $\sigma>0$. Let us consider a formal Fourier series
$\Upsilon\in \QQQ_{0,\kk,\de,\sigma}$ such that $\Upsilon\in\mathrm{Ker}\LL$. Define its average
\[
 \langle\Upsilon\rangle_\ga=\frac{1}{2\pi}\int_0^{2\pi}\Upsilon(v,\ga)d\ga.
\]
Then, the Fourier series $\Upsilon(v,\ga)$ satisfies the following.
\begin{itemize}
\item 
Is of the form
\[
\Upsilon(v,\ga)=\sum_{\ell\in\ZZ}\Upsilon^{[\ell]}(v)e^{i\ell\ga}=\sum_{\ell\in\ZZ}
\wt\Upsilon^{[\ell]}e^{i\ell\left(G_0^3v+\ga\right)}
\]
for certain constants $\wt\Upsilon^{[\ell]}\in\CC$ and its average $\langle\Upsilon\rangle_\ga$ is independent of $v$.
\item 
It defines a function for $v\in D_{\kk,\de}\cap \RR$ and $\ga\in\TT$, whose Fourier coefficients satisfy that
\[
\begin{split}
 \left|\Upsilon^{[\ell]}(v)\right|&\leq
 \sup_{v\in D_{\kk,\de}}\left|\Upsilon^{[\ell]}(v)\right|K^{|\ell|}
e^{-\dps\tfrac{|\ell|G_0^{3}}{3}}\lesssim \left\|\Upsilon\right\|_{0,\sigma}(KG_0^{3/2})^{|\ell|}
e^{-\dps\tfrac{|\ell|G_0^{3}}{3}}\\
 \left|\pa_v\Upsilon^{[\ell]}(v)\right|&\leq  \sup_{v\in D_{\kk,\de}}\left|\Upsilon^{[\ell]}(v)\right|K^{|\ell|}G_0^3e^{-\dps\tfrac{
|\ell|G_0^{3}}{3}}\lesssim G_0^3 \left\|\Upsilon\right\|_{0,\sigma}(KG_0^{3/2})^{|\ell|}
e^{-\dps\tfrac{|\ell|G_0^{3}}{3}}.
%  \left|\pa^2_v\Upsilon^{[\ell]}(v)\right|&\leq  \sup_{u\in D_{\kk,\de}\cap
% D^u_{\rr,\kk,\de}}\left|\Upsilon^{[\ell]}(u)\right|K^{|\ell|}G_0^6e^{-\dps\tfrac{
% |\ell|G_0^{3}}{3}}\lesssim G_0^6\left\|\Upsilon\right\|_{0,\sigma}(KG_0^{3/2})^{|\ell|}
% e^{-\dps\tfrac{|\ell|G_0^{3}}{3}}\\.
\end{split}
\]
%where $\|\cdot\|_{0,\sigma}$ is the norm for formal Fourier series defined
%  restricted to the domain
% $D_{\kk,\de}\cap D^u_{\rr,\kk,\de}$.
\end{itemize}
\end{lemma}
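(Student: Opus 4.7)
\medskip
\noindent \textbf{Proof plan.}
The strategy is the classical Lazutkin argument: since $\Upsilon\in\mathrm{Ker}\LL$, each Fourier coefficient $\Upsilon^{[\ell]}(v)$ solves an explicit linear ODE in $v$, so it is an exponential; the exponential smallness on the real axis is then extracted by evaluating the a priori complex-analytic bound at the upper or lower vertex of $D_{\kk,\de}$, which sits at imaginary part $\pm(1/3-\kk G_0^{-3})$.

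First I would plug the Fourier expansion $\Upsilon(v,\ga)=\sum_\ell \Upsilon^{[\ell]}(v)e^{i\ell\ga}$ into the equation $\LL\Upsilon=0$, using $\LL=\pa_v+\omega\pa_\ga$ with $\omega=\nu G_0^3/L_0^3$. This yields the ODEs $(\Upsilon^{[\ell]})'(v)=-i\ell\omega\,\Upsilon^{[\ell]}(v)$, whose solutions are $\Upsilon^{[\ell]}(v)=\wt\Upsilon^{[\ell]}e^{-i\ell\omega v}$ for constants $\wt\Upsilon^{[\ell]}\in\CC$. In particular $\Upsilon^{[0]}\equiv \wt\Upsilon^{[0]}$ is constant in $v$, which gives that $\langle\Upsilon\rangle_\ga=\wt\Upsilon^{[0]}$ is independent of $v$, and the rewriting $\Upsilon(v,\ga)=\sum_\ell \wt\Upsilon^{[\ell]}e^{i\ell(-\omega v+\ga)}$ holds (the statement with $G_0^3 v$ in the phase is the same formula up to the fixed $L_0$-independent factor $\nu/L_0^3$ that is absorbed in the constants $K$ below).

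Next I would exploit that $\Upsilon\in\QQQ_{0,\kk,\de,\sigma}$, so every Fourier coefficient $\Upsilon^{[\ell]}$ admits an analytic extension to $D_{\kk,\de}$ with
\[
\sup_{v\in D_{\kk,\de}}|\Upsilon^{[\ell]}(v)|\,e^{|\ell|\sigma}\le \|\Upsilon\|_{0,\sigma}.
\]
The top vertex $u_2=i(1/3-\kk G_0^{-3})$ and the bottom vertex $\ol u_2=-i(1/3-\kk G_0^{-3})$ belong to $\overline{D_{\kk,\de}}$. Evaluating the identity $\wt\Upsilon^{[\ell]}=\Upsilon^{[\ell]}(v)e^{i\ell\omega v}$ at $v=u_2$ when $\ell<0$ and at $v=\ol u_2$ when $\ell>0$ yields
\[
|\wt\Upsilon^{[\ell]}|\le \sup_{D_{\kk,\de}}|\Upsilon^{[\ell]}|\,e^{-|\ell|\omega(1/3-\kk G_0^{-3})}.
\]
Since for real $v$ one has $|\Upsilon^{[\ell]}(v)|=|\wt\Upsilon^{[\ell]}|$ and $\omega\kk G_0^{-3}=\nu\kk/L_0^3$ is bounded, writing $e^{|\ell|\omega\kk G_0^{-3}}\le K^{|\ell|}$ with $K=K(\nu,L_0,\kk)$ independent of $G_0$ produces the first claimed bound. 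The second (weaker) form with $\|\Upsilon\|_{0,\sigma}$ uses $\sup_{D_{\kk,\de}}|\Upsilon^{[\ell]}|\le \|\Upsilon\|_{0,\sigma}e^{-|\ell|\sigma}$ and absorbs $e^{|\ell|\sigma}$ into the constant $K$.

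Finally, the derivative estimate follows by direct differentiation: $\pa_v\Upsilon^{[\ell]}(v)=-i\ell\omega\,\Upsilon^{[\ell]}(v)$, and the prefactor $|\ell\omega|\lesssim |\ell|G_0^3$ may be absorbed into a modified $K^{|\ell|}$ for the second (coarser) bound, whereas the explicit $G_0^3$ factor appears in the first bound as stated. The main work is bookkeeping rather than a conceptual obstruction; the only subtle point is that the distance from real $v$ to the vertices $\pm u_2$ is $1/3-\kk G_0^{-3}$ rather than $1/3$, and that this correction produces exactly the harmless $K^{|\ell|}$ factor once one checks $e^{\omega\kk G_0^{-3}}=\OO(1)$ uniformly in $G_0$.
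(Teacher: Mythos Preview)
Your approach is the standard Lazutkin argument and matches what the paper invokes (the paper does not reprove this lemma but cites \cite{GuardiaMS16}). The derivation of the first inequality in each chain is correct: solve $\LL\Upsilon=0$ coefficientwise, obtain $\Upsilon^{[\ell]}(v)=\wt\Upsilon^{[\ell]}e^{-i\ell\omega v}$, and evaluate at the top or bottom vertex of $D_{\kk,\de}$ to extract the exponentially small factor. (Minor slip: you have the vertices swapped --- use $u_2$ for $\ell>0$ and $\ol u_2$ for $\ell<0$, since $|e^{i\ell\omega v}|=e^{-\ell\omega\Im v}$.)

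There is, however, a genuine gap in your passage to the second inequality involving $\|\Upsilon\|_{0,\sigma}$. You claim $\sup_{D_{\kk,\de}}|\Upsilon^{[\ell]}|\le\|\Upsilon\|_{0,\sigma}e^{-|\ell|\sigma}$, but this ignores the weight in the norm: by definition $\|\Upsilon^{[\ell]}\|_{0,\ell}=\sup_{u}|e^{i\ell\phi_\h(u)}\Upsilon^{[\ell]}(u)|$, so one only gets
\[
|\Upsilon^{[\ell]}(u)|\le\|\Upsilon\|_{0,\sigma}e^{-|\ell|\sigma}\,|e^{-i\ell\phi_\h(u)}|.
\]
Near the vertices $u_2,\ol u_2$ one has $|u\mp i/3|\sim\kk G_0^{-3}$ and, by Lemma~\ref{lem:HomoInfinity}, $|e^{\pm i\phi_\h(u)}|\sim|u\mp i/3|^{-1/2}\sim G_0^{3/2}$. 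Hence $\sup_{D_{\kk,\de}}|e^{-i\ell\phi_\h(u)}|\lesssim G_0^{3|\ell|/2}$, and this is precisely the origin of the factor $(KG_0^{3/2})^{|\ell|}$ in the statement. Your remark about ``absorbing $e^{|\ell|\sigma}$ into $K$'' does not account for this $G_0$-dependence; without the $\phi_\h$ weight there would be no $G_0^{3/2}$ at all. Once this is inserted, the rest of your argument (including the derivative bound via $\pa_v\Upsilon^{[\ell]}=-i\ell\omega\Upsilon^{[\ell]}$ with $|\ell|\le K^{|\ell|}$) goes through.
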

Note nevertheless, that we do not want to bound $\wh\Delta$ but its difference with respect to its first order. 
The first order is defined through the
operators $\FF^{u,s}$ in \eqref{def:OperatorInfinity} (see Theorem \ref{thm:parametrcommdomain}) and is given by
\[
\wh
\Delta_0=\wt\Phi_A^{-1}\left(\FF^u(0)-\FF^s(0)\right).
\]
Using  \eqref{def:OperatorInfinity} and the relation $\wt\Phi_A^{-1}\Phi_A=\mathcal{J}$ given in Lemma \ref{lemma:FundMatEstimates},
\begin{equation}\label{def:Deltazerohat}
\wh
\Delta_0=\wt\Phi_A^{-1}\Phi_A\left[\delta z +\GG^u\left(\Phi_A^{-1}F(0)\right)-\GG^s\left(\Phi_A^{-1}F(0)\right)\right]=\mathcal{J}\delta z +\GG^u\left(\wt\Phi_A^{-1}F(0)\right)-\GG^s\left(\wt\Phi_A^{-1}F(0)\right).
\end{equation}
Since $\mathcal{J}\delta z$ is a constant term and  $\GG^{u,s}$ are both inverses of $\LL$, $\wh\Delta_0$ satisfies  $\LL\wh \Delta_0=0$.

We write then, $\wh\Delta$ as
\[
 \wh \Delta=\wh \Delta_0+\EE.
\]
The next two lemmas give estimates for these functions. Recall that $\Theta$ and $G_0$ are related through
\[
\omega=\frac{\nu G_0^3}{L_0^{3}}, \quad \mbox{with}\quad G_0=\Theta-L_0+\eta_0\xi_0.
\]
(see \eqref{eq:omega}). The estimates for $\EE$ are a consequence of  Lemma \ref{lemma:Lazutkin}.
\begin{lemma}\label{lemma:Delta0Hat}
The function $\wh\Delta_0$ in \eqref{def:Deltazerohat} satisfies that, for $v\in D_{\kk_3,\de_3}\cap\RR$ and $\ga\in\TT$,
\[
\begin{split}
\wh{\Delta Y}_0(v,\ga)&=\omega\pa_\sigma\LL(\omega v-\ga,\alo,\beto)\\
\wh{\Delta \La}_0(v,\ga)&=-\pa_\sigma\LL(\omega v-\ga,\alo,\beto)\\
\wh{\Delta \al}_0(v,\ga)&=\delta\eta-i\pa_{\beto}\LL(\omega v-\ga,\alo,\beto)+G_0^{-3}\mathcal{O}(\eta_0,G_0^{-1}\xi_0)\\
\wh{\Delta \bet}_0(v,\ga)&=\delta\xi+i\pa_{\alo}\LL(\omega v-\ga,\alo,\beto)+G_0^{-3}\mathcal{O}(\eta_0,G_0^{-1}\xi_0)
\end{split}
\]
where $\LL$ is the Melnikov potential introduced in Proposition \ref{prop:MelnikovPotential}.
% (see also .$\MM_Y,\MM_\La,\MM_\al,\MM_\bet$ are the functions introduced in Theorem \ref{thm:MainSplitting}, which are defined in terms of the Melnikov potential defined 
\end{lemma}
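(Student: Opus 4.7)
The lemma is essentially a direct computation from the definition
\eqref{def:Deltazerohat}. The key observation is that at $Z=0$ the two nonlinear pieces of $F$ in the splitting $F=F^1+F^2+F^3$ from \eqref{def:Exist:RHS:1}--\eqref{def:Exist:RHS:3} vanish: indeed $F^1(0)=0$ since $\GG_1(0)\partial_uZ$ and $\GG_2(0)\partial_\gamma Z$ both involve $Z$ or its derivatives, while $F^2(0)=0$ because every term in $F^2$ carries at least one factor of $Y$, $\alpha$, $\beta$ or $q=\La-\eta_0\beta-\xi_0\alpha-\alpha\beta$, all of which vanish at $Z=0$. Hence $F(0)=F^3(0)=(-\partial_u\PP_1,-\partial_\gamma\PP_1,-i\partial_\beta\PP_1,i\partial_\al\PP_1)|_{Z=0}$. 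Using the explicit form of $\wt\Phi_A$ in \eqref{def:FundamentalMatrixModified} and the relation $\Phi_A=\wt\Phi_A\mathcal J$ from Lemma \ref{lemma:FundMatEstimates}, the first two rows of $\wt\Phi_A^{-1}$ coincide with the identity, so the $Y$- and $\La$-components of $\wt\Phi_A^{-1}F(0)$ are just $-\partial_u\PP_1|_{Z=0}$ and $-\partial_\gamma\PP_1|_{Z=0}$, while the $\al$- and $\bet$-rows carry $\OO(|\eta_0|)$ corrections through $\eta_0 g_1,\eta_0 g_2^s$, etc., evaluated on the homoclinic.

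The core step is to show that, for real $v\in D_{\kk_3,\de_3}\cap\RR$, the operator difference collapses to a Melnikov integral. By \eqref{def:Outer:IntegralOperator},
\[
(\GG^u-\GG^s)(h)(v,\gamma)=\int_{-\infty}^{+\infty}h(v+s,\gamma+\omega s)\,ds.
\]
Substituting $\tau=v+s$ into the integral of $\PP_1(v+s,\gamma+\omega s,0,0,0)$ and using the definition \eqref{def:MelnikovPotential} of the Melnikov potential, I will obtain the clean identity
\[
\int_{-\infty}^{+\infty}\PP_1(v+s,\gamma+\omega s,0,0,0)\,ds=\LL(\gamma-\omega v,\eta_0,\xi_0).
\]
Differentiating this equality in $v$ yields the $Y$-component $\omega\partial_\sigma\LL$, and differentiating it in $\gamma$ yields the $\La$-component $-\partial_\sigma\LL$, both evaluated at $\gamma-\omega v$ (the formula in the lemma uses the equivalent form with $\omega v-\gamma$). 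For the $\al$- and $\bet$-components, I will exploit the fact that $\partial_\al\PP_1=G_0^3 e^{i\phi_\h(u)}\partial_3\wt W$ and $\partial_\bet\PP_1=G_0^3 e^{-i\phi_\h(u)}\partial_4\wt W$ so that, after the same change of variables $\tau=v+s$ and noting that $e^{i\phi_\h(v+s)}\eta_0$ appears in $\PP_1$ in exactly the way it appears inside $\wt W$ in the definition of $\LL$, the integrals collapse to $-i\partial_{\xi_0}\LL(\gamma-\omega v,\eta_0,\xi_0)$ and $i\partial_{\eta_0}\LL(\gamma-\omega v,\eta_0,\xi_0)$ respectively.

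The constant vector $\mathcal J\delta z$ accounts for the $\delta\eta$ and $\delta\xi$ terms in the $\al$- and $\bet$-components, and from Lemma \ref{lemma:FundMatEstimates} the non-identity part of $\mathcal J$ is $\OO(|\eta_0|)$ and only touches the last two rows; together with the bounds on $g_1,g_2^{u,s}$ from Lemma \ref{lemma:f1f2} and the bound on $\LL$ from Proposition \ref{prop:MelnikovPotential}, these off-diagonal corrections produce exactly the stated error $G_0^{-3}\OO(\eta_0,G_0^{-1}\xi_0)$ in the $\al$- and $\bet$-components (and no correction in $Y,\La$).

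The main obstacle is the careful bookkeeping of the last paragraph: one must check that mixing through the matrix $\wt\Phi_A^{-1}$ of the $\partial_u\PP_1$ and $\partial_\gamma\PP_1$ entries into the $\al$- and $\bet$-components, once integrated by $\GG^u-\GG^s$, does not spoil the leading Melnikov term $\pm i\partial_{z_0}\LL$ but rather is absorbed into the $G_0^{-3}\OO(\eta_0,G_0^{-1}\xi_0)$ remainder. This follows from combining the size $\OO(|\eta_0|)$ of the relevant off-diagonal entries, the estimates on $\partial_u\PP_1$ and $\partial_\gamma\PP_1$ from Lemma \ref{lem:P1k}, and the pointwise integral bounds on $g_1, g_2^{u,s}$, but it is the only part of the argument that requires more than unwinding definitions.
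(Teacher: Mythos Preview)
Your proposal is correct and follows essentially the same route as the paper's proof: identify $F(0)=F^3(0)$, use that the first two rows of $\wt\Phi_A^{-1}$ act as the identity so the $Y$- and $\La$-components reduce to $(\GG^u-\GG^s)(-\partial_u\PP_1)$ and $(\GG^u-\GG^s)(-\partial_\gamma\PP_1)$, recognize these as Melnikov integrals via the substitution $\tau=v+s$, and then absorb the off-diagonal contributions of $\wt\Phi_A^{-1}$ and $\mathcal J$ into the stated $G_0^{-3}\OO(\eta_0,G_0^{-1}\xi_0)$ remainder for the $\al$- and $\bet$-components. The paper's proof is terser---it simply writes ``one can check that $\wh{\Delta Y}_0=\omega\partial_\sigma\LL$'' and for the last two components invokes the real-variable bound $\wt\Phi_A-\Id=\OO(G_0^{-1}\eta_0+G_0^{-1}\xi_0)$ rather than tracking $g_1,g_2^{u,s}$ explicitly---but the underlying argument is the same as yours.
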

\begin{lemma}\label{lemma:ErrorEstimate}
The function $\EE$ satisfies that,  for $v\in D_{\kk_3,\de_3}\cap\RR$ and $\ga\in\TT$,
\begin{equation}\label{def:ExpSmallErrors}
 \begin{aligned}
 |\EE_Y-\langle \EE_Y\rangle|&\lesssim e^{-G_0^3/3}G_0^2
 %{1/2}
 \ln^2G_0,&  |\EE_\La-\langle \EE_\La\rangle|&\lesssim e^{-G_0^3/3}G_0^{-1}
 %{-5/2}
 \ln^2G_0 \\
 |\EE_\al-\langle \EE_\al\rangle|&\lesssim e^{-G_0^3/3}G_0^{1/2}
 %{-1}
 \ln^2 G_0, &  |\EE_\bet-\langle \EE_\bet\rangle|&\lesssim e^{-G_0^3/3}G_0^{1/2}
 %{-1}
 \ln^2 G_0
  \end{aligned}
  \end{equation}
and
\begin{equation*}
% \label{def:Averages}
 \begin{aligned}
 |\langle \EE_Y\rangle|+|\langle \EE_\La\rangle|+|\langle \EE_\al\rangle|+ |\langle \EE_\bet\rangle|\lesssim G_0^{-6}|\ln G_0|^3.
 \end{aligned}
\end{equation*}
 \end{lemma}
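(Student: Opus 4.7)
The proof will combine two facts: $\EE$ lies in $\Ker\LL$, which makes it amenable to the exponentially-small estimate of Lemma \ref{lemma:Lazutkin}, and we have control of $\EE$ in the complex domain $D_{\kk_3,\de_3}$ inherited from the previously constructed parameterizations. To see that $\LL\EE=0$, note that $\LL\wh\Delta=0$ by the derivation of $\wh\Delta$ in \eqref{def:DiffKernel}, while $\LL\wh\Delta_0=0$ because $\mathcal{J}\delta z$ is a constant and the two integral operators $\GG^u,\GG^s$ are both left inverses of $\LL$, so their difference applied to the common function $\wt\Phi_A^{-1}F(0)$ lies in $\Ker\LL$.

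To get complex-domain estimates of $\EE$ in the $\QQQ$-norms, I would invert the relation $\wt\Delta=\wt\Phi_A(\Id+\wt\Psi)\wh\Delta$ from \eqref{def:DiffKernel} to obtain $\wh\Delta=(\Id+\wt\Psi)^{-1}\wt\Phi_A^{-1}\wt\Delta$, then subtract $\wh\Delta_0$. The complex norm of $\wt\Delta$ is controlled by Theorem~\ref{thm:parametrcommdomain}, which gives $\|Z^u-\FF^u(0)\|_{0,1/2,\vect}$ and $\|Z^s-\FF^s(0)\|_{0,1/2,\vect}$ of size $G_0^{-9/2}\log^3 G_0$ on $D_{\kk_3,\de_3}$ (via the change \eqref{def:difference:reparameterized} whose Jacobian is controlled by Theorem~\ref{thm:DifferenceSymplecticStraightening}). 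The definition of $\wh\Delta_0$ in \eqref{def:Deltazerohat} is precisely $\mathcal{J}\delta z+\wt\Phi_A^{-1}(\FF^u(0)-\FF^s(0))$, so the leading $\FF^*(0)$ contribution cancels and $\EE$ is expressed as $(\Id+\wt\Psi)^{-1}\wt\Phi_A^{-1}$ acting on the second-order remainders $Z^*-\FF^*(0)$ plus the lower-order corrections generated by the factors $\wt\Psi$ and $\CCC$. Using the matrix norm \eqref{def:MatrixNorm}, Lemma~\ref{lemma:FundMatEstimates}, Theorem~\ref{thm:fundamentalmatrix}, and Lemma~\ref{lemma:NormMatrix}, this yields complex-norm bounds of size $G_0^{1/2}\log^2 G_0$ for $\EE_Y$, $G_0^{-5/2}\log^2 G_0$ for $\EE_\La$, and $G_0^{-1}\log^2 G_0$ for $\EE_\al,\EE_\bet$ in the appropriate $\QQQ_{n,\kk_3,\de_3,\sigma_3}$ spaces.

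With $\EE\in\Ker\LL$ and these complex-norm bounds in hand, Lemma~\ref{lemma:Lazutkin} applied to each component of $\EE-\langle\EE\rangle$ gives $|\EE^{[\ell]}(v)|\lesssim \|\EE\|_{0,\sigma_3}(KG_0^{3/2})^{|\ell|}e^{-|\ell|G_0^3/3}$ for every nonzero Fourier index $\ell$. Since $KG_0^{3/2}e^{-G_0^3/3}$ is negligible, the sum over $|\ell|\geq 1$ is dominated by the first harmonic, contributing $e^{-G_0^3/3}G_0^{3/2}$ times the complex norm. Multiplying out produces exactly the stated polynomial factors $G_0^2$, $G_0^{-1}$, $G_0^{1/2}\log^2 G_0$ in \eqref{def:ExpSmallErrors}.

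For the average $\langle\EE\rangle$, the argument is different: since $\langle\EE\rangle\in\Ker\LL\cap\Ker\pa_\ga$, it is $v$-independent and therefore constant, so there is no exponential smallness to extract. Instead, I would evaluate $\langle\wh\Delta\rangle$ and $\langle\wh\Delta_0\rangle$ directly using the improved estimate \eqref{def:LambdaImprovedEstimate} (which provides an extra $G_0^{-3/2}$ factor on the $\Lambda$ component because its Fourier average is much smaller than a generic component of $Z^*$), together with the explicit formulas in Lemma~\ref{lemma:Delta0Hat} for $\wh\Delta_0$. Combined with energy conservation (which enforces a relation between $Y$ and $\Lambda$ in the averages) and Cauchy estimates at points of $D_{\kk_3,\de_3}\cap\RR$ that are $\OO(1)$ from the singularities, this yields $|\langle\EE\rangle|\lesssim G_0^{-6}\log^3 G_0$. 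The main obstacle will be propagating the asymmetric component weights of the vector and matrix norms \eqref{def:VectNorm:Diff}, \eqref{def:MatrixNorm} through the compositions $(\Id+\wt\Psi)^{-1}\wt\Phi_A^{-1}$ and $\Phi_0$ without losing the sharp $G_0$ powers; a secondary difficulty is that the average estimate truly requires the \emph{improved} $\La$-bound \eqref{def:LambdaImprovedEstimate}, since a naive application of the generic bound $\|Z^*-\FF^*(0)\|\lesssim G_0^{-9/2}\log^3 G_0$ would only give $|\langle\EE\rangle|\lesssim G_0^{-9/2}\log^3 G_0$, which is not small enough.
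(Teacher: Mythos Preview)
Your treatment of the non-average part is essentially the paper's approach: you correctly place $\EE$ in $\Ker\LL$, obtain complex-domain bounds, and feed them into Lemma~\ref{lemma:Lazutkin}. The paper organizes the complex estimate via the explicit splitting
\[
\EE=\underbrace{\left[(\Id+\wt\Psi)^{-1}-\Id\right]\wt\Phi_A^{-1}\wt\Delta}_{\EE^1}
+\underbrace{\wt\Phi_A^{-1}(\wt\Delta-\Delta)}_{\EE^2}
+\underbrace{\wt\Phi_A^{-1}\big(\Delta-(\FF^u(0)-\FF^s(0))\big)}_{\EE^3},
\]
which is the same grouping you describe (``remainders $Z^*-\FF^*(0)$'' gives $\EE^3$, ``corrections from $\wt\Psi$'' gives $\EE^1$, ``corrections from $\CCC$'' gives $\EE^2$).

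The gap is in your average estimate. You assert that the generic bound $\|Z^*-\FF^*(0)\|_{0,1/2,\vect}\lesssim G_0^{-9/2}\ln^3 G_0$ from Theorem~\ref{thm:parametrcommdomain} is insufficient, and propose closing the $G_0^{-3/2}$ gap via the improved $\Lambda$ bound~\eqref{def:LambdaImprovedEstimate} and energy conservation. Neither tool does the job here: \eqref{def:LambdaImprovedEstimate} controls $\Lambda^s$ itself, not the remainder $Z^*-\FF^*(0)$, and energy conservation is used elsewhere (to discard the $Y$ equation when locating heteroclinics) but plays no role in this lemma. The actual mechanism is much simpler. Since $Z^*$ is a fixed point of $\FF^*$, one has $Z^*-\FF^*(0)=\FF^*(Z^*)-\FF^*(0)$, and the Lipschitz estimate~\eqref{def:LipschitzImproved} gives this difference in the \emph{stronger} norm:
\[
\lln \FF^*(Z^*)-\FF^*(0)\rrn_{1/3,1,\vect}\lesssim G_0^{-3}\ln^2 G_0\,\lln Z^*\rrn_{1/3,1/2,\vect}\lesssim G_0^{-6}\ln^3 G_0.
\]
For real $v$ the weights $|u\mp i/3|$ are $\OO(1)$, so this higher-index bound translates directly into the pointwise estimate $|\EE^3(v,\ga)|\lesssim G_0^{-6}\ln^3 G_0$. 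The terms $\EE^1,\EE^2$ are bounded similarly (each picks up a factor $G_0^{-3}\ln G_0$ from $\wt\Psi$ or $\CCC$ against $\|\wt\Delta\|_{1/2,\vect}\lesssim G_0^{-3}\ln G_0$), and then $|\langle\EE\rangle|\le\sup_{v\in\RR}|\EE|$ finishes the average bound without any component-specific analysis.
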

Note that this lemma gives an expression of the difference between the paramerizations of the invariant manifolds as
\begin{equation}\label{def:DeltaTildeAsymp}
 \wt\Delta=\wt\Phi_A \left(\Id+\wt\Psi\right)\left(\wh \Delta_0+\EE\right)
\end{equation}
and the Fourier coefficients of $\EE$ (except its averages) have exponentially small bounds. In the next section we  improve the estimates for a certain average associated to the $\wt\Lambda$ component of $\EE$.

We finish this section proving Lemmas \ref{lemma:Delta0Hat} and \ref{lemma:ErrorEstimate}. 
\begin{proof}[Proof of Lemma \ref{lemma:Delta0Hat}]
For the $Y$ component, using the definition of $\wt\Phi_A$ in \eqref{def:FundamentalMatrixModified} and the properties of the matrix $\mathcal{J}$ stated in Lemma \ref{lemma:FundMatEstimates}, one has ($\pi_Y$ denotes the projection on the $Y$ component)
\[
\wh {\Delta Y}_0=\pi_Y (\mathcal{J}\delta z)+\GG^u(F_1(0))-\GG^s(F_1(0))=\GG^u(F_1(0))-\GG^s(F_1(0)).
\]
Then, recalling the definition of the operators $\GG^{u,s}$ in \eqref{def:Outer:IntegralOperator} and of $F(0)$, one can check that $\wh {\Delta Y}_0=\omega\pa_\sigma\LL$ as was to be shown. Proceeding analogously, one can prove that  $\wh {\Delta \La}_0=-\pa_\sigma\LL$.

For the components $\al$ and $\bet$ we use that, for real values of $v$, the matrix $\wt\Phi_A$ satisfies $\wt\Phi_A-\Id=\OO\left(G_0^{-1}\alo+G_0^{-1}\beto\right)$. 
Then, using Theorem \ref{thm:parametrcommdomain}, for real values of $(v,\ga)$,
\[
\begin{split}
\wh {\Delta \al}_0=&\,\pi_\alpha(\mathcal{J}\delta z)+\GG^u(F_3(0))-\GG^s(F_3(0))+\OO\left(G_0^{-4}\alo+G_0^{-4}\beto\right)\\
=&\,\delta\eta-i\pa_{\beto}\LL+\OO\left(G_0^{-4}\alo+G_0^{-4}\beto\right)+\OO\left(G_0^{-3}\eta_0\right)
\end{split}
\]
and analgously for $\bet$.
\end{proof}

\begin{proof}[Proof of Lemma \ref{lemma:ErrorEstimate}]
We split the function $\EE$ as  $\EE=\EE^1+\EE^2+\EE^3$ with
\begin{equation}\label{def:ErrorE}
\begin{split}
 \EE^1&=(\Id+\wt\Psi)\ii\wt\Phi_A\ii\wt \Delta-\wt\Phi_A\ii\wt \Delta=\left[(\Id+\wt\Psi)\ii-\Id\right]\wt\Phi_A\ii\wt\Delta\\
 \EE^2&=\wt\Phi_A\ii \left(\wt\Delta-\Delta\right)\\
 \EE^3&=\wt\Phi_A\ii \left(\Delta-\left(\FF(0)^u-\FF^s(0)\right)\right)
%  \EE^4&=\Phi_A\ii
% \left(\left(\FF(0)^u-\FF^s(0)\right)\right)-\GG^u\left(F(0)\right)-\GG^s\left(F(
% 0)\right)
\end{split}
\end{equation}
where  $\wt\Psi$ is the matrix obtained in Theorem \ref{thm:fundamentalmatrix}.

We bound each term separately. For the first
term, one can write the matrix as
\[
(\Id+\wt\Psi)\ii-\Id =\sum_{k\geq 1}\left(-\wt\Psi\right)^k.
\]
Therefore, using the estimates for $\wt\Phi_A\ii$ and $\wt\Psi$ in Lemma \ref{lemma:FundMatEstimates} and Theorem \ref{thm:fundamentalmatrix} respectively and the properties of the matrix norm given in Lemma \ref{lemma:NormMatrix},
\[
 \left\|\left[(\Id+\wt\Psi)\ii-\Id\right]\wt\Phi_A\ii\right\|_{1/2,\sigma_3,\matrx}\lesssim G_0^{-3}\ln G_0.
\]
Then, using also Theorems \ref{thm:parametrcommdomain} and
\ref{thm:DifferenceSymplecticStraightening} and Lemma \ref{lemma:banach:Composition}, one has $\|\wt\Delta\|_{1/2,\sigma_3,\vect}\lesssim G_0^{-3}\ln G_0$. Thus,
\[
\left\| \EE^1\right\|_{1,\sigma_3,\vect}\lesssim  G_0^{-3}\ln G_0\|\wt \Delta\|_{0,\sigma_3,\vect}\lesssim G_0^{-6}\ln^2 G_0.
\]
For $\EE_2$, we use the  definition of $\wt\Delta$ in
\eqref{def:difference:reparameterized}. Theorems  \ref{thm:parametrcommdomain} and
\ref{thm:DifferenceSymplecticStraightening} and Lemma \ref{lemma:banach:Composition} imply that
\[
 \|\wt\Delta-\Delta\|_{1/3,\sigma_3,\vect}\lesssim G_0^{-7}\ln^2G_0.
\]
Then, using this estimate and Lemmas \ref{lemma:banach:AlgebraProps} and \ref{lemma:FundMatEstimates},
\[
\left\|\EE^2\right\|_{3/2,\sigma_3,\vect}\lesssim \left\|\wt\Phi_A\ii \right\|_{0,\sigma_3,\matrx} \|\wt\Delta-\Delta\|_{3/2,\sigma_3,\vect}\lesssim G_0^{-7}\ln^2 G_0.
\]
Finally, using that the paramerizations of the invariant manifolds $Z^\ast$, $\ast=u,s$  obtained in Theorem  \ref{thm:parametrcommdomain} are fixed points of the operators $\FF^\ast$, $\ast=u,s$, respectively,
we write $\EE^3$ as
\[
\EE^3= \wt\Phi_A\ii\left(\FF^u(Z^u)-\FF^s(Z^s)\right)-\wt\Phi_A\ii\left(\FF^u(0)-\FF^s(0)\right).
\]
Now, by \eqref{def:LipschitzImproved},
\[
 \left\|\FF^*(Z^*)-\FF^*(0)\right\|_{1,\vect,\sigma_3}\lesssim  G_0^{-6}|\ln G_0|^3 \qquad \ast=u,s.
\]
Therefore, using this estimate and the estimate for  $\wt \Phi_A$ in Lemma \ref{lemma:FundMatEstimates}, we obtain
\[
% \begin{split}
\left\|\EE^3\right\|_{1,\sigma_3,\vect}\lesssim \left\|\wt\Phi_A\ii \right\|_{0,\sigma_3,\matrx}\left(\left\|\FF^u(Z^u)-\FF^u(0)\right\|_{1,\sigma_3,\vect}+\left\|\FF^s(Z^s)-\FF^s(0)\right\|_{1,\sigma_3,\vect}\right)\lesssim    G_0^{-6}|\ln G_0|^3.
% \end{split}
\]
Now, for $v\in D_{\kk_3,\de_3}\cap \RR$ and $\ga\in\TT$,
\[
|\langle\EE\rangle|\lesssim |\EE|\lesssim \left\|\EE^1\right\|_{1,\sigma_3,\vect}+\left\|\EE^2\right\|_{3/2,\sigma_3,\vect}+\left\|\EE^3\right\|_{1,\sigma_3,\vect}\lesssim   G_0^{-6}|\ln G_0|^3.
\]
For the other harmonics, one can use that  $\left\|\EE\right\|_{0,\sigma_3,\vect}\lesssim G_0^{-5/2}\ln^2G_0$. Then, by the definition of the norm,

\[
 \begin{aligned}
 \|\EE_Y\|_{0,\sigma_3}&\lesssim G_0^{1/2}\ln^2G_0,\qquad  &  \|\EE_\La\|_{0,\sigma_3}&\lesssim G_0^{-5/2}\ln^2G_0 \\
 \|\EE_\al\|_{0,\sigma_3}&\lesssim G_0^{-1}\ln^2 G_0, \qquad&  \|\EE_\bet\|_{0,\sigma_3}&\lesssim G_0^{-1}\ln^2 G_0.
  \end{aligned}
\]
which together with  Lemma \ref{lemma:Lazutkin} gives the estimates of the lemma.
\end{proof}

\subsection{Improved estimates for the averaged term}\label{sec:Average}

The last step in the analysis of the difference between the invariant manifolds is to obtain improved estimates for the averaged term for $\EE_Y$ and $\EE_\La$. 
Note that for these two components the first order given by the Melnikov potential  in Theorem \ref{thm:MainSplitting} is exponentially small (see Proposition \ref{prop:MelnikovPotential}). 
Therefore, to prove that $\wh \Delta_0$ is bigger than the error one has to show that the averages $\langle \EE_Y\rangle$ and $\langle \EE_\La\rangle$ are smaller than the corresponding Melnikov components $\omega\pa_\sigma\LL$, $-\pa_\sigma\LL$ (and thus exponentially small). 
We prove this fact by using the Poincar\'e invariant (for $\Lambda$) and the conservation of energy (for $Y$).

Consider the autonomous Hamiltonian system associated to \eqref{def:HamFinal}. 
Then, it has the Poincar\'e invariant
acting on closed loops of the phase space defined as follows. Consider a loop $\Gamma$  in phase space, then
\[
 I(\Ga)=\int_\Ga \left(Y du+\La d\ga +\beta d\al\right)
\]
is invariant under the flow associated to the Hamiltonian system. 
We use this invariant to improve the estimates of the $\ga$-averaged terms in $\wh{\Delta Y}$ and $\wh{\Delta\La}$.

In Theorem \ref{thm:parametrcommdomain}, we have obtained parameterizations of the unstable manifold of the
periodic orbit $P_{\alo,\beto}$ and the stable manifold of the periodic orbit $P_{\eta_0+\delta\eta,\xi_0+\delta\xi}$, $Z^u$ and $Z^s$ (as graphs).
% where $\al_0,\beta_0$ satisfy
% $|\al_0|,|\beta_0|\lesssim \zeta_0\lesssim G_0^{-3/2}$.
Take any loop $\Ga^u$ contained in $W^u (P_{\alo,\beto})$, homotopic to a loop of the form 
$W^u ( P_{\alo,\beto})\cap \{u=u_0\}$ 
(and thus homotopic to $P_{\alo,\beto}$) and any loop $\Ga^s$ contained in $W^s (P_{\alo+\delta\eta,\beto+\delta\xi})$, homotopic to a loop of the form 
$W^s ( P_{\alo+\delta\eta,\beto+\delta\xi})\cap \{u=u_0\}$ 
(and thus homotopic to $P_{\alo+\delta\eta,\beto+\delta\xi}$). 
More concretely, in the case of the stable manifold, take a $\CCC^1$ function $f:\TT\to D^u\cap \RR$ and define an associated loop parameterized as
\begin{equation}\label{def:loops}
 \Ga^s=\{(u,\ga,\al,Y,\La, \bet)=(f(\ga), \ga, \al^s(f(\ga),\ga), Y^s(f(\ga),\ga), \La^s(f(\ga),\ga), \bet^s(f(\ga),\ga) \}
\end{equation}
where $Z^s=(Y^s, \La ^s,\al^s,  \bet^s)$ is the parameterization of the invariant manifold obtained in Theorem  \ref{thm:parametrcommdomain}.
\begin{lemma}\label{lemma:Poincare}
The loops of the form \eqref{def:loops} satisfy $I(\Ga^*)= 0$, $*=u,s$.
\end{lemma}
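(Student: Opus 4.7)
The plan is to exploit that $I$ is a homotopy invariant inside the two-dimensional invariant manifolds $W^s$ and $W^u$, and then to deform each loop to one sitting at $u=u_0$ and let $u_0\to\pm\infty$, where the parameterization converges to the periodic orbit at infinity and the integrand decays.

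First I will show that the restriction to $W^s(P_{\eta_0+\delta\eta,\xi_0+\delta\xi})$ of the Poincar\'e 1-form $\alpha_0 := Y\,du+\Lambda\,d\gamma+\beta\,d\alpha$ is closed. The two-dimensional submanifold $W^s$, parameterized by $(u,\gamma)$, is automatically isotropic: the Hamiltonian vector field $X_\PP$ is tangent to $W^s$ and satisfies $\iota_{X_\PP}\omega = -d\PP$, which vanishes on $TW^s$ because $\PP$ is constant along trajectories. Hence $X_\PP\in\ker(\omega|_{W^s})$ and, being nonzero (since $\dot u\approx 1$ in the parameterization region), it forces a skew $2$-form on a $2$D space to vanish identically. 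Therefore $\omega|_{W^s}\equiv 0$, so $\alpha_0|_{W^s}$ is closed and $I$ depends only on the free homotopy class of the loop inside $W^s$.

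Next, I will homotope $\Gamma^s$ inside $W^s$ to a loop $\Gamma^s_{u_0} := \{(u,\gamma)\in D_{\kk,\de}\times\TT : u=u_0,\ Z=Z^s(u_0,\gamma)\}$ via the straight-line homotopy $u\mapsto(1-t)f(\gamma)+tu_0$ in the $(u,\gamma)$-chart, and let $u_0\to+\infty$. On $\Gamma^s_{u_0}$ we have $du=0$, so only two terms remain:
\[
I(\Gamma^s_{u_0}) = \int_0^{2\pi}\Lambda^s(u_0,\gamma)\,d\gamma + \int_0^{2\pi}\beta^s(u_0,\gamma)\,\partial_\gamma\alpha^s(u_0,\gamma)\,d\gamma.
\]
By Theorem~\ref{thm:ExistenceManiFixedPt}, the components of $\widetilde Z^s = Z^s-\delta z$ belong to $\XX_{1/3,1/2,\vect}$, whose weights enforce $\Lambda^s(u_0,\gamma)\to 0$ and $\widetilde\alpha^s(u_0,\gamma) := \alpha^s(u_0,\gamma)-\delta\eta\to 0$ as $u_0\to+\infty$, with Cauchy estimates giving $\partial_\gamma\alpha^s(u_0,\gamma)\to 0$ as well; meanwhile $\beta^s$ stays bounded. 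Passing to the limit under the integrals yields $I(\Gamma^s)=\lim_{u_0\to\infty}I(\Gamma^s_{u_0})=0$. The argument for $\Gamma^u$ is identical upon sending $u_0\to-\infty$ and using that, with $\delta z=0$, the unstable parameterization $Z^u$ tends to the periodic orbit $P_{\eta_0,\xi_0}$.

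The subtle point is that the given primitive $\alpha_0$ is not exactly the Liouville form for the complex symplectic structure $\omega = du\wedge dY+d\gamma\wedge d\Lambda+i\,d\alpha\wedge d\beta$: a direct computation gives $d\alpha_0 = -\omega + (1-i)\,d\beta\wedge d\alpha$, so the closedness step must be supplemented. The discrepancy $(1-i)\,d\beta\wedge d\alpha$, once restricted to the physical real slice $\xi=\bar\eta$ (equivalently $\beta=\bar\alpha$), becomes the sum of an exact form and a purely imaginary closed form whose integral over any closed loop vanishes; hence $\alpha_0$ restricted to the physical $W^s$ is genuinely closed up to harmless contributions. Alternatively, one can bypass this subtlety altogether by invoking directly the flow invariance of $I$ stated before the lemma: set $\Gamma^s_T := \phi_T(\Gamma^s)\subset W^s$, whose projection to the $(u,\gamma)$-chart shifts $u\mapsto u+T$ to leading order, and apply the same decay estimates to the flowed loop as $T\to\infty$.
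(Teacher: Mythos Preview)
Your primary argument has a genuine gap that you yourself flag but do not repair. You correctly observe that $d\alpha_0 = -\omega + (1-i)\,d\beta\wedge d\alpha$, so the isotropy of $W^s$ with respect to $\omega$ does \emph{not} imply that $\alpha_0|_{W^s}$ is closed; for that you would also need $(d\beta\wedge d\alpha)|_{W^s}=0$, which is a separate claim you do not establish. Your proposed patch --- that on the real slice the discrepancy is ``the sum of an exact form and a purely imaginary closed form whose integral over any closed loop vanishes'' --- is not correct as stated: writing $\alpha=a+ib$, $\beta=\bar\alpha=a-ib$ gives $(1-i)\,d\beta\wedge d\alpha = 2(1+i)\,da\wedge db$, which is neither exact nor purely imaginary. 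So the homotopy-invariance step of your main route is not justified.

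Your ``alternative'' at the very end is the right idea and is precisely what the paper does: invoke directly the flow invariance of the Poincar\'e--Cartan integral (stated just before the lemma), write $I(\Gamma^s)=\lim_{t\to\infty} I(\Phi_t(\Gamma^s))$, and use the decay of $Z^s$ as $u\to+\infty$ (in particular $\pi_\alpha\Phi_t(\Gamma^s)\to\delta\eta$ constant, so $d\alpha\to 0$, while $\Lambda^s\to 0$) to conclude the limit vanishes. The paper's proof is three lines; the isotropy/homotopy machinery is unnecessary here and, as written, incomplete.
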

\begin{proof}
Call $\Phi_t$ the flow associated to the Hamiltonian $\PP$ in \eqref{def:HamFinal} and take a loop $\Ga^s$. Then, since the Poincar\'e invariant is invariant  under this flow
\[
 I(\Ga^s)=\lim_{t\to \infty} I(\Phi_t(\Ga^s)).
\]
Then, using that $\lim_{t\to\infty} \pi_\alpha (\Phi_t( \Ga^s))=\delta\eta$ and the estimates of the parameterizations of $Z^s$ as $u\to\infty$ in Theorem \ref{thm:parametrcommdomain}, it is clear that $\lim_{t\to\infty} I(\Phi_t(\Ga^s))=0$.
\end{proof}
Taking any of the loops $\Ga^s$ and $\Ga^u$ considered in Lemma \ref{lemma:Poincare},
we have $I(\Ga^s)- I(\Ga^u)=0$

Now consider the difference $\Delta$ between the parameterization of the invariant
manifolds  introduced in \eqref{def:difference}, which satisfies equation \eqref{eq:Diffeq}.

Let us consider any loop $\Ga^\ast$ of those considered in Lemma \ref{lemma:Poincare} and apply to them the symplectic exact transformation $\Phi$ obtained in Theorem \ref{thm:DifferenceSymplecticStraightening}.
That is, $\wt\Ga^*=\Phi\ii (\Ga^*)$. Since the Poincar\'e invariant is invariant under exact symplectic transformation, we have that
\[
 I(\wt \Ga^*)=\int_{\wt\Ga^*} \left(\wt Y dv+\wt \La d\ga +\wt\beta d\wt\al\right)=I(\Ga^*)=0.
\]
%
% We define its difference
% \[
% \wt \Delta=(\wt Y^u- \wt Y^s,
% \wt \Lambda^u-\wt\Lambda^s,\al^u-\al^s,\beta^u-\beta^s).
% \]
% Since the change of coordinates is symplectic exact,  in these
% new coordinates the Poincar\'e invariant has the form
% \[
%  I(\Ga)=\int_\Ga \left(\wt Y dv+\wt \La d\tau +\beta d\al\right)
% \]
% On the other hand $\wt \Delta$ satisfies
% \[
%  \LL\wt\Delta =\wt R\wt\Delta
% \]
% where $\wt R$ is ``close'' to $R$.
Now, let us fix a section $v=v_0$ and study the loops
$\wt \Ga^u=W^u(P_{\alo,\beto})\cap \{v=v_0\}$ and $\wt \Ga^s=W^s(P_{\alo+\delta\eta,\beto+\delta\xi})\cap \{v=v_0\}$. Note that these loops satisfy
\[
 I\left(\wt\Ga^*\right)=\int_0^{2\pi} \left(\wt \La^*(v_0,\ga)  +\wt\beta^*(v_0,\ga)
\partial_\ga\wt\al^* (v_0,\ga)\right)d\ga=0,\quad \ast=u,s
\]
where $(\wt Y^*, \wt \La^*,\wt\al^*,\wt \bet^*)$, $*=u,s$, are the parameterizations obtained in Theorem \ref{thm:DifferenceSymplecticStraightening}.

We subtract the equation for the stable and unstable loops. Integrating by parts, we obtain the
relation
\begin{equation}\label{def:PoincareInvariantEquality}
0=I(\wt\Ga^u)-I(\wt\Ga^s)= \int_0^{2\pi} \left(\Delta \wt \La(v_0,\ga)
+h_1(v_0,\ga)\Delta\wt\al(v_0,\ga)
+h_2(v_0,\ga)\Delta \wt\beta (v_0,\ga)\right)d\ga
\end{equation}
where
\[
%  \begin{split}
h_1(v_0,\ga)=-\frac{1}{2}\left(\partial_\ga\wt\beta^s(v_0,
\ga)+\partial_\ga\wt\beta^u(v_0, \ga)\right)\quad\text{and}\quad
h_2(v_0,\ga)=\frac{1}{2}\left(\partial_\ga\wt\al^s(v_0,\ga)+\partial_\ga\wt\al^u(v_0,
\ga)\right).
%  \end{split}
\]
Note that the functions $h_i$ satisfy $\langle h_i\rangle_\ga=0$ and, by Theorem \ref{thm:parametrcommdomain}, $|h_i(v,\ga)|\lesssim G_0^{-6}\ln G_0$ for $i=1,2$ and real values of $(v,\ga)$.

Next step is to make the transformation
\begin{equation}\label{def:ChangeFundamental}
 (\wt
Y,\wt \Lambda,\wt \al,\wt \beta)^\top=\Psi(v,\tau) (\wh
Y,\wh \Lambda, \wh\al, \wh\beta)^\top
\end{equation}
where $\Psi=\wt\Phi_A(\Id+\wt\Psi)$  and $\wt\Psi$ is the matrix obtained in Theorem \ref{thm:fundamentalmatrix}.
%
% the coordinates $(\wt
% Y,\wt \Lambda, \al, \beta)$ by a linear change. To this end we compute an
% invertible matrix $\Phi$ which solves $\LL\Phi=\wt R\Phi$. Note that we hope
% that this matrix satisfies $\Phi-\mathrm{Id}$ is small.
%
% Now, we define the new system of coordinates

The difference between the invariant manifolds in these new coordinates is the vector $\wh \Delta$ introduced in \eqref{def:DiffKernel}, which satisfies $ \LL\wh\Delta=0$.
%

% To deduce this last change, w
We analyze the Poincar\'e invariant relation
 \eqref{def:PoincareInvariantEquality} after performing the change of
 coordinates \eqref{def:ChangeFundamental}.
Note that this change  is not
symplectic and therefore the Liouville form is not preserved. Thus, we apply the
change of coordinates to \eqref{def:PoincareInvariantEquality} directly
and we obtain
\begin{equation}\label{def:InvPoincTransf}
 \int_0^{2\pi} \left((1+  \wh h_0(v_0,\ga))\wh \Delta \La(v_0,\ga)
+\wh h_1(v_0,\ga)\wh \Delta\al(v_0,\ga)
+\wh h_2(v_0,\ga)\wh \Delta \beta (v_0,\ga)+\wh
h_3(v_0,\ga)\wh \Delta Y\right)d\ga=0
\end{equation}
for some functions $\wh h_i$. By the definition of the fundamental matrix $\wt\Phi_A$ in \eqref{def:FundamentalMatrixModified} and the estimates of the matrix $\wt\Psi$ obtained in Theorem \ref{thm:fundamentalmatrix}, one can easily check that,  for real values of $(v,\ga)$ and assuming $|\alo| G_0^{3/2}\ll1$, the functions $\wh h_i$ satisfy
\begin{equation}\label{def:EstimatesFunctionshs:1}
|\wh h_i|\lesssim G_0^{-3}\ln G_0, \quad i=0\ldots 2\qquad \text{ and }\qquad|\wh h_3|\lesssim G_0^{-9/2}\ln G_0.
\end{equation}
% Moreover, since $\wt\Phi_A$ is independent of $\ga$,
% \begin{equation}\label{def:EstimatesFunctionshs:2}
% |\wh h_i-\langle \wh h_i\rangle_\ga |\lesssim G_0^{-3},, \quad i=0\ldots 3.
% \end{equation}
%
%
% small (all the done changes are close
% to the identity and the original $h_1$, $h_2$ were small (of size $\zeta_0$)).
We would like to use \eqref{def:InvPoincTransf} to obtain more accurate estimates of $\langle \wh \Delta \Lambda\rangle$.  Assume for a moment that  $\langle \wh h_i\rangle=0$ for $i=1,2,3$ and let us introduce the
following notation
\[
\left\{ f\right\}_\ga(v,\ga)= f(v,\ga)-\langle
f\rangle_\ga(v).
\]
It certainly satisfies $\langle\left\{f(v,\ga)\right\}_\ga\rangle_\ga=0$.

% We give first the ideas of how to get exponentially small estimates for the component $\Lambda$ under this assumption.
% Then, we
% would obtain the following relation.
By \eqref{def:ExpSmallErrors}, the four components of $\left\{\wh\Delta\right\}_\ga$ are
exponentially small.
Under the assumption  $\langle \wh h_i\rangle_\ga=0$ for $i=1,2,3$ then
\eqref{def:InvPoincTransf} becomes
\[
\langle \wh \Delta\Lambda \rangle_\ga= \frac{1}{2\pi (1+\langle
\wh h_0\rangle_\ga)}\int_0^{2\pi}F(\ga)d\ga
\]
with
\[
 F(\ga)=\left\{ \wh h_0 \right\}_\ga\left\{ \wh \Delta\Lambda
\right\}_\ga+\left\{ \wh h_1 \right\}_\ga\left\{ \wh \Delta\al
\right\}_\ga+\left\{ \wh h_2 \right\}_\ga\left\{ \wh \Delta\beta
\right\}_\ga+\left\{ \wh h_3 \right\}_\ga\left\{ \wh \Delta Y \right\}_\ga.
\]
Now, using the estimates given in Proposition \ref{prop:MelnikovPotential} and \eqref{def:ExpSmallErrors}, one would obtain
\[
 |F(\ga)|\lesssim G_0^{-5/2} e^{-G_0^3/3}\ln G_0
\]
therefore $\langle \wh \Delta\Lambda
\rangle_\ga$ would have the same estimate.

Now, this argument does not work because $\langle \wh h_i\rangle_\ga
\neq 0$ for $i=1,2,3$. Therefore, we have to perform the close to the identity change of coordinates which depends on $v$ but not on $\ga$.
\[
\check{\Delta} \La=\wh\Delta \La+\frac{\langle \wh h_1\rangle_\ga}{1+\langle \wh  h_0\rangle_\ga}\wh
\Delta \al+\frac{\langle \wh  h_2\rangle_\ga}{1+\langle \wh  h_0\rangle_\ga}\wh \Delta
\bet+\frac{\langle \wh  h_3\rangle_\ga}{1+\langle \wh  h_0\rangle_\ga}\wh \Delta Y
\]
and the other variables remain unchanged. Note that the functions $h_i$ are
small by \eqref{def:EstimatesFunctionshs:1} and therefore this change is close to the identity. Moreover, the functions $\langle \wh  h_i\rangle_\ga$ are independent of $\ga$ and therefore
\[
\langle\check{\Delta} \La\rangle_\ga=\langle\wh\Delta \La\rangle_\ga+\wh  C_1\langle\wh
\Delta \al\rangle_\ga+\wh  C_2\langle\wh \Delta
\bet\rangle_\ga+C_3\langle\wh \Delta Y\rangle_\ga.
\]
where
\begin{equation}\label{def:Ci}
\wh  C_i=\frac{\langle \wh h_i\rangle_\ga}{1+\langle \wh  h_0\rangle_\ga}.
\end{equation}
Thus
\[
\left|\left\{\check{\Delta} \La\right\}_\ga\right| \lesssim G_0^{-3/2}  e^{-G_0^3/3}.
\]
%
% are evaluated at $v=v^*$ and therefore the involved coefficents are constant.
% This implies that $\LL \check{\Delta} L=0$.
Now the relation \eqref{def:InvPoincTransf} becomes
\[
  \int_0^{2\pi} \left((1+\wh h_0(v_0,\ga))\check \Delta \La(v_0,\ga)
+\check  h_1(v_0,\ga)\wh \Delta\al(v_0,\ga)
+\check h_2(v_0,\ga)\wh \Delta \beta (v_0,\ga)+\check
h_3(v_0,\ga)\wh \Delta Y\right)d\ga=0
\]
where
\[
 \check h_i=\wh h_i-\frac{1+\wh h_0}{1+\langle \wh h_0\rangle_\ga}\langle \wh h_i\rangle_\ga,\qquad i=1,2,3,
\]
and therefore satisfy $\langle\check h_i\rangle_\ga=0$ and $|\check h_i|\lesssim G_0^{-3}\ln G_0$ for $i=1,2,3$. Therefore, the argument done
previously works and one can deduce that
\[
\left\langle \check \Delta \La(v_0,\ga)\right\rangle_\ga \lesssim G_0^{-5/2} e^{-G_0^3/3}\ln G_0.
\]

The just obtained results are summarized  in the following lemma.
\begin{lemma}\label{lemma:AverageLambda}
The function $\wh\Delta$ introduced in \eqref{def:DiffKernel} satisfies that $\wh\Delta(v,\ga) =N(v)\check\Delta(v,\ga)$ where $N$ is the matrix
\[
N(v)=\begin{pmatrix}
      1&0&0&0\\
      -\wh C_3&1&-\wh C_1&-\wh C_2\\
      0&0&1&0\\
      0&0&0&1.
     \end{pmatrix}
\]
which satisfies
\[
N=\Id+\OO\left(G_0^{-3}\ln G_0\right)\qquad \text{ and }\qquad N_{21}=\OO\left(G_0^{-9/2}\ln G_0\right).
\]
Moreover, for real values of $v\in D_{\kk_3,\de_3}\cap\RR$, $\left\langle \check \Delta \La(v,\ga)\right\rangle_\ga \lesssim G_0^{-5/2} e^{-G_0^3/3}\ln G_0$.
\end{lemma}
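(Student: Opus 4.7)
The plan is to package the computation that precedes the statement as a clean proof, making explicit the change of variables encoded in the matrix $N(v)$ and then extracting the exponentially small bound on $\langle \check\Delta\Lambda\rangle_\gamma$ from Lemma~\ref{lemma:ErrorEstimate}.

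First I would recall that on any loop homotopic to a periodic orbit inside $W^u(P_{\alo,\beto})$ or $W^s(P_{\alo+\delta\eta,\beto+\delta\xi})$, Lemma~\ref{lemma:Poincare} forces the Poincar\'e invariant $I(\Gamma^*)$ to vanish, since one can flow the loop to infinity where the parameterizations collapse. Applying the exact symplectic straightening $\Phi$ of Theorem~\ref{thm:DifferenceSymplecticStraightening} preserves $I$, so taking the sections $\{v=v_0\}$ of the straightened invariant manifolds and subtracting the two identities, an integration by parts in $\gamma$ yields the closed relation~\eqref{def:PoincareInvariantEquality} for the difference $(\Delta \wt Y,\Delta\wt\Lambda,\Delta\wt\alpha,\Delta\wt\beta)$, with coefficients $h_1,h_2$ built out of the $\gamma$-derivatives of the stable and unstable parameterizations which, by Theorem~\ref{thm:parametrcommdomain}, are $\OO(G_0^{-6}\ln G_0)$ and have zero $\gamma$-average.

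Next I would push the relation through the non-symplectic linear change~\eqref{def:ChangeFundamental} associated with the fundamental matrix $\Psi=\wt\Phi_A(\mathrm{Id}+\wt\Psi)$. Since the change is not symplectic, the pull-back of $dv\wedge d\wt\Lambda+id\wt\alpha\wedge d\wt\beta$ is no longer canonical, so one gets the deformed relation~\eqref{def:InvPoincTransf} involving the four coefficient functions $\wh h_0,\ldots,\wh h_3$. The estimates~\eqref{def:EstimatesFunctionshs:1}, $|\wh h_i|\lesssim G_0^{-3}\ln G_0$ for $i=0,1,2$ and $|\wh h_3|\lesssim G_0^{-9/2}\ln G_0$, will be obtained directly from the explicit form of $\wt\Phi_A$ in~\eqref{def:FundamentalMatrixModified}, the bound $\|\wt\Psi\|_{1/2,\sigma_3,\matrx}\lesssim G_0^{-3}\ln G_0$ (together with the sharper $\|\wt\Psi_{21}\|_{0,\sigma_3}\lesssim G_0^{-9/2}\ln G_0$) from Theorem~\ref{thm:fundamentalmatrix}, and the hypothesis $|\alo|G_0^{3/2}\ll 1$.

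The key step is now to compensate for the fact that the averages $\langle \wh h_i\rangle_\gamma$, $i=1,2,3$, need not vanish. I would define the constants $\wh C_i(v)$ as in~\eqref{def:Ci} and introduce the matrix
\[
N(v)=\begin{pmatrix}1&0&0&0\\ -\wh C_3&1&-\wh C_1&-\wh C_2\\ 0&0&1&0\\ 0&0&0&1\end{pmatrix},
\]
setting $\wh\Delta = N(v)\check\Delta$; the estimates on $\wh h_i$ immediately give $N=\mathrm{Id}+\OO(G_0^{-3}\ln G_0)$ and $N_{21}=\OO(G_0^{-9/2}\ln G_0)$ as stated. Rewriting~\eqref{def:InvPoincTransf} in the new variable, the coefficients multiplying $\wh\Delta\alpha$, $\wh\Delta\beta$, $\wh\Delta Y$ become $\check h_i:=\wh h_i-\tfrac{1+\wh h_0}{1+\langle\wh h_0\rangle_\gamma}\langle\wh h_i\rangle_\gamma$ and satisfy $\langle\check h_i\rangle_\gamma=0$ with $|\check h_i|\lesssim G_0^{-3}\ln G_0$.

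Finally, isolating $\langle\check\Delta\Lambda\rangle_\gamma$ from the integrated relation gives
\[
\langle\check\Delta\Lambda\rangle_\gamma = \frac{1}{2\pi(1+\langle\wh h_0\rangle_\gamma)}\int_0^{2\pi}\bigl(\{\wh h_0\}_\gamma\{\check\Delta\Lambda\}_\gamma+\{\check h_1\}_\gamma\{\wh\Delta\alpha\}_\gamma+\{\check h_2\}_\gamma\{\wh\Delta\beta\}_\gamma+\{\check h_3\}_\gamma\{\wh\Delta Y\}_\gamma\bigr)\,d\gamma,
\]
where the zero-average pieces $\{\wh\Delta(\cdot)\}_\gamma$ are controlled by the exponentially small estimates~\eqref{def:ExpSmallErrors} of Lemma~\ref{lemma:ErrorEstimate}. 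Combining the worst terms, namely $|\{\check h_1\}_\gamma|\,|\{\wh\Delta\alpha\}_\gamma|\lesssim G_0^{-3}\ln G_0 \cdot G_0^{1/2}\ln^2 G_0\,e^{-G_0^3/3}$, yields $|\langle\check\Delta\Lambda\rangle_\gamma|\lesssim G_0^{-5/2}\ln^3 G_0\,e^{-G_0^3/3}$, which is the bound claimed (absorbing the logarithmic loss as in the statement). The only subtle point I expect is the verification of the bound on $\wh h_3$: because this coefficient multiplies $\wh\Delta Y$, whose zero-average part grows as $G_0^{2}\ln^2 G_0\,e^{-G_0^3/3}$, one really needs the improved estimate $|\wh h_3|\lesssim G_0^{-9/2}\ln G_0$ coming from the refined bound on $\wt\Psi_{21}$ in Theorem~\ref{thm:fundamentalmatrix} and from the $\La$-improvement~\eqref{def:LambdaImprovedEstimate}. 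This sharper entry is what guarantees that the $Y$-term does not dominate the right-hand side and destroy the exponential smallness.
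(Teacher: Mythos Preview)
Your proposal is correct and follows exactly the route the paper itself takes: the argument for this lemma is laid out in the paragraphs preceding its statement, and you have reproduced it faithfully (Poincar\'e invariant $\Rightarrow$ relation~\eqref{def:PoincareInvariantEquality} $\Rightarrow$ non-symplectic pushforward to~\eqref{def:InvPoincTransf} $\Rightarrow$ kill the averages of $\wh h_1,\wh h_2,\wh h_3$ via the matrix $N(v)$ $\Rightarrow$ exponential bound on $\langle\check\Delta\Lambda\rangle_\gamma$).

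One small imprecision: when you bound the oscillatory pieces $\{\wh\Delta(\cdot)\}_\gamma$ you invoke only~\eqref{def:ExpSmallErrors} from Lemma~\ref{lemma:ErrorEstimate}. Those estimates control $\EE=\wh\Delta-\wh\Delta_0$, not $\wh\Delta$ itself; the non-zero harmonics of the Melnikov piece $\wh\Delta_0$ (Lemma~\ref{lemma:Delta0Hat}) must be bounded separately via Proposition~\ref{prop:MelnikovPotential}, and the paper explicitly cites both. For the $Y$-term in particular, the Melnikov contribution $\{\omega\partial_\sigma\LL\}_\gamma\sim G_0^{5/2}e^{-G_0^3/3}$ actually dominates $\{\EE_Y\}_\gamma$, so the improved bound on $\wh h_3$ you correctly flag is indeed what saves the estimate. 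This does not change your argument, only the bookkeeping of which input controls which piece.
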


The next two lemmas complete the estimates of the errors in Theorem \ref{thm:MainSplitting}.

\begin{lemma}\label{lem:DeltaTildeFinal}
The function $\wt\Delta(v,\ga)$ can be written as
\[
 \wt\Delta(v,\ga)=\wt\NNN(v,\ga)\left(\wh\Delta_0(v,\ga)+\wt \EE(v,\ga)\right)
\]
where $\wh\Delta_0$ is the function introduced in \eqref{def:Deltazerohat}, $\wt\NNN$ is an invertible matrix satisfying
\[
\wt\NNN=\Id +\OO(G_0^{-3}\ln G_0)\qquad \text{and}\qquad \wt\NNN_{21}=\OO\left(G_0^{-9/2}\ln G_0\right),\]
and $\wt\EE$ satisfies
\[
%  \begin{aligned}
 |\wt\EE_Y|\lesssim G_0^{-6}\ln^2 G_0,\qquad  |\wt\EE_\La|\lesssim e^{-G_0^3/3}G_0^{-5/2}\ln^2G_0,\qquad 
 |\wt\EE_\al|,  |\wt\EE_\bet|\lesssim G_0^{-6}\ln^2 G_0.
%  , &  |\wt\EE_\bet|&\lesssim G_0^{-6}\ln^2 G_0.
%  \end{aligned}
\]
Moreover, $\wt\EE_Y,\wt\EE_\al,\wt\EE_\bet\in\mathrm{Ker}\LL$ and 
\[
 \begin{aligned}
 |\wt\EE_Y-\langle\wt\EE_Y\rangle_\gamma|&\lesssim e^{-G_0^3/3}G_0^{2}
 %{1/2}
 \ln^2G_0 \\
 |\wt\EE_\al-\langle\wt\EE_\al\rangle_\gamma|,|\wt\EE_\bet-\langle\wt\EE_\bet\rangle_\gamma|&\lesssim e^{-G_0^3/3}G_0^{1/2}
 %{-1}
 \ln^2G_0.
%  \\
%  |\wt\EE_\bet-\langle\wt\EE_\bet\rangle_\gamma|&\lesssim e^{-G_0^3/3}G_0^{1/2}
%  %{-1}
%  \ln^2G_0.
 \end{aligned}
\]
\end{lemma}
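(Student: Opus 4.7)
The plan is to assemble three objects already available: the factorization \eqref{def:DeltaTildeAsymp}, namely $\wt\Delta = \wt\Phi_A(\Id+\wt\Psi)(\wh\Delta_0+\EE)$; the linear change of variables $\wh\Delta = N\check\Delta$ from Lemma \ref{lemma:AverageLambda}; and the estimates on $\EE$ supplied by Lemma \ref{lemma:ErrorEstimate}. I would accordingly define
\[
\wt\NNN = \wt\Phi_A\,(\Id+\wt\Psi)\,N, \qquad \wt\EE = \check\Delta - \wh\Delta_0,
\]
so that $\wh\Delta_0 + \wt\EE = \check\Delta = N^{-1}\wh\Delta$ and the identity $\wt\NNN(\wh\Delta_0+\wt\EE) = \wt\Delta$ is immediate. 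All the work is then in verifying the four assertions of the lemma.

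For the bound on $\wt\NNN$, combining Lemma \ref{lemma:FundMatEstimates}, Theorem \ref{thm:fundamentalmatrix}, and Lemma \ref{lemma:AverageLambda} yields $\wt\NNN = \Id + \OO(G_0^{-3}\ln G_0)$ on the real part of the domain. For the improved $(2,1)$-entry bound, I exploit the special row/column structure: row 2 of $\wt\Phi_A$ equals $(0,1,0,0)$ and column 1 of $N$ equals $(1,-\wh C_3,0,0)^\top$, so the sum defining $\wt\NNN_{21}$ collapses to $\wt\Psi_{21}-(1+\wt\Psi_{22})\wh C_3$. Both $\wt\Psi_{21}$ (by Theorem \ref{thm:fundamentalmatrix}) and $\wh C_3$ (by \eqref{def:Ci} together with \eqref{def:EstimatesFunctionshs:1}) are $\OO(G_0^{-9/2}\ln G_0)$, which gives the required improved bound.

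Since $N-\Id$ is nilpotent of order two (it only has nonzero entries in row 2 and $N_{22}=1$), $N^{-1}=\Id-(N-\Id)$; in particular $\check\Delta_Y=\wh\Delta_Y$, $\check\Delta_\al=\wh\Delta_\al$, $\check\Delta_\bet=\wh\Delta_\bet$. Hence $\wt\EE_x = \EE_x$ for $x\in\{Y,\al,\bet\}$, and the quantitative bounds follow directly from Lemma \ref{lemma:ErrorEstimate}. The kernel assertion $\wt\EE_x \in \mathrm{Ker}\LL$ also comes for free: $\wh\Delta\in\mathrm{Ker}\LL$ by \eqref{def:DiffKernel}, and $\wh\Delta_0\in\mathrm{Ker}\LL$ by construction (see the discussion after \eqref{def:Deltazerohat}), whence $\EE=\wh\Delta-\wh\Delta_0\in\mathrm{Ker}\LL$.

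The delicate step is the $\Lambda$-component, where $\wt\EE_\Lambda = \check\Delta_\Lambda - \wh\Delta_{0,\Lambda} = \EE_\Lambda + \wh C_1 \wh\Delta_\al + \wh C_2 \wh\Delta_\bet + \wh C_3 \wh\Delta_Y$. Since $\wh\Delta_{0,\Lambda}=-\pa_\sigma\LL(\omega v-\gamma,\alo,\beto)$ has zero mean in $\gamma$, I get $\langle\wt\EE_\Lambda\rangle_\gamma = \langle\check\Delta_\Lambda\rangle_\gamma$, so Lemma \ref{lemma:AverageLambda} gives the target bound $\lesssim G_0^{-5/2}e^{-G_0^3/3}\ln G_0$ for the average. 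For the oscillating part, I use that the $\wh C_i$ depend only on $v$ to split
\[
\{\wt\EE_\Lambda\}_\gamma = \{\EE_\Lambda\}_\gamma + \wh C_1\{\wh\Delta_\al\}_\gamma + \wh C_2\{\wh\Delta_\bet\}_\gamma + \wh C_3\{\wh\Delta_Y\}_\gamma,
\]
and bound each term using Lemma \ref{lemma:ErrorEstimate}, the Melnikov first orders from Lemma \ref{lemma:Delta0Hat} combined with Proposition \ref{prop:MelnikovPotential}, and the bounds $|\wh C_1|,|\wh C_2|\lesssim G_0^{-3}\ln G_0$, $|\wh C_3|\lesssim G_0^{-9/2}\ln G_0$ from \eqref{def:EstimatesFunctionshs:1}. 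The main obstacle here is the bookkeeping of these cross-terms: one must verify that the smallness of the $\wh C_i$ compensates the (exponentially small but with non-trivial polynomial prefactors) sizes of the $\wh\Delta_{x_i}$ so that none of the contributions exceeds the target size $e^{-G_0^3/3}G_0^{-5/2}\ln^2 G_0$.
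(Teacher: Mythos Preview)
Your proposal is correct and follows essentially the same approach as the paper: both define $\wt\NNN=\wt\Phi_A(\Id+\wt\Psi)N$ and $\wt\EE=\check\Delta-\wh\Delta_0$, then estimate averages and oscillating parts separately using Lemmas~\ref{lemma:ErrorEstimate} and~\ref{lemma:AverageLambda}. Your observation that $N^{-1}$ acts as the identity on rows $Y,\al,\bet$, so that $\wt\EE_x=\EE_x$ for these components, is a clean simplification of what the paper's proof sketches more vaguely (``use that $N-\Id=\OO(G_0^{-3}\log G_0)$ and the estimates for $\EE_\al$ and $\EE_\beta$\ldots''); your explicit computation of $\wt\NNN_{21}=\wt\Psi_{21}-(1+\wt\Psi_{22})\wh C_3$ is likewise more detailed than the paper, which does not spell this out.
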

\begin{proof}
By  \eqref{def:DiffKernel} and   \eqref{lemma:AverageLambda}, one has that $\wt\Delta=\NNN\check\Delta$ with $\NNN=\wt\Phi_A (\Id+\wt\Psi)N$. Then, it can be written as
\[
 \wt\Delta=\NNN\left(\wh\Delta_0+\wt\EE\right)\qquad \text{ with }\qquad \wt\EE=N\ii\EE-(\Id-N\ii)\wh\Delta_0=\check\Delta-\wh\Delta_0.
\]
We estimate $\langle  \wt\EE\rangle_\ga$ and $\{ \wt\EE\}_\ga$ separately. For the $\Lambda$ average component we use that \[\langle  \wt\EE_\La\rangle_\ga=\langle \check\Delta \La\rangle_\ga=\OO\left(G_0^{-5/2} e^{-G_0^3/3}\ln G_0\right).\]
For the other averages one has to use that $N-\Id=\OO(G_0^{-3}\log G_0)$ and the estimates for $\EE_\al$ and $\EE_\beta$ in Lemma \ref{lemma:ErrorEstimate} and for $\wh\Delta_0$ given by Lemma \ref{lemma:Delta0Hat} and Proposition \ref{prop:MelnikovPotential}.
One can proceed analogously for  the no average terms $\{\EE\}_\ga$
\end{proof}
Now it only remains to express the difference between the invariant manifolds in the variables $(u,\ga)$ (see Theorem \ref{thm:DifferenceSymplecticStraightening}).
\begin{lemma}\label{lem:DeltaFinal}
The function $\Delta(u,\ga)$ introduced in \eqref{def:difference} can be written as
\[
\Delta(u,\ga)=\NNN(u,\ga)\left(\wh\Delta_0(u,\ga)+\EE(u,\ga)\right)
\]
where $\wh\Delta_0$ is the function introduced in \eqref{def:Deltazerohat}, $\NNN$ is an invertible matrix satisfying
\[
\NNN=\Id +\OO(G_0^{-3}\log G_0)\qquad \text{ and }\qquad \NNN_{21}=\OO\left(G_0^{-9/2}\ln G_0\right)
\]
 and $\EE$ satisfies
\[
%  \begin{aligned}
 |\EE_Y|\lesssim G_0^{-6}\ln^2 G_0,\qquad   |\EE_\La|\lesssim e^{-G_0^3/3}G_0^{-5/2}\ln^2G_0,\qquad 
 |\EE_\al|, |\EE_\bet|\lesssim G_0^{-6}\ln^2 G_0.
%  , &  |\EE_\bet|&\lesssim G_0^{-6}\ln^2 G_0.
%  \end{aligned}
\]
Moreover, $\EE_Y,\EE_\al,\EE_\bet$ and 
\[
 \begin{aligned}
 |\EE_Y-\langle\EE_Y\rangle_\gamma|&\lesssim e^{-G_0^3/3}G_0^2
 %{1/2}
 \ln^2G_0 \\
 |\EE_\al-\langle\EE_\al\rangle_\gamma|, |\EE_\bet-\langle\EE_\bet\rangle_\gamma|&\lesssim e^{-G_0^3/3}G_0^{1/2}
 %{-1}
 \ln^2G_0.
%  \\
%  |\EE_\bet-\langle\EE_\bet\rangle_\gamma|&\lesssim e^{-G_0^3/3}G_0^{1/2}
%  %{-1}
%  \ln^2G_0.
 \end{aligned}
\]
% De fet ens surt que un terme que ve del Melnikov que ens surt aixo. S'ha de revisar! \textcolor{red}{FET}
% \[
%   |\EE_\al-\langle\EE_\al\rangle_\gamma|\lesssim e^{-G_0^3/3}G_0^{1/2}\ln^2G_0\\
% \]
% }
\end{lemma}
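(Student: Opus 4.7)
The plan is to deduce Lemma~\ref{lem:DeltaFinal} from Lemma~\ref{lem:DeltaTildeFinal} by undoing the symplectic straightening of Theorem~\ref{thm:DifferenceSymplecticStraightening}, which passes between $(u,\gamma)$ and $(v,\gamma)$ via the point transformation $u = v + \CCC(v,\gamma)$ together with the matrix conjugation in \eqref{def:difference:reparameterized}.

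First, I would invert the scalar change $u = v + \CCC(v,\gamma)$ by a standard contraction argument. Using $\|\CCC\|_{0,\sigma_3}\leq b_6 G_0^{-4}\ln G_0$ along with the bounds $|\pa_v\CCC|\lesssim G_0^{-3}$, $|\pa_\gamma\CCC|\lesssim G_0^{-6}$ coming from Proposition~\ref{prop:straighteninLhat}, one obtains $v = u + \wt\CCC(u,\gamma)$ as a formal Fourier series with analogous estimates. The matrix relation \eqref{def:difference:reparameterized} then inverts to
\[
\Delta(u,\gamma) = M^{-1}(v(u),\gamma)\,\wt\Delta(v(u),\gamma),
\]
where $M^{-1}$ is close to the identity with $M^{-1}_{21}=\OO(G_0^{-6})$. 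Substituting the asymptotic expression for $\wt\Delta$ provided by Lemma~\ref{lem:DeltaTildeFinal} and setting $\NNN(u,\gamma) := M^{-1}(v(u),\gamma)\wt\NNN(v(u),\gamma)$, the required bounds $\NNN = \Id + \OO(G_0^{-3}\ln G_0)$ and $\NNN_{21} = \OO(G_0^{-9/2}\ln G_0)$ follow from the algebra property of matrix norms (Lemma~\ref{lemma:NormMatrix}) combined with the estimates on $M^{-1}$ and $\wt\NNN$.

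The residual error is then
\[
\EE(u,\gamma) = \wt\EE(v(u),\gamma) + \bigl[\wh\Delta_0(v(u),\gamma) - \wh\Delta_0(u,\gamma)\bigr].
\]
The first summand inherits the pointwise bounds of Lemma~\ref{lem:DeltaTildeFinal} directly, since evaluating at $v(u)$ instead of $v$ preserves pointwise magnitude. For the $Y$, $\al$, $\bet$ components, the second summand is controlled by a first-order Taylor expansion $|\wh\Delta_0(v(u))-\wh\Delta_0(u)|\lesssim |\pa_v\wh\Delta_0|\cdot|\wt\CCC|$, which, combined with the explicit formulas in Lemma~\ref{lemma:Delta0Hat} and the Melnikov estimates of Proposition~\ref{prop:MelnikovPotential}, falls comfortably within the polynomial bound $\OO(G_0^{-6}\ln^2 G_0)$. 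The exponentially small bounds on the oscillatory parts $\EE_Y - \langle\EE_Y\rangle_\gamma$, $\EE_\al - \langle\EE_\al\rangle_\gamma$, $\EE_\bet - \langle\EE_\bet\rangle_\gamma$ then follow by checking that the residual lies in the kernel of $\LL$ modulo polynomially bounded corrections and applying Lemma~\ref{lemma:Lazutkin}.

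The main obstacle is the exponentially small bound $|\EE_\La| \lesssim G_0^{-5/2}\ln^2 G_0\,e^{-\tilde\nu\tte^3/3L_0^3}$. Since $\wh\Delta_{0,\La}(v,\gamma)=-\pa_\sigma\LL(\omega v-\gamma)$ and $\omega\wt\CCC = \OO(G_0^{-1}\ln G_0)$, a naive Taylor expansion produces a residual $|\omega\wt\CCC|\cdot|\pa_{\sigma\sigma}\LL| \sim G_0^{-3/2}\ln G_0\,e^{-\tilde\nu\tte^3/3L_0^3}$, which exceeds the target by roughly a factor $G_0$. Resolving this requires exploiting the fact that $\wh\Delta_{0,\La}$ depends only on the phase $\omega v-\gamma$: the leading part of the shift can be recast as a multiplicative correction that is absorbed into the $\OO(G_0^{-3}\ln G_0)$ freedom of the entry $\NNN_{22}$ (permitted by the statement), leaving only a genuinely subleading remainder in $\EE_\La$ within budget. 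The sharper bound on the average $\langle\EE_\La\rangle_\gamma$, which cannot be obtained from this pointwise analysis, is inherited from the Poincaré invariant argument of Section~\ref{sec:Average}, which transports cleanly through the close-to-identity change of coordinates once one verifies, as in the passage from $\wh\Delta$ to $\wt\Delta$, that the extra averaged matrix entries picked up by composing with $v(u)$ preserve the cancellation used there.
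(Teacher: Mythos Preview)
Your overall approach---inverting the symplectic straightening of Theorem~\ref{thm:DifferenceSymplecticStraightening} and transporting the estimates of Lemma~\ref{lem:DeltaTildeFinal} from $(v,\gamma)$ to $(u,\gamma)$---is exactly what the paper does; the paper's own proof is a single sentence (``straightforward applying the inverse of change of coordinates'') and your write-up is a faithful expansion of that. Your construction of $\NNN = M^{-1}\wt\NNN$ and the verification of its bounds, as well as the treatment of the $Y,\al,\bet$ components of $\EE$, are correct.

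However, your fix for the sharp bound on $\EE_\La$ does not work as stated. You correctly compute that the phase shift in $\wh\Delta_{0,\La}(v(u),\gamma)-\wh\Delta_{0,\La}(u,\gamma)$ produces a residual of order $|\omega\,\CCC|\cdot|\pa_{\sigma\sigma}\LL|\sim G_0^{-3/2}\ln G_0\cdot e^{-G_0^3/3}$, exceeding the target $G_0^{-5/2}\ln^2 G_0\cdot e^{-G_0^3/3}$ by a factor $\sim G_0$. But the multiplicative correction you would need to absorb into $\NNN_{22}$ is therefore of size $|\omega\,\CCC|\sim G_0^{-1}\ln G_0$, which is \emph{not} within the $\OO(G_0^{-3}\ln G_0)$ freedom the statement allows for $\NNN-\Id$. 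Trying to route the correction through $\NNN_{21}$ (using $\wh\Delta_{0,Y}=-\omega\,\wh\Delta_{0,\La}$) fails similarly: achieving a $G_0^{-1}\ln G_0$ effect would force $\NNN_{21}\sim\omega^{-1}G_0^{-1}\ln G_0\sim G_0^{-4}\ln G_0$, which violates the stated $\NNN_{21}=\OO(G_0^{-9/2}\ln G_0)$. So the absorption mechanism you describe cannot close the gap under the bounds on $\NNN$ that the lemma asserts. You have correctly located the obstruction; the proposed resolution does not actually remove it.
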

The proof of this lemma is straighforward applying the inverse of change of coordinates obtained in Theorem \ref{thm:DifferenceSymplecticStraightening}.

\subsection{End of the proof of  Theorem \ref{thm:MainSplitting}}

Lemma \ref{lem:DeltaFinal}, recalling the expression of $\wh\Delta_0$ given in Lemma \ref{lemma:Delta0Hat},  completes the proof of  formulas \eqref{def:SplittingFormula}, \eqref{def:matriuN}, \eqref{def:MelnikvoWithErrors} in  Theorem \ref{thm:MainSplitting} (recall the relation between $G_0$ and $\Theta$ given in \eqref{eq:omega}). Note, however that it gives slightly worse estimates compared to those in Theorem \ref{thm:MainSplitting}. Indeed, Lemma \ref{lem:DeltaFinal} implies that  $\NNN$  is of the form 
\[
\NNN=\Id+\OO(\Tet^{-3}\log \Tet)
\]
and $\MM_\al$ and $\MM_\bet$ satisfy
\[
\begin{pmatrix}
\MM_\al(u,\ga,  z_0,\de z)\\
\MM_\bet(u,\ga,  z_0,\de z)
\end{pmatrix}
=
\begin{pmatrix}
-i\pa_{\beto}\LL(\ga-\omega u,  z_0)%+\OO\left(G_0^{-6}\ln^2 G_0\right)\\
+\OO\left(\Tet^{-6}\ln^2 \Tet\right)\\
i\pa_{\alo}\LL(\ga-\omega u,  z_0)
%+\OO\left( G_0^{-6}\ln^2 G_0\right),
+\OO\left(\Tet^{-6}\ln^2 \Tet\right)
\end{pmatrix}.
\]
To obtain the estimates for the derivatives given in \eqref{def:DerivMelnikovLambda}, it is enough to apply Cauchy estimates. Indeed, the formulas \eqref{def:SplittingFormula}, \eqref{def:matriuN}, \eqref{def:MelnikvoWithErrors} are valid in a domain such that $|\eta_0|\Theta^{3/2}\ll 1$ and therefore applying Cauchy estimates one loses $\Theta^{3/2}$ at each derivative.

% Finally, for the estimates of derivatives with respect to $\gamma$ can be obtained applying Cauchy estimates reducing the strip of analyticity and taking into account that the average does not depend on $\gamma$.
Then, it only remains to prove the estimates in \eqref{def:DerivadesManifolds} and \eqref{def:DerivMelnikovAltres}. The first ones are a direct consequence of Theorem \ref{thm:parametrcommdomain}. To obtain the estimates for the derivatives of $\MM$ in \eqref{def:DerivMelnikovAltres} we proceed as follows.

First note that, by Theorem \ref{thm:parametrcommdomain}, the paramerizations of the invariant manifolds admit an analytic continuation to the domain
\begin{equation}\label{def:realdomain}
u\in D_{\kappa,\delta}\cap \mathbb{R},\ \gamma\in\mathbb{T},\ |\eta_0|\leq\frac{1}{2},\  |\xi_0|\leq\frac{1}{2}.
\end{equation}
Proceeding analgously, one can also extend analytically to this domain the change of variable $\Phi$ obtained in Theorem \ref{thm:DifferenceSymplecticStraightening}. Then, one can easily check that in such domain, the associated function $\CCC$  satisfies
\[
 |\CCC|\lesssim \Theta^{-4}.
\]
(recall \eqref{eq:omega}). Similarly, one can extend the  matrix $\wt\Psi$ given by Theorem \ref{thm:fundamentalmatrix} to the same domain, where it satisfies
\[
 |\wt\Psi|\lesssim \Theta^{-3}.
\]
Then, one can conclude that the matrix $\NNN$ appearing in Theorem \ref{thm:MainSplitting} can be also analytically extended to \eqref{def:realdomain} where it satisfies
\[
 \NNN=\Id+\OO( \Theta^{-3}).
\]
(see \eqref{def:matriuN}). This analysis also gives the improved estimates for $\MM_\al$ and $\MM_\bet$  in  \eqref{def:MelnikvoWithErrors}.

Note that  the derivatives of $\CCC$, $\wt\Psi$ and $\NNN$ with respect to $(\eta_0.\xi_0)$ have the same estimates in the domain 
\[|\eta_0|\leq\frac{1}{4},\  |\xi_0|\leq\frac{1}{4}.\]
Indeed, it is enough to apply Cauchy estimates. Using these estimates and the estimates of the derivatives of the parameterizations of the invariant manifolds given by Theorem \ref{thm:parametrcommdomain}, one can easily deduce formulas \eqref{def:DerivMelnikovAltres}.

\section{The homoclinic channels and the associated scattering maps}\label{sec:scatteringglobal}
We devote this section to prove the results on the scattering maps stated in Section \ref{sec:scatteringstatements}.

First, in Section \ref{sec:existencescattering} we prove Theorem \ref{prop:scatteringmap}. That is we prove the existence of two homoclinic channels and we obtain formulas for the associated scattering maps (in suitable domains).
% 
% in Section \ref{sec:existhomo} we prove  Theorem \ref{thm:SplittingGeometric}, which provides the existence of  periodic orbits in the infinity manifold $\EE_\infty$  with transverse homoclinic orbits. This is the starting point to prove the existence of scattering maps defined in suitable neighborhoods of these periodic orbits. This is shown in Section \ref{sec:existencescattering}, where we prove Proposition \ref{prop:scatteringmap}.

Finally, in Section \ref{sec:IsolatingBlockScattering} we prove 
Theorem \ref{thm:BlockScattering} which provides the existence of an isolating block for a suitable high iterate of a combination of the two scattering maps obtained in  Theorem \ref{prop:scatteringmap}.

% \subsection{Periodic orbits in the infinity manifold $\EE_\infty$  with transverse homoclinics: Proof of Proposition \ref{thm:SplittingGeometric}}\label{sec:existhomo}

\subsection{The scattering maps: Proof of Theorem \ref{prop:scatteringmap}}\label{sec:existencescattering}
We devote this section to prove the existence and derive formulas for the scattering maps given by Theorem \ref{prop:scatteringmap}.

Consider two periodic orbits $P_{\alo^-,\beto^-}, P_{\alo^+,\beto^+}\in\EE_\infty$ and denote
\[
 \de\eta=\alo^+-\alo^-,\quad  \de\xi=\beto^+-\beto^-.
\]
From now on we denote 
\[
 (\alo^-,\beto^-)=(\eta_0,\xi_0)\qquad \text{and}\qquad (\alo^+,\beto^+)=(\eta_0+ \de\eta,\xi_0+ \de\xi).
\]
We fix a section $u=u^*\in(u_1,u_2)$ (which is transverse to the flow) and we analyze the intersection of the unstable manifold of $P_{\alo,\beto}$ and the stable manifold of $P_{\alo+\de\eta,\beto+\de\xi}$ in this section. By the expression \eqref{def:sigmaparamnousnr} and Theorem \ref{thm:MainSplitting}, these invariant manifolds intersect along an heteroclinic orbit if there exists $\ga\in\TT$ such that
\begin{equation}\label{def:differencezero}
\begin{split}
Y^u(u^*,\ga,z_0) -Y^s(u^*,\ga, z_0,\de z)&=0\\
\La^u(u^*,\ga,z_0) -\La^s(u^*,\ga,z_0,\de z)&=0\\
\al^u(u^*,\ga,z_0) -\al^s(u^*,\ga, z_0,\de z)&=0\\
\bet^u(u^*,\ga, z_0)
-\bet^s(u^*,\ga, z_0,\de z)&=0
\end{split}
\end{equation}
where $z_0=(\alo,\beto)\in\wt\DD$ (see \eqref{def:domainscattering}) and $\delta z=( \de\eta, \de\xi)$.

Using \eqref{def:SplittingFormula} in Theorem \ref{thm:MainSplitting} and the fact that the matrix $\NNN$ is invertible, obtaining a zero $(\ga,\de z)$ of \eqref{def:differencezero} is equivalent to obtain a zero of
\[
 \begin{split}
\MM_Y(u^*,\ga, z_0,\de z)&=0\\
\MM_\La(u^*,\ga,  z_0,\de z)&=0\\
\de\eta+ \MM_\al(u^*,\ga,  z_0,\de z)&=0\\
\de\xi+ \MM_\bet(u^*,\ga,  z_0,\de z)&=0.
\end{split}
\]
Since we are dealing with an autonomous Hamiltonian, by the conservation of energy, if three of the components above vanish, the fourth one does too. For this reason we can get rid of the first equation and just look for zeros $(\ga,\de z)$ of
\begin{equation}\label{def:diffvarhetero}
 \begin{split}
 \MM_\La(u^*,\ga,  z_0,\de z)&=0\\
\de\eta+ \MM_\al(u^*,\ga,  z_0,\de z)&=0\\
\de\xi+ \MM_\bet(u^*,\ga,  z_0,\de z)&=0.
\end{split}
\end{equation}
We emphasize that by zeros  we mean that for a given $z_0$ and $u^*$ there exists $\ga$ and $\de z$ which solve these three equations.

We first analyze the third and fourth equations, that is 
\[
 \de\eta+ \MM_\al(u^*,\ga,  z_0,\de z)=0,\qquad 
\de\xi+ \MM_\bet(u^*,\ga,  z_0,\de z)=0.
\]
Using the asymptotic expansions for $ \MM_\al$ and $ \MM_\bet$ given in Theorem \ref{thm:MainSplitting}, one can obtain $(\de\eta,\de\xi)$ in terms of $(\alo,\beto)$ and $\ga$ as follows,
% \begin{equation}\label{eq:melnikovseccio9}
%  \begin{split}
% \alo^+-\alo^-&=-i\pa_{\beto}\LL(u,\ga, \alo^-,\beto^-)+\OO\left(\Tet^{-6}\ln \Tet\right)\\
% \beto^+-\beto^-&=i\pa_{\alo}\LL(u,\ga, \alo^-,\beto^-)+\OO\left(\Tet^{-6}\ln \Tet \right),
% \end{split}
% \end{equation}
% which gives the formulas of .% \eqref{def:ScatteringFormulas}.
% 
% \textcolor{red}{If we write this equality as
% \[
\begin{equation}\label{eq:melnikovseccio9}
  \begin{split}
\de\eta&=\de\eta(u^*,\ga,z_0)=-i\pa_{\beto}\LL(\ga-\omega u^*, \alo,\beto)+P_1(u^*,\ga,\alo,\beto)\\
\de\xi&=\de\xi(u^*,\ga,z_0)=i\pa_{\alo}\LL(\ga-\omega u^*, \alo,\beto)+P_2(u^*,\ga,\alo,\beto)
\end{split}
\end{equation}
%
% \textcolor{magenta}{els signes m'han quedat al reves}
%
where by Proposition \ref{prop:MelnikovPotential} and
the estimates of $\MM$ in Theorem \ref{thm:MainSplitting}, implicit derivation and Cauchy estimates, the functions $P_1$ and $P_2$ satisfy
% has the following estimates
\begin{equation}\label{def:EstimatesPDerivs}
\begin{aligned}
|\pa_{\alo}^i\pa_{\beto}^j P_i|&\leq C(i,j) \Theta^{-6}\qquad &\text{for}\quad & i,j\geq 0\\
|\pa_{\alo}^i\pa_{\beto}^j\pa^{k}_\ga P_i|&\leq C(i,j,k) \Tet^{1/2-3(i+j)/2}\ex^{-\frac{\tilde \nu \tte^3}{3 L_0^3}}  &\text{for}\quad & i,j\geq 0, k\geq 1
\end{aligned}
\end{equation}
for some constants $C(i,j)$ and $C(i,j,k)$.

Now we solve the equation for the $\Lambda$ component  evaluated at \eqref{eq:melnikovseccio9}, that is
\[
\MM_\La(u^*,\ga,  z_0,\de z(u^*,\ga,z_0))=0,
\]
which gives  two solutions $\ga^1,\ga^2$. 
Note that in the domain $\wt\DD$ introduced in \eqref{def:domainscattering}, one has that 
\[
\left| \LL^{[-1]}\right|\gtrsim \Theta^{-\frac{1}{2}}.
\]
Then, dividing this equation  by the factor $(-2\LL^{[-1]})$, one obtains an equation of the form
\[
 \sin (\omega u^*-\ga)+\OO(\Theta^{-1/2})=0
\]
which has two solutions
\begin{equation}\label{def:gammaj}
 \ga^j=\ga^j(u^*,z_0)=%\frac{L_0^3}{\nu G_0^3}(\ga_0+j\pi)+\OO\left(G_0^{-9/2}\right)
%  \frac{\nu \tte^3}{L_0^3}
 \omega u^*+(j-1)\pi+\OO\left(\Tet^{-1/2}\right),\qquad j=1,2.
\end{equation}
Moreover, by Cauchy estimates on the domain $\wt\DD$ introduced in \eqref{def:domainscattering} (reducing the constant $\varrho$ used in the definition of $\wt\DD$), one can see that  for $i,k\geq 0$,
\begin{equation}\label{def:derivativeschannels}
 |\pa_{\alo}^i\pa_{\beto}^k \ga^j|\leq C(i,k)\Theta^{-1/2-2(i+k)}
\end{equation}
for some constant $C(i,k)$ independent of $\Theta$.

Now we obtain asymptotic formulas for the scattering maps.

% \begin{remark}
% Let us fix $j=1,2$.
Observe that the values $(u^*,\ga^j(u^*,z_0),z_0,\de z(u^*,\ga^j(u^*,z_0),z_0))$, $u^*\in(u_1,u_2)$ and $z_0\in\wt\DD$ (see  \eqref{def:domainscattering}) solving equations \eqref{def:diffvarhetero}
% , in  our construction, once $u=u^*$ is given, and for any $(\alo^-,\beto^-)\in\wt\DD$, we have found $\ga^j$, and $\delta\eta=\alo^+-\alo^-,\delta\xi=\beto^+-\beto^-$ satisfying \eqref{def:diffvarhetero}. 
% This gives 
give rise to heteroclinic points 
\[
z_{\het}^j=(\la^j_{\het}(u^*,z_0),w^j_{\het}(u^*,z_0))\in W^u(P_{z_0})\cap W^s(P_{z_0+\de z_0^j})
\]
with $\la^j_{\het}=\ga^j(u^*,z_0) +\phi_\h(u^*)$.
Consequently, there exist $\la^j_\pm$ such that
\begin{align}
&\psi\left(t, z_{\het}^j\right)-\psi\left(t, \la^j_- ,w_0\right)\to 0 \quad \mbox{as}\quad t\to -\infty \\
&\psi\left(t, z_{\het}^j\right)-\psi\left(t, \la^j_+ ,w_0+\de w_0^j\right)\to 0 \quad \mbox{as}\quad t\to +\infty,
\end{align}
where $w_0=(L_0,\infty,0,\alo,\beto)$ and $\delta w_0^j$ is given by \eqref{eq:melnikovseccio9} with $\ga=\ga^j(u^*,z_0)$, in the sense that the asymptotic condition in the $\tilde r$ component of $\psi\left(t, z_{\het}^j\right)$ means that it becomes unbounded.
% and,  for any $r\in \RR$\[\psi\left(t+r, z_{\het}^j\right)-\psi\left(t+r, \la^\pm ,w_0^\pm\right)\to 0 \quad \mbox{as}\quadt\to\pm \infty\]

An important observation is  that,  
using that the system \eqref{def:Poincarenr} is autonomous,  we have that, for any $r\in \RR$
\[
\begin{split}
\psi\left(t+r, z_{\het}^j \right)-\psi\left(t+r, \la^j_- ,w_0\right)\to 0 \quad \mbox{as}\quad t\to - \infty,\\
\psi\left(t+r, z_{\het}^j \right)-\psi\left(t+r, \la^j_+ ,w_0+\de w_0^j\right)\to 0 \quad \mbox{as}\quad t\to +\infty.
\end{split}
\]
Now, using that
\begin{equation}\label{lambdes}
\begin{split}
\psi\left(t+r, z_{\het}^j\right) &= \psi\left(t,\psi(r, z_{\het}^j)\right), \nonumber \\
\psi\left(t+r, \la^j_- ,w_0 \right)&
= \psi\left(t, \psi\left(r,  \la^j_-,w_0\right)\right)
=(\la^j_- +\frac{\nu}{L_0^3}r,w_0),
\end{split}
\end{equation}
and analgously for the other periodic orbit.

Calling $z_{\het}^j(r)=\psi\left(r, z_{\het}^j\right)$, and 
$\la_\pm^j (r)=\la_\pm^j +\frac{\nu}{L_0^3}r$, we have that
\[
\begin{split}
\psi\left(t, z_{\het}^j (r)\right)-\psi\left(t, \la^j_- (r),w_0\right)\to 0 \quad \mbox{as}\quad t\to- \infty\\
\psi\left(t, z_{\het}^j (r)\right)-\psi\left(t, \la^j_+ (r),w_0+\de w_0^j\right)\to 0 \quad \mbox{as}\quad t\to + \infty.
\end{split}
\]
Therefore, the orbit through $z_{\het}^j(r)$ is an heteroclinic orbit between the points $(\la^j_-(r),w_0)$ and $(\la^j_+(r),w_0+\de w_0^j)$,
% near the homoclinic point $z_\h^j (r)$,  
for any $r\in\RR$.
%, and$\la_+^*(r)-\la_-^*(r)=\la_+^*-\la_-^*$ is usually called the phase shift associated to the periodic orbit $P^*$.

Analogously, given any $\la \in \TT$, we can choose $r$ through
$\la=\la^j_-+\frac{\nu}{L_0^3}r$ and, abusing notation, calling again the heteroclinic point $z^j _{\het}(\la):=z^j _{\het}(r)$, we have that
\begin{equation}
\begin{split}
&\psi(t, z_{\het}^j (\la)-\psi(t, \la, w_0)\to 0 \quad \mbox{as}\quad t\to -\infty \\
&\psi(t, z_{\het}^j (\la))-\psi(t, \la +\Delta ^j  ,w_0+\de w_0^j)\to 0 \quad \mbox{as}\quad t\to + \infty
\end{split}
\end{equation}
where $\Delta^j =\la^j_+-\la^j_-$.

Consequently, the scattering maps are of the form
\begin{equation*}
\wt \SSS^j:
\begin{pmatrix}L_0 \\ \la\\ \eta_0\\ \xi_0\end{pmatrix}\to \begin{pmatrix}L_0 \\ \la+\Delta^j\\ \eta_0+\de \eta_0^j\\ \xi_0+\de \xi_0^j\end{pmatrix}
\end{equation*}
where $(\alo+\de\eta^j,\beto+\de\xi^j)=\SSS^j(\alo,\beto)$   are independent of $\la$ and $L_0$ is preserved by the conservation of the energy (recall that we are omitting the dependence on $L_0$ of all the functions).

Observe that $z_{\het}^j (\la)=z_{\het}^j (\la,\alo,\beto)$, with $(\alo, \beto)\in\wt\DD$  and $\la \in\TT$, gives a different parameterization of the homoclinic chanel introduced in \eqref{def:homochannels}.
% \end{remark}

Finally, note that to obtain formulas for  $\SSS^j$ one has just to evaluate \eqref{eq:melnikovseccio9} at the solutions  $\ga^j$ in \eqref{def:gammaj}, to obtain
% . This leads to the formulas for the $\alo,\beto$ components of the scattering map \eqref{eq:melnikovseccio9}
%
\begin{equation}\label{def:ScatteringFormulas}
 \begin{split}
\SSS^j \begin{pmatrix}\alo\\ \beto\end{pmatrix}=
\begin{pmatrix}
\alo-i\pa_{\beto}\LL (j\pi,\alo,\beto)+
\OO\left(\Theta^{-6}\right)\\
\beto+i\pa_{\alo}\LL  (j\pi,\alo,\beto) +\OO\left(\Theta^{-6}\right)
\end{pmatrix}.
\end{split}
\end{equation}
Then, it is enough to use the formulas of the Melnikov potential $\LL$ given in 
Proposition \ref{prop:MelnikovPotential} to obtain the formulas for $\SSS^j$ in Proposition \ref{prop:scatteringmap}.

To obtain estimates for the derivatives of $\SSS^j$ it is enough to use the estimates for the derivatives of the Melnikov potential in Proposition \ref{prop:MelnikovPotential} and the estimates \eqref{def:EstimatesPDerivs} and \eqref{def:derivativeschannels}.

Finally, Theorem 8 in  \cite{DelshamsLS06a} implies that the map $\wt\SSS^j$ is symplectic. Then, using the particular form of $\wt\SSS^j$ one can easily see that $\SSS^j$ is symplectic in the sense that fixing $L=L_0$ it preserves the symplectic form $d\eta_0\wedge d\xi_0$.

Now it only remains to analyze the fixed points $(\eta_0^j,\xi_0^j)$ of the scattering maps $\SSS^j$. The particular form of the fixed points given in \eqref{def:formulafixedscattering} is just a consequence of \eqref{def:ScatteringFormulas} and the asymptotic expansions of the Melnikov potential given in Proposition \ref{prop:MelnikovPotential}. Note that $(\eta_0^j,\xi_0^j)\in \wt\DD$ where $\wt\DD$ is the domain introduced in \eqref{def:domainscattering}.

To prove the  asymptotic formula \eqref{def:DistanceHomosteor} for the difference between the two fixed points one cannot use \eqref{def:ScatteringFormulas} but has to go back to equations \eqref{def:diffvarhetero} and analyze them when $\de\eta=\de\xi=0$.
In particular, we know that
\[
\MM_\al(u^*,\ga^j, \alo^j,\beto^j)=0,\quad \MM_\bet(u^*,\ga^j, \alo^j,\beto^j)=0.
\]
We subtract the equalities for $j=1$ and $j=2$ to obtain
\[
% \begin{split}
\MM_\al(u^*,\ga^2, \alo^2,\beto^2)-\MM_{\al}(u^*,\ga^1, \alo^1,\beto^1)=0\qquad \text{and}\qquad
\MM_\bet(u^*,\ga^2, \alo^2,\beto^2)-\MM_\bet(u^*,\ga^1, \alo^1,\beto^1)=0.
% \end{split}
\]
Taylor expanding, defining $\Delta\alo=\alo^ 2-\alo^ 1$, $\Delta\beto=\beto^ 2-\beto^ 1$ and using the estimates in Proposition \ref{prop:MelnikovPotential} and  Theorem \ref{thm:MainSplitting},
we have
\[
 \begin{split}
E_\al  +\pa_{\alo}\wt \MM_\al(u^*,\ga^2, \alo^1,\beto^1)\Delta\alo+\pa_{\beto}\wt \MM_\al(u^*,\ga^2, \alo^1,\beto^1)\Delta\beto+\Tet^{-3}\OO_2\left(\Delta\alo, \Delta\beto\right)&=0\\
E_\bet  +\pa_{\alo}\wt \MM_\bet(u^*,\ga^2, \alo^1,\beto^1)\Delta\alo+\pa_{\beto}\wt \MM_\bet(u^*,\ga^2, \alo^1,\beto^1)\Delta\beto+\Tet^{-3}\OO_2\left(\Delta\alo, \Delta\beto\right)&=0
 \end{split}
\]
where
\[
 \begin{split}
E_\al&= \MM_\al(u^*,\ga^2, \alo^1,\beto^1)- \MM_\al(u^*,\ga^1, \alo^1,\beto^1)=-\frac{3i}{2}\wt\nu N_2\sqrt{\pi}L_0^{7/2}\tte^{3/2}\ex^{-\frac{\tilde \nu \Tet^3}{3 L_0^3}}\left(1+\OO\left(\Tet\ii\right)\right)\\
E_\bet&=\MM_\bet(u^*,\ga^2, \alo^1,\beto^1)-\MM_\bet(u^*,\ga^1, \alo^1,\beto^1)=\frac{3i}{2}\wt\nu N_2\sqrt{\pi}L_0^{7/2}\tte^{3/2}\ex^{-\frac{\tilde \nu \Tet^3}{3 L_0^3}}\left(1+\OO\left(\Tet\ii\right)\right).
 \end{split}
\]
Moreover, using again  the estimates in Proposition \ref{prop:MelnikovPotential} and  Theorem \ref{thm:MainSplitting},
\[
\begin{split}
 \pa_{\alo}\MM_\al(u^*,\ga^2, \alo^1,\beto^1)&=-
 \frac{3i}{8}\wt\nu \pi L_0^3\tte^{-3}N_2+\OO\left(\Tet^{-5}\right)\\
 \pa_{\beto} \MM_\bet(u^*,\ga^2, \alo^1,\beto^1)&=\frac{3i}{8}\wt\nu \pi L_0^3\tte^{-3}N_2+\OO\left(\Tet^{-5}\right)
 \end{split}
 \]
and
\[
    \pa_{\beto}\MM_\al(u^*,\ga^2, \alo^1,\beto^1)=\pa_{\alo}\MM_\bet(u^*,\ga^2, \alo^1,\beto^1)=\OO\left(\Tet^{-5}\right).
\]
Then, it is enough to apply the Implicit Function Theorem.

\subsection{An isolating block for an iterate of the  scattering map: Proof of Theorem \ref{thm:BlockScattering}}\label{sec:IsolatingBlockScattering}
We devote this section to construct an isolating block of a suitable iterate of the scattering map. That is, for the map $\wt \SSS=(\SSS^1)^M\circ\SSS^2$ for a suitable large $M$  which depends on the size of the block. This will be a good approximation of the projection of an iterate of the return map into the central variables. To construct the block we need a ``good'' system of coordinates. We rely on the properties of the  scattering maps obtained in Proposition \ref{prop:scatteringmap}.

The steps to build the isolating block are
\begin{enumerate}
 \item Prove the existence of a KAM invariant curve $\TT_*$  for $\SSS^1$.
%  which give an invariant annulus for $\SSS^1$.
To apply KAM Theory we first have to do a finite number of steps of Birkhoff Normal Form around the elliptic point of $\SSS^1$ and consider action-angle coordinates.
 \item Prove that the preimage of  $\TT_*$ by the other scattering map, that is $(\SSS^2)^{-1}(\TT_*)$, intersects transversally  $\TT_*$.
 \item Pick a small ``rectangle'' whose lower boundary is a piece of this torus and one adjacent side is a piece of $(\SSS^2)^{-1}(\TT_*)$. Show that such rectangle has the isolating block property for $\wt \SSS=(\SSS^1)^M\circ\SSS^2$.
\end{enumerate}

We start with Step 1. We follow, without mentioning explicitly, all the notation used in Proposition \ref{prop:scatteringmap}. However, we restrict the scattering maps to much narrower domains (see Lemma \ref{lemma:BNF} below). In fact, we consider domains which are balls centered at the fixed points of the scattering maps obtained in Proposition \ref{prop:scatteringmap} and exponentially small radius. We use the notation for disks introduced in \eqref{def:disk}. These exponentially small  domains are enough to build the isolating block.

Since in this section we need to perform several symplectic transformations  to the scattering maps, we denote them by $\Phi_i$, $i=1,2,3,4$.

\begin{lemma}\label{lemma:BNF}
Fix 
% $\eta>0$ small and 
$N_1\in\NN$, $N_1\geq 3$. For $\Theta>0$ large enough, there exists a symplectic change of coordinates
 \[
\Phi_1: \DD_{\rr/2}\left(z_0^1 \right)\to \DD_\rr\left(z_0^1\right)\qquad \text{with}\qquad \rr=\tth^{11/2}  \ex^{-\frac{\tilde \nu \tth^3}{3 L_0^3}}
\]
such that $\wt\SSS^1=\Phi_1\ii\circ\SSS^1\circ\Phi_1$ is of the form
 \begin{equation}\label{def:BNFofS1}
 \wt  \SSS^1(z)=z_0^1+e^{i\left(\omega_1+C_1|z-z_0^1|^{2}+C_2|z-z_0^1|^{4}\right)} (z-z_0^1) +\OO\left(z-z_0^1\right)^{7}
%  \wt  \SSS^1(z)=z_0^1+\la_1 (z-z_0^1)+\sum_{k=1}^{N_1}C_k|z-z_0^1|^{2k}(z-z_0^1) +\OO\left(z-z_0^1\right)^{2N_1+3}
 \end{equation}
where
% \begin{itemize}
%  \item T
 the $\omega_1$ has been introduced in \eqref{def:VapScattering}, the constant $C_1=\TTT\tth^{-3}+\OO\left(\tth^{-5}\right)$ with $\TTT$ as introduced in \eqref{def:TTT} in Proposition \ref{prop:scatteringmap}, which satisfies $C_1\neq0$ and    $C_2$ such that $C_2=\OO\left(\tth^{-3}\right)$.

% \end{itemize}
Morover, $\Phi_1$ satisfies
\[
 \Phi_1(z)=z+\tth^{-2}\OO\left(z^2\right)+\OO\left(z^3\right).
\]
\end{lemma}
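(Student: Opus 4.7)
The plan is to apply the standard Birkhoff normal form procedure for a symplectic map near an elliptic fixed point, tracking the explicit $\tte$-dependence supplied by Proposition~\ref{prop:scatteringproperties}. First I would translate $z_0^1$ to the origin via $w=z-z_0^1$, so that Proposition~\ref{prop:scatteringproperties} gives the Taylor expansion
\[
S^1(w)=\mu_1 w+P_2(w)+P_3(w)+\sum_{k=4}^{N_1}P_k(w)+\OO(|w|^{N_1+1}),
\]
with $\mu_1=e^{i\omega_1}$, quadratic coefficients of size $\OO(\tte^{-5})$, $P_3(w)=\TTT\tte^{-3}|w|^2w+\tte^{-5}\OO(|w|^3)$, and coefficients of $P_k$ for $k\ge4$ of size $\OO(\tte^{-3})$. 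A linear symplectic change bringing the linearization to diagonal form $(u,v)\mapsto(\mu_1 u,\mu_1^{-1}v)$, with symplectic form $du\wedge dv$ (recall $\xi=\bar\eta$ for real orbits so the complex extension is the natural setting), reduces the problem to the standard setup.

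Next, the key non-resonance bound
\[
|\mu_1^q-1|\gtrsim\tte^{-3},\qquad 1\le|q|\le 6,
\]
follows immediately from $\omega_1=\wt\nu\pi L_0^4 A_1\tte^{-3}+\OO(\tte^{-4})$ being nonzero and tending to zero as $\Theta\to\infty$. With this in hand I would perform five successive symplectic near-identity conjugations, generated by time-one flows of Hamiltonian polynomials, eliminating at each step every non-resonant monomial of degree $2,3,4,5,6$ by solving the cohomological equations $(\mu_1^{a-b-1}-1)d_{ab}=-c_{ab}$ by explicit division. The only monomials that survive are the resonant ones $u^{j+1}v^j$ and $u^jv^{j+1}$ in the two components, corresponding via $uv\leftrightarrow|w|^2$ to the degree-$(2j+1)$ terms $|w|^{2j}w$, which produce $C_1$ ($j=1$) and $C_2$ ($j=2$). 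Since the disk $\DD_\rr(z_0^1)$ is exponentially small in $\Theta$, the truncation error $\OO(|w|^{N_1+1})$ is below any polynomial in $\tte^{-1}$, so the scheme closes automatically with no convergence issue and $\Phi_1(\DD_{\rr/2}(z_0^1))\subset\DD_\rr(z_0^1)$ is trivially verified.

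Finally, the quantitative claims follow by inspecting each normalization step. At degree two, coefficients of size $\OO(\tte^{-5})$ divided by small denominators of size $\tte^{-3}$ produce the $\tte^{-2}\OO(w^2)$ contribution to $\Phi_1-\mathrm{Id}$; at higher degrees, both the direct coefficients of $P_k$ and the nonlinear feedback from previous steps are $\OO(\tte^{-3})$, so the resulting corrections to $\Phi_1$ are $\OO(1)$ in $\tte$ and fit inside $\OO(w^3)$. For the resonant coefficients I would match the Taylor expansion of the target $e^{i(\omega_1+C_1|w|^2+C_2|w|^4)}w$ against the normalized truncation of $S^1$: at cubic order the leading contribution is inherited directly from the resonant part $\TTT\tte^{-3}|w|^2w$ of $P_3$, which after absorbing the factor $ie^{-i\omega_1}=i+\OO(\tte^{-3})$ into the remainder yields $C_1=\TTT\tte^{-3}+\OO(\tte^{-5})$; non-vanishing of $C_1$ is then immediate from $\TTT\neq 0$, which holds because $A_2\neq 0$ in~\eqref{def:A1A2A3}. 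The coefficient $C_2$ is read off analogously from the resonant quintic term at the end of the procedure; its size $\OO(\tte^{-3})$ follows from the coefficients of $P_5$ and from the cubic–quadratic interactions arising from the earlier normalization steps, all of which contribute at the same order. The main obstacle is simply the bookkeeping of five successive cohomological equations with small denominators of size $\tte^{-3}$, but the exponentially small radius $\rr$ makes every untracked contribution negligible and renders the accumulated loss harmless.
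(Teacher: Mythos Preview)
Your proposal is correct and follows essentially the same approach as the paper. The paper's proof is very terse—it simply invokes the classical Birkhoff Normal Form procedure (via generating functions rather than Hamiltonian time-one flows, but these are equivalent), records the small-divisor bound $\gtrsim\Theta^{-3}$, and then appeals to the Taylor estimates of Proposition~\ref{prop:scatteringproperties}; you have filled in precisely the bookkeeping that the paper elides, including the $\tte^{-2}$ contribution to $\Phi_1-\Id$ from the quadratic step and the identification of $C_1$ from the resonant cubic monomial.
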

\begin{proof}
 The proof of this lemma is through the classical method of Birkhoff Normal Form by (for instance) generating functions. Fix $N>0$. Note that then the small divisors which arise in the process are of the form $|k\tth^{-3}-1|$ for $k=1\ldots 2N-2$. Then, taking $\Theta$ big enough, they satisfy
 \[
  |k\tth^{-3}-1|\gtrsim \Theta^{-3}.
 \]
With such estimate and the estimates of the Taylor coefficients of the scattering map $\SSS^1$ given in Proposition \ref{prop:scatteringmap}, one can easily complete the proof of the lemma.
\end{proof}

Next step is the application of KAM Theorem. To this end, we consider action angle coordinates for $\wt  \SSS^1$ (centered at the fixed point). Note that the first order of $\wt\SSS^1$ in \eqref{def:BNFofS1} is integrable and therefore it only depends on the action.
\begin{lemma}\label{lemma:ActionAngleCoords}
Fix a parameter $\rr\in \left(0,\frac{1}{2}\tth^{11/2}  \ex^{-\frac{\tilde \nu \tth^3}{3 L_0^3}} \right)$ and any $\ell\geq 4$.  Consider the change of coordinates
\[
(\alo,\beto)= \Phi_2(\theta,I)=(\alo^1+\rr\sqrt{I}e^{i\theta},\beto^1+\rr\sqrt{I}e^{-i\theta}).
\]
Then, the map $\wh\SSS^1=\Phi_2\ii\circ\wt\SSS^1\circ\Phi_2$ is symplectic with respect to the canonical form $d\theta\wedge dI$ and it is of the form
 \begin{equation}\label{def:S1ActionAngle}
\wh  \SSS^1(\theta,I)=\begin{pmatrix}
                       \theta+B(I)+\OO_{\CCC^\ell}\left(\rr^{6}\right)\\
                       I+\OO_{\CCC^\ell}\left(\rr^{6}\right).
                      \end{pmatrix}
 \end{equation}
Moreover, there exists two constants $C_1,C_2\neq 0$ such that  for $I\in[1,2]$, the function $B$ satisfies
\[
\begin{split}
 B(I)&=C_1\tth^{-3}+\OO\left(\tth^{-4}\right)\\
%  \qquad \text{ with }\quad C_1\neq 0\\
\pa_I B(I)&= \rr^2\left(C_2\tth^{-3}+\OO\left(\tth^{-4}\right)\right)\\
% \qquad \text{ with }\quad C_2\neq 0.
\pa_I^2 B(I)&=\OO(\rr^4).
\end{split}
 \]
% where the $\CCC^1$ in the error refers to the derivative with respect to $I$. In particular
% \[
%  \pa_I B(I)\geq G_0^{-3}C_2\rr+\qquad \text{ for some }C'>0.
% \]
 \end{lemma}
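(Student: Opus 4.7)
The plan is to prove this by direct computation, since Lemma \ref{lemma:BNF} already puts $\wt\SSS^1$ in a form where the phase depends only on $|z-z_0^1|^2$, and the change $\Phi_2$ is essentially polar coordinates around $z_0^1$ scaled by $\rr$. I would first verify the symplectic claim: computing the pullback of $d\alo\wedge d\beto$ under $\Phi_2$ directly gives $d\alo\wedge d\beto = i\rr^2\, d\theta\wedge dI$, which is a constant multiple of $d\theta\wedge dI$. Since $\wt\SSS^1$ is symplectic with respect to $d\alo\wedge d\beto$ (being the conjugate of the symplectic map $\SSS^1$ by the symplectic BNF change $\Phi_1$), it follows at once that $\wh\SSS^1$ preserves $d\theta\wedge dI$.

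Next I would compute $\wh\SSS^1$ explicitly. Substituting $w = z-z_0^1 = \rr\sqrt{I}\,e^{i\theta}$ into \eqref{def:BNFofS1}, one has $|w|^2 = \rr^2 I$, so the phase becomes $B(I) := \omega_1 + C_1\rr^2 I + C_2\rr^4 I^2$ (with $C_1,C_2$ the constants from Lemma \ref{lemma:BNF}, which I would rename to avoid confusion with the statement). Thus
\[
\wt\SSS^1\circ\Phi_2(\theta,I) \;=\; z_0^1 + \rr\sqrt{I}\,e^{i(\theta+B(I))} + R(\theta,I),
\]
in the $\alo$ component, where $R = \OO_{\CCC^\ell}(\rr^7 I^{7/2})$ inherits its smoothness from the remainder in \eqref{def:BNFofS1}. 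Since $I\in[1,2]$ is bounded away from zero, $\Phi_2^{-1}$, which recovers $(\theta',I')$ from $\alo-\alo^1 = \rr\sqrt{I'}\,e^{i\theta'}$, is $\CCC^\infty$ with bounds uniform in $\rr$. Expanding $I' = |\alo-\alo^1|^2/\rr^2$ and $\theta' = \arg(\alo-\alo^1)$ yields
\[
\wh\SSS^1(\theta,I) \;=\; \bigl(\theta + B(I),\; I\bigr) + \OO_{\CCC^\ell}(\rr^6),
\]
the remainder being $R/(\rr\sqrt{I})$ in the angle component and $\sqrt{I}(e^{iB(I)}\bar R + e^{-iB(I)}R)/\rr + |R|^2/\rr^2$ in the action, both of order $\rr^6$ for $I\in[1,2]$.

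Finally, the three estimates on $B$ follow from the explicit expression $B(I) = \omega_1 + C_1\rr^2 I + C_2\rr^4 I^2$ together with the asymptotics collected in Lemma \ref{lemma:BNF} and \eqref{def:VapScattering}: since $\rr^2 = \OO(\tth^{11}e^{-2\tilde\nu\tth^3/(3L_0^3)})$ is exponentially small, the dominant term in $B(I)$ is $\omega_1 = \tilde\nu\pi L_0^4 A_1\tth^{-3} + \OO(\tth^{-4})$, giving $B(I) = \tilde C_1 \tth^{-3} + \OO(\tth^{-4})$ with $\tilde C_1 = \tilde\nu\pi L_0^4 A_1\neq 0$; differentiation gives $\pa_I B = \rr^2(C_1 + 2C_2\rr^2 I) = \rr^2(\TTT\tth^{-3} + \OO(\tth^{-4}))$ where $\TTT\neq 0$ by \eqref{def:TTT}; and $\pa_I^2 B = 2C_2\rr^4 = \OO(\rr^4\tth^{-3}) = \OO(\rr^4)$.

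I do not anticipate a substantive obstacle here: the statement is essentially a coordinate computation once the BNF of Lemma \ref{lemma:BNF} is in hand. The only minor bookkeeping point is to keep track of the $\CCC^\ell$ regularity when inverting $\Phi_2$; this is harmless because all the maps involved are $\CCC^\infty$ away from $I=0$, and we work on the compact interval $I\in[1,2]$ where all derivatives of $\Phi_2^{\pm 1}$ are uniformly bounded in $\rr$.
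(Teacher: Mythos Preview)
Your proof is correct and is precisely the direct computation the paper has in mind: the paper does not give an explicit proof of this lemma, treating it as an immediate consequence of the Birkhoff normal form in Lemma~\ref{lemma:BNF} together with the standard passage to scaled polar (action--angle) coordinates around the elliptic fixed point. Your handling of the symplecticity, the substitution $|z-z_0^1|^2=\rr^2 I$, the size $\OO(\rr^7)$ of the remainder and its division by $\rr$ upon inverting $\Phi_2$, and the identification of the constants in $B(I)$ with $\omega_1$ and $\TTT$ are all correct; the bookkeeping about the clash of names $C_1,C_2$ between Lemma~\ref{lemma:BNF} and the present statement is a real notational issue in the paper that you have resolved appropriately.
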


We use the following KAM Theorem from \cite{Herman83c, Herman86b} (we use the simplified version already stated in \cite{DelshamsLS00})

\begin{theorem}\label{thm:KAM}
Let $f:\TT^ 1\times[0,1]\to \TT^ 1\times[0,1]$ be an exact symplectic $\CCC^\ell$ map, $\ell\geq 4$. Assume that $f=f_0+\de f_1$, where $f(I,\theta)=(\theta+A(I),I)$, $A$ is $\CCC^\ell$ and satisfies $|\pa_I A(I)|\geq M$ and $\|f_1\|_{\CCC^ \ell}\leq 1$.

Then, if $\sigma=\de^{1/2}M^{-1}$ is sufficiently small, for a set of Diophantine numbers with $\tau=5/4$, we can find invariant tori which are the graph of $\CCC^{\ell-3}$ functions $u_\omega$, the motion on them is $\CCC^{\ell-3}$ conjugate to a rotation by $\omega$ and $\|u_\omega\|_{\CCC^{\ell-3}}\lesssim\de^{1/2}$ and the tori cover the whole annulus except a set of measure smaller than $\mathrm{ constant}M\ii\de^{1/2}$.
\end{theorem}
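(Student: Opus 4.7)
The plan is to prove Theorem \ref{thm:KAM} by a KAM (Newton-type) iteration scheme adapted to the finite-smoothness setting, essentially following Moser's approach with Herman's sharp estimates. Since $f = f_0 + \delta f_1$ with $f_0$ an integrable twist, the strategy is to seek, for each Diophantine frequency $\omega$ in a suitable set, an exact symplectic conjugation $h_\omega:\TT\to\TT\times[0,1]$ and an action parameter $I_\omega\in[0,1]$ such that $f\circ h_\omega(\theta) = h_\omega(\theta+\omega)$. The conjugation $h_\omega$ is constructed as the $C^{\ell-3}$ limit of a quadratically convergent sequence of near-identity conjugations produced by the iteration.

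First I would perform a single KAM step. Writing the candidate torus as a graph $I = I_0 + u(\theta)$ and conjugating $f$ by the time-one map of a suitable generating function $S(\theta)$ reduces, to first order, the invariance equation to a pair of cohomological equations of the type
\begin{equation*}
u(\theta+\omega) - u(\theta) = g(\theta) - \langle g \rangle, \qquad A'(I_0)\,\langle u \rangle + \langle g \rangle = 0.
\end{equation*}
The first equation is solved by the standard small-divisor estimate: for $\omega$ Diophantine with exponent $\tau = 5/4$, the solution $u$ loses a controlled amount of regularity, bounded by a polynomial in the inverse Diophantine constant (Rüssmann). The second uses the twist hypothesis $|\partial_I A| \geq M$ to solve for the correction to the action $I_0$. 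The remaining error is quadratic in the size $\delta$ of the current perturbation, modulo a loss of derivatives which is controlled via Moser-type smoothing operators $S_N$ applied before each step.

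Next I would set up the inductive quadratic scheme. One defines $\delta_{n+1} \sim \delta_n^{3/2}$ (or $\delta_n^2$ with enough regularity) with a loss of derivatives at each step absorbed by the smoothing. Convergence in $C^{\ell-3}$ follows once $\sigma = \delta^{1/2} M^{-1}$ is small enough, since the Diophantine constant $\gamma_\omega$ can be taken of order $\delta^{1/2}$ and all small-divisor losses are then polynomial in $\gamma_\omega^{-1}$. The resulting torus $u_\omega$ satisfies $\|u_\omega\|_{C^{\ell-3}} \lesssim \delta^{1/2}$ as claimed. Finally, the measure estimate follows from the classical fact that the set of frequencies in the twist interval $[A(0),A(1)]$ failing the Diophantine condition with constant $\gamma \sim \delta^{1/2}$ has Lebesgue measure $\lesssim \gamma |A(1)-A(0)|^{-1} \lesssim M^{-1}\delta^{1/2}$; pulling this back through the twist relation $\omega = A(I)+\mathrm{small}$ (which is bi-Lipschitz thanks to $|\partial_I A|\geq M$) gives the stated bound on the complement of the invariant-torus set.

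The main obstacle is the usual one in KAM with finite regularity: coordinating the loss of derivatives in the small-divisor inversion with the smoothing parameters so that the Newton step closes quadratically in a Banach space of fixed finite smoothness. Herman's contribution is precisely the sharp choice of smoothing exponents and the tight bookkeeping of derivatives that allows $\ell\geq 4$, and replicating this is the delicate part. Beyond that, the argument is the standard classical KAM scheme, and since the statement is quoted verbatim from \cite{Herman83c,Herman86b} one would in practice simply invoke Herman's proof rather than reproduce it.
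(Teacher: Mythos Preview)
The paper does not prove this theorem at all: it is stated as a quotation from Herman \cite{Herman83c,Herman86b} (in the simplified form given in \cite{DelshamsLS00}) and used as a black box to produce the invariant torus $\TT_*$ for the scattering map $\wh\SSS^1$. Your closing remark is therefore exactly right --- in the context of this paper one simply invokes Herman's result rather than reproving it --- and your sketch of the KAM iteration, while a reasonable outline of how such a proof goes, is not something the paper attempts or needs.
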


Note that the map \eqref{def:S1ActionAngle} in $[1,2]\times\TT$ satisfies the properties of Theorem \ref{thm:KAM} with $M\gtrsim \Theta^{-3}\rr$ and $\de=\rr^6$ for any regularity $\CCC^\ell$ (the scattering map is actually analytic). This theorem then gives, in particular, a torus $\TT_*$ which is invariant by  $\wh  \SSS^1$ and is parameterized as  graphs as
\begin{equation}\label{def:InvTori}
\TT_*=\left\{I=\Psi(\theta)=I_*+\OO_{\CCC^1}(\rr^{2}), \theta\in\TT\right\},\qquad j=1,2,
\end{equation}
where $I_*$ satisfies $I_*\in[1,2]$.
% (later we will choose suitable $I_*$).
Note that the $\CCC^1$ in the error refers to derivatives with respect to $\theta$.
%  and satisfy
% . Lemma \ref{lemma:KAM} ensures that one can find such tori such that
% \[
%  \|\psi_2-\psi_1\|_{\CCC^2}\lesssim \rr^2
% \]
%  \ex^{-\frac{\tilde \nu G_0^3}{3 L_0^3}}???
We can apply several steps of Birkhoff Normal Form around the torus $\TT_*$.

\begin{lemma}\label{lemma:BNFTorus}
% Fix $N_3\in \NN$, $N_3\geq 3$.
There exists a symplectic change of coordinates $\Phi_3$ satisfying
\[
% \Phi_3=\mathrm{Id}+\OO_{\CCC^1}\left(\rr^{N_1/2}\right).
(I,\theta)=\Phi_3(J,\psi)=\left(\psi+\OO_{\CCC^1}\left(\rr^{2}\right),J+I_*+\OO_{\CCC^1}\left(\rr^{2}\right)\right),
\]
such that $\{J=0\}=\Phi_3^{-1}(\TT_*)$ and  the map $\wh\SSS^1$ becomes
\begin{equation}\label{def:S1tilde}
 \tS^1(\psi,J)=\begin{pmatrix}
                       \psi+\wt B (J)+\OO\left(J^{2}\right)\\
                       J+\OO\left(J^{3}\right)
                      \end{pmatrix}
\end{equation}
where
\begin{equation}\label{def:b1}
 \wt B(J)=b_0+b_1J \quad \text{with} \quad b_0=C_1 \tth^{-3}+\OO\left(\Theta^{-4}\right),\qquad C_1\neq 0.
\end{equation}
%
% \[
% %  \sum_{k=1}^{\wt N-1}b_k J^k
% %  \qquad \text{and}\quad b_0,b_1\neq 0.
% \]
and $b_1\in\RR$ which  satisfies $b_1\neq 0$ for $\Theta$ small enough.
% In particular, $b_0$ is of the form
\end{lemma}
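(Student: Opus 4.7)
The plan is to build $\Phi_3$ as a composition of three symplectic changes: one that straightens the KAM torus $\TT_*$ to $\{J=0\}$, one that conjugates the internal dynamics on the torus to a rigid rotation, and finally a pair of Birkhoff normal form steps near $\{J=0\}$ that linearise the angle dynamics in $J$ modulo $\OO(J^2)$ and push the obstruction in the action dynamics from $\OO(J^2)$ to $\OO(J^3)$.

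First, I would exploit exactness of the KAM torus. Being an invariant graph of the exact symplectic map $\wh\SSS^1$ in the annulus $\TT\times[0,1]$, the graph $I=\Psi(\theta)$ from \eqref{def:InvTori} is exact Lagrangian, so $\Psi(\theta)-I_\ast=S_0'(\theta)$ for some $\CCC^{\ell-3}$ function $S_0$ with $\|S_0\|_{\CCC^2}\lesssim\rr^2$. The shear $(\theta,I)\mapsto(\theta,I-I_\ast-S_0'(\theta))$ is symplectic and sends $\TT_\ast$ to $\{J=0\}$; in the new variables the map has $\hat g(\theta,0)\equiv 0$ by invariance and takes the form $(\theta,J)\mapsto(\theta+B(I_\ast+S_0'(\theta))+\OO(\rr^6)+\OO(J),\,J+\OO(J))$. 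Next, apply the KAM conjugacy: Theorem~\ref{thm:KAM} provides a $\CCC^{\ell-3}$ diffeomorphism $h(\varphi)=\varphi+u(\varphi)$, $\|u\|_{\CCC^{\ell-3}}\lesssim\rr$, that conjugates the restriction of the map to $\TT_\ast$ to the rigid rotation by the Diophantine frequency $\omega$. Extend $h$ symplectically off the torus by $(\theta,J)=(h(\varphi),\,J/h'(\varphi))$, which preserves $d\theta\wedge dJ$ and leaves $\{J=0\}$ invariant. Composing these two maps gives a symplectic $\Phi_3$ of the required form, after which the transformed map reads
\[
(\psi,J)\longmapsto\bigl(\psi+b_0+\beta_1(\psi)J+\OO(J^2),\,J+\gamma_2(\psi)J^2+\OO(J^3)\bigr),
\]
with $b_0=\omega$ the rotation number on $\TT_\ast$.

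Now I would perform two steps of Birkhoff normal form around $\{J=0\}$ using time-one maps of Hamiltonians of the form $H_k(\psi,J)=\chi_k(\psi)J^k$, $k=1,2$. The corresponding cohomological equations are $\chi_k(\psi+b_0)-\chi_k(\psi)=\beta_1(\psi)-\langle\beta_1\rangle$ (respectively the analogous one for $\gamma_2$), which are solvable in $\CCC^{\ell-4}$ by Fourier series because $b_0$ is Diophantine (a standard Denjoy/Russmann estimate). Exactness of the symplectic map guarantees the zero-mean compatibility condition for the $J^2$-term in the action equation, so one normal form step suffices to reach $J+\OO(J^3)$. After averaging, the angular dynamics becomes $\psi+b_0+b_1 J+\OO(J^2)$ with $b_1=\langle\beta_1\rangle$, which is what \eqref{def:S1tilde} requires.

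Finally, for the estimates in \eqref{def:b1}: the rotation number on $\TT_\ast$ is read off from the Birkhoff normal form of Lemma~\ref{lemma:BNF}, namely $b_0=\omega_1+C_1\rr^2 I_\ast+\OO(\rr^4)$; since $\rr$ is super-exponentially small, $b_0=\omega_1+\OO(\rr^2)=C_1\tth^{-3}+\OO(\tth^{-4})$ with $C_1\neq 0$ (using $A_1\neq 0$ from \eqref{def:A1A2A3}). For the twist, the scaling $(\alpha_0,\beta_0)=z_0^1+\rr\sqrt{I}(e^{i\theta},e^{-i\theta})$ of Lemma~\ref{lemma:ActionAngleCoords} together with $\TTT\neq 0$ in \eqref{def:TTT} yields $\partial_I B(I_\ast)=\rr^2(C_2\tth^{-3}+\OO(\tth^{-4}))$ with $C_2\neq 0$, and this expression survives as $b_1$ through the normal form because the successive changes are tangent to the identity up to $\OO(\rr^2)$. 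The main obstacle is checking that the KAM conjugacy $h$ admits a symplectic extension with sufficient regularity and that the small divisors at $b_0$ in the cohomological equations remain under control (they do, because the torus frequency is Diophantine by construction); tracking all error terms in the $\CCC^1$ norm and verifying that the estimates $\|{\rm Id}-\Phi_3\|_{\CCC^1}=\OO(\rr^2)$ hold throughout is a careful but routine bookkeeping exercise.
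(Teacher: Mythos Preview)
The paper does not give a proof of this lemma; it only prefaces the statement with the single sentence ``We can apply several steps of Birkhoff Normal Form around the torus $\TT_*$.'' Your proposal correctly unfolds this indication: straighten the KAM graph by a shear, symplectically extend the KAM conjugacy so that the dynamics on $\{J=0\}$ is the rigid rotation by its Diophantine frequency, then run two cohomological steps to average the $J$-coefficient of the angle and the $J^2$-coefficient of the action. This is exactly the standard ``Birkhoff normal form around an invariant curve'' that the paper has in mind, and the identification $b_0=\omega_1+\OO(\rr^2)$, $b_1=\langle\beta_1\rangle\sim\partial_I B(I_*)\neq 0$ is correct.

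Two minor remarks on the write-up. First, the ``exact Lagrangian'' language in your first step is unnecessary: any shear $(\theta,I)\mapsto(\theta,I-\Psi(\theta))$ preserves $d\theta\wedge dI$, regardless of whether $\Psi-I_*$ has zero mean. Second, the compatibility condition $\langle\gamma_2\rangle=0$ follows already from area preservation, not exactness: expanding the Jacobian determinant to order $J$ gives $\beta_1'(\psi)+2\gamma_2(\psi)=0$, hence $\gamma_2=-\tfrac12\beta_1'$ has zero mean. Finally, the KAM bound you quote as $\|u\|_{\CCC^{\ell-3}}\lesssim\rr$ is actually $\lesssim\de^{1/2}=\rr^3$ from Theorem~\ref{thm:KAM}; this only improves your estimates and in particular keeps the normal-form corrections (whose size is governed by the $\psi$-oscillation of $\beta_1,\gamma_2$, of order $\OO(\rr^5)$, divided by the fixed Diophantine constant of $b_0$) safely inside the claimed $\OO_{\CCC^1}(\rr^2)$.
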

%  We use the cooNote that  action angle-variables $(\theta,I)$ are well suited to obtain formulas for the scattering map $\wh\SSS^1$.
Now we express the scattering map $\SSS^2$ also in $(\psi,J)$ coordinates to compare them. To this end, we
% define
% \begin{equation}\label{def:HomoPointsVector}
%  \Delta=z_0^2-z_0^1
% \end{equation}
% and we
take
\begin{equation}\label{def:ChoiceRho}
 \rr=\tth^{7/2} \ex^{-\frac{\tilde \nu \tth^3}{3 L_0^3}}
%  \qquad \text{ for some }\quad \nu\in \left[\frac{3}{2}+\eta, \frac{11}{2}-\eta\right]
\end{equation}
The exponent $7/2$ is not crucial and one could take any other exponent in the interval $(3/2, 11/2)$.
% \[
%  \left[\frac{3}{2}+\eta, \frac{11}{2}-\eta\right],
% \]
% where $\eta$ is the constant considered in Lemma \ref{lemma:BNF}.
% \textcolor{red}{Quan tot surti jo triaria $\nu=7/2$ i aix\'i ens estalviem un parametre.}

\begin{lemma}\label{lemma:ScatteringS2ActionAngle}
Take $\rr$ of the form \eqref{def:ChoiceRho} and $\Theta$ large enough. Then, the scattering map  $\SSS^2$ expressed in the coordinates $(\psi,J)$ obtained in Lemma \ref{lemma:BNFTorus} is of the form
\[
\tS^2(\psi,J)=\begin{pmatrix}\tS^2_\psi(\psi,J)\\\tS^2_J(\psi,J)\end{pmatrix}=\begin{pmatrix}\psi+f_1(\psi)+\OO\left(J\right)\\
J+g_1(\psi)+\OO\left(J\right)\end{pmatrix}
% \begin{pmatrix}\psi+f_1(\psi)+f_2(\psi)J+\OO\left(J^2\right)\\
% J+g_1(\psi)+g_2(\psi)J+\OO\left(J^2\right)\end{pmatrix}
\]
where
\[
% \begin{split}
f_1(\psi)=\OO_{\CCC^1}\left(\Theta^{-2}\right)\qquad \text{and}\qquad
% &=C_1 \tth^{-3}+\OO_{\CCC^1}\left(\Theta^{-4}\right)\\
g_1(\psi)=C_2\tth^{-2}\cos\psi+\OO_{\CCC^1}\left(\Theta^{-3}\ln^2 \Theta\right),
% \end{split}
\]
with some constant $C_2\neq 0$.
% and $f_2$ and $g_2$ are functions satisfying
% \[
% \|f_2\|_{\CCC^1}, \|g_2\|_{\CCC^1}\lesssim G_0^{3/2-\nu}
% \]
\end{lemma}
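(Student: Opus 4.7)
The plan is to write $S^2 = S^1 + E$ with $E := S^2 - S^1$ exponentially small in $\tte$, and then propagate through the conjugation $\Phi := \Phi_1 \circ \Phi_2 \circ \Phi_3$. By construction, $\Phi^{-1}\circ S^1 \circ \Phi = \tS^1$ (Lemma \ref{lemma:BNFTorus}), so
\[
\tS^2 = \tS^1 + \wt E,\qquad \wt E := \Phi^{-1}\circ S^2 \circ \Phi - \tS^1,
\]
and the task reduces to understanding $\wt E$ to leading order at $J = 0$.

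First I would quantify $E$. From formula \eqref{def:ScatteringFormulas}, using that only the harmonics $\LL^{[\pm 1]}$ of the Melnikov potential change sign between $\sigma = \pi$ and $\sigma = 2\pi$,
\[
E(\eta_0, \xi_0) = 2i\bigl(-\partial_{\xi_0}(\LL^{[1]}+\LL^{[-1]}),\ \partial_{\eta_0}(\LL^{[1]}+\LL^{[-1]})\bigr) + \OO(\Theta^{-6}),
\]
modulo doubly-exponentially small contributions from $\LL^{\geq}$. Inserting the asymptotic expansions of Proposition \ref{prop:MelnikovPotential} and evaluating near the fixed point $(\eta_0^1,\xi_0^1) \sim \tte^{-2}$ (which is nonzero because $m_0 \neq m_1$, hence $N_3 \neq 0$) shows that $E$ is, to leading order, a \emph{nearly constant} Cartesian vector of magnitude $\sim \tte^{3/2} e^{-\tilde\nu\tte^3/(3L_0^3)}$, with nonzero direction determined by $N_2,N_3$ and with its $\CCC^1$ variation over $\TT_*$ controlled by Cauchy estimates on a complex strip of size $\OO(\tte^{-5/2} e^{-\tilde\nu\tte^3/(3L_0^3)})$.

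Second, I would compute the pullback of $E$ through $\Phi$. Since $\Phi_1$ (Lemma \ref{lemma:BNF}) and $\Phi_3$ (Lemma \ref{lemma:BNFTorus}) are $\CCC^1$-close to the identity with corrections of order $\OO(\rho^2)$ in the relevant variables, the dominant effect comes from the polar rescaling $\Phi_2(\theta,I) = (\eta_0^1 + \rho\sqrt{I}e^{i\theta},\,\xi_0^1 + \rho\sqrt{I}e^{-i\theta})$. A direct linearization gives
\[
\delta I = \tfrac{2\sqrt{I}}{\rho}\,\Re(\delta\eta_0\, e^{-i\theta}),\qquad \delta\theta = \tfrac{1}{\rho\sqrt{I}}\,\Im(\delta\eta_0\, e^{-i\theta}),
\]
so that a Cartesian displacement of magnitude $|A| \sim \tte^{3/2} e^{-\tilde\nu\tte^3/(3L_0^3)}$ translates, with the choice $\rho = \tte^{7/2} e^{-\tilde\nu\tte^3/(3L_0^3)}$ from \eqref{def:ChoiceRho}, into $(\delta I,\delta\theta)$-variations of order $\tte^{-2}$ with leading $\cos\theta$ and $\sin\theta$ dependence respectively. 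Restricting to $J = 0$, where $\theta = \psi + \OO(\rho^2)$ and $I = I_* + \OO(\rho^2) \in [1,2]$ by Lemma \ref{lemma:BNFTorus}, and adding the rotation $b_0 = \OO(\tte^{-3})$ coming from $\tS^1_\psi(\cdot,0) - \psi$, one reads off $g_1(\psi) = C_2 \tte^{-2} \cos\psi + (\text{corrections})$ with $C_2 \neq 0$ (up to redefining the origin of $\psi$ by a constant phase) and $f_1(\psi) = \OO_{\CCC^1}(\Theta^{-2})$. The $\OO(J)$-remainders in $\tS^2$ come from Taylor-expanding $\tS^1$ in $J$ plus the $J$-derivatives of the pulled-back $E$, both absorbed trivially into the stated form.

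The error terms accumulate from (i) the $\OO(\Theta^{-6})$ non-oscillatory remainder in $S^j$, which upon division by $\rho$ remains much smaller than $\Theta^{-3}\ln^2\Theta$; (ii) the $\OO(\rho^2)$ deviations of $\Phi_1, \Phi_3$ from the identity, yielding corrections of size $\OO(\rho^2 \tte^{-2})$, super-exponentially small; (iii) the non-constancy of $E$ over $\TT_*$, controlled by Cauchy as above; and (iv) the $\ln^2\Theta$ losses inherited from Theorem \ref{thm:MainSplitting} (cf.\ Lemma \ref{lemma:AverageLambda}), which account for the $\ln^2\Theta$ in the remainder of $g_1$. The main obstacle is careful $\CCC^1$-bookkeeping through the three successive coordinate changes, so that the amplification factor $\rho^{-1} \sim \tte^{-7/2} e^{\tilde\nu\tte^3/(3L_0^3)}$ coming from $\Phi_2$ does not interact badly with either the $\OO(\rho^2)$ Birkhoff remainders or the gradient of $E$ over $\TT_*$; a secondary subtlety is verifying the nonvanishing of $C_2$, which traces back to $N_3 \neq 0$ (equivalent to $m_0 \neq m_1$), positivity of $N_2$, and positivity of $I_* \in [1,2]$.
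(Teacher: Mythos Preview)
Your perturbative decomposition $\SSS^2 = \SSS^1 + E$ is a valid alternative to the paper's route, which instead expands $\SSS^2$ directly around its own fixed point $z_0^2$ and tracks the offset $\Delta = z_0^1 - z_0^2$ from \eqref{def:DistanceHomosteor} through the action--angle change $\Phi_2$. The two are equivalent at leading order because near the torus $E \approx (\lambda_2-1)\Delta$, and indeed both produce the same $\tte^{-2}\cos\psi$ term after division by~$\rho$. The paper's packaging has the advantage that $\Delta$ is computed from the \emph{full} fixed-point equations $\MM_\al=\MM_\bet=0$ (not just the Melnikov first order), so the error bookkeeping is automatic; your packaging is more conceptual but forces you to control the remainder in $E$ by hand.

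This is where your argument has a real gap. In item~(i) you write that the $\OO(\Theta^{-6})$ remainder in $E$, ``upon division by $\rho$, remains much smaller than $\Theta^{-3}\ln^2\Theta$''. But $\rho = \tte^{7/2}e^{-\tilde\nu\tte^3/(3L_0^3)}$ is exponentially small, so $\Theta^{-6}/\rho$ is exponentially \emph{large}. The point you are missing is that the $\OO(\Theta^{-6})$ errors in $\SSS^1$ and $\SSS^2$ come from the functions $P_1,P_2$ of \eqref{eq:melnikovseccio9} evaluated at $\gamma^1,\gamma^2$; by \eqref{def:EstimatesPDerivs} their $\gamma$-derivatives are already exponentially small, so $P_i(\gamma^2)-P_i(\gamma^1)=\OO(\Theta^{1/2}e^{-\tilde\nu\tte^3/(3L_0^3)})$, not merely $\OO(\Theta^{-6})$. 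In other words, the $\gamma$-independent part of the remainder cancels identically in $E=\SSS^2-\SSS^1$, and only the exponentially small oscillatory part survives; dividing \emph{that} by $\rho$ gives $\OO(\Theta^{-3})$, which is what you need. Once you insert this correction your scheme goes through, but as written the estimate in (i) is false. (A minor secondary point: your argument yields $C_2\tte^{-2}\cos(\psi-\psi_0)$ for an a priori nonzero phase~$\psi_0$; the paper gets $\cos\psi$ directly because the explicit phases of $(\lambda_2-1)$ and $\Delta$ combine to a real quantity. Your ``redefining the origin of $\psi$'' is harmless downstream but does not literally prove the lemma as stated.)
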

\begin{proof}
We need to apply to $\SSS^2$ the changes of coordinates given in Lemmas \ref{lemma:BNF} and \ref{lemma:ActionAngleCoords}.  We first apply the symplectic transformation $\Phi_1$ in Lemma \ref{lemma:BNF}. Then, we obtain
that $\wt  \SSS^2=\Phi_1\ii\circ\wt\SSS^2\circ\Phi_1$ is of the form
 \[
\wt\SSS^2(z)=\Phi\ii\circ\SSS^2\circ\Phi=z_0^2+\la_2 (z-z_0^2)+\wt P_2\left(z-z_0^2\right)
% +\OO\left(z-z_0^2\right)^{N+1}
 \]
for some function $\wt P_2$ which satisfies
\[
 |\wt P_2\left(z-z_0^2\right)|\leq C_3 \left|z-z_0^2\right|^2
\]
for some constant $C_3$ independent of $\tth$.
% are polynomials which satisfy the same properties as the polynomials $P_k$ introduced in Lemma \ref{lemma:BNF}.

Now we apply the (scaled) Action-Angle transformation considered in \ref{lemma:ActionAngleCoords}. To this end, taking into account  \eqref{def:DistanceHomosteor}, we define
% \[
% \Delta
% \]
% which, by, satisfies
\begin{equation}\label{def:EstimateDelta}
\Delta=z_0^1-z_0^2 =-\frac{6i}{\sqrt{\pi}}L_0^{1/2} \tth^{9/2} \ex^{-\frac{\tilde \nu \tth^3}{3 L_0^3}} \left(1+\OO\left(\Theta^{-1}\ln^ 2\Theta\right)\right).
\end{equation}
Denoting by $(\theta_1,I_1)$ the image of $(\theta,I)$ by $\wt\SSS^2$,
we obtain (in complex notation)
\[
\begin{split}
\rr \sqrt{I_1}e^{i\theta_1}&=-\Delta+\la_2\left(\Delta+\rr \sqrt{I}e^{i\theta} \right)+\sum_{k=2}^N\wt P_k\left(\Delta+\rr \sqrt{I}e^{i\theta}\right)+\OO\left(|\Delta+\rr \sqrt{I}e^{i\theta}|^2\right)^{N+1}\\
&=(\la_2-1)\Delta +\la_2 \rr \sqrt{I}e^{i\theta}+\OO(\rr+\Delta)^2.
\end{split}
\]
Therefore,
\[
 \sqrt{I_1}e^{i\theta_1}=(\la_2-1)\frac{\Delta}{\rr} +\la_2 \sqrt{I}e^{i\theta}+\frac{1}{\rr}\OO(\rr+\Delta)^2.
\]
Now, condition \eqref{def:ChoiceRho} and the fact that $\lambda_2 = e^{i\omega_2}$, with $\omega_2 = \omega_0 \tth^{-3} + \OO(\Theta^{-4})$, implies that
\[
 (\la_2-1)\frac{\Delta}{\rr}\lesssim \Theta^{-2}\ll 1.
\]
Therefore, using also \eqref{def:EstimateDelta}, \eqref{def:VapScattering} and the fact $|\la_2|=1$ (see Proposition \ref{prop:scatteringmap}), we obtain
\[
% \begin{split}
 I_1=I\left|1+\frac{\la_2-1}{\la_2\sqrt{I}}\frac{\Delta}{\rr}e^{i\theta}+\frac{1}{\rr}\OO_{\CCC^1}(\rr+\Delta)^2\right|^2=I\left(1+\frac{C}{\sqrt{I}} \tth^{-2}\cos\theta+\OO_{\CCC^1}\left(\Theta^{-3}\ln^2 \Theta\right)\right)
%  \end{split}
 \]
for some constant $C\neq 0$ independent of $\Theta$. The notation $\OO_{\CCC^1}$ refers to derivatives with respect to $(\theta,I)$.

% One can easily check that the first derivatives of the error satisfy the same estimate for $\theta\in\TT$ and $I\in[1,2]$.

The formulas for $\theta_1$ can be obtained analogously to obtain the following expansion
\[
\wh\SSS^2(\theta,I)=\begin{pmatrix}\theta+\OO_{\CCC^1}\left(\Theta^{-2}\right)\\
I+C_2\tth^{-2}\sqrt{I}\cos\theta+\OO_{\CCC^1}\left(\Theta^{-3}\ln^2 \Theta\right)\end{pmatrix}
\]
for some constant $C_2\neq 0$.

Now it only remains to apply the change of coordinates $\Phi_3$ obtained in Lemma \ref{lemma:BNFTorus}.
\end{proof}

We use the expressions of $\tS^1$ and $\tS^2$ given in Lemmas \ref{lemma:BNFTorus} and \ref{lemma:ScatteringS2ActionAngle} to build the isolating block. Consider the torus $\TT_*=\{J=0\}$ which is invariant by $\tS^1$. Then,  we define
\[
 \TT_-=\left(\tS^2\right)^{-1}(\TT_*)
\]
and we denote by $Z_{*}=(\psi_{*},0)$  the intersections between $\TT_*$ and $\TT_-$ which satisfies
\begin{equation}\label{def:theta11}
 \psi_{*}=\frac{\pi}{2}+\OO\left(\Theta^{-1}\ln^2\Theta\right).
\end{equation}
This point will be one of the vertices of the block and ``segments'' within $\TT_*$ and $\TT_-$ will be two of the edges of the block.

Lemma \ref{lemma:ScatteringS2ActionAngle} implies that
\[
 \pa_\psi \tS_J^2(\psi_*,0)=C_2 \tth^{-2}+\OO\left(\Theta^{-3}\ln^2 \Theta\right)\neq 0.
\]
Therefore, in a neighborhood of $(\psi_*,0)$, the torus $\TT_-$ can be parameterized as
\begin{equation}\label{def:functionh}
\psi=h(J)  \qquad \text{for}\qquad |J|\ll 1\qquad \text{and}\qquad h(0)=\psi_*.
\end{equation}
In other words, there exists a function $h$ satisfying $\tS_J^2(h(J),J)=0$.

To analyze such block we perform a last change of coordinates so that the segment of $\TT_-$  becomes vertical.

\begin{lemma}\label{lemma:BlockScatteringStraight}
 The symplectic change of coordinates
 \[
 \Phi_4:(\psi,J)=(\varphi+h(J),J) \qquad\text{for}\quad \varphi\in\TT, \,\,|J|\ll 1,
 \]
transforms the scattering maps given in Lemmas \ref{lemma:BNFTorus} and \ref{lemma:ScatteringS2ActionAngle}  into
\begin{equation}\label{def:S1tilde}
 \wh  \tS^1(\varphi,J)=\begin{pmatrix}
                       \varphi+\wt B (J)+\OO\left(J^{2}\right)\\
                       J+\OO\left(J^{3}\right)
                      \end{pmatrix}
\qquad \text{and}\qquad
% \[
\wh\tS^2(\varphi,J)=\begin{pmatrix}\wh\tS^2_\varphi(\varphi,J)\\\wh\tS^2_J(\varphi,J)\end{pmatrix}
                      \end{equation}
% =\begin{pmatrix}\varphi+\wh f_1(\varphi)+\OO\left(J\right)\\
% J+\wh g_1(\varphi)+\OO\left(J\right)\end{pmatrix}
% \]
% where for $|J|\ll 1$,
which satisfies $\wh\tS^2_J(0,J)=0$ for $|J|\ll 1$.
% \[
%  \wh\tS^2_J(0,J)=0 \qquad \text{for}\qquad |J|\ll 1.
% \]
% and
% \[
% \begin{split}
% \wh f_1(\varphi)&=C_1 G_0^{-3}+\OO_{\CCC^1}\left(G_0^{-4}+G_0^{-3/2-\nu}\right)\\
% \wh g_1(\varphi)&=C_2G_0^{3/2-\nu}\cos\varphi+\OO_{\CCC^1}\left(G_0^{1/2-\nu}\ln^2 G_0+G_0^{3-2\nu}\right),
% \end{split}
% \]
% with some constants $C_1,C_2\neq 0$.
% and $f_2$ and $g_2$ are functions satisfying
% \[
%  |f_2(\varphi)|, |g_2(\varphi)|\lesssim G_0^{3/2-\nu}
% \]
\end{lemma}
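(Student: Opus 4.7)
The plan is to verify each of the three claims of the lemma by direct computation, exploiting that $\Phi_4$ is a simple shear designed precisely to straighten the curve $\TT_-$ defined in \eqref{def:functionh}.

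Symplecticity comes for free: the Jacobian of $\Phi_4$ is upper triangular with unit diagonal, so $d\psi\wedge dJ = (d\varphi+h'(J)\,dJ)\wedge dJ = d\varphi\wedge dJ$. This also shows that $\Phi_4$ is a diffeomorphism onto its image for $|J|\ll 1$, so the conjugations $\wh\tS^i = \Phi_4^{-1}\circ \tS^i\circ \Phi_4$ make sense.

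For the transformed first scattering map, substitute the formula from Lemma~\ref{lemma:BNFTorus}. Since $\tS^1$ only shifts the angle by $\wt B(J)+\OO(J^2)$ and perturbs the action by $\OO(J^3)$, one obtains
\[
\wh\tS^1(\varphi,J) = \bigl(\varphi + h(J) + \wt B(J) + \OO(J^2) - h(J+\OO(J^3)),\; J+\OO(J^3)\bigr).
\]
Because $h$ is smooth in $|J|\ll 1$, the Taylor expansion gives $h(J+\OO(J^3)) = h(J)+\OO(J^3)$, which is absorbed into the $\OO(J^2)$ remainder of the first coordinate. This recovers the claimed form.

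The key structural point is the second scattering map. By the very definition of $h$ in \eqref{def:functionh}, $\tS^2_J(h(J),J)=0$ for $|J|\ll 1$. Since $\Phi_4^{-1}$ acts as the identity on the $J$-coordinate, one simply has
\[
\wh\tS^2_J(\varphi,J) \;=\; \tS^2_J\bigl(\varphi+h(J),J\bigr),
\]
so evaluating at $\varphi=0$ yields $\wh\tS^2_J(0,J) = \tS^2_J(h(J),J) = 0$ as required. Geometrically, the preimage $\TT_- = (\tS^2)^{-1}(\TT_*)$, which was locally the graph $\{\psi=h(J)\}$, is mapped by $\Phi_4^{-1}$ onto the vertical axis $\{\varphi=0\}$, and this axis is therefore sent by $\wh\tS^2$ into $\{J=0\}$.

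No genuine obstacle arises: the lemma is essentially an unfolding of definitions, the only subtlety being the careful bookkeeping of remainder orders $\OO(J^2)$ and $\OO(J^3)$ under the conjugation by $\Phi_4$, which is handled by the smoothness of $h$ near $J=0$.
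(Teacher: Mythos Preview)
Your proof is correct and follows the natural direct-computation approach; the paper itself does not give an explicit proof of this lemma, treating it as an immediate consequence of the definition of $h$ in \eqref{def:functionh} and the shear form of $\Phi_4$. Your bookkeeping of the remainders under conjugation and the observation that $\Phi_4^{-1}$ acts trivially on the $J$-coordinate are exactly what is needed.
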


Note that, since $\wh\tS^2$ its a diffeomorphism, it satisfies
\begin{equation}\label{def:Scatt2Deriv}
 \nu=\pa_\varphi\wh\tS^2_J(0,0)\neq 0.
\end{equation}
We use Lemma \ref{lemma:BlockScatteringStraight} and this fact to build the isolating block.

Note that now the point $Z_{*}=(\psi_*, 0)$ has become $\wh Z_{*}=(0,0)$ and the chosen two sides of the block are $J=0$ and $\varphi=0$. We consider the block $\RRR$ defined as
\[
\RRR=\left\{ (\varphi,J):\, 0\leq \varphi\leq 2\nu\ii\tkk, \, 0\leq J\leq \tkk\right\} \quad\text{for some}\quad \tkk\ll 1.
\]
The choice of $\varphi=2\nu\ii\tkk$ is for the following reason. It implies that, for $\tkk$ small enough,
\[
 \wh\tS^2_J(2\nu\ii\tkk,J)\geq \tkk \qquad \text{for}\qquad J\in (0,\tkk).
\]
Then,
\[
 \RRR'= \wh\tS^2(\RRR)\cap \{0\leq J\leq \tkk\}
\]
is a ``rectangle'' bounded by the segments $J=0$, $J=\tkk$ and two others of the form $\varphi=h_i(J)$, $i=1,2$ which satisfy
\[
| h_2(J)-h_1(J)|\lesssim \tkk\qquad \text{for}\qquad 0\leq J\leq \tkk.
\]
Now, we show that for a suitable $M\gg 1$, $\RRR$ is an isolating block for $(\wh\tS^1)^M\circ \wh\tS^2$. To this end, we must analyze $(\wh\tS^1)^M(\RRR')$. Note that $M$ will depend on $\tkk$.

Consider the vertices of the rectangle $\RRR'$, $Z_{ij}$, $i,i=1,2$, with
\[
 Z_{i1}=(\varphi_{i1},0),\quad  Z_{i2}=(\varphi_{i2},\tkk)\qquad \text{with} \quad \varphi_{1j}\leq \varphi_{2j},\qquad j=1,2.
\]
Note that they satisfy
\[
 |\varphi_{ij}-\varphi_{i'j'}|\lesssim \tkk,\qquad i,j,i',j'=1,2.
\]
We define
\[
 Z_{ij}^M=(\varphi_{ij}^M,J_{ij}^M)=\left(\wh\tS^1\right)^M( Z_{ij}),
\]
which by Lemma \ref{lemma:BlockScatteringStraight} satisfy
\[
J_{i1}^M=0\qquad\text{ and }\qquad J_{i2}^M=\tkk+M\OO(\tkk^{3}).
\]
Choosing a suitable $M$ satisfying
\begin{equation}
\label{def:M}
\frac{1}{4b_1\tkk}\leq M\leq \frac{1}{2b_1\tkk},
\end{equation}
where $b_1$ is the constant introduced in Lemma \ref{lemma:BNFTorus}, we show that
\begin{equation}\label{def:IsoBlockEstimates}
 \varphi_{12}^M-\varphi_{21}^M\gtrsim \frac{1}{8} \qquad \text{and}\qquad -\frac{1}{16}\leq \varphi_{21}^M\leq 0.
\end{equation}
Indeed, for the first one, note that
\[
 \varphi_{12}^M-\varphi_{21}^M= \varphi_{12}-\varphi_{21}+M b_1\tkk+M\OO(\tkk^2)=\frac{1}{4}+\OO(\tkk)\gtrsim \frac{1}{8}.
\]
For the second estimate in \eqref{def:IsoBlockEstimates} it is enough to choose a suitable $M$ by using the particular form  of the first component of $\wh \tS_1$ in Lemma   \ref{lemma:BlockScatteringStraight} and the definition of $b_0$ in \eqref{def:b1}.

The estimates in \eqref{def:IsoBlockEstimates} implies that $\RRR$ is an isolating block.

Now, we compute $D\wh \tS=D[(\wh\tS^1)^ M\circ\wh\tS^2]$.
\begin{lemma}\label{def:DifferentialScattering}
For  $z=(\varphi,J)\in\RRR$, the matrix $D\wh \tS(z)$ is hyperbolic with eigenvalues $\la(z), \la(z)\ii\in\RR$
% for some $\la\in\RR$, $\la>1$
with
\[
\la_{\wh \tS}(z)\gtrsim \tkk
% \quad \text{for}\quad z\in\RRR.
\]
% for some $\wh\nu>0$.
Moreover, there exist two vectors fields $V_j:\RRR\to T\RRR$ of the form
\[
 V_1=\begin{pmatrix}1\\ 0 \end{pmatrix},\qquad  V_2=\begin{pmatrix}V_{21}(z)\\ 1 \end{pmatrix}\quad \text{with}\quad|V_{21}(z)| \lesssim\tkk,
\]
which satisfy, for $z\in\RRR$,
\[
\begin{aligned}
  D\wh \tS(z)V_1&=\la_{\wh \tS}(z)\left(V_1+\wh V_1(z)\right)\qquad &&\text{with} \qquad |\wh V_1(z)|\lesssim \tkk\\
 D\wh \tS(z)V_2(z)&=\la_{\wh \tS} (z)\ii\left(V_2(\wh \tS(z))+\wh V_2(z)\right)\qquad&& \text{with}  \qquad |\wh V_2(z)|\lesssim \tkk.
 \end{aligned}
\]
\end{lemma}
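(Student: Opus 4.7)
\medskip

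\noindent\textbf{Proof plan for Lemma \ref{def:DifferentialScattering}.} (We read the eigenvalue bound as $\la_{\wh\tS}(z)\gtrsim \tkk^{-1}$, in agreement with Theorem \ref{thm:BlockScattering}(3); in $\RRR$ we have $\|z\|\lesssim \tkk$ and $M\sim \tkk^{-1}$ by~\eqref{def:M}.) The strategy is to compute $D(\wh\tS^1)^M$ and $D\wh\tS^2$ separately in terms of the normal-form data of Lemma~\ref{lemma:BlockScatteringStraight}, multiply, and read off hyperbolicity from the resulting $\SL(2,\RR)$ matrix. First I would differentiate $\wh\tS^1$ in~\eqref{def:S1tilde} to obtain
\[
D\wh\tS^1(\varphi,J)=\begin{pmatrix} 1 & b_1+\OO(J) \\ \OO(J^2) & 1+\OO(J^2)\end{pmatrix},
\]
and show by induction on $k\le M$ that for $(\varphi,J)\in\RRR$
\[
D(\wh\tS^1)^k(\varphi,J)=\begin{pmatrix} 1 & k\,b_1+\OO(1) \\ \OO(\tkk^2) & 1+\OO(\tkk)\end{pmatrix},
\]
using that iterates stay in $\RRR$ (so $J_k=J+\OO(\tkk^2)$ and $\varphi$-derivatives of the $J$-component accumulate only multiplicatively by $1+\OO(\tkk^2)$ while starting from $0$ at $k=0$). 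Plugging $k=M$ and the pinching~\eqref{def:M} gives $Mb_1\sim \tkk^{-1}$, which is the source of hyperbolicity.

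Next I would analyse $D\wh\tS^2(z)$ using the two structural facts from Lemma~\ref{lemma:BlockScatteringStraight}: $\wh\tS^2_J(0,J)\equiv 0$ and $\nu=\partial_\varphi\wh\tS^2_J(0,0)\neq0$ (see~\eqref{def:Scatt2Deriv}). The first yields a Taylor expansion $\wh\tS^2_J(\varphi,J)=\nu(J)\varphi+\OO(\varphi^2)$ with $\nu(J)=\nu+\OO(J)$, hence in $\RRR$
\[
D\wh\tS^2(z)=\begin{pmatrix} 1+\OO(\tkk) & -\nu^{-1}+\OO(\tkk) \\ \nu+\OO(\tkk) & \OO(\tkk)\end{pmatrix},
\]
the upper row being forced by $\det D\wh\tS^2=1$ (symplecticity of $\wh\tS^2$) and by $\partial_\varphi\wh\tS^2_\varphi(0,0)=1+\OO(\Theta^{-2})$ inherited from Lemma~\ref{lemma:ScatteringS2ActionAngle}. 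Multiplying $D(\wh\tS^1)^M\big|_{\wh\tS^2(z)}\cdot D\wh\tS^2(z)$ then gives
\[
D\wh\tS(z)=\begin{pmatrix} Mb_1\nu+\OO(1) & \OO(1) \\ \nu+\OO(\tkk) & \OO(\tkk)\end{pmatrix},
\]
whose trace is $t(z)=Mb_1\nu+\OO(1)\sim \tkk^{-1}$ and whose determinant equals $1$ (composition of symplectic maps). Hence the eigenvalues solve $\mu^{2}-t\mu+1=0$, are real with opposite modulus, and $\la_{\wh\tS}(z)=t(z)+\OO(\tkk)\gtrsim \tkk^{-1}$.

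For the vector fields: take $V_1=(1,0)^{\top}$; then $D\wh\tS(z)V_1=(t(z)+\OO(1),\nu+\OO(\tkk))^{\top}$, and dividing by $\la_{\wh\tS}(z)$ yields $V_1+\wh V_1(z)$ with $|\wh V_1(z)|\lesssim \tkk$ since $\la_{\wh\tS}^{-1}\sim \tkk$. For $V_2$, I would solve $D\wh\tS(z)V_2(z)=\la_{\wh\tS}(z)^{-1}V_2(z)$ exactly at each $z\in\RRR$ normalised to $V_2=(V_{21}(z),1)^{\top}$; the stable eigendirection computation gives $V_{21}(z)=-\nu^{-1}/(Mb_1\nu)+\OO(\tkk^{2})=\OO(\tkk)$. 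The required invariance up to $\OO(\tkk)$, i.e.\ $D\wh\tS(z)V_2(z)=\la_{\wh\tS}^{-1}(z)(V_2(\wh\tS(z))+\wh V_2(z))$, then follows from Lipschitz dependence of $V_{21}(\cdot)$ on the base point: since $V_{21}$ is a smooth function of the entries of $D\wh\tS$, its $z$-variation across $\RRR$ is $\OO(\tkk)$.

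\emph{Main obstacle.} The delicate step is the $M$-fold iteration of $D\wh\tS^1$ with $M\sim \tkk^{-1}$: one must show that the error terms $\OO(J^2)$, $\OO(J^3)$ in \eqref{def:S1tilde} do not accumulate to swamp the leading shear $Mb_1$, which amounts to a quantitative Gronwall-type bound along orbits that remain in $\RRR$. The rest (the $\SL(2,\RR)$ eigenvalue/eigenvector book-keeping and the Lipschitz control of $V_2$) is straightforward once the iterated matrix has the predicted form.
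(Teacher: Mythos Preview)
Your approach is essentially the same as the paper's: compute $D(\wh\tS^1)^M$ as a near-shear matrix with off-diagonal entry $Mb_1+\OO(1)$, compute $D\wh\tS^2$ from the structure in Lemma~\ref{lemma:BlockScatteringStraight} and symplecticity, multiply, and read hyperbolicity off the trace of the resulting $\SL(2,\RR)$ matrix. The paper does exactly this, writing $D\wh\tS^2=\begin{pmatrix}\eta&-\nu^{-1}\\ \nu&0\end{pmatrix}+\OO(\tkk)$ and arriving at $\mathrm{tr}\,D\wh\tS=M\nu b_1+\OO(1)\gtrsim\tkk^{-1}$; for $V_2$ it simply says ``invert $D\wh\tS$ and compute the eigenvector of large eigenvalue,'' which amounts to your direct eigenvector computation.

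One small point: your claim that the $(1,1)$ entry of $D\wh\tS^2$ is $1+\OO(\tkk)$ is not forced by symplecticity (since the $(2,2)$ entry vanishes, the determinant constraint only fixes the $(1,2)$ entry as $-\nu^{-1}$), and the conjugation by $\Phi_4$ with $h'(0)\sim\nu^{-1}$ can shift it by an $\OO(1)$ amount. The paper accordingly just writes this entry as an unspecified real $\eta$. This does not affect the argument, as $\eta=\OO(1)$ is absorbed into the $\OO(1)$ error after multiplication by the shear.
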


\begin{proof}
Note that
\[
 D\wh\tS^1=\begin{pmatrix} 1 &(b_1+\OO(\tkk))\\ 0& 1\end{pmatrix}+\OO\left(\tkk^{2}\right)
\]
and therefore, since $M\sim\tkk\ii$,
\[
 D(\wh\tS^1)^M=\begin{pmatrix} 1 &Mb_1+\OO(1)\\ 0& 1\end{pmatrix}+M\OO\left(\tkk^{2}\right)=\begin{pmatrix} 1 &Mb_1+\OO(1)\\ 0& 1\end{pmatrix}+\OO\left(\tkk\right)
\]
On the other hand, by Lemma \ref{lemma:BlockScatteringStraight}, \eqref{def:Scatt2Deriv} and taking into account that $\wh\tS^2$ is symplectic,
\[
 D\wh\tS^2=\begin{pmatrix} \eta&-\nu\ii \\ \nu& 0\end{pmatrix}+\OO(\tkk)
\]
for some  $\eta\in\RR$.
% , by Lemma \ref{lemma:ScatteringS2ActionAngle} and taking into account that $|\theta-\pi/2|\lesssim G_0^{-\wt\nu}$, satisfies
% \[
% \al\gtrsim G_0^{\frac{3}{2}-\nu}.
% \]
Then,
\begin{equation}\label{def:DiffScattering}
 D\wh\tS=\begin{pmatrix}M\nu b_1 &-\nu\ii\\ \nu&0\end{pmatrix}+\OO\left(1\right)
\end{equation}
Since this matrix is simplectic (the scattering maps are, see \cite{DelshamsLS08}), to prove hyperbolicity it is enough to check that the trace is bigger than 2. Indeed, for $(\varphi,J)\in \RRR$ and $\tkk>0$ small enough,
\[
\begin{split}
 \mathrm{tr} D\wh\tS&= M\nu b_1+\OO(1)\gtrsim \tkk\ii.
\end{split}
\]
% Then, it is enough to recall the choice of $\wt \nu$ in \eqref{def:ChoiceNuTilde} to complete the proof.
The statements for $V_1$ are straightforward considering the form of $D\wh\tS$ in \eqref{def:DiffScattering}. To obtain those for $V_2$ it is enough to invert the matrix $D\wh\tS$ and compute the eigenvector of large eigenvalue.
\end{proof}

\section{A parabolic normal form: Proof of Theorem~\ref{prop:coordinatesatinfinity}}
\label{app:proofofprop:coordinatesatinfinity}

The Theorem will be an immediate consequence of the Lemmas~\ref{lem:normalformatinfinitystep1},
\ref{lem:2ndstepnormalform} and~\ref{lem:thirdstepofnormalformqkpk} below.

\subsection{First step of normal form}

The first step of the normal form transforms the ``center'' variables $z$ so that its dynamics becomes much closer to the identity in a neighborhood of infinity.
\begin{lemma}
\label{lem:normalformatinfinitystep1}
For any $N\ge 0$, there exist an analytic change of variables of the form
\[
\tilde z = z + Z(x,y,z,t),
\]
where $Z$ is a polynomial in $(x,y)$ of order at least $3$, even in $x$, such that equation~\eqref{def:systematinfinityold} becomes
in the new variables
\begin{equation}
\label{def:systematinfinity1}
\begin{aligned}
\dot x & = -  x^3 y(1+ B x^2-B y^2 + \OO_4(x,y)), \\
\dot y & = -  x^4(1+ (B-4A)x^2- B y^2 + \OO_4(x,y)), \\
\dot{ \tilde z} & =  x^6 \OO_N(x,y).
\end{aligned}
%\begin{aligned}
%\dot x & = -  x^3 y+ \OO(x^6), \\
%\dot y & = -  x^4 +
% A x^6+ \OO(x^6), \\
%\dot{\tilde z} & =  x^6 \OO_N(x,y),
%\end{aligned}
\end{equation}
where the $\OO_N(x,y)$ terms in the equations of $\dot{\tilde z}$ as well as the $\OO_4(x,y)$ are even functions of $x$. The constants
$A$ and $B$ were introduced in~\eqref{def:AiBconstantsdeltermedegrau6}.
\end{lemma}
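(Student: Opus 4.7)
The plan is to construct $Z$ iteratively degree by degree in $(x,y)$, writing $Z = \sum_{k\geq 3} Z_k$ with $Z_k$ homogeneous of degree $k$ and even in $x$. Under the near-identity change $\tilde z = z + Z$ the new $z$-equation reads
\[
\dot{\tilde z} = R_3 + \partial_t Z + \partial_x Z\cdot\dot x + \partial_y Z\cdot\dot y + \partial_z Z\cdot R_3.
\]
Since $\dot x = -x^3 y(1+\OO_2(x^2,y^2))$, $\dot y = -x^4(1+\OO_2(x^2,y^2))$ and $R_3 = \OO_3(x^2)$ carries a factor $x^6$, the terms $\dot x\,\partial_x Z_k$ and $\dot y\,\partial_y Z_k$ contribute to $\dot{\tilde z}$ at degree at least $k+3$, while $R_3\,\partial_z Z_k$ contributes at degree at least $k+6$. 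Thus, at each degree $k$ the only contribution coming from $Z_k$ itself is $\partial_t Z_k$, and one is led to the cohomological equation $\partial_t Z_k = -F^{(k)}$, where $F^{(k)}$ denotes the degree-$k$ part of $R_3$ together with all the pieces of degree $k$ produced by the $Z_j$ with $j<k$.

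I would then split each $Z_k = Z_k^{\mathrm{avg}}(x,y,z) + Z_k^{\mathrm{osc}}(x,y,z,t)$ with $\langle Z_k^{\mathrm{osc}}\rangle_t = 0$. The oscillatory part $Z_k^{\mathrm{osc}}$ is recovered by integrating in $t$ the zero-average piece of the right-hand side, which is always $2\pi$-periodically solvable. The time-averaged forcing $\langle F^{(k)}\rangle_t$ must then be cancelled by the still-free averaged parts of lower-order polynomials. The key observation is that $Z_{k-3}^{\mathrm{avg}}$ enters $F^{(k)}$ through the time-independent leading contribution $-x^3 y\,\partial_x Z_{k-3}^{\mathrm{avg}} - x^4\,\partial_y Z_{k-3}^{\mathrm{avg}}$. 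Writing $Z_{k-3}^{\mathrm{avg}} = \sum_{m+n=k-3,\,m\text{ even}} a_{mn}(z)\,x^m y^n$, the requirement $\langle F^{(k)}\rangle_t=0$ becomes, for each monomial $x^p y^q$ with $p+q=k$ and $p$ even,
\[
(p-2)\,a_{p-2,q-1}(z) + (q+1)\,a_{p-4,q+1}(z) = -\bigl\langle F^{(k)}\bigr\rangle_{t,(p,q)}(z),
\]
with the convention that $a_{mn}=0$ whenever an index is negative or $m$ is odd.

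The main obstacle is the solvability of this linear system. For $p=0$ and $p=2$ the left-hand side vanishes identically, so solvability forces the $y^k$- and $x^2 y^{k-2}$-coefficients of $\langle F^{(k)}\rangle_t$ to be zero; I would establish this by induction on $k$, using that every contribution to $F^{(k)}$ carries at least a factor $x^3$ (from $\dot x$), $x^4$ (from $\dot y$) or $x^6$ (from $R_3 = \OO_3(x^2)$), so no monomial with $p\leq 2$ can ever be generated. For $p\geq 4$ the system is triangular in the $a_{mn}$ once ordered by decreasing $p$ (the equation for $(p,q)$ uniquely determines $a_{p-4,q+1}$ in terms of the already-solved $a_{p-2,q-1}$), and is therefore solved step by step. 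Running the induction up to degree $k = N+5$ yields $\dot{\tilde z} = x^6\,\OO_N(x,y)$ as claimed. The announced form of $\dot x$ and $\dot y$ follows from substituting $z = \tilde z - Z(x,y,\tilde z,t) + \OO_2(Z)$ into $R_1,R_2$: since $Z$ has order at least $3$ in $(x,y)$ and $R_i = \OO_2(x^2,y^2)$, the induced corrections land in $\OO_4(x,y)$. Evenness in $x$ is preserved throughout because $Z$ is chosen even in $x$, $\dot x$ is odd and $\dot y$ and $R_3$ are even in $x$, so all objects constructed during the induction remain even in $x$.
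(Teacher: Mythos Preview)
Your proposal is correct and follows essentially the same strategy as the paper. The paper organizes the argument as an induction on $N$: at each step it first averages in $t$ via a change $\tilde z = z + x^6 U_N$ with $\partial_t U_N = \tilde p_N - p_N$, and then kills the time-independent leading term via $\hat z = \tilde z + Z_{N+3}$ by solving the same cohomological equation $y\,\partial_x Z_{N+3} + x\,\partial_y Z_{N+3} = x^3 \tilde p_N$ that underlies your linear system in the $a_{mn}$. Your degree-by-degree scheme, determining $Z_k^{\mathrm{osc}}$ and $Z_{k-3}^{\mathrm{avg}}$ at step $k$, is an equivalent reorganization of the same two moves; the paper's inductive packaging is slightly cleaner because at each step the equation is already in normal form up to order $N$, so one never has to track cross-contributions from several earlier $Z_j$ simultaneously.
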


\begin{proof}
Equation~\eqref{def:systematinfinityold} is in the claimed form for $N=0$. We proceed by induction. Assume the claim is true for $N$, that is,
that the equation is
\begin{equation}
\label{def:inductionNaveraging}
\begin{aligned}
\dot x & = -  x^3 y(1+ B x^2-B y^2 + \OO_4(x,y)), \\
\dot y & = -  x^4(1+ (B-4A)x^2- B y^2 + \OO_4(x,y)), \\
\dot{ z} & =  x^6 p_N(x,y,a,b,t)+ x^6 \OO_{N+1}(x,y),
\end{aligned}
\end{equation}
where $p_N$ is a homogeneous polynomial in $(x,y)$, even in $x$, of degree $N$ with coefficients depending on $(z,t)$.

First, with an averaging step, we can assume that $p_N$  does not depende on $t$. Indeed, given $U_N$ such that
$\partial_t U_{N}  = p_N -\tilde p_N$, where $\tilde p_N = \langle p_N \rangle_{t}$, the change
\[
\tilde z = z + x^6 U_{N}(x,y,z,t),
\]
transforms equation~\eqref{def:inductionNaveraging} into
\begin{equation}
\label{def:inductionN}
\begin{aligned}
\dot x & = -  x^3 y(1+ B x^2-B y^2 + \OO_4(x,y)), \\
\dot y & = -  x^4(1+ (B-4A)x^2- B y^2 + \OO_4(x,y)), \\
\dot{ \tilde z} & =  x^6  \tilde p_N(x,y,\tilde z) +  x^6 \OO_{N+1}(x,y).
\end{aligned}
\end{equation}
Clearly, since $p_N$ is even in $x$, so is $U_n$ and then the parity on $x$ of the equation remains the same.

Second, we consider the change
\[
\hat z = \tilde z + Z_{N+3}(x,y,\tilde z),
\]
where $Z_{N+3}$ is a homogeneous polynomial in $(x,y)$ of degree $N+3$, even in $x$, with coefficients depending on $z$. It transforms equation~\eqref{def:inductionN} into
\[
\begin{aligned}
\dot x & = -  x^3 y(1+ B x^2-B y^2 + \OO_4(x,y)), \\
\dot y & = -  x^4 (1+ (B-4A)x^2- B y^2 + \OO_4(x,y)), \\
\dot{ \hat z} & =  x^3  \left(x^3\tilde p_N(x,y,\hat z) - y\partial_x Z_{N+3}(x,y,\hat z) - x\partial_y Z_{N+3}(x,y,\hat z) \right)+  x^6 \OO_{N+1}(x,y).
\end{aligned}
\]
Clearly, since $Z_{N+3}$ is even in $x$, the parity in $x$ of the equation is preserved.

Since $x^3\tilde p_N(x,y,\hat z)$ is an odd polynomial in $x$, it is in the range
of the operator $L:C_{N+3} \mapsto y \partial_x C_{N+3} + x \partial_y C_{N+3}$, acting on homogeneous polynomials of degree $N+3$, even in $x$, the claim follows. Indeed, for any $j,\ell\ge 0$ such that $2j+\ell+1 = N+3$,
\[
L \left( \sum_{i=0}^j a_i x^{2(j-i)} y^{\ell+2i+1} \right) = x^{2j+1} y^{\ell},
\]
where
\[
 \begin{split}
 a_0&=\frac{1}{\ell+1},\\
 a_i&=(-1)^i\frac{(2j)\cdots (2j-2i+2)}{(\ell+1) \cdots (\ell+1+2i)},\qquad i\geq 1.
 \end{split}
\]
\end{proof}

\subsection{Second step of normal form: straightening the invariant manifolds of infinity}

Here we use the invariant manifolds of the periodic orbits $P_{z_0}$, $z_0 \in \RR^2$, to find coordinates in which
these manifolds are the coordinate planes.

Let $K \subset \RR^2$ be fixed. 
Given $\rho>0$, we denote by $K_{\CC}^\rho$, a neighborhood of $K$ in $\CC^2$ such that $\Re z \in K$ and $|\Im z | < \rho$, for all $z \in K_{\CC}^\rho$. 
Given $\delta, \sigma, \rho >0$, we consider the domain
\begin{multline}
\label{def:Urho}
U_{\delta, \rho} = \{(q,p,z,t)\in \CC \times \CC \times \CC^2 \times \CC \mid |\Im q |<\delta \Re q,\,
|\Im p |< \delta \Re p, \\
 \|(q,p)\|<\rho,\, z \in K_{\CC}^\rho, \, |\Im t| < \sigma\}.
\end{multline}

\begin{lemma}
\label{lem:2ndstepnormalform}
There exists a $\CCC^{\infty}$ change of variables of the form
$
(q,p,z) \mapsto (q,p,z) + \OO_2(q,p)$,
analytic in a domain of the form~\eqref{def:Urho}, that transforms equation~\eqref{def:systematinfinity1} into
\begin{equation}
\label{eq:infinitymanifoldsstraightened2}
\begin{aligned}
\dot q & = q ((q+p)^3+\OO_4(q,p)),  & \dot z & = q p \OO_{N+4}(q,p),\\
\dot p & = -p ((q+p)^3+\OO_4(q,p)), & \dot t & = 1.
\end{aligned}
\end{equation}
Equation~\eqref{eq:infinitymanifoldsstraightened2} is analytic in $U_{\delta,\rho}$,
for some $\delta,\rho>0$, and $\CCC^{\infty}$ at $(q,p)=(0,0)$.
\end{lemma}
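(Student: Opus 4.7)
The strategy is to identify the two coordinate hyperplanes $\{p=0\}$ and $\{q=0\}$ of the desired normal form with the unstable and stable invariant manifolds of the family of periodic orbits $\{P_{z_0}\}_{z_0\in K}$ at infinity. By the parabolic invariant manifold theory of \cite{BaldomaFM20a,BaldomaFM20b}, already invoked in Section~\ref{sec:varietatainfinit}, each $P_{z_0}$ has local unstable and stable manifolds representable as graphs
\[
W^u(P_{z_0})=\{y=\eta^u(x,z_0,t),\ z=\zeta^u(x,z_0,t)\},\qquad W^s(P_{z_0})=\{x=\eta^s(y,z_0,t),\ z=\zeta^s(y,z_0,t)\},
\]
analytic in a sectorial complex neighbourhood of the positive $x$-axis (respectively, $y$-axis), $\CCC^\infty$ at infinity, and smooth in $z_0\in K$. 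Combining the bound $\dot z = x^6\,\OO_N(x,y)$ from Lemma~\ref{lem:normalformatinfinitystep1} with the linear leading behaviour of $\eta^u$ and $\dot x\sim x^4$ along $W^u$, integration of the $\dot z$ equation yields the sharp asymptotic estimate $\zeta^u(x,z_0,t)-z_0 = \OO(x^{N+3})$, and analogously for $\zeta^s$.

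First I would straighten the unstable foliation by the implicit change
\[
x=q,\qquad y=\eta^u(q,\tilde z,t)+\tilde p,\qquad z=\zeta^u(q,\tilde z,t),
\]
which is a local diffeomorphism because the Jacobian at the origin is upper-triangular with diagonal equal to one (using $\partial_{\tilde z}\zeta^u(0,z_0,t)=\Id$). In the intermediate coordinates $(q,\tilde p,\tilde z,t)$, the hyperplane $\{\tilde p=0\}$ coincides with $\bigcup_{z_0\in K}W^u(P_{z_0})$, and $\tilde z$ labels the base periodic orbit, so it is constant along it. In these coordinates the stable manifold becomes a graph $q=Q^s(\tilde p,z_0,t)$, $\tilde z=Z^s(\tilde p,z_0,t)$, with $Q^s(0,z_0,t)=0$ and $Z^s(0,z_0,t)=z_0$. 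A second implicit change
\[
q=\bar q+Q^s(p,\bar z,t),\qquad \tilde p=p,\qquad \tilde z=Z^s(p,\bar z,t),
\]
straightens $W^s$ to $\{\bar q=0\}$. Since both corrections reduce to the identity at $p=0$, this second change preserves the first straightening, so $\{p=0\}$ remains equal to $\bigcup_{z_0}W^u(P_{z_0})$.

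Writing the final coordinates as $(q,p,z,t)$, both $\{q=0\}$ and $\{p=0\}$ are invariant, and on each of them $z$ is a constant of motion (equal to the base-point label). Hence $\dot q = q\,F_1$, $\dot p = -p\,F_2$ and $\dot z = qp\,G$ for smooth $F_1,F_2,G$. The leading-order coefficients of $F_1$ and $F_2$ at the origin are determined by evaluating the original vector field on $W^{u}$ and $W^s$, using that $\eta^u(x,z_0,t)=-x+\OO(x^3)$ (forced by the energy-type identity $d(x-y)/dt = x^3(x-y)(1+\OO_2)$ derived from \eqref{def:systematinfinity1}) and the symmetric statement for $\eta^s$; this produces the common factor $(q+p)^3+\OO_4(q,p)$. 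The exponent $\OO_{N+4}$ in $\dot z$ follows from the original $x^6\,\OO_N$ drift combined with the factorization through $qp$. The main obstacle is regularity at the parabolic infinity: the composite change must be analytic in a genuine sectorial domain $U_{\delta,\rho}$ of the form~\eqref{def:Urho} and $\CCC^\infty$ at $(q,p)=(0,0)$. This is secured by the sectorial analyticity and smoothness at infinity of the invariant manifolds provided by \cite{BaldomaFM20a,BaldomaFM20b}, together with the fact that both implicit function steps have Jacobian equal to the identity at the origin uniformly in $(z_0,t)$, so that the sectorial domains of the invariant manifolds carry over to the composite change.
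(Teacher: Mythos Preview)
Your strategy—straighten $W^u$ and then $W^s$ using the parabolic invariant manifold parametrizations—is exactly the paper's approach. The paper also does this in two successive changes $\Psi_1,\Psi_2$, after first performing the linear diagonalization $(q,p)=\bigl((x-y)/2,(x+y)/2\bigr)$, which makes the invariant manifolds $\mathcal O(q^3)$ and $\mathcal O(p^3)$ graphs and makes the common factor $(q+p)^3=x^3$ manifest from the outset.

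There are, however, two concrete gaps in your write-up. First, the ``energy-type identity'' $d(x-y)/dt=x^3(x-y)(1+\OO_2)$ is false as stated: from~\eqref{def:systematinfinity1} one computes
\[
\dot x-\dot y=x^3(x-y)\bigl(1+B(x^2-y^2)\bigr)-4Ax^6+\OO_8,
\]
and the $-4Ax^6$ term does not factor through $x-y$. The tangent direction $y=-x$ of $W^u$ is correct, but it is not forced by this identity; in the paper it is obtained a posteriori by substituting the parametrization back into the vector field. Second, and more importantly, your two implicit changes produce at linear order $\bar q\approx(x-y)/2$ and $p=\tilde p\approx x+y$, so that on $\{p=0\}$ one gets $F_1=\bar q^3+\OO_4$ while on $\{\bar q=0\}$ one gets $F_2=p^3/8+\OO_4$; these are \emph{not} both restrictions of $(\bar q+p)^3$. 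A rescaling $p\mapsto p/2$ (equivalently, performing the linear change first, as the paper does) is needed to obtain the common factor $(q+p)^3$. Your sentence ``this produces the common factor $(q+p)^3+\OO_4(q,p)$'' therefore requires justification you have not given.
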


\begin{proof}
We start by straightening the tangent directions of the topological saddle in~\eqref{def:systematinfinity1}. We introduce
\[
(q,p,z) = ((x-y)/2,(x+y)/2,z).
\]
In these variables, equation~\eqref{def:systematinfinity1} becomes
\begin{equation}
\label{def:straightenedsaddle}
\begin{aligned}
\dot q & = (q+p)^3\left(q   -2A q^3-6\left(A-\frac{2B}{3}\right)q^2 p - 6A q p^2-2Ap^3 + \OO((q+p)^5)\right), \\
\dot p & = - (q+p)^3\left(p   +2A q^3+6\left(A-\frac{2B}{3}\right)q^2 p + 6A q p^2+2Ap^3 + \OO((q+p)^5)\right) \\
\dot{ z} & =  (q+p)^6 \OO_{N}(q,p).
\end{aligned}
\end{equation}
This equation is analytic and $2\pi$-periodic in $t$ in a neighborhood $U$ of $\{q=p=0, \, z \in \RR^2, \, t \in \RR\}$ in $\CC \times \CC \times \CC^2 \times \CC$.

Let $K \subset \RR^2$ be a fixed compact set. Let $K_{\CC}$ be a complex neighborhood of $K$ in $\CC^2$, $\sigma>0$ and $\rho>0$ be such that $\{\|(q,p)\|< \rho\}\times K_{\CC} \times \{|\Im t | < \sigma \} \subset U$.

Let $z_0 \in K$. 
We claim that that the periodic orbit $P_{z_0}$ of~\eqref{def:straightenedsaddle} has invariant stable and unstable manifolds which admit parametrizations $\gamma^{s,u}(\cdot,t;z_0)$ of the form
\begin{equation}
\label{eq:paraminvmanperiodicorbits}
\begin{aligned}
(p,z) & = \gamma^{u}(q,t;z_0) = \left(\frac{A}{2} q^3+\OO(q^4), z_0+\OO(q^{N+3})\right), \\
(q,z) & = \gamma^{s}(p,t;z_0) = \left(\frac{A}{2} p^3 +\OO(p^4), z_0+\OO(p^{N+3})\right),
\end{aligned}
\end{equation}
where $\gamma^{s,u}$ are analytic in
\[
\begin{aligned}
V^{u}_{\delta,\rho} & = \{(q,t,z_0) \in \CC^3\mid |\Im q | < \delta |\Re q|, \, |q| < \rho, \, |\Im t| < \sigma, \,z_0 \in K_{\CC}^{\rho}\}, \\
V^{s}_{\delta,\rho} & = \{(p,t,z_0) \in \CC^3\mid |\Im p | < \delta |\Re p|, \, |p| < \rho, \, |\Im t| < \sigma, \,z_0 \in K_{\CC}^{\rho}\},
\end{aligned}
 \]
for some $\delta,\rho,\sigma >0$ and are of class $\CCC^{\infty}$ at $q=0$ and $p=0$,
respectively.

We prove the claim for $\gamma^u$, being the one for $\gamma^s$  analogous.

First we remark that, if $\gamma^u$ exists and is $\CCC^{\infty}$ at $q=0$, substituting in the vector field an imposing the invariance condition,
one obtains that it must be of the form given by~\eqref{eq:paraminvmanperiodicorbits}.

We change the sign of time in~\eqref{def:straightenedsaddle}.  With the introduction of the new variables
\[
\tilde z = \frac{1}{q+p}(z-z_0),
\]
equation~\eqref{def:straightenedsaddle} becomes
\begin{equation}
\label{eq:systemtoapplyinvariantmanifoldtheorem}
\begin{aligned}
\dot q & = -q  (q+p)^3  + \OO((q+p)^6), \\
\dot p & = p (q+p)^3  + \OO((q+p)^6), \\
\dot{ \tilde z} & =  \tilde z (q-p)(q+p)^2 + \tilde z \OO_5(q,p)+ \OO_{N+5}(q,p)
\end{aligned}
\end{equation}
where the vector field is analytic in $U$.

By Theorem~2.1 in \cite{BaldomaFdlLM07} (see also~\cite{GuardiaSMS17}, where the dependence on parameters is established), the periodic orbit $(q,p,\tilde z) = (0,0,0)$ of~\eqref{eq:systemtoapplyinvariantmanifoldtheorem},
which is parabolic, has an invariant stable manifold, parametrized by
\begin{equation}
\label{eq:tildegammaparabolicmanifold}
(q,p,\tilde z) = \tilde \gamma (u,t,z_0) = (u + \OO(u^2), \OO(u^2), \OO(u^2))
\end{equation}
with $\tilde \gamma$  analytic in $\wt V_{\delta,\rho} = \{|\Im u | < \delta |\Re u|, \, |u| < \rho, \, |\Im t| < \sigma, \,z_0 \in K_{\CC}^\rho\}$, for some $\delta,\rho,\sigma >0$ and  of class $\CCC^{\infty}$ at $u=0$. Since~\eqref{eq:systemtoapplyinvariantmanifoldtheorem} has the time reversed, $\tilde \gamma$ corresponds to the unstable manifold of~\eqref{eq:paraminvmanperiodicorbits}. 
We can invert the first component of~\eqref{eq:tildegammaparabolicmanifold}, $q = \pi_q \tilde \gamma(u,t,z_0)$, to obtain $u = q+U(q,t,z_0)$, defined and analytic in
\[
V^{u}_{\delta',\rho'}  = \{(q,t,z_0) \in \CC^3\mid |\Im q | < \delta' |\Re q|, \, |q| < \rho', \, |\Im t| < \sigma, \,z_0 \in K_{\CC}^{\rho'}\},
\]
for any $0< \delta' < \delta$ and some $0 < \rho' < \rho$, and  is $\CCC^{\infty}$ in $V^{u}_{\delta',\rho'} \cup \{0\}$\footnote{This claim can be proven as follows. $U$ is trivially $\CCC^\infty$ at $u=0$. Observe that the function $U$ is the solution of the fixed point equation $U= F(U)$, with $F(U)(q,t,z_0)= - \varphi(q+U(q,t,z_0),t,z_0)$. Since $\varphi(u,t,z_0) = \OO(u^2)$ and is analytic in $\wt V_{\delta_0,\rho_0}$, we have that, for any $0 < \delta' < \delta$ and $0<\rho'<\rho$, $\partial_u \varphi(u,t,z_0) = \OO(u)$ in $\wt V_{\delta',\rho'}$. Using this fact, it is immediate to see that $F$ is a contraction  with the norm $\|\varphi\|_2 = \sup_{(q,t,z_0) \in V^u_{\delta',\rho''}}|q^{-2} U(q,t,z)|$, if $\rho''$ is small enough.}.
% \textcolor{magenta}{$V^{u}_{\delta',\rho'} $? o volem dir més?}
Hence, the stable manifold of $(q,p,\tilde z) = (0,0,0)$ of~\eqref{eq:systemtoapplyinvariantmanifoldtheorem}, and therefore the unstable one of ~\eqref{eq:paraminvmanperiodicorbits},  can be written as a graph as
\[
(p,\tilde z) = \hat \gamma(q,t,z_0) = \pi_{p,\tilde z} \tilde \gamma(q+U(q,t,z_0),t,z_0) = \OO(q^2).
\]
We claim that $\pi_{\tilde z} \hat \gamma (q,t,z_0) = \OO(q^{N+2})$. 

Indeed, assume that $\pi_{\tilde z} \hat \gamma (q,t,z_0) = a_L q^{L}+\OO(q^{L+1})$. It is clear that $L\ge 2$. But, denoting by $X$ the vector field in~\eqref{eq:systemtoapplyinvariantmanifoldtheorem}, since the graph of $\hat \gamma$ is invariant,
it satisfies
\begin{multline*}
-L a_L q^{L+3} +\OO(q^{L+4}) = \frac{\partial}{\partial q} \pi_{\tilde z} \hat \gamma (q,t,z_0) X_{q} (q,\tilde \gamma(q),t) \\ =  X_{\tilde z} (q,\tilde \gamma(q),t) = a_L q^{L+3} + a_L \OO(q^{L+5}) + \OO(q^{N+5}),
\end{multline*}
from which the claim follows. 

Going back to $z = z_0+(q+p) \tilde z$ we obtain
that $(p,z) = \gamma^u(q,t,z_0) = (\OO(q^2),z_0+\OO(q^{N+3})$ is a parametrization of the unstable manifold of the periodic orbit $z=z_0$. Substituting this expression into~\eqref{def:straightenedsaddle}, one obtains that $\OO(q^2) = \frac{A}{2} q^3+\OO(q^4)$, which proves the claim for $\gamma^u$ in~\eqref{eq:paraminvmanperiodicorbits}.

Now we straighten the invariant manifolds using the functions $\gamma^u$ and $\gamma^s$ in~\eqref{eq:paraminvmanperiodicorbits}. We claim that there exist variables $(q,p,z)$ in which equation~\eqref{def:straightenedsaddle} becomes
\begin{equation}
\begin{aligned}
\dot q & = q ((q+p)^3+\OO_4), \\
\dot p & = -p ((q+p)^3+\OO_4), \\
\dot z & = q p\OO_{N+4}(q,p),
\end{aligned}
\end{equation}
being defined and analytic in a domain of the form~\eqref{def:Urho} and is $\CCC^{\infty}$ at $(q,p) = (0,0)$. We will apply two consecutives changes of variables, each of them straightening one invariant manifold.

Let $z_0^u(q,z,t)= z + \OO_{N+3}(q)$ be such that
\[
z = \pi_z \gamma^u(q,t;z_0^u(q,z,t)),
\]
which is also analytic on $V_{\delta,\rho}^u$, $\CCC^{\infty}$ at $q=0$. 
We define the new variables $(\tilde q, \tilde p, \tilde z) =
\Psi_1^{-1}(q,p,z,t)$ by
\begin{equation}
\label{def:primer_canvi}
\begin{aligned}
\tilde q & = q, \\
\tilde p & = p - \pi_p \gamma^u(q,t;z_0^u(q,z,t)) = p - \frac{A}{2} q^3 + \OO_4(q), \\
\tilde z & = z_0^u(q,z,t) = z+ \OO_{N+3}(q).
\end{aligned}
\end{equation}
Again, it is easy to see that the map
\[
\Psi_1(\tilde q,\tilde p,\tilde z, t) =
\begin{pmatrix}
\tilde q \\
\tilde p + \psi_1^{\tilde p}(\tilde q, \tilde z, t) \\
\tilde z + \psi_1^{\tilde z}(\tilde q, \tilde z, t)
\end{pmatrix}=
\begin{pmatrix}
\tilde q \\
\tilde p + \frac{A}{2} \tilde{q}^3 + \OO_4(\tilde{q}) \\
\tilde z + \OO_{N+3}(\tilde q)
\end{pmatrix}
\]
is analytic in the domain
\begin{equation}
\label{def:Wdeltarhou}
W_{\delta,\rho}^u = \{(\tilde q,\tilde p,\tilde z,t)\in \CC^4 \mid |\tilde q|,|\tilde p| \le \rho, \; \Re \tilde p \ge 0,\; |\Im \tilde q | < \delta \Re \tilde q, \; \tilde z \in K_{\CC}^{\rho},\; t \in \TT_\sigma \}.
\end{equation}
for some $\gamma, \rho,\sigma >0$. 
Moreover, since $\Psi_{1 \mid \tilde q = 0}$ is the identity, it is  analytic in $\{|\tilde p | < \rho,\; \tilde z \in K_{\CC}^{\rho},\; t \in \TT_\sigma \}$. Observe that we are considering $\tilde q$ with $\Re \tilde q \ge 0$.
% \textcolor{magenta}{es la identidedad}.

Observe that,  in these variables, the unstable manifold is given by $\tilde p = 0$.
We claim that, in these variables, equation~\eqref{def:straightenedsaddle} becomes
\begin{equation}
\begin{aligned}
\dot{\tilde q} & = \tilde q (\tilde q+\tilde p)^3+\OO_4, \\
\dot{\tilde p} & = -\tilde p ((\tilde q+\tilde p)^3+\OO_4), \\
\dot{\tilde z} & = \tilde p \OO_{N+5}(\tilde q,\tilde p),
\end{aligned}
\end{equation}
Indeed, the claim for $\dot{\tilde q}$ is an immediate substitution. To see the claim for $\tilde z$, we observe
that, on the unstable manifold, $z_0$ is constant. Hence
\[
\dot{\tilde z} = \frac{d}{dt} z_0^u(q,z,t) = \partial_q z_0^u(q,z,t) \dot q +\partial_z z_0^u(q,z,t) \dot z + \partial_t z_0^u(q,z,t) = \OO_{N+6}(q,p)+ \partial_t z_0^u(q,z,t).
\]
Since $\dot{\tilde z}_{\mid \tilde p = 0} = 0$, $\partial_t z_0^u(q,z,t) = \OO_{N+6}(q)$ and the claim follows. Then the claim for $\dot{\tilde p}$ is an immediate substitution.

Observe that the composition $\gamma^s \circ \Psi_1$, where $\gamma^s(p,t;z_0)$ is the function in~\eqref{eq:paraminvmanperiodicorbits}, is well defined and analytic in $W^u_{\delta',\rho'}$, for $0< \delta' < \delta/3$ and $\rho'$ small enough. Indeed, if $(\tilde q, \tilde p, \tilde z, t) \in W^u_{\delta',\rho'}$, using the the function $A$ in~\eqref{def:AiBconstantsdeltermedegrau6} is real analytic and positive for real values of $z \in K_{\CC}^{\rho}$,
\[
\pi_{\tilde p} \Psi_1(\tilde q, 0, \tilde z, t) = \frac{A}{2} \tilde q^3 + \OO(\tilde q^4) \in V^{s}_{\delta,\rho},
\]
and $\Re \pi_{\tilde p} \Psi_1(\tilde q, 0, \tilde z, t) >0$, which implies that, for any $\tilde p \in V^{s}_{\delta,\rho}$ with $\Re \tilde p \ge 0$, $\tilde p + \frac{A}{2} \tilde q^3 + \OO(\tilde q^4) \in V^{s}_{\delta,\rho}$.

It can be seen with the same type of fixed point argument that the stable manifold $(q,z) = \gamma^s(p, t, z_0)$ in~\eqref{eq:paraminvmanperiodicorbits} can be written in the variables $(\tilde q, \tilde p, \tilde z)$ as
\begin{equation}
\label{def:stablemanifoldintildevariables}
(\tilde q,\tilde z)  = \tilde \gamma^{s}(\tilde p,t;z_0) = \left(\frac{A}{2} \tilde p^3 +\OO(\tilde p^4), z_0+\OO(\tilde p^{N+3})\right),
\end{equation}
analytic in
\[
V^{s}_{\delta'',\rho''}  = \{(\tilde p,t,z_0) \in \CC^3\mid |\Im \tilde p | < \delta'' |\Re \tilde p|, \, |\tilde p| < \rho'', \, |\Im t| < \sigma, \,z_0 \in K_{\CC}^{\rho''}\},
\]
for some $0 <\delta'' < \delta'$, $0 < \rho'' < \rho'$.

Now, repeating the same arguments, we obtain a change of variables $(\hat q, \hat p, \hat z) =
\Psi_2^{-1}(\tilde q,\tilde p,\tilde z,t)$ such that
\[
\Psi_2(\hat q,\hat p,\hat z, t) =
\begin{pmatrix}
\hat q + \psi_2^{\hat q}(\hat p, \hat z, t)\\
\hat p  \\
\hat z + \psi_2^{\hat z}(\hat p, \hat z, t)
\end{pmatrix}=
\begin{pmatrix}
\hat q + \frac{A}{2} \hat{p}^3 + \OO_4(\hat{p})\\
\hat p  \\
\hat z + \OO_{N+3}(\hat p)
\end{pmatrix}
\]
is analytic in the domain
\begin{equation}
\label{def:Wdeltarhos}
W_{\delta,\rho}^s = \{(\hat q,\hat p,\hat z,t)\in \CC^4 \mid |\hat q|,|\hat p| \le \rho, \; \Re \hat p \ge 0,\; |\Im \hat p | < \delta \Re \hat p, \; \hat z \in K_{\CC}^{\rho},\; t \in \TT_\sigma \}.
\end{equation}
for some $\gamma, \rho,\sigma >0$ (smaller than $\delta''$ and $\rho''$). Moreover, $\Psi_{2 \mid \tilde q = 0}$ is analytic in $\{|\tilde p | < \rho,\; \tilde z \in K_{\CC}^{\rho},\; t \in \TT_\sigma \}$.
This change is the identity on the unstable manifold.
The previous arguments show that equation~\eqref{def:systematinfinity1} in the $(\hat q, \hat p, \hat z)$ variables has the form~\eqref{eq:infinitymanifoldsstraightened2}.

The change of variables $ \Psi_1 \circ \Psi_2$
is then analytic in $U_{\delta, \rho}$, defined in~\eqref{def:Urho}, for some $\delta,\rho >0$.

\end{proof}

\subsection{Third step of normal form}

Next lemma provides a better control
of the dynamics of $z=(a,b)$ close to $(q,p) = (0,0)$.

\begin{lemma}
\label{lem:thirdstepofnormalformqkpk}
Let $N \ge 2$ be fixed. For any $1 \le k < (N+1)/2$, there exists a  change of variables
\[
\Phi( q,  p,  z ,t) = (q,p,z + \OO_{N+3}(q,p),t),
\]
analytic in a domain of the form~\eqref{def:Urho} and of class $C^{N+2}$ at $(q,p) = (0,0)$ such that equation~\eqref{def:systematinfinityold} becomes, in the new
variables,
\begin{equation}
\label{eq:infinitymanifoldsstraightened3}
\begin{aligned}
\dot q & = q ((q+p)^3+\OO_4), \\
\dot p & = -p ((q+p)^3+\OO_4), \\
\dot z & = q^k p^k \OO_{N+6-2k}, \\
\dot t & = 1,
\end{aligned}
\end{equation}
where $z = (a,b)$.
\end{lemma}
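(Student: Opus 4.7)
Proof plan. The argument is by induction on $k$, with base case $k=1$ given by Lemma~\ref{lem:2ndstepnormalform}. For the inductive step, assume the system has the form
\[
\dot q = q((q+p)^3+\OO_4),\quad \dot p = -p((q+p)^3+\OO_4),\quad \dot z = q^k p^k R(q,p,z,t),
\]
with $R=\OO_{N+6-2k}(q,p)$. I construct a change $\hat z = z + q^k p^k V(q,p,z,t)$, with $V$ a polynomial in $(q,p)$ of low degree whose coefficients are analytic in $(z,t)$, that transforms the system into the same form but with $\dot{\hat z} = q^{k+1}p^{k+1}\tilde R$ and $\tilde R=\OO_{N+4-2k}(q,p)$.

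The effect of such a change rests on the identity
\[
\dot{\hat z} = q^k p^k \bigl[R + \partial_t V + (q+p)^3(q\partial_q V - p\partial_p V)\bigr] + q^k p^k \OO_{\mathrm{higher}},
\]
which follows because the two contributions $\pm k q^k p^k V(q+p)^3$ arising from $\partial_q(q^k p^k V)\,\dot q$ and $\partial_p(q^k p^k V)\,\dot p$ cancel exactly; the $\OO_4$ corrections in $\dot q,\dot p$ and the term $\partial_z V\cdot\dot z$ are absorbed in the higher-order remainder. The only obstructions to factoring an extra $qp$ out of the degree-$(N+6-2k)$ piece $R_{N+6-2k}$ are the two corner monomials $c_0(z,t)\,q^{N+6-2k}$ and $c_1(z,t)\,p^{N+6-2k}$; every other monomial of that degree is automatically divisible by $qp$.

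These corner terms are removed by two substeps, mirroring the procedure of Lemma~\ref{lem:normalformatinfinitystep1}. First, an averaging substep with $V$ homogeneous of degree $N+6-2k$ solving $\partial_t V = \langle R_{N+6-2k}\rangle_t - R_{N+6-2k}$ renders the leading piece $t$-independent. Second, a cohomological substep uses a $t$-independent $V_2 = \alpha(z)\,q^{N+3-2k}+\beta(z)\,p^{N+3-2k}$, for which a direct expansion gives
\[
(q+p)^3(q\partial_q V_2 - p\partial_p V_2) = (N+3-2k)\bigl[\alpha\,q^{N+6-2k} - \beta\,p^{N+6-2k}\bigr] + qp\cdot(\cdots).
\]
Choosing $\alpha = -\langle c_0\rangle_t/(N+3-2k)$ and $\beta = \langle c_1\rangle_t/(N+3-2k)$ kills both corner monomials; the denominator is nonzero precisely because the hypothesis $k<(N+1)/2$ ensures $N+3-2k>0$.

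Iterating the two substeps through the finitely many homogeneous degrees of $R$ that remain before the cutoff needed to guarantee divisibility by $q^{k+1}p^{k+1}$, the resulting composition of polynomial changes is analytic on a domain of the form \eqref{def:Urho} and produces $\Phi(q,p,z,t)=(q,p,z+\OO_{N+3}(q,p),t)$ as required. The main technical delicacy is purely bookkeeping: each cohomological change redistributes monomials into higher degrees and the $\OO_4$ corrections in $\dot q,\dot p$ feed back sub-leading contributions, so the induction must be organized so that only finitely many degrees are relevant at each stage and the condition $N+3-2k>0$ persists throughout. Since every operation reduces to explicit finite-dimensional linear algebra on spaces of homogeneous polynomials with analytic $(z,t)$-coefficients, no genuine analytic difficulty arises.
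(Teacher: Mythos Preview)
Your inductive framework and the computation of how the change $\hat z = z + q^k p^k V$ affects $\dot{\hat z}$ are both correct, and you correctly identify that the obstruction to factoring out an extra $qp$ lies in the ``corner'' pieces $R(q,0,z,t)$ and $R(0,p,z,t)$. But there is a genuine gap: these corner pieces are not polynomials but full analytic functions, containing monomials $q^j$ (resp.\ $p^j$) for \emph{every} $j \ge N+6-2k$. Your claim that only ``finitely many homogeneous degrees'' need to be treated is incorrect---to obtain $\dot{\hat z} = q^{k+1}p^{k+1}\tilde R$ one must kill the entire functions $R(q,0,z,t)$ and $R(0,p,z,t)$, not just their lowest-degree terms, and no finite composition of polynomial changes can accomplish this.

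The paper addresses this by a genuinely analytic step. Writing $R_M = Q_M(q,z,t) + P_M(p,z,t) + qp\,\tilde R_{M-2}$ with $Q_M = R_M|_{p=0}$ and $P_M = R_M|_{q=0}$, it seeks $\tilde z = z + q^k p^k\bigl(A(q,z,t)+B(p,z,t)\bigr)$ where $A$ solves the PDE
\[
Q_M + h_4 A + f\,\partial_q A + \partial_t A = 0,\qquad f = \dot q|_{p=0} = q^4+\OO_5(q),\quad h_4 = \OO_4(q),
\]
and similarly for $B$. After the straightening $q=q(u)$ with $dq/du=f$ this becomes $(\partial_u+\partial_t)A = -\hat Q_M - \hat h_4 A$, which is solved as the fixed-point equation $A = \GG(-\hat Q_M - \hat h_4 A)$ in the Banach space with norm $\|\alpha\|_\kappa = \sum_\ell \sup_{u,z} |u^\kappa \alpha^{[\ell]}(u,z)|e^{-\sigma|\ell|}$, $\kappa=M/3-1$; the map is a contraction once $\rho$ is small and $M>5$, which is exactly the hypothesis $k<(N+1)/2$. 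The resulting $A$ is analytic in the sector with $A=\OO(q^{M-3})$, so the change is $z+\OO_{N+3}(q,p)$. Your averaging-plus-cohomological scheme is, in effect, the leading-order step of an iterative solution of this same PDE; to make it work you would need to sum all orders and prove convergence, which is precisely what the fixed-point argument supplies.
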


\begin{proof}
We prove the claim by induction. The case $k=1$ is given by Lemma~\ref{lem:2ndstepnormalform}.
We observe that, since equation~\eqref{eq:infinitymanifoldsstraightened2} is analytic in $U_{\delta,\rho}$ and $\CCC^{\infty}$ at $(0,0)$, the $z$ component of the vector field, $qp \OO_{N+4}(q,p)$, can be written as
% \[
% qp \OO_{N+4}(q,p) = \sum_{0 \le j < \lfloor \frac{N+4}{2} \rfloor}
% (qp)^j (Q_{N+4-2j}(q,z,t)+P_{N+4-2j}(p,z,t)) + (qp)^{\lfloor \frac{N+4}{2}\rfloor} \OO_2(q,p),
% \]
% \textcolor{magenta}{
\[
qp \OO_{N+4}(q,p) = \sum_{0 \le j < \lfloor \frac{N+2}{2} \rfloor}
(qp)^{j+1} (Q_{N+4-2j}(q,z,t)+P_{N+4-2j}(p,z,t)) + (qp)^{\lfloor \frac{N+4}{2}\rfloor} \OO_2(q,p),
\]
% }
where the functions $Q_{N+4-2j}(u,z,t), P_{N+4-2j}(u,z,t) = \OO(u^{N+4-2j})$, $0 \le j < \lfloor \frac{N+4}{2} \rfloor$,  are analytic in a domain of the form
\[
V_{\delta,\rho}  = \{(u,z,t) \in \CC^3\mid |\Im u | < \delta \Re u, \, |u| < \rho, \, |\Im t| < \sigma, \,z \in K_{\CC}^{\rho}\},
\]
for some $\delta, \rho >0$.

Assume that the equation is in the form
\begin{equation}
\label{eq:infinitymanifoldsstraightenedinduction}
\begin{aligned}
\dot q & = q ((q+p)^3+\OO_4), \\
\dot p & = -p ((q+p)^3+\OO_4), \\
\dot z & = q^k p^k \OO_{M}(q,p),
\end{aligned}
\end{equation}
where $M=N+6-2k>5$ and is analytic in $U_{\delta,\rho}$.
We write the terms $\OO_{M}(q,p)= R_{M}(q,p,z,t)$ as
\[
R_{M}(q,p,z,t) = Q_{M}(q,t,z)+P_{M}(p,t,z)+\wt R_{M}(q,p,t,z),
\]
where
\[
Q_{M}(q,t,z) = R_{M}(q,0,z,t) = \OO_{M}(q), \quad P_{M}(p,t,z) = R_{M}(0,p,z,t) = \OO_{M}(p).
\]
It is clear that $\wt R_{M}(q,p,t,z) =  qp \OO_{M-2}(q,p)$.

We perform a change a variables to get rid of the term $Q_{M}$ and $P_{M}$ of the form
\begin{equation}
\label{eq:lastchangeofnormalform}
\tilde z = z + q^k p^k(A(q,z,t)+B(p,z,t)).
\end{equation}
The equation for $\tilde z$ becomes
\[
\dot{\tilde z} = q^{k+1} p^{k+1} \OO_{M-2}(q,p)
\]
if
\begin{align}
\label{eq:homologicaleqforqjpjA}
Q_{M} +  h_4 A + f \partial_q A+ \partial_t A & = 0,\\
\label{eq:homologicaleqforqjpjB}
P_{M} + \tilde h_4 B + \tilde f \partial_q B+ \partial_t B & = 0,
\end{align}
where, from~\eqref{eq:infinitymanifoldsstraightenedinduction},
\begin{equation}
\label{def:h4}
k(\dot q p + q \dot p) = qp (h_4(q,z,t)+\tilde h_4(p,z,t)+ qp\hat h_2(q,p,z,t))
\end{equation}
with $h_4(q,z,t) =\OO_4(q)$, $\tilde h_4(p,z,t) = \OO_4(p)$ and $\hat h_2(q,p,z,t) = \OO_2(q,p)$,
%\textr{Entenc que la $j$ hauria de ser una $k$? Potser afeggir que $h_4$ es d'ordre 4?}
and
\begin{equation}
\label{def:ftildef}
\begin{aligned}
f(q,z,t) & = \dot q_{\mid p =0} = q^4 + \OO_5(q), \\
\tilde f(p,z,t) & = \dot p_{\mid q =0} = -p^4 + \OO_5(p).
\end{aligned}
\end{equation}
The functions $f,  h_4, Q_{M}$ and  $\tilde f,  \tilde h_4, P_{M}$  are defined respectively in the sectors
\[
\begin{aligned}
V & = \{|\Im q | < \delta \Re q, \, |q| < \rho, \, |\Im t| < \sigma, \,z \in K_{\CC}^{\rho}\}, \\
\wt V & = \{|\Im p | < \delta \Re p, \, |p| < \rho, \, |\Im t| < \sigma, \,z \in K_{\CC}^{\rho}\}.
\end{aligned}
\]
\begin{lemma}
\label{lem:homologicaleqforqjpj}
If $\rho$ is small enough, equations~\eqref{eq:homologicaleqforqjpjA} and~\eqref{eq:homologicaleqforqjpjB}  admit analytic solutions $A$, $B$, defined in $V$ and $\wt V$, such that
\[
\sup_{(q,z,t) \in V}|q^{-(M-3)}A(q,z,t)|, \sup_{(p,z,t) \in \wt V}|p^{-(M-3)}B(p,z,t)| < \infty.
\]
respectively,
\end{lemma}

\begin{proof}[Proof of Lemma~\ref{lem:homologicaleqforqjpj}]
We prove the claim for~\eqref{eq:homologicaleqforqjpjA}, being the proof for~\eqref{eq:homologicaleqforqjpjB} analogous.

%We have to solve an equation of the form
%\begin{equation}
%\label{eq:homologicalequationM}
%Q_{M} +  h_4 A + f \partial_q A+ \partial_t A  = 0
%\end{equation}
%where $M > 6$, the functions $Q_M$, $h_4$, $f$ are $2\pi$-periodic in $t$, defined and analytic in
%in
%\[
%V  = \{|\Im q | < \gamma |\Re q|, \, |q| < \rho, \, |\Im t| < \sigma, \,z \in K_{\CC}\},
%\]
%$Q_M$ satisfies
%\[
%\sup_{(q,z,t)\in V} | q^{-M} Q_M(q,z,t)| < \infty
%\]
%and $h_4$ and $f$ are given in~\eqref{def:h4} and~\eqref{def:ftildef}, respectively.

We consider the change of variables $q = q(u)$, where $q(u)$ satisfies
\[
\frac{dq}{du} = f(q,z,t).
\]
Since $f(q,z,t) = q^4 + \OO_5(q)$, we have that
\[
q(u) = - \frac{1}{(3 u)^{1/3}} ( 1+ \OO(u^{-1/3})).
\]
It transforms~\eqref{eq:homologicaleqforqjpjA} into
\begin{equation}
\label{eq:homologicalequationuvariable}
\LL  A = -\wh Q_M - \hat h_4 A ,
\end{equation}
where
\begin{equation}
\LL A = \partial_u A + \partial_t A
\end{equation}
and $\wh Q_M(u, z,t) = Q_M(q(u),z,t)$ and $\hat h_4(u,z,t) = h_4(q(u),z,t)$ are defined in
\[
\wh V = \{ \Re u < -1/(3 \rho^3),\, |\Im u | < 3 \arctan \gamma |\Re u |,\, |\Im t| < \sigma, \,z \in K_{\CC}\}.
\]

We introduce the Banach spaces
\[
\XX_\kappa = \{ \alpha: \wh V \to \CC \mid \|\alpha\|_\kappa < \infty\}
\]
where, writing $\alpha(u,z,t) = \sum_{\ell \in \ZZ} \alpha^{[\ell ]} e^{i\ell t}$,
\[
\|\alpha\|_\kappa = \sum_{\ell \in \ZZ} \sup_{(u,z)\in U}| u^{\kappa} \alpha^{[\ell]}(u,z)| e^{-\sigma |\ell|},
\]
and
\[
U = \{ \Re u < - 1/(3 \rho^3),\, |\Im u | < 3 \arctan \gamma |\Re u |, \,z \in K_{\CC}\}.
\]
The following properties of the spaces are $\XX_\kappa$ are immediate:
\begin{enumerate}
\item
if $\alpha \in \XX_\kappa$ and $\tilde \alpha \in \XX_{\tilde \kappa}$, $\alpha \tilde \alpha \in \XX_{\kappa +\tilde \kappa}$ and
\[
\|\alpha \tilde \alpha\|_{\kappa + \tilde \kappa} \le \|\alpha\|_\kappa \|\tilde \alpha\|_{\tilde \kappa},
\]
\item
if $\alpha \in \XX_\kappa$, for all $\tilde \kappa \le \kappa$, $\alpha \in \XX_{\tilde \kappa}$ and
\[
\|\alpha\|_{\tilde \kappa} \le 3^{\kappa-\tilde \kappa} \rho^{3(\kappa-\tilde \kappa)}\|\alpha\|_\kappa.
\]
\end{enumerate}

It is clear that $\wh Q_M \in \XX_{M/3}$ and $\hat h_4 \in \XX_{4/3}$.
The following lemma, whose proof omit, is a simplified version of Lemma~\ref{lemma:Operator:1}.

\begin{lemma}
\label{lem:inverseofLappendix}
Let $\beta \in \XX_\kappa$, with $\kappa >1$. 
The equation $\LL \alpha = \beta$ has a solution
$\GG(\beta) \in \XX_{k-1}$ with $\|\GG(\beta)\|_{k-1} \le \|\beta\|_k$.
\end{lemma}
%\textr{No hauriem de parlar de contractivitat en una bola?}

Using Lemma~\ref{lem:inverseofLappendix},  we can rewrite equation~\eqref{eq:homologicalequationuvariable} as the fixed point equation
\[
A = \GG(-\wh Q_M - \hat h_4 A).
\]
But, since $M>5$, the right hand side above is a contraction in $\XX_{M/3-1}$ if $\rho$ is small enough, since, by 1. and~2.,
\begin{multline*}
\| \GG(-\wh Q_M - \hat h_4 A)- \GG(-\wh Q_M - \hat h_4 \tilde A)\|_{M/3-1} \le \|  \hat h_4 (A- \tilde A)\|_{M/3} \\
\le \|  \hat h_4 \|_1 \|(A- \tilde A)\|_{M/3-1} \le 3^{1/3} \rho \|  \hat h_4 \|_{4/3}\|(A- \tilde A)\|_{M/3-1}.
\end{multline*}
Finally, since $u(q) = 1/(3q^3)+\OO(q^{-2})$, the lemma follows.
\end{proof}

Now we can finish the proof of Lemma~\ref{lem:thirdstepofnormalformqkpk}. With the choice of $A$ and $B$ given by
Lemma~\ref{lem:homologicaleqforqjpj}, the change of variables~\eqref{eq:lastchangeofnormalform} transforms~\eqref{eq:infinitymanifoldsstraightenedinduction} into a equation of the same form with $k$ replaced by $k+1$ and $M$ by $M-2$. Notice that this change of variables, since it is analytic in a sectorial domain of the form~\eqref{def:Urho}
and is of order $\OO_{M+2k-3}(q,p) = \OO_{N+3}(q,p)$, it is of class $C^{N-2}$ at $(q,p)=(0,0)$. The composition of the vector field with the change is well defined in a sectorial domain of the form~\eqref{def:Urho}, with $\gamma$ replaced with any $0 < \gamma' < \gamma$, if $\rho$ is small enough. Hence the lemma is proven.
% \textcolor{magenta}{no veiem perque el canvi no es $\mathcal {C}^\infty$ a l'origen}
\end{proof}

\section{The parabolic Lambda Lemma: Proof of Theorem \ref{prop:lambdalemma}}\label{sec:LambdaLemmaTechnical}

The proof of Theorem~\ref{prop:lambdalemma} will be a consequence of the following  technical lemmas and is deferred to the end of this section. To simplify the notation in this section we denote $\Psi=\Psi_\loc$.

Let $K>0$ be such that the terms $\OO_k$ in~\eqref{eq:infinitymanifoldsstraightened} satisfy
\[
\|\OO_k\| \le K \|(q,p)\|^k, \qquad (q,p,z,t) \in V_{\rho} \times W \times \TT.
\]
This bound is also true  for $(q,p,z,t) \in B_{\rho} \times W \times \TT$,
being $B_\rho = \{(q,p)\mid |q|,|p| < \rho\}$.

We express system~\eqref{eq:infinitymanifoldsstraightened} in a new time in which the topological saddle is a true saddle. Indeed, since the solutions of~\eqref{eq:infinitymanifoldsstraightened} with initial condition $(q_0,p_0,z_0,t_0) \in B_{\rho} \times W \times \TT$ with $q_0,p_0 \ge 0$ satisfy $q+p >0$ while in $B_{\rho} \times W \times \TT$, we can write
equations~\eqref{eq:infinitymanifoldsstraightened} in the new time~$s$ such that $dt/ds = (q+p)^{-3}$. System~\eqref{eq:infinitymanifoldsstraightened} becomes
\begin{equation}
\label{eq:infinitymanifoldsstraightenedintimes}
\begin{aligned}
q' & = q (1+\OO_1(q,p)), \\
p' & = -p (1+\OO_1(q,p)), \\
z' & = q^N p^N\OO_1(q,p), \\
t' & = \frac{1}{(q+p)^3},
\end{aligned}
\end{equation}
where $\mbox{}'$ denotes $d/ds$. 
The $\OO_1(q,p)$ terms are uniformly bounded in terms of $(q,p)$ in $V_{\rho} \times W \times \TT$.

Given $K \subset W$, for $w_0 = (q_0,p_0,z_0,t_0) \in V_{\rho} \times K \times \TT$, we define
\begin{equation}
\label{def:sw0}
s_{w_0} = \sup \,\{s>0\mid w(\tilde s) \in V_{\rho} \times W\times \TT,\; \forall \tilde s \in [0,s) \},
\end{equation}
where  $w$ is the solution of~\eqref{eq:infinitymanifoldsstraightened} with initial condition $w_0$.

Next lemma implies Item 1 of Theorem~\ref{prop:lambdalemma}.

\begin{lemma}
\label{lem:topological_lambda_lemma}
Let $K \subset W$ be a compact set. 
There exists $\rho$ and $C>1$ such that the solution $w= (q,p,z,t)$ of~\eqref{eq:infinitymanifoldsstraightenedintimes} with initial condition $w_0 = (q_0,p_0,z_0,t_0) \in V_{\rho} \times K \times \TT$ satisfies
\[
\log \left( \left( \frac{\rho}{q_0}\right)^{\frac{1}{1+C\rho}} \right) \le s_{w_0} \le \log \left( \left( \frac{\rho}{q_0}\right)^{\frac{1}{1-C\rho}} \right).
\]
%\textr{em sembla una mica estrany qu el temps d'`escapament'' aqui depengui de $p_0$. Despres si, pero aqui es estrany.}
%
Moreover, for any $0 < a \leq \rho $
and $0 < \delta < a/2$,
the Poincar\'e map 
\[
\Psi: \Lambda_{a,\delta}^- (K) \to \Lambda_{a,\delta^{1-C a}}^+  (W),
\]
where the sets $\Lambda_{a,\delta}^\pm (K)$ are defined in \eqref{def:sectionsLambdaadeltapm}, is well defined and, if $w = (q_0,a,z_0,t_0) \in \Lambda_{a,\delta}^- (K)$ and $\Psi(w) = (a,p_1,z_1,t_1)$, then
\begin{equation}
\label{bounds:C0bounds}
\begin{aligned}
q_0^{1+C a} \le  p_1 & \le q_0^{1-C a}, \\
 |z_1 - z_0 |  \le &  \frac{1}{2N} a^{N(1+3C a)} q_0^{N(1-3C a)}, \\
\wt C_1 q_0^{-3(1-C a)/2} \le  t_1 -t_0 & \le \wt C_2 q_0^{-3(1+C a)/2}
\end{aligned}
\end{equation}
for some constants $\wt C_1, \wt C_2 >0$ depending only on $a$.
\end{lemma}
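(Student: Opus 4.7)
\medskip

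\noindent\textbf{Proof proposal.} The strategy is to exploit the near-product structure of the $(q,p)$-flow. In system \eqref{eq:infinitymanifoldsstraightenedintimes} the $(q,p)$-equations decouple from $z$ at leading order, and since $|\OO_1(q,p)|\le C_0(|q|+|p|)\le 2C_0\rho$ on $V_\rho$, I would begin by applying a scalar Gronwall/comparison argument to $q'/q=1+\OO_1$ and $p'/p=-(1+\OO_1)$ to obtain the sharp two-sided bounds
\[
q_0\, e^{s(1-C\rho)}\le q(s)\le q_0\, e^{s(1+C\rho)},\qquad p_0\, e^{-s(1+C\rho)}\le p(s)\le p_0\, e^{-s(1-C\rho)},
\]
valid for all $s\in[0,s_{w_0})$, with $C=2C_0$. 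Since $q(s)$ grows monotonically and $p(s)$ decays monotonically, the orbit exits $V_\rho$ precisely when $q$ reaches $\rho$; solving the two inequalities for $q(s)=\rho$ yields the asserted bounds on $s_{w_0}$.

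For the transition map, set $p_0=a$, $q_0\in(0,\delta)$ and let $s_1$ be defined by $q(s_1)=a$. The same comparison gives
$\log(a/q_0)/(1+Ca)\le s_1\le \log(a/q_0)/(1-Ca)$, and then
\[
p_1=p(s_1)\in\bigl[a\,e^{-s_1(1+Ca)},\,a\,e^{-s_1(1-Ca)}\bigr]\subset [q_0^{1+C'a},\,q_0^{1-C'a}],
\]
after absorbing the factor $a^{O(Ca)}$ into the exponent via $a\le\rho$ and adjusting $C$. In particular $p_1<\delta^{1-C'a}$, so the Poincar\'e map indeed lands in $\Lambda^+_{a,\delta^{1-Ca}}(W)$ if $\rho$ is small.

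For the $z$-estimate I would combine the above bounds multiplicatively. From $(qp)'=qp\,(\OO_1(q,p)-\OO_1(q,p))$ ---which, even if the two $\OO_1$ terms in the $q$- and $p$-equations do not cancel, differs from zero only by $O(|q|+|p|)$--- one gets $q_0 a\,e^{-2sCa}\le q(s)p(s)\le q_0 a\,e^{2sCa}$. Hence
\[
|z_1-z_0|\le \int_0^{s_1}\!\!q^Np^N|\OO_1|\,ds\;\lesssim\;(q_0a)^N(q+p)_{\max}\!\int_0^{s_1}\!e^{2sNCa}\,ds\;\le\;\frac{C}{NCa}(q_0a)^N (a/q_0)^{\frac{2NCa}{1-Ca}},
\]
and since $q+p\le 2a$ throughout and $2NCa/(1-Ca)\le 3NCa$ for small $a$, a rearrangement yields the bound
$\tfrac{1}{2N}a^{N(1+3Ca)}q_0^{N(1-3Ca)}$ after adjusting the constant $C$ (and using $\rho$ small to absorb any $O(1)$ prefactor into the $1/(2N)$). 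The statement in the lemma uses $N(1+Ca)$ rather than $N(1+3Ca)$, which is only a matter of renaming $C$.

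Finally the time estimate is where I expect the most care is needed, because $(q+p)^{-3}$ is largest not at the endpoints but in the middle of the orbit, near $s^\star\approx\tfrac12\log(a/q_0)$, where $q\approx p\approx\sqrt{q_0 a}$. The plan is to use AM--GM, $q+p\ge 2\sqrt{qp}$, together with the two-sided bound on $qp$ above, which yields
\[
\frac{1}{8(q_0a)^{3/2}}\,e^{-3sCa}\;\le\;(q+p)^{-3}\;\le\;\frac{1}{8(q_0a)^{3/2}}\,e^{3sCa}.
\]
The upper integral gives $t_1-t_0\le \tilde C_2(a)\, q_0^{-3(1+Ca)/2}$ after absorbing $a$-dependent factors into $\tilde C_2$; for the matching lower bound, the same AM--GM inequality shows that on an interval of length $\gtrsim 1$ around $s^\star$ the integrand is $\gtrsim (q_0a)^{-3/2}(a/q_0)^{-O(Ca)}$, producing $t_1-t_0\ge \tilde C_1(a)\,q_0^{-3(1-Ca)/2}$. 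The main technical obstacle is purely bookkeeping: propagating the multiplicative error $e^{\pm sCa}$ through the exponentials to end up with clean exponents of the form $1\pm Ca$ in all four estimates, which ultimately forces the choice of $C$ sufficiently large but independent of $a,\delta,q_0$.
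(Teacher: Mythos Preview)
Your approach is essentially the same as the paper's: Gronwall bounds on $q,p$, then read off $s_{w_0}$, $p_1$, and the $z$-drift; the paper does exactly this. One genuine error: your displayed two-sided bound
\[
\frac{1}{8(q_0a)^{3/2}}e^{-3sCa}\le (q+p)^{-3}\le \frac{1}{8(q_0a)^{3/2}}e^{3sCa}
\]
has a false lower inequality --- AM--GM gives $q+p\ge 2\sqrt{qp}$, hence only the upper bound on $(q+p)^{-3}$. At $s=0$ the left side is $\sim (q_0a)^{-3/2}$ while $(q_0+a)^{-3}\sim a^{-3}$, and for $q_0\ll a$ the former is much larger. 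Your subsequent heuristic (localize near $s^\star$ where $q\approx p$) is the right idea; the paper implements it cleanly by substituting the exponential bounds for $q,p$ and changing variable $\sigma=s-\tfrac12\log(p_0/q_0)$, which reduces the lower bound to $\frac{1}{(q_0a)^{3/2}}\int \frac{d\sigma}{(e^\sigma+e^{-\sigma})^3}$ over an interval containing an $O(1)$ neighborhood of $\sigma=0$. (Also, the lemma \emph{does} state the exponent $N(1\pm 3Ca)$ for the $z$-bound, matching what you derived; your parenthetical remark about $N(1+Ca)$ is a misreading.)
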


\begin{proof}
 Let $w_0 = (q_0,p_0,z_0,t_0) \in V_{\rho} \times K \times \TT$ and $w= (q,p,z,t)$ the solution of~\eqref{eq:infinitymanifoldsstraightenedintimes} with initial condition $w_0$ at $s=0$. Since $q_0,p_0 >0$, while
$w \in V_{\rho} \times W \times \TT$, $q,p >0$. Hence, there exists $C>0$, depending only on $\rho$ and $W$, such that
\begin{equation}
\label{bound:infinitymanifoldsstraightenedintimes}
\begin{aligned}
(1-C \rho)q & \le q'  \le   (1+C \rho)q, \\
 -(1+C\rho)p & \le p'  \le  -(1-C\rho)p , \\
- C \rho q^N p^N & \le z_i'  \le  C \rho q^N p^N, \qquad i=1,2.
\end{aligned}
\end{equation}
Since $p$ is decreasing and $\{p=0\}$ is invariant, $w$ can only leave $V_{\rho} \times W \times \TT$ if $q=\rho$ or $z$ leaves~$W$. From~\eqref{bound:infinitymanifoldsstraightenedintimes}, we have that for all $s$ such $w(\tilde s) \in V_{\rho} \times W \times \TT$ for all $\tilde s \in [0,s)$,
\begin{equation}
\label{bound:solutions_integrated_qp}
\begin{aligned}
q_0 e^{(1-C \rho)s} & \le q(s)   \le   q_0 e^{(1+C \rho)s}, \\
p_0 e^{-(1+C \rho)s} & \le p(s)  \le  p_0 e^{-(1-C \rho)s}
\end{aligned}
\end{equation}
and
\begin{equation}
\label{bound:solutions_integrated_z}
| z_i(s)-z_i(0)|  \le  \frac{1}{2N} q_0^N p_0^N \left(e^{2N C \rho s}-1\right), \qquad i=1,2.
\end{equation}
In particular, the time $s_{q_0,q}$ to reach $q$ from $q_0$ is bounded by
\begin{equation}
\label{bound:timesfromq0toq}
\log \left( \frac{q}{q_0}\right)^{\frac{1}{1+C\rho}} \le s_{q_0,q} \le \log \left( \frac{q}{q_0}\right)^{\frac{1}{1-C\rho}} ,
\end{equation}
but, up to this time, for $i=1,2$, since $0 < q_0,p_0,q < \rho<1$,
%
% \begin{equation}
% \label{bound:zdifferenceattimesqq0}
% |z_i(s_{q_0,q})-z_i(0)| \le \frac{1}{2N} q_0^N p_0^N  \left( \frac{q}{q_0}\right)^{\frac{2N C \rho}{1-C\rho}}
% = \frac{1}{2N} q_0^{N\frac{1-(2N+1)C \rho}{1-C\rho}} p_0^N  q^{\frac{2 N C \rho}{1-C\rho}} \le
% \frac{1}{2N} \rho^{2N\frac{1-(2N+1)C \rho}{1-C\rho}}.
% \end{equation}
% %
% \textcolor{magenta}{
% Ens queda:
\begin{equation}
\label{bound:zdifferenceattimesqq0}
|z_i(s_{q_0,q})-z_i(0)| \le \frac{1}{2N} q_0^N p_0^N  \left( \frac{q}{q_0}\right)^{\frac{2N C \rho}{1-C\rho}}
= \frac{1}{2N} q_0^{N\frac{1-3C \rho}{1-C\rho}} p_0^N  q^{\frac{2 N C \rho}{1-C\rho}} \le
\frac{1}{2N} \rho^{2N}.
\end{equation}
% }
Hence, taking $\rho$ small enough depending on $K$, the solution through $w_0$ remains in $V_{\rho} \times W \times \TT$
for all $s$ such that 
% $0<s<s_{w_0}$ and $s_{w_0}$ sa such that
\[
0 < s<s_{w_0} < \log  \left( \frac{\rho}{q_0}\right)^{\frac{1}{1-C\rho}}.
\]
% which gives the upper bound of $s_{\omega_0}$.
Moreover, \eqref{bound:zdifferenceattimesqq0} ensures that the solution leaves $V_{\rho} \times W \times \TT$ through $q=\rr$.

The time $s_{q_0,q}$ to reach $q$ from $q_0$ is bounded by below by
\begin{equation}
\label{bound:timetoreachrhobybelow}
s_{q_0,q} \ge \log  \left( \frac{q}{q_0}\right)^{\frac{1}{1+C\rho}}, 
\end{equation}
putting $q=\rho$ in \eqref{bound:timetoreachrhobybelow} we obtain the lower bound for  $s_{w_0}$. 

Consequently, for any $0 < \delta < a < \rho$, if $w_0 \in \Lambda_{a,\delta}^- (K)$, the solution through $w_0$
satisfies $q = a$ at a time $s_*$ bounded by
\begin{equation}
\label{bound:timetoreachrhobybelow_from_section_to_section}
\log  \left( \frac{a}{q_0}\right)^{\frac{1}{1+C a}} \le s_* \le \log  \left( \frac{a}{q_0}\right)^{\frac{1}{1-C a}}.
\end{equation}
From~\eqref{bound:solutions_integrated_qp} with an analogous argument replacing $\rr$ by $a$, this solution satisfies
\[
 q_0^{1+2C a} \le p_0 \left( \frac{a}{q_0}\right)^{\frac{-1-C a}{1-C a}} \le p_0 e^{-(1+ C a)s_*} \le p(s_*) \le p_0 e^{-(1-C a)s_*} \le p_0 \left( \frac{a}{q_0}\right)^{\frac{-1+C a}{1+C a}}
\le  q_0^{1-2C a}
\]
% \textcolor{magenta}
% {Ens surt diferent!!!!
% \[
%  q_0^{1+Ca + C\rho} \le p_0 \left( \frac{a}{q_0}\right)^{\frac{-1-C \rho}{1-C a}} \le p_0 e^{-(1+ C \rho)s_*} \le p(s_*) \le p_0 e^{-(1-C \rho)s_*} \le p_0 \left( \frac{a}{q_0}\right)^{\frac{-1+C \rho}{1+C a}}
% \le  q_0^{1-C a-C\rho}
% \]
% }
and, for $i=1,2$,
% \[
% | z_i(s_*)-z_i(0)|  \le  \frac{1}{2N} a^{N(1+3C a)} q_0^{N(1-3C a)}.
% \]
% \textcolor{magenta}
% {
\[
| z_i(s_*)-z_i(0)|  
% \le  \frac{1}{2N} a^{N\frac{1+C \rho}{1-C\rho}} q_0^{N\frac{1-3C \rho}{1-C\rho}}
\le \frac{1}{2N} a^{N\frac{1+Ca}{1-Ca}} q_0^{N(1-3C a)}.
\]
% }
It only remains to estimate $t(s_*)-t_0$. Since
\[
t(s_*)-t_0 = \int_0^{s_*} \frac{1}{(q+p)^3}ds,
\]
$0 < q_0 < \delta$, $p_0 = a$ and $2 \delta < a$, we have that
\[
\begin{aligned}
t(s_*)-t_0 & \ge \int_0^{s_*} \frac{1}{(q_0 e^{(1+C a)s}+ p_0 e^{-(1-C a)s})^3}ds \\
& = \int_0^{s_*} \frac{e^{-3 C a s}}{(q_0 e^{s}+ p_0 e^{-s})^3}ds \\
& \ge \left( \frac{q_0}{a}\right)^{\frac{3C a }{1+C a}} \int_0^{s_*} \frac{1}{(q_0 e^{s}+ p_0 e^{-s})^3}ds \\
& = \left( \frac{q_0}{a}\right)^{\frac{3C a }{1+C a}} \frac{1}{(q_0p_0)^{3/2}} \int_{-\log(p_0/q_0)^{1/2}}^{s_*-\log(p_0/q_0)^{1/2}}
\frac{1}{( e^{\sigma}+  e^{-\sigma})^3}d\sigma \\
& \ge \frac{1}{a^{3(1+C a)/2}} \frac{1}{q_0^{3(1-C a)/2}} \int_{-(\log 2)/2}^{(1-Ca)(\log 2)/(2(1+C)a)}
\frac{1}{( e^{\sigma}+  e^{-\sigma})^3}d\sigma .
\end{aligned}
\]
With an analogous argument one obtains the upper bound of $t(s_*)-t_0$.
\end{proof}

Let $w = (q,p,z,t)$ be a solution of~\eqref{eq:infinitymanifoldsstraightened} with initial condition $w_0
\in V_{\rho} \times K \times \TT$. 
We define
\begin{equation}
\label{def:tau}
\tau = p/q
\end{equation}
and, from now on, abusing notation we denote $\OO_i=\OO_i(q,p)$.

Clearly, $0<\tau(s) <\infty$, for all $s$ such that $w\in V_{\rho} \times K \times \TT$. It is immediate from~\eqref{eq:infinitymanifoldsstraightenedintimes} that
\begin{equation}
\label{def:tauedo}
\frac{d \tau}{d s} = -(2+\OO_1) \tau.
\end{equation}
The variational equations around a solution of system~\eqref{eq:infinitymanifoldsstraightened} are
\begin{equation}
\label{eq:variationalequationsatinfinity}
\begin{pmatrix}
\dot Q \\ \dot P \\ \dot Z \\ \dot T
\end{pmatrix}
=
\begin{pmatrix}
(q+p)^2 (4q+p+\OO_2) & (q+p)^2 q (3+\OO_1) & q \OO_4 &  q \OO_4\\
-(q+p)^2p(3+\OO_1) & -(q+p)^2 (q+4p+\OO_2) & p \OO_4  & p \OO_4\\
q^{N-1}p^N \OO_4 & q^N p^{N-1} \OO_4 & q^N p^N \OO_4 & q^N p^N \OO_4 \\
0 & 0 & 0& 0
\end{pmatrix}
\begin{pmatrix}
 Q \\ P \\  Z \\ T
\end{pmatrix},
\end{equation}
where $Z = (Z_1,Z_2)$.

In order to prove item 2) of Theorem~\ref{prop:lambdalemma}, we will have to study the behaviour of the solutions of~\eqref{eq:variationalequationsatinfinity} with initial
condition $Q = Q_0 \neq 0$ along solutions $\varphi_{t}(w_0)$ of~\eqref{eq:infinitymanifoldsstraightened}, where the initial condition
$w_0 = (q_0,p_0,z_0,t_0) \in U \cap \{ q >0, \; p >0\}$ will be taken with $p_0$ small but fixed and $q_0$ arbitrarily close to $0$.

Equations~\eqref{eq:variationalequationsatinfinity} become, in the time $s$ in which~\eqref{eq:infinitymanifoldsstraightenedintimes} are written and using $\tau$ in~\eqref{def:tau} ,
\begin{equation}
\label{eq:variationalequationsatinfinitystau}
\begin{pmatrix}
 Q' \\ P' \\  Z' \\  T'
\end{pmatrix}
=
\begin{pmatrix}
\frac{4+\tau+\OO_1}{1+\tau} & \frac{3+\OO_1}{1+\tau} & q \OO_1 &  q \OO_1\\
-\frac{(3+\OO_1)\tau}{1+\tau} & -\frac{1+4\tau+\OO_1}{1+\tau} & p \OO_1  & p \OO_1\\
q^{N-1}p^N \OO_1 & q^N p^{N-1} \OO_1 & q^N p^N \OO_1 & q^N p^N \OO_1 \\
0 & 0 & 0& 0
\end{pmatrix}
\begin{pmatrix}
 Q \\ P \\  Z \\ T
\end{pmatrix}.
\end{equation}
% \textcolor{magenta}{el canvi de temps no commuta amb les variacionals, hi ha truc?}
It will be convenient to perform a linear change of variables to~\eqref{eq:variationalequationsatinfinitystau}.

\begin{proposition}
\label{lem:straighteningvariationalequations} There exists $\alpha^*$, with $0 < \alpha^* < 6/5$ such that for any  $\rho>0$, small enough,  any $w = (q,p,z,t)$, solution of~\eqref{eq:infinitymanifoldsstraightenedintimes} with $w_{\mid s = 0} = w_0 =(q_0,p_0,z_0,t_0)\in V_{\rho} \times K \times \TT$ and any $\alpha_0^*\in [0,\alpha^*]$, there exists a $C^N$ function $\alpha:[0,s_{w_0}] \to \RR$,
where $s_{w_0}$ was introduced in~\eqref{def:sw0}, with $\alpha(0) = \alpha_0^*$, such that, in the new variables
\[
\wt P = P + \alpha Q,
\]
system~\eqref{eq:variationalequationsatinfinity} becomes
\begin{equation}
\label{eq:variationalequationsatinfinitystausimplifyied}
\begin{pmatrix}
 Q' \\ \wt P' \\  Z' \\  T'
\end{pmatrix}
=
\begin{pmatrix}
\frac{4+\tau+\OO_1}{1+\tau} - \alpha\frac{3+\OO_1}{1+\tau} & \frac{3+\OO_1}{1+\tau} & q \OO_1 &  q \OO_1\\
0 & -\frac{1+4\tau+\OO_1}{1+\tau} + \alpha\frac{3+\OO_1}{1+\tau} &  p\OO_1  &  p\OO_1\\
q^{N-1}p^N \OO_1 & q^N p^{N-1} \OO_1 & q^N p^N \OO_1 & q^N p^N \OO_1 \\
0 & 0 & 0& 0
\end{pmatrix}
\begin{pmatrix}
 Q \\ \wt P \\  Z \\ T
\end{pmatrix}.
\end{equation}
Furthermore, for $s \in (0,s_{w_0}]$,
\begin{equation}
\label{bound:alphachange}
0 < \alpha(s) < \frac{2\tau(s)}{1+\tau(s)}.
\end{equation}
\end{proposition}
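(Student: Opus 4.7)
I would begin by performing the linear change of variables $\wt P = P + \alpha Q$ directly on~\eqref{eq:variationalequationsatinfinitystau}. Writing $\wt P' = P' + \alpha' Q + \alpha Q'$ and eliminating $P$ by $P = \wt P - \alpha Q$ in the equations for $Q'$ and $P'$, the coefficient of $Q$ in the new equation for $\wt P'$ becomes
\begin{equation*}
\alpha' - \frac{(3+\OO_1)\tau}{1+\tau} + (5+\OO_1)\,\alpha - \frac{3+\OO_1}{1+\tau}\,\alpha^2 ,
\end{equation*}
while the coefficient of $\wt P$ is $-\tfrac{1+4\tau+\OO_1}{1+\tau}+\tfrac{(3+\OO_1)\alpha}{1+\tau}$ and the coefficient of $Q$ in $Q'$ is $\tfrac{4+\tau+\OO_1}{1+\tau}-\tfrac{(3+\OO_1)\alpha}{1+\tau}$, exactly matching~\eqref{eq:variationalequationsatinfinitystausimplifyied}. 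Setting the first expression to zero reduces the whole proposition to finding a $\CCC^N$ solution $\alpha(s)$, defined on $[0,s_{w_0}]$, of the scalar Riccati equation
\begin{equation}\label{eq:riccatiplan}
\alpha' \;=\; \frac{(3+\OO_1)\tau}{1+\tau} \;-\; (5+\OO_1)\,\alpha \;+\; \frac{3+\OO_1}{1+\tau}\,\alpha^2 ,
\end{equation}
whose coefficients depend on $s$ through $w(s)$ and therefore inherit the $\CCC^N$ regularity of~\eqref{eq:infinitymanifoldsstraightened}.

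The next step is a phase-plane analysis of~\eqref{eq:riccatiplan} coupled with $\tau'=-(2+\OO_1)\tau$ in the half-plane $\tau>0$. For the unperturbed equation ($\OO_1\equiv 0$) the null-isoclines are the two branches
\begin{equation*}
\alpha_\pm(\tau) \;=\; \tfrac{1}{6}\bigl(5(1+\tau)\pm\sqrt{25(1+\tau)^2-36\tau}\bigr),
\end{equation*}
with $\alpha_-(\tau)\sim 3\tau/5$ as $\tau\downarrow 0$, $\alpha_-(\tau)\to 3/5$ as $\tau\to\infty$, and $\alpha_+(\tau)\geq 5/3$ uniformly in $\tau\geq 0$. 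Between the two branches the right-hand side of~\eqref{eq:riccatiplan} is strictly negative, so $\alpha_-$ is attracting. Choosing $\alpha^*<6/5$ places every $\alpha_0^*\in[0,\alpha^*]$ strictly below the minimum of $\alpha_+$, hence in the basin of $\alpha_-$, and guarantees that the local $\CCC^N$ solution does not blow up and extends to all of $[0,s_{w_0}]$ once $\rho$ is small enough to absorb the $\OO_1$ perturbations.

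The bounds~\eqref{bound:alphachange} will be obtained by a barrier argument. The lower barrier is immediate: at $\alpha=0$ the right-hand side of~\eqref{eq:riccatiplan} equals $(3+\OO_1)\tau/(1+\tau)>0$, so $\alpha$ is instantly pushed into the positive half-line. For the upper barrier set $\beta(s):=2\tau(s)/(1+\tau(s))$, differentiate it along $\tau'=-(2+\OO_1)\tau$ and evaluate~\eqref{eq:riccatiplan} at $\alpha=\beta$; after simplification one finds
\begin{equation*}
\bigl(\alpha'-\beta'\bigr)\big|_{\alpha=\beta} \;=\; \frac{\tau\,(-3+2\tau-7\tau^2)}{(1+\tau)^3} \;+\; \OO_1 .
\end{equation*}
The polynomial $-3+2\tau-7\tau^2$ has discriminant $4-84<0$ and is therefore uniformly negative on $[0,\infty)$, bounded away from zero in every compact $\tau$-range. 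Taking $\rho$ small enough to absorb the $\OO_1$ correction yields $\alpha'<\beta'$ on the curve $\alpha=\beta$, making the strip $\{0<\alpha<\beta\}$ forward invariant for the coupled $(\alpha,\tau)$ flow. Combined with the lower barrier and the non-escape from the basin of $\alpha_-$, this delivers~\eqref{bound:alphachange} for all $s>0$, while the smooth dependence of ODE solutions on initial data gives the stated regularity of $\alpha$.

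The main obstacle I expect is the barrier computation, specifically the algebraic verification that $-3+2\tau-7\tau^2$ has no real roots and is bounded away from zero uniformly in $\tau$ before the $\OO_1$ corrections are added — this is what lets us take $\rho$ small enough to swallow those corrections without ruining the sign. A secondary delicate point is the choice $\alpha^*<6/5$: this value is dictated by the need to keep $\alpha_0^*$ safely below $\alpha_+(0)=5/3$, ensuring that no trajectory escapes through the unstable branch of the Riccati before being captured by the slow, stable branch $\alpha_-(\tau)$.
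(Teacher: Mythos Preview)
Your approach is correct and takes a more direct route than the paper. Both reduce to the same Riccati equation and prove~\eqref{bound:alphachange} by a barrier argument, but the paper uses $2\alpha_0(\tau)$ (twice the stable nullcline, your $\alpha_-$) as the upper barrier, establishes forward invariance of $[0,2\alpha_0]$ via an inequality that needs the derivative estimates of Lemma~\ref{lem:alpha0properties}, and only then invokes the separate bound $\alpha_0(\tau)<\tau/(1+\tau)$ to reach~\eqref{bound:alphachange}. Your choice of $\beta=2\tau/(1+\tau)$ as the barrier collapses these two steps into one and bypasses the nullcline analysis entirely; the price is a weaker intermediate bound, but since only~\eqref{bound:alphachange} is used downstream nothing is lost.

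One point needs care. As written, your display $(\alpha'-\beta')|_{\alpha=\beta}=\tau(-3+2\tau-7\tau^2)/(1+\tau)^3+\OO_1$ fails near $\tau=0$: the main term vanishes like $-3\tau$ while a bare $\OO_1=\OO(\rho)$ does not, so no choice of $\rho$ fixes the sign there. What actually happens is that every $\OO_1$ contribution --- from $\nu_0$, from $\nu_1\beta$, from $\nu_2\beta^2$, and from $\tau'$ inside $\beta'$ --- already carries a factor $\tau/(1+\tau)$. Once this is pulled out,
\[
(\alpha'-\beta')\big|_{\alpha=\beta}=\frac{\tau}{1+\tau}\left(\frac{-3+2\tau-7\tau^2}{(1+\tau)^2}+\OO_1\right),
\]
and the bracketed rational function is uniformly $\le -5/3$ on $[0,\infty)$ (its maximum occurs at $\tau=1/2$), so taking $\rho$ small does absorb the correction. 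A minor side remark: your reading of $\alpha^*<6/5$ as ``stay below $\alpha_+(0)=5/3$'' is not the operative constraint --- what the barrier needs is $\alpha_0^*<\beta(0)$. In the application $w_0\in\Lambda^-_{a,\delta}(K)$ forces $\tau_0>2$, hence $\beta(0)>4/3>6/5$; the paper handles the same point by noting ``recall that $\tau(0)\gg1$''.
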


\begin{proof} Given $\alpha$ and $\wt P = P + \alpha Q$, since $\tau >0$, the equation for $\wt P$ is
\begin{multline}
\label{eq:tildePprime}
\wt P' = \left( -\frac{(3+\OO_1)\tau}{1+\tau} + \alpha' + (5+\OO_1)  \alpha - \alpha^2 \frac{3+\OO_1}{1+\tau}\right) Q \\
+ \left( -\frac{1+4\tau+\OO_1}{1+\tau} + \alpha\frac{3+\OO_1}{1+\tau}\right) \wt P
+ (p+\alpha q)\OO_1 Z + (p+\alpha q)\OO_1 T,
\end{multline}
The claim will follow finding an appropriate solution of
\begin{equation}
\label{eq:alphavariational}
 \alpha'   = \nu_0+ \nu_1 \alpha +\nu_2 \alpha^2 ,
\end{equation}
where
\begin{equation}
\label{def:nu0nu2}
\nu_0 = \frac{(3+\OO_1)\tau}{1+\tau}, \qquad \nu_1 (s) = -5+\OO_1, \qquad \nu_2 = \frac{3+\OO_1}{1+\tau}.
\end{equation}
Let $f(w,\alpha) = \nu_0+ \nu_1 \alpha +\nu_2 \alpha^2$ be the right hand side of~\eqref{eq:alphavariational}, where we have omitted the dependence of $\nu_i$, $i=1,2,3$ on $w$.
We introduce $\alpha_0$ and $\alpha_1$, the nullclines of~\eqref{eq:alphavariational},  by
\[
f(w,\alpha) = \nu_2 (\alpha-\alpha_0(\tau))(\alpha-\alpha_1(\tau)),
\]
and $R$, where
\begin{equation}
\label{def:alpha0}
\begin{aligned}
\alpha_0 (\tau) & = -\frac{\nu_1}{2 \nu_2}\left( 1-\left(1-4 \frac{\nu_0 \nu_2}{\nu_1^2}\right)^{1/2}\right) \\
& = \left(\frac{5}{6} +\OO_1\right)\left(1+\tau-\left((1+\tau)^2-\left(\frac{36}{25}+\OO_1 \right)
\tau \right)^{1/2}\right)
\\
& = \left(\frac{5}{6} +\OO_1\right)\left(1+\tau-\sqrt{R(\tau)}\right).
\end{aligned}
\end{equation}

To complete the proof of Proposition \ref{lem:straighteningvariationalequations}, we need the following two auxiliary lemmas.
\begin{lemma}
\label{lem:alpha0properties}
The function $\alpha_0$ has the following properties. 
For $(q,p) \in V_\rho$ (that is, $0< \tau < \infty$),
\begin{enumerate}
\item $4/5+\OO_1 \le \sqrt{R(\tau)}/(1+\tau) <1$,
\item $\lim_{\tau\to \infty} \alpha_0(\tau) = 3/5+\OO_1$,
\item $\lim_{\tau\to 0} \alpha_0(\tau)/\tau = 3/5+\OO_1$,
\item
\[
\frac{d}{ds} \alpha_0 = -(1+\OO_1) \frac{\sqrt{R}-\tau+1+\OO_1}{\sqrt{R}} \alpha_0,
\]
%\textr{Em surt! Durissim!}
\item
\[
- \frac{(2+\OO_1)\alpha_0}{\sqrt{R}}  \le \frac{d}{ds} \alpha_0 \le
- \frac{(32/25+\OO_1)\alpha_0}{\sqrt{R}}
\]
\item and $\lim_{\tau\to 0} (d\alpha_0/ds)/\alpha_0 = -2+\OO_1$.
\end{enumerate}
Furthermore,
\begin{equation}
\label{bound:alpha0bytau}
0 < \alpha_0(\tau) < \frac{\tau}{1+\tau}.
\end{equation}
\end{lemma}

\begin{proof}
Items 1 to 6 are proven in~\cite{GorodetskiK12}. 
The rather crude bound~\eqref{bound:alpha0bytau} is a straightforward computation.
% \textcolor{magenta}{no entenem res}
\end{proof}

Next lemma provides solutions of~\eqref{eq:alphavariational} close to the nullcline~$\alpha_0$.

\begin{lemma}
\label{lem:attractingnullcline}
For any $0 < \rho < 1$, small enough,  the following is true.
For any solution $w = (q,p,z,t)$ of~\eqref{eq:infinitymanifoldsstraightened} with initial condition $w_0
\in V_{\rho} \times K \times \TT$,
if $\alpha$ is a solution of~\eqref{eq:alphavariational} with $0 \le \alpha(s_0) \le 2 \alpha_0(\tau(s_0))$ for some $0 < s_0 < s_{w_0}$, then $0 < \alpha(s) < 2 \alpha_0(\tau(s))$ for all $s \in [s_0, s_{w_0}]$.
\end{lemma}

\begin{proof}[Proof of Lemma~\ref{lem:attractingnullcline}]
We only need to proof that $\alpha$ satisfies
\[
\mathrm{(i)} \; \;  \frac{d \alpha}{ds}_{\mid \tiny
\alpha  = 0 }
 >  0,   \qquad
 \mathrm{(ii)} \; \;
\frac{d \alpha}{ds}_{\mid \tiny
\alpha  = 2 \alpha_0 }  <  2 \frac{d \alpha_0}{ds}.
\]

Item (i) follows from
\[
\frac{d \alpha}{ds}_{\mid \tiny
\alpha  = 0 } = \nu_0 >0.
\]

Now we prove (ii). Using 5 of Lemma~\ref{lem:alpha0properties}, (ii) is implied by
\[
\begin{aligned}
\frac{d \alpha}{ds}_{\mid \tiny
\alpha  = 2 \alpha_0 }
& =  \frac{3+\OO_1}{1+\tau} \alpha_0(2\alpha_0-\alpha_1) \\
& =   \frac{3+\OO_1}{1+\tau} \alpha_0
\left(\alpha_0-\left(\frac{5}{3}+\OO_1\right) \sqrt{R}\right) \\
& < - 2\frac{2+\OO_1}{\sqrt{R}} \alpha_0,
\end{aligned}
\]
which, since $\alpha_0>0$, is equivalent to
\begin{equation}
\label{ineq:impliesiin}
\frac{3+\OO_1}{1+\tau} \left(\alpha_0-\left(\frac{5}{3}+\OO_1\right) \sqrt{R}\right)
<  -\frac{4+\OO_1}{\sqrt{R}},
\end{equation}
for $0<\tau$. Taking into account the definition of $\alpha_0$,
\eqref{ineq:impliesiin} is equivalent to prove
\[
\left(\frac{15}{2}+\OO_1\right) R-(4+\OO_1) (1+\tau) > \left(\frac{5}{2}+\OO_1\right) (1+\tau) \sqrt{R},
\]
for $0<\tau$, which is equivalent to
\[
\left(\left(\frac{15}{2}+\OO_1\right) R-(4+\OO_1) (1+\tau)\right)^2-
\left(\frac{5}{2}+\OO_1\right)^2 (1+\tau)^2 R >0.
\]
%\textr{Aquesta linia de sota no l'he revisada. Ho fare a la feina amb maple}

If we disregard the $\OO_1$ terms, which are small if $\rho$ is small, the above inequality simply reads
\[
50 \tau^4 -13 \tau^3 + \frac{826}{25} \tau^2 - \frac{75}{3} \tau +6
= \tau^2\left( 50 \tau^2-13 \tau +\frac{76}{25}\right) + 30 \tau^2 - \frac{75}{3} \tau +6>0.
\]
But the above inequality holds, since $50 \tau^2-13 \tau +\frac{76}{25}>0$ and $30 \tau^2 - \frac{75}{3} \tau +6>0$.
\end{proof}

Let $\alpha$ be any solution of~\eqref{eq:alphavariational} with $\alpha(0) \in [0,2\alpha_0(\tau(0))]$, which, by the definition of $\tau$ and Item 2 in Lemma~\ref{lem:alpha0properties}, is a nonempty interval (recall that $\tau(0)\gg 1$).
% \textcolor{magenta}{per que?????}.
 By Lemma~\ref{lem:attractingnullcline}, $\alpha$ is well defined for $s\in [0,s_{w_0}]$ and
$0 < \alpha(s) < 2\alpha_0(\tau(s))$.  Then, bound~\eqref{bound:alpha0bytau} implies~\eqref{bound:alphachange}. System~\eqref{eq:variationalequationsatinfinitystausimplifyied} is obtained by a straightforward computation. Observe that, by~\eqref{bound:alpha0bytau}, the terms $(p+\alpha q)\OO_1$ in~\eqref{eq:tildePprime} are indeed $p\OO_1$.
\end{proof}

% \textcolor{magenta}{atenco. la prova del lema 11.2 requereix elsd lemes11.3,11.4!!}

\begin{lemma}
\label{lem:solvingstraightenedvariationalequations}
Choose $N>10$ in Theorem~\ref{prop:coordinatesatinfinity}. 
Let $W$ and $K$ be the sets considered in Theorem~\ref{prop:lambdalemma}.
Let $w = (q,p,z,t)$ be a solution of~\eqref{eq:infinitymanifoldsstraightened} with initial condition, at $s=0$, $w_0
\in \Lambda_{a,\delta}^-(K)$. Let $\tilde s_{w_0}$ be such that $w(\tilde s_{w_0}) \in \Lambda_{a,\delta^{1-Ca}}^+(W)$. Let $W=(Q,\wt P,Z,T)$ be a solution of~\eqref{eq:variationalequationsatinfinitystausimplifyied} with initial condition, at $s=0$, $W_0=(Q_0,\wt P_0,Z_0,T_0)$.
\begin{enumerate}
\item
For all $s\in[0,s_{w_0}]$,
\[
\|(Z,T)-(Z_0,T_0)\| \le K q_0^{N-10} \|W_0\|.
\]
\item
For $s = \tilde  s_{w_0}$,
\[
|\wt P(\tilde s_{w_0})| \le  C q_0^{\frac{3}{5}+C\rho}(|\wt P_0|+ \OO_1  \|W_0\|).
\]
\item
Assume that $W_0$ satisfies $Q_0 \neq 0$. Then, there exists $\delta$ such that for any $w_0 \in \Lambda_{a,\delta}^-(K)$,
\[
|Q(\tilde s_{w_0})| \ge C \left(|Q_0|- C q_0^{\frac{1}{5}+\OO_1(\rho)} \|W_0\|\right) q_0^{-(\frac{3}{5}+\OO_1(\rho))}.
\]
\item For any $\wt P_0$, there exists a linear map $\wt Q(Z_0,T_0)$ satisfying $|\wt Q(Z_0,T_0)| \le C q_0^{\frac{1}{5}+\OO_1(\rho)} \|(Z_0,T_0)\|$, such that the solution $W$ of~\eqref{eq:variationalequationsatinfinitystausimplifyied} with initial condition $W_0 = (\wt Q(Z_0,T_0), \wt P_0, Z_0,T_0)$
    satisfies $Q(\tilde s_{w_0}) = 0$. %\textr{No hauriem de fixar/triar la $P_0$?}
\end{enumerate}
\end{lemma}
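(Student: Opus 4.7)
My plan is to integrate the scalar equations in~\eqref{eq:variationalequationsatinfinitystausimplifyied}, exploiting its upper-triangular structure: $T'=0$, the $\wt P$ equation couples only to $(Z,T)$, and the $Q$ equation couples to all four components. Throughout, I would work in the time~$s$ of Lemma~\ref{lem:topological_lambda_lemma}, in which $q(s)\sim q_0 e^{s}$, $p(s)\sim a\,e^{-s}$, and $\tilde s_{w_0}=(1+\OO(\rho))\log(a/q_0)$, and use the invariance $\alpha(s)\in[0,2\alpha_0(\tau(s))]$ from Lemma~\ref{lem:attractingnullcline} combined with the bound $\alpha_0(\tau)\le\tau/(1+\tau)$ from~\eqref{bound:alpha0bytau}.

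Item~1 is immediate for $T\equiv T_0$, while for $Z$ one integrates its equation using the preliminary Gronwall bound $\|W(s)\|\le Ce^{Cs}\|W_0\|$ (valid since the matrix entries of~\eqref{eq:variationalequationsatinfinitystausimplifyied} are bounded on the trajectory) and the explicit exponential profiles for $q,p$. The integrand carries a factor $(qp)^{N-1}(q+p)\lesssim q_0^{N-1+\OO(\rho)}$, so integrating over $[0,\tilde s_{w_0}]$ yields $|Z-Z_0|\le Kq_0^{N-10}\|W_0\|$ using $N>10$.

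For items~2 and~3, the $\wt P$ and $Q$ equations are scalar linear ODEs with coefficients $a_P=-(1+4\tau+\OO_1-\alpha(3+\OO_1))/(1+\tau)$ and $a_Q=(4+\tau+\OO_1-\alpha(3+\OO_1))/(1+\tau)$, plus forcing. The bound $\alpha(3+\OO_1)\le 6\tau/(1+\tau)+\OO(\rho)$ then gives
\[
a_P\le -\tfrac{1-\tau+4\tau^2}{(1+\tau)^2}+\OO(\rho),\qquad a_Q\ge \tfrac{4-\tau+\tau^2}{(1+\tau)^2}-\OO(\rho),
\]
both of which are uniformly $\ge 5/8-\OO(\rho)>3/5$ in $\tau\ge 0$, with minima $5/8$ attained at $\tau=1/3$ and $\tau=3$ respectively. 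Hence the integrating factors satisfy
\[
E_P(\tilde s_{w_0}):=\exp\Bigl(\int_0^{\tilde s_{w_0}}a_P\,ds\Bigr)\le q_0^{3/5+C\rho},\qquad E_Q(\tilde s_{w_0})\ge q_0^{-3/5-C\rho}
\]
for $\rho$ small. Duhamel's formula, combined with item~1 to replace $Z(s),T(s)$ by $Z_0,T_0$ up to an $\OO(q_0^{N-10})\|W_0\|$ error, plus $\int_0^{\tilde s_{w_0}}p\,ds,\int_0^{\tilde s_{w_0}}q\,ds=\OO(a)$, yields items~2 and~3.

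For item~4, I would write $Q(\tilde s_{w_0})=E_Q(\tilde s_{w_0})\,Q_0+F(W_0)$, with $F$ the linear functional arising from the Duhamel integral of the forcing by $\wt P,Z,T$. The self-dependence of $F$ on $Q_0$ (entering via the $q^{N-1}p^N$-coupling in the $Z$ equation and the $q\,\OO_1\,Z$ term in $Q'$) is of size $q_0^{N-10}\|W_0\|$, so a Neumann series yields $\wt Q=-F(W_0)/E_Q(\tilde s_{w_0})$ as a well-defined linear functional of $(\wt P_0,Z_0,T_0)$; dividing by $E_Q(\tilde s_{w_0})\ge q_0^{-3/5-C\rho}$ gives $|\wt Q|\le Cq_0^{1/5+C\rho}\|(Z_0,T_0)\|$ once the $\wt P_0$-contribution (for fixed $\wt P_0$, of the same or smaller size) is absorbed. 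The main obstacle is the uniform-in-$\tau$ control of the integrating factors, which rests on the invariance provided by Lemma~\ref{lem:attractingnullcline}; a secondary subtlety lies in item~4, where tracking how $\wt P_0$ enters $\wt Q$ and verifying the stated bound purely in terms of $\|(Z_0,T_0)\|$ requires care in the bookkeeping of the coupled Duhamel integrals.
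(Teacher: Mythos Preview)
Your approach matches the paper's: crude Gronwall for the spectral-radius bound, then integrate the $(Z,T)$ block using $(qp)^{N-1}$ decay for item~1, then use the invariance $0<\alpha<2\alpha_0(\tau)\le 2\tau/(1+\tau)$ from Lemma~\ref{lem:attractingnullcline} to get uniform lower bounds on $-a_P$ and $a_Q$, and finally Duhamel for items~2--4. Your explicit rational-function bounds $(1-\tau+4\tau^2)/(1+\tau)^2$ and $(4-\tau+\tau^2)/(1+\tau)^2$ with minima $5/8$ at $\tau=1/3$ and $\tau=3$ are actually sharper and more transparent than the paper's ``straightforward computation shows $A<-3/5$, $\wt A>3/5$''.

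There is, however, one genuine point you gloss over in item~3. The forcing in the $Q$ equation is $\wt B\,\wt P + q\,\OO_1\,W_{Z,T}$ with $\wt B=(3+\OO_1)/(1+\tau)$. Your ``$\int p\,ds,\int q\,ds=\OO(a)$'' handles the second term, but not the first: $\wt B$ is neither $p\,\OO_1$ nor $q\,\OO_1$, and the naive bound $\wt B\le 3+\OO_1$ only gives $\int_0^{\tilde s_{w_0}} u(-\sigma)\wt B\,\wt P\,d\sigma=\OO(1)(|\wt P_0|+\OO_1\|W_0\|)$, which does \emph{not} yield the $q_0^{1/5}$ factor in the statement. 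The paper instead uses the $\tau$-dynamics $\tau'=-(2+\OO_1)\tau$ to get
\[
\wt B(\sigma)\le \frac{3+\OO_1}{\tau(\sigma)}\le \frac{q_0}{p_0}(3+\OO_1)\,e^{(2+\OO_1)\sigma},
\]
so that $u(-\sigma)\wt B(\sigma)\wt P(\sigma)\lesssim q_0\,e^{(4/5+\OO_1)\sigma}(|\wt P_0|+\OO_1\|W_0\|)$; integrating to $\tilde s_{w_0}\sim\log(a/q_0)$ then produces exactly the $q_0^{1/5}$. This is the missing ingredient both for item~3 as stated and for the size bound on $\wt Q$ in item~4; your ``secondary subtlety'' remark about the $\wt P_0$-bookkeeping is correctly identified but cannot be resolved without this estimate.
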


\begin{proof}
Here $\|\cdot\|$ will denote the $\sup$-norm. Let $\alpha$ be any of the functions given by Lemma~\ref{lem:attractingnullcline}. Using~\eqref{bound:alpha0bytau},
a direct computation shows that the spectral radius of the matrix defining system~\eqref{eq:variationalequationsatinfinitystausimplifyied} is bounded by $7+\OO_1(\rho)$. Hence, if $\rho$ is small enough,
\begin{equation}
\label{bound:roughboundvariational}
\|W(s) \| \le e^{8 s}\|W_0\|, \qquad 0 \le s \le \tilde s_{w_0},
\end{equation}
where, by~\eqref{bound:timesfromq0toq}, the time $\tilde s_{w_0}$  is
bounded by above by
\begin{equation}
\label{bound:tildesw0}
\tilde s_{w_0} \le \log \left( \frac{a}{q_0}\right)^{\frac{1}{1-C\rho}}.
\end{equation}
Let $W_{Q,P} = (Q,P)$ and $W_{Z,T} = (Z,T)$. The vector $W_{Z,T}$  satisfies
\[
W_{Z,T}' = q^N p^N \OO_1 W_{Z,T} + q^{N-1} p^{N-1} \OO_1 W_{Q,P}.
\]
Since, by~\eqref{bound:solutions_integrated_qp},
\[
\|q^{N-1} p^{N-1} \OO_1 W_{Q,P}\| \le (q_0 p_0)^{N-1} e^{2NC \rho s} \|W_{Q,P}\| \le (q_0 p_0)^{N-1}
e^{(8+2NC\rho) s} \|W_0\|
\]
and $N>10$, we have that, for $0 \le s \le \tilde s_{w_0}$, if $\rho$ is small enough,
%\textr{Per fitar el $WZT$ de la dreta fem Gronwall? O fitem com el WPQ? Potser a la part de la dreta no cal diferenciar en els dos subvectors i podem fitar a sac usant les a priori bounds? Per cert, si agafessim N mes gran, tindriem mes bones fites? serien utils?)}
\[
\|W_{Z,T}(s)-W_{Z,T}(0)\| \le K q_0^{N-10} \|W_0\|,
\]
for some constant $K$. This proves 1.

Now we prove 2. The equation for $\tilde P$ is
\[
\wt P ' = A \wt P + p \OO_1 W_{Z,T},
\]
where
\[
A = -\frac{1+4\tau+\OO_1}{1+\tau} + \alpha\frac{3+\OO_1}{1+\tau}.
\]
Using the bounds on $\alpha$ given by Lemma~\ref{lem:attractingnullcline} and~\eqref{bound:alphachange}  a straightforward computation shows that, if $\rho$ is small enough, for all $\tau>0$,  $A < -3/5$.
Hence, using again~\eqref{bound:solutions_integrated_qp},
$|\wt P(s) | \le  (|\wt P_0|+ \OO_1  \|W_0\|) e^{-\frac{3}{5}s}$, which, taking into account the bound of $\tilde s_{w_0}$, implies Item 2.

The equation for $Q$ is
\[
Q' = \wt A Q + \wt B \wt P + q \OO_1 W_{Z,T},
\]
where
\[
\wt A = \frac{4+\tau+\OO_1}{1+\tau} - \alpha\frac{3+\OO_1}{1+\tau}, \qquad \wt B= \frac{3+\OO_1}{1+\tau}.
\]
Again, a straightforward computation shows that, if $\rho$ is small enough, $\wt A > 3/5$.
Defining $u(s) = \exp \int_0^s \wt A (\sigma)\,d\sigma$, we have that
\begin{equation}
\label{eq:Qs}
Q(s) = u (s) \left[Q_0 + \int_0^s u(-\sigma)(\wt B(\sigma) \wt P (\sigma)+ q (\sigma)\OO_1 W_{Z,T}(\sigma))\,d\sigma  \right]
\end{equation}
We bound the terms in the integral in the following way. First we observe that, using~\eqref{def:tauedo},
\[
0 \le \wt B(\sigma) = \frac{3+\OO_1}{1+\tau} \le \frac{q_0}{p_0} \frac{3+\OO_1}{\frac{q_0}{p_0}+e^{-(2+\OO_1(\rho))s}}
< \frac{q_0}{p_0} (3+\OO_1) e^{(2+\OO_1(\rho))s}.
\]
Hence, by the previous bound on $\wt P$, for some constant $K>0$,
\[
\begin{aligned}
\left| \int_0^s u(-\sigma)\wt B(\sigma) \wt P (\sigma)\,d\sigma \right|
& \le (|\wt P_0|+ \OO_1  \|W_0\|) (3+\OO_1) \frac{q_0}{p_0}\int_0^s e^{(\frac{4}{5}+\OO_1(\rho))\sigma} \,d\sigma \\
& \le K (|\wt P_0|+ \OO_1  \|W_0\|)  q_0 e^{(\frac{4}{5}+\OO_1(\rho))s}.
\end{aligned}
\]
Using~\eqref{bound:solutions_integrated_qp} to bound $q(s)$, the other term in the integral can be bounded as
\[
\begin{aligned}
\left| \int_0^s u(-\sigma)q (\sigma)\OO_1 W_{Z,T}(\sigma)\,d\sigma \right|
& \le \OO_1(\rho)\|W_0\| q_0 \int_0^s e^{(\frac{2}{5}+\OO_1({\rho}))\sigma} \,d\sigma \\
& \le \OO_1(\rho)\|W_0\| q_0  e^{(\frac{2}{5}+\OO_1({\rho}))s}.
\end{aligned}
\]
That is, since $0 < s < \tilde s_{w_0} $ and using~\eqref{bound:tildesw0},
\[
\left|Q_0 + \int_0^s u(-\sigma)(\wt B(\sigma) \wt P (\sigma)+ q (\sigma)\OO_1 W_{Z,T}(\sigma))\,d\sigma \right|
\ge |Q_0|- K q_0^{\frac{1}{5}+\OO_1(\rho)} \|W_0\|.
\]
Since $0 < q_0 < \delta$, substituting this bound into~\eqref{eq:Qs} and evaluating  at $s = \tilde s_{w_0} $, we obtain 3.

Item 4 follows immediately from the bounds of the terms inside the brackets in~\eqref{eq:Qs}.
\end{proof}

\begin{proof}[Proof of Theorem~\ref{prop:lambdalemma}]
Lemma~\ref{lem:topological_lambda_lemma} proves that the Poincar\'e map $\Psi:\Lambda_{a,\delta}^- (K) \to
\Lambda_{a,\delta^{1-Ca}}^+ (W)$ is well defined for any compact $K\subset W$ if $0<2\delta<a$ are small enough
and also implies the estimates of Item  1 of Theorem~\ref{prop:lambdalemma}.

Given $I \subset \RR$, an interval, let $\gamma(u) = (q_0(u),a,z_0(u),t_0(u))$, $u \in I$ be a $C^1$ curve with
$0 < q(u) < \delta$,
and $\tilde \gamma = \Psi \circ \gamma = (a,p_1,z_1,t_1)$,
which is well defined if $\delta$ is small enough. 
Along this proof we  choose different curves $\gamma(u)$.

Let us compute $\tilde \gamma'(u)$. Let $X = (X_q,X_p,X_{z},X_t)$ denote the vector field in~\eqref{eq:infinitymanifoldsstraightened} and $w=(q,p,z,t)$. Since
$\tilde \gamma(u) = \varphi_{t_1(u)-t_0(u)}(\gamma(u))$,
we have that
\begin{multline}
\label{eq:gammatildeprime}
\tilde \gamma'(u) =
\begin{pmatrix}
0 \\ p_1'(u) \\ z_1'(u) \\ t_1'(u)
\end{pmatrix}
=
D_w \varphi_{t_1(u)-t_0(u)}(\gamma(u))\gamma'(u)+ X(\tilde \gamma(u)) (t_1'(u)-t_0'(u)) \\
=
\begin{pmatrix}
Q_{\mid t_1(u)-t_0(u)} + X_q(\tilde \gamma(u))(t_1'(u)-t_0'(u)) \\
P_{\mid t_1(u)-t_0(u)} + X_p(\tilde \gamma(u))(t_1'(u)-t_0'(u)) \\
Z_{\mid t_1(u)-t_0(u)} + X_z(\tilde \gamma(u))(t_1'(u)-t_0'(u)) \\
T_{\mid t_1(u)-t_0(u)} + X_t(\tilde \gamma(u))(t_1'(u)-t_0'(u)) \\
\end{pmatrix} ,
\end{multline}
where $(Q,P,Z,T)$ is the solution of~\eqref{eq:variationalequationsatinfinity}
along $\varphi_{t-t_0(u)}(\gamma(u))$ with initial condition $\gamma'(u)$.

From the first component of~\eqref{eq:gammatildeprime},
\begin{equation}
\label{eq:t1pmt0p}
t_1'(u)-t_0'(u) = -\frac{Q_{\mid t_1(u)-t_0(u)}}{X_q(\tilde \gamma(u))}.
\end{equation}
We observe that $X_q(\tilde \gamma(u)) = a((a+\OO(q_0(u)^{1-\OO_1(a)})^3+\OO(a^4)))$.

We choose $\alpha$ in Lemma~\ref{lem:attractingnullcline} such that $\alpha(0) = 0$. 
We apply the change of variables
of Proposition~\ref{lem:straighteningvariationalequations} and consider $(Q,\tilde P,Z,T)$, the corresponding solution
of~\eqref{eq:variationalequationsatinfinitystausimplifyied}. 
By the choice of $\alpha$,
$(Q,P,Z,T)_{\mid s=0} = (Q,\tilde P,Z,T)_{\mid s=0}$.

Now we prove 2 of Theorem~\ref{prop:lambdalemma}. 
Assume $q(u) = u$, $0<u<\delta$. 
Let $W_u^0 = (Q_u^0,P_u^0,Z_u^0,T_u^0)= \gamma'(u)$. 
Let $(Q_u,P_u,Z_u,T_u)$ be the solution of~\eqref{eq:variationalequationsatinfinitystau} with initial condition $W_u^0$ and $(Q_u,\wt P_u,Z_u,T_u)$,
the solution of~\eqref{eq:variationalequationsatinfinitystausimplifyied} with the same initial condition.
If $\delta$ is small enough, $\sup_{0<u<\delta} \|\gamma'(u)\| = \sup_{0<u<\delta} \|W_u^0\|  < 2 \|W_0^0\|$. 
Hence, if $\delta$ is small enough, by item 3 of Lemma~\ref{lem:solvingstraightenedvariationalequations},
\[
|Q_u(\tilde s_{w_0})|  \ge C \left(|Q_u^0|- C u^{\frac{1}{5}+\OO_1(\rho)} \|W_0^0\|\right) u^{-(\frac{3}{5}+\OO_1(\rho))}
\ge \wt C |Q_u^0| u^{-(\frac{3}{5}+\OO_1(\rho))}.
\]
In the case we are considering, $Q_u^0 = 1$.
This inequality, combined with~\eqref{eq:t1pmt0p}, implies
\begin{equation}
\label{bound:t1pminust0p}
|t_1'(u)-t_0'(u)| \ge C  u^{-(\frac{3}{5}+\OO_1(\rho))}.
\end{equation}
Hence, by item 2 of Lemma~\ref{lem:solvingstraightenedvariationalequations}, the bound of $\alpha$ given by Lemma~\ref{lem:attractingnullcline}, bound~\eqref{bound:alpha0bytau}, \eqref{eq:t1pmt0p} and the facts that $T'=0$, $T=t_0'(u)$,
$|X_p(\tilde \gamma(u))| \le C q(u)^{1-Ca}$  and $|X_q(\tilde \gamma(u))| \le C $,
\[
\begin{aligned}
\left| \frac{p_1'(u)}{t_1'(u)}\right| & =
\frac{|P_u (\tilde s_{\gamma(u)})+ X_p(\tilde \gamma(u))(t_1'(u)-t_0'(u)|}{|t_1'(u)|}
\\
& \le
\frac{|P_u (\tilde s_{\gamma(u)})+ X_p(\tilde \gamma(u))(t_1'(u)-t_0'(u)|}{|t_1'(u)-t_0'(u))|}
\left(1+ \frac{|t_0'(u)|}{|t_1'(u)|}\right)\\
& \le C
\frac{|\wt P_u (\tilde s_{\gamma(u)})-\alpha (\tilde s_{\gamma(u)}) Q_u (\tilde s_{\gamma(u)})+ X_p(\tilde \gamma(u))(t_1'(u)-t_0'(u))|}{|t_1'(u)-t_0'(u)|}  \\
& =
C \frac{|\wt P_u (\tilde s_{\gamma(u)})+[(\alpha (\tilde s_{\gamma(u)}) X_q(\tilde \gamma(u))+ X_p(\tilde \gamma(u))](t_1'(u)-t_0'(u))|}{|t_1'(u)-t_0'(u)|}  \\
& =
C \left(\frac{|\wt P_u (\tilde s_{\gamma(u)})|}{|t_1'(u)-t_0'(u)|} +
\left|(\alpha (\tilde s_{\gamma(u)}) X_q(\tilde \gamma(u))+ X_p(\tilde \gamma(u))\right|\right) \\
& \le \wt C u^{1-Ca}.
\end{aligned}
\]
% \textcolor{magenta}{el terme entre parentesis es surt $\left(1+ \frac{|t_0'(u)|}{|t_1'(u)-t_0'(u)|}\right)$}
And, analogously, using Item 1 of Lemma~\ref{lem:solvingstraightenedvariationalequations} and the fact that $N>10$, we obtain that
\[
\begin{aligned}
\left| \frac{z_1'(u)}{t_1'(u)}\right| & \le
\frac{|Z_u (\tilde s_{w_0})+ X_z(\tilde \gamma(u))(t_1'(u)-t_0'(u)|}{|t_1'(u)-t_0'(u))|}
\left(1+ \frac{|t_0'(u)|}{|t_1'(u)|}\right)\\
& \le
C \left(\frac{|Z_u (\tilde s_{w_0}))|}{|t_1'(u)-t_0'(u)|} +| X_z (\tilde \gamma(u))|\right) \\
& \le C u^{\frac{3}{5}-Ca},
\end{aligned}
\]
which proves Item 2 of Theorem~\ref{prop:lambdalemma}.

Now we prove Item 3 of Theorem~\ref{prop:lambdalemma}. 
We first observe that, using~\eqref{bound:roughboundvariational},~\eqref{bound:tildesw0} and \eqref{eq:t1pmt0p},
\[
|t_1'(u)-t_0'(u)| \le C |Q_u (\tilde s_{\gamma(u)})| \le C q(u)^{-8-Ca}\|W_u\|.
\]
Then, using 1 of Lemma~\ref{lem:solvingstraightenedvariationalequations} and, since $N>10$, $|X_z(\tilde \gamma(u))| \le C q(u)^{N-Ca}$,
\[
\begin{aligned}
|z_1'(u)-z_0'(u)| & = |Z_{\mid t_1(u)-t_0(u)}-Z_{\mid 0} + X_z(\tilde \gamma(u))(t_1'(u)-t_0'(u)) | \\
& \le C q(u)^{N-10} \|W_u\|.
\end{aligned}
\]
Hence, 3 is proven.

We finally prove 4. Let $\tilde q_0 \in (0,\delta)$ and $\tilde w_0 = (\tilde q_0,a,\tilde z_0,\tilde t_0) \in \Lambda_{\delta}^-(K)$.
Taking into account~\eqref{bound:t1pminust0p}, which also holds in this case, by the Implicit Function Theorem, the equation
\[
t_1(q,a,z,t) -t = t_1(\tilde q_0,a,\tilde z_0,\tilde t_0)-\tilde t_0
\]
defines a function $q_0(z,t)$, with $(z,t)$ in neighborhood of $(z_0,t_0)$.
Given $(z_0(u),t_0(u))$, any curve in $K\times \TT$ with $z_0(0)=\tilde z_0$ and $t_0(0) = \tilde t_0$,
let $\gamma(u) = (q_0(z_0(u),t_0(u)),a,z_0(u),t_0(u))$ and $\tilde \gamma (a,p_1,z_1,t_1)= \Psi \circ \gamma$.
Since, by the definition of the function $q_0$, $(t_1(u) -t_0(u))' = 0$, from~\eqref{eq:gammatildeprime} we obtain that
\[
\tilde \gamma'(u)
=
\begin{pmatrix}
Q_{\mid t_1(u)-t_0(u)}  \\
P_{\mid t_1(u)-t_0(u)}  \\
Z_{\mid t_1(u)-t_0(u)}  \\
T_{\mid t_1(u)-t_0(u)}  \\
\end{pmatrix} ,
\]
%\textr{No hauria de ser $\tilde \gamma'(u)$?}
%%
%
where $(Q,P,Z,T)$ is the solution of~\eqref{eq:variationalequationsatinfinity}
along $\varphi_{t-t_0(u)}(\gamma(0))$ with initial condition $\gamma'(0)$. In particular, $Q_{\mid t_1(u)-t_0(u)} = 0$.
But, then, this implies that $Q_{\mid 0}$ has to be the value $\wt Q_0$ given by 4 of Lemma~\ref{lem:solvingstraightenedvariationalequations}. Moreover, this implies  $P_{\mid t_1(u)-t_0(u)} =\tilde P_{\mid t_1(u)-t_0(u)}$.  From the bounds of Lemma~\ref{lem:solvingstraightenedvariationalequations} follow 4 of Theorem~\ref{prop:lambdalemma}.
\end{proof}

\section{Conjugation with the Bernouilli shift}

We devote this section to proof Propositions \ref{prop:horizontalstrips} and \ref{prop:condicionsdecons}. This is done in several steps. First, in Section \ref{sec:differentialintermediatereturn}, we analyze the differential of the  return maps $\wt \Psi_{i,j}$ defined in \eqref{def:wtPsiij}. In particular, we analyze its expanding, contracting and center directions. Then, in Section \ref{sec:proofofpropositionprop:horizontalstrips}, we prove Proposition \ref{prop:horizontalstrips}.
In Section \ref{sec:conefields}, we analyze the differential of $\wt\Psi$, the high iterate of the  return map  defined in \eqref{def:Psitilde}. Finally, in Section \ref{sec:provaproposiciodelscons}, we use these cone fields to prove Proposition \ref{prop:condicionsdecons}.

\subsection{The differential of the intermediate return maps}\label{sec:differentialintermediatereturn}

In the following lemma, we will write a vector $v\in T_\omega\QQQ_\de^i$ in the basis
$\frac{\partial}{\partial p}$, $\frac{\partial}{\partial \tau}$, $\frac{\partial}{\partial z}$, where
$\frac{\partial}{\partial z}$ stands for $(\frac{\partial}{\partial z_1},\frac{\partial}{\partial z_2})$, given by the coordinates defined by $A_i$.

\begin{lemma}
\label{lem:almostinvariantvectorfields}
Let $N$ be fixed.
Assume $\de$ small enough.
\begin{enumerate}
\item
The vector field
\[
v_1 = \begin{pmatrix}
0 \\ 1 \\ 0
\end{pmatrix}
\]
satisfies that,
for any $\omega =(p,\tau,z) \in \QQQ_\de^i \cap \wt \Psi_{i,j}^{-1}(\QQQ_\de^k)$,
\begin{equation}
\label{eq:v1v2almostinvariantvectorfields}
D \wt \Psi_{i,j}(\omega)  v_1  = \lambda_1^i(\omega)(v_1+ \tilde v_1^{i,j}(\omega)),
\qquad 1 \le i,j \le 2,
\end{equation}
where $\wt \Psi_{i,j}$ is the  return map defined in \eqref{def:wtPsiij} written in coordinates $(p,\tau,z)$ and
\begin{gather*}
\lambda_1^i(\omega)\gtrsim \tau^{-\frac{3}{5}+\tilde Ca}, \\
\quad |\tilde v_{1,p}^{i,j}(\omega)|\le \OO\left(\tau^{1-\tilde Ca}\right), \quad \tilde v_{1,\tau}^{i,j}(\omega) = 0, \quad \|\tilde v_{1,z}^{i,j}(\omega)\|\le \OO\left(\tau^{\frac{3}{5}-\tilde Ca}\right).
\end{gather*}
Moreover, for any vector
\[
\hat v_1 = \begin{pmatrix}
b\\ 1 \\ c
\end{pmatrix}
\]
for $|b|,|c|\lesssim 1$ one has
\begin{equation}\label{def:v1hatstreching}
 D \wt \Psi_{i,j}(\omega)  \hat  v_1 = \lambda_1^i(\omega)(v_1+ \hat  v_1^{i,j}(\omega)),
\qquad 1 \le i,j \le 2,
\end{equation}
for some vector $\hat  v_1^{i,j}$ satisfying
\[
 \quad |\hat  v_{1,p}^{i,j}(\omega)|\le \OO\left(\tau^{1-\tilde Ca}\right), \quad |\hat v_{1,\tau}^{i,j}(\omega)| , \|\hat  v_{1,z}^{i,j}(\omega)\|\le \OO\left(\tau^{\frac{3}{5}-\tilde Ca}\right)\quad\text{and}\quad  \|\lambda_1^i(\omega)\hat  v_{1,z}^{i,j}(\omega)\|\le \OO\left(1\right).
\]
\item
There exist vector fields $v_2^{i,j}: \QQQ_\de^i \to T\QQQ_\de^i$, $i,j=1,2$, of the form
\begin{equation}\label{def:vec2Item2}
v_2^{i,j} (\omega)= \begin{pmatrix}
1 \\ \tilde v_{2,\tau}^{i,j} (\omega)\\ v_{2,z}^i(z) +  \tilde v_{2,z}^{i,j}(\omega)
\end{pmatrix},
\end{equation}
% \textcolor{magenta}{que es $v_{2,z}^i$??, $Q^k_\rho$ ha de ser $j$}
with certain functions $v_{2,z}^i$ depending only on $z$ satisfying
$\|v_{2,z}^i(z)\|=\OO(1)$ and
\[
|\tilde v_{2,\tau}^{i,j}(\omega)| = \OO(p)+\OO\left(\tau^{\frac{3}{5}-\tilde Ca}\right),  \qquad \|\tilde v_{2,z}^{i,j}(\omega)\| \le \OO\left(\tau^{\frac{3}{5}-\tilde Ca}\right), \qquad i =1,2,
\]
such that for any $\omega =(p,\tau,z) \in \QQQ_\de^i \cap \wt \Psi_{i,j}^{-1}(\QQQ_\de^j)$, the following holds.
\begin{equation}
\label{eq:v1v2almostinvariantvectorfields}
D \wt \Psi_{i,j}(\omega)  v_2^{i,j}(\omega)  = \lambda_2^{i,j}(\omega) (v_2^{i,j} (\wt \Psi_{i,j}(\omega)) + \hat v_2^{i,j}(\omega)),
\qquad 1 \le i,j \le 2,
\end{equation}
where
\begin{gather*}
\lambda_2^{i,j}(\omega)^{-1} \gtrsim \tau^{-\frac{3}{5}+\tilde Ca}, \\
\hat v_{2,p}^{i,j}(\omega) = 0, \quad |\hat v_{2,\tau}^{i,j}(\omega)|\le \OO\left(\tau^{1-\tilde Ca}\right), \quad \|\hat v_{2,z}^{i,j}(\omega)\|\le \OO\left(\tau^{\frac{3}{5}-\tilde Ca}\right).
\end{gather*}
%\textcolor{red}{$k$ i $j$ es el mateix?}
\item
For any $v_z (z)$, $C^0$ vector field in $\RR^2$, $i,j=1,2$, there exist vector fields
\[
v^{i,j}(\omega) =
\begin{pmatrix}
0 \\ 0 \\v_z(z)
\end{pmatrix}
+\begin{pmatrix}
0 \\ \tilde v_\tau^{i,j} (\omega)\\ 0
\end{pmatrix},
\]
with $|\tilde v_\tau^{i,j}| \le \OO(\tau^{1/5-Ca})$, such that the following holds.
\begin{equation}
\label{eq:vzalmostinvariantvectorfields}
D \wt \Psi_{i,j}(\omega)  v^{i,j}(\omega) = \begin{pmatrix}
0 \\ 0 \\ D\widehat{\mathtt{S}_i}(z) v_z(z)
\end{pmatrix}+\hat v^{i,j}(\omega) ,
\qquad \|\hat v^{i,j}\| \le \OO(\tau^{1/5-Ca},a).
\end{equation}
\end{enumerate}
\end{lemma}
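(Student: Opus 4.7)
The plan is to decompose $\wt\Psi_{i,j}=\wt\Psi_{\loc,i,j}\circ\wt\Psi_{\glob,i}$ and compute the action of the differential on vectors of each type separately. The differential of the global map is read off directly from the explicit expression \eqref{def:globalmapiinlocalcoordinates}: in the basis $(\partial_p,\partial_\tau,\partial_z)$ of $T_\omega\Sigma_2$ versus $(\partial_q,\partial_\sigma,\partial_z)$ of $T\Sigma_1$, one obtains the matrix
\[
D\wt\Psi_{\glob,i}(p,\tau,z) =
\begin{pmatrix}
\OO(\tau) & \nu_1^i(z)+\OO(\tau) & \OO(\tau)\\
\nu_2^i(z)+\OO(p) & \OO(p) & \OO(p)\\
\OO(1) & \OO(1) & D\wh\tS_i(z)+\OO(p,\tau)
\end{pmatrix}.
\]
The differential of the local map will be controlled via Theorem~\ref{prop:lambdalemma}, after accounting for the coordinate changes $A_j$ and $B_i^{-1}$ (which are close to the identity in the fibers $\{q=0\}$, $\{p=0\}$ but which mix $t$ with $q$ or $p$ via the functions $\tilde w_i^{u,s}$). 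The heart of the matter is to combine the polynomial expansion $|t_1'(q)|\gtrsim q^{-3/5+Ca}$ of the local map with the $\tau\mapsto q$ correspondence given by the global map.

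For Item~1, I push $v_1=\partial_\tau$ through $D\wt\Psi_{\glob,i}$ to obtain a vector whose components in $\Sigma_1$ are approximately $(\nu_1^i(z),0,\OO(1))$, and then apply item~2 of Theorem~\ref{prop:lambdalemma} to the curve $q\mapsto\Psi_\loc(q,a,z_0(q),t_0(q))$ obtained via $B_i^{-1}$. The bound $|t_1'(q)|\gtrsim q^{-3/5+Ca}$ produces the expanding factor $\lambda_1^i$ (using $q\sim\nu_1^i\tau$), while $|p_1'(q)|\le \wt C_3$ and $|z_1'(q)|\le \wt C_3$ translate, after division by $t_1'(q)$ and the change $A_j$, into the claimed $\OO(\tau^{1-\tilde Ca})$ and $\OO(\tau^{3/5-\tilde Ca})$ bounds for $\tilde v_{1,p}^{i,j}$ and $\tilde v_{1,z}^{i,j}$. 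The same argument handles $\hat v_1=(b,1,c)$: the $\partial_p$ component $b$ enters only as a bounded perturbation of the input time $t_0$ through $\tilde w_i^u$, while $c\partial_z$ enters as a bounded variation in $z_0$, and neither affects the leading $q^{-3/5+Ca}$ expansion.

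Item~2 mirrors Item~1 using item~4 of Theorem~\ref{prop:lambdalemma}: to get a contracting direction with $\partial_p$-leading part, I start from the $\sigma$-direction in $\Sigma_1$ (which is the image of $\partial_p$ under $D\wt\Psi_{\glob,i}^{-1}$ up to lower order), i.e.\ a variation of $t_0$ at fixed $z_0$. Item~4 supplies a compensating variation $q_0'(0)=\OO(\tilde q_0^{1/5})$ so that $t_1'(0)=t_0'(0)$; then $|p_1'(0)|\le\wt C_5\tilde q_0^{3/5-Ca}$, which after passage through $A_j$ becomes the $\lambda_2^{-1}\sim\tau^{3/5-\tilde Ca}$ contraction. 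The fact that the image has the form $v_2^{i,j}(\wt\Psi_{i,j}(\omega))+\hat v_2^{i,j}$ with $\hat v_{2,p}^{i,j}=0$ is built into the construction by solving for the correct $\tilde v_{2,\tau}^{i,j}$ component by implicit function arguments, and the center coefficient $v_{2,z}^i(z)$ is determined by the requirement that the $\partial_z$-part of $D\wt\Psi_{\glob,i}v_2^{i,j}$ balances $D\wh\tS_i(z)v_{2,z}^i(z)$ up to the lower-order $\OO(p,\tau)$ corrections from the Lambda Lemma.

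Item~3 is the cleanest: by item~4 of Theorem~\ref{prop:lambdalemma}, any $\CCC^1$ variation of $(z_0,t_0)$ can be compensated by a small $q_0$-variation so that $t_1'(0)=t_0'(0)$ and $|z_1'(0)-z_0'(0)|\le\wt C_5 \tilde q_0$. Taking $(z_0(u),t_0(u))=(z+uv_z(z),0)$ and pulling back through $B_i^{-1}\circ D\wt\Psi_{\glob,i}$ produces the vector field $v^{i,j}$, and the output in the $z$-component is, to leading order, exactly $D\wh\tS_i(z)v_z(z)$ because the global map's $z$-component is $\wh\tS_i(z)+\OO_1(p,\tau)$ and the local map does not substantially move $z$. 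The required $\tilde v_\tau^{i,j}=\OO(\tau^{1/5-Ca})$ correction corresponds to the $q_0'(0)=\OO(\tilde q_0^{1/5})$ compensation of item~4.

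The main obstacle will be bookkeeping in Item~2: the vector field $v_2^{i,j}$ must be adjusted simultaneously at the input and output of $\wt\Psi_{i,j}$ (it appears on both sides of \eqref{eq:v1v2almostinvariantvectorfields}), and the center coefficient $v_{2,z}^i$ must be chosen so that the contracting direction is \emph{almost invariant} rather than merely contracted. This is achieved by solving a fixed-point equation for $v_{2,z}^i$ modelled on the linearisation of the scattering dynamics $\wh\tS_i$, using that the perturbative terms $\OO(p,\tau)$ in both the global and local maps are small compared to the hyperbolic splitting provided by the Lambda Lemma.
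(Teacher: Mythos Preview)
Your treatment of Item~1 is correct and matches the paper's argument.

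For Item~2, however, your approach via item~4 of Theorem~\ref{prop:lambdalemma} on the forward map does not yield a contracting direction. Item~4 guarantees $t_1'(0)=t_0'(0)$, not that $t_1'(0)$ is small. If you start from the $\sigma$-direction in $\Sigma_1$ (a pure $t_0$-variation), the output of the local map has $t_1'(0)=t_0'(0)=\OO(1)$; passing through $A_j$ this becomes an $\OO(1)$ component in $\tau$, so the image of $v_2^{i,j}$ is not of the form $\lambda_2\,(v_2^{i,j}(\wt\Psi_{i,j}(\omega))+\text{small})$---after division by $\lambda_2\sim\tau^{3/5-\tilde Ca}$ the $\tau$-component blows up. The paper instead works with the \emph{inverse} differential: it computes $D\wt\Psi_{i,j}(\omega)^{-1}v_2^{i,j}(\wt\Psi_{i,j}(\omega))$ by composing the inverse local map (to which item~2 of Theorem~\ref{prop:lambdalemma} applies with time reversed and the roles of $p,q$ swapped) with $D\wtPsig{i}^{-1}$, mirroring Item~1 exactly and producing the expansion factor $(\lambda_2^{i,j})^{-1}$ directly. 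Also, no fixed-point equation is needed for $v_{2,z}^i$: it is defined explicitly as $v_{2,z}^i(z)=\partial_\sigma\pi_z\wtPsig{i}^{-1}(0,0,\wh\tS_i(z))$.

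Item~3 is not the cleanest case but the most delicate, and your sketch misses a key cancellation. After pushing $(0,0,v_z)^\top$ through $D\wtPsig{i}$ and $DB_i^{-1}$, the $t$-component of the input curve in $\Sigma_1$ picks up an $\OO(1)$ term from the $z$-dependence of $t_i^u$; item~4 of Theorem~\ref{prop:lambdalemma} preserves this as $t_1'(0)=t_0'(0)=\OO(1)$, and the final passage through $A_j$ does \emph{not} cancel it (the mismatch $(t_i^u)'-(t_j^s)'$ is generically $\OO(1)$). The paper fixes this by splitting the $\tau$-correction as $\tilde v_\tau^{i,j}=\tilde v_{\tau,1}^{i,j}+\tilde v_{\tau,2}^{i,j}$: the first piece $\tilde v_{\tau,1}^{i,j}=\OO(\tau^{1/5})$ is chosen via item~4 of Lemma~\ref{lem:solvingstraightenedvariationalequations} so that the $Q$-variational component satisfies the non-growth condition; the second piece $\tilde v_{\tau,2}^{i,j}=\OO(\tau^{3/5-Ca})$ lies along the expanding direction and is tuned so that, after multiplication by $\lambda(\tau)\sim\tau^{-3/5+Ca}$, it exactly cancels the $\OO(1)$ residual in the $\tau$-output. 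The value of $\tilde v_{\tau,1}^{i,j}$ is then recovered by a short fixed-point argument in which $\tilde v_{\tau,2}^{i,j}$ enters linearly.
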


\begin{proof}
We start with $v_1$. For any $\omega = (p,\tau,z) \in \QQQ_\de^i$, $z= (z_1,z_2)$, we have that, in view of~\eqref{def:globalmapiinlocalcoordinates},
\[
\begin{aligned}
D\wtPsig{i}(\omega) v_1 & =
\begin{pmatrix}
\OO_1(\tau) & \nu_1^i(z) + \OO_1(p,\tau) & \OO_1(\tau) \\
\nu_2^i(z) + \OO_1(p,\tau) & \OO_1(p) & \OO_1(p) \\
\widetilde S_{i,p}(z)  + \OO_1(p,\tau) & \widetilde S_{i,\tau}(z)  + \OO_1(p,\tau)& D\wh{\mathtt{S}}_i(z) + \OO_1(p,\tau)
\end{pmatrix} \begin{pmatrix}
0 \\ 1 \\ 0
\end{pmatrix} \\
& = (\nu_1^i(z) + \OO_1(p,\tau))
\begin{pmatrix}
1 \\  \OO_1(p,\tau) \\
\wt{S}_{i,\tau}(z)\nu_1^i(z)^{-1}+ \OO_1(p,\tau)
\end{pmatrix},
\end{aligned}
\]
% \textcolor{magenta}{a nosaltres la segona component ens surt $\OO_1(p,\tau)$}.
% \\
where $\widetilde S_{i,p}(z) = \partial_p \pi_z {\Psig{i}}(0,0,z)$ and
$\widetilde S_{i,\tau}(z) = \partial_\tau \pi_z {\Psig{i}}(0,0,z)$.
Hence, using that $(q^*,\sigma^*,z^*) = \wtPsig{i}(\omega)$ satisfies $q^* = \tau \nu_1^i(z) ( 1+ \OO_1(p,\tau))$, by Item 2 of Theorem~\ref{prop:lambdalemma}, we have that, using \eqref{def:coordenadeslocalsaSigma1} and \eqref{def:coordenadeslocalsaSigma2}
% \textcolor{magenta}{el primer terme s'ha de substituir en $A^{-1}_i(w)$}
\[
\begin{aligned}
D\wt \Psi_{i,j}(\omega) v_1 & = DA_j (\Psi_{i,j}(A^{-1}_i(\omega)))  D \Psiloc{i}{j}(B_i^{-1} \circ \wtPsig{i}(\omega)) D B_i^{-1}(\wtPsig{i}(\omega)) D\wtPsig{i}(\omega) v_1\\
& = (\nu_1^i(z) + \OO_1(p,\tau)) DA_j (\Psi_{i,j}(A^{-1}_i(\omega)))  D \Psiloc{i}{j}((B_i^{-1} \circ \wtPsig{i}(\omega))\begin{pmatrix}
1 \\ \OO_0(p,\tau) \\
\OO_0(p,\tau)
\end{pmatrix} \\
& = \lambda_1(\omega) (\nu_1^i(z) + \OO_1(p,\tau))  \begin{pmatrix}
P^*_{i,j} \\ 1 \\
Z^*_{i,j}
\end{pmatrix},
\end{aligned}
\]
where, for some $C>0$,
\[
\lambda_1(\omega) \gtrsim \tau^{-\frac{3}{5}+Ca}, \qquad |P^*_{i,j}|\le \OO\left(\tau^{1- Ca}\right),\qquad \|Z^*_{i,j}\| \le \OO\left(\tau^{\frac{3}{5}-Ca}\right).
\]

%\textcolor{red}{Se'ns queixaran de la notacio primera?}

This proves the claim for $v_1$, taking $\lambda_1^i (\omega)= \lambda_1(\omega) (\nu_1^i(z) + \OO_1(p,\tau))$.

%\textcolor{red}{Posar mes dependencies?}
The proof of the second statement of Item 1 follows exactly the same lines.

We now prove Item  2 of Lemma \ref{lem:almostinvariantvectorfields}. Let $v_{2,z}^i(z)= \widehat S_{i,\sigma}(\widehat{\mathtt{S}}_i(z))$, where $\widehat S_{i,\sigma}(z) = \partial_\sigma \pi_z {\wtPsig{i}^{-1}}(0,0,z)$.
%We will compute the ``expanding'' vector of $D\Psi_1(\omega)^{-1}$.

%\textcolor{red}{Que es S hat, aqui em perdo una mica? Aquesta frase va aqui? No hem de provar 2?}

We observe that, since $\wt \Psi_{i,j} =  \wtPsiloc{i}{j} \circ \wtPsig{i}$,
\[
\begin{split}
\left(D\wt \Psi_{i,j}(\omega)\right)^{-1}= D (\wt \Psi_{i,j}^{-1})(\wt \Psi_{i,j}(\omega)) &= D (\wtPsig{i}^{-1}) (\wtPsiloc{i}{j}^{-1} \circ \wt \Psi_{i,j}(\omega))
D (\wtPsiloc{i}{j}^{-1}) (\wt \Psi_{i,j}(\omega)) \\
&= D (\wtPsig{i}^{-1}) (\wtPsig{i}(\omega))
D (\wtPsiloc{i}{j}^{-1}) (\wt \Psi_{i,j}(\omega)).
\end{split}
\]
First of all, we notice that, denoting $\wt \Psi_{i,j}(\omega) = (\hat p,\hat \tau, \hat z)$, by~\eqref{def:globalmapiinlocalcoordinates} and Item 1 of Theorem~\ref{prop:lambdalemma},
\begin{equation}
\label{bound:hatp}
(\tau \nu_1^i(z))^{1+Ca}(1+\OO_1(p,\tau)) \le \hat p \le (\tau \nu_1^i(z))^{1-Ca}(1+\OO_1(p,\tau)).
\end{equation}
Then, since
\[
\wtPsiloc{i}{j}^{-1}  = B_i \circ \Psiloc{i}{j}^{-1} \circ A_j^{-1},
\]
applying Item 2 of Theorem~\ref{prop:lambdalemma} to $(\Psiloc{i}{j})^{-1}$ (that is, applying Item 2 of Theorem~\ref{prop:lambdalemma}, changing the sign of the vector field), evaluated at $A_j^{-1} \circ \wt \Psi_{i,j}(\omega)$, and to the vector $v_2^{i,j}$ in \eqref{def:vec2Item2},
\[
D (\wtPsiloc{i}{j}^{-1}) (\wt \Psi_{i,j}(\omega)) v_2^{i,j} (\wt \Psi_{i,j}(\omega)) =
\tilde \lambda_2^{i,j}(\omega) \begin{pmatrix}
\hat Q^*_{i,j} \\ 1 \\
\hat Z^*_{i,j}
\end{pmatrix},
\]
where, for some $\tilde C>0$,
\[
\tilde \lambda_2^{i,j}(\omega) \gtrsim \tau^{-\frac{3}{5}+\tilde Ca}, \qquad |\hat Q^*_{i,j}|\le \OO\left(\tau^{1-\tilde Ca}\right),\qquad \|\hat Z^*_{i,j}\| \le \OO\left(\tau^{\frac{3}{5}-Ca}\right).
\]
Hence, by~\eqref{def:inverseglobalmapiinlocalcoordinates},
\begin{multline*}
D\wt \Psi_{i,j}(\omega)^{-1} v_2^j(\wt \Psi_{i,j}(\omega)) =
D (\wtPsig{i}^{-1})(\wtPsig{i}(\omega))
D (\wtPsiloc{i}{j}^{-1}) (\wt \Psi_{i,j}(\omega)) v_2^j (\wt \Psi_{i,j}(\omega))\\
\begin{aligned}
& = \tilde  \lambda_2^{i,j}(\omega) D (\wtPsig{i}^{-1})(\wtPsig{i}(\omega))\begin{pmatrix}
\hat Q^*_{i,j} \\ 1 \\
\hat Z^*_{i,j}
\end{pmatrix} \\
& = \tilde \lambda_2^{i,j}(\omega) \begin{pmatrix}
\OO_1(\tau) & \mu_1^i(\widehat{\mathtt{S}}_i(z)) + \OO_1(p,\tau) & \OO_1(\tau) \\
\mu_2^i(\widehat{\mathtt{S}}_i(z)) + \OO_1(p,\tau) & \OO_1(p) & \OO_1(p) \\
\widehat S_{i,q}(\widehat{\mathtt{S}}_i(z))  +\OO_1(p,\tau)& \widehat S_{i,\sigma}(\widehat{\mathtt{S}}_i(z)) + \OO_1(p,\tau)& D(\widehat{\mathtt{S}}_i^{-1})(\widehat{\mathtt{S}}_i(z)) + \OO_1(p,\tau)
\end{pmatrix}\begin{pmatrix}
\hat Q^*_{i,j} \\ 1 \\
\hat Z^*_{i,j}
\end{pmatrix}\\
& = \tilde \lambda_2^{i,j}(\omega) (\mu_1^i(\widehat{\mathtt{S}}_i(z))+\OO_1(p,\tau))\begin{pmatrix}
1 \\ \bar T^*_{i,j} \\
\widehat S_{i,\sigma}(\widehat{\mathtt{S}}_i(z)) + \bar Z^*_{i,j}
\end{pmatrix},
\end{aligned}
\end{multline*}
where $\widehat S_{i,q}(z) = \partial_q \pi_z {\wtPsig{i}^{-1}}(0,0,z)$ and $\widehat S_{i,\sigma}(z) = \partial_\sigma \pi_z {\wtPsig{i}^{-1}}(0,0,z)$ and
\[
|\bar T^*_{i,j}|\le \OO_1(p)+\OO\left(\tau^{\frac{3}{5}-\tilde Ca}\right),\; \|\bar Z^*_{i,j}\| \le \OO\left(\tau^{\frac{3}{5}-\tilde Ca}\right).
\]
The claim follows taking $(\lambda_2^{i,j})^{-1}(\omega) = \tilde \lambda_2^{i,j}(\omega) (\mu_1^i(S_i(z))+\OO_1(p,\tau))$.

Finally, we prove Item 3. In order to find the vector fields, we look for
$v^{i,j} = v_0+ \tilde v^{i,j}(p,\tau,z)$, with $v_0 = (0,0,v_z)^\top$ and $\tilde v^{i,j} = (0, \tilde v_{\tau,1}^{i,j}+\tilde v_{\tau,2}^{i,j}, 0)^\top$. Note that both corrections appear in the $\tau$-direction. We write down them separated since they will play the different roles. Roughly speaking $\tilde v_{\tau,1}^{i,j}$ will be obtained by applying Item 4 in Theorem~\ref{prop:lambdalemma} whereas $\tilde v_{\tau,2}^{i,j}$ is obtained by applying Item 2 in the same proposition.

We have that
%We write
%\[
%D \Psig{1}(\omega) = \Phi_0(z)+ \Phi_1(p,\tau,z),
%\]
%where
%\[
%\Phi_0(z) = \begin{pmatrix}
%0 & \nu_1^1(z)  & 0 \\
%\nu_2^1(z)  & 0 & 0 \\
%\widetilde S_{1,p}(z) & \widetilde S_{1,\tau}(z) & DS_1(z)
%\end{pmatrix}
%\]
%and $\Phi_1(p,\tau,z) = \OO_1(p,\tau)$.
%Then, denoting $\RRR(\omega,\Delta v_0) =   \Phi_1(\omega)   v_1(\omega)$,
\[
%\begin{aligned}
D \wtPsig{i}(\omega) v^{i,j}(\omega) %& = \Phi_0(z) v_0(z) + \Phi_0(z) v_1(\omega)+  \Phi_1(\omega)  v_0(z) + \RRR(\omega,v_1) \\
%&
= \begin{pmatrix}
\nu_1^{i}(z)  \tilde v_{\tau,1}^{i,j} + \OO_1(p,\tau) v^{i,j} \\
%\nu_2^{i}(z) \tilde  v_p^{i,j} +
\OO_1(p,\tau) v^{i,j} \\
D\widehat{\mathtt{S}}_i(z) v_z + \widetilde S_{i,\tau}(z)\tilde  v_{\tau,1}^{i,j}
% +\widetilde S_{i,\tau}(z) \tilde v_p^{i,j}
+ \OO_1(p,\tau) v^{i,j}
\end{pmatrix}
+
 \begin{pmatrix}
\nu_1^{i}(z)  \tilde v_{\tau,2}^{i,j}  \\
%\nu_2^{i}(z) \tilde  v_p^{i,j} +
0 \\
\widetilde S_{i,\tau }(z)\tilde  v_{\tau,2}^{i,j}
\end{pmatrix}.
%\end{aligned}
\]
Hence, by \eqref{def:coordenadeslocalsaSigma1}
\begin{equation}
\label{eq:BinverseDPsiglob1onv}
%\begin{aligned}
DB_i^{-1}(\wtPsig{i}(\omega))D \wtPsig{i}(\omega) v^{i,j}(\omega)
= w_1(\omega)+w_2(\omega)
\end{equation}
with
\begin{equation}
\label{def:w1omega}
w_1(\omega) =
\begin{pmatrix}
\nu_1^{i}(z)  \tilde v_{\tau,1}^{i,j} + \OO_1(p,\tau) v^{i,j} \\
%\nu_2^{i}(z) \tilde  v_p^{i,j} +
 -\beta_{i,2}(\widehat{\mathtt{S}}_i(z))DS_i(z) v_z -\left[\beta_{i,1}(\widehat{\mathtt{S}}_i(z))\nu_1^{i}(z) + \beta_{i,2}(\widehat{\mathtt{S}}_i(z)) \widetilde S_{i,\tau}(z)\right] \tilde v_{\tau,1}^{i,j}  +\OO_1(p,\tau) v^{i,j} \\
D\widehat{\mathtt{S}}_i(z) v_z + \widetilde S_{i,\tau}(z)\tilde  v_{\tau,1}^{i,j}
% +\widetilde S_{i,\tau}(z) \tilde v_p^{i,j}
+ \OO_1(p,\tau) v^{i,j}
\end{pmatrix}
\end{equation}
and
\begin{equation}
\label{def:w2omega}
w_2(\omega) =
\begin{pmatrix}
\nu_1^{i}(z)  \tilde v_{\tau,2}^{i,j}  \\
%\nu_2^{i}(z) \tilde  v_p^{i,j} +
-\left[\beta_{i,1}(\widehat{\mathtt{S}}_i(z))\nu_1^{i}(z) + \beta_{i,2}(\widehat{\mathtt{S}}_i(z)) \widetilde S_{i,\tau}(z)\right]\tilde v_{\tau,2}^{i,j} \\
\widetilde S_{i,\tau}(z)\tilde  v_{\tau,2}^{i,j}
\end{pmatrix}
\end{equation}
where the functions $\beta_{i,1}(z)=-\frac{\partial \wt w^u_i}{\partial q}(0,0,z)$ and $\beta_{i,2}=-(t^u_i)'(z)-
\frac{\partial \wt w^u_i}{\partial z}(0,0,z)$ (see \eqref{def:coordenadeslocalsaSigma1}).
%where introduced in~\eqref{def:DBi}.

Since $\pi_q (\wtPsig{i}(\omega)) = \nu_1^i(z) \tau (1+\OO_1(p,\tau))$, (see \eqref{def:globalmapiinlocalcoordinates}),
by Item  4 of Lemma~\ref{lem:solvingstraightenedvariationalequations} with $\wt P_0=0$, there exists a linear map $\wt Q(\wtPsig{i}(\omega))$, with $\|\wt Q(\wtPsig{i}(\omega))\| \le \OO(\tau^{1/5})$,
such that if $w_1$ satisfies
\begin{equation}
\label{eq:nongrowthcondition}
\pi_q w_1(\omega) = \wt Q(\wtPsig{i}(\omega))\left[ \pi_{t, z}  w_1(\omega)\right]
\end{equation}
then
\[
\begin{aligned}
\pi_t  D\Psiloc{i}{j}(\wtPsig{i}(\omega)) w_1(\omega) & = \pi_t w_1(\omega), \\
|\pi_p  D\Psiloc{i}{j}(\wtPsig{i}(\omega)) w_1(\omega)| & \le \OO(\tau^{3/5-Ca}) \|\pi_{t, z}  w_1(\omega)\|, \\
\|\pi_z  D\Psiloc{i}{j}(\wtPsig{i}(\omega)) w_1(\omega)- \pi_z w_1(\omega)\|
& \le \OO(\tau) \| w_1(\omega) \|.
\end{aligned}
\]
We use this fact to choose a suitable $w_1$ (by choosing a suitable  $\tilde v_{\tau,1}^{i,j}$).

Moreover, since $\nu_1^{i}$ does not vanish, from Item 2 of Theorem~\ref{prop:lambdalemma},
\[
D\Psiloc{i}{j}(\wtPsig{i}(\omega)) w_2(\omega)
= \lambda(\omega) \nu_1^{i}(z)  \tilde v_{\tau,2}^{i,j}(\omega)
\begin{pmatrix}
P^* \\ 1 \\ Z^*
\end{pmatrix}
\]
with
\[
\lambda(\omega) \gtrsim \tau^{-(3/5-Ca)}, \qquad |P^*| \le \OO(\tau^{1-Ca}), \qquad |Z^*| \le \OO(\tau^{3/5-Ca}).
\]
%\textcolor{red}{Primera desigualtat?}
Then, if we assume for a moment that~\eqref{eq:nongrowthcondition} is satisfied, cumbersome but straightforward computations lead to
\begin{equation}
\label{eq:DwtPsiijvij}
D\wt \Psi_{i,j}(\omega) v^{i,j} =
\begin{pmatrix}
\lambda(\tau) \nu_1^{i}(z)  \tilde v_{\tau,2}^{i,j}(\omega) P^* + \OO(\tau^{3/5-Ca}) v^{i,j} \\
\wt T_0(\omega)
  +\OO_1(p,\tau) v^{i,j} + \lambda(\omega)\wt T_1(\omega)  \tilde v_{\tau,2}^{i,j}(\omega) \\
DS_i(z) v_z + \widetilde S_{i,p}(z)\tilde  v_{\tau,1}^{i,j}
% +\widetilde S_{i,\tau}(z) \tilde v_p^{i,j}
+ \OO_1(p,\tau) v^{i,j} +   \lambda(\omega) \nu_1^{i}(z)  \tilde v_{\tau,2}^{i,j}(\omega) Z^*
\end{pmatrix}
\end{equation}
where
\[
\begin{split}
\wt T_0(\omega)=&
\left[\alpha_{i,2}(S_i(z)) -\beta_{i,2}(S_i(z))\right]DS_i(z) v_z -((\beta_{i,1}(S_i(z))-\alpha_{i,1}(S_i(z)))\nu_1^{i}(z) \\
&+ (\beta_{i,2}(S_i(z))-\alpha_{i,2}(S_i(z))) \widetilde S_{i,p}(z)) \tilde v_{\tau,1}^{i,j}\\
\wt T_1(\omega)=&  \nu_1^{i}(z)\left[1+(\alpha_{i,1}(S_i(z))+\OO_1(p,\tau))P^*+(\alpha_{i,2}(S_i(z))+\OO_1(p,\tau))Z^*\right]
\end{split}
\]
where $\al_{i,1}= - \pa_p\tilde w_i^s(p,z)$ and $\al_{i,2}=-\pa_z t_i^s(z) - \pa_z \tilde w_i^s(p,z)$.
Note that for small $(p,\tau)$,
$\wt T_1(\omega)\gtrsim 1$.

We choose
\begin{equation}
\label{def:tildevtau2ij}
\tilde v_{\tau,2}^{i,j}(\omega) = - \frac{1}{\lambda(\tau)\wt T_1(\omega)} \wt T_0(\omega).
\end{equation}
Observe that this choice of $\tilde v_{\tau,2}^{i,j}$ is linear in $v_{\tau,1}^{i,j}$ and satisfies
\[
\tilde v_{\tau,2}^{i,j}(\omega) = \OO(\tau^{3/5-Ca}) + \OO(\tau^{3/5-Ca})\tilde v_{\tau,1}^{i,j}(\omega).
\]
Inserting this choice of $\tilde v_{\tau,2}^{i,j}$ in~\eqref{eq:nongrowthcondition}, we obtain the fixed point equation for $\tilde v_{\tau,1}^{i,j}$
\[
\tilde v_{\tau,1}^{i,j} = \frac{1}{\nu_1^i(z)} \left(\wt Q(\Psig{i}(\omega))\left(
\begin{pmatrix}
\wt T(\omega)  \\ DS_i(z) v_z + \widetilde S_{i,p}(z)\tilde v_\tau^i
\end{pmatrix}
 + \OO_1(p,\tau) \tilde  v^{i,j}\right)\right).
\]
It clearly has a solution $\tilde v_{\tau,1}^{i,j} = \OO (\tau^{1/5})\|v_z\|$. Then, taking into account~\eqref{def:tildevtau2ij} and~\eqref{eq:DwtPsiijvij}, we have that
\[
\pi_p D\wt \Psi_{i,j}(\omega) v^{i,j} = \lambda(\tau) \nu_1^{i}(z)  \tilde v_{\tau,2}^{i,j}(\omega) P^* + \OO(\tau^{3/5-Ca}) v^{i,j} = \OO(\tau^{3/5-Ca}) v_z
\]
and
\[
\pi_\tau D\wt \Psi_{i,j}(\omega) v^{i,j} = \OO_1(p,\tau) v^{i,j} = \OO_1(p,\tau) v_z .
\]
This completes the proof of Item 3.
\end{proof}

\subsection{Horizontal and vertical strips: Proof of Proposition~\ref{prop:horizontalstrips}}
\label{sec:proofofpropositionprop:horizontalstrips}

Proposition~\ref{prop:horizontalstrips} is direct consequence of the following lemma.

\begin{lemma}
\label{lem:bandeshoritzontalsabandeshoritzontals}
For any \emph{horizontal surface},
\[
S_h = \{(p,\tau,\varphi,J) \in \QQQ_\de^2\mid ( p, J ) =  (h_1(\tau,\varphi),
h_2(\tau,\varphi)), \; (\tau,\varphi) \in (0,\de)\times(0,\tkk)\},
\]
 with $h$ a $\CCC^1$ function satisfying
\begin{equation}
\label{cond:horizontalband}
\begin{aligned}
\sup_{(\tau,\varphi)\in (0,\de)\times(0,\tkk)} \|\partial_\tau h_1(\tau,\varphi)\| & < \OO(1),
\sup_{(\tau,\varphi)\in (0,\de)\times(0,\tkk)} \|\partial_\varphi h_1(\tau,\varphi)\| & < \OO(\de),
\end{aligned}
\end{equation}
$\wt \Psi(S_h) \cap \QQQ_\de^2$ has an infinite number of connected components.
Moreover, $\wt \Psi(S_h) \cap \QQQ_\de^2$ contains a countable union of horizontal surfaces, $S_{h_n}$, with $\lim_{n\to \infty} h_{n,1} = 0$, in the $\CCC^0$ topology and
\[
\begin{aligned}
|\partial_\tau h_{n,1}| & \lesssim \OO(\de), & |\partial_{\varphi} h_{n,1}| & \lesssim \OO(\tkk \de), \\
|\partial_\tau h_{n,2}| & \lesssim \OO(\de)+\OO(\tkk) , & |\partial_{\varphi} h_{n,2}| & \lesssim \OO(\tkk).
\end{aligned}
\]
Analogously,
for any \emph{vertical surface},
\[
S_v = \{(p,\tau,\varphi,J) \in \QQQ_\de^2\mid (\tau,\varphi) =  (v_1( p, J ),
v_2( p, J )), \; ( p, J ) \in (0,\de)\times(0,\tkk)\},
\]
 with $v$ a $\CCC^1$ function satisfying
\begin{equation}
\label{cond:verticalband}
\begin{aligned}
\sup_{( p, J )\in (0,\de)\times(0,\tkk)} \|\partial_p v_1(\tau,\varphi)\| & < \OO(1),
\sup_{( p, J )\in (0,\de)\times(0,\tkk)} \|\partial_J v_1(\tau,\varphi)\| & < \OO(\de),
\end{aligned}
\end{equation}
$\wt \Psi^{-1}(S_v) \cap \QQQ_\de^2$ has an infinite number of connected components. Moreover, $\wt \Psi^{-1}(S_v) \cap \QQQ_\de^2$ contains a countable union of vertical surfaces, $S_{v_n}$, with $\lim_{n\to \infty} v_n = 0$, in the $\CCC^0$ topology and
\[
\begin{aligned}
|\partial_p v_{n,1}| & \lesssim \OO(\de), & |\partial_{J} h_{n,1}| & \lesssim \OO(\tkk \de), \\
|\partial_p h_{n,2}| & \lesssim \OO(1) , & |\partial_{J} h_{n,2}| & \lesssim \OO(\tkk).
\end{aligned}
\]
In particular, $\Lip h_n \lesssim \OO(\de)+\OO(\tkk)$ and $\Lip v_n \lesssim \OO(1)$, uniformly in $n$.
\end{lemma}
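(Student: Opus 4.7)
\textbf{Proof plan for Lemma \ref{lem:bandeshoritzontalsabandeshoritzontals}.} The strategy is to exploit the decomposition $\wt \Psi = \wt \Psi_{1,2} \circ \wt \Psi_{1,1}^{M-1} \circ \wt \Psi_{2,1}$ in \eqref{def:Psitilde} as an alternation of global maps $\wtPsig{i}$, which are uniformly bounded diffeomorphisms by \eqref{def:globalmapiinlocalcoordinates}, and local passages $\wtPsiloc{i}{j}$, which are responsible for the infinite branching via Theorem~\ref{prop:lambdalemma}. Since the global maps preserve a graph structure with comparable Lipschitz constants, the first step is to pull the hypotheses on $S_h$ forward through $\wt \Psi_{2,1}$ and the $M-1$ iterates, which will deliver a surface $S_h'\subset U_1^2$ still parametrised over $(\tau,\varphi)$ with Lipschitz bounds of the same order. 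The infinite branching will then come from the last factor $\wt\Psi_{1,2}=\wtPsiloc{1}{2}\circ \wtPsig{1}$, whose local part produces countably many returns to $\QQQ_\de^2$.

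The mechanism of infinite branching is the following. After applying $\wtPsig{1}$ to $S_h'$, one obtains a surface in $\Sigma_1$ that is transverse to $W^u(\EE_\infty)\cap\Sigma_1=\{q=0\}$ and hence is, after a trivial reparametrisation, of the form $\gamma(q,\varphi)=(q,a,z(q,\varphi),t(q,\varphi))$ in the $B_1$-coordinates of Section~\ref{sec:locandglobmaps}. By Item~1 of Theorem~\ref{prop:lambdalemma}, for each fixed $\varphi$ the return time $t_1(q,\varphi)-t(q,\varphi)$ blows up like $q^{-3(1\pm Ca)/2}$ as $q\to 0^+$, and by Item~2 it is monotone in $q$ with $|\partial_q t_1|\gtrsim q^{-3/5-Ca}$. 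Passing back to $\Sigma_2$ via $A_2$ in \eqref{def:coordenadeslocalsaSigma2}, $\tau_1 = t_1 - t_1^s(z_1) - \wt w_1^s(p_1,z_1)$ winds around $\TT$ infinitely many times as $q\to 0^+$, so the constraint $\tau_1 \in (0,\de)$ selects a monotone sequence of $q$-intervals accumulating on $q=0$; these will index the horizontal strips $H_n$. On each such interval, the map $q\mapsto \tau_1(q,\varphi)$ is a diffeomorphism (uniformly in $\varphi$ by Item~2 of Theorem~\ref{prop:lambdalemma}), which allows us to invert and obtain $H_n$ as a graph over $(\tau_1,\varphi_1)$.

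The Lipschitz bounds on $h_n$ come from Lemma~\ref{lem:almostinvariantvectorfields}. Items~1 and~3 identify the expanding vector $v_1 \approx \partial_\tau$ (stretched by $\lambda_1^i\gtrsim \tau^{-3/5+\tilde Ca}$) and a center vector $v^{i,j}\approx v_z$ pushed forward by the scattering maps $D\wh{\mathtt S}_i$. Differentiating the implicit-function inversion $q=q(\tau_1,\varphi_1)$ and composing with the contraction $|\pi_p D\wt\Psi_{i,j}|\lesssim \tau^{1-\tilde C a}$ of Item~2 yields $|\partial_{\tau_1} h_{n,1}|\lesssim \de$ and $|\partial_{\varphi_1} h_{n,1}|\lesssim \tkk \de$, while the center dynamics governed by $(\wh{\tS}_1)^{M-1}\circ \wh{\tS}_2$ restricted to the rectangle $\RRR$ (Theorem~\ref{thm:BlockScattering}) gives $|\partial_{\varphi_1} h_{n,2}|\lesssim \tkk$ and $|\partial_{\tau_1} h_{n,2}|\lesssim \de + \tkk$, exactly matching the claim. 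The uniform statement $h_{n,1}\to 0$ in $\CCC^0$ follows from the fact that, on the $n$-th strip, $p_1\leq q^{1-Ca}\to 0$ as $n\to\infty$ by the first estimate in \eqref{bounds:C0bounds}.

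The vertical assertion is obtained by applying the same argument to $\wt\Psi^{-1}$, whose decomposition has the same form with the roles of $\wtPsig{i}$ and $\wtPsiloc{i}{j}$ reversed and in which the contracting direction $v_2$ from Item~2 of Lemma~\ref{lem:almostinvariantvectorfields} plays the dual role of $v_1$; the relaxed Lipschitz bounds on $v_{n,2}$ (of order $\OO(1)$ rather than $\OO(\tkk)$) reflect the mismatch between the two expansion rates. The main technical obstacle is aligning the intrinsic expanding/center/contracting frame of Lemma~\ref{lem:almostinvariantvectorfields} with the coordinate splitting $(p,\tau,\varphi,J)$ of $\QQQ_\de^2$ so that both the existence of the graph form and the uniform-in-$n$ Lipschitz bounds survive the implicit-function inversion near $\tau=0$; this is handled by keeping the sharp exponents $1\pm Ca$ throughout and using $Ca<3/5$ from Theorem~\ref{prop:lambdalemma} to prevent logarithmic accumulations.
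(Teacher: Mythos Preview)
Your overall strategy matches the paper's: push the horizontal surface through the factors of $\wt\Psi$ one at a time, use Lemma~\ref{lem:almostinvariantvectorfields} to control how $D\wt\Psi_{i,j}$ acts on the tangent vectors $\partial_\tau$ and $\partial_{z_1}$, invert the blown-up $\tau$-component via the implicit function theorem, and use the hyperbolicity of $\wh\tS$ from Theorem~\ref{thm:BlockScattering} for the final $z_1$-inversion. However, there is a structural misunderstanding in your first paragraph that makes the plan, as written, unworkable.

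You write that the first $M$ factors ``deliver a surface $S_h'\subset U_1^2$ still parametrised over $(\tau,\varphi)$ with Lipschitz bounds of the same order'' because ``the global maps preserve a graph structure''. But $\wt\Psi_{2,1}$ and each $\wt\Psi_{1,1}$ are \emph{not} global maps: by \eqref{def:wtPsiij} each is already a composition $\wtPsiloc{i}{j}\circ\wtPsig{i}$ containing a local passage. So the very first application of $\wt\Psi_{2,1}$ already sends the $\tau$-component to $\gtrsim\tau^{-3/2+Ca}$ and forces the image outside $\QQQ_\de^1$; you cannot apply $\wt\Psi_{1,1}$ until you have selected a branch and reparametrised via an implicit function $\hat\tau_1(T,z_1)$. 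The paper therefore runs an induction: at step $j$ it defines $\wt\Lambda_j=\wt\Psi_{1,1}\circ\Lambda_{j-1}$, inverts $\pi_\tau\wt\Lambda_j(\tau,z_1)=T$ to obtain $\hat\tau_j(T,z_1)$, and verifies that $\Lambda_j(T,z_1)=\wt\Lambda_j(\hat\tau_j(T,z_1),z_1)$ is again a horizontal surface with the claimed derivative bounds. This inductive verification is the main technical content of the proof, not something that can be deferred to the last step. In particular, the decomposition of $\partial_{z_1}\Lambda_0$ using Items~1--3 of Lemma~\ref{lem:almostinvariantvectorfields} and the two-stage argument needed to show $\partial_{z_1}\hat\tau_j=\OO(\de)$ (first crude, then improved) must be carried out at each $j$.

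Two smaller points. First, the total center dynamics is $\wh\tS=(\wh\tS^1)^M\circ\wh\tS^2$, not $(\wh\tS^1)^{M-1}\circ\wh\tS^2$: the factor $\wt\Psi_{1,2}$ contributes one more $\wh\tS^1$. Second, at the final step there are \emph{two} implicit-function inversions, not one: after solving $\wt T_{M+1}(\tau,z_1)=T$, the paper also solves $\pi_{z_1}\Lambda_{M+1}(T,z_1)=Z_1$ for $z_1=\hat z_{M+1}(T,Z_1)$, and it is precisely the bound $|\pi_{z_1}D\wh\tS\,(1,\partial_{z_1}h_2)^\top|\gtrsim\tkk^{-1}$ from Item~3 of Theorem~\ref{thm:BlockScattering} that produces the factors of $\tkk$ in $\partial_{\varphi}h_{n,1}$ and $\partial_{\varphi}h_{n,2}$.
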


\begin{proof}
Let $h:(0,\de)\times(0,\tkk) \to (0,\de)\times(0,\tkk)$ be a function satisfying~\eqref{cond:horizontalband}. Let
\begin{equation}\label{def:lambda0}
\Lambda_0 (\tau,z_1) = (h_1(\tau,z_1),\tau, z_1, h_2(\tau,z_1))^\top\qquad\text{and}\qquad\wt \Lambda_1 = \wt \Psi_{2,1} \circ \Lambda_0 = (\tilde h_1, \wt T, \wt Z)^\top.
\end{equation}
By Item 1 of Theorem~\ref{prop:lambdalemma}, the definition of $\wt \Psi_{2,1}$ in~\eqref{def:wtPsiij} and the expression of $\wtPsig{2}$ in~\eqref{def:globalmapiinlocalcoordinates}, we have that
\begin{equation}
\label{def:wtTmesgranquetauunters}
\wt T(\tau,z_1) \gtrsim \tau^{-3/2+Ca},
\end{equation}
for all $z_1 \in (0,\tkk)$. Hence, for any $n\in \NN$, sufficiently large, there exist $\tau_{1,n}^- < \tau_{1,n}^+$ such that
$\wt T(\tau_{1,n}^+, z_1) \le n < n+\de \le \wt T(\tau_{1,n}^-, z_1)$, for all $z_1 \in (0,\tkk)$ and, moreover, $\tau_{1,n}^\pm\to 0$ as $n\to+\infty$.

By Item 1 of Lemma~\ref{lem:almostinvariantvectorfields},
\begin{equation}
\label{eq:ferradura_derivada_tau}
\partial_\tau \wt \Lambda_1(\tau,z_1) = D \wt \Psi_{2,1} \circ \Lambda_0 (\tau,z_1)
\begin{pmatrix}
\partial_\tau h_1(\tau,z_1) \\ 1 \\ 0 \\ \partial_\tau h_2(\tau,z_1)
\end{pmatrix}
= \begin{pmatrix}
\lambda(\tau,z_1) \varepsilon_1(\tau,z_1) \\ \lambda(\tau,z_1)  \\ Z_1(\tau,z_1) \\ Z_2(\tau,z_1)
\end{pmatrix}
\end{equation}
where,
\begin{equation}
\label{bound:fitesexpansio}
|\lambda(\tau,z_1)| \gtrsim C \tau^{-3/5+Ca}, \qquad |\varepsilon_1 (\tau,z_1) | \lesssim \tau^{1-Ca}, \qquad \|(Z_1(\tau,z_1),Z_2(\tau,z_1))\| \lesssim \OO(1).
\end{equation}
In particular, $|\partial_\tau \wt T(\tau,z_1)| = |\lambda(\tau,z_1)| > C \tau^{-3/5+Ca}$. Hence, the equation
\[
\wt T(\tau,z_1) = T
\]
defines a function $\hat \tau_1(T,z_1)$, for $T>0$ large enough and  $z_1 \in (0,\tkk)$, such that
$\tau_{1,n}^- \le \hat \tau_1(T,z_1) \le \tau_{1,n}^+$ for $T\in [n,n+\de]$, with $\partial_T \hat \tau_1(T(\tau,z_1),z_1) = \lambda^{-1} (\hat \tau_1(T,z_1),z_1)$.
Taking $\tau_{1,n}^+$ small enough by taking $n$ large enough, we can assume that $\hat \tau_1(T,z_1)^{1/5-Ca} \lesssim \OO(\de)$ for  $T\in [n,n+\de]$. Moreover $\sup_{T\in[n,n+\de]}\hat \tau_1(T,z_1)\to 0$ as $n\to +\infty$.

We define
\[
\Lambda_1(T,z_1) = \wt \Lambda_1 (\hat \tau_1(T,z_1), z_1).
\]
By construction, $\pi_\tau \Lambda_1(T,z_1) = T$ and, by Item 1 of Theorem \ref{prop:lambdalemma}, $\sup_{T\in[n,n+\de]}\pi_p \Lambda_1(T,z_1)\to 0$ as $n\to+\infty$.
In view of~\eqref{eq:ferradura_derivada_tau}, it satisfies
\[
\partial_T \Lambda_1(T,z_1) =
\partial_\tau \wt \Lambda_1(\hat \tau_1(T,z_1) ,z_1) \partial_T \hat \tau_1(T,z_1)
= \begin{pmatrix}
\OO(\de) \\ 1  \\ \OO(\de) \\ \OO(\de)
\end{pmatrix}.
\]
Moreover, by~\eqref{def:globalmapiinlocalcoordinates}  and Item~1 of Theorem~\ref{prop:lambdalemma}, the third component of $\Lambda_1$, satisfies
\begin{multline}\label{bound:componentz1deLambda1}
|\pi_{z_1} \Lambda_1(T,z_1) - \pi_{z_1} \wh \tS^2(z_1,h_2(\hat \tau_1(T,z_1),z_1))|\\
= |\pi_{z_1} \wt \Psi_{2,1} \circ \Lambda_0(\hat \tau_1(T,z_1),z_1) - \pi_{z_1} \wh \tS^2(z_1,h_2(\hat \tau_1(T,z_1),z_1))|
\lesssim \OO(\de).
\end{multline}
Now we compute the derivatives of $\Lambda_1$ with respect to $z_1$. We have that
\[
\begin{aligned}
\partial_{z_1} \Lambda_1(T,z_1) & = \partial_{z_1} \left[\wt \Lambda_1 (\hat \tau_1(T,z_1), z_1)\right] \\
& =
D \wt \Psi_{2,1} \circ \Lambda_0 (\hat \tau_1(T,z_1),z_1)
\left[ \partial_\tau \Lambda_0 (\hat \tau_1(T,z_1),z_1) \partial_{z_1} \hat \tau_1(T,z_1)+
 \partial_{z_1} \Lambda_0 (\hat \tau_1(T,z_1),z_1)\right].
\end{aligned}
\]
We write the vector $\partial_{z_1} \Lambda_0 (\hat \tau_1(T,z_1),z_1)$ as
\begin{equation}\label{def:splitpartialLambda0}
(\partial_{z_1} \Lambda_0) (\hat \tau_1(T,z_1),z_1) =
\begin{pmatrix}
\partial_{z_1} h_1 \\ 0 \\ 1 \\ \partial_{z_1} h_2
\end{pmatrix} =
\partial_{z_1} h_1
\begin{pmatrix}
1 \\  b \\  c_1 \\  c_2
\end{pmatrix} + \begin{pmatrix}
0 \\ d \\ 1 - \partial_{z_1} h_1 c_1 \\ \partial_{z_1} h_2 - \partial_{z_1} h_1 c_2
\end{pmatrix}-(d+\partial_{z_1} h_1 b) \begin{pmatrix}
0 \\ 1 \\ 0 \\ 0
\end{pmatrix},
\end{equation}
where $b$, $c=(c_1,c_2)$ are the functions given by Item 2 of Lemma~\ref{lem:almostinvariantvectorfields} such that
\[
|b(\tau,z_1)| \lesssim \OO(\de,\hat \tau_1^{3/5-Ca)}), \qquad \|(c_1,c_2)(\tau,z_1)\| \lesssim \OO(1)
\]
and
\[
D \wt \Psi_{2,1} \circ \Lambda_0 (\hat \tau_1(T,z_1),z_1) (1,b,c)^\top = \mu(\hat \tau_1,z_1) (1,b^*,c^*)^\top
\]
with
\[
|\mu(\hat \tau_1,z_1)| \lesssim \OO(\hat \tau_1^{3/5-Ca}), \qquad |b^*(\hat \tau_1,z_1)| \lesssim \OO(\de), \qquad \|(c_1^*,c_2^*)(\tau,z_1)\| \lesssim \OO(1)
\]
and $d = \OO(\hat \tau_1^{1/5-Ca})$ is given by Item 3 of Lemma~\ref{lem:almostinvariantvectorfields} satisfies
\[
D \wt \Psi_{2,1} \circ \Lambda_0 (\hat \tau_1(T,z_1),z_1) \begin{pmatrix}
0 \\ d \\ 1 - \partial_{z_1} h_1 c_1 \\ \partial_{z_1} h_2 - \partial_{z_1} h_1 c_2
\end{pmatrix} = \begin{pmatrix}
d_1^*  \\ d_2^*  \\ D \wh \tS^2 (z_1, h_2(\hat \tau_1,z_1))
\left( \begin{aligned}
1  - \partial_{z_1} h_1 c_1 \\ \partial_{z_1} h_2 - \partial_{z_1} h_1 c_2
\end{aligned} \right) + d_3^*
\end{pmatrix}
\]
with $\|(d_1^*, d_2^*,d_3^*)\| = \OO(\hat \tau_1^{1/5-Ca})$.

Then,  applying Item 1 of Lemma~\ref{lem:almostinvariantvectorfields} to $(d+\partial_{z_1} h_1 b)(0,1,0,0)^\top$, using formula \eqref{def:splitpartialLambda0} and the previous bounds for $\mu$, $b^*$ and $d_2^*$, we obtain
\begin{equation}
\label{bound:firststeptau1}
\pi_T D \wt \Psi_{2,1} \circ \Lambda_0 (\hat \tau_1(T,z_1),z_1) \partial_{z_1} \Lambda_0 (\hat \tau_1(T,z_1),z_1) =
\lambda(\hat \tau_1,z_1) (d+\partial_{z_1} h_1 b) + \OO(\hat \tau_1^{1/5-Ca}).
\end{equation}
By using the second part of  Item 1 of Lemma~\ref{lem:almostinvariantvectorfields}  (see  \eqref{def:v1hatstreching}),
\begin{equation}
\label{bound:firststeptau2}
\pi_T D \wt \Psi_{2,1} \circ \Lambda_0 (\hat \tau_1(T,z_1),z_1)
\partial_\tau \Lambda_0 (\hat \tau_1(T,z_1),z_1) \partial_{z_1} \hat \tau_1(T,z_1) = \lambda(\hat\tau_1,z_1)\left (1+\OO(\hat\tau_1^{3/5-Ca})\right)\partial_{z_1} \hat \tau_1(T,z_1).
\end{equation}
Now, since $\pi_T \Lambda_1(T,z_1) = T$, we have that $\pi_T \partial_{z_1} \Lambda_1(T,z_1) = 0$. Then, combining~\eqref{bound:firststeptau1} and~\eqref{bound:firststeptau2} we obtain that
\[
\partial_{z_1} \hat \tau_1(T,z_1) = \OO(\de).
\]
This bound is not good enough. In order to improve it, we introduce $A = \partial_\tau h_1 \partial_{z_1} \hat \tau_1 + \partial_{z_1} h_1
= \OO(\de)$ and rewrite $\partial_{z_1}\left[ \Lambda_0 (\hat \tau_1(T,z_1),z_1)\right]$ as
% \[
% \partial_{z_1}\left[ \Lambda_0 (\hat \tau_1(T,z_1),z_1)\right] =
% A
% \begin{pmatrix}
% 1 \\  \tilde b \\  \tilde c_1 \\  \tilde c_2
% \end{pmatrix} + \begin{pmatrix}
% 0 \\ \tilde  a \\ 1 - A \partial_{z_1} h_1 \tilde c_1 \\ \partial_{z_1} h_2 - A \partial_{z_1} h_1 \tilde c_2
% \end{pmatrix}-(\tilde a+A \tilde b) \begin{pmatrix}
% 0 \\ 1 \\ 0 \\ 0
% \end{pmatrix},
% \]
% \textcolor{red}{ens surt
\[
\partial_{z_1}\left[  \Lambda_0 (\hat \tau_1(T,z_1),z_1)\right] =
A
\begin{pmatrix}
1 \\  \tilde b \\  \tilde c_1 \\  \tilde c_2
\end{pmatrix} + \begin{pmatrix}
0 \\ \tilde  d \\ 1 - A  \tilde c_1 \\ \partial_\tau h_2 \partial_{z_1} \hat \tau_1+\partial_{z_1} h_2 - A \tilde c_2
\end{pmatrix}-(\tilde d+A \tilde b-\partial_{z_1} \hat \tau_1) \begin{pmatrix}
0 \\ 1 \\ 0 \\ 0
\end{pmatrix},
\]
% Sembla que els canvis no afecten el raonament}
where, again, $\tilde b = \OO(\de)$, $\tilde c=(\tilde c_1,\tilde c_2) = \OO(1) $ are the
functions given by Item  2 of Lemma~\ref{lem:almostinvariantvectorfields} and $\tilde d = \OO (\hat \tau_1^{1/5 -Ca})$ is given by Item 3  of Lemma~\ref{lem:almostinvariantvectorfields}. Then,
% \begin{multline*}
% D \wt \Psi_{2,1} \circ \Lambda_0 (\hat \tau_1(T,z_1),z_1) \partial_{z_1} \Lambda_0 (\hat \tau_1(T,z_1),z_1) \\
% = \mu A
% \begin{pmatrix}
% 1 \\  \OO(\rho) \\  \OO(1) \\  \OO(1)
% \end{pmatrix} + \begin{pmatrix}
% \OO(\rho) \\   \OO(\rho)  \\
% D \wh\tS^2 (z_1, h_2(\hat \tau_1,z_1))
% \left( \!\! \begin{array}{c}
% 1 - A \partial_{z_1} h_1 \tilde c_1 \\ \partial_{z_1} h_2 - A \partial_{z_1} h_1 \tilde c_2
% \end{array} \!\! \right) + \OO(\rho)
% \end{pmatrix}
% - (\tilde a+A \tilde b) \begin{pmatrix}
% \lambda \OO(\rho) \\ \lambda \\ \OO(1) \\ \OO(1)
% \end{pmatrix},
% \end{multline*}
% \textcolor{red}{
\begin{multline*}
D \wt \Psi_{2,1} \circ \Lambda_0 (\hat \tau_1(T,z_1),z_1) \partial_{z_1} \left[\Lambda_0 (\hat \tau_1(T,z_1),z_1)\right] \\
= \mu A
\begin{pmatrix}
1 \\  \OO(\de) \\  \OO(1) \\  \OO(1)
\end{pmatrix} + \begin{pmatrix}
\OO(\de) \\   \OO(\de)  \\
D \wh\tS^2 (z_1, h_2(\hat \tau_1,z_1))
\left( \!\! \begin{array}{c}
1 - A  \tilde c_1 \\ \partial_\tau h_2 \partial_{z_1} \hat \tau_1+ \partial_{z_1} h_2 - A  \tilde c_2
\end{array} \!\! \right) + \OO(\de)
\end{pmatrix}
- (\tilde a+A \tilde b-\partial_{z_1} \hat \tau_1 ) \begin{pmatrix}
\lambda \OO(\de) \\ \lambda \\ \OO(1) \\ \OO(1)
\end{pmatrix},
\end{multline*}
% }
where $|\mu(\hat \tau_1,z_1)| \lesssim \OO(\hat \tau_1^{3/5-Ca}) = \OO(\de)$. Hence, using again that $\pi_T \Lambda_1(T,z_1) = T$, we obtain that
$\tilde a+A \tilde b-\partial_{z_1} \hat \tau_1 = \OO(\de)/\lambda$. This implies that
\[
\pi_p \partial_{z_1} \Lambda_1 (T,z_1) = \mu A + \OO(\de) - (\tilde a+A \tilde b-\partial_{z_1} \hat \tau_1)\lambda \OO(\de) = \OO(\de).
\]
Summarizing,
\[
\partial_T \Lambda_1(T,z_1) =
 \begin{pmatrix}
\OO(\de) \\ 1  \\ \OO(\de) \\ \OO(\de)
\end{pmatrix}, \quad
\partial_{z_1} \Lambda_1 (T,z_1) = \begin{pmatrix}
\OO(\de) \\   \OO(\de)  \\
D \wh\tS^2 (z_1, h_2(\hat \tau_1,z_1))
\left( \!\! \begin{array}{c}
  1  \\ \partial_{z_1} h_2
\end{array} \!\! \right) + \OO(\de)
\end{pmatrix}.
\]
Now we proceed by induction, defining $\wt \Lambda_j = \wt \Psi_{1,1} \circ \Lambda_{j-1}$, for $2 \le j \le M$.
With the same argument, $\pi_\tau \wt \Lambda_j(\tau,z_1) = T$ defines a function $\hat \tau_j(T,z_1)$, with $T$ large enough, such that
$\Lambda_j(T,z_1) = \wt \Lambda_j(\hat \tau_j(T,z_1),z_1)$ satisfies
\[
\partial_T \Lambda_j(T,z_1)
= \begin{pmatrix}
\OO(\de) \\ 1  \\ \OO(\de) \\ \OO(\de)
\end{pmatrix}, \quad
\partial_{z_1} \Lambda_j (T,z_1) = \begin{pmatrix}
\OO(\de) \\   \OO(\de)  \\
D ((\wh\tS^1)^{j-1} \circ \wh\tS^2) (z_1, h_2(\hat \tau_j,z_1))
\left( \!\! \begin{array}{c}
  1  \\ \partial_{z_1} h_2
\end{array} \!\! \right) + \OO(\de)
\end{pmatrix}
\]
and
\begin{equation}
\label{bound:componentz1deLambdaj}
|\pi_{z_1} \Lambda_j(T,z_1) - \pi_{z_1} (\wh\tS^1)^{j-1} \circ \wh\tS^2(z_1,h_2(\hat \tau_j,z_1))|  \lesssim \OO(\de).
\end{equation}
Of course, the $\OO(\de)$ terms depend on $j$.

In the last step, we define $\wt \Lambda_{M+1}(\tau,z_1)  = \wt \Psi_{1,2} \circ \Lambda_{M}(\tau,z_1)$, defined for $\tau \in (0,\de)$. With the same argument to obtain~\eqref{def:wtTmesgranquetauunters}, we have that
\[
\wt T_{M+1}(\tau,z_1) = \pi_T \wt \Lambda_{M+1}(\tau,z_1) \ge C \tau^{-3/2+Ca}
\]
and, from~\eqref{eq:ferradura_derivada_tau} and~\eqref{bound:fitesexpansio},
\[
|\partial_\tau \wt T_{M+1}(\tau,z_1)| = |\lambda(\tau,z_1)| > C \tau^{-3/5+Ca}.
\]
Hence, the equation
\[
\wt T_{M+1}(\tau,z_1) = T
\]
defines a function $\hat \tau_{M+1}(T,z_1)$, for $T$ large enough and $z_1 \in  (0,\tkk)$, strictly decreasing in $\tau$ with $\lim_{T\to \infty} \hat \tau_{M+1}(T,z_1) = 0$, uniformly in $z_1$, with $\partial_T \hat \tau_{M+1}(T(\tau,z_1),z_1) = \lambda (\hat \tau_{M+1}(T,z_1),z_1)^{-1}$. With the previous arguments, $\Lambda_{M+1}(T,z_1) = \wt \Lambda_{M+1}(\hat \tau_{M+1}(T,z_1),z_1)$
\begin{equation}
\label{bound:componentz1deLambdaMm1}
|\pi_{z_1} \Lambda_{M+1}(T,z_1) - \pi_{z_1} \wh \tS (z_1,h_2(\hat \tau_{M+1},z_1))|  \lesssim \OO(\de),
\end{equation}
and
\begin{equation}
\label{bound:derivadesdelaimatgedunabandahoritzontal}
\partial_T \Lambda_{M+1}(T,z_1)
= \begin{pmatrix}
\OO(\de) \\ 1  \\ \OO(\de) \\ \OO(\de)
\end{pmatrix}, \;
\partial_{z_1} \Lambda_{M+1} (T,z_1) = \begin{pmatrix}
\OO(\de) \\   \OO(\de)  \\
D \wh \tS (z_1, h_2(\hat \tau_{M+1},z_1))
\left( \!\! \begin{array}{c}
  1  \\ \partial_{z_1} h_2
\end{array} \!\! \right)
 + \OO(\de)
\end{pmatrix},
\end{equation}
where $\wh\tS=(\wh\tS^1)^M\circ \wh\tS^2$ was introduced in of Theorem~\ref{thm:BlockScattering}. Since, by Item 3 of Theorem~\ref{thm:BlockScattering},
\begin{equation}
\label{bound:derivadesscattering}
\left|\pi_{z_1} D \wh \tS (z_1, h_2(\hat \tau_{M+1},z_1))
\begin{pmatrix}
  1  \\ \partial_{z_1}  h_2
\end{pmatrix} \right| \gtrsim \tkk^{-1},
\end{equation}
the equation $\pi_{z_1} \Lambda_{M+1}(T,z_1) = Z_1$ defines a function $\hat z_{M+1}(T,Z_1)$, with $T$ large enough and $Z_1\in(0,\tkk)$. Using~\eqref{bound:derivadesdelaimatgedunabandahoritzontal} and~\eqref{bound:derivadesscattering}, we immediately have
\[
|\partial_T \hat z_{M+1}(T,Z_1)| \lesssim \OO(\de \tkk), \qquad |\partial_{Z_1} \hat z_{M+1}(T,Z_1)| \lesssim \OO(\tkk).
\]
Then,  $\Lambda (T,Z_1) = \Lambda_{M+1} (T, \hat z_{M+1}(T,Z_1))$ is defined for $T$ large enough and $Z_1\in (0,\tkk)$, satisfies
\[
\pi_T \Lambda (T,Z_1) = T, \qquad \pi_{Z_1} \Lambda (T,Z_1) = Z_1,
\]
and, denoting $\tilde h = (\tilde h_1, \tilde h_2) = (\pi_p \Lambda, \pi_{z_2} \Lambda)$,
\[
\partial_T \tilde h =
\begin{pmatrix}
\OO(\de) \\ \OO(\de)+ \OO(\tkk)
\end{pmatrix},
\qquad
\partial_{Z_1} \tilde h =
\begin{pmatrix}
\OO(\de \tkk) \\ \OO(\tkk)
\end{pmatrix}.
\]
Moreover, if we denote by $\tilde h_n=(\tilde h_{n,1},\tilde h_{n,2})$ the restriction of $\tilde h$ to $T\in [n,n+\de]$, it satisfies that
\[
\lim_{n\to \infty} h_{n,1} = 0.
\]

This proves the claim for the horizontal bands. The claim for the vertical one is proven along the same lines.
\end{proof}

\subsection{The differential of the high iterate of the return map}\label{sec:conefields}

In this section we analyze the differential of the map $\wt\Psi$ in \eqref{def:Psitilde}. Note that this map is a composition of the return maps $\wt\Psi_{i,j}$ (see \eqref{def:wtPsiij}), whose differentials have been  studied in Section \ref{sec:differentialintermediatereturn}. In that section we have obtained a good basis at each point of the tangent space which captures the expanding, contracting and center direction for each map  $\wt\Psi_{i,j}$. Note however that the basis depends on the map (and certainly on the point!). Therefore, one has to adjust these basis so that they capture the expanding/contracting behavior for the differential of $\wt\Psi$. This is done in Proposition \ref{prop:straighteningDPsi} below.

First we state a lemma, which is an immediate consequence of  Lemma~\ref{lem:almostinvariantvectorfields} and a direct product of matrices.

\begin{lemma}
\label{lem:redressanttildepsiij}
Consider the vector fields
$\{v_1,v_2^{i,j},v_3^{i,j},v_4^{i,j}\}$, $i=1,2$, where $v_1$ and $v_2^{i,j}$ are the vectors in Items 1 and 2 of Lemma~\ref{lem:almostinvariantvectorfields} and
\[
v_3^{i,j} = e_3+ \tilde v_3^{i,j}, \qquad v_4^i = e_4+ \tilde v_4^{i,j},
\]
where $e_3 = (0,0,1,0)^\top$, $e_4 = (0,0,0,1)^\top$ and $\tilde v_3^{i,j}$ and $\tilde v_3^{i,j}$ are such $v_3^{i,j}$ and $v_4^{i,j}$ satisfy Item 3 of
    Lemma~\ref{lem:almostinvariantvectorfields}.
    They form a basis of $T_\omega \QQQ_\de^i$, at any $\omega=(p,\tau,z)  \in \QQQ_\de^i \cap \Psi_{i,j}^{-1}(\QQQ_\de^j)$. Let $C_{i,j}(\omega)$ denote the matrix of the change of coordinates from the standard basis to $\{v_1,v_2^{i,j},v_3^{i,j},v_4^{i,j}\}$. Then,
%for any $\omega \in \QQQ_\rho^i \cap \Psi_1^{-1}(\QQQ_\rho^j)$, the expression of $D\Psi_{i,j}(\omega)$ in the basis $\{v_1^i,v_2^i,v_3^i,v_4^i\}$  and $\{v_1^j,v_2^j,v_3^j,v_4^j\}$ is
\begin{equation}
\label{eq:DPhi1adapted basis_1}
\begin{aligned}
\M_{2,1}(\omega) = C_{1,1}(\wt \Psi_{2,1}(\omega))^{-1} D\wt \Psi_{2,1}(\omega) C_{2,1}(\omega) & =
\begin{pmatrix}
\lambda^{2,1} & \mu^{2,1} \eps_{1}^{2,1} & \eps_{2}^{2,1}  \\
\lambda^{2,1} \eps^{2,1}_{3} & \mu^{2,1}  &  \eps_{4}^{2,1}  \\
\lambda^{2,1}  \eps_{5}^{2,1} & \mu^{2,1} (a_{2,1}+ \eps_{6}^{2,1}) & D\tS^2(z) +  \eps_{7}^{2,1}
\end{pmatrix}, \\
\M_{1,1}(\omega) =C_{1,1}(\wt \Psi_{1,1}(\omega))^{-1} D\wt \Psi_{1,1}(\omega) C_{1,1}(\omega) & =
\begin{pmatrix}
\lambda^{1,1} & \mu^{1,1} \eps_{1}^{1,1} & \eps_{2}^{1,1}  \\
\lambda^{1,1} \eps^{1,1}_{3} & \mu^{1,1}  &  \eps_{4}^{1,1}  \\
\lambda^{1,1}  \eps_{5}^{1,1} & \mu^{1,1}  \eps_{6}^{1,1} & D\tS^1(z) +  \eps_{7}^{1,1}
\end{pmatrix}, \\
\wt \M_{1,1}(\omega) = C_{1,2}(\wt \Psi_{1,1}(\omega))^{-1} D\wt \Psi_{1,1}(\omega) C_{1,1}(\omega) & =
\begin{pmatrix}
\tilde \lambda^{1,1} & \tilde \mu^{1,1} \delta_{1}^{1,1} & \delta_{2}^{1,1}  \\
\tilde \lambda^{1,1} \delta^{1,1}_{3} & \tilde \mu^{1,1}  &  \delta_{4}^{1,1}  \\
\tilde \lambda^{1,1} \delta_{5}^{1,1} & \tilde \mu^{1,1} (a_{1,2}+ \delta_{6}^{1,1}) & D\tS^1(z) +  \delta_{7}^{1,1}
\end{pmatrix}, \\
\M_{1,2}(\omega) = C_{2,1}(\wt \Psi_{1,2}(\omega))^{-1} D\wt \Psi_{1,2}(\omega) C_{1,2}(\omega) & =
\begin{pmatrix}
\lambda^{1,2} & \mu^{1,2} \eps_{1}^{1,2} & \eps_{2}^{1,2}  \\
\lambda^{1,2} \eps^{1,2}_{3} & \mu^{1,2}  &  \eps_{4}^{1,2}  \\
\lambda^{1,2}  \eps_{5}^{1,2} & \mu^{1,2} (a_{1,1}+ \eps_{6}^{1,2}) & D\tS^1(z) + \eps_{7}^{1,2}
\end{pmatrix},
\end{aligned}
\end{equation}
%\textcolor{red}{estan girats els subindexos?}
where
\[
\begin{aligned}
|\eps^{i,j}_k(\omega)|,  |\delta^{1,1}_k(\omega)|, & \le \OO(\tau^{\frac{1}{5}-Ca}, \de), \qquad k=1,\dots, 7,\\
\lambda^{i,j}(\omega), \tilde \lambda^{1,1}(\omega), \mu^{i,j}(\omega)^{-1}, \tilde \mu^{1,1}(\omega)^{-1} & \gtrsim \tau^{-\frac{3}{5}+Ca}
\end{aligned}
\]
and $a_{i,j} = a_{i,j}(z)$, $i,j = 1,2$ satisfy $|a_{i,j}|\leq\OO(1)$.
\end{lemma}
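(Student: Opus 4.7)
The lemma is essentially a matrix-valued reformulation of Lemma~\ref{lem:almostinvariantvectorfields}. The first task is to exhibit vectors $v_3^{i,j},v_4^{i,j}$ and verify that $\{v_1,v_2^{i,j},v_3^{i,j},v_4^{i,j}\}$ is a basis at every $\omega\in\QQQ_\de^i$. For this I would apply Item~3 of Lemma~\ref{lem:almostinvariantvectorfields} to $v_z=e_1$ and $v_z=e_2$ respectively, obtaining vectors of the form $e_k+\tilde v_k^{i,j}$ ($k=3,4$) with $\tilde v_k^{i,j}$ purely in the $\tau$-direction and of size $O(\tau^{1/5-Ca})$; together with $v_1=e_2$ and $v_2^{i,j}$ (which has a nonzero $p$-component equal to $1$), their leading parts span the four coordinate directions, so for $\tau$ and $\delta$ small enough they form a basis, with change-of-basis matrix $C_{i,j}(\omega)=I+O(\tau^{1/5-Ca},\delta)$.

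The second task is to compute the four columns of each matrix $\M_{i,j}$, $\wt\M_{1,1}$ by applying $D\wt\Psi_{i,j}(\omega)$ to each of the basis vectors and expressing the image in the prescribed target basis. For the first column I would invoke Item~1 of Lemma~\ref{lem:almostinvariantvectorfields}, which writes $D\wt\Psi_{i,j}(\omega)v_1=\lambda_1^i(\omega)(v_1+\tilde v_1^{i,j}(\omega))$ with $\tilde v_{1,\tau}^{i,j}=0$; after re-expanding $v_1+\tilde v_1^{i,j}$ in the target basis the components on $v_2^{\cdot,\cdot}$, $v_3^{\cdot,\cdot}$, $v_4^{\cdot,\cdot}$ are precisely the $\eps$-terms in position $(2,1)$ and $(3,1)$. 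For the second column I would use Item~2, which gives $D\wt\Psi_{i,j}(\omega)v_2^{i,j}(\omega)=\lambda_2^{i,j}(\omega)(v_2^{i,j}(\wt\Psi_{i,j}(\omega))+\hat v_2^{i,j}(\omega))$, and then re-express $v_2^{i,j}(\wt\Psi_{i,j}(\omega))$ in the target basis. Finally, for the third and fourth columns Item~3 applied with $v_z=e_1$ and $v_z=e_2$ directly yields the $(3,3)$ block $D\wh\tS_i(z)+\eps_7^{i,j}$ together with small contributions in the $v_1$ and $v_2^{\cdot,\cdot}$ directions (the entries $\eps_2^{i,j}$, $\eps_4^{i,j}$).

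The main subtlety, and the only place where bookkeeping really matters, is the appearance of the $O(1)$ coefficients $a_{i,j}$ in the $(3,2)$ entry. These come from the fact that the target basis is not, in general, the one naturally attached to the image of $\wt\Psi_{i,j}$. Specifically, Item~2 produces $v_2^{i,j}$ at the image point, but in $\M_{2,1},\wt\M_{1,1},\M_{1,2}$ we re-express it in $v_2^{i',j'}$ with $(i',j')\neq(i,j)$. Writing
\[
v_2^{i,j}(\wt\Psi_{i,j}(\omega))=v_2^{i',j'}(\wt\Psi_{i,j}(\omega))+\bigl(v_2^{i,j}-v_2^{i',j'}\bigr)(\wt\Psi_{i,j}(\omega)),
\]
the difference on the right-hand side is a vector whose $p$-component vanishes identically, whose $\tau$-component is $O(p,\tau^{3/5-Ca})$ (and thus contributes to $\delta_1^{i,j},\eps_1^{i,j}$), and whose $z$-component—because $v_{2,z}^i(z)$ depends on $i$ only through the choice of scattering channel—can be $O(1)$. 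Projecting that $z$-component onto $v_3^{i',j'},v_4^{i',j'}$ at the image point yields the quantity $a_{i,j}(z)$, which is defined purely in terms of the two wave/scattering structures involved and is therefore a function of $z$ alone with $|a_{i,j}|\lesssim 1$. For $\M_{1,1}$, where the source and target bases coincide, this basis mismatch is absent and no $a$-term appears, in agreement with the statement.

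The final task is to collect all error contributions. The $\eps$- and $\delta$-entries arise from three sources: the corrections $\tilde v_k^{i,j}$ in the basis itself, the vectors $\tilde v_1^{i,j},\hat v_2^{i,j},\hat v^{i,j}$ produced by Lemma~\ref{lem:almostinvariantvectorfields}, and the remainder in the identity $v_2^{i,j}-v_2^{i',j'}=a+\tilde v$ once the $O(1)$ leading term $a_{i,j}$ has been extracted. Each of these is bounded by $O(\tau^{1/5-Ca},\delta)$ by the estimates of Lemma~\ref{lem:almostinvariantvectorfields}, so after the elementary matrix arithmetic of the change of basis (which contributes only factors $1+O(\tau^{1/5-Ca},\delta)$ since $C_{i,j}=I+O(\tau^{1/5-Ca},\delta)$) all error terms remain of size $O(\tau^{1/5-Ca},\delta)$. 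The growth rates $\lambda^{i,j},\tilde\lambda^{1,1}\gtrsim\tau^{-3/5+Ca}$ and $(\mu^{i,j})^{-1},(\tilde\mu^{1,1})^{-1}\gtrsim\tau^{-3/5+Ca}$ are inherited verbatim from Items~1 and~2 of Lemma~\ref{lem:almostinvariantvectorfields}, completing the proof.
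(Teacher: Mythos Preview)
Your plan is essentially what the paper has in mind: it states the lemma as ``an immediate consequence of Lemma~\ref{lem:almostinvariantvectorfields} and a direct product of matrices'' and gives no further proof. Your column-by-column argument via Items~1--3 is exactly the intended unpacking, and your identification of the source of the $a_{i,j}$ terms (the mismatch between $v_2^{i,j}$ at the image and the $v_2$ used in the target frame) is correct and is the only nontrivial point.

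One inaccuracy: you assert that $C_{i,j}(\omega)=I+O(\tau^{1/5-Ca},\delta)$. This is false. From the form of $v_2^{i,j}$ in~\eqref{def:vec2Item2}, its $z$-component $v_{2,z}^i(z)$ is $O(1)$, not small; indeed the paper records in Section~\ref{sec:provaproposiciodelscons} that $C_{2,1}$ has the block shape
\[
\begin{pmatrix}
0 & 1 & 0 & 0 \\
1 & O(\de) & O(\de) & O(\de) \\
0 & \tilde a & 1 & 0 \\
0 & \tilde b & 0 & 1
\end{pmatrix},\qquad \tilde a,\tilde b=O(1).
\]
Your error-collection step therefore cannot rely on $C_{i,j}$ being a perturbation of the identity. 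What makes the argument go through instead is the specific block structure: $v_2^{i,j}$ is the unique basis vector with nonzero $p$-component, and $v_1$ the unique one with nonzero $\tau$-component at leading order, so the re-expression of the small remainders $\tilde v_1^{i,j}$, $\hat v_2^{i,j}$, $\hat v^{i,j}$ in the target frame does not pick up any $O(1)$ factor in a dangerous slot. In short, replace ``$C_{i,j}=I+O(\cdot)$'' by ``$C_{i,j}$ and $C_{i,j}^{-1}$ are uniformly bounded with the displayed block structure'' and the rest of your argument stands.
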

%
% \begin{proof}
% It is an immediate consequence of  Lemma~\ref{lem:almostinvariantvectorfields} and a direct product of matrices.
% \end{proof}

This lemma provides formulas for the differential of the intermediate return maps in ``good basis''. The next proposition provides a good basis for the high iterate of the return map $\wt\Psi$ in  \eqref{def:Psitilde}.

\begin{proposition}\label{prop:straighteningDPsi}
Consider $\tkk$ given by Theorem~\ref{thm:BlockScattering} and $\de>0$ small enough. Consider also   the map $\wt \Psi= \wt \Psi_{1,2}\circ \wt \Psi_{1,1}^{M-1}\circ \wt \Psi_{1,1}$ defined in $U_1 \subset \QQQ_\de$.
There exists $C:\QQQ_\de \cap \wt \Psi^{-1}(\QQQ_\de) \to \M_{4\times 4}(\RR)$ of the form
\[
C(\omega) = C_{2,1}(\omega)  \wt C(\omega) C_{\wh \tS} (\omega)
\]
where $C_{2,1}(\omega)$ is the matrix introduced in Lemma \ref{lem:redressanttildepsiij},
\[
\wt C(\omega) =
\begin{pmatrix}
1 & a(\omega) & b (\omega) \\
0 & 1 & 0 \\
0 & 0 & \Id
\end{pmatrix}, \qquad a(\omega), b (\omega) = \OO(\tau^{3/5-Ca} \de^{1/5}),
\]
and, for $\omega=(p,\tau,z)$
\[
C_{\wh \tS} (\omega) =
\begin{pmatrix}
1 & 0 & 0 & 0 \\
0 & 1 & 0 & 0 \\
0 & 0 & 1 & V_{2,1}(z) \\
0 & 0 & 0 & 1
\end{pmatrix},
\]
where $V_{2,1}$ is given by 3 of Theorem~\ref{thm:BlockScattering},
such that
\begin{equation}
\label{eq:DPhi1adapted basis_M}
C(\wt \Psi(\omega))^{-1} D \wt \Psi(\omega) C(\omega) =
\begin{pmatrix}
\lambda & \mu \eps_5 & \eps_7 & \eps_{10} \\
\lambda \eps_2 & \mu (1+\eps_6) & \eps_8 & \eps_{11} \\
\lambda \eps_3 & \mu \tilde c_1 & \la_{\wh \tS}  & \la_{\wh \tS}\ii  \eps_{12} \\
\lambda \eps_4 & \mu \tilde c_2 & \la_{\wh \tS} \eps_{9} & \la_{\wh \tS}\ii
\end{pmatrix},
\end{equation}
where
% \textcolor{red}{Notacio d'epsilons estranya. Abans en fila, ara en columna. No hi ha ni 2 ni 13. Unificar-ho?}
\[
\lambda(\omega) \gtrsim \tau^{-\frac{3}{5}+\tilde Ca} \de^{-3M/5},\qquad \mu(\omega)^{-1}  \gtrsim \tau^{-\frac{3}{5}+\tilde Ca},
\qquad \tilde c_1, \tilde c_2 = \OO(1), \qquad |\eps_j|  \lesssim \OO(\de^{1/5}),
\]
for $j= 2, \dots,11$, $j \neq 9$,
\[
|\eps_9|,  |\eps_{12}|  \lesssim  \tkk
% \max\{\OO(\rho^{1/5}), \tkk\},
\]
and $\la_{\wh \tS} \ge \tkk\ii$ was introduced in Item 3 of Theorem~\ref{thm:BlockScattering}.

Analogously, there exists $\wh C:\QQQ_\de \cap \Psi(\QQQ_\de) \to \M_{4\times 4}(\RR)$ of the form
\[
\wh C (\wt \Psi(\omega)) = C_{2,1}(\wt \Psi(\omega)) \wt  C^*(\wt \Psi(\omega)) C_{\wh \tS}^* (\omega)
\]
with
\[
\wt C^*(\wt \Psi(\omega)) =
\begin{pmatrix}
1 & 0 & 0 \\
\tilde a(\omega) & 1 & \tilde b (\omega) \\
0 & 0 & \Id
\end{pmatrix}, \qquad \tilde a(\omega), \tilde b (\omega) = \OO(\de^{4/5-Ca}),
\]
and, for $\omega(p,\tau,z)$
\[
C_{\wh \tS}^* (\omega) =
\begin{pmatrix}
1 & 0 & 0 & 0 \\
0 & 1 & 0 & 0 \\
0 & 0 & 1 & V_{2,1}((\wh \tS)^{-1}(z)) \\
0 & 0 & 0 & 1
\end{pmatrix},
\]
such that
\begin{equation}
\label{eq:DPhi1inverseadapted basis_M}
\wh C(\omega)^{-1} D (\wt \Psi)^{-1}(\wt \Psi(\omega)) \wh C(\wt \Psi(\omega)) =
\begin{pmatrix}
\tilde \mu \tilde \eps_1 & \tilde \lambda \tilde \eps_5 & \tilde \eps_8 & \tilde \eps_{11}\\
\tilde \eps_2 & \tilde \lambda & \tilde \eps_9 & \tilde \eps_{12}\\
\tilde \eps_3 & \tilde \lambda \tilde \eps_6 & \la_{\wh \tS}\ii  & \la_{\wh \tS}\ii \tilde \eps_{13}\\
\tilde \eps_4 & \tilde \lambda \tilde \eps_7 & \la_{\wh \tS}\tilde  \eps_{10} & \la_{\wh \tS}
\end{pmatrix},
\end{equation}
where
\[
\tilde \lambda(\omega) \gtrsim \de^{-3(M+1)/5},\qquad \tilde  \mu(\omega)^{-1}  \gtrsim \de^{-\frac{3}{5}},
   \qquad |\tilde \eps_j|  \lesssim \OO(\de^{1/5}),
\]
for $j= 1, \dots,12$, $j \neq 10, 13$,
\[
|\tilde \eps_{10}|,  |\tilde \eps_{13}|  \lesssim \tkk,
\]
\end{proposition}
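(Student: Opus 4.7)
The plan is to compute $D\wt\Psi(\omega)$ as a product of the four adapted-basis matrices given by Lemma~\ref{lem:redressanttildepsiij} and then apply two close-to-identity corrections: a shear $\wt C$ that aligns the contracting/central subspace with $\{e_1 = 0\}$, and a block $C_{\wh\tS}$ that diagonalizes the central directions using the hyperbolicity of $D\wh\tS$ provided by Item~3 of Theorem~\ref{thm:BlockScattering}.

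First I would write, in the chained adapted bases of Lemma~\ref{lem:redressanttildepsiij},
\[
C_{2,1}(\wt\Psi(\omega))^{-1}\,D\wt\Psi(\omega)\,C_{2,1}(\omega) = \M_{1,2}\cdot\wt\M_{1,1}\cdot\M_{1,1}^{M-2}\cdot\M_{2,1}(\omega),
\]
where the last factor is evaluated at $\omega$ while the intermediate ones are evaluated at iterates that lie uniformly bounded away from $\EE_\infty$. Each factor has the $(1+1+2)$ block-triangular-modulo-$\eps$ structure with expanding diagonal entry $\lambda_*$, contracting one $\mu_*$, and central block close to $D\wh\tS^{i_*}$, all $\eps$-errors being $\OO(\de^{1/5})$ at the intermediate factors and $\OO(\tau^{1/5-Ca})$ at the first. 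Expanding the product entry by entry and telescoping the scattering contributions $D\wh\tS^1\cdot(D\wh\tS^1)^{M-2}\cdot D\wh\tS^2 = D\wh\tS$, one obtains a matrix whose $(1,1)$-entry is $\lambda(\omega)\gtrsim\tau^{-3/5+\wt Ca}\de^{-3M/5}$, whose $(2,2)$-entry is $\mu(\omega)\lesssim\tau^{3/5-\wt Ca}$, whose bottom-right $2\times 2$ block is $D\wh\tS(z) + \OO(\de^{1/5})$, and whose off-diagonal entries are $\OO(\de^{1/5})$ times the appropriate principal eigenvalue.

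Next I would determine $\wt C$ by solving for the $D\wt\Psi$-invariant three-dimensional subspace transverse to the expanding direction. Writing this subspace as the span of $a(\omega) e_1 + e_2$ and $b_i(\omega) e_1 + e_{2+i}$, $i=1,2$, and imposing approximate invariance yields, after dividing by the large number $\lambda$, a contraction-mapping system for $(a,b)$ whose right-hand side is of order $(\mu+1)\eps/\lambda$; its solution satisfies $a,b = \OO(\tau^{3/5-Ca}\de^{1/5})$, and conjugation by the resulting shear reduces the top row to its target form. The matrix $C_{\wh\tS}$ is then obtained directly from Item~3 of Theorem~\ref{thm:BlockScattering}: placing the two eigenvectors $V_1=(1,0)^\top$ and $V_2=(V_{2,1}(z),1)^\top$ of $D\wh\tS$ into the central block diagonalizes it up to $\OO(\tkk)$ errors, producing the form~\eqref{eq:DPhi1adapted basis_M}.

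The construction of $\wh C$ is analogous, applied to $(D\wt\Psi)^{-1}$: the $p$-direction becomes expanding with rate $\tilde\lambda\gtrsim\de^{-3(M+1)/5}$, the symmetric fixed-point problem now yields $\tilde a,\tilde b = \OO(\de^{4/5-Ca})$, and $C_{\wh\tS}^*$ is built from $V_{2,1}(\wh\tS^{-1}(z))$ since the inverse scattering map has the conjugate hyperbolic structure. The main obstacle is controlling the $(M-2)$-fold product of $\M_{1,1}$ uniformly in $M$: since $M\sim \tkk^{-1}$ by Theorem~\ref{thm:BlockScattering}, naively compounding the $\OO(\de^{1/5})$ errors of each factor would let them blow up. The key observation is that each $\M_{1,1}$ is strongly hyperbolic, with a spectral gap of order $\de^{-6/5}$ between the expanding and the $\OO(1)$ central directions, so an invariant-cone argument shows that the off-diagonal entries of the iterated product are bounded by a geometric series of ratio $\OO(\de^{4/5})$ and therefore remain $\OO(\de^{1/5})$ with a constant independent of $M$; verifying this uniform control at every intermediate iterate is the technical heart of the argument.
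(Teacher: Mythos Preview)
Your overall architecture is correct and close to the paper's, but the organization of the hard step differs in a way worth noting. You propose to (i) first multiply out the full product $\M_M\cdots\M_0$ and bound its entries via an invariant-cone argument, and (ii) then solve a single fixed-point equation for the shear $\wt C$. The paper instead intertwines these two steps: it builds $\wt C$ \emph{inductively} as a product $\wt C_M=C_0\cdots C_M$ of elementary shears
\[
C_j=\begin{pmatrix}1&\alpha_j&\beta_j\\0&1&0\\0&0&\Id\end{pmatrix},\qquad |\alpha_j|,\|\beta_j\|\lesssim \tau^{3/5-Ca}\de^{(1+3j)/5},
\]
choosing $(\alpha_j,\beta_j)$ at each step to annihilate the $(1,2)$ and $(1,3)$ entries of the accumulated product $\M_j\cdots\M_0\,\wt C_{j-1}$, and then checking by direct computation that all the remaining entries $\eps_{j,k}$ stay $\OO(\de^{1/5})$. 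The geometric decay of $\alpha_j,\beta_j$ in $\de^{3j/5}$ is exactly the ``geometric series of ratio $\OO(\de^{4/5})$'' you allude to, so the two approaches are really the same mechanism viewed from different ends; the paper's version has the advantage that the entry-by-entry bookkeeping is explicit and one never has to state or prove a separate cone lemma. Your phrasing ``the off-diagonal entries of the iterated product are bounded'' is slightly off: entries in the first column do grow like $\lambda$, and what stays bounded are the \emph{ratios} $\eps_{j,k}$.

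One minor correction: your worry that $M\sim\tkk^{-1}$ forces uniform-in-$M$ control is somewhat overstated. In the paper's scheme the parameters are chosen in order---first $\tkk$ (hence $M$), then $\de$ small depending on $M$---so $M$-dependent constants are allowed, and the paper explicitly says so. The geometric decay you identify is what keeps most constants from actually depending on $M$, but it is not strictly required for the argument to close. The construction of $C_{\wh\tS}$ and the treatment of $\wh C$ for the inverse map are as you describe.
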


\begin{proof}
In view of Lemma~\ref{lem:redressanttildepsiij}, we write
\begin{equation}
\label{eq:redressanttildePsiprimerpas}
C_{2,1}(\tilde \Psi(\omega))^{-1} D \tilde \Psi (\omega) C_{2,1}(\omega) = \M_{M}(\omega) \cdots \M_0(\omega),
\end{equation}
where the matrices
\[
\begin{aligned}
\M_0(\omega) &  = \M_{2,1}(\omega) \\
\M_j(\omega) & = \M_{1,1} (\wt \Psi_{1,1}^{j-1} \circ \wt \Psi_{2,1}(\omega)), \qquad j= 1,\dots, M-2, \\
\M_{M-1}(\omega) & = \wt \M_{1,1}(\wt \Psi_{1,1}^{M-2} \circ \wt \Psi_{2,1}(\omega)), \\
\M_{M}(\omega) & = \M_{1,2}(\wt \Psi_{1,1}^{M-1} \circ \wt \Psi_{2,1}(\omega)),
\end{aligned}
\]
are given by Lemma~\ref{lem:redressanttildepsiij}.
The product of the matrices in~\eqref{eq:redressanttildePsiprimerpas}, in the current form, is difficult to control. We find an adapted basis in which this product of matrices has a more convenient expression. We proceed in the following way. We claim that, for any $0 \le j \le M$, there exists a matrix
\begin{equation}
\label{def:canviCj}
C_j = \begin{pmatrix}
1 & \alpha_j & \beta_j \\
0 & 1 & 0 \\
0 & 0 & \Id
\end{pmatrix}, \qquad  |\alpha_j|, \|\beta_j\| \le K_M \tau^{3/5-Ca} \de^{(1+3j)/5},
% \rho^{3j/5},
\end{equation}
where $K_M$ is a constant depending only on $M$, such that $\wt C_j = \wt C_{j-1} C_j$, $\wt C_{-1} = \Id$, satisfies
% \textcolor{red}{Hem de parlar amb la mida dels blocs en cada direccio. A vegades especifiquem que les constants depenen de $M$ i a vegades no. Potser posar un comentari al principi. I.e., en cada seccio la notacio $\OO$ vol dir coses diferents.}
\begin{equation}
\label{eq:inducciocons}
\M_j \dots \M_0 \wt C_j =
\begin{pmatrix}
\tilde \lambda_j & 0 & 0 \\
\tilde \lambda_j \eps_{j,1} & \mu(1+ \eps_{j,2}) & \eps_{j,3} \\
\tilde \lambda_j \eps_{j,4} & \mu c_j & \wt \SSS_j + \eps_{j,5}
\end{pmatrix}
\end{equation}
with
\begin{equation}
\label{def:SSSj}
\wt \SSS_j (z) = D((\wh\tS^1)^{j-1} \circ\wh\tS^2) (z)
\end{equation}
and
\begin{equation}
\label{bound:elementsproducteMj}
\tilde \lambda_j \gtrsim \tau^{-3/5+Ca} \de^{-3(j-1)/5}, \qquad \mu^{-1} \gtrsim \tau^{-3/5+Ca}, \qquad \eps_{j,k} = \OO(\de^{1/5}), \qquad c_j = \OO(1).
\end{equation}
The constants involved in the above equalities depend only on $M$.

We prove this claim by induction.
The case $j=0$ follows from the expression of $\M_0 =\M_{2,1}$ given Lemma~\ref{lem:redressanttildepsiij}, taking
$\wt C_0=C_0$ as in \eqref{def:canviCj} with
\[
\tilde \lambda _0 =  \lambda^{2,1}, \qquad \mu = \mu^{2,1}, \qquad  \alpha_0 =  - \frac{\mu^{2,1} \eps^{2,1}_1}{\lambda^{2,1}},
\qquad \beta_0 = - \frac{\eps^{2,1}_2}{\lambda^{2,1}}.
\]
% \textcolor{red}{No trobem \tilde  $\eps^{2,1}_2$. Sobre la titlla?}
Now assume that~\eqref{eq:inducciocons} holds for $j-1$, with $1 \le j\le M-2$, that is, there exists $\tilde C_{j-1}$ such that
\begin{multline*}
\M_j \left(\M_{j-1} \dots \M_0 \tilde C_{j-1}\right) \\
 = \begin{pmatrix}
\lambda_j & \mu_j \tilde \eps_{j,1} & \tilde \eps_{j,2}  \\
\lambda_j \tilde \eps_{j,3} & \mu_j  & \tilde \eps_{j,4}  \\
\lambda_j \tilde \eps_{j,5} & \mu_j \tilde \eps_{j,6} & \SSS_j + \tilde \eps_{j,7}
\end{pmatrix}
\begin{pmatrix}
\tilde \lambda_{j-1} & 0 & 0 \\
\tilde \lambda_{j-1} \eps_{j-1,1} & \mu (1+\eps_{j-1,2}) & \eps_{j-1,3} \\
\tilde \lambda_{j-1} \eps_{j-1,4} & \mu c_{j-1} & \wt \SSS_{j-1} + \eps_{j-1,5}
\end{pmatrix}
\end{multline*}
with
\begin{equation}\label{def:mulambdaSj}
\begin{split}
(\lambda_j, \mu_j) &= (\lambda^{1,1},\mu^{1,1}) \circ \wt \Psi_{1,1}^{j-1} \circ \wt \Psi_{2,1} \\
\SSS_j& =D\wh\tS^1 \circ \pi_z\wt \Psi_{1,1}^{j-1} \circ \wt \Psi_{2,1} \\
\tilde \eps_{j,k}& = \eps^{1,1}_k \circ \wt \Psi_{1,1}^{j-1} \circ \wt \Psi_{2,1}, \qquad k=1,\dots,7.
\end{split}
\end{equation}
where $\pi_z$ is the projection onto the $z$ component. Observe that, by hypothesis, $\wt \Psi_{1,1}^{j-1} \circ \wt \Psi_{2,1}(\omega)\in \mathcal{Q}_\de$ and therefore $\pi_\tau \wt \Psi_{1,1}^{j-1} \circ \wt \Psi_{2,1}(\omega), \pi_p\wt \Psi_{1,1}^{j-1} \circ \wt \Psi_{2,1}(\omega)\in (0,\de)$. In particular, by Lemma~\ref{lem:redressanttildepsiij},
\[
 |\la_j|,|\mu_j|\ii\gtrsim \de^{-3/5}\qquad \text{and}\qquad |\tilde\eps_{j,k}|\lesssim\de^{1/5}.
\]
% of $\omega=(p,\tau,z)$.
Then, the elements of the top row of $\M_j \M_{j-1} \dots \M_0 \tilde C_{j-1}$ are
\begin{equation}\label{def:lambdajtilde}
\begin{split}
\tilde \lambda_j & = \lambda_j \tilde \lambda_{j-1} \left( 1+\frac{1}{\lambda_j}\left(\mu_j \tilde \eps_{j,1} \eps_{j-1,1}+
\tilde \eps_{j,2}\eps_{j-1,4}\right)\right) \\
& \gtrsim \de^{-3/5} \tau^{-3/5+Ca} \de^{-3(j-2)/5}\gtrsim\tau^{-3/5+Ca} \de^{-3(j-1)/5},
\end{split}
\end{equation}
and
\[
% \begin{aligned}
D_1  = \mu \left(\mu_j \tilde \eps_{j,1}(1+ \eps_{j-1,2}) +  \tilde \eps_{j,2}  c_{j-1}\right) ,  \qquad
D_2  = \mu_j \tilde \eps_{j,1} \eps_{j-1,3} + \tilde \eps_{j,2}(\wt \SSS_{j-1} + \eps_{j-1,5}).
% \end{aligned}
\]
Hence, taking
\[
\alpha_j = - \frac{D_1}{\tilde \lambda_{j}} = \OO\left(\tau^{2(3/5-Ca)} \de^{(3j+1)/5} \right), \qquad \beta_j = - \frac{D_2}{\tilde \lambda_{j}} = \OO\left(\tau^{3/5-Ca} \de^{(3j+1)/5}\right),
\]
we have that
\[
\M_j \M_{j-1} \dots \M_0 \tilde C_{j-1} C_j = \begin{pmatrix}
\tilde \lambda_j & 0 & 0 \\
\tilde \lambda_j \eps_{j,1} & \mu (1+ \eps_{j,2} )& \eps_{j,3} \\
\tilde \lambda_j \eps_{j,4} & \mu c_j & \wt \SSS_j + \eps_{j,5}
\end{pmatrix},
\]
for some $\eps_{j,k}$, $k=1\ldots 5$ and $c_j$.

Note that $\eps_{j,1}$ and $\eps_{j,4}$ do not depend on the choice of $\al_j$ and $\bet_j$. Indeed, using the equality in the first row in \eqref{def:lambdajtilde}, they satisfy
\[
 \begin{aligned}
\eps_{j,1} & = \frac{\lambda_j \tilde \lambda_{j-1}}{\tilde \lambda_{j}} \left(
\tilde \eps_{j,3} + \frac{1}{\lambda_j}(\mu_j \eps_{j-1,1}+\tilde \eps_{j,4} \eps_{j-1,4})
\right)  = \OO(1) \OO(\de^{1/5}),
\\
\eps_{j,4} & = \frac{\lambda_j \tilde \lambda_{j-1}}{\tilde \lambda_{j}} \left(
\tilde \eps_{j,5} + \frac{1}{\lambda_j}(\mu_j \tilde \eps_{j,6}\eps_{j-1,1}+(\SSS_j+\tilde \eps_{j,7}) \eps_{j-1,4})
\right)  = \OO(1) \OO(\de^{1/5}). \end{aligned}
\]
Clearly,  $\alpha_j$ and $\beta_j$ satisfy the inequalities in~\eqref{def:canviCj}.
Moreover,
\begin{equation}
\label{bound:D1D2}
\tilde \lambda_j \eps_{j,1} \alpha_j = -D_1 \eps_{j,1} =\mu \OO(\de^{1/5}), \qquad \tilde \lambda_j \eps_{j,1} \beta_j = -D_2 \eps_{j,1}= \OO(\de^{1/5}).
\end{equation}
% \textcolor{magenta}{Tindria m\'es sentit posar aix\`o despr\'es de la definici\'o dels epsilons?}.
The bounds of the elements of $\M_j \M_{j-1} \dots \M_0 \tilde C_{j-1} C_j$ can be computed immediately from  Lemma~\ref{lem:redressanttildepsiij} and the induction hypotheses. Indeed,
% \textcolor{red}{Falta afegir els $-C\rr$ als exponents a sota? tamb\'e a dalt.}
\[
\begin{aligned}
% \tilde \lambda_j & = \lambda_j \tilde \lambda_{j-1} \left( 1+\frac{1}{\lambda_j}\left(\mu_j \tilde \eps_{j,1} \eps_{j-1,1}+
% \tilde \eps_{j,2}\eps_{j-1,4}\right)\right) \\
% & =  \OO(\rho^{-3/5}) \OO(\tau^{-3/5+C\rho} \rho^{-3(j-2)/5}) \OO(1)  = \OO(\tau^{-3/5+C\rho} \rho^{-3(j-1)/5}), \\
\eps_{j,2} & = \frac{\tilde \lambda_j \eps_{j,1} \alpha_j}{\mu} + \mu_j (1+\eps_{j-1,2})+ \tilde
\eps_{j,4} c_{j-1}
= \OO(\de^{1/5}), \\
\eps_{j,3} & = \tilde \lambda_j \eps_{j,1} \beta_j + \mu_j \eps_{j-1,3}+\tilde \eps_{j,4} (\wt \SSS_{j-1}+ \eps_{j-1,5}) = \OO(\de^{1/5}), \\
c_j & = \frac{\tilde \lambda_j \eps_{j,4} \alpha_j}{\mu} + \mu_j \tilde \eps_{j,6} (1+\eps_{j-1,2}) + (\SSS_j + \tilde \eps_{j,7}) c_{j-1} = \OO(1), \\
\eps_{j,5} & = - \wt \SSS_j+ \tilde \lambda_j \eps_{j,4} \beta_j +(\SSS_j+ \tilde \eps_{j,7})(\wt \SSS_{j-1}+\eps_{j-1,5})+\mu_j\tilde\eps_{j,6}\eps_{j-1,3}
 = \SSS_j \wt \SSS_{j-1}- \wt \SSS_j + \OO(\de^{1/5})  \\
\end{aligned}
\]
where we have used~\eqref{bound:D1D2} in the bounds of $\eps_{j,2}$, $\eps_{j,3}$, $c_j$.

To get small estimates for $\eps_{j,5}$ note that, by  Statement 1 of Proposition~\ref{prop:lambdalemma}, one has that for $j=1\ldots M-2$ and $\omega=(p,\tau,z)\in\QQQ_\de$,
\[
\left|\pi_z\wt \Psi_{1,1}^{j-1} \circ \wt \Psi_{2,1}(\omega) -\wh\tS_1^{j-1}\circ \wh\tS_2(z)\right|\leq \OO(\de).
\]
Then, by the definition of $\SSS_j $ and $\wt\SSS_j$ in \eqref{def:mulambdaSj} and \eqref{def:SSSj} respectively,
\[
\left|  \SSS_j \wt \SSS_{j-1}- \wt \SSS_j\right|\leq \OO(\de).
\]
This implies that
% \[
$ |\eps_{j,5}|\leq \OO(\de^{1/5})$.
% \]
% \[
%  = (DS_1  \circ \wt \Psi_{1,1}^{j-1} \circ \wt \Psi_{2,1} - D S_1 \circ S_1^{j-1} \circ S_2) \wt \SSS_{j-1}  + \OO(\rho^{1/5})
% = \OO(\rho) + \OO(\rho^{1/5}).
% \]
% in the bound of $\eps_{j,5}$.

The obtained estimates prove the claim for $0\le j \le M-2$. The cases $j=M-1,M$ can be treated exactly in the same way.
They only differ in $\eps_{j,4}$, $c_j$ and $\eps_{j,5}$, where $\tilde \eps_{j-1,6}$
should be substituted by some $\tilde c_j = \OO(1)$ coming from Lemma~\ref{lem:redressanttildepsiij}. Their final bounds remain the same.

Observe that, from~\eqref{def:canviCj},
\[
\wt C_M = C_0 \cdots C_M =
\begin{pmatrix}
1 & \sum_{0\le j \le M} \alpha_j & \sum_{0\le j \le M} \beta_j \\
0 & 1 & 0 \\
0 & 0 & \Id
\end{pmatrix}
\]
where, from the bounds in~\eqref{def:canviCj},
\[
\left| \sum_{0\le j \le M} \alpha_j \right|, \left\| \sum_{0\le j \le M} \beta_j \right\|\le \OO(\tau^{3/5-Ca}\de^{1/5}).
\]
Hence, from~\eqref{eq:inducciocons} with $j=M$, we have that, for any $\omega \in \QQQ_\de \cap \wt \Psi^{-1}(\QQQ_\de) $,
\begin{multline*}
\wt C_M^{-1} ( \Psi (\omega)) C_{2,1}(\tilde \Psi(\omega))^{-1} D \tilde \Psi (\omega) C_{2,1}(\omega) \wt C_M(\omega) \\
\begin{aligned}
& = \begin{pmatrix}
1 & -\sum_{0\le j \le M} \alpha_j & -\sum_{0\le j \le M} \beta_j \\
0 & 1 & 0 \\
0 & 0 & \Id
\end{pmatrix}
\begin{pmatrix}
\tilde \lambda_M & 0 & 0 \\
\tilde \lambda_M \eps_{M,1} & \mu (1+\eps_{M,2}) & \eps_{M,3} \\
\tilde \lambda_M \eps_{M,4} & \mu c_M & \wt \SSS_M + \eps_{M,5}
\end{pmatrix}
\\
& = \begin{pmatrix}
\tilde \lambda_M (1+ \OO(\tau^{3/5-Ca}\de^{2/5})) & \mu \OO(\tau^{3/5-Ca}\de^{1/5}) & \OO(\tau^{3/5-Ca}\de^{1/5}) \\
\tilde \lambda_M \eps_{M,1} & \mu (1+\eps_{M,2}) & \eps_{M,3} \\
\tilde \lambda_M \eps_{M,4} & \mu c_M & \wt \SSS_M + \eps_{M,5}
\end{pmatrix}.
\end{aligned}
\end{multline*}
Finally, the claim follows from the properties of $V_{2,1}$ in Theorem~\ref{thm:BlockScattering}.

The proof of~\eqref{eq:DPhi1inverseadapted basis_M} is completely analogous, considering matrices $D_j$ of the form
\[
D_j = \begin{pmatrix}
1 & 0 & o \\
\alpha_j & 1 & \beta_j \\
0 & 0 & \Id
\end{pmatrix},
\]
which are also a group.
\end{proof}

\subsection{Stable and unstable cone fields: Proof of Proposition~\ref{prop:condicionsdecons}}
\label{sec:provaproposiciodelscons}
The analysis of the differential of the map $\wt\Psi$ (see \eqref{def:Psitilde}) performed in Section \ref{sec:conefields} allows us to set up stable and unstable cone fields and  prove Proposition~\ref{prop:condicionsdecons}.

The only difficulty in the study  of the behavior of the cone fields is that, of the two expanding factors $\lambda, \la_{\wh \tS} $ (see Proposition~\ref{prop:straighteningDPsi}), which satisfy $\lambda > \la_{\wh \tS}>1$, $\lambda$ is unbounded in $\QQQ_\de$ (as $\tau\to 0$).
Hence the error terms $\lambda \eps_i$ in~\eqref{eq:DPhi1adapted basis_M} are not necessarily small and, in fact, can be large.

We start by considering the unstable cone field $S^u_{\omega,\kappa_u}$, introduced in~\eqref{def:coninestable}.

Let $\M$ be the matrix in the left hand side of~\eqref{eq:DPhi1adapted basis_M}, in Proposition~\ref{prop:straighteningDPsi}, that is, such that
\[
C(\wt \Psi(\omega))^{-1} D \wt \Psi(\omega) C(\omega) = \M(\omega).
\]

For $x \in S^u_{\omega,\kappa_u}$,
let $y = C(\omega)x$, where $C$ is given by Lemma~\ref{prop:straighteningDPsi}. We will denote $y=(y_u,y_s)$, where $y_u = (y_1,y_3)$ and $y_s = (y_2,y_4)$, and use the norms $\|y_u\| = \max\{|y_1|,|y_3| \}$ and $\|y_s\| = \max\{|y_2|,|y_4| \}$. Given $\kappa >0$, we denote
\[
\wt S^u_{\omega,\kappa} = \{y; \; \|y_s\| \le \kappa \|y_u\| \}.
\]
We also denote $(\M y)_u = \M_{u,u}y_u+ \M_{u,s}y_s$ and $(\M y)_s = \M_{s,u}y_u+ \M_{s,s}y_s$, where
\begin{equation}
\label{def:Msuuuusss}
\begin{aligned}
\M_{u,u} & = \begin{pmatrix}
\lambda & \eps_7 \\
\lambda \eps_3 & \la_{\wt \tS}
\end{pmatrix}, & \quad \M_{u,s} & = \begin{pmatrix}
\mu \eps_5 & \eps_{10} \\
\mu \tilde c_1 & \la_{\wh \tS}\ii  \eps_{12}
\end{pmatrix}, \\
 \M_{s,u} & = \begin{pmatrix}
\lambda \eps_2 & \eps_8 \\
\lambda \eps_4 & \la_{\wh \tS} \eps_{9}
\end{pmatrix}, & \quad \M_{s,s} & = \begin{pmatrix}
\mu (1+\eps_6) & \eps_{11} \\
\mu \tilde c_2 & \la_{\wh \tS}\ii
\end{pmatrix}.
\end{aligned}
\end{equation}
with $|\eps_i| \lesssim \OO(\de^{1/5})$, $i \neq 9, 12$ and $|\eps_9|,|\eps_{12}| \lesssim \OO(\tkk)$.

Note that the matrix $C$ in Proposition \ref{prop:straighteningDPsi} has been defined in Proposition as the product of three matrices. The form of $\wt C$ and $C_{\wh\tS}$ has been given in Proposition \ref{prop:straighteningDPsi}.
% Recalling that $C_{2,1}=\left(v_1, v_2^{21}, v_$
Lemmas \ref{lem:almostinvariantvectorfields} and  \ref{lem:redressanttildepsiij}  imply that
$C_{2,1}$ is of the form
\[
C_{2,1} =
\begin{pmatrix}
0 & 1 & 0 & 0 \\
1 & \OO(\de) & \OO(\de)& \OO(\de) \\
0 & \tilde a & 1 & 0 \\
0 & \tilde b & 0 & 1
\end{pmatrix}.
\]
% \textcolor{red}{Per qu\`e epsilon 1,2,3? Canviar nom?}
% with $|\de_i| \lesssim \OO(\rho)$, $i=1,2,3$.
Let $\hat a = \sup_{\omega \in \QQQ_\de} |\tilde a( \omega)|\lesssim \OO(1)$ and $\hat b = \sup_{\omega \in \QQQ_\de} |\tilde b( \omega)|\lesssim \OO(1)$.

% Using the expression of $C$ in Proposition~\ref{prop:straighteningDPsi}
Then, it is immediate to check that
\begin{equation}
\label{bound:deconsacons}
C(\omega) S^u_{\omega,\kappa_u} \subset \wt S^u_{\omega,\wt \kappa_u}, \qquad \wt \kappa_u \geq \frac{(1+\OO(\de^{1/5}))\kappa_u + \hat b }{1+\hat a} = \OO(1),
\end{equation}
% \textcolor{red}{No hauria de ser m\'es gran o igual?}
and, for $\beta >0$, small enough,
\begin{equation}
\label{bound:deconsaconsinvers}
C(\omega)^{-1} \wt S^u_{\omega,\beta}  \subset S^u_{\omega,\wt \beta}, \qquad \wt \beta \le \frac{(1+\hat b)\beta}{1-\OO(\de^{1/5})-(1+\hat a+\OO(\de))\beta } = \OO(\beta).
\end{equation}

We claim that
\begin{enumerate}
\item
if $y \in \wt S^u_{\omega,\wt \kappa_u}$, $\|(\M y)_u \| \ge \la_{\wh \tS} (1+\OO(\de^{1/5}))\|y_u\|$ and
\item
$\M(\omega) \wt S^u_{\omega,\wt \kappa_u} \subset \wt S^u_{\omega,\hat \kappa_u}$, with $\hat \kappa_u = \OO(\de^{1/5}) +  \OO(\tkk)$.
\end{enumerate}

Indeed, let $y \in \wt S^u_{\omega,\wt \kappa_u}$. We first observe that $\|\M_{u,u}^{-1}\| \leq \la_{\wh \tS}\ii$. This implies that
\[
\|\M_{u,u} y_u\| \ge \|\M_{u,u}^{-1}\|^{-1}\|y_u\| \ge \la_{\wh \tS} \|y_u\|.
\]
Hence, using that $\|y_s\| \le \wt \kappa_u \|y_u\|$,
\begin{equation}
\label{bound:expansiominima}
\|(\M y )_u\| \ge \|\M_{u,u} y_u\| - \|\M_{u,s} y_s\| \ge ( \la_{\wh \tS}  - \OO(\de^{1/5}) \wt \kappa_u ) \|y_u\|.
\end{equation}
Taking into account the bound on $\wt \kappa_u$ given by~\eqref{bound:deconsacons}, this last inequality implies Item 1. However, this is the minimum expansion in the unstable directions. If $\|y_u\| = |y_1|$, the expansion is much larger, as follows from
\begin{multline}
\label{bound:expansiomaxima}
\|(\M y )_u\| \ge |(\M y)_1| = | \lambda y_1 + \eps_7 y_3 + \mu \eps_5 y_2 + \eps_{10} y_4 | \\ \ge
(\lambda - \OO(\de^{1/5})- \OO(\de^{1/5}) \wt \kappa_u) |y_1| = \lambda(1 - \OO(\de^{1/5})/\lambda- \OO(\de^{1/5}) \wt \kappa_u/\lambda)\|y_u\|.
\end{multline}
Also, if $\|y_u\| = |y_1|$ and $|y_1| \ge (\la_{\wh \tS}/\lambda) |y_3|$, we have that
\begin{equation}
\label{bound:expansioy1sinormayuy3peronomassa}
|(\M y)_1| = | \lambda y_1 + \eps_7 y_3 + \mu \eps_5 y_2 + \eps_{10} y_4 |  \ge
\lambda(1 - \OO(\de^{1/5})/\la_{\wh \tS}- \OO(\de^{1/5}) \wt \kappa_u) |y_1|.
\end{equation}
% \textcolor{red}{Hi havia un $\la_{\wh \tS}$ dividint a l'\'ultim terme que hem eliminat}
Now we prove Item 2. We first claim that, if $y \in \wt S^u_{\omega,\wt \kappa_u}$,
\begin{equation}
\label{bound:Msuyu}
\|\M_{s,u} y_u \| \le
\begin{cases}
\lambda (\OO(\de^{1/5})+ \OO(\tkk)) \|y_u\|, \quad & \text{if $\|y_u\| = |y_1|$}, \\
\lambda (\OO(\de^{1/5})+ \OO(\tkk)) |y_1|, \quad & \text{if $\|y_u\| = |y_3|$ and $|y_1| \ge (\la_{\wt \tS}/\lambda)|y_3|$}, \\
\la_{\wt \tS} (\OO(\de^{1/5})+ \OO(\tkk)) \|y_u\|, \quad & \text{if $\|y_u\| = |y_3|$ and $|y_1| \le (\la_{\wt \tS}/\lambda)|y_3|$}.
\end{cases}
\end{equation}
Indeed, if $\|y_u \| = |y_1|$, by the definition of $\M_{s,u}$ in~\eqref{def:Msuuuusss} and the fact that $\la_{\wt \tS}/ \lambda < 1$,
\[
%\label{bound:Msuyu}
\|\M_{s,u} y_u \| \le \lambda \OO(\de^{1/5})|y_1|+ \la_{\wt \tS} \OO(\tkk)|y_3|
 \le \lambda ( \OO(\de^{1/5}) +  \OO(\tkk)) \|y_u\|.
\]
In the case $\|y_u\| = |y_3|$ and $|y_1| \ge (\la_{\wt \tS}/\lambda)|y_3|$,  we have that
\[
%\label{bound:Msuyu}
\|\M_{s,u} y_u \| \le \lambda \OO(\de^{1/5})|y_1|+ \la_{\wt \tS} \OO(\tkk)|y_3|
 \le \lambda ( \OO(\de^{1/5}) +  \OO(\tkk))|y_1|.
\]
Finally, if $\|y_u\| = |y_3|$ and $|y_1| \le (\la_{\wt \tS}/\lambda)|y_3|$,
\[
%\label{bound:Msuyu}
\|\M_{s,u} y_u \| \le \lambda \OO(\de^{1/5})|y_1|+ \la_{\wt \tS} \OO(\tkk)|y_3|
 \le \la_{\wt \tS} ( \OO(\de^{1/5}) +  \OO(\tkk)) \|y_u\|,
    \]
which proves~\eqref{bound:Msuyu}.

Hence, if $y \in \wt S^u_{\omega,\wt \kappa_u}$ and  $\|y_u \| = |y_1|$, by the first inequality in~\eqref{bound:Msuyu}, using that
$\|\M_{s,s}\| \le \OO(\de^{1/5})+\OO(\tkk)$ and~\eqref{bound:expansiomaxima}, we have that
\[
\|(\M y)_s\| \le  \|\M_{s,u} y_u \| + \|\M_{s,s} y_s\| \le \lambda ( \OO(\de^{1/5}) +  \OO(\tkk)) \|y_u\| \le ( \OO(\de^{1/5}) +  \OO(\tkk)) \|(\M y)_u\|.
\]
In the case $\|y_u\| = |y_3|$ and $|y_1| \ge (\la_{\wt \tS}/\lambda)|y_3|$, by \eqref{bound:expansiominima}, the second inequality in~\eqref{bound:Msuyu} and~\eqref{bound:expansioy1sinormayuy3peronomassa},
\begin{multline*}
\|(\M y)_s\| \le   \|\M_{s,u} y_u \| + \|\M_{s,s} y_s\|  \le \lambda ( \OO(\de^{1/5}) +  \OO(\tkk)) |y_1| + ( \OO(\de^{1/5}) +  \OO(\tkk)) \|y_s\| \\
 \le ( \OO(\de^{1/5}) +  \OO(\tkk)) |(\M y)_1|+ ( \OO(\de^{1/5}) +  \OO(\tkk)) \|(\M y)_u\| \le ( \OO(\de^{1/5}) +  \OO(\tkk)) \|(\M y)_u\|.
\end{multline*}
Finally, in the case $\|y_u\| = |y_3|$ and $|y_1| \le (\la_{\wt \tS}/\lambda)|y_3|$, by the third inequality in~\eqref{bound:Msuyu} and~\eqref{bound:expansiominima},
\begin{multline*}
\|(\M y)_s\| \le   \|\M_{s,u} y_u \| + \|\M_{s,s} y_s\|  \le \la_{\wt \tS} (\OO(\de^{1/5})+ \OO(\tkk)) \|y_u\| + ( \OO(\de^{1/5}) +  \OO(\tkk)) \|y_s\| \\
  \le ( \OO(\de^{1/5}) +  \OO(\tkk)) \|(\M y)_u\|.
\end{multline*}
This proves 2. Then, taking $\beta = \OO(\de^{1/5}) +  \OO(\tkk)$ in~\eqref{bound:deconsaconsinvers},  the claim for the unstable cones follows.

The proof of the claim for the stable cones is completely analogous. It is only necessary to use~\eqref{eq:DPhi1inverseadapted basis_M} instead of~\eqref{eq:DPhi1adapted basis_M}. We simply emphasise that~\eqref{bound:deconsaconsinvers} is replaced by~\eqref{bound:deconsacons}.

\appendix

\section{Proof of Proposition \ref{prop:invariance}}
We devote this section to proof Proposition \ref{prop:invariance}. Separating the linear and non-linear terms, 
% Recall that 
the invariance equation \eqref{eq:Invariance1} 
% is
% \[
% DZ(x)X_x(x,Z)= X_Z(x,Z)
% \]
% and separating linear and non-linear terms
can be rewritten as
\begin{equation*}
% \label{eq:equaciovectorial}
\mathcal{L}Z= D_ZX_Z^0(x,0)Z+ F(x,Z)
\end{equation*}
where 
% $D=D_Z$ and
\[
% \begin{split}
F(x,Z) = X_Z^0(x,Z)-X_Z^0(x,0)-DX_Z^0(x,0)Z+ X_Z^1(x,Z)-DZ\left( X_x^0(x,Z)- \Omega+ X_x^1(x,Z)  \right)
% \end{split}
\]
with $\Omega=(1,\nu G_0^3/L_0^3)^\top$.

Observe that, using the definition of $Q_0$ in \eqref{def:Q0} and defining $q=\Lambda-\alpha\beto-\alo\beta-\alpha\beta$, one has
\[
\begin{split}
\lefteqn{DZ\left( X_x^0(x,Z)- \Omega+ X_x^1(x,Z)\right)}\\
 &=
 \partial_u Z \left(\partial_Y Q_0(u,  Y,q)-1\right)
+ \partial_\lo Z \left(\frac{G_0^3\nu}{(L_0+\Lambda)^3}-\frac{G_0^3\nu}{L_0^3}
+ \partial_q Q_0(u,  Y,q)
+\partial_\La \PP_1(u,\lo,\La,\al,\bet)\right)\\
&= \partial_u Z
 \left(\frac{ Y}{G_0y_\h^2(u)}+f_1(u)q_1\right)+ \partial_\lo Z \left(\frac{G_0^3 \nu}{(L_0+\Lambda)^3}-\frac{G_0^3\nu}{L_0^3}+f_1(u)Y+f_2(u)q
 +\partial_\La \PP_1(u,\lo,\La,\al,\bet)\right).
\end{split}
\]
Therefore,
\begin{eqnarray*}
DZ\left( X_x^0(x,Z)- \Omega+ X_x^1(x,Z)\right) =
 \partial_u Z \GG _1 (u,\lo,Z)+ \partial_\lo Z\GG_2 (u,\lo,Z)
 \end{eqnarray*}
where $\GG_1$ and $\GG_2$ are the functions introduced in \eqref{def:G1G2}.
Moreover, $X^0_Z(x,0)=0$ and
\begin{equation*}
\begin{split}
A(u)&=DX^0_Z(u,\ga,0)=\left(\begin{array}{cccc}
          -\frac{\partial ^2 \PP_0}{\partial  Y \partial u} & -\frac{\partial ^2 \PP_0}{\partial \Lambda \partial u} & -
          \frac{\partial ^2 \PP_0}{\partial \alpha \partial u} & -\frac{\partial ^2 \PP_0}{\partial \beta \partial u} \\
          0&0&0&0\\
          -i \frac{\partial ^2 \PP_0}{\partial  Y \partial \beta} & -i\frac{\partial ^2 \PP_0}{\partial \Lambda \partial \beta} &
          -i\frac{\partial ^2 \PP_0}{\partial \alpha \partial \beta} & -i\frac{\partial ^2 \PP_0}{\partial \beta \partial \beta} \\
          i \frac{\partial ^2 \PP_0}{\partial  Y \partial \alpha} & i\frac{\partial ^2 \PP_0}{\partial \Lambda \partial \alpha} &
          i\frac{\partial ^2 \PP_0}{\partial \alpha \partial \alpha} & i\frac{\partial ^2 \PP_0}{\partial \beta \partial \alpha}
        \end{array}\right)(u,\gamma,0)
%         \\
%         &
        =
\begin{pmatrix}0&0\\ \mathcal{A}(u)& \mathcal{B}(u) \end{pmatrix}.
\end{split}
\end{equation*}
We obtain the expression of $\AAA(u)$ and $\BB(u)$ using the formula for $\PP_0$ in \eqref{eq:P0} and \eqref{def:Q0}, \eqref{def:f1f2}.
%
% with
% \begin{align}
% \mathcal{A}(u)&=iG_0\left(\begin{array}{cc}\frac{\partial ^2 Q_0}{\partial  Y \partial q}\alpha_0 &
% \frac{\partial ^2 Q_0}{\partial q \partial q}\alpha_0 \\
% -\frac{\partial ^2 Q_0}{\partial  Y \partial q} \beta_0 & - \frac{\partial ^2 Q_0}{\partial q \partial q}\beta_0
% \end{array}\right)(u,0,-\alpha_0\beta_0)=
% \left(\begin{array}{cc}f_1(u)\alpha_0 & f_2(u)\alpha_0 \\
% -f_1(u) \beta_0 & - f_2(u)\beta_0
% \end{array}\right)\label{eq:A}\\
% % \begin{array}{rcl}
% \mathcal{B}(u) &= iG_0\left(\begin{array}{cc}-\frac{\partial ^2 Q_0}{\partial q \partial q}\alpha_0 \beta _0 & -\frac{\partial ^2 Q_0}{\partial q \partial q}\alpha_0^2 \\
% \frac{\partial ^2 Q_0}{\partial q \partial q} \beta_0^2 &  \frac{\partial ^2 Q_0}{\partial q \partial q}\alpha _0\beta_0
% \end{array}\right)(u,0,-\alpha_0\beta_0)\label{eq:B}\\
% &= f_2(u)
% \left(\begin{array}{cc}-\alpha_0 \beta _0 & -\alpha_0^2 \\
% \beta_0^2 & \alpha _0\beta_0
% \end{array}\right)=f_2(u)B
% \label{eq:Bb}
% % \end{array}
% \end{align}
% where
% \begin{equation}\label{def:f1f2}
% \begin{split}
% f_1(u) =& iG_0 \frac{\partial ^2 Q_0}{\partial  Y \partial q}(u,0,-\alpha_0\beta_0)=\frac{i}{y_h^2(u)r_h^2(u)}\\
% f_2(u) =& iG_0 \frac{\partial ^2 Q_0}{\partial q \partial q}(u,0,-\alpha_0\beta_0)=\frac{i}{G_0y_h^2(u) r_h^4(u)}+\frac{i}{G_0 r_h^2(u)}=\frac{2i}{G_0 r_h^3y_h^2}.
% \end{split}
% \end{equation}
% For the last equality, we have used that by energy conservation
% \[
% \frac{y_h^2}{2}+\frac{1}{2r_h^2}-\frac{1}{r_h} =0.
% \]
Defining
\[
\QQ(Z)=X_Z^0(x,Z)-X_Z^0(x0)-AZ+X_Z^1(x,Z)
\]
we obtain the formulas \eqref{def:Q}.

\section{Estimates for the perturbing potential}\label{sec:Potential}
The goal of this appendix is to give estimates for the Fourier coefficents of the potential $\PP_1$ introduced in \eqref{def:P1}. This estimates are thoroughly used in the proof of Theorem \ref{thm:ExistenceManiFixedPt} given in Section \ref{sec:proofexistencemani} and in the analysis of the Melnikov potential given in Appendix \ref{sec:Melnikov}.

Using \eqref{def:potentialwtilde}, \eqref{def:rvInPoinc} and \eqref{def:ChangeThroughHomo}, the potential $\PP_1$  in \eqref{def:P1} satisfies
\begin{equation*}
% \label{eq:P1}
\begin{array}{rcl}
\PP_1(u,\gamma,\Lambda, \alpha, \beta)&=&G_0^3\wt W\left(\gamma+\phi_\h(u), L_0+
\Lambda,e^{i\phi_\h(u)}(\alo+\alpha),
e^{-i\phi_\h(u)}(\beto+\beta), G_0^2\wh  r_\h(u)\right)
\\
% &=&G_0
% V\left(\gamma-\frac{1}{2i}\log{-\frac{\alpha}{\beta}},L_0+\La,\phi_\h(u)+\frac{1}{2i}\log{-\frac{\alpha}{\beta}},\Gamma,\wh r_\h(u);G_0\right)\\
&=&
\frac{\wt\nu G_0}{\wh r_\h(u)}\left(\frac{m_0}{\left|
1+\frac{\wt\sigma_0}{G_0^2}\frac{\wt \rr e^{iv}}{\wh r_\h(u)} \sqrt{\frac{\alo+\alpha}{\beto+\beta}} e^{i\phi_\h(u)}\right|}
+\frac{m_1}{\left|1-\frac{\wt\sigma_1}{G_0^2}\frac{\wt
\rr e^{iv}}{\wh r_\h(u)} \sqrt{\frac{\alo+\alpha}{\beto+\beta}} e^{i\phi_\h(u)}\right|}-(m_0+m_1)\right),\\
% &=&
% \frac{\wt\nu G_0}{\wh r_\h(u)}\left(\frac{m_0}{\left|
% 1+\frac{\wt\sigma_0}{G_0^2}\frac{\wt \rr}{\wh r_\h(u)} \sqrt{-\frac{\alpha}{\beta}} e^{i(v+\phi_\h(u))}\right|}
% +\frac{m_1}{\left|1-\frac{\wt\sigma_1}{G_0^2}\frac{\wt
% \rr}{\wh r_\h(u)} \sqrt{-\frac{\alpha}{\beta}} e^{i(v+\phi_\h(u))}\right|}-(m_0+m_1)\right),
\end{array}
\end{equation*}
where the function $\wt \rr(\ell,L, \Gamma) e^{iv(\ell,L, \Gamma)}$ is evaluated at 
\[
\ell=\gamma-\frac{1}{2i}\log{\frac{\alo+\alpha}{\beto+\beta}}, \quad L=L_0+\Lambda,\quad \Gamma= L_0+\La-(\alo+\alpha)(\beto+\beta).
\]
By  \eqref{def:Delvg} and \eqref{def:Dellu},  it can be written also as function of $(\ell,L,\e )$, where $\e$ is the function introduced in
\eqref{def:DeleLGamma}. In  coordinates \eqref{def:ChangeThroughHomo}, it can be written as
\begin{equation}\label{eq:ecalbet}
\e=\mathcal{E}(\Lambda,\al,\bet)\sqrt{(\alo+\alpha)(\beto+ \beta)}\quad \text{where}\quad \mathcal{E}(\Lambda,\al,\bet)=\frac{\sqrt{2(L_0+\Lambda)-(\alo+\alpha)(\beto+ \beta)}}{L_0+\Lambda}.
\end{equation}

Next lemma gives some crucial information about the Fourier expansion of the perturbed Hamiltonian $\PP_1$ in the domains
$D^{u}_{\kk,\de}$, $D^{s}_{\kk,\de}$ in \eqref{def:DomainOuter}.

\begin{lemma}\label{lem:P1k}
Assume $|\alpha_0|<\zeta_0$,  $|\bet_0|<\zeta_0$, where $\zeta_0 G_0^{3/2}\ll 1$, $1/2 \le L_0 \le 2$.
Then, there exists $\sigma>0$ such that for $u\in D^{u}_{\kk,\de} \cup D^{s}_{\kk,\de}$,
$\gamma \in \TT_\sigma$, $|\La|\le 1/4$, $|\al|\le \zeta_0/4$,
$|\bet|\le \zeta_0/4$,the function $\PP_1$ can be written as
\[
\PP_1(u,\gamma, \Lambda,\alpha, \beta) = \sum_{q\in\ZZ}  \PP_1 ^{[q]}(u, \Lambda,\alpha, \beta) e^{i q\gamma}\qquad \text{where}\qquad  \PP_1 ^{[q]}(u, \Lambda,\alpha, \beta)= \wh \PP_1 ^{[q]}(u, \Lambda,\alpha, \beta) e^{i q\phi_\h(u)}
\]
for some  coefficients $\wh \PP_1 ^{[q]}$ satisfying
\begin{equation}\label{eq:boundsP1q}
\begin{aligned}
 \left|\wh \PP_1^{[q]}\right|  & \le \frac{K}{G_0^3 |\wh r_\h(u)|^3}e^{-|q|\sigma}&
 \left|\partial_u \wh \PP_1^{[q]}\right|  & \le \frac{K}{G_0^3 |\wh r_\h(u)|^4}|\wh y_\h(u)|e^{-|q|\sigma}\\
 \left|\partial_\gamma \wh \PP_1^{[q]}\right|  & \le \frac{K}{G_0^3 |\wh r_\h(u)|^3}e^{-|q|\sigma}&
  \left|\partial_\La \wh \PP_1^{[q]}\right|  & \le \frac{K}{G_0^3 |\wh r_\h(u)|^3}e^{-|q|\sigma}\\
 \left|e^{-i\phi_\h(u)}\partial_\al \wh \PP_1^{[q]}\right|  & \le \frac{K}{ G_0^3 |\wh r_\h(u)|^3}e^{-|q|\sigma}&
 \left|e^{i\phi_\h(u)}\partial_\bet \wh \PP_1^{[q]}\right|  & \le \frac{K}{ G_0^3 |\wh r_\h(u)|^3}e^{-|q|\sigma}  \end{aligned}.
\end{equation}
\end{lemma}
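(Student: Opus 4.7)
My plan is to exploit the fact that $\wt W(\la, L, \eta, \xi, \wt r)$ depends on the angle $\la$ in a natural way (through the mean anomaly $\ell = \la - \phi$), and the substitution $\la = \gamma+\phi_\h(u)$ from \eqref{def:ChangeThroughHomo} is exactly what produces the factor $e^{iq\phi_\h(u)}$ in each Fourier coefficient. I will first perform the change of variable $\sigma = \gamma + \phi_\h(u)$ and define
\[
\wt\PP_1(u,\sigma,\Lambda,\alpha,\beta)=G_0^3\wt W\bigl(\sigma, L_0+\Lambda, e^{i\phi_\h(u)}(\alo+\alpha), e^{-i\phi_\h(u)}(\beto+\beta), G_0^2\wh r_\h(u)\bigr).
\]
Since $\PP_1(u,\gamma,\ldots) = \wt\PP_1(u,\gamma+\phi_\h(u),\ldots)$, a short calculation shows that if $\wt\PP_1 = \sum_q \wt\PP_1^{[q]}(u,\ldots)e^{iq\sigma}$, then $\PP_1^{[q]} = e^{iq\phi_\h(u)}\wt\PP_1^{[q]}$, which gives the claimed factorization with $\wh\PP_1^{[q]} = \wt\PP_1^{[q]}$. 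So the lemma reduces to bounding the Fourier coefficients of $\wt\PP_1$ in $\sigma$.

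Next I will bound $\wt\PP_1$ pointwise by expanding the Newtonian potential $|1\pm Z|^{-1}$ in powers of $Z=\wt\sigma_i(\wt\rr/\wt r)e^{i(\phi+v)}$ using Legendre polynomials. Under the hypothesis $\zeta_0 G_0^{3/2}\ll 1$, in the sectorial domain $D^{u,s}_{\kk,\de}$ we have $|e^{i\phi_\h(u)}|\lesssim |u\mp i/3|^{-1/2}\lesssim G_0^{3/2}$, so $|\eta|,|\xi|\lesssim \zeta_0 G_0^{3/2}$ remain small and the Taylor series converges uniformly. The key cancellation is that $m_0\wt\sigma_0 = m_1\wt\sigma_1 = m_0m_1/(m_0+m_1)$, so the constant and linear terms in $Z$ cancel and the leading contribution is $O((\wt\rr/\wt r)^2)/\wt r$. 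Bounding $\wt\rr \le C L_0^2$ and $\wt r = G_0^2\wh r_\h(u)$ gives $|\wt W|\lesssim L_0^4/(G_0^6|\wh r_\h(u)|^3)$, hence $|\wt\PP_1|\lesssim K/(G_0^3|\wh r_\h(u)|^3)$.

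The third step is to establish analyticity in $\sigma$ on a complex strip. The $\sigma$-dependence of $\wt\PP_1$ enters only through the mean anomaly $\ell=\sigma-\phi$, hence through the eccentric and true anomalies $E(\ell,e), v(\ell,e)$ determined by Kepler's equation. The eccentricity \eqref{eq:ecalbet} satisfies $|e|\le C\zeta_0$, bounded uniformly in $G_0$, so $E$ and $v$ extend analytically in $\ell$ to a strip $|\Im\ell|<\sigma_0$ with $\sigma_0>0$ independent of all small parameters. Consequently $\wt\PP_1$ is analytic in $\sigma\in\TT_{\sigma_0}$ and Cauchy's estimate yields
\[
|\wh\PP_1^{[q]}(u,\ldots)| = |\wt\PP_1^{[q]}|\le e^{-|q|\sigma}\sup_{\sigma\in\TT_\sigma}|\wt\PP_1| \lesssim \frac{K\,e^{-|q|\sigma}}{G_0^3|\wh r_\h(u)|^3},
\]
for any $\sigma<\sigma_0$, giving the first inequality of \eqref{eq:boundsP1q}.

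For the derivative bounds I will differentiate $\wt\PP_1$ via the chain rule: $\partial_u$ brings down either $\phi_\h'(u)=-1/\wh r_\h(u)^2$ (from $\eta,\xi$) or $G_0^2\wh y_\h(u)$ (from $\wt r$); the latter dominates and explains the extra factor $|\wh y_\h(u)|/|\wh r_\h(u)|$ in the bound for $\partial_u\wh\PP_1^{[q]}$. The derivatives $\partial_\Lambda$ and $\partial_\gamma=\partial_\sigma$ do not worsen the order of magnitude since the corresponding partials of $\wt W$ are controlled by $\wt W$ itself. For $\partial_\alpha$, note that $\alpha$ enters $\wt\PP_1$ only through $\eta=e^{i\phi_\h(u)}(\alo+\alpha)$, so $\partial_\alpha\wt\PP_1 = e^{i\phi_\h(u)}G_0^3\partial_\eta\wt W$ and hence $e^{-i\phi_\h(u)}\partial_\alpha\wh\PP_1^{[q]} = G_0^3(\partial_\eta\wt W)^{[q]}$, whose bound follows by Cauchy estimates in $\eta$ on a disk of radius comparable to $|\eta|/2$; the analog for $\partial_\beta$ supplies the factor $e^{i\phi_\h(u)}$. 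The main technical difficulty I anticipate is to make precise, near the singularities $u=\pm i/3$, the claim that the Taylor/Legendre expansion converges and is bounded uniformly: this is exactly where the hypothesis $\zeta_0 G_0^{3/2}\ll 1$ is used to keep $|\eta|,|\xi|$ inside the disk of analyticity of $\wt W$ in the Poincaré variables (see Remark \ref{rmk:analytic}).
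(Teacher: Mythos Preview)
Your proposal is correct and takes a genuinely different route from the paper's proof.

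The paper passes through Delaunay coordinates: it writes the Fourier coefficient $V^{[q]}$ of the potential in $\ell$ as an integral, changes to the eccentric anomaly $E$, and then performs the further substitution $s=E+\phi$ to extract the factor $e^{iq\phi}$ explicitly. The resulting integrand is then bounded pointwise on the real $s$-line, using that terms like $e_c e^{\pm i\phi}$ rewrite as $(\eta_0+\alpha)\mathcal E$ and $(\xi_0+\beta)\mathcal E$, so that the potentially large factor $e^{\pm i\phi_\h(u)}$ only ever appears multiplied by a $\zeta_0$-small coefficient. Your approach stays entirely in Poincar\'e variables: the shift $\sigma=\gamma+\phi_\h(u)=\lambda$ gives the factorization $\PP_1^{[q]}=e^{iq\phi_\h(u)}\wt\PP_1^{[q]}$ for free, and you then bound $\wt\PP_1^{[q]}$ by a Cauchy estimate on a complex strip in $\sigma$, invoking the analyticity of $\wt W$ in $(\lambda,L,\eta,\xi)$ from Remark~\ref{rmk:analytic}. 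Your route is more conceptual and actually yields the exponential decay $e^{-|q|\sigma}$ more directly; the paper's integral formula, on the other hand, is what is reused verbatim in the Melnikov computation of Appendix~\ref{sec:Melnikov}.

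Two small points to tighten. First, your sentence ``$|\eta|,|\xi|\lesssim\zeta_0 G_0^{3/2}$ remain small and the Taylor series converges uniformly'' is the right conclusion but the wrong justification if read as a bound on $Z=\wt\sigma_i(\wt\rho/\wt r)e^{i(v+\phi)}$: the factors $e^{i\phi}$ and $\wt\rho e^{iv}$ are individually large near $u=\pm i/3$, and only their product (expressed in Poincar\'e variables) is $\sim L^2 e^{i\lambda}/\wt r$, hence small. You already point to Remark~\ref{rmk:analytic} for this, so just make that the primary argument rather than the Legendre expansion in $Z$. Second, for $\partial_\alpha$ the Cauchy estimate should be taken on a disk of \emph{fixed} radius in $\eta$ (the analyticity domain of $\wt W$), not of radius $|\eta|/2$, since $|\eta|$ can be arbitrarily small near $u=-i/3$.
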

%Observe that:
% $$
% P_1^{[q]}=\int_0^{2\pi} P_1(u,\gamma,\alpha, \Lambda, \beta) e^{-i q\gamma}
% $$
We devote the rest of this appendix to prove this lemma.
\begin{proof}[Proof of Lemma \ref{lem:P1k}]
To estimate the Fourier coefficients of $\PP_1$ it is convenient to analyze first the Newtonian potential in Delaunay coordinates. Indeed, we consider the potential the potential $\wt W$ in Delaunay coordinates, that is 
\[
 V( \ell, L,\phi, \Ga,\wt r)=\wt W\left( \ell+\phi, L,\sqrt{L-\Ga}e^{i\phi}, \sqrt{L-\Ga}e^{-i\phi},\wt r\right)
\]
which reads
\begin{equation}\label{def:potentialDelaunay}
V( \ell, L,\phi, \Ga,\wt r)=\frac{\wt\nu}{\wt r}\left(\frac{m_0}{|
1+\wt\sigma_0 n e^{i\phi}|}+\frac{m_1}{|1-\wt\sigma_1 n e^{i\phi}|}-(m_0+m_1)\right),
\end{equation}
where $n=n(\ell,L,\phi,\Gamma,\wt r)=\frac{\wt \rr}{\wt r} e^{iv}$.

This potential can be rewritten as $V(\ell,L,\phi,\Gamma, \wt r) = \sum_{q\in\ZZ} V ^{[q]} e^{i q\ell}$ with
\[
\begin{split}
V^{[q]}( L,\phi, \Ga,\wt r)&=\frac{1}{2\pi}\int_0^{2\pi} V(\ell, L,\phi, \Ga, \wt r) e^{-i q\ell} d\ell\\
&= \frac{\wt\nu}{2\pi\wt r} \int_0^{2\pi} \left(\frac{m_0}{|
1+\wt\sigma_0 n e^{i\phi}|}+\frac{m_1}{|1-\wt\sigma_1 n e^{i\phi}|}-(m_0+m_1)\right) e^{-i q\ell} d\ell.
\end{split}
\]
Following \cite{DelshamsKRS19}, to estimate these integrals, we perform the change to the excentric anomaly
\begin{equation}\label{def:eccentric}
\ell =E-\e\sin E, \quad\ %\wt \rr=L^2 (1-e\cos E), \
d\ell=(1-\e\cos E)dE
\end{equation}
and use that
\begin{equation}\label{def:a}
\wt \rr e^{iv}= L^2\left( a^2 e^{iE}-\e+\frac{\e^2}{4a^2}e^{-iE}\right),\quad \ a=\frac{\sqrt{1+\e}+\sqrt{1-\e}}{2}
\end{equation}
where $\e=\frac{1}{L}\sqrt{L^2-\Gamma^2}$. Then we do a second change of variables $E+\phi=\s$ to obtain
\[
 \begin{split}
V^{[q]} & = \frac{\wt\nu}{\wt r} \int_0^{2\pi} \left(\frac{m_0}{|
1+\wt\sigma_0 \wt n e^{i\phi}|}+\frac{m_1}{|1-\wt\sigma_1 \wt n e^{i\phi}|}-(m_0+m_1)\right)
e^{-i q(\s-\phi-\e\sin (\s-\phi))}(1-\e\cos (\s-\phi)) d\s \\
&=e^{i q\phi}  \frac{\wt\nu}{\wt r} \int_0^{2\pi} \left(\frac{m_0}{|
1+\wt\sigma_0 \wt n e^{i\phi}|}+\frac{m_1}{|1-\wt\sigma_1 \wt n e^{i\phi}|}-(m_0+m_1)\right)
e^{-i q(\s-\e\sin (\s-\phi))}(1-\e\cos (\s-\phi)) d\s \\
& = e^{i q\phi} \wh V^{[q]}( L,\phi, \Ga,\wt r)
\end{split}
\]
where
\[
\wt n(\s,L,\phi,\Gamma,\wt r)e^{i\phi}= n(\s-\phi-e\sin (\s-\phi),L,\phi,\Gamma,\wt r)e^{i\phi}=\frac{1}{\wt r}
L^2\left(a^2e^{i\s}-\e e^{i\phi}+\frac{\e^2}{4a^2}e^{-i(\s-2\phi)}\right).
\]
Now we relate these Fourier coefficients to those of $\PP_1$. To this end we relate Delaunay coordinates to the coordinates  introduced in \eqref{def:ChangeThroughHomo}. One can see that the
%
% One can relate these variables with the Delaunay ones through
\[
(\ell,L, \phi, \Gamma,\wt r, \wt y)\to (u, Y,\gamma,\La, \alpha,\beta)
\]
is given by
% where, using that
% \begin{eqnarray*}
% \alpha &=& \sqrt{L-\Gamma} e^{i(\phi-\phi_\h(u))}\\
% \beta &=& \sqrt{L-\Gamma} e^{-i(\phi-\phi_\h(u))}\\
% \end{eqnarray*}
% we obtain $\alpha / \beta=-e^{2i(\phi_\h(u)-\phi)}$, and that $ \alpha \beta = \frac{\xi^{2}+\eta^{2}}{2}=L-\Gamma$. Therefore,
\begin{equation}\label{def:DelaunayToAdapted}
\begin{aligned}
\ell &= \gamma-\frac{1}{2i}\log{\frac{\alo+\alpha}{\beto+\beta}} &\quad L &=\, L_0+\Lambda\\
\phi &=\, \phi_\h(u)+\frac{1}{2i}\log{\frac{\alo+\alpha}{\beto+\beta}} &\quad \Gamma &=\, L-(\alo+\alpha)(\beto+\beta)\\
\wt r &=\, G_0^2\wh r_\h(u) & \quad\wh y&=\,\frac{\wh y_\h(u)}{G_0}+\frac{ Y}{G_0^2\wh y_\h(u)}+\frac{\Lambda-(\alo+\alpha)(\beto+\beta)+\alo\beto}{G_0^2\wh y_\h(u)(\wh r_\h(u))^2}.
\end{aligned}
\end{equation}
Then,
\[
 \begin{split}
\lefteqn{\PP_1(u,\gamma,\La,\alpha, \beta)}\\
&=G_0^3 V\left(\gamma-\frac{1}{2i}\log{\frac{\alo+\alpha}{\beto+\beta}} ,L_0+\La,\phi_\h(u)+\frac{1}{2i}\log{\frac{\alo+\alpha}{\beto+\beta}} ,  L_0+\La-(\alo+\al)(\beto+ \bet),G_0^2\wh r_\h(u)\right)\\
&=\sum_{q\in\ZZ}
e^{iq( \phi_\h(u)+\frac{1}{2i}\log{\frac{\alo+\alpha}{\beto+\beta}} )}
e^{iq(\gamma-\frac{1}{2i}\log{\frac{\alo+\alpha}{\beto+\beta}} )} \wh \PP_1^{[q]}(u,\La,\alpha, \beta)\\
&=\sum_{q\in\ZZ} e^{iq( \phi_\h(u)+\gamma)} \wh \PP_1^{[q]}(u,\La,\alpha, \beta)
\end{split}
\]
where
\[
\begin{split}
& \wh \PP_1^{[q]}(u,\La,\alpha, \beta) =
G_0^3\wh V^{[q]}\left(L_0+\La,\phi_\h(u)+\frac{1}{2i}\log{\frac{\alo+\alpha}{\beto+\beta}},L_0+\La-(\alo+\al)( \beto+\bet),G_0^2\wh r_\h(u)\right) \\
& =\frac{\wt\nu G_0}{2\pi\wh r_\h(u)} \int_0^{2\pi} \left(\frac{m_0}{\left|
1+\wt\sigma_0 \wh n e^{i(\phi_\h(u)+\frac{1}{2i}\log{\frac{\alo+\alpha}{\beto+\beta}} )}\right|}
+\frac{m_1}{\left|1-\wt\sigma_1 \wh n e^{i(\phi_\h(u)+\frac{1}{2i}\log{\frac{\alo+\alpha}{\beto+\beta}} )}\right|}-(m_0+m_1)\right) \\
& e^{-i q(\s-\e\sin (\s-(\phi_\h(u)+\frac{1}{2i}\log{\frac{\alo+\alpha}{\beto+\beta}} )))}\left(1-\e
\cos \left(\s-\left(\phi_\h(u)+\frac{1}{2i}\log{\frac{\alo+\alpha}{\beto+\beta}} \right)\right)\right) d\s
\end{split}
\]
where now
\[
\wh n e^{i(\phi_\h(u)+\frac{1}{2i}\log{\frac{\alo+\alpha}{\beto+\beta}} )}=
\frac{1}{G_0^2 \wh r_\h(u)}
(L_0+\Lambda)^2\left(a^2 e^{i\s}-\e e^{i(\phi_\h(u)+\frac{1}{2i}\log{\frac{\alo+\alpha}{\beto+\beta}})}
+\frac{\e^2}{4a^2}e^{-i(\s-2(\phi_\h(u)+\frac{1}{2i}\log{\frac{\alo+\alpha}{\beto+\beta}}) )}\right).
\]
The first important observation is that, using the expresion for the eccentricity in \eqref{eq:ecalbet} one has
\[
 \e e^{\frac{1}{2}\log{\frac{\alo+\alpha}{\beto+\beta}}}= (\alo+\al) \mathcal{E},\qquad 
 \e e^{-\frac{1}{2}\log{\frac{\alo+\alpha}{\beto+\beta}}}= (\beto+\bet) \mathcal{E}
 \]
which implies
\begin{align*}
  \e\sin (\s-(\phi_\h(u)+\frac{1}{2i}\log{\frac{\alo+\alpha}{\beto+\beta}})))&=\frac{\mathcal{E}}{2i}\left(
(\beto+\bet) e^{i(s-\phi_\h(u))}-(\alo+\al) e^{-i(s-\phi_\h(u))} \right)\\
 \e\cos (\s-(\phi_\h(u)+\frac{1}{2i}\log{\frac{\alo+\alpha}{\beto+\beta}})))&=\frac{\mathcal{E}}{2}\left(
(\beto+\bet) e^{i(s-\phi_\h(u))}+(\alo+\al) e^{-i(s-\phi_\h(u))} \right)\\
\end{align*}
and 
\[
\wh n e^{i(\phi_\h(u)+\frac{1}{2i}\log{\frac{\alo+\alpha}{\beto+\beta}})}=
\frac{(L_0+\Lambda)^2}{G_0^2 \wh r_\h(u)}
\left(a^2 e^{i\s}-
(\alo+\al) \mathcal{E}e^{i\phi_\h(u)}
+\mathcal{E}^2\frac{(\alo+\al)^2}{4a^2}e^{-i(\s-2\phi_\h(u))}\right).
\]
Now we observe that, taking into account that $1/2\le a \le 2$, the asymptotics provided by Lemma \ref{lemma:homounperturbed} for $\wh r_\h$ and $\phi_\h$, and the fact that $|\alo+\al|+|\beto+\bet| \le \zeta_0$ imply
\[
\left|(|\alo+\al|+|\beto+\bet|) \mathcal{E}e^{\pm i\phi_\h(u)}\right| \le K \zeta_0 G_0^{3/2} \ll 1
\]
we have that
\[
\left|\wh n e^{i(\phi_\h(u)+\frac{1}{2i}\log{\frac{\alo+\alpha}{\beto+\beta}})}\right| \le \frac{K}{G_0^2 |\wh r_\h(u)|}\le  G_0^{-1/2}\le  \frac{1}{2}
\]
under the hypotheses of the lemma.
Therefore we have that, using the cancellations (observe that $\wt \sigma_0 m_0-\wt \sigma_1 m_1=0$),
\[
\left|\wh \PP_1^{[q]} (u,\La,\alpha,\beta)
%= e^{i q\phi} \wh V^q( \wh r,L,\phi, \Ga;G_0), \ |\wh V_q
\right| \le K
\frac{1}{G_0^3 |\wh r_\h(u)|^3}.
\]
The bounds for the derivatives can  be made analogously differentiating the expressions for $\PP_1$.
\end{proof}

\section{The Melnikov potential: Proof of Proposition \ref{prop:MelnikovPotential}}\label{sec:Melnikov}
% The Melnikov potential
% \[
% \LL(u,\gamma,\alo,\beto)=\sum_q e^{iq(\gamma-\omega u)}\LL^q(\alo,\beto)\quad \text{ with }\quad \LL^q=\int_{-\infty}^{\infty} \PP_1^q(t,0,\alo,\beto) e^{iq\omega t} dt
% \]
% and $\omega=\frac{\nu G_0 ^3}{ L_0 ^2}$.
We devote this section to prove Proposition \ref{prop:MelnikovPotential} which gives estimates for the Melnikov potential $\LL$ introduced in \eqref{def:MelnikovPotential}. Note that $\LL$ can be rewritten in terms of the perturbing potential $\PP_1$ introduced in \eqref{def:P1} as
%  \begin{equation}\label{def:MelnikovPotential}
\[
\LL(\vm,\alo,\beto)=\int_{-\infty}^{+\infty} \PP_1(s, \vm+\omega s,0,0,0)\ ds\qquad \text{where}\qquad \omega=\frac{\nu G_0^3}{L_0^{3}}
\]
(see \eqref{eq:omega}).

% \end{equation}
%
% Next proposition gives an asymptotic formula for the Melnikov potential $\LL$.
% We prove this proposition through  the next two lemmas
First, we obtain estimates for the harmonics different from $0, \pm 1$ of the potential $\LL$.
\begin{lemma}\label{lemma:Melnikov:qlarge}
The $q$-Fourier coefficient of the Melnikov potential \eqref{def:MelnikovPotential} with $|q|\geq 2$ can be bounded as
\begin{equation}\label{boundLq}
\left|\LL^{[q]}\right|\le K^q G_0^{\frac{3|q|+1}{2}}\ex^{-\frac{|q|\tilde \nu G_0^3}{3 L_0^3}}
\end{equation}
for some $K>0$ independent of $G_0$ and $q$.
\end{lemma}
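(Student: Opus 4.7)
Plan: Starting from the definition \eqref{def:P1} of the perturbed potential $\PP_1$, rewrite the Melnikov potential as
\[
\LL(\sigma,\eta_0,\xi_0)=\int_{-\infty}^{+\infty}\PP_1(s,\sigma+\omega s,0,0,0)\,ds.
\]
Substituting the Fourier expansion provided by Lemma~\ref{lem:P1k}, namely $\PP_1^{[q]}(u,0,0,0)=\wh\PP_1^{[q]}(u,0,0,0)e^{iq\phi_\h(u)}$, and extracting the $e^{iq\sigma}$-coefficient yields
\[
\LL^{[q]}(\eta_0,\xi_0)=\int_{-\infty}^{+\infty}\wh\PP_1^{[q]}(s,0,0,0)\,e^{iq\phi_\h(s)}\,e^{iq\omega s}\,ds.
\]
The argument for $|q|\ge 2$ is a complex contour shift; by symmetry I will carry it out for $q\ge 2$.

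By Lemma~\ref{lem:HomoInfinity} and the analyticity statement of Lemma~\ref{lem:P1k}, the integrand extends analytically to the strip $|\Im s|<1/3$ minus $\OO(G_0^{-3})$-neighborhoods of the singularities $s=\pm i/3$ of the unperturbed homoclinic, and decays like $|s|^{-2}$ as $|\Re s|\to\infty$ (since $|\wh r_\h(s)|^{-3}\sim |s|^{-2}$). Because $\omega>0$, the factor $e^{iq\omega s}$ decays in the upper half-plane, so I would deform the real-axis contour up to the horizontal line $\{\Im s=1/3-\kappa G_0^{-3}\}$ for a fixed small $\kappa>0$, closing with vertical segments at $\Re s=\pm R$ that vanish as $R\to\infty$ by the polynomial decay just noted.

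On the shifted contour, the dominant decay comes from the factor $|e^{iq\omega s}|=e^{-q\omega(1/3-\kappa G_0^{-3})}=e^{-q\omega/3}\,e^{q\omega\kappa G_0^{-3}}$, where the second factor is $e^{q\nu\kappa/L_0^3}=e^{O(q)}$ and can be absorbed into $K^q$. The other two factors blow up near the critical point $s_*=i/3-i\kappa G_0^{-3}$: Lemma~\ref{lem:HomoInfinity} gives $|\wh r_\h(s)|\sim |s-i/3|^{1/2}$ and $|e^{i\phi_\h(s)}|\sim |s-i/3|^{-1/2}$, so combined with $|\wh\PP_1^{[q]}(s)|\lesssim G_0^{-3}|\wh r_\h(s)|^{-3}$ from Lemma~\ref{lem:P1k}, the integrand has modulus $\lesssim C^q\,G_0^{-3}\,|s-i/3|^{-(q+3)/2}$ near $s_*$. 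Splitting the $\Re s$-integration into the Laplace-peak region $|\Re s|\lesssim\kappa G_0^{-3}$ (where $|s-i/3|\sim\kappa G_0^{-3}$, so the width $\kappa G_0^{-3}$ yields a contribution $\sim C^q\kappa^{-(q+1)/2}G_0^{(3q-3)/2}e^{-q\omega/3}$) and the tail region $|\Re s|\gtrsim\kappa G_0^{-3}$ (where the tail $\int|\Re s|^{-(q+3)/2}d\Re s$ converges and gives a matching contribution) produces a polynomial prefactor dominated by the claimed $K^q G_0^{(3|q|+1)/2}$, yielding \eqref{boundLq}. The case $q\le -2$ follows by deforming into the lower half-plane $\{\Im s=-(1/3-\kappa G_0^{-3})\}$.

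The main technical point is keeping track of the balance between the $q$-dependent polynomial blow-up of $(e^{i\phi_\h(s)})^q\wh r_\h(s)^{-3}$ at the singularity and the exponential decay of $e^{iq\omega s}$: the contour height must be chosen at distance $\sim G_0^{-3}$ from the singularity — any closer would violate the analyticity domain of Lemma~\ref{lem:P1k}, and any further would fail to capture the full $e^{-q\omega/3}$ factor. Since $\omega\propto G_0^3$, a height change of order $G_0^{-3}$ changes the exponent by $O(q)$, which is precisely what gets absorbed into the constant $K^q$; and since $\kappa$ is independent of both $G_0$ and $q$, the $\kappa$-dependent factors in the integrand likewise contribute only to $K^q$.
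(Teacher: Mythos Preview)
Your proof is correct and follows essentially the same approach as the paper: both express $\LL^{[q]}$ as $\int_{-\infty}^{+\infty}\wh\PP_1^{[q]}(s,0,0,0)\,e^{iq\phi_\h(s)}\,e^{iq\omega s}\,ds$, shift the integration contour to $\Im s=1/3-\kappa G_0^{-3}$ (or its reflection for $q<0$), and then estimate using the bounds of Lemma~\ref{lem:P1k} on $\wh\PP_1^{[q]}$ together with the asymptotics of $\wh r_\h$ and $e^{i\phi_\h}$ near the singularity from Lemma~\ref{lem:HomoInfinity}. The paper's proof is a terse sketch of exactly this argument; your version simply makes explicit the peak/tail splitting and the resulting power counting (in fact your exponent $G_0^{(3|q|-3)/2}$ is slightly sharper than the stated bound, which is harmless).
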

\begin{proof}
The proof of this lemma is straighforward if one writes the Fourier coefficients of $\LL$ in terms of the Fourier coefficients of the perturbing Hamiltonian $\PP_1$ introduced in \eqref{def:P1} (which can be also expressed in terms of the function $\wh \PP_1^{[q]}$ introduced in Lemma  \ref{lem:P1k}) as
\[
\LL^{[q]}=\int_{-\infty}^{+\infty}\PP_1^{[q]}(u,0,0,0)\ex^{i\frac{q\tilde \nu G_0^3}{ L_0^3}u}du=
\int_{-\infty}^{+\infty}\wh \PP_1^{[q]}(u,0,0,0) \ex^{iq\phi_\h(u)} \ex^{i\frac{q\tilde \nu G_0^3}{ L_0^3}u}du.
\]
Then, it is enough to change the path of the integral to $\Im u=\frac{1}{3}-  G_0^3$ and use the bounds of Lemma
\ref{lem:P1k} and that, by Lemma \ref{lemma:homounperturbed},
\[
|\ex^{q i \phi_\h(u)}|\le K^q G_0^{\frac{3|q|}{2}}.
\]
\end{proof}
Now we give asymptotic formulas for the harmonics $q=0,\pm 1$. It is easy to check that
\[
\LL^{[q]}(\alo,\beto)=\ol{\LL^{[-q]}}(\beto,\alo)
\]
and therefore it is enough to compute $q=0,1$.

\begin{lemma}\label{lemma:Melnikov:q01}
The Fourier coefficients  $\LL^{[0]}$ and $\LL^{[1]}$ satisfy
\[
\begin{split}
\LL^{[0]}(\alo,\beto) =&\,
\frac{\tilde \nu \pi}{8} L_0^4 G_0^{-3} \Bigg[ N_2\left(1 +\frac{3}{L_0}\alo\beto-\frac{3}{2L_0^2}(\alo\beto)^2\right)\\
&-\frac{15 N_3  L_0^2}{8\sqrt{2L_0}}  G_0^{-2}(\alo+\beto)+\alo\beto\OO_1\left(\alo,\beto\right)\Bigg]\\
 \LL^{[1]}(\alo,\beto) =&\,
  \frac{\tilde\nu\sqrt{\pi}}{4}\ex^{-\frac{\tilde \nu G_0^3}{3 L_0^3}}L_0^4G_0^{3/2}\left[
 \left( \frac{N_3L_0^2}{ 8\sqrt{2}}G_0^{-2}-\frac{3N_2}{\sqrt{L_0}}\alo
 \right)+
   \OO\left( G_0^{-5/2}, G_0^{-3/2}\alo, \alo\beto, \alo^2 G_0, \beto^2 G_0\right)\right]
\end{split}
\]
where $N_2$ and $N_3$ are given in~\eqref{def:massterm}
\end{lemma}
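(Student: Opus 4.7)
The plan is to treat $\LL^{[0]}$ and $\LL^{[1]}$ by quite different techniques: $\LL^{[0]}$ is a non-oscillatory integral whose size is polynomial in $G_0^{-1}$, while $\LL^{[1]}$ contains the oscillatory factor $e^{i\omega u}$ with $\omega = \tilde\nu G_0^3/L_0^3$ large, so its size is exponentially small and must be extracted by deforming the contour of integration to the complex singularity $u=i/3$ of the unperturbed homoclinic.

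First I would write, using Lemma~\ref{lem:P1k}, the Fourier expansion
\[
\LL^{[q]}(\alo,\beto)=\int_{-\infty}^{+\infty}\wh\PP_1^{[q]}(u,0,0,0)\,e^{iq\phi_\h(u)}e^{iq\omega u}\,du,
\]
and then substitute the explicit form of $V$ from~\eqref{def:potentialDelaunay} together with the change to the eccentric anomaly $E\mapsto \sigma=E+\phi$, expanding both denominators in Taylor series in the small parameter $n=\wt\rr e^{iv}/\wt r=\mathcal O(G_0^{-2}\wh r_\h^{-1})$. After using the exact cancellations $\wt\sigma_0 m_0-\wt\sigma_1 m_1=0$ (so that all linear terms in $n$ vanish), the integrand for $\LL^{[q]}$ becomes a power series in $n$ whose coefficients can be grouped into combinations of the mass constants $N_2, N_3$ introduced in~\eqref{def:massterm}.

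For $\LL^{[0]}$ (the non-oscillatory harmonic) I would then expand up to order $n^4$, evaluating at $\alo=\beto=0$ and at small $(\alo,\beto)$. The resulting one-dimensional integrals over $u\in\RR$ of powers $\wh r_\h(u)^{-k}$ can be evaluated explicitly via the parameterization $\wh r_\h = (\tau^2+1)/2$, $du = (\tau^2+1)/2\,d\tau$ from Lemma~\ref{lemma:homounperturbed}: a direct computation gives $\int \wh r_\h^{-3}du = \pi/2$, $\int \wh r_\h^{-4}du = 3\pi/8$, $\int \wh r_\h^{-5}du = 5\pi/16$, etc. Matching the $n^2$ term against $\wh r_\h^{-3}$ yields the prefactor $\tilde\nu\pi L_0^4 N_2/(8G_0^3)$; the $n^3$ term (odd, coming from the asymmetry $m_1-m_0$ in $N_3$) and the $n^4$ term supply the remaining $G_0^{-5}(\alo+\beto)$ and $\alo\beto$ corrections; all further terms are absorbed in the error $\OO_1(\alo,\beto)$.

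For $\LL^{[1]}$ the main task is to capture an exponentially small quantity. After pulling out the exponential via $|e^{iq\phi_\h(u)}| \to e^{-\phi_\h''\Im u}$ and moving the integration contour to $\Im u = 1/3 - O(1/G_0)$, the estimate on $\wh\PP_1^{[1]}$ in Lemma~\ref{lem:P1k} already gives the factor $e^{-\tilde\nu G_0^3/(3L_0^3)}$. To obtain the precise prefactor I would introduce the local scaled variable $w = G_0^{3}(u-i/3)$ near the singularity $u=i/3$, use the local expansions $\wh r_\h(u)\sim c_1(u-i/3)^{1/2}$, $\wh y_\h(u)\sim c_2(u-i/3)^{-1/2}$, $e^{i\phi_\h(u)}\sim c_3(u-i/3)^{-1/2}$ from Lemma~\ref{lem:HomoInfinity}, and reduce the integral to a standard inner integral of the type $\int w^{-\alpha}e^{iw}dw$, which evaluates in closed form using $\Gamma$-function identities. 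Expanding in $\alo,\beto$ before the contour shift gives the two leading terms $\tfrac{N_3 L_0^6}{32}\sqrt{\pi/2}\,\tte^{-1/2}$ (from the $N_3$ correction in $\PP_1$) and $-\tfrac{3N_2}{4}\sqrt{\pi}L_0^{7/2}\tte^{3/2}\alo$ (from linearizing in $\alo$), together with the error $\RRR_1$.

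The main obstacle is Step~3: controlling the inner asymptotic expansion for $\LL^{[1]}$ with enough precision to certify \emph{both} the $\tte^{-1/2}$ and the $\alo\,\tte^{3/2}$ coefficients, together with the derivative estimates $|\partial^i_{\alo}\partial^j_{\beto}\RRR_1|\le C(i,j)\tte^{(-1+3i+3j)/2}$. The derivative bounds follow from Cauchy estimates applied in a disk of radius $\sim\tte^{-3/2}$ in $(\alo,\beto)$ (dictated by $|\alo|\Theta^{3/2}\le\varrho^*$), but one must check that the contour-deformation constants are uniform in $(\alo,\beto)$ over this disk; this requires verifying that none of the Taylor remainders in $\wh\PP_1^{[1]}$ ruin the exponential smallness of the subdominant terms, which is the delicate bookkeeping step of the argument. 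Finally, reparameterizing from $G_0=\tte+\alo\beto$ back to $\tte$ produces the stated form of Proposition~\ref{prop:MelnikovPotential}, and the $\LL^\ge$ bound follows directly from Lemma~\ref{lemma:Melnikov:qlarge} after the same contour shift.
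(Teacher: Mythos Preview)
Your strategy is correct and matches the paper's: expand the Newtonian potential in powers of the small ratio $n=\wt\rr e^{iv}/\wt r$, use the cancellation $\wt\sigma_0 m_0=\wt\sigma_1 m_1$ to kill the linear term, and then treat the $q=0$ harmonic by direct integration and the $q=1$ harmonic by contour deformation to the singularity $u=i/3$.

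The organizational difference is worth noting. The paper does not do the inner--outer matching you outline; instead it factorizes each term of the Legendre expansion as a product of an \emph{angular} coefficient $C_q^{n,m}(L_0,e_c)$ (the $q$-th Fourier coefficient in the mean anomaly of $\wt\rr^{\,n}e^{imv}$, see~\eqref{def:c}) and a \emph{temporal} oscillatory integral $N(q,m,n)$ (see~\eqref{eq:Nmn}), so that
\[
\LL_1^{[q]},\ \LL_2^{[q]}\ =\ \text{finite linear combinations of}\ C_q^{n,m}\,N(q,m,n),
\]
as in~\eqref{def:MelnikovImportant}. The exponentially small asymptotics of the $N(1,m,n)$ and the explicit values of $N(0,m,n)$ are then quoted from \cite{DelshamsKRS19}, and the $C_q^{n,m}$ are computed from the eccentric-anomaly representation~\eqref{def:CqnmMeanAnomaly}. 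This decouples the dependence on $(\alo,\beto)$ (which enters only through $e_c$ in the $C_q^{n,m}$) from the dependence on $G_0$ (which enters only through the $N(q,m,n)$), so no uniformity check of the contour deformation in $(\alo,\beto)$ is needed --- the ``delicate bookkeeping step'' you flag simply does not arise. Your inner-variable argument $w=G_0^3(u-i/3)$ is precisely how the $N(1,m,n)$ are evaluated in that reference, so the two routes coincide at the analytic core; the paper's version is just more modular. One small caution: for $\LL^{[0]}$ the integrands are not bare powers $\wh r_\h^{-k}$ but carry the factors $e^{im\phi_\h(u)}$, and it is the identity $N(0,k,0)=N(0,0,k)=0$ for $k\ge2$ that makes the $C_0^{2,\pm2}$ contributions vanish --- make sure your direct computation reproduces this cancellation rather than the naive $\int\wh r_\h^{-3}du$ alone.
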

\begin{proof}
Proceeding as in the proof of Lemma \ref{lem:P1k}, we use Delaunay coordinates. To this end, we first compute expansions of the potential $V$ introduced in  \eqref{def:potentialDelaunay} in powers of $\wt r$ as $ V= V_{1}+V_{2}+ V_{\geq}$ with
\[
 \begin{split}
V_{1}( \ell, L,\phi, \Ga,\wt r)&=N_2\wt \nu\frac{\wt\rr^2}{4\wt r^3}\left(3\cos 2(v+\phi)+1\right), \\
V_{2}( \ell, L,\phi, \Ga,\wt r)&=-N_3\wt \nu\frac{\wt\rr^3}{8\wt r^4}\left(3\cos(v+\phi)+5\cos3 (v+\phi)\right),
 \end{split}
\]
(see~\eqref{def:massterm}) and $V_\geq$ is of the form $V_\geq=\frac{1}{\wh r}E$ and $E$ is a function of $z=\frac{1}{\wt r}\wt \rr e^{i(v+\phi)}$ of order 4.

Accordingly,  we write the potential $\PP_1(u,\ga,\La,\al,\bet)$ in \eqref{def:P1} as
\[
 \PP_1= \PP_{11}+ \PP_{12}+ \PP_{1\geq}.
\]
Now we compute formulas for the Fourier coefficents of $\PP_1^{[q]}$ with $q=0,1$. For the coefficients $\PP_{1\geq}^{[q]}$, proceeding as in Lemma \ref{lem:P1k}, one can prove that
\begin{equation}\label{def:P1qhat}
\PP^{[q]}_{1\geq} (u,0,0,0)=\wh\PP^{[q]}_{1\geq}(u,0,0,0) e^{iq\phi_\h(u)}\qquad\text{with}\qquad\left|\wh\PP^{[q]}_{1\geq}(u,0,0,0)\right|\lesssim \frac{1}{G_0^7|\wh r^5_\h(u)|}.
\end{equation}
For $\PP_{11}$ and $\PP_{12}$ we have explicit formulas. To compute their Fourier coefficients, we introduce the coefficients $C_q^{n,m}$, defined, following   \cite{DelshamsKRS19},  by
\begin{equation}\label{def:c}
\wt \rr^n(\ell,L,\Gamma) \ex^{imv(\ell,L,\Gamma)}= \sum_{q\in\ZZ} C_q^{n,m}(L,\e)\ex^{iq\ell}=\sum_{q\in\ZZ}\left(\frac{\beto}{\alo}\right)^{q/2} C_q^{n,m}(L_0,\e)\ex^{iq\ga}
\end{equation}
where
% we have used
\[
 \ell=\ga-\frac{1}{2i}\log\frac{\eta_0}{\xi_0},\quad L=L_0,\quad \Ga=L_0-\eta_0\xi_0 \qquad \text{(see \eqref{def:DelaunayToAdapted})}.
\]
The coefficients $C_q^{n,m}$ depend on $L$ and  $\e$ is the eccentricity (see \eqref{eq:ecalbet}).

Then, recalling also $\phi=\phi_\h(u)+\frac{1}{2i}\ln\left(\frac{\alo}{\beto}\right)$, we obtain
\[
 \begin{split}
  \PP_{11}^{[q]}(u,0,0,0)=&\,\frac{N_2\wt\nu}{4G_0^3\wh r^3_\h(u)}\left(C_q^{2,0}+\frac{3}{2} C_q^{2,2} e^{2i\phi}+\frac{3}{2} C_q^{2,-2} e^{-2i\phi}\right)\left(\frac{\beto}{\alo}\right)^{q/2}\\
=&\,\frac{N_2\wt\nu}{4G_0^3\wh r^3_\h(u)}\left(C_q^{2,0}+\frac{3}{2} C_q^{2,2} e^{2i\phi_\h(u)}\left(\frac{\beto}{\alo}\right)\ii+\frac{3}{2} C_q^{2,-2} e^{-2i\phi_\h(u)}\left(\frac{\beto}{\alo}\right)\right)\left(\frac{\beto}{\alo}\right)^{q/2}\\
  \PP_{12}^{[q]}(u,0,0,0)=&\,-\frac{N_3\wt\nu}{8G_0^5\wh r^4_\h(u)}\left(\frac{3}{2}C_q^{3,1} e^{i\phi}+\frac{3}{2}C_q^{3,-1} e^{-i\phi}+\frac{5}{2}C_q^{3,3} e^{3i\phi}+\frac{5}{2}C_q^{3,-3} e^{-3i\phi}\right)\left(\frac{\beto}{\alo}\right)^{q/2}\\
=&\,-\frac{N_3\wt\nu}{8G_0^5\wh r^4_\h(u)}\Bigg(\frac{3}{2}C_q^{3,1} e^{i\phi_\h(u)}\left(\frac{\beto}{\alo}\right)^{-1/2}+\frac{3}{2}C_q^{3,-1} e^{-i\phi_\h(u)}\left(\frac{\beto}{\alo}\right)^{1/2}\\
  &\,+\frac{5}{2}C_q^{3,3} e^{3i\phi_\h(u)}\left(\frac{\beto}{\alo}\right)^{-3/2}+\frac{5}{2}C_q^{3,-3} e^{-3i\phi_\h(u)}\left(\frac{\beto}{\alo}\right)^{3/2}\Bigg)\left(\frac{\beto}{\alo}\right)^{q/2}
  \end{split}
\]
where $N_2$ and $N_3$ are given by in \eqref{def:massterm}.
% =\sqrt{1- \frac{\Gamma^2}{L^2}}$.

The functions $\LL^{[q]}$, $q=0,1$, can be written as $\LL^{[q]}=\LL_1^{[q]}+\LL^{[q]}_2+\LL^{[q]}_\geq$ with
\[
 \LL_i^{[q]}=\int_{-\infty}^{+\infty}\PP_{1i}^{[q]}(s,0,0,0)e^{iq\omega s}ds,\qquad i=1,2,\geq.
\]
We first give estimates for the last terms. For $\LL^{[1]}_\geq$ it is enough to change the path of integration to $\Im s=1/3-G_0^{-3}$ and use \eqref{def:P1qhat} and the properties in Lemma \ref{lemma:homounperturbed}. For $\LL^{[0]}_\geq$ one can estimate the integral directly. Then, one can obtain
\begin{equation}\label{def:FitaL1>}
% \begin{split}
% \left|\LL^0_\geq\right|\lesssim&\,G_0^{-7}\\
\left|\LL^{[0]}_\geq\right|\lesssim \,G_0^{-7}\qquad\text{and}\qquad\left|\LL^{[1]}_\geq\right|\lesssim G_0^{-1}\ex^{-\frac{\tilde \nu G_0^3}{3 L_0^3}}.
% \end{split}
\end{equation}

Thus, it only remains to obtain formulas for $\LL_i^{[q]}$, $q=0,1$, $i=1,2$. Using the formulas in \eqref{def:homoclinic}, they are given in terms of the integrals
\[
\begin{aligned}
\int_{-\infty}^{+\infty}\frac{1}{\wh r_\h^{2l-k+1}(t)}\ex ^{-i k\phi_\h(t)}  \ex^{iq\omega t} dt&=&
\int_{-\infty}^{+\infty}\frac{\ex^{iq\omega (\tau+\frac{\tau ^3}{3})}}{(\tau -i)^{2l-2k}(\tau+i)^{2l}}d\tau    &\qquad\mbox{for}\ l \ge k \ge 0\\
\int_{-\infty}^{+\infty}\frac{1}{\wh r_\h^{k-2l+1}(t)}\ex ^{-i k\phi_\h(t)}  \ex^{iq\omega t} dt&=&
\int_{-\infty}^{+\infty}\frac{\ex^{iq\omega (\tau+\frac{\tau ^3}{3})}}{(\tau -i)^{-2l}(\tau+i)^{2k-2l}}d\tau &\qquad\mbox{for}\ l\le k\le -1
\end{aligned}
\]
Therefore, following the notation of \cite{DelshamsKRS19}, if we introduce
\begin{equation}\label{eq:Nmn}
 N(q,m,n)=\frac{2^{m+n}}{G_0^{2m+2n-1}}{ -1/2\choose m}{-1/2\choose n}
\int_{-\infty}^{+\infty}\frac{e^{iq\omega (\tau+\frac{\tau ^3}{3})}}{(\tau -i)^{2m}(\tau+i)^{2n}}d\tau,
\end{equation}
one can write these functions as
\begin{equation}\label{def:MelnikovImportant}
\begin{split}
\LL_1^{[0]}=&\frac{N_2\wt\nu}{4}\left(C_0^{2,0}N(0,1,1)+ \left(\frac{\beto}{\alo}\right)\ii C_0^{2,2}N(0,2,0) +\left(\frac{\beto}{\alo}\right) C_0^{2,-2}N(0,0,2)\right)\\
\LL_1^{[1]}=&\frac{N_2\wt\nu}{4}\left(\left(\frac{\beto}{\alo}\right)^{1/2}C_1^{2,0}N(1,1,1)+\left(\frac{\beto}{\alo}\right)^{-1/2} C_1^{2,2}N(1,2,0) +\left(\frac{\beto}{\alo}\right)^{3/2} C_1^{2,-2}N(1,0,2)\right)\\
\LL_2^{[0]}=&\frac{N_3\wt\nu}{8}\Bigg(\left(\frac{\beto}{\alo}\right)^{-1/2}C_0^{3,1}N(0,2,1)+\left(\frac{\beto}{\alo}\right)^{1/2}C_0^{3,-1}N(0,1,2)\\
&+\left(\frac{\beto}{\alo}\right)^{-3/2}C_0^{3,3}N(0,3,0)+\left(\frac{\beto}{\alo}\right)^{3/2}C_0^{3,-3} N(0,3,0)\Bigg)\\
\LL_2^{[1]}=&\frac{N_3\wt\nu}{8}\Bigg(C_1^{3,1}N(1,2,1)+\left(\frac{\beto}{\alo}\right)C_1^{3,-1}N(1,1,2)\\
&\,+\left(\frac{\beto}{\alo}\right)^{-1}C_1^{3,3}N(1,3,0)+\left(\frac{\beto}{\alo}\right)^{2}C_1^{3,-3} N(1,0,3)\Bigg)
\end{split}
\end{equation}
It can be easly check that the functions $N$ satisfy
\[
N(0,0,k)=\,N(0,k,0)=0, \quad\mbox{for}\quad k\ge 2 \qquad \text{ and }\qquad N(-q,m,n)= N(q,n,m).
\]
In \cite{DelshamsKRS19}, the leading terms of these integrals are computed (see Lemma 30 and the proof of Lemma 36),
\[
\begin{split}
N(0,1,1)=&\,\frac{\pi}{2} G_0^{-3}\\
N(0,2,1)=&\,N(0,1,2)=\frac{3 \pi}{8} G_0^{-5}\\
N(1,2,1) =&\, \frac{1}{4}\sqrt{\frac{\pi}{2}}G_0^{-\frac12}\ex^{-\frac{\tilde \nu G_0^3}{3 L_0^3}}\left(1+\OO(G_0^{-3/2})\right)\\
N(1,2,0) =&\, \sqrt{\frac{\pi}{2}}G_0^{\frac32}\ex^{-\frac{\tilde \nu G_0^3}{3 L_0^3}}\left(1+\OO(G_0^{-3/2})\right)\\
N(1,3,0) =&\, \frac{1}{3}\sqrt{\frac{\pi}{2}}G_0^{\frac52}\ex^{-\frac{\tilde \nu G_0^3}{3 L_0^3}}\left(1+\OO(G_0^{-3/2})\right)
\end{split}
\]
% \textbf{Nota: De l'ultima nomes usem fites, no la formula exacta}
and
\[
\begin{aligned}
N(1,1,1) =&\,\ex^{-\frac{\tilde \nu G_0^3}{3 L_0^3}}\OO\left(G_0^{-3/2}\right),&\quad N(1,0,2) =&\,\ex^{-\frac{\tilde \nu G_0^3}{3 L_0^3}}\OO\left(G_0^{-9/2}\right)\\
N(1,1,2) =&\,\ex^{-\frac{\tilde \nu G_0^3}{3 L_0^3}}\OO\left(G_0^{-7/2}\right),&\quad N(1,0,3) =&\,\ex^{-\frac{\tilde \nu G_0^3}{3 L_0^3}}\OO\left(G_0^{-13/2}\right).
\end{aligned}
\]
Now it remains to compute/estimate some of the coefficients $C_q^{n,m}$ in \eqref{def:c}. Observe that these functions can be defined as 
\[
 C_q^{n,m}(L_0,e_c)=\frac{L_0^{2n}}{2\pi}\int_0^{2\pi}\wt\rr^n(\ell,L_0,e_c)e^{imv(\ell,L_0,e_c)}e^{-iq\ell}d\ell.
\]
Proceeding as in \eqref{def:eccentric}, we change variables to the eccentric anomaly
as
\[\ell =E-\e\sin E, \quad\ %\wt \rr=L^2 (1-e\cos E), \
d\ell=(1-\e\cos E)dE.\]
Then, using \eqref{def:Delvg} and \eqref{def:Dellu} to express $\wt\rr$ and $v$ in terms of the eccentric anomaly, one obtains
\begin{equation}\label{def:CqnmMeanAnomaly}
 C_q^{n,m}(L_0,e_c)=\frac{L_0^{2n}}{2\pi}\int_0^{2\pi}\left(a^2 e^{iE}+\frac{e_c^2}{4a^2}e^{-iE}-e_c\right)^m\left(1-e_c\cos E\right)^{n+1-m} e^{-iq(E-\e\sin E)}dE.
\end{equation}
where $a$ has been defined in \eqref{def:a}.
% In \cite{DelshamsKRS19} the authors also compute the expansions of the functions $C_{q}^{m,n}$ appearing in \eqref{def:MelnikovImportant}. 
Using this formulas easily imply  the symmetries
\[
% \begin{split}
  C_q^{n,m}(\e)=C_{-q}^{n,-m}(\e)=\ol{ C}_q^{n,m}(\e)
\qquad \text{and}\qquad C_q^{n,m}(\e)=(-1)^{q+m} C_q^{n,m}(-\e).
% \end{split}
 \]
Moreover, they allow to compute them, as was already done in  \cite{DelshamsKRS19}, to obtain
\[
\begin{split}
C_0^{2,0}&=\, L_0 ^4\left(1+ \frac{3}{L_0}\alo\beto-\frac{3}{2L_0^2}(\alo\beto)^2\right) \\
C_0^{3,-1}\left(\frac{\beto}{\alo}\right)^{1/2}&=\,
-\frac{5}{\sqrt{2L_0}}L_0 ^6\left(\beto + \mathcal{O}(\beto^{2}\alo)\right)\\
C_0^{3,1}\left(\frac{\beto}{\alo}\right)^{-1/2}&=\,
-\frac{5}{\sqrt{2L_0}}L_0 ^6\left(\alo + \mathcal{O}(\alo^{2}\beto)\right)\\
C_1^{3,1}&=\,L_0^6\left(1+ \OO(\alo\beto)\right)\\
\left(\frac{\beto}{\alo}\right)^{-1/2} C_1^{2,2}&=\,-3L_0^4\sqrt{\frac{2}{L_0}}\alo+\OO\left(\alo^2\beto\right).
\end{split}
\]
They also can be easily bounded, switching the integration path in \eqref{def:CqnmMeanAnomaly} to either  $\Im E=\log \e$ (if $q-m>0$) or $\Im E=-\log \e$ (if $q-m<0$), as 
% (Finally,  using the estimates
% \marginpar{Tere: per tot $m$}
\[
% \begin{aligned}
 |C_q^{n,m}(\e)|\lesssim \e^{|m-q|}.
%  \text{ for }m\geq 0
%  |C_q^{n,m}(\e)|&\lesssim 1\qquad & \text{ for }m\leq -1
%  \end{aligned}
 \]
% in Proposition 12 in \cite{DelshamsKRS19}, 
Using this estimate, one obtains
\[
\begin{aligned}
 \left(\frac{\beto}{\alo}\right)^{1/2}C_1^{2,0}=&\,\OO(\beto),&  \left(\frac{\beto}{\alo}\right)^{3/2} C_1^{2,-2}=&\,\OO\left(\beto^3\right), &  \left(-\frac{\beto}{\alo}\right)C_1^{3,-1}=&\,\OO\left(\beto^2\right) \\
\left(\frac{\beto}{\alo}\right)^{-1}C_1^{3,3}=&\, \OO\left(\alo^2\right),& \left(\frac{\beto}{\alo}\right)^{2}C_1^{3,-3}=&\, \OO\left(\beto^4\right).
\end{aligned}
\]
Then,
\[
\begin{split}
\LL_1^{[0]}=&\frac{N_2\wt\nu\pi}{8}L_0 ^4  G_0^{-3}\left(1+ \frac{3}{L_0}\alo\beto-\frac{3}{2L_0^2}(\alo\beto)^2\right)\\
\LL_1^{[1]}=&-\frac{N_2\wt\nu}{4}\sqrt{\frac{\pi}{L_0}}G_0^{\frac32}\ex^{-\frac{\tilde \nu G_0^3}{3 L_0^3}}3L_0^4\alo \left(1+\OO(\alo\beto,G_0^{-3/2})\right)\\
\LL_2^{[0]}=&-\frac{15N_3\wt\nu\pi L_0^6}{64\sqrt{2L_0}}G_0^{-5}\left(\alo+\beto+\alo\beto\OO_1(\alo,\beto)\right)\\
\LL_2^{[1]}=&\frac{N_3\wt\nu}{32}\sqrt{\frac{\pi}{2}}L_0^6G_0^{-\frac12}\ex^{-\frac{\tilde \nu G_0^3}{3 L_0^3}}\left(1+\OO\left(\alo\beto,\beto^2 G_0^{-3},G_0^{-3/2}, \alo^2 G_0^3\right)\right).
% \left(1+\left( \OO\left(\alo^2 G_0^{5/2}\right)+ \OO\left(\beto^2 G_0^{5/2}\right)\right)\right).
\end{split}
\]
% with
% \[
%  \begin{split}
% % E_1=&\,\OO\left(\beto G_0^{-3/2}\right)+\OO\left(\beto^3G_0^{-9/2}\right) \\
% E_2=&\,\ex^{-\frac{\tilde \nu G_0^3}{3 L_0^3}}\left( \OO\left(\alo^2 G_0^{5/2}\right)+ \OO\left(\beto^2 G_0^{5/2}\right)\right)
%  \end{split}
% \]
These formulas and \eqref{def:FitaL1>} give the asymptotic expansions stated in Lemma \ref{lemma:Melnikov:q01}.
\end{proof}

To finish the proof of Proposition \ref{sec:Melnikov} it is enough to use the relation between $\Theta$, $\tte$ and $G_0$ given in \eqref{def:ThetaTilde} and \eqref{eq:omega}.

\bibliography{references}
\bibliographystyle{alpha}

\end{document}